\crefname{ineq}{inequality}{inequalities}
\crefname{fact}{fact}{facts}
\crefname{obs}{observation}{observations}
\crefname{figure}{figure}{figures}
\crefname{relation}{relation}{relations}
\newcommand{\excl}{{(!!)}}
\declaretheorem[numberwithin=section]{theorem}
\declaretheorem[sibling=theorem]{lemma}
\declaretheorem[sibling=theorem]{proposition}
\declaretheorem[sibling=theorem,name={\excl Proposition}]{bigproposition}
\declaretheorem[sibling=theorem]{corollary}
\declaretheorem[sibling=theorem]{relation}
\declaretheorem[sibling=theorem,name={Relation\excl}]{bigrelation}
\theoremstyle{definition}
\declaretheorem[sibling=theorem]{definition}
\declaretheorem[sibling=theorem]{fact}
\declaretheorem[sibling=theorem]{remark}
\declaretheorem[sibling=theorem]{notation}
\declaretheorem[sibling=theorem]{example}
\title{Coboundary expansion inside Chevalley coset complex HDXs}
\author{
Ryan O'Donnell\thanks{\texttt{odonnell@cs.cmu.edu}. Carnegie Mellon University.  Supported in part by a grant from Google Quantum AI.}${~}^{\tiny{\textcircled{r}}}$
 \and
Noah G. Singer\thanks{\texttt{ngsinger@cs.cmu.edu}. Carnegie Mellon University.  Supported in part by an NSF Graduate Research Fellowship (Award DGE 2140739). \\
{\tiny \textcircled{r}} The author ordering was randomized.}${~}^{\tiny{\textcircled{r}}}$}
\date{}
\begin{document}

\maketitle

\begin{abstract}
    Recent major results in property testing~\cite{BLM24,DDL24} and PCPs~\cite{BMV24} were unlocked by moving to high-dimensional expanders (HDXs) constructed from $\widetilde{C}_d$-type buildings, rather than the long-known $\widetilde{A}_d$-type ones.
    At the same time, these building quotient HDXs are not as easy to understand as the more elementary (and more symmetric/explicit) \emph{coset complex} HDXs constructed by Kaufman--Oppenheim~\cite{KO18} (of $A_d$-type) and O'Donnell--Pratt~\cite{OP22} (of $B_d$-, $C_d$-, $D_d$-type).
    Motivated by these considerations, we study the $B_3$-type generalization of a recent work of Kaufman--Oppenheim~\cite{KO21}, which showed that the $A_3$-type coset complex HDXs have good $1$-coboundary expansion in their links, and thus yield $2$-dimensional topological expanders.

    The crux of Kaufman--Oppenheim's proof of $1$-coboundary expansion was: (1)~identifying a group-theoretic result by Biss and Dasgupta~\cite{BD01} on small presentations for the $A_3$-unipotent group over~$\BF_q$; (2)~``lifting'' it to an analogous result for an $A_3$-unipotent group over polynomial extensions~$\BF_q[x]$.

    For our $B_3$-type generalization, the analogue of~(1) appears to not hold.  We manage to circumvent this with a significantly more involved strategy: (1)~getting a computer-assisted proof of vanishing $1$-cohomology of $B_3$-type unipotent groups over~$\BF_5$; (2)~developing significant new ``lifting'' technology to deduce the required quantitative $1$-cohomology results in $B_3$-type unipotent groups over $\BF_{5^k}[x]$.
\end{abstract}

\newpage

\tableofcontents

\newpage

\newcommand{\CoCo}{\mathcal{C}\mathcal{C}}
\newcommand{\CoCoCo}{\mathcal{C}\mathcal{C}^\circ}
\newcommand{\col}[2]{\binom{#1}{#2}}

\newcommand{\dist}{\mathrm{dist}}

\newcommand{\ACplx}[3]{\mathfrak{A}^{#1}_{#2}(#3)}
\newcommand{\BCplx}[3]{\mathfrak{B}^{#1}_{#2}(#3)} 
\newcommand{\BCplxSkel}[3]{\widehat{\mathfrak{B}}^{#1}_{#2}(#3)} 

\newcommand{\UnipA}[1]{U_{A_3}(\BF_{#1})}
\newcommand{\GrUnipA}[1]{U_{A_3}(\BF_{#1}[x]_{\leq 1})}
\newcommand{\UnipBSm}[1]{U_{B_3}^{\mathrm{sm}}(\BF_{#1})}
\newcommand{\GrUnipBSm}[1]{U_{B_3}^{\mathrm{sm}}(\BF_{#1}[x]_{\leq 1})}
\newcommand{\UnipBLg}[1]{U_{B_3}^{\mathrm{lg}}(\BF_{#1})}
\newcommand{\GrUnipBLg}[1]{U_{B_3}^{\mathrm{lg}}(\BF_{#1}[x]_{\leq 1})}

\newcommand{\LinkCplxA}[1]{\mathfrak{LA}_3(\BF_{#1})}
\newcommand{\GrLinkCplxA}[1]{\mathfrak{LA}_3(\BF_{#1}[x]_{\leq 1})}
\newcommand{\LinkCplxBSm}[1]{\mathfrak{LB}_3^{\mathrm{sm}}(\BF_{#1})}
\newcommand{\GrLinkCplxBSm}[1]{\mathfrak{LB}_3^{\mathrm{sm}}(\BF_{#1}[x]_{\leq 1})}
\newcommand{\LinkCplxBLg}[1]{\mathfrak{LB}_3^{\mathrm{lg}}(\BF_{#1})}
\newcommand{\GrLinkCplxBLg}[1]{\mathfrak{LB}_3^{\mathrm{lg}}(\BF_{#1}[x]_{\leq 1})}

\newcommand{\Comm}[2]{\left[#1, #2\right]}
\newcommand{\Id}{\mathbbm{1}}

\newcommand{\Rt}[1]{\bm{#1}}

\newcommand{\El}[2]{\begin{Bmatrix} \Rt{#1} \\ #2 \end{Bmatrix}}
\newcommand{\Eld}[3]{{\begin{pmatrix} \Rt{#1} \\ #2 \\ #3\end{pmatrix}}}

\newcommand{\ColorR}{red!25}
\newcommand{\ColorBackR}{red!50!black}
\newcommand{\ColorG}{green!25}
\newcommand{\ColorBackG}{green!50!black}
\newcommand{\ColorB}{blue!25}
\newcommand{\ColorBackB}{blue!50!black}
\newcommand{\ColorX}{black}
\newcommand{\ColorNeut}{gray!10}

\newcommand{\BoxR}{{~\colorbox{\ColorR}{RED}~}}
\newcommand{\BoxG}{{~\colorbox{\ColorG}{GREEN}~}}
\newcommand{\BoxB}{{~\colorbox{\ColorB}{BLUE}~}}
\newcommand{\BoxX}{{~\colorbox{\ColorX}{\color{white} BLACK}~}}
\newcommand{\KR}{K_{\text{RED}}}
\newcommand{\KG}{K_{\text{GREEN}}}
\newcommand{\KB}{K_{\text{BLUE}}}
\newcommand{\eR}{e_{\text{RED}}}
\newcommand{\eG}{e_{\text{GREEN}}}
\newcommand{\eB}{e_{\text{BLUE}}}

\newcommand{\CellR}[1]{\cellcolor{\ColorR}{#1}}
\newcommand{\CellG}[1]{\cellcolor{\ColorG}{#1}}
\newcommand{\CellB}[1]{\cellcolor{\ColorB}{#1}}
\newcommand{\CellX}[1]{\cellcolor{\ColorX}{\color{white} #1}}

\newcommand{\Quant}[1]{\forall #1, \quad}
\newcommand{\DQuant}[2]{\forall #1; \forall #2, \quad}

\newcommand{\asite}[1]{{\color{red} \underline{#1}}}

\newcommand{\Cnref}[1]{\Cref{#1} (\nameref*{#1})}

\newcommand{\vu}{\widetilde{u}}

\newcommand{\RelNameLeft}{\texttt{left}}
\newcommand{\RelNameMid}{\texttt{middle}}
\newcommand{\RelNameRight}{\texttt{right}}
\newcommand{\RelNameStraight}{\texttt{straight}}
\newcommand{\RelNameRev}{\texttt{reverse}}
\newcommand{\RelNameA}{\texttt{A3:}\allowbreak}
\newcommand{\RelNameBSm}{\texttt{B3-sm:}\allowbreak}
\newcommand{\RelNameBLg}{\texttt{B3-lg:}\allowbreak}
\newcommand{\RelNameAlg}{\texttt{generic:}\allowbreak}
\newcommand{\RelNamePart}{\texttt{partial:}\allowbreak}
\newcommand{\RelNameSubgp}{\texttt{subgp:}\allowbreak}
\newcommand{\RelNameHomLift}{\texttt{lift-hom:}\allowbreak}
\newcommand{\RelNameHomLiftRaw}{\texttt{raw:lift-hom:}\allowbreak}
\newcommand{\RelNameNonHomLift}{\texttt{lift-non-hom:}\allowbreak}
\newcommand{\RelNameNonHomLiftRaw}{\texttt{raw:lift-non-hom:}\allowbreak}
\newcommand{\RelNameCommutator}[2]{\texttt{commutator}\{$#1$, $#2$\}}
\newcommand{\RelNameCommute}[2]{\texttt{commute}\{$#1$ \& $#2$\}}
\newcommand{\RelNameLinearity}[1]{\texttt{linearity}\{$#1$\}}
\newcommand{\RelNameSelfCommute}[1]{\texttt{self-commute}\{$#1$\}}
\newcommand{\RelNameOrder}[2]{\texttt{reorder}\{$#1$, $#2$\}}
\newcommand{\RelNameExpr}[1]{\texttt{expression}\{$#1$\}}
\newcommand{\RelNameExprn}[3]{\texttt{expression}\{\(#1\) \(\to\) $#2$ \& $#3$\}}
\newcommand{\RelNameId}[1]{\texttt{id}\{$#1$\}}
\newcommand{\RelNameInv}[1]{\texttt{inv}\{$#1$\}}
\newcommand{\RelNameDoub}[1]{\texttt{doub}\{$#1$\}}
\newcommand{\RelNameInvDoub}[1]{\texttt{inv-doub}\{$#1$\}}
\newcommand{\RelNameInterchange}[1]{\texttt{interchange}\{$#1$\}}
\newcommand{\RelSquare}{\allowbreak\texttt{:square}}

\usetikzlibrary{positioning,calc,shapes,arrows,backgrounds,fit}
\usetikzlibrary{shapes, positioning}

\tikzset{
node distance=0.5cm and 2cm,
algnode0/.style = {draw, fill=magenta!40, anchor=center, minimum width=1cm, minimum height=.75cm},
algnode1/.style = {draw, fill=yellow!40, anchor=center, minimum width=1cm, minimum height=.75cm},
algnode2/.style = {draw, fill=cyan!40, anchor=center, minimum width=1cm, minimum height=.75cm},
algnode0M/.style = {draw, fill=magenta!40!gray, anchor=center, minimum width=1cm, minimum height=.75cm},
algnode1M/.style = {draw, fill=yellow!40!gray, anchor=center, minimum width=1cm, minimum height=.75cm},
algnode2M/.style = {draw, fill=cyan!40!gray, anchor=center, minimum width=1cm, minimum height=.75cm},
algdummy/.style = {anchor=center, minimum width=1cm, minimum height=.5cm},
algedge/.style = {very thick},
algedge1/.style = {very thick, dotted},
algback/.style = {draw, dashed, rounded corners, inner sep=0.3cm},
algbackh/.style = {draw=none, fill=gray!40, rounded corners, inner sep=0.15cm},
algedgelabel/.style = {midway, draw, solid, thin, circle, fill=white, inner sep=1pt}
}
\allowdisplaybreaks

\section{Introduction}
The concept of \emph{high-dimensional expansion (HDX)} has played an important role in quite a number of exceptional recent breakthroughs: e.g., in the analysis of Markov Chains~\cite{ALOV18,ALOV18,AOV21}, in coding theory~\cite{DELLM22,PK22b}, in quantum complexity~\cite{ABN23}, and in inapproximability~\cite{BMV24}. Very informally, a ``$k$-dimensional HDX'' is a $k$-uniform hypergraph which is ``well-connected'' in a spectral or topological sense. In the case $k=1$, HDXs recover the classical notions of \emph{expander graphs}. There are, however, two commonly acknowledged difficulties in working with HDXs: \emph{a dearth of constructions}; and, \emph{a glut of inequivalent notions of HDX}.

\paragraph*{Few constructions.} We only seem to know two families of sparse, spectral HDXs: ``quotients of affine buildings'' and ``coset complexes of finite groups of Lie type''; both are algebraic in flavor.
Until recently, both types of constructions were associated only with a single family of groups: the ``$A_d$-type'' Lie groups. Ballantine~\cite{Bal00} first constructed finite quotients of affine buildings in the case~$\widetilde{A}_2$ and showed that their spectrum is \emph{Ramanujan} (i.e., its nontrivial eigenvalues are in the underlying building's spectrum).
For $\widetilde{A}_d$, $d > 2$, Ramanujan constructions are due to~\cite{CSZ03,Li04,Sar04,LSV05a,LSV05b}, with the last reference giving explicit ($\poly(n)$-time) constructions.
As for the second family, Kaufman and Oppenheim~\cite{KO18} showed how to build finite coset complexes that are sparse spectral expanders from the $A_d$-type groups $\SL_{d+1}(\BF)$.
Though not known to have the same level of spectral expansion as the Ramanujan complexes, the \cite{KO18} complexes have some advantages. For example, they have been proven \emph{strongly} explicit in the literature~\cite{OP22}, and are also more symmetric: their automorphism group acts transitively on top-dimensional faces. Also, it is possible to analyze their spectral expansion using completely elementary tools~\cite{KO18,HS19}.

Recently, both constructions have been generalized beyond $A_d$-type.  For affine building quotients, \cite{CL23,BLM24,DDL24} constructed (weakly) explicit, spectral HDXs of type~$\widetilde{C}_d$ (not known to be Ramanujan), allowing them to solve important problems in property testing and inapproximability (see below).
For coset complexes, \cite{OP22,Pra23} generalized the \cite{KO18} construction to all Chevalley groups (including the infinite $B_d$-, $C_d$-, $D_d$-type families), and this was further generalized to Kac--Moody--Steinberg groups in~\cite{GV24}.

\paragraph*{Inequivalent notions.} In the $1$-dimensional (graph) case, there are two main definitions for expansion, edge expansion and spectral expansion, between which Cheeger's inequality gives a bidirectional relation.
By contrast, for higher-dimensional simplicial complexes, there are several inequivalent notions: e.g., topological expansion, spectral expansion, coboundary expansion, cosystolic expansion. 
(Topological expansion is usually considered an end in its own right; spectral expansion is motivated by random walk mixing; and coboundary/cosystolic expansion is most closely connected with property testing applications.)
On top of this, applications of HDX often use not just ``generic HDX properties'', but also additional bespoke properties of the known constructions.
As an example, the aforementioned recent breakthroughs in agreement testing~\cite{BLM24,DDL24} and low-soundness PCPs~\cite{BMV24} needed HDXs with both good spectral expansion and a strong variant of coboundary expansion --- and only the type-$\widetilde{C}_d$ affine building quotients fit the bill.

\subsection{Our main theorem} \label{subsec:main}
Motivated by the evident need for diverse HDXs with diverse properties, we sought to understand the coboundary and cosystolic expansion of the Chevalley coset complex HDXs.  
Recent additional work of Kaufman--Oppenheim~\cite{KO21} made progress on this in the~$A_3$ case; we revisit their work and extend it also to the~$B_3$ case.\footnote{We expect our techniques will also extend to the last $3$-dimensional coset complex family, of type~$C_3$, but have not yet verified this.}
Let us first state our top-line theorem, and then discuss what is perhaps the key contribution in our paper: a new method for proving coboundary expansion.
First, we recall from~\cite{OP22} the HDX in question:
\begin{definition}[The ``$B_3$-type coset complex HDX'']
    Let $\BF_q$ be a field of order~$q$ and odd characteristic.\footnote{\label{foot:2}In \cite{OP22}, the ``base field'' was always assumed to have prime order~$p$.  However this was done for notational simplicity, and it is easy to verify that everything is just the same (after replacing ``$p$'' by ``$q$'') for base fields~$\BF_q$ of characteristic not~$2$.
    Also, though that paper's statements required characteristic at least~$5$, they noted that characteristic~$3$ was also fine for all types other than~``$G_2$''.} 
    Let $\mathfrak{F}$ be an extension of degree \mbox{$m \geq 6$}, so $|\mathfrak{F}| = n \coloneqq q^m$.
    Let $B_3(\mathfrak{F})$
    denote the universal $B_3$-type Chevalley group\footnote{$B_3(\mathfrak{F})$ is also known as $\Omega_7(\mathfrak{F})$, the commutator subgroup of $\mathsf{O}_7(\mathfrak{F})$, the group of $7 \times 7$ orthogonal matrices over~$\mathfrak{F}$.} over~$\mathfrak{F}$, of cardinality $N \coloneqq n^9(n^6-1)(n^4-1)(n^2-1) \sim n^{21}$.
    Let $H_{\Rt{\alpha}}$, $H_{\Rt{\beta}}$, $H_{\Rt{\psi}}$, $H_{\Rt{\omega}} \subseteq B_3(\mathfrak{F})$ be the four subgroups defined in~\cite{OP22} (and in \Cref{def:opcc} in this paper) of cardinality $q^{20}$, $q^{20}$, $q^{31}$, $q^{31}$ (respectively), and let $\BCplx{3}{q}{m}$ be the associated $3$-dimensional coset complex, acted upon by $B_3(\mathfrak{F})$ in a tetrahedron-transitive way.
    This complex is on $V \coloneqq (2q^{-20} + 2q^{-31})N \sim 2q^{21m-20}$ vertices,  with each vertex in $\Theta(q^{31})$ tetrahedra.
\end{definition}
\begin{remark}
    The \emph{smallest} possible expanding instantiation of the above complex ($q=19$, $m=6$) has $V > 2^{450}$.
    (Similar astronomical numbers hold for all known HDX families.)
    This illustrates the importance of \emph{strong} explicitness; we can work with that HDX implicitly, computing incidences in time ``$\poly(450)$''.
\end{remark}
\begin{notation}
    Let $\BCplxSkel{3}{q}{m}$ denote the $2$-skeleton of $\BCplx{3}{q}{m}$ (meaning we simply forget that $\BCplx{3}{q}{m}$ has tetrahedra).
\end{notation}
Thinking of $\BCplxSkel{3}{q}{m}$ as a (connected) ``graph with triangles'' on $V$ vertices, each vertex has degree at most~$q^{15}$. \cite{OP22} shows that each vertex in $\BCplxSkel{3}{q}{m}$ has a link~$L$ which is isomorphic to one of two certain graphs, $L_q^{\text{sm}}$ (on $q^4$ vertices) or $L_q^{\text{lg}}$ (on $q^{15}$ vertices), each having a random walk matrix with second-largest eigenvalue $\lambda_2 \leq \frac{1}{\sqrt{q/2}-1} < 1/2$ (provided $q > 18$). Thus $(\BCplxSkel{3}{q}{m})_m$ is a strongly explicit family of bounded-degree $2$-dimensional spectral HDXs.

\begin{theorem} \label{thm:main} (Main.)
    If $q$ is a sufficiently large power of~$5$, the $2$-dimensional HDXs $\BCplxSkel{3}{q}{m}$ are $(\eps_0,\mu_0)$-cosystolic expanders, where $\eps_0, \mu_0 > 0$ are universal constants.
\end{theorem}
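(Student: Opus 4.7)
The plan is a two-layer argument: first, invoke a \emph{local-to-global} theorem of the kind proved by Evra--Kaufman and Kaufman--Mass, which turns uniformly good $1$-coboundary expansion of all links of the complex $\BCplx{3}{q}{m}$ (combined with the already-known spectral HDX property) into $1$-cosystolic expansion of the $2$-skeleton $\BCplxSkel{3}{q}{m}$; second, actually prove the required coboundary expansion in those links. The spectral hypothesis is already in hand from the paragraph preceding \Cref{thm:main}, so all of the real content is in the local step.

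Opening up a link of $\BCplx{3}{q}{m}$ as in~\cite{OP22}, one finds that it is itself a coset complex built from one of two ``graded'' $B_3$-type unipotent groups $\GrUnipBSm{q}$ or $\GrUnipBLg{q}$ (the underlying parameter ring now being $\BF_q[x]_{\leq 1}$ rather than $\BF_q$). Following the Kaufman--Oppenheim template of~\cite{KO21}, the $1$-coboundary expansion of such a coset complex is quantitatively equivalent to the underlying group admitting a \emph{cheap presentation} by the natural root-subgroup generators, in which every defining relator has a short derivation from Chevalley commutator formulas, root-subgroup linearity, and subgroup-membership relations. The crux is therefore to exhibit such a cheap presentation for $\GrUnipBSm{5^k}$ and $\GrUnipBLg{5^k}$ with constants independent of~$k$.

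The main obstacle is that, unlike in type $A_3$, there is no known analogue of the Biss--Dasgupta short presentation~\cite{BD01} for base-field $B_3$-unipotent groups. My plan splits the proof accordingly into a \emph{base case} and a \emph{lifting step}. In the base case, I verify by a finite computer-algebra computation that the ordinary groups $\UnipBSm{5}$ and $\UnipBLg{5}$ over the prime field $\BF_5$ admit a cheap presentation of the above form (equivalently, have vanishing $1$-cohomology with a quantitative coboundary-expansion rate). In the lifting step, I build an inductive rewriting calculus that expresses any word in $\GrUnipBSm{5^k}$ or $\GrUnipBLg{5^k}$ using base-field relators together with ``homogeneous lifted'' and ``inhomogeneous lifted'' relators, each pass of the induction losing only a bounded factor in the coboundary-expansion parameter. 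The lifting must in particular absorb the new relators produced by the long roots $2\Rt{\alpha}+\Rt{\beta}$ and $\Rt{\alpha}+2\Rt{\beta}$ of the $B_3$ root system; this is also the source of the characteristic-not-$2$ requirement, since the relevant Chevalley formulas involve coefficients like $2$ and $\binom{a}{2}$.

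The hardest step is clearly the lifting: in type $A_3$ each Chevalley commutator has just one term on the right-hand side, so promoting presentations to polynomial coefficients is essentially linear bookkeeping; in type $B_3$, the long-root contributions couple the additive linear structure of the parameter to a quadratic one, forcing both genuinely new relators (encoding ``doubling'', ``squaring'', and ``interchange'' phenomena) and more delicate rewriting to show these relators themselves have short derivations. Once the link-coboundary bound is in hand, the final assembly is automatic: feeding the constant-rate local expansion and the $\lambda_2 < 1/2$ bound into the local-to-global theorem produces universal $\eps_0, \mu_0 > 0$, completing the proof of \Cref{thm:main}.
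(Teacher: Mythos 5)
Your high-level architecture matches the paper exactly: reduce to $1$-coboundary expansion of the vertex-links via a local-to-global theorem, then split the link-coboundary problem into a base computation over $\BF_5$ plus a lifting argument to $\BF_{5^k}[x]_{\leq 1}$. But there is a genuine conceptual error in the way you describe the base case, and it is not a cosmetic one --- it hides the main technical novelty of the paper.

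You write that the base case verifies that $\UnipBSm{5}$ and $\UnipBLg{5}$ ``admit a cheap presentation of the above form (equivalently, have vanishing $1$-cohomology with a quantitative coboundary-expansion rate).'' This parenthetical equivalence is false, and in fact the paper proves the opposite of what you claim: the degenerate link complexes $\LinkCplxBSm{5}$ and $\LinkCplxBLg{5}$ are \emph{not} simply connected. The group-theoretic condition $(\diamond)$ (that every trivial word has a derivation from in-subgroup relations alone, i.e.\ a ``cheap presentation'' in your sense) is strictly stronger than vanishing $\BF_2$-cohomology: $(\diamond)$ is equivalent to simple connectedness, which forces vanishing $1$-homology over \emph{every} coefficient ring including $\BF_5$, and the paper's computations (\Cref{sec:bd}) show $\LinkCplxBSm{5}$ has nontrivial $1$-homology over $\BF_5$. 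What the computer actually certifies is only vanishing $\BF_2$-cohomology (\Cref{thm:story:b3:base}). So if you ran the computation you propose, looking for a Biss--Dasgupta-style presentation, it would fail.

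The consequence is that your lifting step, as stated, cannot get off the ground. You describe it as rewriting every word in $\GrUnipBLg{5^k}$ ``using base-field relators together with lifted relators,'' implicitly treating the base-field relators as derivable. They are not. What the paper does instead is develop an entirely new bookkeeping framework (\Cref{sec:dehn}) that tracks $\BF_2$-\emph{triangulation sizes} of loops rather than group-theoretic derivation lengths, together with a structural lemma (\Cref{thm:lifting:tri}) showing that a group homomorphism between coset multicomplexes cannot increase the triangulation cost of a lifted relation. This lets the paper promote the computer-found $\BF_2$-fillings in $\LinkCplxBLg{5}$ into a finite family of \emph{new relator words} each equipped with a bounded $\BF_2$-filling in the graded complex, and then carry out derivations modulo those (\Cref{cor:dehn2}). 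The hard-won lifting results in \Cref{sec:b3} (\Cref{thm:story:b3:lift}) produce bounded derivations of all pure-degree Steinberg relations from in-subgroup relations plus \emph{these} lifted relators --- not from in-subgroup relations alone. Your plan is missing the entire ``derivations-relative-to-$\BF_2$-fillable-relators'' layer, which is precisely what lets the argument survive the failure of simple connectedness in type $B_3$. (As a minor additional point, the long-root coefficients you cite, $2\Rt{\alpha}+\Rt{\beta}$ and $\Rt{\alpha}+2\Rt{\beta}$, do not occur in the $B_3$ system used here; the coefficient-$2$ roots are of the form $\Rt{\beta+2\psi}$, $\Rt{\alpha+\beta+2\psi}$, $\Rt{\alpha+2\beta+2\psi}$.)
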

\begin{corollary} \label{cor:main}
    The $2$-dimensional HDXs in \Cref{thm:main} are topological expanders: if the graph-with-triangles $\BCplxSkel{3}{q}{m}$ is (continuously) drawn in the plane, no matter how curvily, there will be a point in~$\BR^2$ contained in a constant fraction of its triangles.
\end{corollary}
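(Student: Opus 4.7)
The plan is to derive \Cref{cor:main} from \Cref{thm:main} by invoking the now-standard implication \emph{cosystolic expansion $\Rightarrow$ topological overlap}. In its original form this is Gromov's topological overlap theorem (for coboundary expanders); in the cosystolic setting, one uses the bounded-degree refinement due to Dotterrer--Kaufman--Wagner and Evra--Kaufman. The version I would invoke reads, roughly: if a $d$-dimensional bounded-degree simplicial complex $X$ is an $(\eps_0,\mu_0)$-cosystolic expander whose vertex-links are also coboundary expanders, then for any continuous $f\colon |X|\to\BR^d$ there is a point in~$\BR^d$ whose preimage meets an $\Omega(1)$-fraction of the $d$-faces of~$X$. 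Specializing $d=2$ recovers the exact statement of \Cref{cor:main}.

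So the remaining task is to verify the three hypotheses for $X = \BCplxSkel{3}{q}{m}$. Bounded degree is immediate: for fixed~$q$, each vertex has degree at most~$q^{15}$, a constant independent of~$m$. Cosystolic expansion is delivered verbatim by \Cref{thm:main}. Coboundary expansion of the vertex-links is in fact already part of the technical backbone of \Cref{thm:main}, because the standard local-to-global machinery (Kaufman--Mass, Evra--Kaufman) \emph{deduces} global cosystolic expansion from spectral expansion plus good $1$-coboundary expansion in every vertex-link; those link-level coboundary-expansion constants are exactly the quantitative vanishing-$1$-cohomology results that are the paper's main technical contribution. Hence all three hypotheses are in hand.

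The only real obstacle at this stage is bookkeeping: different sources normalize ``cosystolic expansion,'' ``link coboundary expansion,'' and ``topological overlap'' slightly differently (e.g., whether $d$-faces are weighted uniformly or according to the top-dimensional stationary distribution, and whether one measures cocycle norm via Hamming or weighted $\ell_1$). I would fix one consistent convention, translate the constants $\eps_0,\mu_0$ coming out of \Cref{thm:main} into that convention, and track their dependence through the topological overlap argument. The resulting overlap constant depends only on $\eps_0$, $\mu_0$, the link-expansion constants, and the degree bound~$q^{15}$ --- all universal constants once $q$ is fixed as a sufficiently large power of~$5$ --- so the overlap fraction is a universal positive constant independent of~$m$, as required.
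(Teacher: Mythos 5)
Your approach is correct and matches the paper's: one deduces \Cref{cor:main} from \Cref{thm:main} by invoking Gromov's cosystolic-expansion-implies-topological-overlap theorem (here \Cref{thm:to-top}, quantitatively packaged in \Cref{cor:workhorse}), together with the bounded-degree property. One small imprecision worth flagging: link coboundary expansion is \emph{not} a hypothesis of the overlap theorem itself --- \Cref{thm:to-top} only requires $j$-cosystolic expansion for all $j<d$ plus bounded degree; link coboundary expansion is instead a hypothesis of the local-to-global machinery (\Cref{thm:workhorse}) used earlier to establish cosystolic expansion, so listing it as a third hypothesis of the overlap theorem slightly conflates two different implications, even though it does not break the argument.
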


Kaufman and Oppenheim~\cite{KO21} proved similar theorems for the family of ``$A_3$-type coset complexes over $\BF_q$'' they had earlier constructed~\cite{KO18}, and which we denote $(\ACplx{3}{q}{m})_{m \geq 4}$. (Their theorems held for $q$ being any sufficiently large prime.)

\subsection{Showing coboundary expansion}\label{sec:show}

Besides providing new examples of cosystolic and topological expanders with additional properties --- strong explicitness, strong symmetry --- our work provides new methods for showing \emph{coboundary} expansion.
This is relevant because establishing \Cref{thm:main} --- the cosystolic expansion of the complex $\BCplx{3}{q}{m}$ --- reduces to showing that each vertex-link in~$\BCplx{3}{q}{m}$ is has coboundary expansion at least $\beta \geq \Omega(1)$.
(This reduction follow from known ``local-to-global'' techniques \cite{EK16,DD23} and known spectral bounds \cite{OP22}.)
We caution that this reduction only works provided $q$ is sufficiently large as a function of~$\beta$.

Kaufman--Oppenheim~\cite{KO21} followed this path for their coset complex families $(\ACplx{3}{q}{m})_{m \geq 4}$.
All vertex-links in their family are isomorphic; this common link is a $2$-dimensional coset complex we'll denote by $\GrLinkCplxA{q}$. (The reason for this notation is given in \Cref{sec:unip}.)
In our case, for the $B_3$-type coset complex families $(\BCplx{3}{q}{m})_{m \geq 6}$, it was shown in~\cite{OP22} that there are \emph{two} different vertex-links (up to isomorphism). 
We call them the ``small link'' $\GrLinkCplxBSm{q}$ and the ``large link'' $\GrLinkCplxBLg{q}$. 
Thus to establish our main \Cref{thm:main}, the key task is to establish coboundary expansion of the $2$-dimensional coset complexes $\GrLinkCplxBSm{q}$ and $\GrLinkCplxBLg{q}$ (for sufficiently large~$q$).

Here we are in the setting of coset complexes of bounded diameter, just like in most prior work on coboundary expansion in sparse HDXs (even the affine building quotient HDXs).
In $2$-dimensional such settings, it is often the case that to show the ``quantitative'' property of $1$-coboundary expansion, it suffices to show the (formally weaker) ``qualitative'' result of vanishing $1$-(co)homology.
Now, vanishing $1$-homology (over $\BZ$ and therefore over $\BF_2$) is implied by the stronger \emph{homotopy} property of ``simply connectivity''; i.e., having ``no holes'' (like a triangulated sphere, and not like a triangulated torus).  
In turn, in a $2$-dimensional coset complex defined by subgroups $H_0, H_1, H_2$ and ambient group $G = \langle H_0,H_1,H_2\rangle$, the homotopical property of simple connectedness has been shown~\cite{Lan50,Bun52,Gar79,AH93} to be equivalent to a group-theoretic condition: \label{sec:diamond}
\begin{quote}
$(\diamond)$ \quad For every word~$w$ with symbols from $H_0, H_1, H_2$ that is equal to~$\Id$ in~$G$, there is a \emph{derivation} of this fact in which all the steps use relations that hold inside $H_0$, $H_1$, or $H_2$.\footnote{This is a derivation in the presentation-theoretic sense: Start with a word, then repeatedly identify a subsequence where all the elements belong to the same subgroup and apply an in-subgroup relation to this subsequence. In the language of \cite{AH93}, the existence of such derivations for every word in $G$ means that $G$ is isomorphic to the free product of $H_0,H_1,H_2$ amalgamated along their intersections.}
\end{quote}
And indeed, it is ultimately by using the group-theoretic condition~$(\diamond)$ that Kaufman--Oppenheim~\cite{KO21} established coboundary expansion for the links $\GrLinkCplxA{q}$ of their $A_3$-type coset complexes $(\ACplx{3}{q}{m})_{m \geq 4}$.
Proofs of coboundary expansion in even simpler situations (e.g.,~\cite{LMM16}) can also be established in this way.

However, for the links $\GrLinkCplxBSm{q}$ and $\GrLinkCplxBLg{q}$ of~$B_3$ coset complexes, our group-theoretic explorations strongly suggested that it is \emph{false} that they are simply connected;\footnote{We hope to rigorously prove this conjecture in the future. For now, we discuss our evidence at the end of \Cref{sec:bd}.} and it is provably false for $q=2,3,5$!\footnote{This is by a computer calculation that we describe in \Cref{sec:story:compute}.}
Thus, we need a new method.

\subsection{Our new techniques, part I: Overcoming not being simply connected}
To push through and show coboundary expansion, we evidently need to hope that the links $\GrLinkCplxBSm{q}$ and $\GrLinkCplxBLg{q}$ have vanishing $1$-cohomology over $\BF_2$, \emph{despite} them not being simply connected.
To gain any sort of confidence that this hope might be realized, one might consider doing an explicit computer calculation of the cohomology.  However this runs into an immediate difficulty: even the ``small'' link $\GrLinkCplxBSm{q}$ has $q^{20}$ triangles.  This would be feasible to study by computer only for $q = 2$ (a case that is anyway irrelevant, as we always need odd~$q$).

There is still one possible angle for computer-assisted attack, which is to look at certain smaller ``degenerate'' versions of the link complexes.
We notate these by $\LinkCplxBSm{q}$, $\LinkCplxBLg{q}$; they formally correspond to the vertex-links in the  ``$m = 1$'' case of $(\BCplx{3}{q}{m})_{m \geq 6}$.
This ``$m = 1$'' case does not actually show up in our infinite HDX families, but it still may serve as a simpler ``warmup'' for understanding $\GrLinkCplxBSm{q}$, $\GrLinkCplxBLg{q}$.  For example, $\LinkCplxBSm{q}$ has only $q^7$ triangles, and $\LinkCplxBLg{q}$ has~$q^9$.
Thus for very small~$q$, we might be able to answer (via computer calculation) whether $\LinkCplxBSm{q}$ and $\LinkCplxBLg{q}$ have vanishing $1$-cohomology over~$\BF_2$\dots

\paragraph*{Interlude: degenerate links in the $A_3$ case.} Kaufman and Oppenheim~\cite{KO21} also heavily studied the degenerate version $\LinkCplxA{q}$ of their link $\GrLinkCplxA{q}$. 
The ambient group for this coset complex $\LinkCplxA{q}$ is in fact the ``unipotent group'' for $A_3(\BF_q) \cong \SL_{4}(q)$. 
This unipotent group $\mathrm{Unip}_4(\BZ/p\BZ)$ is the well known group of $4 \times 4$ upper-unitriangular matrices over~$\BZ/p\BZ$. 
Now, whether or not the group-theoretic condition~$(\diamond)$ holds for the unipotent group $\mathrm{Unip}_4(\BZ/p\BZ)$ was the main topic in the PhD thesis of Kirchmeyer~\cite{Kir78} (he also investigated the $C_2$-analogue). Kirchmeyer showed that~$(\diamond)$ fails for $p = 2$, and (with computer assistance\footnote{SNOBOL code running on a CDC 6600.}) that it holds for $p = 3$. 
Using code of Evens~\cite{Eve78}, he also verified it for $p = 5,7,11,13$. Much later, Biss and Dasgupta~\cite{BD01}  gave a human-readable proof of~$(\diamond)$ for all odd~$p$.\footnote{We give new, possibly easier-to-interpret proof of the Biss-Dasgupta result in \Cref{sec:bd}.}  From this, Kaufman and Oppenheim immediately got that their degenerate link $\LinkCplxA{q}$ \emph{is} simply connected.  Their main work then~\cite[Sec.~7]{KO21} was to ``lift'' this group-theoretic result to show simple-connectedness of the actual vertex-link for their complexes, $\GrLinkCplxA{q}$.

\paragraph*{Back to the $B_3$ case.}
Returning to the $B_3$ case, we used computer assistance (see \Cref{sec:story:compute}) to show that for $p = 2,3$, 
$\LinkCplxBSm{p}$ and $\LinkCplxBLg{p}$ \emph{have} nontrivial $1$-homology (over~$\BF_2$); in particular,~$(\diamond)$ fails for the unipotent groups of~$B_3(\BF_2)$ and $B_3(\BF_3)$.
But happily, computer calculations showed that the $1$-homology (over $\BF_2$) of $\LinkCplxBSm{p}$ and $\LinkCplxBLg{p}$ 
\emph{does} vanish for~$p = 5$. These calculations were not completely trivial, as $5^9$ is rather large; we had to compute the $\BF_2$-rank of a $3$-sparse, $1{\small,}953{\small,}125 \times 1{\small,}171{\small,}875$ matrix.
But once they finished, we became hopeful that the $1$-cohomology in the larger, actual links
$\GrLinkCplxBSm{q}$, $\GrLinkCplxBLg{q}$ might also vanish.
But establishing this would require significantly new ``lifting'' technology, for three reasons:
\begin{itemize}
    \item It's \emph{not} be sufficient to study the $q = 5$ case of $\GrLinkCplxBSm{5}$, $\GrLinkCplxBLg{5}$, since the reduction from $\beta$-coboundary expansion in the links to global cosystolic expansion of $\BCplx{3}{q}{m}$ only works if~$q$ is sufficiently large as a function of~$\beta$.  Thus we need to ``lift'' the $\BF_5$ computer calculation to general fields $\BF_q$ with $q = 5^k$.
    \item The lifting results of Kaufman--Oppenheim~\cite{KO21} were purely group-theoretic, lifting condition~$(\diamond)$ for the ambient group of $\LinkCplxA{q}$ to condition~$(\diamond)$ for the ambient group of $\GrLinkCplxA{q}$.
    But condition~$(\diamond)$ does \emph{not} hold in our degenerate $B_3$-type links; only vanishing $\BF_2$-cohomology.  So we need to augment the group-theoretic lifting methods in a way that takes into account computer-discovered $\BF_2$-triangulations.
    \item Finally, even for the group-theoretic lifting, the additional complexity of $B_3$ vs.\ $A_3$ means we require new and more systematic techniques, beyond those used in~\cite{KO21}.
\end{itemize}

The next section gives an overview of how we implement all of this lifting technology and thereby prove the following:
\begin{theorem}\label{thm:keytech} (Key technical theorem.)
    If $q = 5^k$, $k \in \BN^+$, then $\GrLinkCplxBSm{q}$ and $\GrLinkCplxBLg{q}$ (the links of $\BCplx{3}{q}{m}$ for $m \geq 6$) are $2$-dimensional $\delta_0$-coboundary expanders, where $\delta_0 > 0$ is a universal constant.
\end{theorem}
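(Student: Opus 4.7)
The plan is to reduce the quantitative $\delta_0$-coboundary expansion bound to a qualitative vanishing $H^1(\cdot,\BF_2)=0$ statement, verify the latter on small ``degenerate'' links via a direct computer calculation over $\BF_5$, and then lift this vanishing in two stages: first by scalar extension from $\BF_5$ to $\BF_{5^k}$, and then by the ``graded'' passage from $\BF_q$ to $\BF_q[x]_{\leq 1}$ that takes the degenerate links $\LinkCplxBSm{q}, \LinkCplxBLg{q}$ to the true vertex-links $\GrLinkCplxBSm{q}, \GrLinkCplxBLg{q}$ of $\BCplx{3}{q}{m}$. The quantitative-to-qualitative reduction is by now standard in the bounded-diameter coset complex setting (both links are coset complexes of three subgroups of an ambient group with diameter $O(1)$), so once $H^1(\cdot,\BF_2)=0$ a uniform ``cone'' filling argument yields $\delta_0\ge\Omega(1)$ independent of $q$.

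For the base case I would run the computer calculation already sketched in the introduction, computing the $\BF_2$-rank of the $1$-boundary matrices of $\LinkCplxBSm{5}$ and $\LinkCplxBLg{5}$ (sparse matrices of size roughly $5^9\times 5^7$). The output I care about is not merely a bit indicating vanishing, but, for each loop in a chosen generating set of $1$-cycles, an explicit $2$-chain whose boundary is that loop. This collection of \emph{explicit $\BF_2$-triangulations} is the combinatorial certificate that all subsequent lifting steps must preserve.

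The main obstacle is the second lifting step, from $\BF_q$ to $\BF_q[x]_{\leq 1}$. In the $A_3$ case, Kaufman--Oppenheim~\cite{KO21} enjoyed the luxury that the group-theoretic condition $(\diamond)$ already holds in the base group $\UnipA{q}$, so the lift was a purely group-theoretic statement about amalgamated free products and could freely manipulate arbitrary words. Here $(\diamond)$ \emph{provably fails} in $\UnipBSm{5}$ and $\UnipBLg{5}$, so we are \emph{not} allowed to lift arbitrary words; we can only lift the \emph{specific} fillings discovered by the computer. To cope with this I would introduce extra bookkeeping: decompose each element of $\GrUnipBSm{q}$ and $\GrUnipBLg{q}$ along the direct sum $\BF_q[x]_{\leq 1}=\BF_q\oplus\BF_q\cdot x$, and show that any null-homotopic word $w$ in the graded link can be rewritten, using only in-subgroup identities (i.e., triangulations available in the graded link), into a product of a word living purely in the ``degree-$0$'' subgroup, plus a residue of ``mixed-degree'' words coming from the bilinearity of Chevalley commutators $[xa,b]=x[a,b]$. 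The degree-$0$ factor would then be filled by scalar-extending the $\BF_5$-certificate, and each mixed-degree piece would be filled by a short explicit triangulation produced from the Steinberg commutator relations.

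Scalar extension from $\BF_5$ to $\BF_q$ is the mildest of the three steps: every commutator and linearity identity in $\UnipBSm{5}$, $\UnipBLg{5}$ is $\BF_5$-bilinear in its scalar arguments, so an $\BF_2$-triangulation of a word pattern over $\BF_5$ extends verbatim to an $\BF_2$-triangulation over $\BF_q$ by letting the scalar entries range in $\BF_q$. I expect most of the effort, and the principal technical burden, to fall on the graded step, especially for the ``large'' link $\GrLinkCplxBLg{q}$: its larger collection of positive root subgroups produces more cross-commutator cases that must be reduced systematically to the $\BF_5$-certificate, and the harder part is to organize this reduction so that every intermediate rewrite is justified by a triangulation that genuinely exists in the graded link rather than by an appeal to the false condition $(\diamond)$.
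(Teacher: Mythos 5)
Your high-level reduction is the same as the paper's: reduce quantitative coboundary expansion to bounded $\BF_2$-filling of short loops via the cone/diameter argument for coset complexes, verify the degenerate links over $\BF_5$ by computer, and then lift. You also correctly identify the central obstruction that $(\diamond)$ fails in the base group, so one cannot argue purely group-theoretically as in the $A_3$ case. However, the mechanism you propose for the lifting step is genuinely different from the paper's, and as sketched it has a circularity. You want to rewrite an arbitrary null-homotopic word in the graded link, \emph{using only in-subgroup identities}, into a ``degree-$0$'' part plus a ``mixed-degree residue,'' and then fill the residue ``by short explicit triangulations produced from the Steinberg commutator relations.'' But the hard Steinberg commutator relations are precisely the ones that are \emph{not} in-subgroup and hence have no a priori triangulation---that is the very content of $(\diamond)$ failing. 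So the step ``fill the mixed-degree residue by Steinberg triangulations'' begs the question: you would need exactly the filling of those relations that you have not yet produced. Moreover, the proposed decomposition along $\BF_q[x]_{\le1}=\BF_q\oplus\BF_q x$ at the group level is not justified: the ``degree-$0$'' subgroup is not normal, and commutation mixes degrees quadratically (entries scale like $t^a u^b$, not bilinearly; the displayed rule $[xa,b]=x[a,b]$ holds for Lie brackets, not group commutators), so it is unclear that the rewriting terminates or produces the claimed shape.

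What the paper actually does is different in a way that resolves this. Rather than decomposing words, it constructs an explicit family of group \emph{homomorphisms} $f:U_I(\BF_5)\to U_I(\BF_{5^k}[x]_{\le1})$ (\Cref{thm:chev:general-lift}), parameterized by field elements and a choice of target degrees, and proves (\Cref{thm:lifting:tri}) that any $\BF_2$-triangulation of a loop in the base coset multicomplex pushes forward, under $f$, to a triangulation of the image loop of at most the same size. (This subsumes both your ``scalar extension'' and ``grading'' stages at once and makes precise what ``scalar extension of a triangulation'' should mean; your bilinearity remark does not, by itself, say why a specific set of triangles over $\BF_5$ should have an analogue over $\BF_{5^k}[x]_{\le1}$.) The vanishing of $H^1(\LinkCplxBSm5;\BF_2)$ and $H^1(\LinkCplxBLg5;\BF_2)$ then guarantees, since the base complexes are fixed finite objects, a uniform bound $t_0$ on filling sizes, which one does not even need to extract explicitly. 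The remaining---and by far largest---piece of the paper (Sections~7 and~8, culminating in \Cref{thm:story:b3:lift}) is a careful combinatorial derivation showing that the in-subgroup relations together with these \emph{many different} lifted relations (varying the lift's degree and scalar parameters) generate every pure-degree Steinberg relation of the graded group in a bounded number of steps; one then finishes via \Cref{cor:dehn2} and the Inception \Cref{thm:inception}. Your proposal does not have any analogue of this derivation and therefore does not explain how to produce triangulations for the relations outside the subgroups; this is the essential gap.
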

With this in hand, we can use the reduction described in \Cref{sec:show} to obtain our main \Cref{thm:main} on cosystolic expansion of the $\BCplx{3}{5^k}{m}$
HDXs. 

\begin{remark}
    The key technical theorem of~\cite{KO21} for the $A_3$ case is the same as \Cref{thm:keytech}, except it's for the links $\GrLinkCplxA{p}$ of the HDX family $\ACplx{3}{p}{m}$ with $p$ a sufficiently large prime.\footnote{In \cite{KO21} they state that $p$ may be a prime power, but this is not completely clear because the Biss-Dasgupta result holds for the ring~$\BZ/q\BZ$, but is different for the ring~$\BF_q$.}
    One interesting byproduct of our methods is that in the $A_3$ case, we can show cosystolic expansion of $\ACplx{3}{q}{m}$ for $q$ a (sufficiently large) power of $2,5,7,11,13$, using the  results of~\cite{Kir78,Eve78} in place of~\cite{BD01}.
\end{remark}

\subsection{Our new techniques, part II:  Lifting theorems}

\newcommand{\PiBLg}{\Pi_{B_3}^{\mathrm{lg}}}

For both the ``small link'' and ``large link'' cases, the ambient group is a kind of \emph{unipotent group} defined by a set of so-called \emph{Steinberg relations}.
These relations provide a connection between algebraic interactions of elements in the unipotent group and geometric interactions of a set of vectors \emph{roots}.

\subsubsection{Defining the group $\GrUnipBLg{q}$}

Let us focus on the more complex case we study, the ``large link'' of $B_3$. Without yet getting into full details of root systems (see \Cref{sec:prelim:root-systems}), there are three linearly independent \emph{base root} vectors $\Rt{\alpha},\Rt{\beta},\Rt{\psi}$  and $9$ total \emph{root} vectors
\[
\PiBLg \coloneqq \{\Rt{\alpha}, \Rt{\beta}, \Rt{\psi}, \Rt{\alpha+\beta}, \Rt{\beta+\psi}, \Rt{\beta+2\psi}, \Rt{\alpha+\beta+\psi},\Rt{\alpha+\beta+2\psi},\Rt{\alpha+2\beta+2\psi} \}.
\]
The \emph{height} $\height(\Rt{\zeta})$ of a root $\Rt{\zeta} \in \PiBLg$ is simply the sum of its coefficients on $\Rt{\alpha},\Rt{\beta},\Rt{\psi}$   (So, the heights of roots range between $0$ and $5$.)

We adopt the nonstandard convention $[n] = \{0,\ldots,n\}$, and we adopt the convention where $\Comm{x}{y} \coloneqq xyx^{-1}y^{-1}$ is the ``commutator'' of two group elements.

The group $\GrUnipBLg{q}$, which is the ambient group for $\GrLinkCplxBLg{q}$, is defined as follows.

\begin{definition}\label{def:intro:unip-b3-large}(Informal; see \Cref{sec:unip} for the more formal definition.)
    $\GrUnipBLg{q}$ is generated by symbols of the form $\Eld{\zeta}{t}{i}$ where the root $\Rt{\zeta} \in \PiBLg$ is a \emph{type}, $t \in \BF_{q}$ is a \emph{coefficient}, and $i \in [\height(\Rt{\zeta})]$ is a \emph{degree}. These symbols satisfy the ``linearity'' relations
\[
\DQuant{i \in [\height(\Rt{\zeta})]}{t,u \in \BF_q} \Eld{\zeta}{t}{i} \Eld{\zeta}{u}{i} = \Eld{\zeta}{t+u}{i}
\]
and the ``commutation'' relations
\[
\DQuant{i \in [\height(\Rt{\zeta})], j \in [\height(\Rt{\eta})]}{t,u \in \BF_q} \Comm{\Eld{\zeta}{t}{i}}{\Eld{\eta}{u}{j}} = \prod_{\substack{a \Rt{\zeta} + b \Rt{\eta} \in \PiBLg \\ a,b \in \BN^+}} \Eld{a\zeta+b\eta}{C^{\Rt{\zeta},\Rt{\eta}}_{a,b} \cdot t^a u^b}{ai+bj},
\]
where $C^{\Rt{\zeta},\Rt{\eta}}_{a,b} \in \{\pm1,\pm2\}$ are some constants not depending on $t$, $u$, $i$, or $j$.
\end{definition}
It is known that all elements in this group can be written uniquely in the form \[
\prod_{\Rt{\zeta} \in \PiBLg} \prod_{i=0}^{\height(\Rt{\zeta})} \Eld{\zeta}{t_{\Rt{\zeta},i}}{i}
\]
for some fixed ordering of the roots $\PiBLg$ (see \Cref{thm:structure3}). Thus, the group has $q^{3\cdot2+2\cdot3+2\cdot4+1\cdot5+1\cdot6} = q^{31}$ elements.

Now consider all the commutator and linearity relations involved in the presentation of the group $\GrUnipBLg{q}$. There are $9$ total roots, and hence $\binom{9}2 + 9 = 45$ pairs of (not necessarily distinct) roots, each giving a commutator relation. Further, for each pair of roots $\Rt{\zeta}$ and $\Rt{\eta}$, there are $(\height(\Rt{\zeta})+1)(\height(\Rt{\eta})+1)$ pairs of degrees $(i,j)$ to which the relation needs to apply. Our goal is to show that all of these commutator relations given 4-tuples $(\Rt{\zeta},\Rt{\eta},i,j)$ for $\Rt{\zeta},\Rt{\eta} \in \PiBLg$, $i \in [\height(\Rt{\zeta})]$, $j \in [\height(\Rt{\eta})]$, as well as the linearity relations given 2-tuples $(\Rt{\zeta},i)$ for $\Rt{\zeta} \in \PiBLg$ and $i \in [\height(\Rt{\zeta})]$, can be proven from \emph{in-subgroup Steinberg relations} and \emph{lifted relations of constant length}. We next turn to describing these two kinds of relations.

\subsubsection{The in-subgroup relations}

Recall that the coset complex $\GrLinkCplxBLg{q}$ is defined both by this ambient group $\GrUnipBLg{q}$ and by three subgroups. We now specify these subgroups. These subgroups are written $H_{\setminus \Rt{\psi}}, H_{\setminus \Rt{\beta}}, H_{\setminus \Rt{\alpha}}$, and they are the subgroups generated by $\Rt{\alpha}$ and $\Rt{\beta}$-type elements, by $\Rt{\alpha}$ and $\Rt{\psi}$-type elements, and by $\Rt{\beta}$ and $\Rt{\psi}$-type elements, respectively. For instance, $H_{\setminus \Rt{\psi}}$ consists of elements of the form
\[
\prod_{\Rt{\zeta} \in \{\Rt{\alpha},\Rt{\beta},\Rt{\alpha+\beta} \}} \prod_{i=0}^{\height(\Rt{\zeta})} \Eld{\zeta}{t_{\Rt{\zeta},i}}{i}
\]
and therefore has size $q^{2\cdot2+1\cdot3}=q^7$.

However, the number of pairs of roots which correspond only to elements in $H_{\setminus \Rt{\psi}}$, $H_{\setminus \Rt{\beta}}$, or $H_{\setminus \Rt{\alpha}}$ are $\binom{3}2+3 = 6$, $\binom{2}2+2 = 4$, and $\binom{4}{2}+4 = 10$, respectively, for a total of $17$ pairs.\footnote{We subtract $3$ to avoid double-counting the pair $\{\Rt{\zeta},\Rt{\zeta}\}$ for a base root $\Rt{\zeta}$, since each such relation is in two subgroups.} Thus $28$ root-pairs are ``missing'' from the in-subgroup relations.

\subsubsection{The lifted relations}

Next, we define the ``lifted'' relations. There is a smaller group, $\UnipBLg{p}$, which can be formed formed by taking the definition of $\GrUnipBLg{p}$ (\Cref{def:intro:unip-b3-large}) and discarding the degree feature.\footnote{Formally, one can equate $\Eld{\zeta}ti = \Eld{\zeta}tj$ for all $i,j \in [\height(\Rt{\zeta})]$.} The elements of this group are generated by symbols $\El{\zeta}t$ for $t \in \BF_p$ and obey syntactically identical relations to \Cref{def:intro:unip-b3-large} except that the degree is deleted. Further, by picking three parameters $(i,j,k) \in [1]$ and $t,u,v \in \BF_{p^k}$, one gets a \emph{homogeneous homomorphism}
\begin{align*}
    f : \UnipBLg{p} &\to \GrUnipBLg{p^k}, \\
    \El{\zeta}{w} &\mapsto \Eld{\zeta}{w t^a u^b v^c}{ai+bj+ck}
\end{align*}
for every root $\Rt{\zeta} = a\Rt{\alpha} + b\Rt{\beta} + c\Rt{\psi}$. (It is also necessary at times to consider \emph{nonhomogeneous homomorphisms} where an element of type $\Rt{\zeta}$ in $\UnipBLg{p}$ gets mapped to a product of $\Rt{\zeta}$ elements in $\UnipBLg{p^k}$ of differing degrees.) The ``lifted'' relations in $\GrUnipBLg{p^k}$ are then the images of constant-length relations under this map $f$. In particular, the image of a Steinberg relation will always be a Steinberg relation (even for any pair of roots!) but we can only achieve $2^3 = 8$ possible degree pairs.

\subsubsection{The game plan}

The lifted commutation relations complement the in-subgroup commutation relations in a sense: The lifted relations work for every pair of roots, but only $8$ degrees, while the in-subgroup relations work only for some pairs of roots, but for all possible degrees. The main thrust of our analysis of $B_3$ is to combine these two kinds of relations to gradually build up to all possible root-degree-pairs. We aim to do this by proving implications between various root-degree pairs, which eventually can be used to generate all possible root-degree pairs.

There is one major difficulty that we have elided so far. When we study whether the group $\GrUnipBLg{\BF_{p^k}}$ is defined solely by the in-subgroup relations and the lifted relations, the \emph{generators} we are allowed to consider must themselves be restricted to elements lying in the subgroups $H_{\setminus \Rt{\alpha}}, H_{\setminus \Rt{\beta}}, H_{\setminus \Rt{\psi}}$. So, in order to even properly \emph{state} relations involving a ``missing'' root $\Rt{\zeta}$ --- i.e., a $\Rt{\zeta}$ that has nonzero coefficients for all three base roots --- we must first prove an \emph{establishing} relation for $\Rt{\zeta}$ elements, stating that the various possible ways to ``name'' a $\Rt{\zeta}$-element coincide. Otherwise, we can only lift relations using specific ``names'' for $\Rt{\zeta}$-elements.

Another difficulty is caused by the relatively brittle structure of the lifting homomorphisms. The lift of a linearity relation $\El{\zeta}y \El{\zeta}z = \El{\zeta}{y+z}$ in $\UnipBLg{p}$ to $\GrUnipBLg{p^k}$ is of the form \[ \Eld{\zeta}{t^a u^b v^c y}{ai+bj+ck} \Eld{\zeta}{t^a u^b v^c z}{ai+bj+ck} = \Eld{\zeta}{t^a u^b v^c (y+z)}{ai+bj+ck} \] (if $\Rt{\zeta} = a\Rt{\alpha} + b\Rt{\beta} + c\Rt{\psi}$, for some $t,u,v \in \BF_{p^k}$, $i,j,k \in [1]$). The ratio of the entries in the left-hand side is an element of $\BF_p \subseteq \BF_{p^k}$ and thus lifting cannot give all possible pairs of entries. Similarly, the lift of a commutation relation $\Comm{\El{\zeta}y}{\El{\zeta}z} = \Id$ is \[ \Comm{\Eld{\zeta}{t^a u^b v^c y}{ai+bj+ck}}{\Eld{\zeta}{t^a u^b v^c z}{ai+bj+ck}} = \Id, \] and therefore we can never get a relation stating that two $\Rt{\zeta}$-elements with \emph{different} degrees commute directly from lifting.

The general outline for our proof for $B_3$'s large link is as follows:

\begin{itemize}
    \item As a starting point, prove relations not involving a missing root ($\Rt{\alpha+\beta+\psi}$, $\Rt{\alpha+\beta+2\psi}$, or $\Rt{\alpha+2\beta+2\psi}$) where possible. (There is only one such relation: The relation that $\Rt{\alpha+\beta}$ and $\Rt{\beta+\psi}$ commute.)
    \item Take the lowest-height missing root --- in this case, $\Rt{\alpha+\beta+\psi}$. We have $\Rt{\alpha+\beta+\psi} = (\Rt{\alpha+\beta})+\Rt{\psi} = \Rt{\alpha}+(\Rt{\beta+\psi})$. This means that there are different ways to ``name'' $\Rt{\alpha+\beta+\psi}$ elements of a given degree $i \in [4]$. Prove that all these different ways one ``should'' be able to write a $\Rt{\alpha+\beta+\psi}$ element of a given degree are equivalent;\footnote{We mean this in a quite literal sense: We draw graphs where the vertices are all the different ways one should be able to write a $\Rt{\alpha+\beta+\psi}$ element of a given degree and the edges are equalities, and show that these graphs are connected.} therefore, we can ``establish'' a single name for all such equivalent forms. Finally, state and prove as many relations as possible about $\Rt{\alpha+\beta+\psi}$ (which do not involve other missing roots).
    \item Repeat the above for the missing root $\Rt{\alpha+\beta+2\psi}$, which can be decomposed into smaller roots as $\Rt{\alpha+\beta+2\psi} = \Rt{\alpha} + (\Rt{\beta+2\psi}) = (\Rt{\alpha+\beta}) + 2\Rt{\psi}$.
    \item  Repeat the above for the missing root $\Rt{\alpha+2\beta+2\psi}$, which can be decomposed into smaller roots as $\Rt{\alpha+2\beta+2\psi} = (\Rt{\alpha+\beta}) + (\Rt{\beta+2\psi}) = \Rt{\alpha} + 2(\Rt{\beta+\psi}) = (\Rt{\alpha+\beta+2\psi}) + \Rt{\beta}$.

    \item Prove all remaining commutation relations.
    \item Prove linearity relations (their proofs typically rely on commutation relations).
\end{itemize}
We now comment briefly on why the $B_3$ proof is much more labor intensive than the $A_3$ proof, which uses exactly the same basic model except for the underlying set of roots. There are several reasons why $B_3$ is more complicated than $A_3$:
\begin{itemize}
    \item The $A_3$ link contains only $6$ roots, while the small $B_3$ link contains $7$ roots and the large link contains $9$ roots. Thus, there are simply more relations to prove. Indeed, since the number of root-pairs grows quadratically with the number of roots, and the number of degree-pairs is the product of the heights, the sheer number of root-degree-pairs used in the commutation relations which define the group blows up roughly quartically in the number of roots. On the other hand, lifting will only ever cover $8$ degree-pairs, regardless of the underlying roots' height. Thus, lifting becomes less useful as the number of roots increases, and we have to compensate by proving implications among the relations.
    \item In the large link of $B_3$, there are $3$ missing roots (in the other two links, there is only $1$).
    \item A further complication in the large link of $B_3$ is the presence of roots which differ by a vector of even height: E.g., $\Rt{\alpha}$ and $\Rt{\alpha+2\beta+2\psi}$ differ by $\Rt{2\beta+2\psi}$. Thus, when homogeneously lifting, the ratios of the coefficients we produce between the two types of elements will always be squares. We will survive by proving linearity relations and using the fact that every finite field element is the sum of two squares. 
\end{itemize}

Finally, we mention three stylistic choices which we have made to improve readability of the proofs. Firstly, in a multiline derivation, we often highlight the subsequence of a word which we are ``actively operating'' on (i.e., applying relations to) using \asite{red underline}. Secondly, we have given all relations ``codes'' (in addition to a traditional numbering scheme), displayed when we reference them via \texttt{teletype text}, so that the reader does not have to click a hyperlink to understand which other relation is being referred to. Finally, group elements of a given type are typeset \emph{vertically} (as ``column vectors'') --- either $\El{\zeta}{p}$ for a $\Rt{\zeta}$-type element with entry $p$, or $\Eld{\zeta}{t}{i}$ for a $\Rt{\zeta}$-type element with degree $i$ and coefficient $t$. This makes it visually easier to directly compare the respective types, coefficients, and degrees in a product of elements, and reduces the number of times we have to split lines.

\subsection*{Acknowledgments}

Thanks to Richard Peng and Junzhao Yang for helpful pointers on computing ranks over $\BF_2$, and to Richard Peng and Ryan Bai \cite{BP24} for independently computing the rank of our large matrix over $\BF_2$.
\section{Preliminaries}

As mentioned, we use $[n] = \{0,\ldots,n\}$ to denote the set of numbers between $0$ and $n$ \emph{inclusive} (so $|[n]| = n+1$). We denote group identity elements as $\Id$, and define the commutator of two elements in a group $x,y \in G$ as:
\begin{equation}\label{eq:def:comm}
    \Comm{x}{y} = xyx^{-1} y^{-1}.
\end{equation}

\subsection{Simplicial complexes}
\begin{definition}
    A \emph{pure $d$-dimensional simplicial complex}\footnote{In this paper we only consider pure, finite, simplicial complexes. Thus we may sometimes drop any of the three adjectives.} on (finite) vertex set~$V$ is a downward-closed collection~$X$ of subsets of~$V$ in which all maximal sets have cardinality~$d+1$.
    It is \emph{partite} if there is a partition $V = V_0 \sqcup V_1 \sqcup \cdots \sqcup$ such that each maximal set has one element in each~$V_i$.
    We write $X(j)$ for the subcollection of sets of cardinality~$j+1$, called the \emph{faces/simplices of dimension~$j$}.  
    We will sometimes use the terms vertices, edges, triangles, and tetrahedra for the faces of dimension~$0, 1, 2, 3$ (respectively).
\end{definition}
\begin{definition}
    A \emph{weighted} simplicial complex $(X,\pi)$ is a pair where $X$ is some $d$-dimensional complex and $\pi = \pi_d$ is a probability distribution on~$X(d)$. 
    This $\pi$ induces further probability distributions $\pi_j$ on $X(j)$, $0 \leq j < d$, as follows: $\boldsymbol{\sigma} \sim \pi_j$ is defined by first drawing $\boldsymbol{\tau} \sim \pi$ and then letting $\boldsymbol{\sigma}$ be a uniformly random subset of~$\boldsymbol{\tau}$ of cardinality~$j+1$.
\end{definition}
\begin{notation}
    If we don't specify a weighting for $d$-dimensional complex~$X$, then by default $\pi$ is assumed to be the uniform distribution on $X(d)$.
\end{notation}
\begin{definition}
    For $(X,\pi)$ a $d$-dimensional weighted complex and $\sigma \in X(j)$, the \emph{link} of~$\sigma$ is the $(d-j-1)$-dimensional simplicial complex $X_\sigma = \{\tau \setminus \sigma : \tau \in X,\ \tau \supseteq \sigma\}$.
    It becomes a weighted complex $(X_\sigma, \pi^\sigma)$ under the natural weighting induced by~$\pi$, in which $\pi^{\sigma}(\gamma)$ is proportional to $\pi(\sigma \cup \gamma)$.
\end{definition}
\begin{definition}
    We say a family $(X_m)_m$ of $d$-dimensional complexes is of \emph{bounded degree} if there is some~$D$ such that every vertex link $(X_m)_\sigma$ (meaning $|\sigma| = 1$) has cardinality at most~$D$.
\end{definition}
\begin{definition}
    If $X$ is a $d$-dimensional simplicial complex and~$j \leq d$, the \emph{$j$-skeleton} of~$X$ is the $j$-dimensional complex $\bigcup_{i \leq j} X(i)$. 
    If $X$ is weighted, it also naturally inherits a weighting.
\end{definition}

\subsection{Expansion definitions}
In this section on expansion properties, $(X,\pi)$ will always denote a weighted $d$-dimensional simplicial complex.

\subsubsection{Cocycle expansion}
We define all our cohomology notions over~$\BF_2$.
\begin{definition}
    For $-1 \leq j \leq d$, a function $f : X(j) \to \BF_2$ is called a \emph{$j$-chain}\footnote{Arguably this should be called a \emph{$j$-cochain}, but since our complexes are finite there is no need to distinguish.}, and it may naturally be identified with a column vector in $\BF_2^{X(j)}$ or a subset of~$X(j)$.
\end{definition}
\begin{definition}
    The \emph{distance} between two $j$-chains $f$ and~$f'$ is defined to be 
    \begin{equation}
        \dist(f,f') = \Pr_{\boldsymbol{\sigma} \sim \pi_j}[f(\boldsymbol{\sigma}) \neq f'(\boldsymbol{\sigma})].
    \end{equation}
\end{definition}
\begin{definition}
    For $0 \leq j \leq d$, the \emph{$(j-1)$-coboundary operator} $\delta_{j-1} : \BF_2^{X(j-1)} \to \BF_2^{X(j)}$ is defined on $(k-1)$-chains~$f$ by $\delta_{j-1} f(\tau) = \sum_{\sigma \subset \tau} f(\sigma)$.  We may write $\delta f$ for brevity.
    The operator $\delta_{j-1}$ may be thought of as a matrix in $\BF_2^{X(j-1) \times X(j)}$. 
    We have $\delta_{j} \delta_{j-1} = 0$.
\end{definition}
\begin{definition}
    We will also write $\partial_j : \BF_2^{X(j)} \to \BF_2^{X(j-1)}$ for $\delta_{j-1}^\intercal$, the \emph{$j$-boundary operator}, which acts on $j$-chains as $\partial_j f(\sigma) = \sum_{\tau \supset \sigma} f(\tau)$.
\end{definition}
\begin{remark}  \label{rem:CSP}
    Let $b$ be a $j$-chain (``right-hand sides'').
    Then from a computer science perspective, one might consider each pair $(\tau, b(\tau))$ for $\tau \in X(j)$ as a $\BF_2$-affine ``constraint/check'' on an assignment $f$ to variable set $X(j-1)$; namely, the $(j+1)$-ary constraint $\sum_{\sigma \subset \tau} f(\sigma) = b(\tau)$.
    This gives a weighted CSP, where the $\tau$-constraint is weighted by $\pi_j(\tau)$.
    The associated linear system is $\delta f = b$, and the (weighted) fraction of violated constraints is $\dist(\delta f, b)$.
\end{remark}    
\begin{definition}
    The subspace of \emph{$(j-1)$-cocycles} is $Z^{j-1} = Z^{j-1}(X) = \ker \delta^{j-1}$.
    In the language of \Cref{rem:CSP}, this is all the assignments~$f$ that satisfy the $X(j)$-CSP with right-hand sides $b = 0$.
\end{definition}
\begin{definition}
    Suppose that whenever $f$ is a $j$-chain with $\dist(f, Z^{j}) > v$, it holds that $\dist(\delta f, 0) > \epsilon \cdot v$.
    Then we say that $X$ has \emph{$j$-cocycle expansion at least~$\eps$}, and we write $h^j = h^{j}(X)$ for the least possible such~$\eps$.
    If the ``$j$'' is omitted, we mean that the condition holds for all $0 \leq j < d$.
\end{definition}
    In the language of \Cref{rem:CSP}, $h^{j} \geq \eps$ means that the $X(j+1)$-CSP with right-hand sides~$0$ has the following testability property: for any assignment~$f$ to $X(j)$, if a random constraint is satisfied with probability at least $1 - \eps\cdot v$, then $f$ must be $v$-close to a satisfying assignment.
\begin{definition}
    The subspace of \emph{$j$-coboundaries} is $B^{j} = B^j(X) =  \Ima \delta^{j-1}$.
    In the language of \Cref{rem:CSP}, this is the set of all the right-hand sides~$b$ that make the $X(j)$-CSP satisfiable.
\end{definition}
Since $\delta_j \delta_{j-1} = 0$, the $j$-coboundaries are always ``trivially'' $j$-cocycles. 
This motivates the following definition:
\begin{definition}
    The \emph{$j$th cohomology space} is $H^j = H^j(X) = Z^j/B^j$.
    We say that it \emph{vanishes} if $B^j = Z^j$. 
\end{definition}
\begin{definition}
    We define the \emph{$j$-cosystole} to be
    \begin{equation}
        s^j = s^j(X) = \min\{\dist(f,0) : b \in Z^j/B^j\},
    \end{equation}
    with the convention $s^j = 1$ if the $j$th cohomology vanishes.
\end{definition}
\begin{definition}
    Suppose the $j$-cocycle expansion satisfies $h^j \geq \eps$.
    If, moreover, $s^j \geq \mu$, we say that $X$ has
    \emph{$j$-cosystolic expansion at least $(\eps,\mu)$}.
    If the $j$th cohomology in fact vanishes, $X$ is said to have \emph{$j$-coboundary expansion at least~$\eps$}.
    If the ``$j$'' is omitted, we mean that the condition holds for all $0 \leq j < d$.
\end{definition}
\begin{example}
    Suppose the $1$-skeleton of~$X$ is the (weighted) graph~$G = (V,E,\pi_1)$.  Then $Z^0$ consists of the functions $f : V \to \BF_2$ that are constant on each connected component of~$G$, and $h^0 \geq \eps$ iff each component of~$G$ has edge-expansion (Cheeger constant) at least~$\eps$.
    If $G$ is connected, we would say $X$ has $0$-coboundary at least~$\eps$ expansion; on the other hand, if all connected components have size at least~$\mu$ (under~$\pi_0$), we would say $X$ has $0$-cosystolic expansion at least $(\eps,\mu)$.
\end{example}

\subsubsection{Spectral expansion}
\begin{definition}
    Let $\sigma \in X(j)$, $j \leq d-2$, and let $G_\sigma = (V, E, \pi)$ be the $1$-skeleton of the link~$X_\sigma$.
    Then if $A_\sigma$ denotes the ($\pi_1$-weighted) adjacency matrix for~$G_\sigma$, and $D_\sigma$ denotes the diagonal degree matrix (with $v$th diagonal entry $2\pi_0(\{v\})$), we write $W_\sigma = D_\sigma^{-1} A_\sigma$ for the \emph{standard random walk matrix of~$X_\sigma$} (which has invariant distribution~$\pi_0$). 
    We will also write $\lambda_2(X_\sigma)$ for the second-largest eigenvalue of~$W_\sigma$.
\end{definition}
\begin{definition}
    For $j \leq d-1$, we say that $X$ has \emph{$j$-spectral\footnote{Other authors would call this $(j-1)$-local or $(j-1)$-spectral expansion.} expansion parameter at most~$\gamma$} if $\lambda_2(X_\sigma) \leq \gamma$ for all $|\sigma| = j$.
    If the ``$j$'' is omitted, we mean that the condition holds for all $j \leq d-2$.
\end{definition}
Ballmann and \'{S}wi\k{a}tkowski~\cite{BS97} showed the following:
\begin{theorem}
    Suppose $X$ has $1$-spectral expansion parameter  $\gamma$, and also that (the $1$-skeleton of) $X$ is connected.
    Then $\lambda_2(X_\emptyset) \leq \frac{\gamma}{1-\gamma}$; i.e., the $0$-spectral expansion parameter of~$X$ is at most $\frac{\gamma}{1-\gamma}$.
\end{theorem}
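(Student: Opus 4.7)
The plan is to establish this via the classical ``Garland-type'' local-to-global decomposition of the Dirichlet form. Write $W = W_\emptyset$ for the standard random walk on the $1$-skeleton of $X$, and for each vertex $v \in X(0)$ write $W_v = W_{\{v\}}$ for the standard random walk on the $1$-skeleton of the link $X_v$. Given $f : X(0) \to \BR$, write $f|_v$ for the restriction of $f$ to $X_v(0)$, and define the Dirichlet forms
\[
\mathcal{E}(f) := \langle f, (I - W) f \rangle_{\pi_0}, \qquad \mathcal{E}_v(g) := \langle g, (I - W_v) g \rangle_{\pi^v_0}.
\]
A first routine step is to verify that $\pi^v_0(u) = W(v,u)$, so $(Wf)(v) = \mathbb{E}_{u \sim \pi^v_0}[f(u)]$, and also that $\mathcal{E}(f) = \tfrac{1}{2}\mathbb{E}_{\{u,w\} \sim \pi_1}[(f(u) - f(w))^2]$. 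The crucial combinatorial identity I will need is the local-to-global decomposition
\[
\mathcal{E}(f) = \mathbb{E}_{v \sim \pi_0}\bigl[\mathcal{E}_v(f|_v)\bigr].
\]
This identity follows by expanding both $\pi_1$ and $\pi^v_1$ in terms of the top-face weighting $\pi_d$ and checking that both sides reduce to $\frac{1}{d(d+1)} \sum_{\tau \in X(d)} \pi_d(\tau) \sum_{\{u,w\} \subset \tau} (f(u) - f(w))^2$.

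Next I would invoke the local spectral hypothesis. By assumption $\lambda_2(W_v) \leq \gamma$ for every $v$, so for every $g : X_v(0) \to \BR$ the standard Rayleigh-quotient inequality gives $\mathcal{E}_v(g) \geq (1 - \gamma) \cdot \mathrm{Var}_{\pi^v_0}(g)$. Applying this pointwise with $g = f|_v$ and averaging over $v \sim \pi_0$ using reversibility --- the identity $\sum_v \pi_0(v) \pi^v_0(u) = \pi_0(u)$ yields $\mathbb{E}_v \mathbb{E}_{u \sim \pi^v_0}[f(u)^2] = \|f\|_{\pi_0}^2$ and $\mathbb{E}_v[((Wf)(v))^2] = \|Wf\|_{\pi_0}^2$ --- I obtain the global inequality
\[
\mathcal{E}(f) \geq (1 - \gamma)\bigl(\|f\|_{\pi_0}^2 - \|Wf\|_{\pi_0}^2\bigr)
\]
valid for every $f$ with $\mathbb{E}_{\pi_0}[f] = 0$.

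Finally, take $f$ to be a nonzero eigenfunction of $W$ with eigenvalue $\mu := \lambda_2(W)$ orthogonal to constants; the assumption that the $1$-skeleton is connected forces $\mu < 1$. Substituting $\langle f, Wf\rangle_{\pi_0} = \mu\|f\|_{\pi_0}^2$ and $\|Wf\|_{\pi_0}^2 = \mu^2\|f\|_{\pi_0}^2$ into the displayed inequality gives $(1 - \mu) \geq (1 - \gamma)(1 - \mu^2) = (1 - \gamma)(1 - \mu)(1 + \mu)$; dividing through by $1 - \mu > 0$ yields $\mu \leq \gamma/(1 - \gamma)$, as required. (If $\mu \leq 0$, the bound is trivial since $\gamma/(1-\gamma) \geq 0$.) The only non-routine step is the local-to-global identity for $\mathcal{E}$, but this is really just systematic bookkeeping from the definitions of the induced weightings; every other step is a direct linear-algebra manipulation.
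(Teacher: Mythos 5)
Your proof is correct, and it is the standard modern Garland/Oppenheim-style argument. The paper itself does not supply a proof of this statement: it simply attributes the result to Ballmann--\'{S}wi\k{a}tkowski~\cite{BS97} and moves on, so there is nothing internal to the paper to compare against.

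For the record, each of your ``routine'' steps checks out. The identity $\pi^v_0(u) = W(v,u) = \pi_1(\{u,v\})/(2\pi_0(v))$ follows by writing $\pi^v_{d-1}(\gamma) = \pi_d(\{v\}\cup\gamma)/\bigl((d+1)\pi_0(v)\bigr)$ and marginalizing to a random vertex of $\gamma$. The local-to-global decomposition $\mathcal{E}(f) = \mathbb{E}_{v\sim\pi_0}\bigl[\mathcal{E}_v(f|_v)\bigr]$ works because both sides equal $\frac{1}{d(d+1)}\sum_{\tau}\pi_d(\tau)\sum_{\{u,w\}\subset\tau}(f(u)-f(w))^2$ --- the right-hand side introduces an extra factor $(d-1)$ (each edge $\{u,w\} \subset \tau$ is seen from $d-1$ apex vertices $v$) which precisely cancels the $\binom{d}{2}$ replacing $\binom{d+1}{2}$ in the link's edge marginal. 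The reversibility identity $\sum_v \pi_0(v)\pi^v_0(u) = \pi_0(u)$ is immediate from $\pi_0(v)\pi^v_0(u) = \tfrac{1}{2}\pi_1(\{u,v\})$. And the endgame --- substituting $\langle f, Wf\rangle = \mu\|f\|^2$ and $\|Wf\|^2 = \mu^2\|f\|^2$, then dividing by $1-\mu > 0$ --- is exactly right; connectivity of the $1$-skeleton is used precisely to guarantee $\mu < 1$ and to make $f$ orthogonal to constants. Two cosmetic remarks: the displayed inequality $\mathcal{E}(f) \geq (1-\gamma)(\|f\|^2 - \|Wf\|^2)$ in fact holds for all $f$, not only mean-zero ones, though you only need the mean-zero case; and the theorem implicitly assumes $\gamma < 1$, since otherwise the pointwise bound $\mathcal{E}_v(g) \geq (1-\gamma)\mathrm{Var}(g)$ degenerates and the conclusion $\gamma/(1-\gamma)$ is vacuous.
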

Using this inductively, one can establish~\cite{Opp18} the \emph{Trickling Down} theorem:
\begin{theorem}
    Assume (the $1$-skeleton of) $X_\sigma$ is connected for all $j$-faces $\sigma$, $j < d-1$.
    If $X$ has $(d-1)$-spectral expansion parameter $\gamma$, then it has $j$-spectral expansion parameter at most $\frac{\gamma}{1-(d-1-j)\gamma}$.
\end{theorem}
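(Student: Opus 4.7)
The plan is to prove the statement by downward induction on $j$, starting from $j = d-1$ and moving down, using the Ballmann--Świątkowski theorem stated immediately above as the single-step tool. The overall structure is very natural: Ballmann--Świątkowski upgrades a $1$-spectral-expansion bound on a complex to a $0$-spectral-expansion bound on that complex; applying this to the various links, and being careful about how the spectral expansion parameter of a link of $\sigma$ relates to the spectral expansion of links of $\sigma$ together with one extra vertex in the ambient complex $X$, lets us ``trickle'' quantitative bounds downward through all dimensions.

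The base case is $j = d-1$, which is immediate: the hypothesis gives $(d-1)$-spectral expansion parameter $\gamma$, and the claimed bound $\gamma/(1 - (d-1-j)\gamma)$ evaluates to $\gamma/(1-0) = \gamma$. For the inductive step, I assume the conclusion holds at $j+1$, and fix any $\sigma$ with $|\sigma| = j$. The link $X_\sigma$ is a $(d-j)$-dimensional weighted complex. The key identification is that for any vertex $v \in X_\sigma(0)$, the link $(X_\sigma)_v$ equals $X_{\sigma \cup \{v\}}$, and $\sigma \cup \{v\}$ has cardinality $j+1$. Thus the $1$-spectral expansion parameter of $X_\sigma$ is at most the $(j+1)$-spectral expansion parameter of $X$, which by the inductive hypothesis is at most $\gamma_{j+1} := \gamma/(1 - (d-2-j)\gamma)$. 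Since the connectivity assumption in the theorem covers $X_\sigma$ (here $\sigma$ has dimension $j-1 < d-1$), Ballmann--Świątkowski applies to $X_\sigma$ and yields
\[
\lambda_2(X_\sigma) \;\leq\; \frac{\gamma_{j+1}}{1 - \gamma_{j+1}} \;=\; \frac{\gamma}{1 - (d-2-j)\gamma - \gamma} \;=\; \frac{\gamma}{1 - (d-1-j)\gamma},
\]
where the middle equality is just the algebraic identity one obtains after clearing the inner denominator. Taking the maximum over $\sigma$ with $|\sigma| = j$ gives the desired bound on the $j$-spectral expansion parameter of $X$, completing the induction.

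There is no real obstacle here --- this is essentially a bookkeeping exercise on top of Ballmann--Świątkowski. The only ``delicate'' points to verify are (i) the indexing convention in the paper, where ``$j$-spectral expansion parameter'' refers to links of $\sigma$ with $|\sigma| = j$ (so $\sigma$ is a $(j-1)$-dimensional face), so that the shift between cardinality-$j$ and cardinality-$(j+1)$ subsets matches the recursion; (ii) the fact that the function $x \mapsto x/(1-x)$ is monotone increasing on $[0,1)$, which is what allows us to plug the inductive bound $\gamma_{j+1} \leq \gamma/(1-(d-2-j)\gamma)$ into Ballmann--Świątkowski; and (iii) implicitly, that the denominator stays positive, i.e.\ $(d-1-j)\gamma < 1$, which must be assumed for the quoted bound to be meaningful in the first place. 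The connectivity hypothesis is used at every inductive step except the base, precisely to license Ballmann--Świątkowski on each $X_\sigma$.
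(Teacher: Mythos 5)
Your proof is correct and is precisely the inductive argument the paper alludes to (the paper itself only says "Using this inductively, one can establish the Trickling Down theorem" and cites [Opp18] without writing out the details). Downward induction on $j$, the identification $(X_\sigma)_v = X_{\sigma\cup\{v\}}$, applying Ballmann--\'{S}wi\k{a}tkowski to each $X_\sigma$, and the algebraic simplification of the nested fraction are all exactly right, and you correctly flag the three points (indexing shift, monotonicity of $x\mapsto x/(1-x)$, and positivity of the denominator) that need to be checked for the step to go through.
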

The utility of this theorem is that (under the mild constraint of connectivity) one can show \emph{global} spectral expansion just by verifying spectral expansion in the \emph{local} link graphs of the $(d-2)$-dimensional faces.

\subsubsection{Topological expansion}
Gromov~\cite{Gro10} introduced the following concept: 
\begin{definition}
    We say that $X$ has \emph{topological expansion at least~$\delta$} if, for every continuous map $f : X \to \BR^d$, there is a point $p \in \BR^d$ such that $\Pr_{\boldsymbol{\sigma} \sim \pi}[p \in \boldsymbol{\sigma}] \geq \delta$; in words, $p$ is contained in at least a $\delta$-fraction of the (images) of the $d$-faces.
\end{definition}
It is easy to see that a bounded-degree ($1$-dimensional) expander graph is an $\Omega(1)$-topological expander.
Generalizing this, Gromov showed the following (see~\cite{DKW18}): 
\begin{theorem} \label{thm:to-top}
    Suppose $X$ has $j$-cosystolic expansion at least $(\eps,\mu)$ for all $0 \leq j < d$.  
    Assume that $\pi_1(\sigma) \leq \Delta$ for all vertices $\sigma \in X(0)$, where $\Delta = \Delta(\eps, \mu) > 0$ is sufficiently small.
    Then $X$ has topological expansion at least~$\delta = \Omega_d(\mu \eps^{d+1})$.

    In particular, if $(X_m)_m$ is a growing family of bounded-degree $d$-dimensional complexes, with $j$-cosystolic expansion at least $(\Omega(1), \Omega(1))$ for all $0 \leq j < d$, then $X_m$ has topological expansion~$\Omega(1)$.
\end{theorem}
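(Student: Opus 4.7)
The plan is to follow Gromov's topological overlap argument (as formalized by Dotterrer--Kaufman--Wagner and Matou\v{s}ek--Wagner), in which cosystolic expansion at every dimension is used to iteratively ``fill'' the generic preimages of points in $\BR^d$, reducing the geometric overlap question to a combinatorial pigeonhole.

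\textbf{Step 1 (reduction to simplexwise-affine maps).} By standard simplicial approximation, after a subdivision I may assume the continuous map $f: |X| \to \BR^d$ is affine on each simplex. The hypothesis $\pi_1(\sigma) \leq \Delta$ for sufficiently small $\Delta$ ensures that subdividing distorts the cosystolic expansion parameters only by controlled constant factors depending on $\eps, \mu$.

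\textbf{Step 2 (the pullback cochain).} For a generic point $p \in \BR^d$ (avoiding the image of the $(d-1)$-skeleton), define $c_p \in \BF_2^{X(d)}$ by $c_p(\tau) = 1$ iff $p \in f(\tau)$. Since $X$ is $d$-dimensional, $c_p$ is automatically a $d$-cocycle, and the desired conclusion is equivalent to producing some $p$ with $\pi_d(c_p) \geq \delta$.

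\textbf{Step 3 (cofilling tower).} Suppose toward contradiction that $\pi_d(c_p) < \delta$ for every generic $p$. Choosing $\delta < \mu$, $(d{-}1)$-cosystolic expansion gives $c_p = \delta_{d-1}\phi_{d-1}^{(p)}$ with $\pi_{d-1}(\phi_{d-1}^{(p)}) \leq \pi_d(c_p)/\eps$. As long as the support at stage $j$ remains below $\mu$, I iterate: write $\phi_j^{(p)} = \delta \phi_{j-1}^{(p)} + z_{j-1}^{(p)}$ where $z_{j-1}^{(p)}$ is a cosystole representative and $\pi_{j-1}(\phi_{j-1}^{(p)}) \leq \pi_j(\phi_j^{(p)})/\eps$. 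Propagating from $j = d$ down to $j = 0$ yields a $0$-cochain $\phi_0^{(p)}$ of $\pi_0$-weight at most $\delta/\eps^d$.

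\textbf{Step 4 (topological punchline).} View the assignment $p \mapsto \phi_0^{(p)}$ as a ``coloring'' of generic points by subsets of $X(0)$. Integrating/averaging over a measurable region of $\BR^d$ (after a small further perturbation to enforce measurability) and applying a discrete Borsuk--Ulam or Sarkaria-style argument --- effectively invoking $0$-cosystolic expansion to turn the vertex-set labeling into a concrete geometric conclusion --- one finds some $p^\star$ contained in at least an $\Omega_d(\mu \eps^{d+1})$-fraction of the $d$-faces, contradicting the hypothesis for $\delta$ of that form. The $\eps^{d+1}$ (rather than $\eps^d$) accounts for one factor per filling level plus an extra factor coming from the final overlap estimate.

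\textbf{Main obstacle.} The hardest part is executing Step 3 \emph{simultaneously and measurably in $p$}: the cosystole remainders $z_{j-1}^{(p)}$ must be chosen in a consistent, measurable way, since cosystolic expansion only gives the decomposition $c = \delta \phi + z$ up to this cosystole ambiguity. One must also verify that the support bounds $\pi_j(\phi_j^{(p)}) \leq \delta/\eps^{d-j}$ stay below $\mu$ at every intermediate $j$ so the filling does not stall --- which is exactly why $\delta$ must be a polynomial in both $\mu$ and $\eps$. The role of the $\pi_1(\sigma) \leq \Delta$ hypothesis is subtle but crucial: it ensures that the final Hamming-small $\phi_0^{(p)}$ has correspondingly small $\pi_0$-weight, so that the weighted filling bounds translate into a genuine topological overlap conclusion rather than being absorbed by a single heavy vertex.
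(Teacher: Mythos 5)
The paper does not prove \Cref{thm:to-top} itself; it cites Gromov~\cite{Gro10} and the writeup of Dotterrer--Kaufman--Wagner~\cite{DKW18}. Your proposal is recognizably a sketch of that cofilling argument, and Steps 1 and 2 are fine. But Step 3 has a genuine gap that cannot be patched within the stated hypotheses: you assert that ``$(d{-}1)$-cosystolic expansion gives $c_p = \delta_{d-1}\phi_{d-1}^{(p)}$.'' Cosystolic expansion at level $d-1$ says that $(d{-}1)$-cochains far from $Z^{d-1}$ have large coboundary ($h^{d-1}\geq\eps$) and that nontrivial $(d{-}1)$-cocycles have weight at least $\mu$ ($s^{d-1}\geq\mu$). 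Neither statement implies that a small $d$-cochain is a $d$-coboundary. (The theorem deliberately assumes $j$-cosystolic expansion only for $j<d$, so you also cannot invoke an $s^d\geq\mu$ bound.) Nothing in the topology of a general $d$-complex forces the overlap cochain $c_p$ to lie in $B^d$, so your filling tower never gets started.

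The missing ingredient --- and this is what the Gromov/DKW argument actually supplies --- is a \emph{geometric construction} of the filling. One fixes a generic affine flag $\{p\}=F_0\subset F_1\subset\cdots\subset F_d=\BR^d$ and defines, for each $0\le j\le d$, a $j$-cochain $c_j^F$ by $c_j^F(\sigma)=|f(\sigma)\cap F_{d-j}|\bmod 2$. A degree/Stokes argument shows $\delta c_{j-1}^F=c_j^F$ exactly, so $c_d^F=c_p$ is a $d$-coboundary \emph{by construction}, with explicit filling $c_{d-1}^F$; and crucially $c_0^F=\mathbf{1}$, a fixed cochain of full $\pi_0$-weight. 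The role of cosystolic expansion is then to relate $\pi_j(c_j^F)$ to $\pi_{j+1}(c_{j+1}^F)$: one does not ``find a filler of small weight'' from scratch but rather shows that if $c_{j+1}^F$ is small then $c_j^F$ is $\eps^{-1}$-close to a cocycle, which by the cosystole bound is either a small coboundary (and one corrects $c_{j-1}^F$ accordingly) or a cocycle of weight $\geq\mu$ (contradiction). Iterating down to $j=0$ collides with the fact that $c_0^F=\mathbf{1}$, giving the quantitative contradiction. Your Step 4 gestures at a ``Borsuk--Ulam/Sarkaria-style'' punchline, but the actual punchline is simply this collision at level $0$; no further averaging or combinatorial topology is needed beyond what the flag construction provides. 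So the core idea you are missing is the explicit flag-cochain construction, which both proves that $c_p$ is a coboundary and supplies the canonical, measurable-in-$p$ filler that you flag as the ``main obstacle.''
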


\subsection{Cosystolic/topological expansion from local conditions}
A sequence of works~\cite{KKL14b,EK16,DD23} gave results showing that cosystolic expansion --- and hence also topological expansion --- follows from ``local'' considerations. 
The following version is from~\cite{EK16}:
\begin{theorem} 
    For any $\beta > 0$, there are $\gamma, \eps, \mu > 0$ (depending also on~$d$) such that the following holds: 
    Suppose that for all $j$-faces~$\sigma$, $0 \leq j < d-1$, the complex $X_\sigma$ has coboundary expansion at least~$\beta$.
    Then provided $X$ has spectral expansion parameter at most~$\gamma$, we may conclude that $X$ has $j$-cosystolic expansion at least $(\eps,\mu)$ for all $j < d-1$ (though not necessarily for~$j=d-1$).
\end{theorem}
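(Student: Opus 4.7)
The plan is to prove the theorem by induction on the cochain dimension $j$, combining the local coboundary expansion of links (hypothesis) with the global spectral expansion of $X$ via a local-to-global analysis. The base case $j = 0$ is automatic: every vertex link has $0$-coboundary expansion at least $\beta$, so every link (and inductively $X$ itself) is connected, and the hypothesis of $1$-spectral expansion parameter at most $\gamma$ combined with the Trickling Down theorem gives good $0$-spectral expansion of $X$; Cheeger's inequality then yields $0$-cosystolic expansion with constants depending only on $\gamma$ (taking the cosystole $\mu$ from the minimal vertex weight).

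For the inductive step ($1 \leq j < d-1$), given a $j$-chain $f : X(j) \to \BF_2$, form its \emph{local views} $f_v$ at each vertex $v \in X(0)$, i.e., the $(j-1)$-chains in the link $X_v$ obtained by fixing $v$ as one of the $j+1$ coordinates of a simplex. A direct calculation shows that the coboundary $\delta_{X_v} f_v$ inside $X_v$ is essentially determined by $\delta f$ restricted to faces through $v$, so averaging over vertices yields $\mathbb{E}_v[\dist_{X_v}(\delta_{X_v} f_v, 0)] = O(\dist(\delta f, 0))$. By the $\beta$-coboundary expansion of each link $X_v$, there exists a $(j-2)$-chain $g_v$ in $X_v$ with $\dist_{X_v}(f_v,\, \delta_{X_v} g_v) \leq \dist_{X_v}(\delta_{X_v} f_v, 0)/\beta$.

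The main obstacle is gluing the local corrections $g_v$ into a \emph{global} $(j-1)$-chain $g$ whose coboundary approximates $f$. Following \cite{KKL14b,EK16}, one avoids explicit gluing by a ``local minimum'' argument: pick $f$ to minimize $\dist(f,0)$ within its coset $f + B^j$, then each $f_v$ is forced to be a local minimum in $X_v$ (else modifying $f$ on the star of $v$ by a $\delta_{X_v} g_v$ would reduce $|f|$ globally). Local minimality plus $\beta$-coboundary expansion of $X_v$ forces $\dist_{X_v}(f_v,0) = O(\dist_{X_v}(\delta_{X_v} f_v, 0)/\beta)$. Summing over $v$ with the natural weighting gives a bound on $\mathbb{E}_v \dist_{X_v}(f_v,0)$ in terms of $\mathbb{E}_v \dist_{X_v}(\delta_{X_v} f_v, 0)$. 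The spectral expansion parameter $\gamma$ (via an expander-mixing / Garland-style second-eigenvalue inequality) compares these weighted vertex averages back to the global quantities $\dist(f,0)$ and $\dist(\delta f, 0)$, with a loss factor that is controlled precisely when $\gamma$ is small enough as a function of $\beta$ and $d$. Balancing these inequalities yields $\dist(f, B^j) = O(\dist(\delta f, 0)/\eps)$, giving the coboundary-expansion part, and the cosystole bound $\mu$ emerges from the same mechanism applied to $f \in Z^j \setminus B^j$: either $f$ is a coboundary or $|f|$ is bounded below by a constant. The delicate step is the calibration of $\gamma$ small enough to absorb a constant-factor loss at each induction layer; this is exactly why the conclusion fails at $j = d-1$, where there is no higher-dimensional structure to average over.
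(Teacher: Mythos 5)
The paper does not prove this theorem: it imports it verbatim from~\cite{EK16} (citing~\cite{KKL14b} as the origin of the method and~\cite{DD23} for later quantitative refinements). So there is no paper-internal argument to compare against, and your proposal is best judged against the actual proofs in that literature. Against that standard, you have correctly identified the overall architecture — localize a $j$-cochain $f$ to vertex links, apply the hypothesized coboundary expansion of links, avoid explicit gluing via a locally/globally minimal representative of the coset $f+B^j$, and transfer local averages to global quantities via Garland-type spectral inequalities — and your explanation of why the conclusion fails at $j=d-1$ is also correct.

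However, as a proof this is a roadmap, not a derivation, and one of the things elided is actually the load-bearing identity of the whole argument. For a vertex $v$, the localization of the global coboundary is not $\delta_{X_v} f_v$ alone: one has $(\delta f)_v(\tau) = f(\tau) + (\delta_{X_v} f_v)(\tau)$ for $\tau \in X_v(j)$, so the restriction of $f$ itself to faces of $X_v$ appears as an extra term. Your phrasing that ``$\delta_{X_v} f_v$ inside $X_v$ is essentially determined by $\delta f$ restricted to faces through $v$'' suppresses exactly this term, and it is not ignorable: the proof in~\cite{KKL14b,EK16} works precisely by feeding this extra $f|_{X_v}$ contribution through the link's coboundary expansion to control $\mathbb{E}_v\bigl[\dist_{X_v}(f_v,0)\bigr]$ in terms of both $\dist(\delta f,0)$ and $\dist(f,0)$, with the spectral gap $\gamma$ absorbing the cross terms. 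Without tracking that term the inequality chain simply does not close. Likewise, the ``expander-mixing / Garland-style second-eigenvalue inequality'' and the calibration of $\gamma$ as a function of $\beta$ and $d$ — the hardest and most delicate parts of~\cite{EK16} — are asserted rather than proved. So the proposal correctly points at the known proof but does not constitute an independent one.
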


Suppose we wish to apply this theorem in the case $d=3$.  We would need to verify:
\begin{enumerate}
    \item Each vertex-link has $1$-coboundary expansion at least~$\beta$.
    \item Each vertex-link has $0$-coboundary expansion at least~$\beta$.
    \item Each edge-link has $0$-coboundary expansion at least~$\beta$.
\end{enumerate}
But the second and third conditions here are essentially superfluous, since $X$ is already assumed to be an excellent spectral expander.  More precisely, by the ``easy direction'' of Cheeger's inequality, the $0$-coboundary expansion of the vertex- and edge-links is at least $\frac12 - \frac12 \gamma$, which exceeds~$\beta$ provided $\gamma$ is small enough.
Thus only the first condition above is essential.
Moreover, by the Trickling Down theorem, we only need to verify spectral expansion for the edge-links.  Putting this together (exactly the same as was done in~\cite{KO21}), we conclude the following (where the quantitative dependence uses~\cite{DD23}):
\begin{theorem} \label{thm:workhorse}
    For any $\beta > 0$, there exists $\gamma > 0$ such that the following holds: 
    Suppose that $X$ is a connected $3$-dimensional complex, where all vertex-links are connected, and all edge-links have  $0$-spectral expansion parameter (i.e., second-largest eigenvalue) at most~$\gamma$.
    Moreover, assume that each vertex-link has coboundary expansion at least~$\beta$.
    Then $X$ has $1$-cosystolic expansion at least $(\Omega(\beta),\Omega(\beta))$.
\end{theorem}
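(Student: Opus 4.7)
The plan is to reduce Theorem~\ref{thm:workhorse} to the (unnumbered) Evra--Kaufman-style local-to-global theorem quoted just above it, applied in the case $d=3$. In that case, the hypotheses of the local-to-global theorem boil down to: (i) every vertex-link has $1$-coboundary expansion at least $\beta$, (ii) every vertex-link has $0$-coboundary expansion at least $\beta$, (iii) every edge-link has $0$-coboundary expansion at least $\beta$, and (iv) $X$ itself has global spectral expansion parameter at most some $\gamma'(\beta)$. Condition (i) is given directly by hypothesis, so the task is to derive (ii), (iii), (iv) from the edge-link spectral hypothesis $\lambda_2 \leq \gamma$, by picking $\gamma$ small enough as a function of $\beta$.

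First I would verify (iii): each edge-link is, by assumption, a connected graph with $\lambda_2 \leq \gamma$, so by the easy direction of Cheeger's inequality its $0$-coboundary expansion (the normalized edge expansion) is at least $\tfrac{1-\gamma}{2}$, which exceeds $\beta$ whenever $\gamma \leq 1-2\beta$. For (ii), I would apply the Trickling Down theorem to each vertex-link $X_v$ viewed as a $2$-dimensional complex. The vertex-links of $X_v$ are precisely the edge-links of $X$ containing $v$, which are connected graphs with $\lambda_2 \leq \gamma$; combined with the hypothesis that $X_v$ itself is connected, Trickling Down yields $\lambda_2(X_v) \leq \tfrac{\gamma}{1-\gamma}$. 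Another application of Cheeger then bounds the $0$-coboundary expansion of $X_v$ below by $\tfrac{1}{2}(1 - \tfrac{\gamma}{1-\gamma})$, which exceeds $\beta$ for $\gamma$ sufficiently small in terms of $\beta$.

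For (iv), I would apply Trickling Down one more time, now at the top level: using connectedness of $X$ and of all vertex-links, together with the edge-link spectral bound, we get that $X$ has $j$-spectral expansion parameter at most $\tfrac{\gamma}{1-(d-1-j)\gamma} = \tfrac{\gamma}{1-(2-j)\gamma}$ for $j \in \{0,1\}$. Choosing $\gamma = \gamma(\beta)$ small enough, we can simultaneously force all of (ii), (iii), and (iv) to hold with the quantities required by the local-to-global theorem. Plugging everything into the local-to-global theorem gives $j$-cosystolic expansion at level $(\eps,\mu)$ for all $j < d-1 = 2$, in particular for $j=1$, which is the desired conclusion.

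The only nonroutine choice is the quantitative dependence: the EK16 version of the local-to-global theorem gives some $\eps,\mu > 0$ depending on $\beta$, but obtaining the stated $(\Omega(\beta),\Omega(\beta))$ dependence requires substituting the sharper Dikstein--Dinur~\cite{DD23} version of the local-to-global theorem, as is flagged in the surrounding text. That quantitative substitution is the one step where I would expect to have to look up specific constants; everything else is straightforward bookkeeping combining Cheeger, Trickling Down, and the cited local-to-global result.
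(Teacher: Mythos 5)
Your proposal is correct and follows essentially the same route the paper sketches: invoke the EK16 local-to-global theorem in the case $d=3$, observe that the vertex-link $0$-coboundary, edge-link $0$-coboundary, and global spectral conditions all follow from the edge-link $\lambda_2 \leq \gamma$ hypothesis via Trickling Down plus the easy direction of Cheeger, and substitute the sharper Dikstein--Dinur bound for the stated $(\Omega(\beta),\Omega(\beta))$ dependence. If anything, you are slightly more careful than the paper's informal account at one point, correctly routing through Trickling Down to bound $\lambda_2(X_v) \leq \gamma/(1-\gamma)$ before applying Cheeger to the vertex-link (whereas the paper just asserts $\tfrac{1}{2}-\tfrac{1}{2}\gamma$ for both vertex- and edge-links).
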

Then, as a consequence of \Cref{thm:to-top}, we get (again, as in~\cite{KO21}):

\begin{corollary}   \label{cor:workhorse}
    Fix $\beta > 0$, and suppose $(X_m)_m$ is a growing family bounded-degree $3$-dimensional complexes, each satisfying the conditions of~\Cref{thm:workhorse}. Then the $2$-skeleton of~$X_m$ has topological expansion~$\Omega(\beta^4)$.
\end{corollary}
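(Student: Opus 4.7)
The plan is to chain \Cref{thm:workhorse} and \Cref{thm:to-top}, applying the latter to the 2-skeleton $\widehat{X}_m$ of $X_m$. \Cref{thm:workhorse} immediately yields 1-cosystolic expansion at least $(\Omega(\beta), \Omega(\beta))$ of $X_m$, and this transfers verbatim to $\widehat{X}_m$, since 1-cosystolic expansion depends only on the 1- and 2-faces together with their induced weightings --- which the 2-skeleton inherits unchanged from $X_m$ (a direct check: the inductively-defined marginals $\pi_1$, $\pi_2$ one gets from $\pi_3$ on $X_m$ equal those one gets from $\pi_2$ alone on $\widehat{X}_m$).

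To apply \Cref{thm:to-top} with $d = 2$, I also need 0-cosystolic expansion of $\widehat{X}_m$. This I extract ``for free'' from the spectral hypothesis already baked into \Cref{thm:workhorse}: the edge-links have $\lambda_2 \leq \gamma$, so by the Trickling Down theorem the 1-skeleton of $\widehat{X}_m$ has $\lambda_2 \leq \gamma/(1-2\gamma)$, bounded away from $1$ for $\gamma$ small. The easy direction of Cheeger's inequality then gives $h^0(\widehat{X}_m) = \Omega(1)$, and the connectedness assumption gives $s^0(\widehat{X}_m) = 1$, so 0-cosystolic expansion is at least $(\Omega(1), 1)$, dominating $(\Omega(\beta), \Omega(\beta))$. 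At this point $\widehat{X}_m$ has $j$-cosystolic expansion at least $(\eps,\mu) = (\Omega(\beta), \Omega(\beta))$ for both $j \in \{0,1\}$.

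The bounded-degree assumption drives $\pi_0(\{v\}), \pi_1(\{v\}) \to 0$ as $m \to \infty$, so the smallness condition $\pi_1(\{v\}) \leq \Delta(\eps,\mu)$ of \Cref{thm:to-top} holds for all sufficiently large $m$. Invoking that theorem then yields topological expansion at least $\Omega_2(\mu \cdot \eps^{d+1}) = \Omega(\beta \cdot \beta^{3}) = \Omega(\beta^{4})$, as desired. The main ``obstacle'' is really just remembering that \Cref{thm:to-top} requires cosystolic expansion at \emph{every} dimension below the top, while \Cref{thm:workhorse} as stated only advertises it at dimension~$1$; as noted, the missing $0$-dimensional case is already implicit in the spectral hypotheses. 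The remainder is routine bookkeeping and substitution.
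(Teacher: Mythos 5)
Your proof is correct and matches the route the paper has in mind (which it leaves implicit): apply \Cref{thm:workhorse} to get $1$-cosystolic expansion, supply $0$-cosystolic expansion from the spectral hypothesis via Trickling-Down and the easy direction of Cheeger, observe these data pass unchanged to the $2$-skeleton under the inherited weighting, and then invoke \Cref{thm:to-top} with $d = 2$, $\eps = \mu = \Omega(\beta)$ to get $\Omega_2(\mu\eps^3) = \Omega(\beta^4)$. The one mildly alternative choice you made --- deriving the $j=0$ case directly from Cheeger rather than noting that the cited local-to-global theorem of \cite{EK16,DD23} already delivers $j$-cosystolic expansion for all $j<d-1$ --- is equivalent here, and the bookkeeping on marginals and the $\Delta(\eps,\mu)$ smallness condition is right.
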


\subsection{Cones and fillings}
The main effort in this work will be to show good $1$-coboundary expansion for certain $2$-dimensional complexes.  
One way to show this is to use the ``random cones'' method of Gromov~\cite{Gro10}. 
In turn, Kaufman and Oppenheim~\cite{KO21} (following on ideas in~\cite{LMM16,KM19}) showed that in ``strongly symmetric'' complexes, it suffices to upper-bound the ``cone radius''.
(This is a generalization of the fact that in an edge-transitive graph~$G$, the Cheeger constant is at least $1/(2 \diam G)$.)
We state here an abbreviated version of their result which is sufficient for our purposes.
\begin{definition}  \label{def:1cone}
    In a connected $2$-dimensional simplicial complex~$X$, we say that the \emph{$1$-cone radius is at most~$R$} if there exist the following:
    \begin{itemize}
        \item A distinguished \emph{apex} vertex~$u$.
        \item For each vertex~$v$ in~$X$, a distinguished simple path~$P_{uv}$ from $u$ to~$v$ consisting of edges in~$X(1)$. 
        (For $v = u$ it is the empty path.)
        We identify each $P_{uv}$ with a $1$-chain.
        \item For each edge $\{v,w\} \in X(1)$, a set of triangles $\CT_{vw} \subseteq X(2)$ of cardinality at most~$R$ such that, when $\CT_{vw}$ is treated as a $2$-chain, the following $1$-chain identity holds: $\partial_2 \CT_{vw} = P_{uv} + \{v,w\} + P_{uw}$.
        We say that ``$\CT_{vw}$ is an $\BF_2$-filling of the loop formed by $P_{uv}$, $\{v,w\}$, and $P_{uw}$''.
    \end{itemize}
\end{definition}
If the $1$-cone radius of~$X$ is finite, then the $1$-cohomology vanishes.  Kaufman and Oppenheim's theorem makes this quantitative for strongly symmetric complexes:
\begin{theorem}\label{thm:cones1}
    (From \cite[Thm.~3.8]{KO21}.)
    Let $X$ be a $2$-dimensional complex, weighted by the uniform distribution on~$X(2)$. Assume that there is a group~$G$ of automorphisms of~$X$ that acts transitively on its $2$-faces.  
    Then if $X$'s $1$-cone radius is at most~$R$, it follows that $X$ has $1$-coboundary expansion at least $1/(3R)$.
\end{theorem}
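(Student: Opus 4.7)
The plan is to run a Gromov-style random cone argument, exploiting the $G$-action to average over choices of cone. Given a $1$-chain $f : X(1) \to \BF_2$, the cone data with apex $u$ naturally supplies a candidate $0$-chain $g_u(v) = f(P_{uv})$. The first step is to verify the key identity
\begin{equation*}
f(\{v,w\}) + \delta g_u(\{v,w\}) \;=\; f(\partial_2 \CT_{vw}) \;=\; \sum_{\tau \in \CT_{vw}} \delta f(\tau),
\end{equation*}
which says that $f$ disagrees with $\delta g_u$ on the edge $\{v,w\}$ only when some triangle in the filling $\CT_{vw}$ is ``bad'' (i.e., has $\delta f(\tau) \neq 0$).

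Next I would use $G$-symmetry to average. For each $\gamma \in G$, translating the cone data (apex $\gamma(u)$, paths $\gamma(P_{u,\gamma^{-1}(v)})$, fillings $\gamma(\CT_{\gamma^{-1}(e)})$) yields another valid cone and hence another candidate $0$-chain $g_\gamma$ satisfying an analogous bound. Averaging uniformly over $\gamma \in G$ and using (i) $G$-invariance of $\pi_2$ (and hence of $\pi_1$), since $\pi_2$ is uniform and $G$ permutes $X(2)$, and (ii) the fact that for any fixed $\tau'' \in X(2)$ the translate $\gamma(\tau'')$ is $\pi_2$-distributed when $\gamma$ is uniform in $G$ (by transitivity on $X(2)$), I expect to derive
\begin{equation*}
\mathbb{E}_{\gamma \sim G}[\dist(f, \delta g_\gamma)] \;\le\; \sum_{e \in X(1)} \pi_1(e)\,|\CT_e|\cdot \dist(\delta f, 0) \;\le\; R \cdot \dist(\delta f, 0).
\end{equation*}
Selecting $\gamma^*$ achieving at most the average gives $\dist(f, B^1) \le R\cdot\dist(\delta f, 0)$.

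Finally, finite cone radius already forces vanishing $1$-cohomology: applying the identity from paragraph one to any $f \in Z^1$ (so $\delta f = 0$) gives $f = \delta g_u \in B^1$, hence $Z^1 = B^1$. Combined with the averaged bound this yields $1$-coboundary expansion of at least $1/R$; the statement's $1/(3R)$ simply absorbs some bookkeeping slack (for instance, a factor of $3$ appears naturally if one charges each bad triangle to all three of its edges when bounding the error probability per edge).

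The main obstacle --- really the only nontrivial point --- is the averaging step. It is precisely the transitivity of $G$ on $X(2)$ that makes $\mathbb{E}_\gamma \mathbf{1}[\delta f(\gamma(\tau'')) = 1]$ equal $\dist(\delta f, 0)$ independently of the particular $\tau''$ appearing in a filling, so that only the \emph{size} $|\CT_e| \le R$ enters the final bound. Without such symmetry, the fillings might systematically concentrate on bad triangles and no clean estimate would be available; this is precisely the role of the strong-symmetry hypothesis.
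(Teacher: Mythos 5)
Your proof is correct and follows the standard Gromov random-cone argument, which is the same strategy as the cited Kaufman--Oppenheim result. The paper states \Cref{thm:cones1} only as a citation to~\cite{KO21} without reproducing a proof, so there is nothing in the present text to diverge from; what you have written is a faithful reconstruction. The averaging step is handled correctly: with $\gamma$ uniform on $G$ and $e \sim \pi_1$ independent, $G$-invariance of $\pi_1$ (which holds because $\pi_2$ is uniform and $G$ acts by simplicial automorphisms) makes $\gamma^{-1}(e)$ independent of $\gamma$ and $\pi_1$-distributed, while transitivity of $G$ on $X(2)$ makes $\gamma(\tau')$ uniform on $X(2)$ for each fixed $\tau'$, yielding
\begin{equation*}
\mathbb{E}_{\gamma}\bigl[\dist(f,\delta g_\gamma)\bigr] \;\le\; \sum_{e'} \pi_1(e')\,|\CT_{e'}| \cdot \dist(\delta f,0) \;\le\; R\cdot\dist(\delta f,0).
\end{equation*}
Note that your argument in fact establishes $1$-coboundary expansion at least $1/R$, which is strictly stronger than the stated $1/(3R)$; the extra factor of $3$ in the quoted statement is harmless slack inherited from~\cite{KO21}, not something your proof needs to account for.
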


\subsection{Coset complexes}
In this section we define a natural group-theoretic way of making simplicial complexes, dating back to Lann\'{e}r~\cite{Lan50}:
\begin{definition}
    Let $G$ be an ``ambient'' group and $\CH = \{H_0, H_1, \dots, H_d\}$ a set of~$d$ ``colored'' subgroups.
    We write $\CoCo(G; \CH)$ for the associated \emph{coset complex}, which is a partite $d$-dimensional simplicial on vertex set $V = G/H_0 \sqcup G/H_1 \sqcup \cdots \sqcup G/H_d$ in which the $d$-faces are those of the form $\{xH_0, xH_1, \dots, xH_d\}$ for $x \in G$.  
    (We say that face is \emph{induced} by~$x$, and all the coset complex we study have  $\bigcap_{i=0}^d H_i = \{\Id\}$, so that each $x \in G$ induces a unique $d$-face.)
\end{definition}
We collect some elementary properties of a coset complexes here (see, e.g.,~\cite{Gar79}):
\begin{proposition} \label{prop:cc}
    A coset complex  $\CoCo(G; \CH)$ satisfies the following:
    \begin{enumerate}
        \item The group $G$ acts on $\CoCo(G; \CH)$ as  automorphisms in a natural way ($g \in G$ maps the $d$-face induced by~$x$ to the $d$-face induced by~$gx$). 
        This action is transitive on the $d$-faces.
        \item Let $\sigma$ be a $j$-face; say $\sigma = \{xH_i : i \in S\}$ where $S \subseteq \{0, 1, \dots, d\}$ has cardinality~$j+1$.
        Then writing $H_S = \bigcap_{i \in S}$ (meaning $G$ if $S = \emptyset$),
        the link of~$\sigma$ is isomorphic to the coset complex~$\CoCo(H_S; \{H_S \cap H_i : i \not \in S\})$.
        \item The complex is connected iff $G = \langle H_0, H_1, \dots, H_d\rangle$. Quantitatively, every $x \in G$ can be expressed as a product of at most~$r$ elements from $\bigcup \CH$ iff the diameter of (the $1$-skeleton of) the complex is at most~$r$.
    \end{enumerate}
\end{proposition}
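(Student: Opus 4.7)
Part~(1) is a direct construction. Define the action by $g \cdot (xH_i) \coloneqq (gx)H_i$ on vertices; this is independent of the chosen representative~$x$ and visibly carries the $d$-face induced by~$x$ to the $d$-face induced by~$gx$, hence extends to an action by simplicial automorphisms. Transitivity on $d$-faces is then immediate: given $x, y \in G$, the element $g = yx^{-1}$ sends the face induced by~$x$ to the one induced by~$y$.

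For part~(2), I would use the $G$-action from~(1) to translate by~$x^{-1}$ and reduce to the case $\sigma = \{H_i : i \in S\}$. Unpacking the definition, the $(d-j-1)$-faces containing~$\sigma$ are exactly those of the form $\{zH_k : k \notin S\}$ for $z \in G$ satisfying $zH_i = H_i$ for every $i \in S$; this condition is equivalent to $z \in H_S$. The link $X_\sigma$ therefore has its color-$k$ vertex set equal to $\{zH_k : z \in H_S\}$, and two elements $z, z' \in H_S$ yield the same vertex at color~$k$ iff $z^{-1}z' \in H_k$; since $z^{-1}z' \in H_S$, this is equivalent to $z^{-1}z' \in H_S \cap H_k$. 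This presents $X_\sigma$ precisely as the coset complex $\CoCo(H_S; \{H_S \cap H_k : k \notin S\})$.

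Part~(3) is the main content. For the forward direction, given any factorization $g = h_1 h_2 \cdots h_r$ with $h_k \in H_{i_k}$, I would build a walk from $H_0$ to~$gH_0$ by traversing the $d$-faces induced by successive prefixes: move from $H_0$ to $H_{i_1}$ through the face induced by~$\Id$, then observe $H_{i_1} = h_1 H_{i_1}$ and move to $h_1 H_{i_2}$ through the face induced by~$h_1$, and so on, inductively reaching $h_1 \cdots h_{r-1} H_{i_r} = g H_{i_r}$, and finally crossing to~$gH_0$. Conversely, reading off coset representatives of successive vertices along any path yields such a factorization. The quantitative statement comes from matching the number of edges traversed with the number of factors, using the transitive $G$-action from~(1) to normalize to a basepoint and using $\Id \in H_i$ to adjust endpoint colors freely.

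The only real subtlety is bookkeeping around the partite structure: consecutive vertices along a path must have distinct colors, so consecutive factors in the induced factorization need to lie in distinct~$H_i$'s. This is handled by combining any two adjacent same-color factors into a single factor of that subgroup, which can only shorten the factorization. No essential obstacle arises; the whole proposition is standard in the coset-complex literature (cf.~\cite{Gar79}), and I expect the write-up to be purely verification work.
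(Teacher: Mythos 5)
The paper states \Cref{prop:cc} without proof and cites~\cite{Gar79} for these facts, so there is no in-paper argument to compare against directly; what follows is an assessment of your sketch on its own terms.

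Parts~(1) and~(2) are correct and routine. In~(2), once you translate $\sigma$ to $\{H_i : i \in S\}$ via the transitive action, your identification of the color-$k$ vertices of $X_\sigma$ with $H_S / (H_S \cap H_k)$ is exactly right, and since both $X_\sigma$ and $\CoCo(H_S; \{H_S \cap H_k : k \notin S\})$ are generated by their top-dimensional faces, the matching of top faces extends to a simplicial isomorphism.

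Part~(3) is correct in substance, and the walk~$\leftrightarrow$~factorization mechanism you describe is the same one the paper later implements in proving \Cref{cor:dehn} (there, to extract a trivial word from a closed walk). One caveat: the quantitative ``iff'' as stated --- length-$\leq r$ factorizations iff diameter $\leq r$ --- is off by one, and your remark about merging consecutive same-color factors does not close the gap. The walk you build from $g = h_1\cdots h_r$ has length up to $r+1$ (an extra hop at each end to enter and leave the endpoint color classes), and conversely a path of length $\ell$ between two color-$0$ vertices $H_0$ and $zH_0$, with edge-witnesses $g_0, \dots, g_{\ell-1}$, yields the factorization $z = g_0\,(g_0^{-1}g_1)\cdots(g_{\ell-2}^{-1}g_{\ell-1})\,(g_{\ell-1}^{-1}z)$ with $\ell + 1$ factors. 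A minimal test case: $G = \BZ/2\BZ$ with $H_0 = \{0\}$ and $H_1 = G$ gives a path on three vertices with diameter~$2$, although every element of~$G$ is a single generator. The honest statement is agreement within an additive constant, not an exact equivalence. This imprecision is already present in the proposition as written and is harmless for the paper's purposes (\Cref{prop:diameter} only needs bounded generation to imply a diameter bound depending only on~$d$), but it is worth noting that the literal iff does not hold.
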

The last property above gives a connection between the group theory and $0$-cohomology, since  connectivity is equivalent to vanishing $0$-cohomology.

Much more nontrivially, there is a connection between the group theory and $1$-cohomology.
Specifically, it is known that if a complex is ``simply connected'', then its $1$-cohomology (over~$\BZ$ and hence over~$\BF_2$) vanishes.
We will not recall the definition of ``simply connected'' here, although it is similar in spirit to having finite $1$-cone radius.
Instead, we will just state the group-theoretic characterization that it turns out to be equivalent to, in coset complexes.
This was apparently first proven by Lann\'{e}r~\cite{Lan50}; see also~\cite{Bun52,Gar79}.
\begin{theorem} \label{thm:lanner}
    (Cf.~$(\diamond)$ from \Cref{sec:diamond}.)
    $\CoCo(G; \CH)$ is simply connected iff $G$ has as a presentation \mbox{$\langle y  \in \bigcup \CH \mid r \in R(H_0) \cup \cdots \cup R(H_d) \rangle$}, where $R(H_i)$ is all the ``in-subgroup relations'' for~$H_i$ (i.e., all true equations $ab = c$ for $a,b,c \in H_i$).
\end{theorem}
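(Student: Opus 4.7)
The plan is to establish this characterization by constructing a ``universal'' candidate group and analyzing a natural covering between coset complexes. Concretely, I would form $\widetilde{G} := \langle y \in \bigcup\CH \mid r \in R(H_0)\cup\cdots\cup R(H_d) \rangle$; since every in-subgroup relation already holds in $G$, there is a natural homomorphism $\phi:\widetilde{G}\to G$ sending each generator to itself. By \Cref{prop:cc}(3), $\phi$ is surjective once $\CoCo(G;\CH)$ is connected (which is subsumed by simple connectedness on one side, and follows from the presentation on the other side). The theorem then reduces to showing that $\phi$ is injective if and only if $\CoCo(G;\CH)$ is simply connected.

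For the $(\Rightarrow)$ direction, I would first let $\widetilde{H}_i\subseteq\widetilde{G}$ be the subgroup generated by the symbols corresponding to $H_i$; since $R(H_i)$ is imposed in the presentation, one checks $\widetilde{H}_i\xrightarrow{\sim}H_i$ via $\phi$. I then consider the larger coset complex $\widetilde{X}:=\CoCo(\widetilde{G};\{\widetilde{H}_i\})$ and the natural simplicial map $\Phi:\widetilde{X}\to X:=\CoCo(G;\CH)$ induced by $\phi$. The key step is to verify that $\Phi$ is a \emph{covering} of simplicial complexes: the link of $x\widetilde{H}_i$ in $\widetilde{X}$ is $\CoCo(\widetilde{H}_i;\{\widetilde{H}_i\cap\widetilde{H}_j\}_{j\neq i})$ by \Cref{prop:cc}(2), and $\phi$ sends this isomorphically to $\CoCo(H_i;\{H_i\cap H_j\}_{j\neq i})$ because $\widetilde{H}_i\xrightarrow{\sim}H_i$ takes intersections to intersections. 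Since $X$ is simply connected and $\widetilde{X}$ is connected (by the same generating argument as for $X$), $\Phi$ must be an isomorphism, which in turn forces $\phi$ to be injective.

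For the $(\Leftarrow)$ direction, assume $\phi$ is an isomorphism and let $\gamma$ be a combinatorial loop in the $1$-skeleton of $X$. Lifting $\gamma$ through consecutive cosets yields a word $w$ in the generators $\bigcup\CH$ that evaluates to $\Id$ in $G$. The presentation hypothesis furnishes a derivation of $w=\Id$ using only in-subgroup relations. I would translate each derivation step --- replacing a subword $y_1\cdots y_k$ that lies in a common $H_i$ with another word for the same element --- into a homotopy across $2$-cells in $X$. The precise correspondence is that such a replacement happens entirely inside the star of a single vertex $xH_i$, and the substitution corresponds to traversing the boundary of a disk filled by triangles in that star (again exploiting that links in coset complexes are coset complexes, by \Cref{prop:cc}(2)). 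Composing these local homotopies contracts $\gamma$ to a point, hence $X$ is simply connected.

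The main obstacle is formalizing this last correspondence: one must carefully verify that every in-subgroup derivation step induces a genuine $\BF_2$/$\BZ$-null-homotopy in the simplicial sense (and not merely a word-level cancellation), and conversely that every combinatorial loop decomposes into such steps. This is essentially a van Kampen diagram argument tailored to coset complexes, and while the geometric picture is clean, bookkeeping through the full simplicial structure and edge-orientations is intricate --- which is why I would ultimately appeal to the classical treatments in \cite{Lan50,Bun52,Gar79} (and the survey in \cite{AH93}) to handle the details rigorously rather than rederive them here.
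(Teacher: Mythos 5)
The paper does not give a proof of this theorem --- it is stated with citations to Lann\'{e}r, Bunt, Garland, and Abels--Holz, so there is no ``paper proof'' to compare against. Your two-sided sketch is in fact the classical argument from precisely those sources: the $(\Rightarrow)$ direction is the covering-space / amalgam argument (form the ``universal'' group $\widetilde{G}$ with the presentation, build the coset complex $\widetilde{X}$ over it, check that the canonical map $\Phi\colon\widetilde{X}\to X$ is a simplicial covering by matching vertex links, then use that a connected cover of a simply connected complex is an isomorphism), and the $(\Leftarrow)$ direction is the van~Kampen-diagram argument (translate an in-subgroup derivation of a trivial loop word into a null-homotopy inside vertex stars). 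So your route is essentially the same as the one the paper defers to.

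One small elision worth flagging: from ``$\Phi$ is a simplicial isomorphism'' you conclude ``$\phi$ is injective,'' but this passage uses that the $d$-faces of $\widetilde{X}$ biject with $\widetilde{G}/\bigcap_i\widetilde{H}_i$ and those of $X$ biject with $G/\bigcap_i H_i = G$. So you need $\bigcap_i\widetilde{H}_i = \{\Id\}$. This does follow from what you have already established --- if $\widetilde{g}\in\bigcap_i\widetilde{H}_i$ then $\phi(\widetilde{g})\in\bigcap_i H_i=\{\Id\}$, and $\phi|_{\widetilde{H}_0}$ is injective --- but it is a step that should appear explicitly, since the paper's standing assumption $\bigcap_i H_i=\{\Id\}$ is being used here. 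Likewise, the claim that $\widetilde{H}_i\cap\widetilde{H}_j$ maps \emph{onto} $H_i\cap H_j$ deserves a sentence: it holds because the theorem's presentation uses a single generator per element of $\bigcup\CH$ (so an element of $H_i\cap H_j$ lifts to a single generator lying in both $\widetilde{H}_i$ and $\widetilde{H}_j$), and then injectivity of $\phi|_{\widetilde{H}_i}$ pins down the preimage. With those two points spelled out, the sketch is a faithful reconstruction of the classical proof.
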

A quantitative version of this theorem, which bounds the $1$-cone radius of $\CoCo(G;\CH)$ in terms of the efficiency of the presentation, was shown by Kaufman and Oppenheim~\cite[Thm.~6.17]{KO21}. 
As discussed in \Cref{sec:diamond}, this will not be enough for us; we will to develop of further generalization of the quantitative implication, which we do in \Cref{sec:dehn}.

\subsection{Root systems, Chevalley groups, and unipotent subgroups}

\newcommand{\VecMatStyle}[1]{\mathbf{#1}}
\newcommand{\IndPlain}[1]{\VecMatStyle{e}_{#1}}
\newcommand{\IndVec}[2]{\VecMatStyle{e}^{(#1)}_{#2}}
\newcommand{\IndMat}[3]{\VecMatStyle{E}^{(#1)}_{#2,#3}}
\newcommand{\nnspan}[1]{\mathrm{span}_{\mathbb{N}}(#1)}
\newcommand{\posspan}[1]{\mathrm{span}_{\mathbb{N}^+}(#1)}

\subsubsection{Root systems}\label{sec:prelim:root-systems}
An (irreducible) \emph{root system} 
of rank $d$\footnote{We will henceforth only consider irreducible root systems of rank at least~$2$.}
is a finite set of vectors $\Phi$ lying in a $d$-dimensional real vector space satisfying certain symmetry conditions. (We will not recap them here; see, e.g.~\cite{Ozo07}.)

They are completely classified into four infinite families, $(A_d)_{d \geq 1}$, $(B_d)_{d \geq 2}$, $(C_d)_{d \geq 3}$, $(D_d)_{d \geq 4}$, plus five others ($G_2$, $F_4$, $E_6$, $E_7$, $E_8$).
In turn, root systems are used to help classify finite groups of reflections in~$\BR^d$.
An important concept for root systems is that of a base:
\begin{definition}
    A \emph{base} for rank-$d$ root system $\Phi$ is a subset $\Pi$ with $|\Pi| = d$ such that $\nnspan{\Pi}$ contains one of $\Rt{\pm \zeta}$ for every $\Rt{\zeta} \in \Phi$.
    Here, $\nnspan{P}$ (respectively, $\posspan{P}$) denotes all nonnegative (respectively, positive) linear combinations of vectors in~$P$.
    Each root system has a ``unique'' base, up isometry.
\end{definition}
\begin{definition}
    Suppose $P \subseteq \Phi$ and $\Rt{\zeta} \in \nnspan{P}$; say $\Rt{\zeta} = \sum_{\Rt{\eta} \in P} c_{\Rt{\eta}} \Rt{\eta}$.
    Then we write $\mathrm{ht}_P(\Rt{\zeta}) = \sum_{\Rt{\eta} \in P} c_{\Rt{\eta}}$ for the \emph{height} of $\Rt{\zeta}$ with respect to~$P$.
\end{definition}

The root systems relevant for our work are the rank-$3$ ones, namely $A_3$, $B_3$, and $C_3$.  In fact, since we are leaving $C_3$ for later work, we only define here $A_3$ and~$B_3$.

\paragraph{Type-A.}  The root system $A_d$ consists of all vectors in $\BR^{d+1}$ of the form $\IndPlain{i} - \IndPlain{j}$, $i \neq j \in [d]$, where $\IndPlain{i}$ denotes the $i$-th standard basis vector.
(This is rank~$d$, since all vectors are orthogonal to~$(1, 1, \dots, 1)$.)
A canonical base for $A_3$, depicted in the below \Cref{fig:root-system:a3}, is 
\begin{equation} \label{eqn:PiA3}
    \Pi_{A_3} = \{\Rt{\alpha}, \Rt{\beta}, \Rt{\gamma}\}, \text{where } \Rt{\alpha} = (1,-1,0,0),\ 
    \Rt{\beta} = (0,1,-1,0),\ 
    \Rt{\gamma} = (0,0,1,-1),
\end{equation}
and we will also name the root vector
\begin{equation}
    \Rt{\delta} = -(\Rt{\alpha} + \Rt{\beta} + \Rt{\gamma}) = (-1, 0, 0, 1).
\end{equation}
In this root system we have the simple feature that if $\Rt{\zeta},\Rt{\eta} \in A_d$ are linearly independent, then $\posspan{\Rt{\zeta},\Rt{\eta}} \cap A_d$ is either empty or $\{\Rt{\zeta}+\Rt{\eta}\}$.

\usetikzlibrary{3d}

\begin{figure}
\tdplotsetmaincoords{5}{170}
\begin{subfigure}[b]{0.45\textwidth}
\centering
    \begin{tikzpicture}[->,scale=2,tdplot_main_coords]
\draw[red]  (0,0,0) -- (1.4142135623731,0,0) node[black,pos=1.1] {$\Rt{\alpha}$}; 
\draw[red]  (0,0,0) -- (-0.707106781186548,1.22474487139159,0) node[black,pos=1.1] {$\Rt{\beta}$}; 
\draw[red]  (0,0,0) -- (0,-0.816496580927726,1.15470053837925) node[black,pos=1.1] {$\Rt{\gamma}$}; 
\draw[red]  (0,0,0) -- (0.707106781186548,1.22474487139159,0); 
\draw[red]  (0,0,0) -- (-0.707106781186548,0.408248290463863,1.15470053837925); 
\draw[red]  (0,0,0) -- (0.707106781186548,0.408248290463863,1.15470053837925); 
\draw[red]  (0,0,0) -- (-1.4142135623731,0,0); 
\draw[red]  (0,0,0) -- (0.707106781186548,-1.22474487139159,0); 
\draw[red]  (0,0,0) -- (0,0.816496580927726,-1.15470053837925); 
\draw[red]  (0,0,0) -- (-0.707106781186548,-1.22474487139159,0); 
\draw[red]  (0,0,0) -- (0.707106781186548,-0.408248290463863,-1.15470053837925); 
\draw[red]  (0,0,0) -- (-0.707106781186548,-0.408248290463863,-1.15470053837925)node[black,pos=1.1] {$\Rt{\delta}$}; 
    \end{tikzpicture}
    \caption{The root system $A_3 \subseteq \BR^4$ under an orthogonal projection into $\BR^3$. We also label the  our canonical base roots $\Rt{\alpha}$, $\Rt{\beta}$, $\Rt{\gamma}$, and their negative sum $\Rt{\delta} = -(\Rt{\alpha}+\Rt{\beta}+\Rt{\gamma})$.}
\end{subfigure}%
\hfill
\begin{subfigure}[b]{0.45\textwidth}
\centering
    \begin{tikzpicture}[->,scale=2,tdplot_main_coords]
\draw[red,very thick]  (0,0,0) -- (1.4142135623731,0,0) node[black,pos=1.1] {$\Rt{\alpha}$}; 
\draw[red,very thick]  (0,0,0) -- (-0.707106781186548,1.22474487139159,0) node[black,pos=1.1] {$\Rt{\beta}$}; 
\draw[red,very thick]  (0,0,0) -- (0,-0.816496580927726,1.15470053837925) node[black,pos=1.1] {$\Rt{\gamma}$}; 
\draw[red]  (0,0,0) -- (0.707106781186548,1.22474487139159,0) node[black,pos=1.1] {$\Rt{\alpha+\beta}$}; 
\draw[red]  (0,0,0) -- (-0.707106781186548,0.408248290463863,1.15470053837925) node[black,pos=1.3] {$\Rt{\beta+\gamma}$}; 
\draw[red]  (0,0,0) -- (0.707106781186548,0.408248290463863,1.15470053837925) node[black,pos=1.3] {$\Rt{\alpha+\beta+\gamma}$}; 
\draw[gray,dashed]  (0,0,0) -- (-1.4142135623731,0,0); 
\draw[gray,dashed]  (0,0,0) -- (0.707106781186548,-1.22474487139159,0); 
\draw[gray,dashed]  (0,0,0) -- (0,0.816496580927726,-1.15470053837925); 
\draw[gray,dashed]  (0,0,0) -- (-0.707106781186548,-1.22474487139159,0); 
\draw[gray,dashed]  (0,0,0) -- (0.707106781186548,-0.408248290463863,-1.15470053837925); 
\draw[gray,dashed]  (0,0,0) -- (-0.707106781186548,-0.408248290463863,-1.15470053837925); 
    \end{tikzpicture}
    \caption{The ``link of $\Rt{\delta}$'' in $A_3$: $\Rt{\alpha}$, $\Rt{\beta}$, and $\Rt{\gamma}$ are drawn as solid lines; all roots which are nonnegative integer combinations of these are drawn as colored thin lines; and the remaining roots are dashed and gray.}
\end{subfigure}%
\caption{The root system $A_3$ and a specific ``link'' within it.}\label{fig:root-system:a3}
\end{figure}
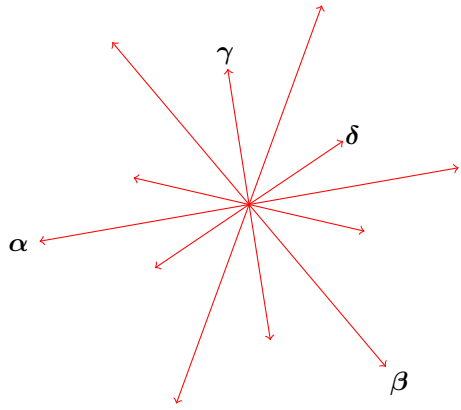
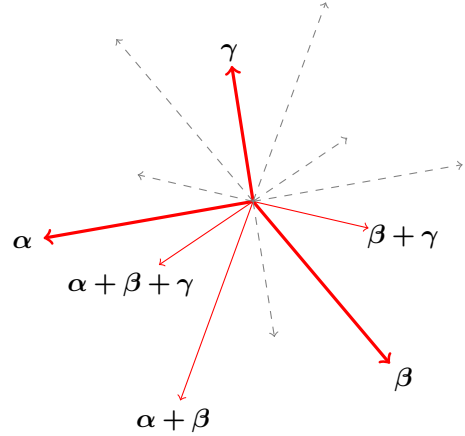

\paragraph{Type-B.}  The root system $B_d$ consists of all integer vectors in $\BR^{d}$ of length~$1$ (``short roots'') or~$\sqrt{2}$ (``long roots'').
A canonical base for $B_3$, depicted in the below \Cref{fig:root-system:b3}, is 
\begin{equation}    \label{eqn:PiB3}
    \Pi_{B_3} = \{\Rt{\alpha}, \Rt{\beta}, \Rt{\psi}\}, \text{where } \Rt{\alpha} = (1,-1,0),\ 
    \Rt{\beta} = (0,1,-1),\ 
    \Rt{\psi} = (0,0,1).
\end{equation}
(Note also our slight abuse of notation: $\Rt{\alpha}$, $\Rt{\beta}$ denote formally different vectors in $A_3$ vs.\ $B_3$.)
We will also name the root vector
\begin{equation}
    \Rt{\omega} = -(\Rt{\alpha} + \Rt{\beta} + \Rt{\psi}) = (-1, 0, 0).
\end{equation}
Note that $\Rt{\alpha}$ and $\Rt{\beta}$ are ``long'' and $\Rt{\psi}$ and $\Rt{\omega}$ are ``short''.

\begin{figure}
\tdplotsetmaincoords{70}{125}
\begin{subfigure}[b]{0.45\textwidth}
\centering
    \begin{tikzpicture}[->,scale=2,tdplot_main_coords]
        \draw[red]  (0,0,0) -- (+1,+1,0 );
        \draw[red]  (0,0,0) -- (+1,-1,0 ) node[black,pos=1.1] {$\Rt{\alpha}$};
        \draw[red]  (0,0,0) -- (-1,+1,0 );
        \draw[red]  (0,0,0) -- (-1,-1,0 );
        \draw[red]  (0,0,0) -- (0 ,+1,+1);
        \draw[red]  (0,0,0) -- (0 ,+1,-1) node[black,pos=1.1] {$\Rt{\beta}$};
        \draw[red]  (0,0,0) -- (0 ,-1,+1);
        \draw[red]  (0,0,0) -- (0, -1,-1);
        \draw[red]  (0,0,0) -- (+1,0, +1);
        \draw[red]  (0,0,0) -- (+1,0, -1);
        \draw[red]  (0,0,0) -- (-1,0, +1);
        \draw[red]  (0,0,0) -- (-1,0, -1);
        \draw[blue] (0,0,0) -- (+1,0, 0 );
        \draw[blue] (0,0,0) -- (-1,0, 0 ) node[black,pos=1.15] {$\Rt{\omega}$};
        \draw[blue] (0,0,0) -- (0 ,+1,0 );
        \draw[blue] (0,0,0) -- (0, -1,0 );
        \draw[blue] (0,0,0) -- (0, 0, +1) node[black,pos=1.15] {$\Rt{\psi}$};
        \draw[blue] (0,0,0) -- (0, 0, -1);
    \end{tikzpicture}
    \caption{The root system $B_3 \subseteq \BR^3$. We distinguish ``long'' and ``short'' roots as blue and red, respectively. We also label our canonical base roots $\Rt{\alpha}$, $\Rt{\beta}$, $\Rt{\psi}$, and their negative sum $\Rt{\omega} = -(\Rt{\alpha}+\Rt{\beta}+\Rt{\psi})$.}
\end{subfigure}%
\hfill
\begin{subfigure}[b]{0.45\textwidth}
\centering
    \begin{tikzpicture}[->,scale=2,tdplot_main_coords]
        \draw[red]  (0,0,0) -- (+1,+1,0 ) node[black,pos=1.2] {$\Rt{\alpha+2\beta+2\psi}$};
        \draw[red,very thick]  (0,0,0) -- (+1,-1,0 ) node[black,pos=1.1] {$\Rt{\alpha}$};
        \draw[gray,dashed]  (0,0,0) -- (-1,+1,0 );
        \draw[gray,dashed]  (0,0,0) -- (-1,-1,0 );
        \draw[red]  (0,0,0) -- (0 ,+1,+1) node[black,pos=1.15] {$\Rt{\beta+2\psi}$};
        \draw[red,very thick]  (0,0,0) -- (0 ,+1,-1) node[black,pos=1.1] {$\Rt{\beta}$};
        \draw[gray,dashed]  (0,0,0) -- (0 ,-1,+1);
        \draw[gray,dashed]  (0,0,0) -- (0, -1,-1);
        \draw[red]  (0,0,0) -- (+1,0, +1) node[black,pos=1.1] {$\Rt{\alpha+\beta+2\psi}$};
        \draw[red]  (0,0,0) -- (+1,0, -1) node[black,pos=1.1] {$\Rt{\alpha+\beta}$};
        \draw[gray,dashed]  (0,0,0) -- (-1,0, +1);
        \draw[gray,dashed]  (0,0,0) -- (-1,0, -1);
        \draw[blue] (0,0,0) -- (+1,0, 0 ) node[black,pos=1.3] {$\Rt{\alpha+\beta+\psi}$};
        \draw[gray,dashed] (0,0,0) -- (-1,0, 0 );
        \draw[blue] (0,0,0) -- (0 ,+1,0 ) node[black,pos=1.3] {$\Rt{\beta+\psi}$};
        \draw[gray,dashed] (0,0,0) -- (0, -1,0 );
        \draw[blue,very thick] (0,0,0) -- (0, 0, +1) node[black,pos=1.15] {$\Rt{\psi}$};
        \draw[gray,dashed] (0,0,0) -- (0, 0, -1);
    \end{tikzpicture}
    \caption{The ``link of $\Rt{\omega}$'' in $B_3$: $\Rt{\alpha}$ (long), $\Rt{\beta}$ (long), and $\Rt{\psi}$ (short) are drawn as solid lines; all roots which are nonnegative integer combinations of these are drawn as colored thin lines; and the remaining roots are dashed and gray.}
\end{subfigure}%
\hfill
\begin{subfigure}[b]{0.45\textwidth}
\centering
    \begin{tikzpicture}[->,scale=2,tdplot_main_coords]
        \draw[gray,dashed]  (0,0,0) -- (+1,+1,0 );
        \draw[gray,dashed]  (0,0,0) -- (+1,-1,0 );
        \draw[red]  (0,0,0) -- (-1,+1,0 ) node[black,pos=1.35] {$\Rt{\beta+\psi+\omega}$};
        \draw[gray,dashed]  (0,0,0) -- (-1,-1,0 );
        \draw[red]  (0,0,0) -- (0 ,+1,+1) node[black,pos=1.15] {$\Rt{\beta+2\psi}$};
        \draw[red,very thick]  (0,0,0) -- (0 ,+1,-1) node[black,pos=1.1] {$\Rt{\beta}$};
        \draw[gray,dashed]  (0,0,0) -- (0 ,-1,+1);
        \draw[gray,dashed]  (0,0,0) -- (0, -1,-1);
        \draw[gray,dashed]  (0,0,0) -- (+1,0, +1);
        \draw[gray,dashed]  (0,0,0) -- (+1,0, -1);
        \draw[red]  (0,0,0) -- (-1,0, +1) node[black,pos=1.1] {$\Rt{\psi+\omega}$};
        \draw[gray,dashed]  (0,0,0) -- (-1,0, -1);
        \draw[gray,dashed] (0,0,0) -- (+1,0, 0 );
        \draw[blue,very thick] (0,0,0) -- (-1,0, 0 ) node[black,pos=1.15] {$\Rt{\omega}$};
        \draw[blue] (0,0,0) -- (0 ,+1,0 ) node[black,pos=1.3] {$\Rt{\beta+\psi}$};
        \draw[gray,dashed] (0,0,0) -- (0, -1,0 );
        \draw[blue,very thick] (0,0,0) -- (0, 0, +1) node[black,pos=1.15] {$\Rt{\psi}$};
        \draw[gray,dashed] (0,0,0) -- (0, 0, -1);
    \end{tikzpicture}
    \caption{The ``link of $\Rt{\alpha}$'' in $B_3$: $\Rt{\beta}$ (long), $\Rt{\psi}$ (short), and $\Rt{\omega}$ (short) are drawn as thick lines; all roots which are nonnegative integer combinations of these are drawn as colored thin lines; and the remaining roots are dashed and gray.}
\end{subfigure}%
\hfill
\caption{The root system $B_3$ and two specific ``links'' within it.}\label{fig:root-system:b3}
\end{figure}
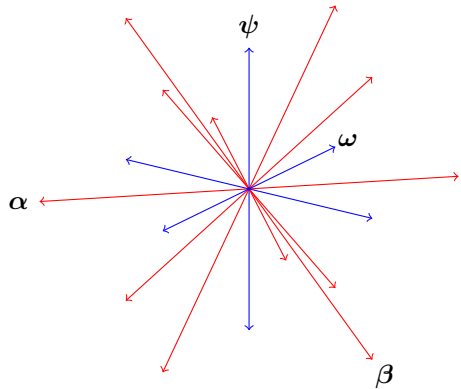
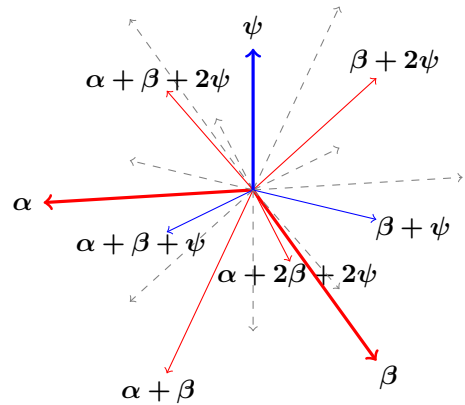
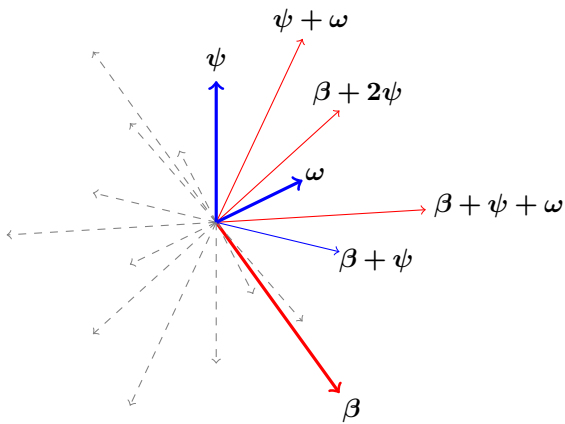

Understanding $\posspan{\Rt{\zeta},\Rt{\eta}} \cap B_d$ where $\Rt{\zeta}, \Rt{\eta} \in B_d$ are linearly independent roots is more complex than it was in the $A_d$ case. Indeed, if $\Rt{\zeta}$ and $\Rt{\eta}$ are independent short roots, then $\posspan{\Rt{\zeta},\Rt{\eta}} \cap B_d = \{\Rt{\zeta}+\Rt{\eta}\}$, a long root. If $\Rt{\zeta}$ and $\Rt{\eta}$ are long, then $\posspan{\Rt{\zeta},\Rt{\eta}} \cap B_d$ is empty or $\{\Rt{\zeta}+\Rt{\eta}\}$, a long root. Finally, if $\Rt{\zeta}$ is long and $\Rt{\eta}$ is short, then $\posspan{\Rt{\zeta},\Rt{\eta}} \cap B_d$ is empty or $\{\Rt{\zeta}+\Rt{\eta},\Rt{\zeta}+2\Rt{\eta}\}$, respectively long and short roots. This makes understanding commutator relations in the corresponding Chevalley group more complex, as we will now see.

\subsubsection{Chevalley groups and unipotent subgroups}
Each root system can be combined with (almost) any finite field~$\BF$ to produce a finite simple (or nearly-simple) group called a \emph{(universal) Chevalley group}.
We define these groups abstractly via their \emph{Steinberg presentation}:

\begin{definition}[Steinberg presentation of a Chevalley group]\label{def:prelim:steinberg}
    Let $\Phi$ be a root system and $\BF$ a finite field. The corresponding (universal) Chevalley group $G_\Phi(\BF)$ is generated by elements/symbols\footnote{These elements are traditionally written $x_{\Rt{\zeta}}(t)$, but we will find our subscript-free notation more readable.} of the form $\El{\zeta}{t}$ for $\Rt{\zeta} \in \Phi$ and $t \in \BF$. We sometimes call $\Rt{\zeta}$ the ``type'' of the element and~$t$ its ``entry''.
    The elements are
    subject to the following three families of relations:
\begin{itemize}
    \item ``Linearity'': For every $\Rt{\zeta} \in \Phi$ and $t,u \in \BF$, 
    \begin{equation}    \label{eqn:linrels}
        \El{\zeta}{t} \El{\zeta}{u} = \El{\zeta}{t+u}.        
    \end{equation}
    (We remark that from that these relations imply that $\El{\zeta}{0} = \Id$ and $\El{\zeta}{t}^{-1} = \El{\zeta}{-t}$, cf. \Cref{prop:prelim:group-homo}.)
    \item ``Commutator'': For every $\Rt{\zeta} \neq -\Rt{\eta} \in \Phi$ and $t,u \in \BF$, 
    \begin{equation}    \label{eqn:conrels}
        \Comm{\El{\zeta}{t}}{\El{\eta}{u}} = \prod_{\substack{a\Rt{\zeta} + b \Rt{\eta} \in \Phi \\ a,b \in \BN^+}} \El{a\Rt{\zeta} + b \Rt{\eta}}{C^{\Rt{\zeta},\Rt{\eta}}_{a,b} \cdot t^au^b},
    \end{equation} 
    where the product is ordered according to a fixed, global total order on~$\Phi$, and where   $C^{\Rt{\zeta},\Rt{\eta}}_{a,b} \in \{\pm1,\pm2,\pm3\}$ are certain so-called \emph{Chevalley constants} whose values (depending only on the root ordering, not on $t,u$) we will discuss below.
    
    \item ``Diagonal'': For every $\Rt{\zeta} \in \Phi$ and $t,u \neq 0\in \BF$, $h_{\Rt{\zeta}}(t) h_{\Rt{\zeta}}(u) = h_{\Rt{\zeta}}(tu)$, where $h_{\Rt{\zeta}}(t) := g_{\Rt{\zeta}}(t) g_{\Rt{\zeta}}(-1)$ and $g_{\Rt{\zeta}}(t) := \El{\zeta}{t} \El{-\zeta}{-t^{-1}} \El{\zeta}{t}$.
\end{itemize}
    In words: the linearity relations state that multiplying elements of the same type adds their entries; the commutator relations say that commuting $\Rt{\zeta}$- and $\Rt{\eta}$-type elements produces a product of $\Rt{\xi}$-type elements over all $\Rt{\xi}$ in $\posspan{\{\Rt{\zeta},\Rt{\eta}\}} \cap \Phi$.
\end{definition}
\begin{remark}
    We give short shrift to the ``diagonal'' relations, as they will not be very relevant for our work. This is because we mainly study ``unipotent'' subgroups of $G_\Phi(\BF)$, which never simultaneously contain elements of both types~$\Rt{\pm \zeta}$. 

    Note also that the ``linearity'' and ``commutator'' relations would also make sense if the entries came from a commutative ring (without division), rather than a field.
\end{remark}
\begin{example}
    The group $G_{A_d}(\BF)$ is isomorphic to $\SL_d(\BF)$.
    For $1 \leq i\neq j \leq d$, this isomorphism maps $\El{\IndPlain{i} - \IndPlain{j}}{t}$ to the $(d+1)\times(d+1)$ elementary matrix with entry~$t$ in the $(i,j)$ position.
    From this fact, one can infer the Chevalley constants (which are in $\{\pm 1\}$).
    For more details, see \Cref{app:Ad}.
\end{example}
\begin{example}
    The group $G_{B_d}(\BF)$ is isomorphic to $\Omega_{2d+1}(\BF)$, the commutator subgroup of the orthogonal group~$\mathsf{O}_{2d+1}(\BF)$.
    For more details of the concrete realization of $G_{B_3}(\BF)$ as matrices in $\BF^{7 \times 7}$ (including Chevalley constants), see \Cref{app:Bd}.
\end{example}

\section{Chevalley coset complexes and their properties}
\subsection{Unipotent subgroups} \label{sec:unip}
Before defining the Chevalley coset complexes, it will be helpful to define and study a generalization of the unipotent subgroups of Chevalley groups, where we restrict what entries are allowed for the generators.

\subsubsection{Three presentations of the unipotent subgroups}

Throughout this section, $\Phi$ denotes a root system. For a set $I \subset \Phi$, $I^+ \subset \Phi$ denotes the set of roots in $\Phi$ which are nonnegative integer combinations of roots in $I$.

\begin{definition}  \label{def:unip}
    Let $I \subset \Phi$ be linearly independent, let $\BF$ be a finite field, and let $L \subseteq \BF$.
    We define the corresponding \emph{unipotent subgroup} $U_I(L)$ to be the subgroup of $G_\Phi(\BF)$ generated by all elements $\El{\zeta}{t}$ where $\Rt{\zeta} \in I$ and $t \in L$.
    (This notation $U_I(L)$ does not show dependence on $\Phi$ or $\BF$, but these will always be clear from context.)
\end{definition}
\begin{remark}
    In the preceding definition, 
    when $I$ is a base for~$\Phi$ and $L = \BF$, the resulting $U_I(L)$ is usually termed \emph{the} unipotent subgroup of $G_{\Phi}(\BF)$.  
    (Different bases lead to isomorphic subgroups.)
    As an example, \emph{the} unipotent subgroup of $G_{A_d}(\BF)$, namely $U_I(\BF)$ with $I = \{\IndPlain{i} - \IndPlain{i+1} : 1 \leq i < d\}$,  is isomorphic to the group of upper-triangular matrices in $\BF^{(d+1)\times(d+1)}$ with $1$'s on the diagonal.
\end{remark}
We will be particular interested in unipotent subgroups of the form $U_I(\BF_q[x]_{\leq 1})$; these were first studied by~\cite{KO18} in the $\Phi = A_d$ case, and by~\cite{OP22} in the general~$\Phi$ case.  
Here we are using the following notation:
\begin{notation}
    Let $\BF_q$ be a finite field, and $\BF_{q^m} \cong \BF_q[x]/(p(x))$ an extension of degree~$d$.  
    We write $\BF_q[x]^{(m)}_{\leq h}$ for the subset of $\BF_{q^m}$ given by polynomials in~$x$ over~$\BF_q$ of degree at most~$h$.
    We will usually drop the superscript~$(m)$ in this notation, as in a sense the set does not depend on~$m$ once $m > h$.
    (On the other hand, when $m \leq h+1$, this set is all of~$\BF_{q^m}$.)
\end{notation}

Concerning these unipotent subgroups, we have the very important structure theorem below (which follows from \cite[Prop.~3.14]{OP22}, a key component of which is \cite[Lem.~17]{Ste16}), which we slightly strengthen to give quantitative details on \emph{how} the presentation for~$H$ derives words:

\begin{theorem}[Polynomial presentation] \label{thm:structure}
    Let $q$ be an odd prime power and let $\BF_{q^m} \cong \BF_q[x]/(p(x))$, where $p(x)$ is irreducible over~$\BF_q$ of degree~$d$.  
    Let $I \subset \Phi \neq G_2$ be linearly independent. \begin{itemize}
        \item \emph{Generation:} $U_I(\BF_q[x]_{\leq 1}^{(m)})$ is generated by the symbols $\El{\zeta}{f}$ where $f \in \BF_q[x]^{(m)}_{\leq \height_I(\zeta)}$.
        \item \emph{Efficient presentation:} There is a function $\kappa : \BN \to \BN$\footnote{The function $\kappa$ has growth rate $\kappa(r) = O(r \log r)$, though we won't need this.}
        depending \emph{only} on $I, \Phi$ --- and not on~$q$ or~$m$ --- such that every trivial word of length at most~$r$ in the symbols $\El{\zeta}{f}$ ($\Rt{\zeta} \in I^+$, $f \in \BF_q[x]_{\leq 1}^{(m)}$) can be reduced to~$\Id$ by the use of at most $\kappa(r)$ applications of the linearity relations \Cref{eqn:linrels} and the commutator relations \Cref{eqn:conrels} (the first batch are commutator relations, the second batch are linearity relations).
    \end{itemize}    
\end{theorem}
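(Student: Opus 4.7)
The plan is to leverage the normal-form analysis already contained in \cite[Prop.~3.14]{OP22} and Steinberg's Lemma~17, but to re-examine it algorithmically to extract the quantitative step bound claimed here.

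For the \emph{generation} statement, I would induct on $h = \height_I(\Rt{\zeta})$. The case $h = 1$ is immediate, since $\Rt{\zeta} \in I$ and $\El{\zeta}{f}$ for $f \in \BF_q[x]_{\leq 1}^{(m)}$ is a defining generator. For $h > 1$, I would choose a decomposition $\Rt{\zeta} = \Rt{\eta_1} + \Rt{\eta_2}$ with $\Rt{\eta_1},\Rt{\eta_2} \in I^+$ of strictly smaller height (available since $I$ is a base for its span and $\Phi \neq G_2$). Expanding the commutator via \Cref{eqn:conrels} gives $\Comm{\El{\eta_1}{f_1}}{\El{\eta_2}{f_2}} = \El{\zeta}{C f_1 f_2} \cdot (\text{strictly higher-height factors})$ for some nonzero Chevalley constant $C \in \{\pm 1, \pm 2\}$ (nonzero by the odd-characteristic hypothesis). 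Since every $f \in \BF_q[x]_{\leq h}^{(m)}$ is an $\BF_q$-linear combination of monomials $x^k$ with $k \leq h$, each of which factors as $x^{k_1}\cdot x^{k_2}$ with $k_j \leq \height_I(\Rt{\eta_j})$, an inductive application of this commutator identity plus linearity \Cref{eqn:linrels} produces $\El{\zeta}{f}$ inside $U_I(\BF_q[x]_{\leq 1}^{(m)})$.

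For the \emph{efficient presentation} statement, I would define an explicit reduction algorithm operating in $G_\Phi(\BF_{q^m})$. Given a word $w = g_1 \cdots g_r$ in the allowed symbols $g_i = \El{\zeta_i}{f_i}$, iteratively apply: \textbf{(i)} \emph{merge:} if $\Rt{\zeta_i} = \Rt{\zeta_{i+1}}$, combine $g_i g_{i+1}$ into a single symbol via linearity \Cref{eqn:linrels}; \textbf{(ii)} \emph{swap:} if $\Rt{\zeta_i}$ exceeds $\Rt{\zeta_{i+1}}$ in a fixed total order on $I^+$, exchange the two symbols via the commutator relation \Cref{eqn:conrels}, inserting boundedly many new (strictly higher-height) symbols. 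At termination, the word is in Steinberg normal form $\prod_{\Rt{\zeta} \in I^+} \El{\zeta}{g_\zeta}$ for some $g_\zeta \in \BF_{q^m}$. Uniqueness of the normal form (from \cite[Lem.~17]{Ste16}) forces every $g_\zeta = 0$ when $w$ is trivial, after which a final batch of linearity applications collapses the residual word to $\Id$.

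The main technical obstacle is bounding the total number of relation applications by a function $\kappa(r)$ depending only on $I$ and $\Phi$, not on $q$ or $m$. The worry is that commutator swaps introduce new symbols which themselves participate in further swaps, risking exponential blow-up as one climbs the height lattice within $I^+$. The resolution is a potential-function argument: one defines a weight of the form $\sum_i w(\Rt{\zeta_i}) \cdot p(i)$ where $w$ assigns positive weight decreasing in height and $p$ is a position-dependent factor, and checks that both the merge and swap rules strictly decrease this potential, using crucially that commutator-introduced symbols have strictly greater height than their parents and that heights within $I^+$ are bounded by a root-system constant. This should yield the $O(r \log r)$ bound advertised in the footnote, with the crucial $(q,m)$-independence following because each relation application contributes exactly one to the step count regardless of which specific field elements it operates on -- the entire bookkeeping is combinatorial in the roots, heights, and word positions, with no trace of the underlying coefficient ring.
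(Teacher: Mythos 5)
Your proof takes essentially the same route as the paper's: both reduce a trivial word to the Steinberg normal form $\prod_{\Rt{\zeta} \in I^+} \El{\zeta}{f_{\Rt{\zeta}}}$ by repeatedly applying commutator relations to re-sort adjacent misordered symbols (exploiting that newly created symbols have strictly greater height, bounded by a constant of $\Phi$) and then linearity relations to merge same-type neighbours, finishing with the uniqueness of the normal form. The paper's proof is even terser — it just says ``follows by inspecting [OP22, Prop.~3.14]'' and sketches the sort-then-merge step — so your write-up actually supplies more of the bookkeeping.

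Two cautions, neither fatal to the theorem since it only needs \emph{some} $\kappa$ independent of $q,m$. First, your potential function $\sum_i w(\Rt{\zeta_i})\cdot p(i)$ is underspecified and does not obviously decrease: a commutator swap inserts new symbols (of higher height but nonzero weight) and shifts all subsequent positions, and the new symbols are typically placed on the \emph{wrong} side of lower-height symbols, so they generate fresh inversions that must themselves be bubbled past. The correct termination argument is lexicographic in height: the cascade depth is at most the maximum height $H$ (a constant of $\Phi$), so the process halts with a number of steps polynomial in $r$ with exponent depending on $\Phi$. Second, both you and the paper's footnote assert $\kappa(r)=O(r\log r)$; the straightforward bubble-sort accounting with height-$H$ cascading gives a larger (though still $q,m$-independent) polynomial, and neither side actually establishes $O(r\log r)$ — which is fine, because the paper explicitly says it will not use that growth rate. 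The generation half of your argument (induction on height, using that Chevalley constants for $\Phi\neq G_2$ lie in $\{\pm1,\pm2\}$ and are units in odd characteristic, and that degree-$\le h$ monomials factor across the height decomposition) is correct and matches the standard reading of [OP22].
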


\begin{proof}
    This follows simply by inspecting the proof of \cite[Prop.~3.14]{OP22} (which in turn follows Carter's proof~\cite[Thm.~5.3.3]{Car89} of \cite[Lem.~17]{Ste16}).
    To briefly sketch the proof, given any trivial word~$w$ of length~$r$ over the symbols $\El{\zeta}{f(x)}$, one repeatedly finds consecutive pairs of symbols that are misordered with respect to~$\prec$, and reorders them using the commutator relation. 
    This may increase the length of the word by~$O(1)$, but decreases the number of misorderings by~$1$.  One repeats this ($O(r \log r)$ times) until the word's symbols are ordered.  Then one repeatedly uses linearity relations ($O(r \log r)$ times in total) to merge consecutive symbols of the same type.
    The final result is a word of the form in \Cref{eqn:prodme}.
    But now all the symbols in this word must have entry~$0$, as the word itself equals~$\Id$, and \Cref{thm:structure}'s uniqueness statement implies $\prod_{\Rt{\zeta} \in I^+} \El{\zeta}{0}$ is the unique representation of~$\Id$.
\end{proof}

\begin{remark}
    Fix a total ordering~$\prec$ on the roots in~$I^+$ that is height-respecting; i.e., $\Rt{\zeta} \prec \Rt{\eta} \implies \height_{I}(\Rt{\zeta}) \leq \height_{I}(\Rt{\zeta})$. The elements of $U_I(\BF_q[x]_{\leq 1}^{(m)})$ can be written uniquely as
    \begin{equation}    \label{eqn:prodme}
        \prod_{\Rt{\zeta} \in I^+} \El{\zeta}{f_{\Rt{\zeta}}(x)},
    \end{equation}
    where the product is taken in the $\prec$ ordering, and 
    where $f_{\Rt{\zeta}}(x)$ has degree at most $\height_{I}(\Rt{\zeta})$.  (More precisely, $f_{\Rt{\zeta}}(x) \in 
    \BF_q[x]_{\leq \height_{I}(\Rt{\zeta})}^{(m)}$.)
\end{remark}

\begin{remark}
    Note from the presentation of~$H$ given above that as soon as $m$ exceeds the maximum height any root, the irreducible~$p(x)$ plays no role, as no entry ever has degree as large as~$p(x)$'s.  
    In other words, we could just as well think of the ``entries'' as formal polynomials in~$\BF_q[x]$, rather than as living inside the field~$\BF_{q^m}$.
    Thus for sufficiently large~$m$, the groups $H = U_I(\BF_q[x]_{\leq 1}^{(m)})$ \emph{do not actually depend on~$m$} (e.g., their size depends only on~$I$ and~$q$).  
    In the case of $\Phi = B_3$, it suffices to have $m \geq 6$, as the maximum height of any root is~$5$. In the case of $\Phi = A_3$, it suffices to have $m \geq 4$, as the maximum height of any root is~$3$.

    Conversely, we will also take an interest in ``base'' versions of these unipotent groups, when the entries are simply from~$\BF_q$.  Formally, this is captured by the $m = 1$, particularly when one takes $p(x) = x$.
\end{remark}

We now make two further important modifications to this presentation:

\begin{definition}
    For the presentation discussed in \Cref{thm:structure}, let us introduce the alias \[ \Eld{\zeta}{t}{i} := \El{\zeta}{tx^i}. \] Following \cite{KO21}, we call these the \emph{pure degree} symbols. 
    Observe that the linearity and commutator relations that have only pure degree symbols on the left-hand side also only have pure degree symbols on the right-hand side.
    We will term this subset of relations the \emph{pure degree Steinberg relations}.
\end{definition}

By linearity (\Cref{eqn:linrels}), every element in $U_I(\BF_q[x]^{(m)}_{\leq 1})$ of the form $\El{\zeta}{f}$ is a product of elements of the form $\Eld{\zeta}{t}{i}$.

\begin{theorem}[Pure degree presentation with aliases]\label{thm:structure4}
    Let $q$ be an odd prime power and let $\BF_{q^m} \cong \BF_q[x]/(p(x))$, where $p(x)$ is irreducible over~$\BF_q$ of degree~$d$. Let $I \subset \Phi \neq G_2$ be linearly independent and $I \subset J \subset I^+$. Suppose $m \geq \max_{\Rt{\zeta} \in I^+} \height_I(\Rt{\zeta})$.
    \begin{itemize}
        \item \emph{Generation:} $U_I(\BF_q[x]_{\leq 1}^{(m)})$ is generated by the pure-degree symbols $\Eld{\zeta}{t}{i}$ where $\Rt{\zeta} \in J$, $t \in \BF_q$, and $i \in [\height_I(\Rt{\zeta})]$.
        \item \emph{Efficient presentation:} There is a function $\kappa : \BN \to \BN$
        depending \emph{only} on $I,J, \Phi$ --- and not on~$q$ or~$m$ --- such that the following holds. For each $\Rt{\zeta} \in I^+ \setminus J, t \in \BF_q, i \in [\height_I(\Rt{\zeta})]$, define a new \emph{alias} symbol $\Eld{\zeta}{t}{i}$ to stand for some fixed product of elements $\Eld{\eta}{u}{j}$, $\eta \in I, u \in \BF_q, j \in [\height_I(\Rt{\eta})]$ multiplying to $\Eld{\zeta}{t}{i}$ in the group $U_I(\BF_q[x]_{\leq 1}^{(m)})$. Then every trivial word of length at most~$r$ in the symbols $\Eld{\zeta}{t}{i}$, $\Rt{\zeta} \in J,t \in \BF_q,i \in [\height_i(\Rt{\zeta})]$ can be reduced to~$\Id$ by the use of at most $\kappa(r)$ applications of the  linearity relations \Cref{eqn:linrels} and the commutator relations \Cref{eqn:conrels} applied only to the pure-degree symbol where aliases are expanded (the first batch are commutator relations, the second batch are linearity relations).
    \end{itemize}    
\end{theorem}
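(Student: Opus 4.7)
The plan is to bootstrap the pure-degree-with-aliases presentation from the already-established polynomial presentation in \Cref{thm:structure}. \emph{Generation} is immediate: since $I \subset J$, the claimed generating set contains all $I$-pure-degree symbols, and these generate $U_I(\BF_q[x]_{\leq 1}^{(m)})$ because each of the original generators $\El{\zeta}{f}$ with $\Rt{\zeta} \in I$ expands by repeated application of linearity \Cref{eqn:linrels} into a product of monomial-entry symbols $\Eld{\zeta}{t_k}{k}$.

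For the \emph{efficient presentation}, the key observation is that any $J$-pure-degree symbol $\Eld{\zeta}{t}{i}$ (with $\Rt{\zeta} \in J \subset I^+$) is literally the old-presentation generator $\El{\zeta}{tx^i}$. Hence a trivial $J$-pure-degree word $w$ of length at most $r$ is also a trivial word of length at most $r$ in the generators of \Cref{thm:structure}, which supplies a reduction to $\Id$ via at most $\kappa'(r)$ applications of polynomial-entry linearity and commutator Steinberg relations (where $\kappa'$ is the function from \Cref{thm:structure}). The remaining task is then to simulate each polynomial-entry Steinberg relation by $O(1)$ pure-degree Steinberg relations (with aliases expanded). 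For a polynomial linearity step $\El{\zeta}{f}\El{\zeta}{g} = \El{\zeta}{f+g}$, I would write $f = \sum_i t_i x^i$ and $g = \sum_i u_i x^i$, reorder the left-hand product so that symbols of equal degree are adjacent (using pure-degree commutator relations, including the self-type relations that yield $\Comm{\Eld{\zeta}{t}{i}}{\Eld{\zeta}{u}{j}} = \Id$), then merge them via pure-degree linearity. For a polynomial commutator step $\Comm{\El{\zeta}{f}}{\El{\eta}{g}} = \prod_{a,b} \El{a\Rt{\zeta}+b\Rt{\eta}}{C^{\Rt{\zeta},\Rt{\eta}}_{a,b} f^a g^b}$, I would similarly expand both sides and use the standard identity $\Comm{xy}{z} = x\Comm{y}{z}x^{-1}\Comm{x}{z}$ iteratively to decompose the left-hand side into a product of pairwise pure-degree commutators $\Comm{\Eld{\zeta}{t_i}{i}}{\Eld{\eta}{u_j}{j}}$, apply the pure-degree commutator relation to each, and finally rearrange and merge the resulting monomial outputs via further pure-degree linearity and commutator relations. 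Whenever a pure-degree symbol $\Eld{\xi}{s}{k}$ for $\Rt{\xi} \in I^+ \setminus J$ arises as an input or output of such a step, substitute its fixed alias definition, which is a constant-length product of $I$-pure-degree symbols. Since $\max_{\Rt{\zeta} \in I^+}\height_I(\Rt{\zeta})$, the number of roots in $I^+$, and the maximum alias length are all constants depending only on $I, J, \Phi$, each polynomial Steinberg relation translates to $O(1)$ pure-degree Steinberg relations, yielding $\kappa(r) = O(\kappa'(r))$.

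\emph{The main obstacle} is the bookkeeping in the commutator simulation: one must verify that the sum over pure-degree pairwise commutator outputs, indexed by pairs $(i,j)$ contributing to each resulting degree $k = ai + bj$, reproduces exactly the $k$-th coefficient of the polynomial $C^{\Rt{\zeta},\Rt{\eta}}_{a,b} f^a g^b$ on the right-hand side of the polynomial commutator relation. This becomes more intricate when outputs of different root types $a\Rt{\zeta} + b\Rt{\eta} \in \posspan{\Rt{\zeta},\Rt{\eta}} \cap \Phi$ must be rearranged into the canonical commutator-relation ordering before merging, and when aliases for $I^+ \setminus J$ symbols must be consistently expanded at intermediate stages; but it is ultimately a finite algebraic verification whose complexity depends only on $I, J, \Phi$ and not on $q$ or $m$, as required.
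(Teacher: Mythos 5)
The paper supplies no standalone proof of \Cref{thm:structure4}; it is stated immediately after \Cref{thm:structure} and is evidently meant to follow from it by the same sort-and-merge mechanism, so your high-level plan --- treat a $J$-pure-degree word as a polynomial word, invoke \Cref{thm:structure}, and simulate each polynomial Steinberg step by $O(1)$ pure-degree steps --- is the intended route, and it is sound in outline.

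That said, there are a few places where the ``bookkeeping'' you flag is doing real work and deserves to be spelled out. First, when you simulate a polynomial commutator step and produce outputs $\Eld{a\Rt{\zeta}+b\Rt{\eta}}{\cdot}{\cdot}$ of several root types, sorting them into the canonical order by applying pure-degree commutator relations may itself emit further terms (e.g.\ for type-$B$ pairs where $\posspan{\Rt{\mu},\Rt{\nu}}$ is nonempty); one should note that this cascade terminates because every new term has strictly larger height, bounded by a constant depending only on $\Phi$. Second, whenever a commutator output lands in $I^+\setminus J$ it appears as a fixed alias block, and the subsequent reordering/merging relations you want to apply are only legal when those alias blocks sit contiguously in the word --- this constrains the order of operations and is worth verifying explicitly rather than absorbing into ``finite algebraic verification.'' Third, your step-by-step simulation interleaves commutator and linearity applications within each simulated polynomial step, whereas the statement (and \Cref{thm:structure}) claims a two-phase derivation --- all commutator applications first, then all linearity applications. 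Chaining simulations does not directly give this phasing. A cleaner alternative, which also sidesteps the chaining-consistency issue (making sure the end of one simulated step expands to exactly the start of the next), is to run \Cref{thm:structure}'s sort-then-merge argument \emph{directly} on the alias-expanded $I$-pure-degree word: sort all symbols by root using pure-degree commutator relations (with aliases expanded as needed), sort within a root by degree using self-commutator relations, then merge using pure-degree linearity. This one-pass version gives the claimed phasing for free and avoids simulating one derivation inside another. Either route lands in the same place, so I would classify your proposal as correct but with the hard part (especially (b) and (c) above) left as asserted rather than checked.
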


In the setting we care about below, $I$ will always have size $3$ and $J$ will be the set of all roots in $I^+$ which are nonnegative integer combinations of \emph{two} roots in $I$.

\subsubsection{Lifting}

Another important notion for us will be homomorphisms between $U_I(\BF)$ and $U_I(\tilde{\BF}[x]_{\leq 1})$ where $\tilde{\BF} \supset \BF$ is a field extension. We will later use these homomorphisms to ``lift'' (computer-generated) ``fillings'' of loops between the corresponding coset complexes. The homomorphisms were essentially studied in the $A_3$ case in \cite[Lemma 7.13]{KO21}.

\begin{theorem}
\label{thm:chev:general-lift}
    Let $I \subset \Phi$ be linearly independent, let $\BF$ be a finite field, and $\tilde{\BF} \supseteq \BF$ a field extension. Consider the two unipotent subgroups $U_I(\BF)$ and $U_I(\tilde{\BF}[x]_{\leq 1})$. Suppose we have field elements $t_{\Rt{\zeta},b} \in \tilde{\BF}$ for $\Rt{\zeta} \in I$, $b \in \{0,1\}$.
    
    Define a map $f : U_I(\BF) \to U_I(\tilde{\BF}[x]_{\leq 1})$ by specifying that 
    for
    $\Rt{\eta} = \sum_{\Rt{\zeta} \in I} c_{\Rt{\zeta}} \Rt{\zeta}$,
    \[
    f \El{\eta}{u} = \El{\eta}{u \prod_{\Rt{\zeta} \in I} (t_{\Rt{\zeta},1} x + t_{\Rt{\zeta},0})^{c_{\Rt{\zeta}}}}.
    \]
    Then this map is a homomorphism.
\end{theorem}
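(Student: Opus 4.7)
The plan is to verify that the proposed assignment respects the Steinberg presentation of $U_I(\BF)$ inherited from \Cref{def:prelim:steinberg}; since unipotent subgroups involve only positive roots in $\nnspan{I}$, the diagonal relations do not appear, and only the linearity relations \Cref{eqn:linrels} and commutator relations \Cref{eqn:conrels} among the generators $\{\El{\zeta}{u} : \Rt{\zeta} \in I, u \in \BF\}$ need to be checked. Once these are verified, $f$ extends uniquely to a group homomorphism, and the stated formula for $\Rt{\eta} \in I^+ \setminus I$ then follows by induction on height using the commutator relations themselves.

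The one essential algebraic ingredient is the multiplicativity of the ``lifting polynomial''
\[
P_{\Rt{\eta}} := \prod_{\Rt{\zeta} \in I}(t_{\Rt{\zeta},1}x + t_{\Rt{\zeta},0})^{c_{\Rt{\zeta}}} \quad \text{for } \Rt{\eta} = \sum_{\Rt{\zeta} \in I} c_{\Rt{\zeta}} \Rt{\zeta}.
\]
By additivity of the exponents $c_{\Rt{\zeta}}$ under root addition, one has $P_{a\Rt{\eta} + b\Rt{\eta'}} = P_{\Rt{\eta}}^{a} \cdot P_{\Rt{\eta'}}^{b}$ for any $\Rt{\eta}, \Rt{\eta'} \in I^+$ and $a, b \in \BN$. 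Moreover, $\deg(P_{\Rt{\eta}}) = \sum_{\Rt{\zeta}} c_{\Rt{\zeta}} = \mathrm{ht}_I(\Rt{\eta})$, so $\El{\eta}{u \cdot P_{\Rt{\eta}}}$ is a legitimate element of $U_I(\tilde{\BF}[x]_{\leq 1})$ by the generation clause of \Cref{thm:structure}.

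Preservation of linearity is then immediate from linearity in the target group:
\[
f(\El{\zeta}{u})\,f(\El{\zeta}{v}) = \El{\zeta}{u P_{\Rt{\zeta}}} \El{\zeta}{v P_{\Rt{\zeta}}} = \El{\zeta}{(u+v) P_{\Rt{\zeta}}} = f(\El{\zeta}{u+v}).
\]
For the commutator relations, apply $f$ to both sides of $[\El{\zeta}{u}, \El{\eta}{v}] = \prod_{a,b} \El{a\zeta+b\eta}{C^{\Rt{\zeta},\Rt{\eta}}_{a,b}\, u^a v^b}$ (for $\Rt{\zeta}, \Rt{\eta} \in I$). The left-hand side becomes $[\El{\zeta}{u P_{\Rt{\zeta}}}, \El{\eta}{v P_{\Rt{\eta}}}]$, which by \Cref{eqn:conrels} in the target expands to $\prod_{a,b} \El{a\zeta+b\eta}{C^{\Rt{\zeta},\Rt{\eta}}_{a,b}\, u^a v^b\, P_{\Rt{\zeta}}^{a} P_{\Rt{\eta}}^{b}}$; by the multiplicativity identity this agrees term-by-term with the image $\prod_{a,b} \El{a\zeta+b\eta}{C^{\Rt{\zeta},\Rt{\eta}}_{a,b}\, u^a v^b\, P_{a\Rt{\zeta}+b\Rt{\eta}}}$ of the right-hand side.

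The only subtle point is that the Chevalley constants $C^{\Rt{\zeta},\Rt{\eta}}_{a,b}$ appearing in the commutator relations of both groups must be the \emph{same} integers --- but this is automatic since by \Cref{def:prelim:steinberg} they depend only on the chosen total ordering of $\Phi$, not on the underlying field. There is thus no real obstacle; the analogous lift was constructed for $A_3$ in \cite[Lem.~7.13]{KO21}, and the general statement here follows the same blueprint, with the multiplicativity identity $P_{a\Rt{\zeta} + b\Rt{\eta}} = P_{\Rt{\zeta}}^{a} P_{\Rt{\eta}}^{b}$ doing essentially all of the work.
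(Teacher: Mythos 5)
Your proposal is correct and follows essentially the same route as the paper: invoke van Dyck's Theorem, use the multiplicativity of the lifting polynomial $P_{\Rt{\eta}}$ under root addition, and conclude that the $f$-image of each Steinberg relation is again a Steinberg relation in the target. The observation that the Chevalley constants are field-independent (depending only on the ordering of $\Phi$) is a nice explicit remark that the paper leaves implicit.

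One small imprecision worth tightening: the presentation of $U_I(\BF)$ underlying the paper's appeal to van Dyck (via \Cref{thm:structure}) has generating symbols $\El{\zeta}{u}$ for all $\Rt{\zeta} \in I^+$, not only $\Rt{\zeta}\in I$, so the commutator relations that must be checked include pairs of non-base roots such as $[\El{\alpha+\beta}{t},\El{\beta+\gamma}{u}]$. Your stated plan restricts attention to pairs drawn from $I$, deferring higher roots to an induction — but there is no need, and your own calculation already handles the general case: the multiplicativity identity $P_{a\Rt{\eta}+b\Rt{\theta}} = P_{\Rt{\eta}}^a P_{\Rt{\theta}}^b$ holds for arbitrary $\Rt{\eta},\Rt{\theta}\in I^+$, and substituting it into the commutator relation gives exactly the paper's term-by-term identification with no induction required. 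If you simply drop the restriction to base roots from the opening paragraph, the proof matches the paper's verbatim in content.
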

\begin{proof}
    It suffices to check that the $f$-image of every relation in the Steinberg presentation of $U_I(\BF)$ (the ``$m = 1$ case'' of \Cref{thm:structure}) is true within $U_I(\tilde{\BF}[x]_{\leq 1})$ (the general-$m$ of \Cref{thm:structure}).
    (This is sometimes called van Dyck's Theorem.  In fact, each $f$-image will be a Steinberg relation in the latter group.)
    It is easy to verify this for the linearity relations, so it remains to verify it for the commutation relations.
    
    In $U_I(\BF)$, commutation relations are of the form \[
    \Comm{\El{\eta}{t}}{\El{\theta}{u}} = \prod_{\substack{a \Rt{\eta} + b \Rt{\theta} \in \Phi \\ a,b \in \BN^+}} \El{a \eta + b \theta}{C^{\Rt{\eta},\Rt{\theta}} \cdot t^a u^b}.
    \] Suppose the roots $\Rt{\eta}$ and $\Rt{\theta}$ have expansions $\Rt{\eta} = \sum_{\Rt{\zeta} \in I} c_{\Rt{\zeta}} \Rt{\zeta}$ and $\Rt{\theta} = \sum_{\Rt{\zeta} \in I} d_{\Rt{\zeta}} \Rt{\zeta}$. Then $\Rt{a \eta + b \theta} = \sum_{\Rt{\zeta} \in I} (ac_{\Rt{\zeta}} + bd_{\Rt{\zeta}} )$. So by our definition of $f$,
    \begin{align*}
    f \El{\eta}{t} &= \El{\eta}{t \prod_{\Rt{\zeta} \in I} (t_{\Rt{\zeta},1} x + t_{\Rt{\zeta},0})^{c_{\Rt{\zeta}}}}, \\
    f \El{\theta}{u} &= \El{\theta}{u \prod_{\Rt{\zeta} \in I} (t_{\Rt{\zeta},1} x + t_{\Rt{\zeta},0})^{d_{\Rt{\zeta}}}}, \\
    f \El{a \eta + b \theta}{C^{\Rt{\eta},\Rt{\theta}} \cdot t^a u^b} &= \El{a\eta + b\theta}{C^{\Rt{\eta},\Rt{\theta}} \cdot t^a u^b \prod_{\Rt{\zeta} \in I} (t_{\Rt{\zeta},1} x + t_{\Rt{\zeta},0})^{ac_{\Rt{\zeta}} + bd_{\Rt{\zeta}}} }.
    \end{align*}
    Applying the commutator relation in $U_I(\tilde{\BF}[x]_{\leq1})$ gives:
    \begin{align*}
        f \Comm{\El{\eta}{t}}{\El{\theta}{u}} &= \Comm{\El{\eta}{t \prod_{\Rt{\zeta} \in I} (t_{\Rt{\zeta},1} x + t_{\Rt{\zeta},0})^{c_{\Rt{\zeta}}}}}{\El{\theta}{u \prod_{\Rt{\zeta} \in I} (t_{\Rt{\zeta},1} x + t_{\Rt{\zeta},0})^{d_{\Rt{\zeta}}}}} \\
        &= \prod_{\substack{a \Rt{\eta} + b \Rt{\theta} \in \Phi \\ a,b \in \BN^+}} \El{a\eta + b\theta}{C^{\Rt{\eta},\Rt{\theta}} \cdot \left(t \prod_{\Rt{\zeta} \in I} (t_{\Rt{\zeta},1} x + t_{\Rt{\zeta},0})^{c_{\Rt{\zeta}}}\right)^a \left(u \prod_{\Rt{\zeta} \in I} (t_{\Rt{\zeta},1} x + t_{\Rt{\zeta},0})^{d_{\Rt{\zeta}}}\right)^b} \\
        &= \prod_{\substack{a \Rt{\eta} + b \Rt{\theta} \in \Phi \\ a,b \in \BN^+}} \El{a\eta + b\theta}{C^{\Rt{\eta},\Rt{\theta}} t^a u^b \cdot \prod_{\Rt{\zeta} \in I} (t_{\Rt{\zeta},1} x + t_{\Rt{\zeta},0})^{ac_{\Rt{\zeta}} + bd_{\Rt{\zeta}}}} \\
        &= \prod_{\substack{a \Rt{\eta} + b \Rt{\theta} \in \Phi \\ a,b \in \BN^+}} f \El{a \eta + b \theta}{C^{\Rt{\eta},\Rt{\theta}} \cdot t^a u^b},
    \end{align*}
    and so the image of a commutator relation is indeed a (commutator) relation.
\end{proof}

Note that using the pure degree symbols, the image of an element $\El{\eta}{t}$ in \Cref{thm:chev:general-lift} is the multinomial expansion:
\begin{equation}\label{eq:chev:nonhom}
 \prod_{b_{\Rt{\zeta}} \in [1] : \Rt{\zeta} \in I} \Eld{\eta}{t \prod_{\Rt{\zeta} \in I} t_{\Rt{\zeta},b_{\Rt{\zeta}}}^{x_{\Rt{\zeta}}}}{\sum_{\Rt{\zeta} \in I} x_{\Rt{\zeta}} b_{\Rt{\zeta}}}.
\end{equation}
We call this general form a \emph{nonhomogeneous lift}, in contrast to one useful special case which simplifies the notation considerably is what we call the \emph{homogeneous lift}, where for every $\Rt{\zeta}$, either $t_{\Rt{\zeta},1}$ or $t_{\Rt{\zeta},0}$ is zero:
\begin{corollary}[Homogeneous lifting]
    Let $I \subset \Phi$ be linearly independent, let $\BF$ be a finite field, and $\tilde{\BF} \supseteq \BF$ a field extension. Consider the two unipotent subgroups $U_I(\BF)$ and $U_I(\tilde{\BF}[x]_{\leq 1})$. Suppose we have $(t_{\Rt{\zeta}} \in \tilde{\BF})_{\Rt{\zeta} \in I}$ and $(b_{\Rt{\zeta}} \in [1])_{\Rt{\zeta} \in I}$. There is a (unique) \emph{lift} homomorphism $f : U_I(\BF) \to U_I(\tilde{\BF}[x]_{\leq 1})$ such that for every $\Rt{\zeta} \in I$, \[
    f \El{\Rt{\zeta}}{1} = \Eld{\Rt{\zeta}}{ut_{\Rt{\zeta}}}{b_{\Rt{\zeta}}}.
    \]
    The image of an element of type $\Rt{\eta} = \sum_{\Rt{\zeta} \in I} c_{\Rt{\zeta}} \Rt{\zeta}$ is:
    \[
    f \El{\eta}{u} = \Eld{\eta}{u \prod_{\Rt{\zeta} \in I} t_{\Rt{\zeta}}}{\sum_{\Rt{\zeta} \in I} b_{\Rt{\zeta}}}.
    \]
\end{corollary}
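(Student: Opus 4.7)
The plan is to obtain this corollary as an immediate specialization of \Cref{thm:chev:general-lift}. Given homogeneous data $(t_{\Rt{\zeta}})_{\Rt{\zeta}\in I}$ in $\tilde{\BF}$ and degree choices $(b_{\Rt{\zeta}})_{\Rt{\zeta}\in I}$ in $\{0,1\}$, I would feed the parameters of the general (nonhomogeneous) lift by setting, for each $\Rt{\zeta}\in I$, $t_{\Rt{\zeta},\,b_{\Rt{\zeta}}} = t_{\Rt{\zeta}}$ and $t_{\Rt{\zeta},\,1-b_{\Rt{\zeta}}}=0$. With this choice the linear polynomial appearing in the general lift collapses to a monomial,
\[
t_{\Rt{\zeta},1}\,x + t_{\Rt{\zeta},0} \;=\; t_{\Rt{\zeta}}\,x^{b_{\Rt{\zeta}}},
\]
so \Cref{thm:chev:general-lift} already produces the desired homomorphism $f:U_I(\BF)\to U_I(\tilde{\BF}[x]_{\leq 1})$.

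It then remains only to rewrite the formula it delivers in the shape claimed by the corollary. For a root $\Rt{\eta}=\sum_{\Rt{\zeta}\in I} c_{\Rt{\zeta}}\Rt{\zeta}$,
\[
f\El{\eta}{u} \;=\; \El{\eta}{u\prod_{\Rt{\zeta}\in I}\bigl(t_{\Rt{\zeta}}\,x^{b_{\Rt{\zeta}}}\bigr)^{c_{\Rt{\zeta}}}} \;=\; \Eld{\eta}{u\prod_{\Rt{\zeta}\in I}t_{\Rt{\zeta}}^{c_{\Rt{\zeta}}}}{\sum_{\Rt{\zeta}\in I} c_{\Rt{\zeta}} b_{\Rt{\zeta}}},
\]
after folding an $x$-monomial into the pure-degree alias $\Eld{\eta}{s}{i}=\El{\eta}{s x^i}$. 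Specializing to $\Rt{\eta}=\Rt{\zeta}\in I$ (so $c_{\Rt{\zeta}}=1$ and all other $c_{\Rt{\eta}}$ vanish) confirms that $f\El{\Rt{\zeta}}{u}=\Eld{\Rt{\zeta}}{u t_{\Rt{\zeta}}}{b_{\Rt{\zeta}}}$, matching the prescribed base behavior.

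For uniqueness, $U_I(\BF)$ is by definition generated by the Steinberg symbols $\El{\Rt{\zeta}}{u}$ with $\Rt{\zeta}\in I$, so any homomorphism out of $U_I(\BF)$ is determined by its values on those generators; the formula above pins them all down once the data $(t_{\Rt{\zeta}},b_{\Rt{\zeta}})$ is fixed. I do not anticipate any real obstacle: the only nontrivial verification, namely that every Steinberg relation of $U_I(\BF)$ maps to a valid relation in $U_I(\tilde{\BF}[x]_{\leq 1})$ (van Dyck's theorem), was already handled inside \Cref{thm:chev:general-lift}; everything left is bookkeeping---specializing parameters, collapsing a linear factor into a monomial, and translating into pure-degree alias notation.
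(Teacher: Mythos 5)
Your proposal is correct and takes exactly the approach the paper intends: the corollary is stated as an immediate special case of \Cref{thm:chev:general-lift}, obtained by setting $t_{\Rt{\zeta},\,b_{\Rt{\zeta}}} = t_{\Rt{\zeta}}$ and $t_{\Rt{\zeta},\,1-b_{\Rt{\zeta}}}=0$, which is precisely the "homogeneous" specialization the paper describes in the sentence preceding the corollary. Your version of the image formula (with the exponents $c_{\Rt{\zeta}}$ on the $t_{\Rt{\zeta}}$ and the coefficients $c_{\Rt{\zeta}}$ in the degree sum) is the correct one; the formula as printed in the statement appears to have dropped these.
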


\subsection{The Chevalley coset complexes}
Kaufman--Oppenheim~\cite{KO18} showed how to construct bounded-degree HDXs from coset complexes over $G_{A_d}(\BF) \cong \SL_{d+1}(\BF)$, and O'Donnell--Pratt~\cite{OP22} generalized their work to all Chevalley groups. (See~\cite{GV24} for further generalizations.)
Let us recap the main construction and theorem from~\cite{OP22}:
\begin{definition}  \label{def:opcc}
    (\cite{OP22}.) Let $\BF_q$ be a finite field of characteristic at least~$3$,\footnote{See \Cref{foot:2}.} 
    and let $\BF_{q^m} \cong \BF_q[x]/(p(x))$, where $p(x)$ is an irreducible of degree~$m$.
    Let $\Phi$ be a rank-$d$ root system, and $\Pi \subset \Phi$ a base.
    Define the ``special'' root set $\CS = \Pi \cup \{\Rt{\zeta}\}$, where $\Rt{\zeta} = -\sum_{\Rt{\eta} \in \Pi} \Rt{\eta} \in \Phi$.
    Now for each $I \subsetneq \CS$, we define a unipotent subgroup of the type discussed in \Cref{thm:structure}:
    \begin{equation}
       H_{I} = H_I(m)
       \coloneqq
       U_{I}(\BF_q[x]^{(m)}_{\leq 1}),
       \quad 
       \text{with the shorthand }
       H_{\setminus \Rt{\eta}} \coloneqq H_{\CS\setminus \{\Rt{\eta}\}}.
    \end{equation}
    
    Finally, we define the $d$-dimensional coset complex
    \begin{equation}
        \mathfrak{K}\Phi_q(m) = \CoCo\Bigl(G_\Phi(\BF_{q^m});\{H_{\setminus\Rt{\eta}} : \Rt{\eta} \in \CS\}\Bigr).
    \end{equation}
\end{definition}

We will need to record a couple of facts about these complexes, before recalling the main theorem about them:
\begin{proposition}\label{prop:link-ko}
    (\cite[Thm.~3.18]{OP22}.) In $\mathfrak{K}\Phi_q(m)$, suppose vertex~$\sigma$ is a coset of $H_{\setminus \Rt{\eta}}$, $\Rt{\eta} \in S$.  Then the link of~$\sigma$ is isomorphic to the coset complex
    $\CoCo\Bigl(H_{\setminus \Rt{\eta}}; \{H_{ \setminus \{\Rt{\eta}, \Rt{\zeta}\}} : \Rt{\zeta} \in S, \Rt{\zeta} \neq \Rt{\eta}\}\Bigr)$.
\end{proposition}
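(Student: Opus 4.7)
My plan is to reduce \Cref{prop:link-ko} to a purely group-theoretic intersection identity and then verify that identity using normal forms from \Cref{thm:structure}. The general link formula \Cref{prop:cc}(2), applied to the $0$-face $\sigma = gH_{\setminus\Rt{\eta}}$, already shows that the link of $\sigma$ is isomorphic to
\[
\CoCo\Bigl(H_{\setminus\Rt{\eta}};\ \{H_{\setminus\Rt{\eta}} \cap H_{\setminus\Rt{\zeta}} : \Rt{\zeta} \in \CS,\ \Rt{\zeta} \neq \Rt{\eta}\}\Bigr).
\]
So once we reduce via \Cref{prop:cc}, all that remains is the statement
\[
H_{\setminus\Rt{\eta}} \cap H_{\setminus\Rt{\zeta}} \;=\; H_{\setminus\{\Rt{\eta},\Rt{\zeta}\}}
\]
inside the ambient Chevalley group $G_\Phi(\BF_{q^m})$.

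The easy inclusion is $H_{\setminus\{\Rt{\eta},\Rt{\zeta}\}} \subseteq H_{\setminus\Rt{\eta}} \cap H_{\setminus\Rt{\zeta}}$: by \Cref{def:unip}, $H_{\setminus\{\Rt{\eta},\Rt{\zeta}\}} = U_{\CS\setminus\{\Rt{\eta},\Rt{\zeta}\}}(\BF_q[x]_{\leq 1})$ is generated by a subset of the generating sets of both larger groups, hence sits inside both. The substantive direction is the reverse. I would argue it with the uniqueness-of-normal-form part of \Cref{thm:structure}: any $g \in H_{\setminus\Rt{\eta}}$ has a unique height-ordered expression $\prod_{\Rt{\theta} \in (\CS\setminus\{\Rt{\eta}\})^+} \El{\theta}{f_\theta}$ with $\deg f_\theta \le \height_{\CS\setminus\{\Rt{\eta}\}}(\Rt{\theta})$, and similarly a unique such expression coming from $H_{\setminus\Rt{\zeta}}$. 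The goal is to force these two expressions to share a common support contained in $(\CS\setminus\{\Rt{\eta}\})^+ \cap (\CS\setminus\{\Rt{\zeta}\})^+ = (\CS\setminus\{\Rt{\eta},\Rt{\zeta}\})^+$.

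The cleanest way I see to achieve this comparison is to embed everything in a master normal form inside a single, larger unipotent subgroup. Concretely, I would pick a linearly independent extension $I_* \subset \Phi$ (possibly a full base of $\Phi$) such that $(\CS\setminus\{\Rt{\eta}\})^+ \cup (\CS\setminus\{\Rt{\zeta}\})^+ \subseteq I_*^+$, and apply \Cref{thm:structure} inside $U_{I_*}(\BF_q[x]_{\leq 1})$, rewriting both normal-form expressions of $g$ as the $I_*$-normal form. Uniqueness in $U_{I_*}$ then forces the two rewrites to agree coefficient-by-coefficient, so the coefficients of $\Rt{\theta}$-type contributions must vanish for every $\Rt{\theta}$ outside the intersection, yielding the claim.

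The main obstacle is precisely finding or constructing this master unipotent subgroup: $\CS$ itself is not linearly independent (it contains $\Pi$ together with $\Rt{\zeta}_0 = -\sum_{\Rt{\theta}\in\Pi}\Rt{\theta}$), so \Cref{thm:structure} cannot be invoked on $\CS$ directly, and one has to argue that a suitable $I_*$ (possibly after passing to the ``opposite'' unipotent subgroup generated by $\{\Rt{\zeta}_0\}$ together with some subset of $\Pi$) really does contain the relevant positive spans. Should that bookkeeping become awkward, a robust backup is to work in the concrete matrix realizations of $G_{A_d}(\BF_{q^m})$ and $G_{B_d}(\BF_{q^m})$ from \Cref{app:Ad} and \Cref{app:Bd}: each $\El{\theta}{t}$ becomes a specific elementary matrix, normal-form expressions correspond to listing out off-diagonal entries, and the intersection identity can be verified directly by comparing the matrix-entry supports of generators from $\CS\setminus\{\Rt{\eta}\}$ versus $\CS\setminus\{\Rt{\zeta}\}$. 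Given that this proposition is cited from \cite{OP22}, I expect either of these two routes closes the argument without further subtlety.
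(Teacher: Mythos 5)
The paper gives no proof of \Cref{prop:link-ko}; it simply cites~\cite[Thm.~3.18]{OP22}, so there is no internal proof to compare against. Your first step, invoking \Cref{prop:cc}(2) to reduce the claim to the intersection identity $H_{\setminus\Rt{\eta}} \cap H_{\setminus\Rt{\zeta}} = H_{\setminus\{\Rt{\eta},\Rt{\zeta}\}}$, with the easy containment disposed of by comparing generating sets, is exactly the right reduction.

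Your primary route for the hard containment cannot work, and the obstruction is structural, not a matter of bookkeeping. You ask for a linearly independent $I_* \subset \Phi$ with $(\CS\setminus\{\Rt{\eta}\})^+ \cup (\CS\setminus\{\Rt{\zeta}\})^+ \subseteq I_*^+$. No such $I_*$ exists, because the union of the two positive cones always contains a root together with its negative. For a concrete failure, take $\Phi = A_3$ with $\CS = \{\Rt{\alpha},\Rt{\beta},\Rt{\gamma},\Rt{\delta}\}$, $\Rt{\delta} = -(\Rt{\alpha}+\Rt{\beta}+\Rt{\gamma})$, and $(\Rt{\eta},\Rt{\zeta}) = (\Rt{\alpha},\Rt{\delta})$: then $(\CS\setminus\{\Rt{\delta}\})^+$ contains $\Rt{\alpha}$, while $(\CS\setminus\{\Rt{\alpha}\})^+$ contains $\Rt{\beta}+\Rt{\gamma}+\Rt{\delta} = -\Rt{\alpha}$. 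If a linearly independent $I_*$ had both $\Rt{\alpha}$ and $-\Rt{\alpha}$ in $\nnspan{I_*}$, summing the two expansions would write $0$ as a nontrivial nonnegative combination of independent vectors, which is impossible. So there is no master unipotent subgroup in which to compare the two normal forms, and the uniqueness argument you describe never gets off the ground. Your matrix-realization backup is the viable direction: after a suitable permutation of basis vectors, $H_{\setminus\Rt{\eta}}$ is exactly the set of unitriangular matrices whose off-diagonal support lies in the positions indexed by $(\CS\setminus\{\Rt{\eta}\})^+$, and the intersection identity then follows by comparing supports. But even there you still owe two things your sketch does not address: (a) the triangular change of variable relating the matrix entries to the normal-form coefficients $f_{\Rt{\theta}}$ of \Cref{thm:structure}, so that vanishing matrix entries outside the common support force the corresponding $f_{\Rt{\theta}}$ to vanish; and (b) the check that the surviving $f_{\Rt{\theta}}$ obey the degree bound $\height_{\CS\setminus\{\Rt{\eta},\Rt{\zeta}\}}(\Rt{\theta})$, so that $g$ lands in $H_{\setminus\{\Rt{\eta},\Rt{\zeta}\}} = U_{\CS\setminus\{\Rt{\eta},\Rt{\zeta}\}}(\BF_q[x]_{\leq 1})$ and not merely in the unrestricted group $U_{\CS\setminus\{\Rt{\eta},\Rt{\zeta}\}}(\BF_{q^m})$.
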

\begin{notation}
    Let us introduce some shorthands for the unipotent groups we are interested in, referring
    back to \Cref{sec:prelim:root-systems}.
    In the $A_3$ case, all the unipotent groups are isomorphic, and we will write $U_{A_3}(\cdot)$ for the case of $I = \{\Rt{\alpha}, \Rt{\beta}, \Rt{\gamma}\}$.
    In the $B_3$ case, there are two isomorphic subgroups of interest.  We will write $U_{B_3}^{\mathrm{sm}}(\cdot)$ for the case of deleting a long root; say, $I = \{\Rt{\beta}, \Rt{\psi}, \Rt{\omega}\}$. We will write $U_{B_3}^{\mathrm{lg}}(\cdot)$ for the case of deleting a short root; say, $I = \{\Rt{\alpha}, \Rt{\beta}, \Rt{\psi}\}$.  
\end{notation}

\begin{proposition}\label{prop:diameter}
    Fix $d$, the rank of~$\Phi$.  Then the diameter of each vertex-link described in \Cref{prop:link-ko} is bounded by a universal constant~$r_0 = r_0(d)$.
\end{proposition}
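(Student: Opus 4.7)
By \Cref{prop:cc}(3), the diameter of the vertex-link coset complex $\CoCo(H_{\setminus\Rt{\eta}}; \{H_{\setminus\{\Rt{\eta},\Rt{\zeta}\}}\}_{\Rt{\zeta}})$ equals the minimum $r$ such that every element of $H_{\setminus\Rt{\eta}}$ can be written as a product of at most $r$ elements drawn from $\bigcup_{\Rt{\zeta}} H_{\setminus\{\Rt{\eta},\Rt{\zeta}\}}$. The plan is to exhibit such an $r$ depending only on the root system $\Phi$; since there are only finitely many irreducible rank-$d$ root systems, taking the maximum over $\Phi$ then yields the universal bound $r_0(d)$.

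Writing $I = \CS \setminus \{\Rt{\eta}\}$, the polynomial presentation (\Cref{thm:structure}) puts any element of $H_{\setminus\Rt{\eta}} = U_I(\BF_q[x]^{(m)}_{\leq 1})$ in canonical form $\prod_{\Rt{\xi} \in I^+} \El{\xi}{f_{\Rt{\xi}}(x)}$, a product of exactly $|I^+|$ factors, so it suffices to bound the number of subgroup-elements needed to express each individual $\El{\xi}{f_{\Rt{\xi}}(x)}$. Whenever the support of $\Rt{\xi}$ on $I$ omits some base root $\Rt{\zeta}$, the factor lies entirely in $H_{\setminus\{\Rt{\eta},\Rt{\zeta}\}}$ and costs one.

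The harder case is that of the ``full-support'' roots (such as $\Rt{\alpha+\beta+\psi}$, $\Rt{\alpha+\beta+2\psi}$, $\Rt{\alpha+2\beta+2\psi}$ in $B_3$), which we handle via the Steinberg commutator relation \Cref{eqn:conrels}. For each such $\Rt{\xi}$, fix a decomposition $\Rt{\xi} = \Rt{\mu} + \Rt{\nu}$ with $\Rt{\mu},\Rt{\nu} \in I^+$ each missing at least one base root of $I$ --- a case-by-case check confirms such a decomposition exists for every full-support root of every irreducible rank-$d$ root system. Then $\Comm{\El{\mu}{t(x)}}{\El{\nu}{u(x)}}$ is a length-$4$ word in elements of two proper subgroups of $H_{\setminus\Rt{\eta}}$, and \Cref{eqn:conrels} expands it as $\El{\xi}{C \cdot tu}$ times a product of finitely many factors $\El{a'\Rt{\mu} + b'\Rt{\nu}}{\cdot}$, all of strictly greater height than $\height_I(\Rt{\xi})$. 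Proceeding by \emph{downward} induction on $\height_I(\Rt{\xi})$ (the base case being the top root of $\Phi$, whose commutator expansions contain only the single $\El{\xi}{\cdot}$ term), we isolate $\El{\xi}{C \cdot tu}$ by left-multiplying by the inverses of these already-expressed higher-height factors. To handle an arbitrary entry $f_{\Rt{\xi}}$ of degree at most $\height_I(\Rt{\xi}) = \height_I(\Rt{\mu}) + \height_I(\Rt{\nu})$, split $f_{\Rt{\xi}} = \sum_i c_i x^i$ into monomials and produce each $\Eld{\xi}{c_i}{i}$ separately by choosing $t(x) = C^{-1} c_i x^{i_1}$, $u(x) = x^{i_2}$ with $i_1 + i_2 = i$, $i_1 \leq \height_I(\Rt{\mu})$, $i_2 \leq \height_I(\Rt{\nu})$; this is feasible since $i \leq \height_I(\Rt{\xi})$, and since $C \in \{\pm 1, \pm 2, \pm 3\}$ is invertible in $\BF_q$ (as $\BF_q$ has odd characteristic).

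The main thing to verify --- and the only subtle point --- is that every quantity entering the above bound (namely $|I^+|$, the heights of roots, the Chevalley constants' invertibility, the number of full-support roots, and the depth of the downward induction) is intrinsic to $\Phi$ and independent of $q$ and $m$. This is immediate from the definitions, so the resulting bound depends only on $d$, as required.
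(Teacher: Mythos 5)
Your proof is essentially correct and runs along the same lines as the paper's proof, which is a terse proof-by-inspection: the paper points to the proof of \cite[Cor.~3.19]{OP22} (resting on \cite[Lem.~3.13]{OP22}) and observes that the generation of $H_{\setminus\Rt{\eta}}$ from the colored subgroups there uses a number of elements that depends only on the number of roots and their heights. You have instead reconstructed that argument explicitly: you start from the canonical factorization (\Cref{thm:structure}) into $|I^+|$ factors $\El{\xi}{f_{\Rt{\xi}}(x)}$, handle non-full-support roots directly (each lies in a single colored subgroup), and handle full-support roots $\Rt{\xi}$ by exhibiting them as commutators $\Comm{\El{\mu}{\cdot}}{\El{\nu}{\cdot}}$ with $\Rt{\mu},\Rt{\nu}\in I^+$ each missing a base root, using a downward induction on height to peel off the strictly higher-height side-products in \Cref{eqn:conrels}. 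The monomial split, the choice of $i_1 \leq \height_I(\Rt{\mu})$, $i_2 \leq \height_I(\Rt{\nu})$ (which keeps the entries $t(x),u(x)$ within the degree bounds imposed by \Cref{thm:structure} on the colored subgroups), and the invertibility of the Chevalley constants (which are $\pm1,\pm2$ outside $G_2$, hence units in odd characteristic) are all correctly accounted for, and all bounds visibly depend only on $\Phi$. So, modulo the one remark below, this is a complete and self-contained version of what the paper compresses into ``inspect \cite{OP22}.''

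The one soft spot: your argument hinges on the asserted lemma that every full-support root $\Rt{\xi}\in I^+$ can be written as $\Rt{\mu}+\Rt{\nu}$ with both $\Rt{\mu},\Rt{\nu}\in I^+$ positive roots each omitting some element of $I$ from their support. You label this ``a case-by-case check'' but do not carry it out; note also that the natural first attempt --- subtract a simple root $\Rt{\alpha}_i$ from $\Rt{\xi}$ --- only certifies the claim for $\Rt{\mu}=\Rt{\xi}-\Rt{\alpha}_i$ when the coefficient of $\Rt{\alpha}_i$ in $\Rt{\xi}$ equals $1$, so the check is not entirely trivial in the abstract. The claim does hold for the rank-$3$ systems the paper actually uses (the decompositions you'd need are exactly those the paper lists when establishing the missing roots, e.g.\ $\Rt{\alpha+2\beta+2\psi} = \Rt{\alpha+\beta} + \Rt{\beta+2\psi}$), and for fixed $d$ it is a finite check. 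Still, as written your proof delegates this to the reader, whereas the paper delegates the whole argument to \cite[Lem.~3.13]{OP22}; neither is fully self-contained, but you should either perform the check for the $\Phi$ at hand or cite a reference for it.
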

\begin{proof}
    To see this, one simply has to inspect the proof~\cite[Cor.~3.19]{OP22} that these vertex-links are connected.
    That proof uses the group-theoretic condition for connectedness from \Cref{prop:cc}, and one can employ its quantitative form to establish the proposition.
    Specifically, one needs to observe that the proof of~\cite[Cor.~3.19]{OP22} actually shows  $r_0$-bounded generation of $H_{\setminus \{\Rt{\eta}\}}$ from $\bigcup\{H_{S \setminus \{\Rt{\eta}, \Rt{\zeta}\}} : \Rt{\zeta} \in S, \Rt{\zeta} \neq \Rt{\eta}\}$ for some~$r_0$ depending only on~$d$ (and not on~$q$ or~$m$).
    The key point is that inspecting the proof of the central~\cite[Lem.~3.13]{OP22}, its generation uses a number of elements that depends only at worst on the number of roots in~$\Phi$ and the maximum ``height'' of any root in~$\Phi$, all of which are bounded by a function only of~$d$.
\end{proof}

(In the case $d=3$, it is not hard to show $r_0 \leq 20$.)

We now give the main theorem from~\cite{OP22} about these coset complexes (excluding the case ``$G_2$'' for simplicity; see~\cite{Pra23} for its treatment).
\begin{theorem} \label{thm:op}
    (\cite[Thm.~3.6, Cor.~3.7]{OP22}.) Let $\Phi \neq G_2$ be of rank~$d$. Then the coset complex (family) from \Cref{def:opcc} is strongly explicit family, on $q^{\Theta(m)}$ vertices, of bounded degree $D = q^{O(1)}$, and has $j$-spectral expansion parameter at most $\frac{1}{\sqrt{q/2} - (d-1-j)}$.
\end{theorem}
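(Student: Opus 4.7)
The plan is to handle the four conclusions (size, bounded degree, strong explicitness, and spectral expansion) largely independently. The first three are essentially bookkeeping on top of the polynomial presentation \Cref{thm:structure} combined with any concrete matrix realization of $G_\Phi(\BF_{q^m})$ (e.g., those described in \Cref{app:Ad}, \Cref{app:Bd}); only the spectral bound requires real work, and I plan to obtain it via the Trickling Down theorem applied to an explicit base-case edge-link estimate.

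For the sizes, I would first invoke the standard Bruhat-decomposition formula giving $|G_\Phi(\BF_{q^m})| = \Theta(q^{m\dim_\BR G_\Phi})$, and then use \Cref{thm:structure} to obtain
\[
|H_{\setminus \Rt{\eta}}| = q^{\sum_{\Rt{\zeta}\in(\CS\setminus\{\Rt{\eta}\})^+}(\height_{\CS\setminus\{\Rt{\eta}\}}(\Rt{\zeta})+1)} = q^{O(1)},
\]
independently of $m$. Consequently $|X(0)| = \sum_{\Rt{\eta}\in\CS}[G:H_{\setminus\Rt{\eta}}] = q^{\Theta(m)}$; by \Cref{prop:cc}(1), each vertex lies in at most $|H_{\setminus\Rt{\eta}}| = q^{O(1)}$ top-dimensional faces, giving bounded degree. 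For strong explicitness I would encode each element of $G_\Phi(\BF_{q^m})$ as a matrix over $\BF_{q^m}$ (supporting $\poly(m\log q)$-time multiplication) and enumerate each $H_{\setminus\Rt{\eta}}$-coset via the canonical form of \Cref{thm:structure}; local queries then run in $\poly(\log|X(0)|)$ time.

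The substantive content is the spectral bound. My plan is to apply the Trickling Down theorem---its connectivity hypotheses being supplied by \Cref{prop:diameter}---and thereby reduce to the single base case of showing $\lambda_2(X_\sigma) \leq 1/(\sqrt{q/2}-1)$ for every edge-link $X_\sigma$ (i.e., $|\sigma|=d-1$). An easy induction then unwinds $\gamma_j \leq \gamma_{j+1}/(1-\gamma_{j+1}) = 1/(\sqrt{q/2}-(d-1-j))$ for every $j \leq d-2$, matching the claim. Iterating \Cref{prop:link-ko} reveals that every such edge-link is a coset complex of the form $\CoCo(H_S;\{H_S\cap H_{\setminus\Rt{\eta}}, H_S\cap H_{\setminus\Rt{\zeta}}\})$ built from the two subgroups associated to the remaining pair of colors $\Rt{\eta},\Rt{\zeta} \in \CS$; as a bipartite graph, its isomorphism type depends only on the rank-$2$ sub-root-system spanned by the two missing base roots, so its structure is $m$-independent and has a transparent $q$-dependence.

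The hard part is therefore the base-case estimate on this small family of rank-$2$ bipartite graphs---one for each possible rank-$2$ sub-root-system inside $\Phi$, namely $A_1\times A_1$, $A_2$, and $B_2$. My plan is to analyze the adjacency operator of each directly: either by decomposing it along the irreducible representations of the ambient finite rank-$2$ Chevalley group, or by recognizing the graph as a natural quotient of (an apartment in) a rank-$2$ building and invoking the Ramanujan-style second-eigenvalue bound of size $\sim 1/\sqrt{q}$ coming from its spherical spectrum. After normalization, this should deliver the required $1/(\sqrt{q/2}-1)$ bound on $\lambda_2$, completing the proof.
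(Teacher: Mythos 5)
The paper does not prove this theorem — it is cited directly from \cite{OP22} — so there is no internal proof to compare against. Your overall skeleton (sizes, degree, and explicitness from \Cref{thm:structure}, followed by a Trickling Down reduction of the spectral bound to the $(d-2)$-dimensional face links) does match the architecture of \cite{OP22,KO18}. There is a small indexing slip: to obtain $j$-spectral expansion parameter $\frac{1}{\sqrt{q/2}-(d-1-j)}$ via Trickling Down, the base case must be $\gamma_{d-1} = 1/\sqrt{q/2}$, not $1/(\sqrt{q/2}-1)$ (which is the already-trickled bound at $j=d-2$); your subsequent recursion formula is consistent with the correct value, so this is likely a typo.

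The substantive issue is your proposed base-case argument. You suggest bounding $\lambda_2$ of the link bipartite graph either by decomposing along irreducibles of "the ambient finite rank-$2$ Chevalley group" or by "recognizing the graph as a natural quotient of (an apartment in) a rank-$2$ building." Neither applies as stated: by \Cref{prop:link-ko}, the ambient group of a $(d-2)$-face link is $H_{\CS\setminus\{\Rt{\eta_1},\Rt{\eta_2}\}}$, a rank-$2$ \emph{unipotent} (hence nilpotent) group with entries in $\BF_q[x]_{\leq 1}$, and the two colored subgroups are single-root unipotent subgroups. This is not a quotient of a rank-$2$ building — buildings arise from a BN-pair/parabolic structure that unipotent groups do not have — and there is no ambient Chevalley group acting here whose representation theory you could appeal to. Indeed the paper itself emphasizes that these coset complexes are "not known to have the same level of spectral expansion as the Ramanujan complexes," and that their spectral expansion is analyzed "using completely elementary tools" \cite{KO18,HS19}; a building-spectrum argument would yield a Ramanujan-type bound, which is not what one gets or what is claimed. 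The actual route in \cite{OP22} (following \cite{KO18,HS19}) is a direct algebraic/combinatorial computation of the second eigenvalue for the specific bipartite coset graphs of rank-$2$ unipotent groups over $\BF_q[x]_{\leq 1}$, one per type $A_1\times A_1$, $A_2$, $B_2$. You would need to supply an analysis of that kind in place of the building appeal.
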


For the $3$-dimensional complexes in these families, we may hope to bound the $1$-cone radius of their vertex-links.
This would let us establish good $1$-coboundary expansion via \Cref{thm:cones1}, as we show in the next proposition (which just verifies some simple conditions).
\begin{proposition} \label{prop:1cone}
    Let $\mathfrak{K}$ be a $3$-dimensional coset complex as in \Cref{def:opcc}.  
    If the $1$-cone radius of each vertex-link $\mathfrak{K}_\sigma$ at most~$R_0$, then $\mathfrak{K}$ has $1$-coboundary expansion at least~$1/(3R_0)$.
\end{proposition}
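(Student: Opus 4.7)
The plan is to apply \Cref{thm:cones1} to the $2$-skeleton $\widehat{\mathfrak{K}}$ of $\mathfrak{K}$. Since the $1$-coboundary expansion of any complex depends only on the chain groups $C^{j} = \BF_2^{X(j)}$ for $j \le 2$, one has $h^1(\mathfrak{K}) = h^1(\widehat{\mathfrak{K}})$ and $H^1(\mathfrak{K}) = H^1(\widehat{\mathfrak{K}})$; so it suffices to show $\widehat{\mathfrak{K}}$ is a $2$-dimensional $1/(3R_0)$-coboundary expander. This reduces the problem to verifying the two hypotheses of \Cref{thm:cones1} for $\widehat{\mathfrak{K}}$: (a) a transitive automorphism action on $2$-faces, and (b) a bound of $R_0$ on the $1$-cone radius.

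For (a), the ambient Chevalley group $G = G_\Phi(\BF_{q^m})$ acts on $\mathfrak{K}$ (and hence on $\widehat{\mathfrak{K}}$) by left multiplication, and by \Cref{prop:cc}(1) is transitive on the $3$-faces of $\mathfrak{K}$ and therefore on $2$-faces within each color class of the partite structure. To bridge across color classes and obtain transitivity on all $2$-faces, I would augment $G$ with the ``color-cycling'' automorphisms arising from the diagram (outer) automorphisms of $G_\Phi$, which permute the defining subgroups $\{H_{\setminus \Rt{\eta}}\}$ by permuting the corresponding roots in $\CS$.

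For (b), the plan is to build the cone data of \Cref{def:1cone} directly from the vertex-link cone data. Fix an apex $u$ for $\widehat{\mathfrak{K}}$; for each vertex $v$ choose a short path $P_{uv}$ in the $1$-skeleton (such paths exist and have length $O(1)$ by \Cref{prop:diameter}). The hard content is building, for each edge $\{v,w\}$, a triangle-set $\CT_{vw}$ of size at most $R_0$ with $\partial_2\CT_{vw} = P_{uv}+\{v,w\}+P_{uw}$. The key is to arrange the apex and paths so that the loop $L_{vw} = P_{uv} + \{v,w\} + P_{uw}$ can be realized as a loop \emph{inside a single vertex-link} $\mathfrak{K}_z$: concretely, every edge in $L_{vw}$ is an edge in $\mathfrak{K}$ containing $z$, i.e., a vertex of $\mathfrak{K}_z$. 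Since a triangle of $\mathfrak{K}$ containing $z$ is precisely an edge of $\mathfrak{K}_z$, a $1$-chain filling of $L_{vw}$ (viewed as an edge-sequence) by $\leq R_0$ edges of $\mathfrak{K}_z$---which exists by the hypothesis on $\mathfrak{K}_z$'s $1$-cone radius and the triangular structure of fillings in \Cref{def:1cone}---pulls back to a $2$-chain of $\leq R_0$ triangles in $\mathfrak{K}$ filling $L_{vw}$.

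The main obstacle is thus ensuring that $L_{vw}$ really sits in a single vertex-link, with no blowup in the filling size (the constant $1/(3R_0)$, rather than $1/(3R_0 \cdot \mathrm{diam})$, forbids any diameter-dependent loss). The resolution exploits the coset-complex rigidity: by \Cref{prop:cc}(2), each vertex-link is itself a coset complex, and the transitive $G$-action on tetrahedra lets one reduce the edge $\{v,w\}$ to a canonical orbit representative sharing a common neighbor $z$ with the chosen apex $u$; this makes the path $P_{uv}$ (and $P_{uw}$) entirely composed of edges through $z$. Working this out carefully, using only the coset complex structure and the bounded diameters from \Cref{prop:diameter} (together with the symmetry established in step (a)), should produce a global cone of radius exactly $R_0$ and thereby complete the proof via \Cref{thm:cones1}.
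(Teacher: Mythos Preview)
You have misread the proposition (admittedly, the paper's phrasing is sloppy): the conclusion is that each \emph{vertex-link} $\mathfrak{K}_\sigma$ has $1$-coboundary expansion at least $1/(3R_0)$, not that $\mathfrak{K}$ itself does. The paper's proof simply applies \Cref{thm:cones1} to each $2$-dimensional link $\mathfrak{K}_\sigma$, checking (i) that the induced weighting on $\mathfrak{K}_\sigma(2)$ is uniform (by showing every $2$-face of $\mathfrak{K}$ lies in the same number $q^2$ of $3$-faces), and (ii) that $\mathfrak{K}_\sigma$, being itself a coset complex by \Cref{prop:cc}(2), has a group acting transitively on its $2$-faces by \Cref{prop:cc}(1). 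This is exactly what \Cref{prop:last-junk} and \Cref{thm:workhorse} need downstream.

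Your literal reading leads to an attempt to prove something that is generally false, and the argument has concrete gaps. For step~(a), the ``color-cycling'' automorphisms you invoke do not exist in the $B_3$ case: the four subgroups $H_{\setminus\Rt\alpha},H_{\setminus\Rt\beta},H_{\setminus\Rt\psi},H_{\setminus\Rt\omega}$ have different orders ($q^{20}$ versus $q^{31}$), so no automorphism of $\mathfrak{K}$ can permute all four colors, and the automorphism group is \emph{not} transitive on $2$-faces of $\widehat{\mathfrak{K}}$. For step~(b), \Cref{prop:diameter} bounds only the diameter of the vertex-links, not of $\mathfrak{K}$ itself; the diameter of $\widehat{\mathfrak{K}}$ grows with $m$, so your paths $P_{uv}$ cannot have bounded length, and the loops $L_{vw}$ cannot in general be confined to a single vertex-link. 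Indeed, the whole complex $\mathfrak{K}$ typically has nonvanishing $1$-cohomology---this is precisely why the main theorem asserts only \emph{cosystolic} expansion of $\mathfrak{K}$, deduced from coboundary expansion of the links via \Cref{thm:workhorse}.
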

\begin{proof}
    Given \Cref{thm:cones1}, there are only two small things to check.
    First, we need that when $\mathfrak{K}(3)$ is (naturally) weighted by the uniform distribution, the induced weighting on the $2$-faces of each vertex-link $\mathfrak{K}_\sigma$ is also uniform.
    This will follow if we can show that each $2$-face in $\mathfrak{K}$ is contained in the same number of $3$-faces.
    Now a $2$-face in $\mathfrak{K}$ is of the form $\{xH_{\setminus \Rt{\zeta_1}}, xH_{\setminus \Rt{\zeta_2}}, xH_{\setminus \Rt{\zeta_3}}\}$ for $\Rt{\zeta_1}, \Rt{\zeta_2}, \Rt{\zeta_3}$ being three of the four roots in~$S$.
    It's not hard to see that the  $3$-faces this is contained in are in $1$-$1$ correspondence with the elements of $H' \coloneqq H_{\setminus \Rt{\zeta_1}} \cap H_{\setminus \Rt{\zeta_2}} \cap H_{\setminus \Rt{\zeta_3}}$.
    And from the definition of $H_{\setminus \Rt{\zeta}}$ and \Cref{thm:structure}, one sees that $H' = U_{\{\Rt{\zeta_4}\}}(\BF_q[x]_{\leq 1})$, where $\Rt{\zeta_4}$ is the fourth root in~$S$. Thus  $|H'| = q^2$, independent of the starting $2$-face, as desired.

     The second small thing to check is that for each vertex-link $\mathfrak{K}_\sigma$, there is a group of automorphisms $G$ that acts transitively on its $2$-faces.  But this is automatic from \Cref{prop:cc}, since $\mathfrak{K}_\sigma$ is a coset complex.
\end{proof}
Now the following proposition gives us a concrete method to bound $1$-cone radii.
(Recall we say a loop~$L$ is ``$\BF_2$-filled'' by a collection of triangles if the $\partial_2$-boundary of the $2$-chain formed by the triangles equals the $1$-chain formed by~$L$.)
\begin{proposition} \label{prop:last-junk}
    Let $\mathfrak{K}$ be a $3$-dimensional coset complex as in \Cref{def:opcc}, and $\mathfrak{K}_\sigma$ some vertex-link.  
    Suppose that each loop~$L$ in (the $1$-skeleton of)~$\mathfrak{K}_\sigma$ of length at most~$r_1 \coloneqq 2r_0+1$ can be $\BF_2$-filled by at most~$R_0$ triangles, where $r_0 = r_0(3)$ is the universal constant from \Cref{prop:diameter}.  
    Then $\mathfrak{K}_\sigma$ has $1$-coboundary expansion at least~$1/(3R_0)$.
\end{proposition}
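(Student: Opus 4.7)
The plan is to chain together \Cref{prop:diameter} (diameter bound on vertex-links), the hypothesis on short-loop fillings, and \Cref{prop:1cone} (cone-radius to coboundary expansion). Specifically, I will verify that the hypothesis gives $\mathfrak{K}_\sigma$ a $1$-cone radius of at most $R_0$ in the sense of \Cref{def:1cone}, and then invoke \Cref{prop:1cone}.

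First I would pick any vertex $u \in \mathfrak{K}_\sigma(0)$ to serve as the apex. By \Cref{prop:diameter}, the diameter of (the $1$-skeleton of) $\mathfrak{K}_\sigma$ is at most $r_0 = r_0(3)$. Hence for every vertex $v \in \mathfrak{K}_\sigma(0)$, I can fix a simple path $P_{uv}$ of edge-length at most $r_0$ from $u$ to $v$ (choosing $P_{uu}$ to be the empty path).

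Next, for each edge $\{v,w\} \in \mathfrak{K}_\sigma(1)$, consider the closed walk $L_{vw}$ obtained by concatenating $P_{uv}$, the edge $\{v,w\}$, and the reverse of $P_{uw}$. As a walk, $L_{vw}$ has length at most $2r_0 + 1 = r_1$; after removing repeated traversals (which vanish in the $1$-chain over $\BF_2$), we obtain a loop of length at most $r_1$. By the hypothesis, this loop can be $\BF_2$-filled by a set of at most $R_0$ triangles $\CT_{vw} \subseteq \mathfrak{K}_\sigma(2)$; that is, treating $\CT_{vw}$ as a $2$-chain, $\partial_2 \CT_{vw}$ equals the $1$-chain $P_{uv} + \{v,w\} + P_{uw}$ (identifying paths with their underlying edge sets mod $2$). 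This verifies all three bullets of \Cref{def:1cone} with parameter $R_0$.

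Therefore the $1$-cone radius of $\mathfrak{K}_\sigma$ is at most $R_0$, and \Cref{prop:1cone} yields $1$-coboundary expansion of $\mathfrak{K}_\sigma$ at least $1/(3R_0)$, as desired. There is no real obstacle here beyond being careful about the distinction between closed walks and loops (simple cycles) when invoking the short-loop-filling hypothesis; since we work over $\BF_2$, repeated edges cancel, so restricting the hypothesis to genuine loops loses nothing.
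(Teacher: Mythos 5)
Your proof takes exactly the same route as the paper: bound the lengths of the paths $P_{uv}$ via \Cref{prop:diameter}, observe that the resulting closed walks $P_{uv} + \{v,w\} + P_{uw}$ have length at most $r_1$, apply the short-loop-filling hypothesis to bound the $1$-cone radius by $R_0$, and then invoke \Cref{prop:1cone}. Your brief remark about closed walks versus simple loops (repeated edges cancelling mod $2$) is a point the paper glosses over, so if anything you are marginally more careful on that technicality.
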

\begin{proof}
    By \Cref{prop:diameter}, the diameter of every vertex-link is at most~$r_0$.
    Thus, all the paths $P_{uv}$ arising in \Cref{def:1cone} need not have length more than~$r_0$; hence the loops formed by $P_{uv}$, $\{v,w\}$, $P_{uw}$ need not have length more than~$r_1$.
    Thus our hypothesis gives us  $\BF_2$-fillings of all such loops with at most~$R_0$ triangles, thus bounding the $1$-cone radius of each vertex-link by~$R_0$.  Now we are done by \Cref{prop:1cone}.
\end{proof}

We are now ready to put everything together; by virtue of \Cref{thm:op} (which in particular shows that the spectral expansion parameter can be made arbitrarily small by taking~$q$ sufficiently large), \Cref{prop:last-junk},  \Cref{thm:workhorse}, and \Cref{cor:workhorse}, we get:
\begin{theorem}[Inception Theorem] \label{thm:inception}
    Let $\Phi$ be a root system of rank~$3$. 
    Let $R_0$ be any constant, and suppose $q$ is an odd prime power that is sufficiently large (as a function of~$R_0$).
    Further suppose that the coset complex 
    $\CoCo\Bigl(H_{\setminus \{\Rt{\eta}\}}; \{H_{S \setminus \{\Rt{\eta}, \Rt{\zeta}\}} : \Rt{\zeta} \in S, \Rt{\zeta} \neq \Rt{\eta}\}\Bigr)$
    described in \Cref{prop:link-ko} and \Cref{thm:structure} (which depends only on~$\Phi$, $q$)
    has the property that every loop of length at most~$r_1$ (the universal constant from \Cref{prop:last-junk}) can be $\BF_2$-filled by at most~$R_0$ triangles.
    
    Then the bounded-degree $3$-dimensional coset complex $(\mathfrak{K}\Phi_q(m))_{m \geq 6}$ (which has the HDX properties described in \Cref{thm:op}) has $1$-cosystolic expansion at least $(\Omega(1/R_0), \Omega(1/R_0))$; and, its $2$-skeleton has topological expansion~$\Omega(1/R_0^4)$.
\end{theorem}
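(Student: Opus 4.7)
The theorem is essentially a ``bookkeeping'' assembly of the previously established results \Cref{thm:op}, \Cref{prop:last-junk}, \Cref{thm:workhorse}, and \Cref{cor:workhorse}, so I will focus on how the pieces fit together and what constants need to be matched. Set $\beta := 1/(3R_0)$.

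First I would transfer the hypothesis to every vertex-link of $\mathfrak{K}\Phi_q(m)$. By \Cref{prop:link-ko}, each vertex-link is isomorphic to one of the coset complexes $\CoCo(H_{\setminus \{\Rt{\eta}\}}; \{H_{S \setminus \{\Rt{\eta}, \Rt{\zeta}\}} : \Rt{\zeta} \neq \Rt{\eta}\})$ mentioned in the hypothesis, and by \Cref{thm:structure} these depend only on $\Phi$ and $q$ (not on $m$, provided $m \geq 6$ exceeds the maximum root height, which is $5$ in $B_3$ and $3$ in $A_3$). The assumption that every loop of length at most $r_1$ has an $\BF_2$-filling by at most $R_0$ triangles --- combined with the universal bound on link-diameter from \Cref{prop:diameter} --- lets me invoke \Cref{prop:last-junk} to conclude that each vertex-link has $1$-coboundary expansion at least $\beta$.

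Next I would verify the remaining hypotheses of \Cref{thm:workhorse} with this $\beta$. That theorem demands a threshold $\gamma = \gamma(\beta) > 0$ on the second-largest eigenvalue of each edge-link's random walk matrix. By \Cref{thm:op}, the $1$-spectral expansion parameter of $\mathfrak{K}\Phi_q(m)$ is at most $1/(\sqrt{q/2} - 1)$, which can be driven below $\gamma(\beta)$ by taking $q$ sufficiently large as a function of $R_0$. The required connectedness of $\mathfrak{K}\Phi_q(m)$ and of each vertex-link follows from the generation statement in \Cref{thm:structure} together with \Cref{prop:cc}. Invoking \Cref{thm:workhorse} then yields $1$-cosystolic expansion at least $(\Omega(\beta), \Omega(\beta)) = (\Omega(1/R_0), \Omega(1/R_0))$, and then \Cref{cor:workhorse} (whose quantitative form traces through \Cref{thm:to-top}) promotes this to topological expansion $\Omega(\beta^4) = \Omega(1/R_0^4)$ of the $2$-skeleton.

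The ``main obstacle'' is therefore not in this inception theorem itself, which is a clean chaining of black-box results, but rather in verifying its hypothesis --- i.e., producing constant-size $\BF_2$-fillings of all short loops in the degenerate coset-complex links --- which is the primary technical content of the rest of the paper. The only subtlety in the assembly is matching the quantifier ``$q$ sufficiently large as a function of $R_0$'' simultaneously across \Cref{thm:op} and \Cref{thm:workhorse}, which is immediate since both are monotone in~$q$.
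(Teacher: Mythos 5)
Your assembly is correct and matches the paper's intent: the paper does not give a separate proof of the Inception Theorem and simply notes that it follows "by virtue of" \Cref{thm:op}, \Cref{prop:last-junk}, \Cref{thm:workhorse}, and \Cref{cor:workhorse}; you have carried out exactly that chaining with the right constants ($\beta = 1/(3R_0)$). One tiny correction: for the edge-link hypothesis of \Cref{thm:workhorse} you want the $2$-spectral expansion parameter from \Cref{thm:op}, which (with $d=3$, $j=2$) is $\frac{1}{\sqrt{q/2}}$ rather than the $j=1$ bound you quoted --- but since both tend to zero as $q \to \infty$, the conclusion is unaffected; also, connectivity of the vertex-links is most directly guaranteed by the bounded diameter from \Cref{prop:diameter}, though your route via \Cref{prop:cc} is serviceable.
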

We refer to this as the ``Inception Theorem'' since it is, in a sense, where the main work \emph{starts}.  
This theorem is essentially due to Kaufman--Oppenheim~\cite{KO21}; we have only made some minor adaptations to present it at a level of generality suitable for studying all rank-$3$ root systems.

Perhaps the main work of Kaufman and Oppenheim (see \cite[Sec.~7]{KO21}) was to establish the filling property needed by the Inception Theorem in the $\Phi = A_3$ case.  They did this purely group-theoretically, relying heavily on a result of Biss and Dasgupta~\cite{BD01} concerning presentations of the unipotent group (of $\SL_{4}(\BF)$).  
As discussed further in \Cref{sec:bd}, this strategy does not seem to work in the $\Phi = B_3$ case.
Thus we will need an additional tool to construct $\BF_2$-fillings, the subject of the next section.

\section{Triangulations and derivations in coset complexes} \label{sec:dehn}
As we will see, in a coset complex $\CoCo(G; \{H_0, \dots, H_d\})$, to each closed loop of edges one can associate a word over symbols from $H_0, \dots, H_d$ that is trivial in~$G$.
Recall \Cref{thm:lanner}, which implies that if every such word can be proven to equal~$\Id$ using only relations that hold within~$H_i$'s, then the complex is simply connected.  In turn, this implies that every closed loop is $\BF_2$-fillable by triangles.  
Naturally, this suggests that one should be able to turn derivations of $w = \Id$ into triangulations of loops corresponding to~$w$.
In this section, we not only give a direct and quantitative proof of this (known) fact, but we augment it to account for cases when there are trivial words~$w$ in~$G$ that \emph{cannot} be shown to equal~$\Id$ just from ``in-subgroup'' relations.
What we show is that if any such short word~$w$ \emph{happens} to have a small $\BF_2$-filling within the complex, then we can treat~$w$ as a ``new'' relation in a subsequent group-theoretic quest to $\BF_2$-triangulate all closed loops.

\subsection{Triangulations of loops in coset complexes}

In this section only, it will be convenient to work with coset \emph{multi}complexes, which are a generalization of simplicial complexes in which the faces are multisets rather than sets.

\begin{definition}
    Let $G$ be a group and $\CH$ a set of subgroups, $|\CH| = d+1$.
    We write $\CoCoCo(G; \CH)$ for the associated \emph{coset multicomplex}, in which the maximal multisets are all those $(d+1)$-multisets of the form $\{xH_0, xH_2, \dots, xH_d\}$ for $H_0, H_1, \dots, H_d \in \CH$ not necessarily distinct.
    If we only include these maximal multisets with $H_0, \dots, H_d$ distinct (and hence encompassing all of~$\CH$), we would recover the usual coset complex, $\CoCo(G;\CH)$.
    We also naturally generalize the notions of chains and (co-)boundary to such complexes.
\end{definition}

\begin{remark}
    Throughout this section we will assume our multicomplexes have $d \geq 2$; indeed, we will ultimately only need the case $d=2$.
\end{remark}
\begin{definition}\label{def:free}
    In the setting of preceding definition we think of each $H_i$ as a ``color'', and for any $x \in H_i \in \CH$, we introduce the symbol~$\col{x}{H_i}$, called a ``colored element''. 

    We write $F_{\CH}$ for the free group generated by all the colored elements, together with the additional relations $\col{x}{H_i}^{-1} = \col{x^{-1}}{H_i}$.
    Finally, we write $\phi : F_{\CH} \to G$ for the  homomorphism that maps the word $\col{x_0}{H_0}\col{x_1}{H_1}\cdots\col{x_{\ell-1}}{H_{\ell-1}}$ to $x_0 x_1 \cdots x_{\ell-1}$. 
    (Here $x_j \in H_j \in \CH$.)
\end{definition}

\begin{definition}
    Let $\hat{w} = \col{x_0}{H_0}\col{x_1}{H_1}\cdots\col{x_{\ell-1}}{H_{\ell-1}} \in F_{\CH}$. 
    (We typically use a $\hat{\text{hat}}$ to denote a word in~$F_{\CH}$.)
    We associate to it the following walk $\CL(\hat{w})$ of length~$t$ in (the skeleton of) $\CoCoCo(G;\CH)$:
    \begin{equation}
        \Id H_0 
        \to x_0H_1 
        \to x_0x_1H_2 
        \to \cdots 
        \to x_0x_1\cdots x_{\ell-2}H_{\ell-1}
        \to x_0 x_1 \cdots x_{\ell-1} H_0 = \phi(\hat{w}) H_0,
    \end{equation}
    which is a closed walk if $\phi(\hat{w}) = \Id$.
    (Above, the presence of the edge between $x_0\cdots x_{i-1} H_i$ and $x_0 \cdots x_i H_{i+1}$ in the skeleton of $\CoCoCo(G;\CH)$ is ``witnessed'' by the element $x_0 \cdots x_{i}$, noting that $x_0\cdots x_{i-1} H_i = x_0\cdots x_{i} H_i$ because $x_i \in H_i$.)
\end{definition}
\begin{remark}
    Consideration of the above definition lets one easily conclude Fact~3 from \Cref{prop:cc}, that $\CoCo(G;\CH)$ is connected iff $G$ is generated by the subgroups in $\CH$ iff $\phi$ is onto.
\end{remark}
\begin{notation}
    Let $L$ be any walk in (the skeleton of) $\CoCoCo(G;\CH)$.  
    We write $[L]$ for the $1$-chain given by the $\BF_2$-sum of~$e$ over all  (undirected) edges~$e$ in~$L$ (where an edge is a $1$-face aka cardinality-$2$-multiset in the multicomplex).
    In particular, an edge~$e$ (which may be a self-loop) occurs with coefficient~$1 \in \BF_2$ in~$[L]$ iff $e$ is used an odd number of times in~$L$.
\end{notation}
\begin{fact}
    If $L^{-1}$ denotes the reversal of~$L$, we have $[L^{-1}] = [L]$.
    Moreover, if $L$ is a closed walk and $L'$ denotes a cyclic shift of~$L$, we still have $[L'] = [L]$.
\end{fact}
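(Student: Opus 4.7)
The plan is to observe that the definition of $[L]$ depends on $L$ only through the multiset of (undirected) edges traversed, and in fact only on the parity with which each such edge appears. Both statements in the fact will then reduce to the simple claim that reversal and cyclic rotation preserve this edge-multiset.

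First I would address the reversal. If $L$ is a walk $v_0 \to v_1 \to \cdots \to v_\ell$, then $L^{-1}$ is the walk $v_\ell \to v_{\ell-1} \to \cdots \to v_0$. For each index $i \in \{0, \dots, \ell-1\}$, the step $v_i \to v_{i+1}$ in $L$ and the step $v_{i+1} \to v_i$ in $L^{-1}$ give rise to the same undirected edge (cardinality-$2$-multiset) $\{v_i, v_{i+1}\}$ in the skeleton of $\CoCoCo(G; \CH)$. Hence the multiset of edges of $L^{-1}$ equals that of $L$, so every edge is used the same number of times — and in particular with the same parity — in $L$ and $L^{-1}$. By the definition of $[\cdot]$, this gives $[L^{-1}] = [L]$.

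Next I would handle the cyclic shift, assuming now that $L$ is a closed walk $v_0 \to v_1 \to \cdots \to v_{\ell-1} \to v_0$. A cyclic shift $L'$ by~$k$ positions is the walk $v_k \to v_{k+1} \to \cdots \to v_{\ell-1} \to v_0 \to v_1 \to \cdots \to v_k$. The consecutive pairs appearing in~$L'$ are exactly $\{v_i, v_{i+1}\}$ (indices read mod~$\ell$), which is the same multiset of undirected edges as appears in~$L$. So again each edge is used the same number of times in $L$ and~$L'$, and therefore $[L'] = [L]$.

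There is essentially no obstacle here: the only subtle point is being careful that $[\cdot]$ is defined with respect to undirected edges (so the direction of traversal is irrelevant) and that $\BF_2$-coefficients depend only on the parity of multiplicity (so the ordering of traversals is irrelevant). Once these observations are on the page, both claims follow immediately from the matching of edge-multisets.
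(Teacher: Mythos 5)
Your proof is correct and uses exactly the observation that makes the paper record this as a ``Fact'' without argument: since $[L]$ is defined as the $\BF_2$-sum over the undirected edges of $L$, it depends only on the parity of each edge's multiplicity, and neither reversal nor cyclic rotation (of a closed walk) changes the multiset of undirected edges. There is nothing to add or compare.
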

\begin{definition}
    If $L$ is a closed walk in $\CoCoCo(G;\CH)$, we write $\triangle(L)$ for the cardinality of the smallest set $S$ of triangles (cardinality-$3$-multisets in $\CoCoCo(G;\CH)$) such that, when $S$ is viewed as a $2$-chain, we have $\partial_2 S = [L]$ (i.e., $\BF_2$-summing the $3|S|$ boundary edges yields~$[L]$).
    We write $\triangle(L) = \infty$ if there is no such~$S$.
    We extend this notation by writing $\triangle(\hat{w})$ in place of $\triangle(\CL(\hat{w}))$ whenever $\hat{w}$ is a word in $F_{\CH}$ satisfying $\phi(\hat{w}) = \Id$.
\end{definition}
\begin{definition}
\label{def:translate}
    Given any multiset $E = \{xH_1, \dots, x H_e\}$ in the multicomplex $\CoCoCo(G;\CH)$, and any $y \in G$, the ``translated'' multiset $yE = \{y H_1, \dots, y H_e\}$ is also in $\CoCoCo(G;\CH)$.
    We may naturally extend this notation to translations $yL$ of closed walks~$L$, or to translations $yS$ of $2$-chains of triangles~$S$.
\end{definition}
The following facts are straightforward:
\begin{fact}    \label{fact:id}
    For any $H_i$, the length-$1$ word $\hat{w} = \col{\Id}{H_i} \in F_{\CH}$ has 
    $\triangle(w) = 1$, since $[\CL(w)]$ is the self-loop at $\Id H_i$, which is the boundary of the single (degenerate) triangle $\{\Id H_i, \Id H_i, \Id H_i\}$.
\end{fact}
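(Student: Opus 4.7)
The plan is to unpack the definitions and verify the two ingredients asserted in the ``since'' clause of the statement. First I would instantiate $\CL(\hat{w})$ for $\hat{w} = \col{\Id}{H_i}$: applying the definition with $\ell = 1$ and the single color equal to $H_i$, the associated walk consists of the single step $\Id H_i \to \Id \cdot \Id \cdot H_i$, so $[\CL(\hat{w})]$ is the $1$-chain supported exactly on the self-loop edge (i.e.\ the $1$-multiset) $\{\Id H_i, \Id H_i\}$. Note also that $\phi(\hat{w}) = \Id$, so this is a genuine closed walk and $\triangle(\hat{w})$ is well-defined. Second, I would confirm that the degenerate triangle $T := \{\Id H_i, \Id H_i, \Id H_i\}$ is an honest $2$-face of $\CoCoCo(G; \CH)$; this is immediate from the multicomplex definition, which explicitly allows the subgroup labels in a maximal multiset to coincide (take $H_0 = H_1 = H_2 = H_i$ and $x = \Id$).

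Then I would compute $\partial_2 T$ over $\BF_2$. Each of the three cardinality-$2$ submultisets of $T$ is the same self-loop multiset $\{\Id H_i, \Id H_i\}$, so $\partial_2 T = 3 \cdot \{\Id H_i, \Id H_i\} = \{\Id H_i, \Id H_i\} = [\CL(\hat{w})]$, which establishes $\triangle(\hat{w}) \le 1$. The matching lower bound is automatic: $[\CL(\hat{w})]$ is a nonzero $1$-chain, so it cannot be filled by the empty $2$-chain. The only point that really calls for care is the multicomplex bookkeeping — confirming that the boundary of a fully degenerate $2$-face genuinely sums three copies of the same degenerate edge rather than being declared zero by convention — but once the multicomplex boundary operator is in place, the verification is a one-line calculation.
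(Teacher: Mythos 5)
Your proposal is correct and follows the same reasoning the paper gives inline in the statement of the fact; you simply unpack the three implicit steps (computing $[\CL(\hat{w})]$, checking the degenerate triangle is a genuine $2$-multiface, and computing its $\BF_2$-boundary via the three single-element deletions, which collapse to one copy of the self-loop). The only thing you add beyond the paper's text is the explicit lower bound $\triangle(\hat{w}) \ge 1$, which the paper leaves tacit and which is indeed trivial since $[\CL(\hat{w})] \neq 0$.
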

\begin{fact}    \label{fact:recolor}
    For any $s \in H_i \cap H_j$, the ``recoloring'' word $\hat{w} = \col{s}{H_i}\col{s^{-1}}{H_j}$
    has $[\CL(\hat{w})] = 0$ and hence 
    $\triangle(\hat{w}) = 0$.
\end{fact}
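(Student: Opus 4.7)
The plan is a direct unpacking of definitions. First I would write out $\CL(\hat{w})$ explicitly for $\hat{w} = \col{s}{H_i}\col{s^{-1}}{H_j}$. Treating $H_i$ as the ``$H_0$'' and $H_j$ as the ``$H_1$'' in the definition of $\CL$, and using $x_0 = s$, $x_1 = s^{-1}$, the associated walk is
\[
\Id H_i \;\to\; s H_j \;\to\; (s \cdot s^{-1}) H_i \;=\; \Id H_i,
\]
which is indeed closed since $\phi(\hat{w}) = s \cdot s^{-1} = \Id$. Note the length-$2$ case is the first place the ``wrap-around'' in the definition of $\CL$ matters: the final step goes back to $H_0 = H_i$ rather than advancing to some $H_2$.

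Next I would identify each of the two edges as a $1$-face of $\CoCoCo(G;\CH)$, using the parenthetical remark in the definition of $\CL$ that the $k$-th edge is witnessed by $x_0 \cdots x_k$, and so equals the multiset $\{x_0 \cdots x_k H_k,\, x_0 \cdots x_k H_{k+1}\}$. The first edge is therefore $\{s H_i, s H_j\}$ and the second edge is $\{\Id H_j, \Id H_i\}$. The hypothesis $s \in H_i \cap H_j$ gives $s H_i = H_i$ and $s H_j = H_j$, so both of these edges are the \emph{same} $1$-face $\{\Id H_i, \Id H_j\}$ of the multicomplex.

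Hence the walk traverses one edge twice, giving $[\CL(\hat{w})] = 2 \cdot \{\Id H_i, \Id H_j\} = 0$ as an $\BF_2$-valued $1$-chain. Consequently the empty set of triangles is a valid $\BF_2$-filling, witnessing $\triangle(\hat{w}) = 0$. There is really no obstacle: the only two subtleties are the wrap-around step in $\CL$'s definition just mentioned, and the reminder that ``edges'' in the (multi)complex are unordered $1$-faces, so a single edge traversed in either direction yields the same element of $X(1)$ and therefore cancels in $\BF_2$.
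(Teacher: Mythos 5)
Your proof is correct and is exactly the definition-unpacking that the paper leaves implicit (it lists this as one of several "straightforward" facts with no written proof). You correctly identify that for the length-2 word the walk wraps back to $\Id H_i$, that both traversed edges — witnessed by $s$ and by $s s^{-1} = \Id$ respectively — collapse to the same unordered multiset $\{H_i, H_j\}$ once you use $s \in H_i \cap H_j$, and that a doubly-traversed edge vanishes over $\BF_2$, so the empty $2$-chain fills it.
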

\begin{fact}    \label{fact:in-subgroup}
    Let $\hat{w} = \col{x_0}{H}\col{x_1}{H}\cdots\col{x_{t-1}}{H}$ be an ``in-subgroup'' word with $\phi(\hat{w}) = \Id$, where $H\in \CH$.
    Then $\CL(\hat{w})$ consists of~$t$ self-loops at $\Id H$, and hence 
    $\triangle(\hat{w}) \leq 1$.
\end{fact}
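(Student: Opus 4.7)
The plan is a direct verification in three short steps, whose only real content is to keep track of the fact that we are working in the coset \emph{multi}complex (which permits repeated subgroups). First, since $H$ is a subgroup and each $x_j \in H$, every partial product $x_0 x_1 \cdots x_j$ lies in $H$, so the coset $x_0 \cdots x_j H$ equals $\Id H$ for every $j$. Consequently every vertex visited by $\CL(\hat{w})$ is the single vertex $\Id H$, and every edge traversed is the self-loop $\{\Id H, \Id H\}$ --- a legitimate $1$-face of $\CoCoCo(G;\CH)$, witnessed (in the sense of the definition of $\CL$) by the elements $x_0 \cdots x_i$, all of which lie in $\Id H$. So $\CL(\hat{w})$ really is $t$ copies of this one self-loop, as the statement claims.

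Next, I would compute $[\CL(\hat{w})]$ as an $\BF_2$-chain: summing $t$ copies of the same edge yields either the zero chain (when $t$ is even) or the single self-loop $\{\Id H, \Id H\}$ (when $t$ is odd).

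Finally, exhibit an $\BF_2$-filling by at most one triangle. In the even case the empty $2$-chain already has $\partial_2 = 0 = [\CL(\hat{w})]$, giving $\triangle(\hat{w}) = 0$. In the odd case, take the degenerate triangle $\tau = \{\Id H,\Id H,\Id H\}$; this is a bona fide $2$-face of $\CoCoCo(G;\CH)$, arising from $x = \Id$ with all three chosen subgroups equal to $H$. Its $\BF_2$-boundary is the sum of the three $1$-faces obtained by deleting one vertex, each equal to $\{\Id H, \Id H\}$; the three copies sum over $\BF_2$ to the single self-loop, matching $[\CL(\hat{w})]$. Hence $\triangle(\hat{w}) \leq 1$.

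The ``hard part'' is really no harder than the bookkeeping above: the one thing one must not forget is that $\CoCoCo(G;\CH)$ is a multicomplex, which is precisely what legitimizes both the self-loop $\{\Id H,\Id H\}$ as a $1$-face and the degenerate triangle $\{\Id H,\Id H,\Id H\}$ as a $2$-face. Without this, the claim would fail in the odd-$t$ case, since an ordinary simplicial complex has no edges from a vertex to itself.
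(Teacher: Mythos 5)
Your proof is correct and follows precisely the argument the paper has in mind; the paper records this as a "Fact" without an explicit proof, but the degenerate-triangle filling is exactly the one the paper itself invokes in the analogous \Cref{fact:id}. The only very minor imprecision is the phrase ``arising from $x=\Id$ with all three chosen subgroups equal to $H$'': when $d>2$ the $3$-multiset $\{\Id H,\Id H,\Id H\}$ is not a maximal face but a sub-multiset of the maximal face $\{\Id H,\dots,\Id H\}$ (with $d+1$ copies); since the multicomplex is downward closed this does not affect the argument, and your emphasis on the multicomplex structure being what legitimizes both the self-loop and the degenerate triangle is exactly the right point to flag.
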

\begin{fact}    \label{fact:translate}
    Let $L$ be a closed walk and let $S$ be a $2$-chain with $\partial_3 S = [L]$.  
    Then for any translation $y \in G$ we have that $\partial_2 (yS) = [yL]$.
    Hence $\triangle(yL) = \triangle(L)$.
\end{fact}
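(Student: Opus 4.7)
The plan is to exhibit left-translation by $y$ as a simplicial automorphism of the multicomplex $\CoCoCo(G;\CH)$ and then invoke the standard fact that such automorphisms intertwine the boundary operator. First I check well-definedness: the map $xH_i \mapsto (yx)H_i$ is a bijection of the vertex set $\bigsqcup_i G/H_i$, since $xH_i = x'H_i$ iff $x^{-1}x' \in H_i$ iff $(yx)^{-1}(yx') \in H_i$. This extends to a bijection on multiset faces by applying it entrywise, sending $\{xH_{i_0},\ldots,xH_{i_k}\}$ to $\{yxH_{i_0},\ldots,yxH_{i_k}\}$, which has the same cardinality and colors; the assignment manifestly preserves containment of multisets, so it is simplicial.

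Because $y \cdot$ is thus a simplicial automorphism, the $\BF_2$-linear maps it induces on $j$-chains (extending the action on faces $\BF_2$-linearly) commute with $\partial_2$: the boundary is defined face-by-face as the sum of sub-faces, and any bijection of faces that preserves sub-face relations intertwines this sum. Thus for the hypothesized 2-chain $S$ with $\partial_2 S = [L]$,
\begin{equation*}
\partial_2(yS) \;=\; y \cdot (\partial_2 S) \;=\; y \cdot [L] \;=\; [yL],
\end{equation*}
where the last equality holds because the walk $yL$ traverses precisely the $y$-translates of the edges of $L$, with the same multiplicities, so its $\BF_2$-indicator sum is the $y$-image of the $\BF_2$-indicator sum of $L$.

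For the equality $\triangle(yL) = \triangle(L)$, the first part immediately shows that $S \mapsto yS$ is a cardinality-preserving bijection between 2-chain fillings of $[L]$ and of $[yL]$, with inverse $T \mapsto y^{-1} T$. Taking minima on each side yields the equality, respecting the convention $\triangle = \infty$ when no filling exists. The \emph{main obstacle} here is essentially nonexistent: the whole argument reduces to verifying that left-multiplication descends to a simplicial action on $\CoCoCo(G;\CH)$, which is immediate from the compatibility between left multiplication in $G$ and right-coset formation, so the fact is mostly a bookkeeping consequence of \Cref{def:translate}.
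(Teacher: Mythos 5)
Your proof is correct. The paper lists this among several facts stated without proof as ``straightforward,'' and your argument — exhibiting left-translation by $y$ as a simplicial automorphism of $\CoCoCo(G;\CH)$ that intertwines $\partial_2$, then observing $S \mapsto yS$ is a cardinality-preserving bijection between $\BF_2$-fillings of $[L]$ and of $[yL]$ — is exactly the intended reasoning, with the natural details filled in (and you implicitly corrected the paper's typo $\partial_3 S = [L]$ to $\partial_2 S = [L]$).
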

\begin{fact}    \label{fact:cyclic}
    Let $\hat{w} = \col{x_0}{H_0}\col{x_1}{H_1}\cdots\col{x_{t-1}}{H_{t-1}}$ in $F_{\CH}$ with $\phi(\hat{w}) = \Id \in G$.
    Then $\CL(\hat{w}^{-1})$ is the reverse of~$\CL(\hat{w})$; hence $[\CL(\hat{w}^{-1})] = [\CL(\hat{w})]$ and $\triangle(\hat{w}^{-1}) = \triangle(\hat{w})$.
    
    Moreover, if $\hat{w}^i = \col{x_i}{H_i}\col{x_{i+1}}{H_{i+1}}\cdots\col{x_{t-1}}{H_{t-1}}\col{x_0}{H_0}\cdots\col{x_{i-1}}{H_{i-1}}$ is some cyclic shift of~$\hat{w}$, then $x_0x_1\cdots x_{i-1}\CL(\hat{w}^i) = \CL(\hat{w})$, and hence $\triangle(\hat{w}^i) = \triangle(\hat{w})$.
\end{fact}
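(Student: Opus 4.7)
The plan is to verify both claims by unfolding the definition of $\CL$ directly and invoking previously-established facts; the hypothesis $\phi(\hat{w}) = \Id$, i.e.\ $x_0 \cdots x_{t-1} = \Id$, does all the real work by letting the two possible names for any partial product (as a prefix of $x_0, \ldots, x_{t-1}$, or as the inverse of the complementary suffix) be freely interchanged.

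For the reversal statement, I would first rewrite $\hat{w}^{-1} = \col{x_{t-1}^{-1}}{H_{t-1}} \cdots \col{x_0^{-1}}{H_0}$ using the built-in relation $\col{x}{H_i}^{-1} = \col{x^{-1}}{H_i}$ of $F_{\CH}$ (see \Cref{def:free}) and then write down the vertices of $\CL(\hat{w}^{-1})$ by the defining recipe. A typical coset representative appearing there is $x_{t-1}^{-1} \cdots x_{j+1}^{-1}$, which by $x_0 \cdots x_{t-1} = \Id$ equals $x_0 \cdots x_j$; pairing cosets this way shows that $\CL(\hat{w}^{-1})$ traces the same cyclic sequence of vertices as $\CL(\hat{w})$ but in the reverse order. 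Since an undirected edge is orientation-independent, $[\CL(\hat{w}^{-1})] = [\CL(\hat{w})]$ as $\BF_2$-chains, from which $\triangle(\hat{w}^{-1}) = \triangle(\hat{w})$ is immediate by the definition of $\triangle$.

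For the cyclic shift, the displayed equality $x_0 \cdots x_{i-1}\CL(\hat{w}^i) = \CL(\hat{w})$ is essentially definitional: the $j$-th vertex of $\CL(\hat{w}^i)$ is $x_i x_{i+1} \cdots x_{i+j-1} H_{i+j}$ (subscripts read mod $t$), so left-multiplying by $x_0 \cdots x_{i-1}$ produces $x_0 \cdots x_{i+j-1} H_{i+j}$, which is the $(i+j)$-th vertex of $\CL(\hat{w})$; when $i+j$ exceeds $t$ the wrap-around is carried out by substituting $x_0 \cdots x_{t-1} = \Id$ into the middle of the product. Given this identification of walks, the chain $\triangle(\hat{w}^i) = \triangle(\CL(\hat{w}^i)) = \triangle(x_0 \cdots x_{i-1}\CL(\hat{w}^i)) = \triangle(\CL(\hat{w})) = \triangle(\hat{w})$ closes the argument, where the middle equality is exactly \Cref{fact:translate} and the outer equalities are the notational convention built into the definition of $\triangle$.

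I do not anticipate any substantive obstacle; the entire proof is routine bookkeeping once one sees that $\phi(\hat{w}) = \Id$ is precisely what identifies the two descriptions of partial products, which in turn is what forces both the reversed and the cyclically shifted walks to coincide (up to translation) with $\CL(\hat{w})$.
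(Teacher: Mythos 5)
The paper presents \Cref{fact:cyclic} without a proof, so there is nothing to compare against; your fill-in is correct and is exactly the direct verification one would write. Both halves unpack the definition of $\CL$, and the only ingredient is the hypothesis $\phi(\hat{w}) = \Id$, which (as you correctly isolate) gives $x_{t-1}^{-1}\cdots x_{j+1}^{-1} = x_0\cdots x_j$ and $x_0\cdots x_{i-1}\cdot x_i\cdots x_{t-1} = \Id$, letting the inverted-suffix and prefix names of a partial product be interchanged.

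One small point worth making explicit for yourself: the walks $\CL(\hat{w}^{-1})$ and $x_0\cdots x_{i-1}\CL(\hat{w}^i)$ agree with (the reversal of) $\CL(\hat{w})$ only as \emph{closed} walks, i.e.\ up to a cyclic shift of the starting vertex. For instance $\CL(\hat{w})$ begins at $\Id H_0$, while the translated $\CL(\hat{w}^i)$ begins at $x_0\cdots x_{i-1}H_i$, which is the $i$-th (not $0$-th) vertex of $\CL(\hat{w})$. You signal this for the inverse case by speaking of ``the same cyclic sequence,'' and since $[\,\cdot\,]$ is just the $\BF_2$-multiset of undirected edges (insensitive to both starting point and orientation), and $\triangle$ depends only on $[\,\cdot\,]$, the conclusion is unaffected. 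Your use of \Cref{fact:translate} for the middle equality in the chain $\triangle(\hat{w}^i) = \triangle(\CL(\hat{w}^i)) = \triangle(x_0\cdots x_{i-1}\CL(\hat{w}^i)) = \triangle(\CL(\hat{w})) = \triangle(\hat{w})$ is exactly right.
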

The following proposition is important; informally, it says that if ``$xy=\Id$'' has a small triangulation, and ``$y=z$'' has a small triangulation, then we can deduce ``$xz=\Id$'' has a small triangulation.
\begin{proposition} \label{prop:stitch}
    Let $\hat{x},\hat{y},\hat{z}$ be words in $F_{\CH}$ with $\phi(\hat{x}\hat{y}) =
    \phi(\hat{y}\hat{z}^{-1}) = \Id$, hence $\phi(\hat{x}\hat{z}) = \Id$.
    Then $\triangle(\hat{x}\hat{z}) \leq \triangle(\hat{x}\hat{y}) + \triangle(\hat{y}\hat{z}^{-1}) + 2$.
\end{proposition}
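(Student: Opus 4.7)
The plan is to paste together an optimal filling $S_1$ of $\CL(\hat{x}\hat{y})$ with the $\phi(\hat{x})$-translate of an optimal filling $S_2$ of $\CL(\hat{y}\hat{z}^{-1})$ (available via \Cref{fact:translate}), and then correct the resulting six-edge seam-discrepancy by adjoining two explicit triangles. Concretely, I would show that
\[
[\CL(\hat{x}\hat{z})] \;=\; [\CL(\hat{x}\hat{y})] \;+\; [\phi(\hat{x})\CL(\hat{y}\hat{z}^{-1})] \;+\; \partial_2(T_1 + T_2)
\]
over $\BF_2$, for two specific triangles $T_1, T_2 \in \CoCoCo(G;\CH)$ described below. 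Granting this, $S_1 + \phi(\hat{x})\,S_2 + T_1 + T_2$ is a $2$-chain filling $\CL(\hat{x}\hat{z})$ using $\triangle(\hat{x}\hat{y}) + \triangle(\hat{y}\hat{z}^{-1}) + 2$ triangles, yielding the claimed bound.

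To verify the identity, I would write $\hat{x} = x_0 \cdots x_{\ell_x-1}$, $\hat{y} = y_0 \cdots y_{\ell_y-1}$, $\hat{z} = z_0 \cdots z_{\ell_z-1}$ with corresponding color sequences $H_{\cdot, x}, H_{\cdot, y}, H_{\cdot, z}$, and compare the three loops edge-by-edge. The ``$\hat{x}$-parts'' of $\CL(\hat{x}\hat{y})$ and $\CL(\hat{x}\hat{z})$ differ only in their last edge, whose second endpoint is $\phi(\hat{x})\,H_{0, y}$ versus $\phi(\hat{x})\,H_{0, z}$. The ``$\hat{y}$-parts'' of $\CL(\hat{x}\hat{y})$ and $\phi(\hat{x})\CL(\hat{y}\hat{z}^{-1})$ likewise differ only in their last edge, whose second endpoint (simplified using $\phi(\hat{x}\hat{y}) = \Id$) is $\Id\,H_{0, x}$ versus $\Id\,H_{\ell_z-1, z}$. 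Finally, using $\phi(\hat{x}\hat{z}) = \Id$ to write $\phi(\hat{x}) = z_{\ell_z-1}^{-1}\cdots z_0^{-1}$, one checks that the $i$-th edge of the $\hat{z}$-walk in $\CL(\hat{x}\hat{z})$ and the $(\ell_z - 2 - i)$-th edge of the $\hat{z}^{-1}$-walk in $\phi(\hat{x})\CL(\hat{y}\hat{z}^{-1})$ are both the edge $\{w H_{i,z}, w H_{i+1,z}\}$ with witness $w = z_{\ell_z-1}^{-1}\cdots z_{i+1}^{-1}$. Hence these pairs cancel over $\BF_2$ for $i = 0, \dots, \ell_z - 2$, leaving only the final edge on each side.

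The six surviving edges are exactly the boundaries of two triangles. Near $\phi(\hat{x})$, take $T_1 = \{\phi(\hat{x})\,H_{\ell_x-1, x},\ \phi(\hat{x})\,H_{0, y},\ \phi(\hat{x})\,H_{0, z}\}$ with witness $\phi(\hat{x})$, whose boundary supplies the two surviving last-$\hat{x}$-edges together with the surviving last (translated) $\hat{z}^{-1}$-edge $\{\phi(\hat{x})H_{0,z}, \phi(\hat{x})H_{0,y}\}$. Near $\Id$, take $T_2 = \{\Id\,H_{\ell_y-1, y},\ \Id\,H_{0, x},\ \Id\,H_{\ell_z-1, z}\}$ with witness $\Id$, whose boundary supplies the two surviving last-$\hat{y}$-edges together with the surviving last $\hat{z}$-edge $\{\Id H_{\ell_z-1,z}, \Id H_{0,x}\}$. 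The main obstacle is precisely the careful coset bookkeeping at these two seams, where the three loops' ``outer'' colors diverge; reconciling these mismatches is exactly the role of $T_1$ and $T_2$, and accounts for the additive ``$+2$''.
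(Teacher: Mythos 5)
Your proof is correct and follows essentially the same approach as the paper's: both decompose the three loops, observe that the common subpaths cancel over $\BF_2$, and identify the six surviving edges as the boundaries of two (possibly degenerate) triangles — your $T_1$ and $T_2$ are precisely the two triangles the paper exhibits. The only (immaterial) difference is that you spell out the cancellation bookkeeping explicitly, while the paper leaves it as a routine computation; you also omit the paper's parenthetical remark that one must separately (but more easily) handle the case where some of $\hat{x},\hat{y},\hat{z}$ are empty.
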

\begin{proof}
    Let $S_1$ (respectively,~$S_2$) be a $2$-chain  of triangles with boundary $[\CL(\hat{x}\hat{y})]$ (respectively, $[\phi(\hat{x})\CL(\hat{y}\hat{z}^{-1})]$) and cardinality $\triangle(\hat{x}\hat{y})$ (respectively, $\triangle(\hat{y}\hat{z}^{-1})$, using \Cref{fact:translate}); here we may assume $\triangle(\hat{x}\hat{y}), \triangle(\hat{y}\hat{z}^{-1}) < \infty$ else there is nothing to prove.
    We will identify a set $S^*$ of two triangles such that $[\CL(\hat{x}\hat{y})] + [\phi(\hat{x})\CL(\hat{y}\hat{z}^{-1})] + \partial_2S^* = [\CL(\hat{x}\hat{z})]$ (as $1$-chains over~$\BF_2$).
    Then the $2$-chain $S_1 + S_2 + S^*$ has boundary $[\CL(\hat{x}\hat{z})]$, from which the proposition follows.
    So it suffices to show that the $1$-chain
    \begin{equation}    \label{eqn:stitch}
        [\CL(\hat{x}\hat{y})] + [\CL(\hat{x}\hat{z})] + [\phi(\hat{x})\CL(\hat{y}\hat{z}^{-1})]
    \end{equation}
    is the boundary of some two triangles in $\CoCoCo(G;\CH)$.
    
    We will assume $\hat{x},\hat{y},\hat{z}$ have length at least~$1$, as the case when one or more has length~$0$ is easier.
    Let $\hat{x} = \col{x_0}{A_0} \cdots \col{x_{j-1}}{A_{j-1}}$, where $x_i \in A_i \in \CH$, and similarly write $\hat{y} = \col{y_0}{B_0} \cdots \col{y_{k-1}}{B_{k-1}}$ and $\hat{z} = \col{z_0}{C_0} \cdots \col{z_{\ell-1}}{C_{\ell-1}}$.
    Then one may check that we have the following closed walks:
    \begin{gather}
        \CL(\hat{x}\hat{y}) = \underbrace{\Id A_0 \to x_0 A_1 \to \cdots \to \phi(\hat{x})A_{j-1}}_{P_0} \to \underbrace{\phi(\hat{x})B_0 \to 
        \phi(\hat{x})y_0B_1 \to 
        \cdots \to \Id B_{k-1}}_{P_1}\ (\to \Id A_0) \\
        \CL(\hat{x}\hat{z}) = \underbrace{\Id A_0 \to x_0 A_1 \to \cdots \to \phi(\hat{x})A_{j-1}}_{P_0} \to \underbrace{\phi(\hat{x})C_0 \to 
        \phi(\hat{x})z_0C_1 \to 
        \cdots \to \Id C_{\ell-1}}_{P_2}\ (\to \Id A_0) \\
        \phi(\hat{x})\CL(\hat{y}\hat{z}^{-1}) = \underbrace{\phi(\hat{x})B_0 \to \phi(\hat{x})y_0B_1 \to \cdots \to \Id B_{k-1}}_{P_1} \to \underbrace{\Id C_{\ell-1} \to z_{\ell-1}^{-1}C_{\ell-2} \to \cdots \to \phi(\hat{x}) C_0}_{P_2^{-1}} \ (\to \phi(\hat{x})B_0)
    \end{gather}
    From this we can compute that the $1$-chain in \Cref{eqn:stitch} simplifies significantly, with the $P_0$, $P_1$, and $P_2$ portions canceling (over~$\BF_2$).
    The six uncanceled edges are
    \begin{align}
        \phi(\hat{x})A_{j-1} &\to \phi(\hat{x})B_0, & 
        \Id B_{k-1} &\to \Id A_0,\\
        \phi(\hat{x})A_{j-1} &\to \phi(\hat{x})C_0, & 
        \Id C_{\ell-1} &\to \Id A_0,\\
        \phi(\hat{x}) C_0 &\to \phi(\hat{x})B_0, & \Id B_{k-1} &\to \Id C_{\ell-1},
    \end{align}
    which indeed form (the boundary of) two possibly degenerate triangles, as claimed.
\end{proof}
Using \Cref{fact:cyclic}, we immediately conclude:
\begin{corollary}   \label{cor:stitch}
    Let $\hat{p},\hat{q},\hat{u},\hat{v}$ be words in $F_{\CH}$ with $\phi(\hat{p}\hat{u}\hat{q}) = \phi(\hat{u}\hat{v}^{-1}) = \Id$,
    hence $\phi(\hat{p}\hat{v}\hat{q}) = \Id$.
    Then $\triangle(\hat{p}\hat{v}\hat{q}) \leq \triangle(\hat{p}\hat{u}\hat{q}) + \triangle(\hat{u}\hat{v}^{-1}) + 2$.
\end{corollary}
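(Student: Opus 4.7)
The plan is to reduce the corollary directly to \Cref{prop:stitch} via the cyclic-shift invariance of $\triangle$ recorded in \Cref{fact:cyclic}. The corollary is essentially ``\Cref{prop:stitch} up to a cyclic rotation of the word'': \Cref{prop:stitch} handles the case where the word $\hat{u}$ we wish to swap for $\hat{v}$ sits at the suffix (and $\hat{p}$ is the whole remainder), whereas here $\hat{u}$ sits in the middle of $\hat{p}\hat{u}\hat{q}$. Cyclically rotating the word so that the $\hat{u}$-portion becomes the suffix should make the two statements coincide.

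Concretely, I would set $\hat{x} := \hat{q}\hat{p}$, $\hat{y} := \hat{u}$, and $\hat{z} := \hat{v}$. Then $\hat{x}\hat{y} = \hat{q}\hat{p}\hat{u}$ is the cyclic shift of $\hat{p}\hat{u}\hat{q}$ by $|\hat{p}|$ positions, so by \Cref{fact:cyclic} we have $\phi(\hat{x}\hat{y}) = \phi(\hat{p}\hat{u}\hat{q}) = \Id$ and $\triangle(\hat{x}\hat{y}) = \triangle(\hat{p}\hat{u}\hat{q})$. Similarly $\hat{y}\hat{z}^{-1} = \hat{u}\hat{v}^{-1}$ is already in the form the hypothesis gives, so $\phi(\hat{y}\hat{z}^{-1}) = \Id$ and $\triangle(\hat{y}\hat{z}^{-1}) = \triangle(\hat{u}\hat{v}^{-1})$. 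Finally $\hat{x}\hat{z} = \hat{q}\hat{p}\hat{v}$ is the cyclic shift of $\hat{p}\hat{v}\hat{q}$ by $|\hat{p}|$ positions, so $\triangle(\hat{x}\hat{z}) = \triangle(\hat{p}\hat{v}\hat{q})$.

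Applying \Cref{prop:stitch} to $\hat{x}, \hat{y}, \hat{z}$ and substituting through these three equalities gives exactly the bound $\triangle(\hat{p}\hat{v}\hat{q}) \le \triangle(\hat{p}\hat{u}\hat{q}) + \triangle(\hat{u}\hat{v}^{-1}) + 2$. No step presents a real obstacle; the only item worth being careful about is verifying that \Cref{fact:cyclic} applies to each of the three words (i.e., that all three cyclic rotations used are rotations of genuinely trivial words in $G$, which they are since $\phi$ is well-defined on cyclic shifts of words whose image is $\Id$). Edge cases in which one or more of $\hat{p}, \hat{q}, \hat{u}, \hat{v}$ is empty are handled transparently by the same argument, since \Cref{prop:stitch} was already proved to handle length-zero words.
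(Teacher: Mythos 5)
Your proposal is correct and takes essentially the same route the paper intends: the paper's proof is literally just the sentence ``Using \Cref{fact:cyclic}, we immediately conclude [the corollary],'' and your instantiation of \Cref{prop:stitch} with $\hat{x}=\hat{q}\hat{p}$, $\hat{y}=\hat{u}$, $\hat{z}=\hat{v}$, together with the cyclic-shift invariance of $\triangle$, is exactly how that ``immediately conclude'' cashes out. One small slip worth noting: $\hat{q}\hat{p}\hat{u}$ is a cyclic shift of $\hat{p}\hat{u}\hat{q}$ by $|\hat{p}|+|\hat{u}|$ positions (or equivalently, moving the $\hat{q}$-suffix to the front), not by $|\hat{p}|$ positions; a shift by $|\hat{p}|$ would give $\hat{u}\hat{q}\hat{p}$ instead. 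This does not affect the correctness of the argument since the only fact you actually use is that the two words are related by some cyclic shift.
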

Finally, by repeatedly using the above with the ``recoloring'' words from \Cref{fact:recolor}, we obtain the following useful lemma:
\begin{lemma}   \label{lem:recoloring}
    Let $\hat{w} = \col{x_0}{A_0} \cdots \col{x_{\ell-1}}{A_{\ell-1}}$ (with $x_i \in A_i \in \CH$) have $\phi(w) = \Id$.
    Let $\hat{w}'$ be a valid ``recoloring'', $\hat{w}' = \col{x_0}{B_0} \cdots \col{x_{\ell-1}}{B_{\ell-1}}$ (with $x_i \in B_i \in \CH$).
    Then $\triangle(\hat{w}') \leq \triangle(\hat{w}) + 2|\hat{w}|$.
\end{lemma}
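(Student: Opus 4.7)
The plan is to recolor one symbol at a time, using \Cref{cor:stitch} as a black box and invoking \Cref{fact:recolor} to show that each recoloring step contributes only~$2$ to the triangulation count.

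Concretely, for $0 \leq i \leq \ell$ I would define the intermediate word
\[
\hat{w}_i = \col{x_0}{B_0}\cdots \col{x_{i-1}}{B_{i-1}} \col{x_i}{A_i}\cdots \col{x_{\ell-1}}{A_{\ell-1}},
\]
so that $\hat{w}_0 = \hat{w}$ and $\hat{w}_\ell = \hat{w}'$. Each $\hat{w}_i$ is a well-defined element of $F_{\CH}$ because the hypothesis that both $\hat{w}$ and $\hat{w}'$ are valid guarantees $x_j \in A_j$ and $x_j \in B_j$ in the appropriate ranges. Crucially, $\phi$ does not see colors, so $\phi(\hat{w}_i) = \phi(\hat{w}) = \Id$ for every~$i$.

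To pass from $\hat{w}_{i-1}$ to $\hat{w}_i$ I would apply \Cref{cor:stitch} with
\[
\hat{p} = \col{x_0}{B_0}\cdots \col{x_{i-1}}{B_{i-1}}, \quad
\hat{q} = \col{x_{i+1}}{A_{i+1}}\cdots\col{x_{\ell-1}}{A_{\ell-1}}, \quad
\hat{u} = \col{x_i}{A_i}, \quad
\hat{v} = \col{x_i}{B_i},
\]
so that $\hat{p}\hat{u}\hat{q} = \hat{w}_{i-1}$ and $\hat{p}\hat{v}\hat{q} = \hat{w}_i$. The hypotheses of \Cref{cor:stitch} hold: $\phi(\hat{p}\hat{u}\hat{q}) = \Id$ as just noted, and $\hat{u}\hat{v}^{-1} = \col{x_i}{A_i}\col{x_i^{-1}}{B_i}$ is exactly a recoloring word of the type in \Cref{fact:recolor} (using $x_i \in A_i \cap B_i$), whence $\phi(\hat{u}\hat{v}^{-1}) = \Id$ and $\triangle(\hat{u}\hat{v}^{-1}) = 0$. \Cref{cor:stitch} then yields
\[
\triangle(\hat{w}_i) \leq \triangle(\hat{w}_{i-1}) + \triangle(\hat{u}\hat{v}^{-1}) + 2 = \triangle(\hat{w}_{i-1}) + 2.
\]

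Iterating this bound from $i=1$ to $i=\ell$ gives $\triangle(\hat{w}') = \triangle(\hat{w}_\ell) \leq \triangle(\hat{w}) + 2\ell = \triangle(\hat{w}) + 2|\hat{w}|$, as desired. There is no serious obstacle here: all the work has been isolated into \Cref{cor:stitch} and \Cref{fact:recolor}, and the only thing to verify is that the intermediate words $\hat{w}_i$ remain syntactically valid elements of $F_{\CH}$, which is immediate from the validity of $\hat{w}$ and $\hat{w}'$.
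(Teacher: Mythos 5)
Your proof is correct and is precisely the argument the paper intends: the lemma is introduced in the paper by the remark that it follows ``by repeatedly using [Corollary~\ref{cor:stitch}] with the recoloring words from Fact~\ref{fact:recolor},'' and your symbol-by-symbol iteration, with $\hat{u} = \col{x_i}{A_i}$, $\hat{v} = \col{x_i}{B_i}$, and $\hat{u}\hat{v}^{-1}$ being exactly a Fact~\ref{fact:recolor} recoloring word of $\triangle$-cost $0$, is that iteration spelled out. No gap.
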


\subsection{Derivations imply triangulations}
\begin{definition}  \label{def:notationy}
    Let $G$ be a group, let $\CH$ be a set of subgroups, and let $S = \bigcup \CH$, regarded as a set of generating symbols (as for a group presentation).
    For integers $\ell \geq 3$, $t \geq 1$, we define the \emph{$(\ell,t)$-bounded relators} $R_{\ell,t}$ to be the set of words $r = x_0 x_1 \cdots x_{\ell'-1}$ over~$S$ of length $\ell' \leq \ell$ such that: (i)~$r = \Id$ when the symbols are interpreted as group elements in~$G$; (ii)~there is a ``coloring'' $\hat{r} = \col{x_0}{H_0}\col{x_1}{H_1} \cdots \col{x_{\ell'-1}}{H_{\ell'-1}}$ (with $x_i \in H_i \in \CH$) satisfying $\triangle(\hat{r}) \leq t$.
\end{definition}
\begin{remark} \label{rem:in-subgroup}
    By \Cref{fact:in-subgroup}, $R_{\ell,t}$ always contains all trivial ``in-subgroup'' words of length at most~$\ell$; i.e., all $w$ where $x_0, \dots, x_{\ell'-1}$ are in a single $H \in \CH$, and where $x_0 \cdots x_{\ell'-1} = \Id$ when the symbols $x_i$ are interpreted as elements in~$H$.
    In particular, it includes all length-$3$ words of the form $st\Id$ where $s,t$ are group-theoretic inverses.
\end{remark}
\begin{remark}
    Given $G, \CH, S, R_{\ell,t}$ as above, the reader may find it helpful in what follows to keep in mind the group presentation $\CP_{\ell,t} = \langle S \mid R_{\ell,t}\rangle$, and the associated Dehn function.
    Strictly speaking, we won't quite refer to these concepts for a few minor technical reasons: we prefer not to have formal inverse symbols; we prefer to think of reducing trivial words to~$\Id$ rather than to the empty word; and, we prefer to freely allow cyclic shifts and inverses.
\end{remark}

\begin{definition}
    We define an equivalence relation~$\sim$ on words over~$S$ by specifying that $v \sim u$ if $v$ is a cyclic shift of either~$u$ or~$u^{-1}$, where (as expected) $u^{-1}$ denotes the word formed by reversing~$u$ and replacing each symbol with the symbol for its group-theoretic inverse.
    (The semantics of $v \sim u$ is ``$v = \Id$ iff $u = \Id$''.)
    We define an \emph{equation} to be a string ``$y=z$'', where $y,z$ are words, and we also identify this equation with the word $yz^{-1}$. 
    Finally, we say equation ``$y=z$'' is \emph{derived from equation ``$w=x$'' via relator $r \in R_{\ell,t}$} if (for words $p,q,u,v$) 
    \begin{equation}    \label{eqn:reduce}
        \text{``$w=x$''} \sim puq, \quad 
        \text{``$y=z$''} \sim pvq, \quad 
        r \sim uv^{-1}.
    \end{equation}
    In other words, up to equivalences, $x'=y'$ can be obtained from $x=y$ by substitution of an equation equivalent to~$r$.
    (We may also describe the above situation by saying that word $pvq$ is derived from word $puq$ via~$r$.)
    Note that it is equivalent to say that``$w=x$'' is derived from ``$y=z$'' via~$r$.
\end{definition}    
\begin{fact}    \label{fact:preserve-triv}
    If ``$x=y$'' is true when the symbols are interpreted in~$G$, and ``$y=z$'' is derived via~$r \in R_{\ell,t}$, then ``$y = z$'' is also true in~$G$.
\end{fact}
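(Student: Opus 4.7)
The plan is to unwind the definitions and observe that applying a true-in-$G$ relator as a substitution preserves truth in $G$, since $\phi$ is a homomorphism. The only subtlety is keeping track of the conventions: equations are identified with words via $(y{=}z) \leftrightarrow yz^{-1}$, and the equivalence $\sim$ (cyclic shifts and inversion) preserves whether a word evaluates to $\Id$ under $\phi$.

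First I would record the following sub-observation: if $a \sim b$, then $\phi(a) = \Id$ iff $\phi(b) = \Id$. This is because cyclically shifting a word $x_0 x_1 \cdots x_{\ell-1}$ conjugates $\phi$ of it by a prefix (leaving triviality invariant), while the word-inverse convention ($x^{-1}$ reverses the word and replaces each symbol by the symbol for its group-theoretic inverse) sends $\phi(w)$ to $\phi(w)^{-1}$.

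Next I would unpack the hypothesis. By \Cref{def:notationy}, $r \in R_{\ell,t}$ in particular satisfies $\phi(r) = \Id$. Since the derivation condition \Cref{eqn:reduce} gives $r \sim uv^{-1}$, the sub-observation yields $\phi(uv^{-1}) = \Id$, i.e., $\phi(u) = \phi(v)$ in $G$. Meanwhile, ``$w = x$'' being true in $G$ means $\phi(w) = \phi(x)$, equivalently $\phi(wx^{-1}) = \Id$; and from $(w{=}x) \sim puq$ (i.e., $wx^{-1} \sim puq$) the sub-observation gives $\phi(puq) = \Id$, so $\phi(p)\phi(u)\phi(q) = \Id$.

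Finally I would substitute: replacing $\phi(u)$ by $\phi(v)$ yields $\phi(p)\phi(v)\phi(q) = \phi(pvq) = \Id$. Since $(y{=}z) \sim pvq$, i.e., $yz^{-1} \sim pvq$, the sub-observation once more gives $\phi(yz^{-1}) = \Id$, which is precisely the statement that ``$y = z$'' holds in $G$. There is no real obstacle here; the only thing to be careful about is the bookkeeping around the equation-to-word convention and the action of $\sim$ on triviality, which is why I would isolate that as the first sub-observation.
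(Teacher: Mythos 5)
Your proof is correct and is essentially the standard argument that the paper leaves implicit (it labels this a \emph{Fact} and gives no proof): unwind the convention identifying equations with words, note that cyclic shifts and inversion preserve triviality of the interpreted product, use that $r\in R_{\ell,t}$ interprets to $\Id$ by definition, and substitute. One small notational caveat: you write $\phi$ for the interpretation map on uncolored words over $S$, whereas the paper reserves $\phi$ for the colored free group $F_{\CH}$ — here the relevant map is just the plain product-in-$G$ map on words over $S$, but the argument is unaffected. (You also implicitly correct a small variable-name mismatch in the Fact statement, which says ``$x=y$'' where the derivation definition uses ``$w=x$'' for the premise.)
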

\begin{remark}
    When studying the Dehn function with respect to a presentation, one typically also considers ``free reduction'' (replacing $ss^{-1}$ by the empty word, $s \in S$) and its opposite, ``free expansion''.
    But these are already included in the preceding formulation via two derivation steps.\footnote{Except in the case where free reduction reduces a word to the empty word. However, we will only ever consider reducing to~$\Id$.}
    For example, to implement free reduction on a word of the form $wss^{-1}x$ where at least one of $w,x$ is nonempty --- say~$x = x_1 \cdots x_m$, $m \geq 1$ --- we can first reduce $ss^{-1}$ to~$\Id$ per \Cref{rem:in-subgroup}.
    We can then perform ``$\Id$-deletion'' by reducing $\Id x_1$ to $x_1$ (again by \Cref{rem:in-subgroup}, since ``$\Id x_1 = x_1$'' is ``in-subgroup'' for any subgroup containing~$x_1$).
\end{remark}
\begin{definition}
    Let $w$ be a word over~$S$.
    We write $\delta_{\ell,t}(w)$ for the least number of derivation steps, via relators from $R_{\ell,t}$, that it takes to reduce $w$ to the word~$\Id$.
    (Or, we write $\delta_{\ell,t}(w) = \infty$ if this is not possible.)
    If $w \sim \text{``$x=y$''}$, we also write this as $\delta_{\ell,t}(x=y)$, and refer to it as the least number of  steps required to 
    derive ``$x=y$'' via~$R_{\ell,t}$.
\end{definition}

Finally, we have:
\begin{theorem} \label{thm:relative-dehn}
    Let $\hat{w}$ be a word in $F_{\CH}$ with $\phi(\hat{w}) = \Id$ and let $w$ be the word over~$S$ formed by forgetting the colors of~$\hat{w}$.

    Then $\triangle(\hat{w}) \leq (t+4\ell+2)\cdot \delta_{\ell,t}(w)+1$.
\end{theorem}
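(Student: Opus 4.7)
The plan is induction on $k := \delta_{\ell,t}(w)$. For the base case $k = 0$, the word $w$ equals the single-symbol word~$\Id$, so $\hat{w}$ is either empty or a single colored symbol $\col{\Id}{H}$; \Cref{fact:id} gives $\triangle(\hat{w}) \leq 1$, matching the target bound.

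For the inductive step, consider the first step of an optimal derivation reducing $w$ to~$w_1$ via a relator $r \in R_{\ell,t}$. By definition there are words $p, u, q, v$ with $w \sim puq$, $w_1 \sim pvq$, and $r \sim uv^{-1}$. Using \Cref{fact:cyclic} to absorb the $\sim$-equivalences (which preserve $\triangle$), I may assume $\hat{w}$ literally factors as $\hat{p}\hat{u}\hat{q}$, where $\hat{p}, \hat{u}, \hat{q}$ are the induced colorings of $p, u, q$. By definition of $R_{\ell,t}$, the relator $r$ comes with some coloring $\hat{r}$ satisfying $\triangle(\hat{r}) \leq t$; applying further equivalences, rewrite $\hat{r}$ as $\hat{u}''(\hat{v}^*)^{-1}$ for colorings $\hat{u}''$ and $\hat{v}^*$ of $u$ and $v$.

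The key move is that the coloring $\hat{u}''$ arising from the relator may disagree with the coloring $\hat{u}$ inherited from $\hat{w}$; but every letter of $u$ lies in both of the respective subgroups (by virtue of being coloredly present in both words), so \Cref{lem:recoloring} applies and lets me recolor $\hat{u}''(\hat{v}^*)^{-1}$ into $\hat{u}(\hat{v}^*)^{-1}$ at an additional cost of at most $2|u| \leq 2\ell$, giving $\triangle(\hat{u}(\hat{v}^*)^{-1}) \leq t + 2\ell$. Now define the colored word $\hat{w}_1 := \hat{p}\hat{v}^*\hat{q}$; since $\phi(u) = \phi(v)$ in~$G$ (as $r$ is trivial), we have $\phi(\hat{w}_1) = \phi(\hat{w}) = \Id$, and $\hat{w}_1$ is a valid coloring of~$w_1$, so by the inductive hypothesis $\triangle(\hat{w}_1) \leq (t+4\ell+2)(k-1) + 1$. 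Applying \Cref{cor:stitch} with $u$ and $v$ in symmetric roles (valid because $\triangle(\hat{v}^*\hat{u}^{-1}) = \triangle(\hat{u}(\hat{v}^*)^{-1})$ by \Cref{fact:cyclic}) yields
\[
\triangle(\hat{w}) \;\leq\; \triangle(\hat{w}_1) + \triangle(\hat{u}(\hat{v}^*)^{-1}) + 2 \;\leq\; (t+4\ell+2)k + 1 - 2\ell,
\]
comfortably below the target bound.

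The main technical obstacle is verifying that the coloring manipulations are all legal: the relator's built-in coloring $\hat{r}$ need not match the ambient coloring of $\hat{w}$, but since each letter of the relator is itself an element of some subgroup in $\CH$, and the same letter appears in $\hat{w}$ with a (possibly different) valid coloring, both colorings are valid inputs to \Cref{lem:recoloring}. Once this is in hand, the rest of the argument is routine accounting of additive contributions: $t$ from the relator, $2\ell$ from recoloring, and $2$ from each stitch, yielding a per-step cost well within the $t + 4\ell + 2$ slack in the theorem statement.
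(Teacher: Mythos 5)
Your proof is correct and takes a genuinely different route from the paper's. The paper runs the derivation \emph{forwards}: it constructs colorings $\hat{w}_0,\dots,\hat{w}_\delta$ starting from an arbitrary coloring of $\Id$, shows each step costs $t+2\ell+2$, arrives at $\triangle(\hat{w}_\delta)\leq(t+2\ell+2)\delta+1$, and then must pay a final recoloring penalty of $2|w|\leq 2\ell\delta$ because the coloring $\hat{w}_\delta$ it built need not match the given $\hat{w}$ --- this is where the jump from $2\ell$ to $4\ell$ in the coefficient comes from. You instead run the derivation \emph{backwards}, inducting on $\delta_{\ell,t}(w)$ and stitching the first derivation step onto the given $\hat{w}$ directly. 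Because you inherit $\hat{p}$ and $\hat{q}$ from the ambient $\hat{w}$ and take the $\hat{v}^*$ colors from the (suitably recolored) relator, the coloring mismatch never accumulates; the only place an arbitrary coloring appears is at $\Id$, where any coloring has $\triangle\leq 1$. This is cleaner and in fact yields the stronger per-step bound $(t+2\ell+2)k+1$; you just apply the weaker inductive hypothesis $(t+4\ell+2)(k-1)+1$ and land at $(t+4\ell+2)k+1-2\ell$, which still clears the target. One small inaccuracy: you bound the \Cref{lem:recoloring} cost by $2|u|$, whereas the lemma as stated charges $2$ per symbol of the \emph{whole} word, i.e.\ $2|r|=2(|u|+|v|)$. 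Since $|r|\leq\ell$ the resulting $\leq 2\ell$ is still valid, so this has no effect on the final estimate. (Also, in the base case $k=0$ the word $w$ is literally the single symbol $\Id$, so $\hat{w}$ is necessarily a single colored symbol, not possibly empty; again immaterial since both cases give $\triangle\leq 1$.)
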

\begin{proof}
    Let $\delta = \delta_{\ell,t}(w) < \infty$ (else there is nothing to prove) and let $w_0, w_1, \dots, w_\delta$ denote a sequence of words with $w_0 = \Id$ and $w_\delta = w$ in which each $w_{i}$ is derived from $w_{i-1}$ via some relator in $R_{\ell,t}$.
    We make the simple observation that 
    \begin{equation}    \label{eqn:len-bound}
        |w_\delta| \leq \ell \delta,
    \end{equation}
    since any application of an $R_{\ell,t}$-relator 
    can increase length by at most~$\ell$ (in fact, $\ell-1$).
    We also note from \Cref{fact:preserve-triv} and induction that each $w_i$ equals~$\Id$ when its symbols are interpreted in~$G$.
    Thus for any coloring $\hat{w}_i \in F_{\CH}$ of~$w_i$, we have $\phi(\hat{w}_i) = \Id$ and hence $\triangle(\hat{w}_i)$ is well defined (albeit it might be~$\infty$).

    Our main goal will be to inductively define colorings $\hat{w}_0, \hat{w}_1, \dots, \hat{w}_\delta$ in $F_{\CH}$ (starting from an arbitrary coloring $\hat{w}_0$ of $w_0 = \Id$) and show
    \begin{equation}    \label{ineq:induct}
        \triangle(\hat{w}_{i}) \leq \triangle(\hat{w}_{i-1}) + t+2\ell + 2.
    \end{equation}
    This establishes $\triangle(\hat{w}_\delta) \leq (t+2\ell+2)\cdot \delta + 1$ by induction (note that $\triangle(\hat{w}_0) = 1$ since $[\hat{w}_0]$ is a single self-loop).
    Then the proof is completed by observing that although the coloring~$\hat{w}$ of~$w$ given in the theorem might not equal the final coloring~$\hat{w}_\delta$ we produced, we have 
    \begin{equation}
        \triangle(\hat{w}) \leq \triangle(\hat{w}_\delta) + 2|w| \leq \triangle(\hat{w}_\delta) + 2\ell \delta 
    \end{equation}
    by \Cref{lem:recoloring}.
   
    To define $\hat{w}_i$ and establish \Cref{ineq:induct}, suppose as in \Cref{eqn:reduce} that
    \begin{equation}
        w_{i-1} \sim puq, \quad 
        w_i \sim pvq, \quad 
        r \sim uv^{-1} \in R_{\ell,t},
    \end{equation}
    and all these words are~$\Id$ when interpreted in~$G$.
    By definition of $R_{\ell,t}$ and \Cref{fact:cyclic}, we may infer that there is a coloring $u' = \col{u_0}{C_0} \col{u_1}{C_1} \cdots \col{u_{|u|-1}}{C_{|u|-1}} \in F_{\CH}$ of~$u$ and similarly a coloring $v'$ of~$v$ such that $r' = (u')(v')^{-1} \in F_{\CH}$ has $\triangle(r') \leq t$.
    Note that the coloring of the $u$-symbols in~$r'$ might not agree with the coloring of the $u$-symbols in $\hat{w}_i$.
    However, if we let $\hat{r}$ be a recoloring of~$r'$ in which the $u$-colors \emph{do} agree with those in~$\hat{w}_{i-1}$, we may conclude from \Cref{lem:recoloring} that \begin{equation}
        \triangle(\hat{r}) \leq \triangle(r') + 2|r| \leq t +2\ell.
    \end{equation}
    We may now naturally define the coloring $\hat{w}_i$ for $w_i$ by using the $p$- and $q$-colors from $\hat{w}_{i-1}$, the $u$-colors appearing in both $\hat{w}_{i-1}$ and $\hat{r}$, and the $v$-colors from $\hat{r}$.
    Now finally applying \Cref{cor:stitch} yields \Cref{ineq:induct}, completing the proof.
\end{proof}

\subsection{Back to the plain coset complex}
We now translate \Cref{thm:relative-dehn} into a statement about statement about loops (closed walks) $L$ in the \emph{usual} coset complex $\CoCo(G;\CH)$.  Using notation from the previous section:
\begin{corollary} \label{cor:dehn}
    Let $L$ be a closed loop in $\CoCo(G;\CH)$ of length~$r$.
    Then there is a word~$w$ over $S$ of length~$r$ such that $L$ can be~$\BF_2$-filled by at most $(t+4\ell+2)\cdot \delta_{\ell,t}(w)+1$ triangles in~$\CoCo(G;\CH)$.
\end{corollary}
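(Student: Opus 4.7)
The plan is to associate to $L$ a length-$r$ colored word $\hat w \in F_\CH$ with $\phi(\hat w) = \Id$, apply \Cref{thm:relative-dehn} to get a triangulation in the multicomplex $\CoCoCo(G;\CH)$, and then extract a triangulation in $\CoCo(G;\CH)$ by discarding the degenerate (repeated-vertex) triangles.

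First I would construct $\hat w$. Write $L = (v_0 \to v_1 \to \cdots \to v_{r-1} \to v_0)$, and for each $k$ (indices mod $r$) let $g_{k+1} \in v_k \cap v_{k+1}$ be an element witnessing the $k$-th edge, so that $v_k = g_{k+1} H_{j_k}$ and $v_{k+1} = g_{k+1} H_{j_{k+1}}$, where $j_k \neq j_{k+1}$ (edges in $\CoCo(G;\CH)$ connect cosets of different colors). The key choice is to set $g_0 := g_r$ and then $h_k := g_k^{-1} g_{k+1} \in H_{j_k}$ for $0 \leq k < r$; the membership uses $g_k, g_{k+1} \in v_k = g_k H_{j_k}$. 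Define $\hat w := \col{h_0}{H_{j_0}} \cdots \col{h_{r-1}}{H_{j_{r-1}}}$. The partial products telescope to $h_0 \cdots h_k = g_0^{-1} g_{k+1}$, so $\CL(\hat w)$ walks through the vertices $g_0^{-1} v_0 = \Id H_{j_0}$, $g_0^{-1} v_1, \ldots, g_0^{-1} v_{r-1}$, $g_0^{-1} v_0$; that is, $\CL(\hat w)$ equals the translate $g_0^{-1} L$ as a closed walk. In particular, $\phi(\hat w) = g_0^{-1} g_r = \Id$, which is exactly what the translation was engineered to achieve.

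Let $w$ be the length-$r$ word over $S$ obtained from $\hat w$ by forgetting colors; it evaluates to $\Id$ in $G$. By \Cref{thm:relative-dehn}, there is a $2$-chain $S$ of triangles in $\CoCoCo(G;\CH)$ with $\partial_2 S = [\CL(\hat w)]$ and $|S| \leq (t + 4\ell + 2) \cdot \delta_{\ell,t}(w) + 1$. Translating by $g_0$ (\Cref{fact:translate}) gives a $2$-chain $g_0 \cdot S$ of the same cardinality with $\partial_2(g_0 \cdot S) = [L]$. To finish, I need to discard the triangles in $g_0 \cdot S$ that lie in $\CoCoCo(G;\CH) \setminus \CoCo(G;\CH)$: these are exactly the degenerate triangles (those with a repeated coset-vertex). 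A non-degenerate triangle of $\CoCoCo(G;\CH)$ is an honest triangle of $\CoCo(G;\CH)$ and its $\BF_2$-boundary consists of three ordinary edges; a degenerate triangle's $\BF_2$-boundary, by contrast, is supported entirely on self-loops, because each pair of duplicated vertices contributes two copies of the same ordinary edge which cancel over $\BF_2$. Since $L$ lies in $\CoCo(G;\CH)$, $[L]$ has no self-loop support, forcing the self-loop contributions of the degenerate triangles in $g_0 \cdot S$ to cancel in $\BF_2$. The sub-$2$-chain $S'$ consisting only of the non-degenerate triangles of $g_0 \cdot S$ is therefore a filling of $L$ in $\CoCo(G;\CH)$ of size at most $(t+4\ell+2) \cdot \delta_{\ell,t}(w) + 1$, as required.

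The only real subtlety is the translation choice $g_0 := g_r$. Without it, the telescoping product would land in $H_{j_0}$ rather than at $\Id$: the walk $\CL(\hat w)$ would still close in $\CoCoCo(G;\CH)$, but \Cref{thm:relative-dehn} would not apply to $\hat w$ directly, and one would be forced to append a closing symbol $\col{\phi(\hat w)^{-1}}{H_{j_0}}$, producing a word of length $r+1$ rather than the promised length $r$.
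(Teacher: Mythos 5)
Your proof is correct and follows essentially the same route as the paper: choose edge-witnesses $g_k$ so that the telescoping word $\hat w$ has length $r$, no consecutive equal colors, and $\phi(\hat w)=\Id$, then apply \Cref{thm:relative-dehn}, discard the degenerate triangles (whose $\BF_2$-boundaries are supported only on self-loops), and translate back. The indexing and normalization (your $g_0:=g_r$ versus the paper's translation by $g_{r-1}^{-1}$) are cosmetic, and your justification for dropping degenerate triangles matches the paper's.
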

\begin{proof}
    Write the loop $L$ as $C_0 \to C_1 \to \cdots \to C_{r-1} \to C_0$, where the $C_i$'s are cosets (of the subgroups in~$\CH$).
    Since the plain coset complex $\CoCo(G;\CH)$ is partite, without self-loops, it follows that consecutive vertices in loop~$L$ are of different ``colors'' (i.e., they are not cosets of the same subgroup).
    We may select elements $g_0, g_1, \dots, g_{r-1} \in G$ that ``witness'' each edge in~$L$; so $g_{i} \in C_{i} \cap C_{i+1}$ (indices taken mod~$r$).  Now if $x_{i} = g_{i-1}^{-1}g_{i}$, then $x_i \in H_i \in \CH$, where $H_i$ is the subgroup of which~$C_i$ is a coset; and, $x_0 x_1 \cdots x_{r-1} = \Id$.  This word $x_0 x_1 \cdots x_{r-1}$ will be the required~$w$.
    Moreover, it is easy to see that translating $L$ by $g_{r-1}^{-1}$ (in the sense of \Cref{def:translate}) transforms it into the form $\CL(\widehat{w})$ for colored word~$\widehat{w} = \col{x_0}{H_0}\col{x_1}{H_1}\cdots\col{x_{r-1}}{H_{r-1}} \in F_{\CH}$, which (again) has no two consecutive colored elements of the same color, and also has $\phi(\widehat{w}) = w = \Id$.  

    Now suppose we apply \Cref{thm:relative-dehn} to this $\widehat{w}$. 
    We conclude that there is an $\BF_2$-filling \emph{in $\CoCoCo(G;\CH)$} of $\CL{\widehat{w}}$ using some set~$T$ of at most $(t+4\ell+2) \cdot \delta_{\ell,t}(w)+1$ (possibly degenerate) triangles.  
    But now suppose we simply discard any degenerate triangles from~$T$, forming~$T'$ (where a degenerate triangle means a cardinality-$3$-multiset with $2$~or~$3$ repeated vertices).  Since the boundary (in $\CoCoCo(G;\CH)$) of a degenerate triangle is always a self-loop, it follows that $T'$ has the same boundary in $\CoCoCo(G;\CH)$ that~$T$ has --- namely~$\CL{\widehat{w}}$ --- except possibly at self-loops.  But $\CL{\widehat{w}}$ \emph{has} no self-loops, since $\widehat{w}$ has no two consecutive same colors; and the boundary of~$T'$ also has no self-loops, since $T'$ only has non-degenerate triangles.
    Thus the boundary of~$T'$ is precisely~$\CL(\widehat{w})$, and this \emph{holds within the plain coset complex $\CoCo(G;\CH)$}.
    So we have a proper~$\BF_2$-filling $T'$ in $\CoCo(G;\CH)$ of $\CL(\widehat{w})$ using at most $(t+4\ell+2) \cdot \delta_{\ell,t}(w)+1$ triangles, and this may be translated back (as in \Cref{fact:translate}) to give an equal-size filling of the original~$L$.
\end{proof}
Finally, one more summatory corollary:
\begin{corollary} \label{cor:dehn2}
    Let $\CoCo(G;\CH)$ be a coset complex.
    Using the notation from \Cref{def:notationy}, suppose that every trivial word over~$S$ of length at most~$r_1$ can be reduced to~$\Id$ using at most~$\delta$ derivation steps, via relators from~$R_{\ell,t}$.  Then every loop of length at most~$r_1$ can be~$\BF_2$-filled by at most $(t+4\ell+2)\cdot \delta +1$ triangles.
\end{corollary}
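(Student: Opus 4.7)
The proof plan is a direct application of \Cref{cor:dehn}. Let $L$ be an arbitrary closed loop in $\CoCo(G;\CH)$ of length $r \leq r_1$. By \Cref{cor:dehn}, there exists a word $w$ over $S$ of length exactly $r$ such that $L$ admits an $\BF_2$-filling by at most $(t+4\ell+2)\cdot \delta_{\ell,t}(w) + 1$ triangles in $\CoCo(G;\CH)$.

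The next step is to verify that the hypothesis of the corollary applies to this $w$. Inspecting the construction of $w$ in the proof of \Cref{cor:dehn}, we see $w = x_0 x_1 \cdots x_{r-1}$ where $x_i = g_{i-1}^{-1} g_i$ for witnesses $g_i$ of consecutive edges of $L$; telescoping gives $x_0 x_1 \cdots x_{r-1} = g_{r-1}^{-1} g_{r-1} = \Id$ in $G$. So $w$ is a trivial word over $S$ of length $r \leq r_1$, and therefore by hypothesis $\delta_{\ell,t}(w) \leq \delta$.

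Substituting this bound into the conclusion of \Cref{cor:dehn} yields an $\BF_2$-filling of $L$ with at most $(t + 4\ell + 2)\cdot \delta + 1$ triangles, as required. Since $L$ was an arbitrary loop of length at most $r_1$, the corollary follows. There is no substantial obstacle here; the entire content has already been done in \Cref{thm:relative-dehn} and \Cref{cor:dehn}, and this statement is merely a ``worst-case over words'' repackaging suitable for feeding into the Inception Theorem (\Cref{thm:inception}).
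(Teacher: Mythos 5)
Your argument is correct and is exactly the intended (and implicit) one: the paper states \Cref{cor:dehn2} without proof as an immediate consequence of \Cref{cor:dehn}, and your verification that the word $w$ produced there is trivial and of length at most $r_1$ (hence $\delta_{\ell,t}(w)\leq\delta$) is precisely the small bookkeeping step needed.
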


\section{Lifting, and derivations for the Chevalley HDXs}

We now discuss the final notions that we use to give a proof of \Cref{thm:keytech}.

\subsection{Homomorphisms between groups give triangulation bounds}

In this subsection, we develop some machinery for upper-bounding the triangulation size of a relation in a coset (multi)complex of a group $\tilde{G}$ via a triangulation in another complex of a group $G$ mapping homomorphically onto $\tilde{G}$. 
(We mention that we have written this section for coset multicomplexes, but the results/proofs are identical for plain complexes.)

Consider following the general setup. Let $G, \tilde{G}$ be groups and $f : G \to \tilde{G}$ a group homomorphism. Let $\CH$ denote a set of subgroups of $G$. For each $H \in \CH$, let $\tilde{H} \subseteq \tilde{G}$ denote a corresponding subgroup such that $f(H) \subseteq \tilde{H}$. Correspondingly, let $\tilde{\CH} = (\tilde{H})_{H \in \CH}$. (We emphasize that these groups and subgroups are arbitrary; they need not be the particular groups we consider in the rest of this paper.) Recall the definitions of the groups $F_\CH$ and $F_{\tilde{\CH}}$ from the preceding section (\Cref{def:free}) and their respective homomorphisms to $G$ and $\tilde{G}$; we denote these $\phi : F_\CH \to G$ and $\tilde{\phi} : F_{\tilde{\CH}} \to \tilde{G}$.

\paragraph*{Lifting trivial words in the free groups.} Let $\hat{x} = \binom{x_0}{H_0} \cdots \binom{x_{\ell-1}}{H_{\ell-1}}$ be a colored word in $F_\CH$ such that $\phi(\binom{x_0}{H_0} \cdots \binom{x_{\ell-1}}{H_{\ell-1}}) = \Id$ (in $G$). Correspondingly, since $f(x_i) \in f(H_i) \subseteq \tilde{H}_i$ for each $i$ we have that $\tilde{x} \coloneqq \binom{f(x_0)}{\tilde{H}_0} \cdots \binom{f(x_{\ell-1})}{\tilde{H}_{\ell-1}}$ is a word in $F_{\tilde{\CH}}$. Hence also in $\tilde{G}$ we have \[ \tilde{\phi}(\tilde{x}) = f(x_0) \cdots f(x_{\ell-1}) = f(x_0 \cdots x_{\ell-1}) = f(\Id) = \Id. \]

Our goal is now to prove the following theorem:

\begin{theorem}\label{thm:lifting:tri}
With the notation of the preceding paragraph, let $X^\circ = \CoCo^\circ(G,\CH)$ and $\tilde{X}^\circ = \CoCo^\circ(\tilde{G}, \tilde{\CH})$ denote the corresponding coset multicomplexes.
Let $\hat{x}$ be a word in $F_\CH$ such that $\phi(\hat{x}) = \Id$ in $G$. Consider the corresponding word $\tilde{x}$ in $F_{\tilde{\CH}}$, recalling that $\tilde{\phi}(\tilde{x}) = \Id$ in $\tilde{G}$. Then
$
\tilde{\triangle} (\tilde{x}) \leq \triangle (\hat{x}).
$
\end{theorem}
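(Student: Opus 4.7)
The strategy is to show that the group homomorphism $f$ induces a simplicial map $\bar{f}$ from $X^\circ$ to $\tilde{X}^\circ$, then push forward an optimal $\BF_2$-triangulation of $\CL(\hat{x})$ through $\bar{f}$ to obtain a triangulation of $\CL(\tilde{x})$ of no larger size.

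First, I would define $\bar{f}$ on vertices by sending a coset $xH$ (with $H \in \CH$) to $f(x)\tilde{H}$, and verify well-definedness: if $xH = yH$ then $x^{-1}y \in H$, so $f(x^{-1}y) \in f(H) \subseteq \tilde{H}$, which gives $f(x)\tilde{H} = f(y)\tilde{H}$. This vertex map extends to a simplicial map on the whole multicomplex: a maximal face $\{xH_0, xH_1, \ldots, xH_d\}$ of $X^\circ$ maps to $\{f(x)\tilde{H}_0, f(x)\tilde{H}_1, \ldots, f(x)\tilde{H}_d\}$, which by construction is a maximal face of $\tilde{X}^\circ$, and similarly for sub-faces. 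In particular, edges go to edges and triangles go to triangles (possibly becoming ``more degenerate'' in the multicomplex sense, which is harmless here).

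Next, I would define the pushforward $\bar{f}_*$ on $\BF_2$-chains face-by-face and linearly. Because $\bar{f}$ is a simplicial map of multicomplexes, the standard chain-map identity $\partial_2 \circ \bar{f}_* = \bar{f}_* \circ \partial_2$ holds. Moreover, for any $\BF_2$-chain $C$ one has $|\bar{f}_*(C)| \leq |C|$, since distinct input faces can only either land on distinct output faces (preserving size) or collide and cancel pairwise (decreasing size by two), but never proliferate. Finally, applying $\bar{f}$ vertex by vertex along the walk $\CL(\hat{x}) = \Id H_0 \to x_0 H_1 \to \cdots$ produces exactly the walk $\Id \tilde{H}_0 \to f(x_0) \tilde{H}_1 \to \cdots = \CL(\tilde{x})$, so $\bar{f}_*([\CL(\hat{x})]) = [\CL(\tilde{x})]$ as $1$-chains.

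To conclude: if $\triangle(\hat{x}) = \infty$ the claim is vacuous, so pick a set $S$ of triangles of $X^\circ$, viewed as a $2$-chain, with $\partial_2 S = [\CL(\hat{x})]$ and $|S| = \triangle(\hat{x})$. Then $\bar{f}_*(S)$ is a $2$-chain in $\tilde{X}^\circ$ with $\partial_2 \bar{f}_*(S) = \bar{f}_*(\partial_2 S) = \bar{f}_*([\CL(\hat{x})]) = [\CL(\tilde{x})]$, and $|\bar{f}_*(S)| \leq |S|$. Hence $\tilde{\triangle}(\tilde{x}) \leq |\bar{f}_*(S)| \leq \triangle(\hat{x})$, as required. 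The proof is essentially formal functoriality; the only mild subtlety is that $\bar{f}$ need not be injective on faces, and what saves us is precisely that we work over $\BF_2$, so collisions under pushforward can only decrease chain size, never increase it.
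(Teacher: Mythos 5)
Your proposal is correct and takes essentially the same approach as the paper: you define the induced map on cosets, verify well-definedness, extend to a chain map on $\BF_2$-chains that commutes with $\partial_2$ and is size-nonincreasing, and push forward an optimal filling. The paper formalizes this by defining $f_i$ on $i$-multifaces for all $i$ and proving the commuting-square identity $f_{i-1}\partial_i = \tilde\partial_i f_i$, but the mechanism is identical to yours.
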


To do this, we need to further develop a few notions.

\paragraph*{Lifting faces.} Call a multiset $\{xH_0,\ldots,xH_i\}$ for $x \in G$ an \emph{$i$-multiface}. Let $X^\circ(i)$ and $\tilde{X}^\circ(i)$ denote the respective sets of $i$-multifaces. We show how the homomorphism $f$ \emph{extends to a map of multifaces} between the multicomplexes $X^\circ$ and $\tilde{X}^\circ$. 

For $0 \leq i < |\CH|$, we define
\[
f_i : X^\circ(i) \to \tilde{X}^\circ(i)
\]
in the natural way: $f_i(\{xH_0,\ldots,xH_i\}) = \{f_i(x) \tilde{H}_0,\ldots,f_i(x) \tilde{H}_i\}$. To check that this map is well-defined, suppose that $\{xH_0,\ldots,xH_i\} = \{x'H_0,\ldots,x'H_i\}$. Then for each $0 \leq j \leq i$, $x H_j = x' H_j$, therefore $x^{-1} x' \in H_j$, therefore $f_i(x)^{-1} f_i(x') = f_i(x^{-1} x') \in f_i(H_j) \subseteq \tilde{H}_j$ and so $f_i(x) \tilde{H}_j = f_i(x') \tilde{H}_j$. Having defined $f_i$ on faces, it extends naturally to a map on chains $f_i : \BF_2^{X^\circ(i)} \to \BF_2^{\tilde{X}^\circ(i)}$, which has the property that $|f_i(T)| \leq |T|$ for all $T \in X^\circ(i)$.

For $0 \leq i < |\CH|$, let $\partial_i : \BF_2^{X^\circ(i)} \to \BF_2^{X^\circ(i-1)}$ denote the boundary operator for $i$-multifaces in $X^\circ$ and similarly for $\tilde{\partial}_i$ in $\tilde{X}^\circ$. Our key claim is the following: For every $i < |\CH|$,
\begin{equation} \label{eqn:comdiag}
f_{i-1} \partial_i = \tilde{\partial}_i f_i.
\end{equation}
Note that these are both functions $\BF_2^{X^\circ(i)} \to \BF_2^{\tilde{X}^\circ(i-1)}$. By linearity, it suffices to verify the above identity when applied to a single face $\{xH_0,\ldots,xH_i\}$. The right-hand side maps this face to $\{f(x) \tilde{H}_0,\ldots, f(x) \tilde{H}_i\}$ and then to $\sum_{j=0}^i \1_{\{f(x) \tilde{H}_0,\ldots, f(x) \tilde{H}_i\} \setminus \{f(x) \tilde{H}_j\}}$. The left-hand side maps this face to $\sum_{j=0}^i \1_{\{x H_0,\ldots, x H_i\} \setminus \{x H_j\}}$ and then to $\sum_{j=0}^i \1_{\{f(x) \tilde{H}_0,\ldots, f(x) \tilde{H}_i\} \setminus \{f(x) \tilde{H}_j\}}$, as desired.

We now have:

\begin{proof}[Proof of \Cref{thm:lifting:tri}]
    Say we are given $\hat{x}, \tilde{x}$ as in the theorem statement.
     Recall that $\CL(\hat{x}), \CL(\tilde{x})$ are the closed walks
    \[
    \1 H_0 \to x_0 H_1 \to \cdots \to x_0 \cdots x_{\ell-2} H_{\ell-1} \to \1 H_0 \text{ and } \1 \tilde{H}_0 \to f(x_0) \tilde{H}_1 \to \cdots \to f(x_0) \cdots f(x_{\ell-2}) \tilde{H}_{\ell-1} \to \1 \tilde{H}_0
    \]
    in $X^\circ$ and $\tilde{X}^\circ$, respectively. As we see from the preceding paragraph, $f_1$ maps the edges of the walk $\CL(\hat{x})$ to those of the walk $\CL(\tilde{x})$ (using that $f$ is a homomorphism).
    That is, $f_1[\CL(\hat{x})] = [\CL(\tilde{x})]$ as $1$-chains in $\BF_2^{\tilde{X}^\circ(1)}$.
    Now  let $T \in \BF_2^{X^\circ(2)}$ be an $\BF_2$-filling of $[\CL(\hat{x})]$ with $|T|\leq \triangle(\hat{x})$.
    Since $\partial_2 T = [\CL(\hat{x})]$, we deduce that
    \begin{equation} \tilde{\partial}_2 (f_2 T) = f_{1} \partial_2 T = f_{1} [\CL(\hat{x})] = [\CL(\tilde{x})], \end{equation} where we used \Cref{eqn:comdiag}
    Hence $f_2 T \in \BF_2^{\tilde{X}^\circ(2)}$ is an $\BF_2$-filling of $[\CL(\tilde{x})]$, and its size is at most $T$'s size.
\end{proof}

\subsection{Statements of sufficient theorems for \Cref{thm:main}}

Recall that our notations $\LinkCplxA{q}$, $\LinkCplxBSm{q}$, and $\LinkCplxBLg{q}$ denote, respectively, the link complex in $A_3$, the small link complex in $B_3$, and the large link complex in $B_3$. These are coset complexes of the unipotent groups $\UnipA{q}$, $\UnipBSm{q}$, and $\UnipBLg{q}$, respectively. We also have the graded equivalents $\GrLinkCplxA{q}$, $\GrLinkCplxBSm{q}$, and $\GrLinkCplxBLg{q}$, which are coset complexes of the graded unipotent groups $\GrUnipA{q}$, $\GrUnipBSm{q}$, and $\GrUnipBLg{q}$, respectively.

The key theorems we use to prove \Cref{thm:main} are the following:

\begin{theorem}[Base bound: Computer rank bound for \emph{fixed} $B_3$ links]\label{thm:story:b3:base}
    $\LinkCplxBSm{5}$ and $\LinkCplxBLg{5}$ have vanishing $1$-(co)homology (over $\BF_2$).
\end{theorem}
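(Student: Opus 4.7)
The plan is a direct computer-assisted verification: set up the cellular chain complexes of $\LinkCplxBSm{5}$ and $\LinkCplxBLg{5}$ over $\BF_2$, compute the $\BF_2$-ranks of the boundary operators, and check the numerical identity that forces $H_1 = 0$ (which over $\BF_2$ and in finite dimensions is the same as $H^1 = 0$, since $H^1 \cong H_1$).

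First I would enumerate the ambient unipotent groups $\UnipBSm{5}$ and $\UnipBLg{5}$ via the pure-degree Steinberg presentation from \Cref{thm:structure4}, representing each element in normal form as a tuple $(t_{\Rt{\zeta}})_{\Rt{\zeta} \in I^+}$ and implementing multiplication by repeatedly applying the commutator relations \Cref{eqn:conrels} to reorder generators into this canonical order. Note that $|\UnipBSm{5}| = 5^7 = 78{,}125$ and $|\UnipBLg{5}| = 5^9 = 1{,}953{,}125$, both easily enumerable. From this one computes the three ``missing-root'' subgroups $H_{\setminus \Rt{\eta}}$, their left cosets (giving the vertex set $X(0)$), and then systematically all edges $X(1)$ and triangles $X(2)$ of each coset complex, using \Cref{prop:cc}.

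Second, I would assemble the boundary matrices $\partial_1 : \BF_2^{X(1)} \to \BF_2^{X(0)}$ and $\partial_2 : \BF_2^{X(2)} \to \BF_2^{X(1)}$: each triangle contributes exactly $3$ ones to its column of $\partial_2$, and each edge exactly $2$ ones to its column of $\partial_1$. For the large link, $\partial_2$ is then a $3$-sparse matrix whose dimensions match the $1{,}171{,}875 \times 1{,}953{,}125$ figure quoted in \Cref{sec:show}. Vanishing of $1$-homology is equivalent to the single numerical identity
\begin{equation}
    \operatorname{rank}_{\BF_2}(\partial_1) + \operatorname{rank}_{\BF_2}(\partial_2) = |X(1)|,
\end{equation}
so checking this for each of the two link complexes settles the theorem.

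The main obstacle is the rank computation in the large-link case: dense Gaussian elimination on a $\sim\!2\cdot 10^6$-dimensional matrix is out of reach, even over $\BF_2$. The sensible approach is a sparse-matrix rank algorithm over $\BF_2$ such as black-box block Wiedemann or structured Gaussian elimination, exploiting the $3$-sparsity (and, if helpful, the transitive $\UnipBLg{5}$-action) to fit in memory and terminate in reasonable time. Because a bug in such code could masquerade as a correct rank, I would insist that the computation be carried out by (at least) two independent implementations and compared; this is precisely what the authors arrange in the acknowledgments, citing Peng and Bai \cite{BP24} as an independent check on the rank of the large matrix.
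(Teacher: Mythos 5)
Your proposal matches the paper's proof: Section~6 enumerates the groups $\UnipBSm{5}$ and $\UnipBLg{5}$ (via a Sage script), builds the coset complexes, writes out the triangle--edge incidence matrices in sparse format, and computes their $\BF_2$-ranks by sparse Gaussian elimination (\texttt{linbox}) with an independent cross-check from Bai and Peng~\cite{BP24} --- precisely the pipeline you describe. The only cosmetic differences are that the paper represents group elements as $7\times 7$ matrices rather than normal-form tuples (it explicitly notes your tuple encoding as a possible memory optimization in a remark), and it uses connectedness of the $1$-skeleton to fix $\operatorname{rank}(\partial_1) = |X(0)| - 1$ in advance, so that only $\operatorname{rank}(\partial_2)$ has to be computed and compared against $|X(1)| - |X(0)| + 1$.
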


\begin{theorem}[Strong lifting theorem for $B_3$ links]\label{thm:story:b3:lift}
    There exists $N,M \in \BN$ such that for every odd prime (power) $p$ \emph{and every $k \geq 1$}:
    \begin{enumerate}
        \item Every pure-degree Steinberg relation in $\GrUnipBSm{p^k}$ can be derived in $\leq N$ steps from (i) in-subgroup relations of length $\leq M$ and (ii) lifts of pure-degree relations in $\UnipBSm{p}$ of length $\leq M$. 
        \item Every pure-degree Steinberg relation in $\GrUnipBLg{p^k}$ can be derived in $\leq N$ steps from (i) in-subgroup relations of length $\leq M$ and (ii) lifts of pure-degree relations in $\UnipBLg{p}$ of length $\leq M$. 
    \end{enumerate}
\end{theorem}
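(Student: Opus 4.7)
The plan is to follow the systematic game plan outlined in Section~1.4.3, handling the small link (one missing root) and the large link (three missing roots) in sequence. \Cref{thm:chev:general-lift} provides the lifted relations: by choosing the parameters $(t_{\Rt{\zeta},b})_{\Rt{\zeta} \in I,\, b \in [1]}$ in $\BF_{p^k}$, every Steinberg relation in $\UnipBSm{p}$ (respectively $\UnipBLg{p}$) of bounded length maps homomorphically to a Steinberg relation in $\GrUnipBSm{p^k}$ (respectively $\GrUnipBLg{p^k}$), which is therefore freely available as a lifted relation. Combined with the in-subgroup Steinberg relations (those lying entirely within $H_{\setminus \Rt{\eta}}$ for some base root $\Rt{\eta}$), we must derive every pure-degree Steinberg commutator and linearity relation of arbitrary degree.

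I would proceed by induction on the height of the roots involved. At the base case, commutator and linearity relations among non-missing roots that share a subgroup are directly in-subgroup; non-missing pairs not sharing a subgroup (e.g.\ $\Rt{\alpha+\beta}$ and $\Rt{\beta+\psi}$ in the large link) are handled by a bounded number of homogeneous lifts combined with in-subgroup rearrangements into pure-degree form. Then for each missing root $\Rt{\eta}$, taken in increasing order of height, I would (a) prove an \emph{establishing relation} equating all canonical names of a degree-$i$ $\Rt{\eta}$-element --- obtained by commuting smaller-root generators in the various ways $\Rt{\eta}$ decomposes --- using the already-derived lower-height commutator relations, and then (b) derive every commutator and linearity relation involving $\Rt{\eta}$ by lifting the corresponding relation in $\UnipBLg{p}$ and stitching in in-subgroup moves to reach the desired degree pair.

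The main obstacle is the ``square-ratio'' phenomenon: a homogeneous lift produces coefficient ratios of the form $\prod_{\Rt{\zeta} \in I} t_{\Rt{\zeta}}^{c_{\Rt{\zeta}}}$, so when a root $\Rt{\eta}$ has some coefficient $c_{\Rt{\zeta}} = 2$ (as for $\Rt{\beta+2\psi}$, $\Rt{\alpha+\beta+2\psi}$, $\Rt{\alpha+2\beta+2\psi}$ in the large link), the lifted coefficient is forced to be a square in $\BF_{p^k}$, which does not hit every field element. To overcome this I would use that every element of $\BF_{p^k}$ (for $p$ odd) is a sum of two squares, combining two homogeneous lifts via an already-derived linearity relation to synthesize an arbitrary coefficient. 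Nonhomogeneous lifts (\Cref{eq:chev:nonhom}) are also needed to access degree pairs not expressible as $ai+bj+ck$ with $i,j,k \in [1]$; again, these can be post-processed into pure-degree form using previously derived linearity relations.

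Finally, for uniformity in $p$ and $k$: because the root system is fixed, there are only finitely many ``types'' of pure-degree Steinberg relations (indexed by the root pair and the degree pattern), and each type is handled by a derivation of bounded length invoking a bounded number of lifted or in-subgroup relations, each itself of bounded length. Hence universal constants $N$ and $M$ (depending only on $B_3$) suffice, as claimed. The small-link argument is a simpler instance of the same strategy, with only one missing root to establish and correspondingly fewer cases to check.
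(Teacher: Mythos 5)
Your proposal correctly identifies the paper's overall strategy: handle the small link and the large link separately, treat the ``missing'' roots in increasing order of height, establish each by showing the various naming conventions coincide, derive commutator and linearity relations by mixing lifted relations with in-subgroup moves, resolve the square-ratio obstruction for roots whose coefficient vector has a $2$ by combining two homogeneous lifts via a linearity relation and the fact that every element of a finite field is a sum of two squares, and appeal to finiteness of the root system for uniformity in $p,k$. This matches the paper's Section~7 step for step, so in that sense there is no divergence of method.

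However, as written this is a plan rather than a proof, and the place where the plan is silent is exactly where the difficulty of the theorem lies. For the large link, the ``establishing'' graphs for $\Rt{\alpha+\beta+2\psi}$ and $\Rt{\alpha+2\beta+2\psi}$ (\Cref{fig:b3-large:def:alpha+beta+2psi} and \Cref{fig:b3-large:def:alpha+2beta+2psi}) are \emph{not} connected by lifted edges alone: the homogeneous lift of the relevant interchange relation only touches degree-pairs of the form $(i+j,k)\sim(i,j+k)$ with $i,j,k \in [1]$, leaving whole blocks of vertices (drawn darker in those figures) unreached. Your phrase ``stitching in in-subgroup moves to reach the desired degree pair'' glosses over the fact that closing these gaps is not a routine degree shift. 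It requires proving nontrivial implications between commutator relations at different degree-pairs --- the ``\newedgeA'', ``\newedgeB'', ``\newedgeC'', ``\newedgeD'' edges --- via dedicated ``sufficient-condition'' propositions (\Cref{rel:b3-large:comm:edge:alpha+beta+psi:psi}, \Cref{rel:b3-large:comm:expr:abs:alpha+2beta+2psi:A}, \Cref{rel:b3-large:comm:expr:abs:alpha+2beta+2psi:B}, \Cref{rel:b3-large:comm:expr:abs:alpha+2beta+2psi:C}, \Cref{rel:b3-large:comm:expr:abs:alpha+2beta+2psi:D}, and the like), each of which is an intricate word manipulation whose hypotheses must themselves be discharged by previously proven special cases in a carefully chosen order. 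Similarly, the residual commutators in \Cref{sec:b3-large:remaining-relations} (e.g.\ $\Comm{\Rt{\alpha}}{\Rt{\alpha+2\beta+2\psi}}$) need a specific nonhomogeneous lift whose post-processing (\Cref{rel:b3-large:comm:lift:alpha:alpha+2beta+2psi:nonhomog}) already depends on the linearity and commutation relations for $\Rt{\alpha+2\beta+2\psi}$ being available first, so the ordering of the induction is delicate and must be checked. Your proposal does not verify that this bootstrapping actually closes, which is the substance of the theorem; without carrying out (or at least explicitly exhibiting the structure of) those derivations, one cannot conclude that the constants $N,M$ exist.
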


Here, the \emph{pure-degree Steinberg relations} are the linearity and commutator relations mentioned in \Cref{thm:structure4}, applied to the symbols $\Eld{\zeta}{t}{i}$, $\Rt{\zeta} \in I^+, t \in \BF_q, i \in [\height_I(\Rt{\zeta})]$ --- these symbols are generators if $\Rt{\zeta} \in J$ and aliases for products of generators if $\Rt{\zeta} \not\in J$, where $J$ is the set of roots which are nonnegative integer combinations of two roots in $I$.

These theorems are proven in \Cref{sec:story:compute} and \Cref{sec:b3} below. We also prove a strengthened version of a lifting theorem from \cite{KO21} as a warmup to the proof of \Cref{thm:story:b3:lift}:

\begin{theorem}[Strong lifting theorem for $A_3$ links]\label{thm:story:a3:lift}
    There exists $N,M \in \BN$ such that for every odd prime power $q$, every pure-degree Steinberg relation in $\GrUnipA{q}$ can be derived in $\leq N$ steps from (i) in-subgroup relations of length $\leq M$ and (ii) lifts of relations in $\UnipA{A}$ of length $\leq M$.
\end{theorem}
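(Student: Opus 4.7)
The plan is to derive each pure-degree Steinberg relation in $\GrUnipA{q}$ using at most $N$ applications of in-subgroup relations and lifted relations of length at most $M$, with $N, M$ independent of $q$. Since there are only finitely many ``types'' of pure-degree Steinberg relation in $\GrUnipA{q}$ (bounded by the number of root-pairs in $A_3$ times the number of degree-pairs, since heights in $A_3$ are at most $3$), and the type structure does not depend on~$q$, it suffices to give a uniform derivation for each type. The approach mirrors the $B_3$ game plan sketched in the introduction, but $A_3$ enjoys several simplifications: there is only one missing root, $\Rt{\alpha+\beta+\gamma}$; any two linearly independent non-antipodal roots in $A_3$ span at most one further root; and, crucially, $\UnipA{q}$ itself satisfies condition~$(\diamond)$ by the Biss--Dasgupta theorem, so every short trivial word in $\UnipA{q}$ has a short derivation from in-subgroup Steinberg relations.

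I would first dispose of the easy relations: every commutator relation $\Comm{\Eld{\zeta}{t}{i}}{\Eld{\eta}{u}{j}}$ whose two root symbols lie in a common subgroup $H_{\setminus \Rt{\xi}}$, together with every linearity relation for a root other than $\Rt{\alpha+\beta+\gamma}$, is directly an in-subgroup relation of bounded length. Next, I would fix a canonical alias for $\Rt{\alpha+\beta+\gamma}$-elements at each degree $d \in [3]$ --- for instance, $\Eld{\alpha+\beta+\gamma}{s}{d} := \Comm{\Eld{\alpha+\beta}{1}{d_1}}{\Eld{\gamma}{\pm s}{d_2}}$ for some fixed $d_1 + d_2 = d$ --- and \emph{establish} that this alias coincides with the alternative natural names $\Comm{\Eld{\alpha}{\cdot}{i}}{\Eld{\beta+\gamma}{\cdot}{j}}$ and $\Comm{\Eld{\alpha+\beta}{\cdot}{i}}{\Eld{\gamma}{\cdot}{j}}$ for all $(i, j)$ with $i + j = d$. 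The core tool is the homogeneous lifting from \Cref{thm:chev:general-lift}: choosing bits $(b_\alpha, b_\beta, b_\gamma) \in \{0,1\}^3$ and suitable scalars, the short Biss--Dasgupta derivation of a $\UnipA{q}$-identity such as $\Comm{\El{\alpha}{t}}{\El{\beta+\gamma}{u}} (\El{\alpha+\beta+\gamma}{\pm tu})^{-1} = \Id$ lifts to the corresponding $\GrUnipA{q}$-relation at degrees $(b_\alpha, b_\beta + b_\gamma) \mapsto b_\alpha + b_\beta + b_\gamma$. Once the aliases are established, the remaining commutation relations involving $\Rt{\alpha+\beta+\gamma}$, and the linearity relations for $\Rt{\alpha+\beta+\gamma}$, both reduce (after alias substitution) to in-subgroup manipulations combined with previously-derived identities.

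The main obstacle is handling the finitely many degree-pairs that cannot be reached by a single homogeneous lift: there are only $2^3 = 8$ possible bit configurations, while for some commutators the unconstrained degree-pair $(i, j)$ ranges over larger sets (e.g.~$[1] \times [2]$ for $\Comm{\Rt{\alpha}}{\Rt{\beta+\gamma}}$, or $[2]\times[2]$ for $\Comm{\Rt{\alpha+\beta}}{\Rt{\beta+\gamma}}$). For each such ``unreachable'' pair, my plan is to first split the offending element into lower-degree factors via in-subgroup linearity, apply homogeneous lifts to each factor at reachable degrees, and reassemble using short commutator identities (Hall--Witt, or direct consequences of commutation with summands of smaller height, all of which themselves follow from in-subgroup relations in a bounded number of steps). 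Because every such ``split-and-reassemble'' gadget depends only on the $A_3$ root combinatorics and on the (Biss--Dasgupta) derivation of a bounded-length $\UnipA{q}$-word, the total derivation length and the length of every relation invoked remain uniformly bounded by constants $N$ and $M$ depending only on~$A_3$.
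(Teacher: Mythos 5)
Your proposal captures the high-level shape of the argument — establish easy relations, establish an alias for $\Rt{\alpha+\beta+\gamma}$, prove the remaining commutators and linearity — but there are two issues, one cosmetic and one a genuine gap.

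The cosmetic issue is the invocation of Biss--Dasgupta. The theorem allows you to use lifts of \emph{arbitrary} short true relations in $\UnipA{q}$ as axioms; those base relations do not need to themselves be derivable from in-subgroup relations, so the hypothesis that $\UnipA{q}$ satisfies $(\diamond)$ plays no role here. (Biss--Dasgupta is the content of the separate base theorem, \Cref{thm:story:a3:base}; conflating the two obscures what lifting actually provides.) In fact the paper assumes exactly one lifted relation, the commutator of an $\Rt{\alpha+\beta}$-element with a $\Rt{\beta+\gamma}$-element, and derives everything else from it together with in-subgroup relations.

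The genuine gap is in the last paragraph, where you propose to handle ``unreachable'' degree-pairs by splitting the offending element into ``lower-degree factors via in-subgroup linearity.'' The in-subgroup linearity relations have the form $\Eld{\zeta}{t}{i}\Eld{\zeta}{u}{i}=\Eld{\zeta}{t+u}{i}$, with the \emph{same} degree $i$ on both sides; they cannot decompose a degree-$2$ element as a product of degree-$0$ and degree-$1$ elements. So the split-and-reassemble gadget as described does not produce anything. What the paper does instead is use \emph{nonhomogeneous} lifting (\Cref{thm:chev:general-lift} with $t_{\Rt{\zeta},1}$ and $t_{\Rt{\zeta},0}$ both nonzero), which sends a single generator $\El{\zeta}{1}$ of $\UnipA{q}$ to a product $\Eld{\zeta}{\cdot}{2}\Eld{\zeta}{\cdot}{1}\Eld{\zeta}{\cdot}{0}$ of pure-degree symbols across several degrees. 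This yields, e.g., a relation of the form
\[
\Comm{\Eld{\alpha+\beta}{t}{2}\Eld{\alpha+\beta}{t+1}{1}\Eld{\alpha+\beta}{1}{0}}{\Eld{\beta+\gamma}{u}{1}\Eld{\beta+\gamma}{u}{0}}=\Id,
\]
from which the one unknown degree-pair $(2,0)$ can be isolated by commuting past the already-established pairs. Your proposal mentions only homogeneous lifting and thus has no mechanism to reach the pairs $(2,0)$ and $(0,2)$ for $\Comm{\Rt{\alpha+\beta}}{\Rt{\beta+\gamma}}$; without this step the alias for $\Rt{\alpha+\beta+\gamma}$ cannot be established at all degrees, and the rest of the derivation does not close up.
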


We prove this theorem in \Cref{sec:a3} below. Finally, we mention a theorem which was proven already in~\cite{KO21}:

\begin{theorem}[Base theorem for $A_3$ links]\label{thm:story:a3:base}
    There exists $N,M \in \BN$ such that for every odd prime $p$, every Steinberg relation in $\UnipA{q}$ can be derived in $\leq N$ steps from in-subgroup relations of length $\leq M$.
\end{theorem}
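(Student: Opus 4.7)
The theorem is essentially the Biss--Dasgupta result~\cite{BD01}, asserting that for odd $p$ the $A_3$-unipotent group over~$\BF_p$ admits a presentation using only ``in-page'' relations inside the three subgroups $H_{\{\Rt{\alpha},\Rt{\beta}\}}, H_{\{\Rt{\beta},\Rt{\gamma}\}}, H_{\{\Rt{\alpha},\Rt{\gamma}\}}$, i.e., condition~$(\diamond)$ from \Cref{sec:diamond}. The plan is to reprove it in a form that makes the constant bounds on derivation length~$M$ and step count~$N$ explicit and, crucially, \emph{uniform in~$p$}.

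First I would isolate the key structural fact: among the six positive roots $\Rt{\alpha}, \Rt{\beta}, \Rt{\gamma}, \Rt{\alpha+\beta}, \Rt{\beta+\gamma}, \Rt{\alpha+\beta+\gamma}$ of $A_3$, the only one \emph{not} lying in any single pair-subgroup is the ``top'' root $\Rt{\alpha+\beta+\gamma}$, which requires all three base roots to express. The in-page relations therefore already cover every Steinberg relation whose roots lie on a common page; what remains are precisely the commutator and linearity relations involving $\Rt{\alpha+\beta+\gamma}$-symbols. To handle these, I would (1)~introduce the $\El{\alpha+\beta+\gamma}{t}$ symbols as \emph{aliases}, defined by a commutator such as $\El{\alpha+\beta+\gamma}{t} := \Comm{\El{\alpha+\beta}{1}}{\El{\gamma}{t}}$ (up to an appropriate Chevalley sign); (2)~show that the two natural such aliases coincide via short ``expression'' derivations; then (3)~derive every relation involving $\Rt{\alpha+\beta+\gamma}$ by expanding both sides as commutators of in-page generators and manipulating only with in-page relations.

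For step~(2), the two natural aliases arise from the decompositions $\Rt{\alpha+\beta+\gamma} = (\Rt{\alpha+\beta}) + \Rt{\gamma} = \Rt{\alpha} + (\Rt{\beta+\gamma})$, and their equivalence follows from a Hall--Witt-type identity applied to the in-page commutators $\Comm{\El{\alpha}{1}}{\El{\beta}{s}}$ and $\Comm{\El{\beta}{1}}{\El{\gamma}{s}}$ (both of which lie in a single page and so can be massaged in-page). For step~(3), the central cases are (a) commutation $\Comm{\El{\alpha+\beta+\gamma}{t}}{\El{\zeta}{u}} = \Id$ for $\Rt{\zeta}$ a positive root (which is trivial because $\Rt{\alpha+\beta+\gamma}$ is maximal in $A_3^+$) and (b) linearity $\El{\alpha+\beta+\gamma}{t}\El{\alpha+\beta+\gamma}{u} = \El{\alpha+\beta+\gamma}{t+u}$. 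Each reduces to a constant-size in-page manipulation by expanding an alias and exploiting the bilinearity of the commutator, for example
\[
\Comm{\El{\alpha+\beta}{1}}{\El{\gamma}{t}}\Comm{\El{\alpha+\beta}{1}}{\El{\gamma}{u}} \;=\; \Comm{\El{\alpha+\beta}{1}}{\El{\gamma}{t+u}},
\]
which follows entirely from in-page linearity within $H_{\{\Rt{\alpha},\Rt{\beta}\}}$ and $H_{\{\Rt{\beta},\Rt{\gamma}\}}$ together with a single application of the $(\Rt{\alpha+\beta}, \Rt{\gamma})$ commutator relation.

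The main obstacle will be uniformity in~$p$: the constants $N, M$ must be absolute, so every derivation must proceed from the \emph{structure} of the Steinberg relations and never invoke a characteristic-specific identity. Since the Chevalley constants for $A_3$ are all $\pm 1$, the only place characteristic enters is in the Hall--Witt step, where one must avoid dividing by~$2$; odd characteristic is exactly what makes the necessary half-commutator identity tractable. Assuming these bookkeeping issues can be handled, each pure Steinberg relation will be derivable from $O(1)$ in-page relations of $O(1)$ length, yielding absolute constants $N, M$ that work for every odd prime $p$ simultaneously.
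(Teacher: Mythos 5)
Your overall game plan — single out the top root $\Rt{\alpha+\beta+\gamma}$, introduce aliases via commutator expressions, show the aliases coincide, then push relations through by expanding aliases — correctly captures the second half of the argument. But there is a genuine and fatal gap in the first half: you assert that \emph{``the in-page relations therefore already cover every Steinberg relation whose roots lie on a common page; what remains are precisely the commutator and linearity relations involving $\Rt{\alpha+\beta+\gamma}$-symbols.''} This is false. Among the $\binom{6}{2}=15$ commutator relations, only $7$ lie inside a single pair-subgroup, and the $8$ missing ones are not all about $\Rt{\alpha+\beta+\gamma}$: the pair $\{\Rt{\alpha+\beta},\Rt{\beta+\gamma}\}$ lies on \emph{no} page (no page contains both $\Rt{\alpha+\beta}$ and $\Rt{\beta+\gamma}$), and yet it has empty positive span, so the relation $\Comm{\El{\alpha+\beta}{t}}{\El{\beta+\gamma}{u}}=\Id$ mentions no missing root. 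This is precisely the one relation you have nothing to say about, and it is the entire substance of the Biss--Dasgupta argument: the paper explicitly describes it as ``essentially the only difficult'' step, and gives an alternative page-long derivation of it in Appendix~C using the Biss--Dasgupta ``swap'' identities $\El{\alpha}{t}\El{\beta}{u}\El{\alpha}{v}=\El{\beta}{uv/(t+v)}\El{\alpha}{t+v}\El{\beta}{tu/(t+v)}$ together with a careful cancellation scheme relying on $1/2\in R$.

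Worse, the gap is not merely one omitted case — your later steps covertly depend on it. The ``establishing'' step (showing the two aliases $\Comm{\El{\alpha+\beta}{1}}{\El{\gamma}{t}}$ and $\Comm{\El{\alpha}{1}}{\El{\beta+\gamma}{t}}$ coincide) is exactly the interchange relation for $\Rt{\alpha+\beta+\gamma}$, and the paper's derivation of that relation in its Section~6 ends by commuting a $\Rt{\beta+\gamma}$-element past an $\Rt{\alpha+\beta}$-element, i.e., it \emph{cites} the $\Rt{\alpha+\beta}$--$\Rt{\beta+\gamma}$ commutation as a prerequisite. Invoking a Hall--Witt identity does not dodge this: while Hall--Witt holds universally, unpacking its conjugation terms in $\UnipA{p}$ produces words in which $\Rt{\alpha+\beta}$- and $\Rt{\beta+\gamma}$-symbols must be reordered, and that reordering is the very relation not yet available in-page. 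So your derivation of the alias equivalence is circular as written. To repair the proposal you would need a direct, in-page, $O(1)$-step derivation of $\Comm{\El{\alpha+\beta}{t}}{\El{\beta+\gamma}{u}}=\Id$ (as in Biss--Dasgupta or Appendix~C of this paper), and only then run the aliasing machinery you describe.
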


\subsection{Proof of \Cref{thm:main}}

Finally, we prove \Cref{thm:main} (assuming \Cref{thm:story:b3:base,thm:story:b3:lift}, which we prove in subsequent sections), the cosystolic expansion bound for $(\BCplx{3}{5^k}{m})_{m \geq 6}$ for sufficiently large $k$. An identical proof replacing \Cref{thm:story:b3:base} with \Cref{thm:story:a3:base}, \Cref{thm:story:b3:lift} with \Cref{thm:story:a3:lift}, and $5$ with $q$ would recover the results of \cite{KO21} for the family $(\ACplx{3}{q}{m})_{m \geq 4}$.

\begin{proof}[Proof of \Cref{thm:main}]
Our goal is to combine \Cref{cor:dehn2}, \Cref{thm:lifting:tri}, \Cref{thm:story:b3:base}, \Cref{thm:story:b3:lift}, and Inception \Cref{thm:inception} in order to prove \Cref{thm:main}.

Let $k \in \BN^+$. Recall that in the corresponding complex $\BCplx{3}{5^k}{m}$, $m \geq 6$, every link is isomorphic to either $\GrLinkCplxBLg{5^k}$ or $\GrLinkCplxBSm{5^k}$. So, to apply \Cref{thm:inception}, we need to check that in both of these link complexes, every loop of length at most $r_1$ (the constant from \Cref{prop:last-junk}) can be $\BF_2$-filled by at most $R_0$ triangles (a constant of our choosing).

Consider the larger link complex $\GrLinkCplxBLg{5^k}$ without loss of generality, whose ambient group is $\GrUnipBLg{5^k}$. We are now interested in taking words~$w$ of length at most~$r_1$  that are trivial in the ambient group, and \emph{attempting} to derive $w = \Id$ using only relations that hold within the colored subgroups (``in-subgroup'' relations).  (Note that the words~$w$ themselves will only use symbols from the colored subgroups.)

Recall our notation (\Cref{def:notationy}) that $R_{\ell,t}$ denotes the set of all relations in the ambient group $\GrUnipBLg{q}$ of length $\leq \ell$ $\BF_2$-fillable using at most $t$ triangles (every relation is fillable because of \Cref{thm:story:b3:base}). Let $\ell_0$ be a constant parameter to be fixed later.

Let $t_0$ denote the maximum $\BF_2$-filling size of any loop in the base complex $\LinkCplxBLg{q}$. Let $R^{\mathrm{lift}}_{\ell_0}$ denote the set of all colored relations in the ambient group $\GrUnipBLg{q}$ which are lifts (in the sense of any homomorphism in \Cref{thm:chev:general-lift}) of relations of length $\leq \ell_0$ in the ungraded group $\UnipBLg{q}$. By \Cref{thm:chev:general-lift}, $R^{\mathrm{lift}}_{\ell_0} \subseteq R_{\ell_0,t_0}$.

Let $R^{\mathrm{subgp}}_{\ell_0}$ denote the set of all (colored) relations in the ambient group $\GrUnipBLg{q}$ of length $\leq \ell_0$ which hold in one of the three designated link subgroups. By \Cref{rem:in-subgroup}, $R^{\mathrm{subgp}}_{\ell_0} \subseteq R_{\ell,1} \subseteq R_{\ell,t}$.

We now claim that $R^{\mathrm{lift}}_{\ell_0}$ and $R^{\mathrm{subgp}}_{\ell_0}$ together are enough to derive any colored relation of length at most $r_1$ in $\GrLinkCplxBLg{5^k}$ in at most~$\delta$ steps. Given this, the premise of  \Cref{cor:dehn2} is fulfilled and we conclude that every loop of length at most~$r_1$ in the complex can be $\BF_2$-filled by at most $R_0 \coloneqq (t_0 + 4\ell_0 + 2)\cdot \delta+1$ triangles.
Finally, provided that this all works with $\ell_0$, $t_0$, and $\delta$ being absolute constants that do not depend on~$q$, we can take $q$ to be sufficiently large and obtain the conclusion of the Inception~\Cref{thm:inception}.

It remains to check our claim. This claim is \emph{almost} the statement of \Cref{thm:story:b3:lift}, except that this theorem is only stated for pure degree Steinberg relations. So, it suffices to reduce to these relations.

\paragraph*{Reduction to pure degree Steinberg relations.}
Let us outline the plan for deriving relations $w = \Id$ of length at most~$r_1$ within the ambient group, using only in-subgroup and lifted relations.
The first step will be to expand $w$ to an equivalent~$w'$ (of length $O(r_1)$) which uses only pure degree symbols.
This is straightforward, by using the linearity relations to write
\begin{equation}
    \El{\zeta}{t_0 + t_1x + \cdots + t_hx^h} = \Eld{\zeta}{t_0}{0}\Eld{\zeta}{t_1}{1} \cdots \Eld{\zeta}{t_h}{h}.
\end{equation}
Next, \Cref{thm:structure4} gives a method for deriving $w' = \Id$ in $\kappa(O(r_1)) = O(r_1 \log r_1)$ steps using only pure degree Steinberg relations.
We thus see that it suffices to be able to derive each \emph{pure degree Steinberg} relation in $N$ steps, using only in-subgroup and lifted relations of length at most~$M$ (where $N,M$ are absolute constants not depending on~$q$) --- because then we can set $\ell_0 = M$, $\delta = O(r_1 \log r_1) \cdot N$.
\end{proof}

\section{Computational analysis for $B_3$}\label{sec:story:compute}

The computational analysis (proof of \Cref{thm:story:b3:base}) is divided into a ``pipeline'' of two separate parts:

\begin{enumerate}
    \item Calculating the $\BF_2$-incidence matrix of Chevalley link groups over finite fields.
    \item Calculating the rank of an arbitrary sparse $\BF_2$ matrix.
\end{enumerate}

For the latter we used some prebuilt tools. We address these two parts in two separate subsections.

\subsection{Generating the incidence matrices}

We wrote a Sage~\cite{sagemath} script to generate the incidence matrices of $B_3$ (and $A_3$, $C_3$) complexes over arbitrarily fields. (This script will be uploaded on Github for the published version of this paper.)

The script takes as input several parameters:
\begin{itemize}
    \item The field size $q$.
    \item The root system ($A_3$, $B_3$, or $C_3$).
    \item Three independent roots $\Rt{\zeta},\Rt{\eta},\Rt{\theta}$ to use as a positive basis.
\end{itemize}
It defines the following data types:
\begin{itemize}
    \item \texttt{MatrixGroup}: A group of $n \times n$ matrices over a finite field $\BF$. Consists of a finite set of matrices. Constructed from a set of generators.
    \item \texttt{MatrixGroupCoset}: A left coset of a \texttt{MatrixGroup} by a matrix.
    \item $A_3$, $B_3$, and $C_3$ root systems. We give procedures for mapping each $(\Rt{\zeta},t)$ pair, where $\Rt{\zeta}$ is a root and $t$ a field element, to a matrix over the field, and for finding the set of roots in the nonnegative span of a given set of roots. Combined with the previous item, this lets us calculate the Steinberg groups for these root systems and their link subgroups.
\end{itemize}
The script proceeds to enumerate the elements of $G$, the entire Steinberg link group generated by $\{\Rt{\zeta},\Rt{\eta},\Rt{\theta}\}$, as well as what we call the ``$\BoxR$'', ``$\BoxG$'', and ``$\BoxB$'' subgroups $\KR, \KG, \KB \subseteq G$, generated by the pairs of roots $\{\Rt{\zeta},\Rt{\eta}\},\{\Rt{\zeta},\Rt{\theta}\},\{\Rt{\eta},\Rt{\theta}\}$, respectively. As a sanity check, it compares the sizes of $G,\KR,\KB,\KG$ to $q^e,q^{\eR},q^{\eG},q^{\eB}$, respectively, where $e,\eR,\eG,\eB$ are respectively the sizes of the nonnegative spans of $\{\Rt{\zeta},\Rt{\eta},\Rt{\theta}\}, \{\Rt{\zeta},\Rt{\eta}\},\{\Rt{\zeta},\Rt{\theta}\},\{\Rt{\eta},\Rt{\theta}\}$.

The next step is writing down the vertices, edges, and triangles of the complex. We begin with the vertices. This is equivalent to enumerating all the cosets of $G/\KR,G/\KG,G/\KB$ and constructing dictionaries mapping an element of $G$ to its cosets (i.e., mapping $x$ to $\KR x,\KG x,\KB x$. We again confirm that we have enumerates $q^{e-\eR},q^{e-\eG},q^{e-\eB}$ cosets respectively.

Now, we enumerate all edges and triangles. We pass over all $x \in G$ and add $(\KR x,\KG x,\KB x)$ as a triangle and $(\KR x,\KG x),(\KR x,\KB x),(\KG x,\KB x)$ as $\BoxR$-$\BoxG$, $\BoxR$-$\BoxB$, and $\BoxG$-$\BoxB$ edges, respectively. Again, we confirm that the numbers of triangles and edges are correct.

Finally, we output the triangle-edge incidence matrix (and the edge-vertex matrix for good measure) in the \texttt{sms} (\underline{s}parse \underline{m}atrix \underline{s}torage) format in order to subsequently calculate their rank.

\begin{remark}
    Recall that given some fixed ordering of the roots in a positive subset, every link group element can be expressed uniquely as a product of elements of each type in the link. If the link has $e$ roots, then we go from encoding group elements as $n \times n$ matrices over $\BF_q$ ($n=7$ in the case of $B_3$) to length-$t$ vectors over $\BF_q$. This could be the basis for a more memory-efficient program to output the triangle-edge incidence matrix. This program also has the nice property that the set of triangles (a.k.a. group elements) is simply the product set $\BF_q^e$. Further, e.g. the \BoxR-\BoxB edges correspond to cosets of the subgroup $\KR \cap \KB \cong \BF_q$ and so may be represented by length-$(e-1)$ vectors. Indeed, even calculating the \BoxR-\BoxB edge incident to some triangle is just some polynomial mapping $\BF_q^e \to \BF_q^{e-1}$. However, we chose to keep the current matrix group approach as it is more ``canonical'' (does not require fixing orderings of the roots) and as computing the rank, not outputting the incidence matrix, is the bottleneck step.
\end{remark}

\subsection{Computing the rank over $\BF_2$ of the matrices}

In the ``large link'' of $B_3$ over $\BF_q$ there are $q^9$ triangles, $3q^8$ edges ($q^8$ for each possible color-pair), and $q^7+q^6+q^5$ vertices. Since the $1$-skeleton of the complex is connected, the edge-vertex incidence matrix always has $\BF_2$-rank $q^7+q^6+q^5 - 1$ (equiv., corank $1$). Hence the $1$-(co)homology over $\BF_2$ vanishes iff the triangle-edge incidence matrix has $\BF_2$-rank $3q^8 - q^7+q^6+q^5 + 1$ (equiv., corank $q^7+q^6+q^5 - 1$). This is the incidence matrix is a $q^9 \times 3q^8$ matrix over $\BF_2$ with three $1$'s per row.

\begin{table}
\begin{centering}
\begin{tabular}{c|c|c|c|c|c|c}
    $q$ & \# triangles & \# edges & \# vertices & Needed rank & Calculated rank & Vanishes? \\
    & $q^9$ & $3q^8$ & $q^7+q^6+q^5 - 1$ & $3q^8 - q^7+q^6+q^5 + 1$ & & \\ \hline \hline
    2 & 512 & 768 & 224 & 545 & 127 & \xmark \\ \hline
    3 & 19,683 & 19,683 & 3,149 & 16,525 & 16,525 & \cmark \\ \hline
    5 & 1,953,125 & 1,171,875 & 96,875 & 1,075,001 & 1,075,001 & \cmark \\ \hline
    7 & 40,353,607 & 17,294,403 & 957,999 & 16,336,405 & ? & ?
\end{tabular}
\caption{The size of the ``large link'' of $B_3$ for some small values of $q$, the rank of the triangle-edge incidence matrix needed for the $1$-(co)homology over $\BF_2$ to vanish, the rank we calculated, and whether the (co)homology vanishes.}\label{tab:ranks:b3-large}
\end{centering}
\end{table}

\begin{table}
\begin{centering}
\begin{tabular}{c|c|c|c|c|c|c}
    $q$ & \# triangles & \# edges & \# vertices & Needed rank & Calculated rank & Vanishes? \\
    & $q^7$ & $3q^6$ & $q^5+q^4+q^3 - 1$ & $3q^6 - q^5+q^4+q^3 + 1$ & & \\ \hline \hline
    2 & 128 & 192 & 56 & 138 & 15 & \xmark \\ \hline
    3 & 2,187 & 2,187 & 351 & 1,837 & 1,825 & \xmark \\ \hline
    5 & 78,125 & 46,875 & 3,875 & 43,001 & 43,001 & \cmark \\ \hline
    7 & 823,543 & 352,947 & 19,551 & 333,397 & 333,397 & \cmark
\end{tabular}
\caption{A similar table for the ``small link'' of $B_3$.}\label{tab:ranks:b3-small}
\end{centering}
\end{table}

For highly sparse matrices such as these, using Gaussian elimination to calculate the rank can be tricky: In an $m \times n$ matrix with $O(1)$ nonzero entries per row, there are only $O(m)$ total nonzero entries, but an unlucky choice of pivots for Gaussian elimination can cause \emph{fill-in}, making the resulting matrix dense, with $\Omega(m^2)$ nonzero entries. When $m \sim 10^6$ and especially when $m \sim 10^7$, with our computing resources we can only afford $\Theta(m)$ memory (megabytes), not $\Theta(m^2)$ memory (terabytes).

However, we did manage to perform the rank calculation in the $q=5$ case. Recall, the $q=5$ triangle-edge incidence matrix is $1,953,125 \times 1,171,875$, so this rank computation was quite expensive. We used the software package \texttt{linbox} \cite{linbox}, which implements sparse Gaussian elimination over finite fields. This computation already took roughly 10 hours to run on our personal computers; $q=7$ would be impractical, given the huge blowup in time and memory usage. We also received independent confirmation of the rank from an optimized program for calculating ranks of sparse matrices over $\BF_2$ due to Ryan Bai and Richard Peng~\cite{BP24}.\footnote{This program also uses sparse Gaussian elimination. The idea behind their script is to perform Gaussian elimination on the matrix in sparse representation until it becomes sufficiently dense, and then to switch to a dense matrix representation which uses a bitarray. Even with a simple pivoting heuristic --- eliminate the rows with the smallest number of nonzeros --- this seemed to run about 10x faster than \texttt{linbox}.}

\section{Lifting for $A_3$}\label{sec:a3}

\newcommand{\LiftScalars}{\BF_{p^k}}

In this section, we prove the lifting theorem for $A_3$ (\Cref{thm:story:a3:lift}), lifting from $\UnipA{q}$ to $\GrUnipA{q}$. This theorem was known in the work of \cite{KO21}.

All relations stated in this section hold in the graded link group $\GrUnipA{p^k}$ and are proven via constant-length in-subgroup and lifted relations in a number of steps independent of $p$ or $k$.

We emphasize again the nonstandard notational convention $[n] = \{0, 1, \dots, n\}$.

\subsection{Link structure}

We first discuss the structure of the link of $\Rt{\omega}$ in $A_3$ in preparation for analyzing the group $\GrUnipA{\BF_q}$. 

\begin{table}[h!]
\centering
\begin{tabular}{c|c|c|c}
    \textbf{Color} & \textbf{Base} & \textbf{Positive span} & \textbf{Total \#} \\ \hline
    & $\{\Rt{\alpha},\Rt{\beta},\Rt{\gamma}\}$ & $\{\Rt{\alpha+\beta},\Rt{\beta+\gamma},\Rt{\alpha+\beta+\gamma}\}$ & 6 \\
    \BoxR & $\{\Rt{\alpha},\Rt{\beta}\}$ & $\{\Rt{\alpha+\beta}\}$ & 3 \\
    \BoxG & $\{\Rt{\alpha},\Rt{\gamma}\}$ & $\emptyset$ & 2 \\
    \BoxB & $\{\Rt{\beta},\Rt{\gamma}\}$ & $\{\Rt{\beta+\gamma}\}$ & 3 \\
\end{tabular}
\caption{The roots spanned by $\Rt{\alpha},\Rt{\beta},\Rt{\gamma}$ in $A_3$ as well as the subsets spanned by pairs.}
\end{table}

The link of the base roots $\{\Rt{\alpha},\Rt{\beta},\Rt{\gamma}\}$ contains $6$ roots; $5$ of these roots (all except the unique ``missing'' root $\Rt{\alpha+\beta+\gamma}$) are contained in the span of some subset of two of the base roots. Correspondingly, in the Steinberg group $\UnipA{q}$, there are $\binom{6}2 = 15$ commutation relations and $6$ linear relations. Together, the \BoxR, \BoxG, and \BoxB subgroups cover $\binom{3}2 + \binom{2}2 + \binom{3}2 = 7$ commutation relations and $5$ linear relations. The ``missing'' $15-7=8$ commutation relations from the subgroups can be divided into the following classes: (i) the commutation relation $\{\Rt{\alpha+\beta},\Rt{\beta+\gamma}\}$ (this pair has empty span), (ii) the commutation relations $\{\Rt{\alpha},\Rt{\beta+\gamma}\}$ and $\{\Rt{\alpha+\beta},\Rt{\gamma}\}$ (these pairs both span $\Rt{\alpha+\beta+\gamma}$), and (iii) the commutation relations $\{\Rt{\alpha+\beta+\gamma},\Rt{\zeta}\}$ for $\Rt{\zeta} \in \{\Rt{\alpha},\Rt{\beta},\Rt{\gamma},\Rt{\alpha+\beta},\Rt{\beta+\gamma}\}$ (these pairs all have empty span). The only commutation relation not involving a ``missing'' root is the relation that $\Rt{\alpha+\beta}$ and $\Rt{\beta+\gamma}$ commute. We are also ``missing'' the linear relation for $\Rt{\alpha+\beta+\gamma}$.

\subsection{Assumed relations}

We now discuss the actual relations which we assume in the \emph{graded} Steinberg group $\GrUnipA{p}$, in order to prove the pure-degree Steinberg relations.

\paragraph*{In-subgroup relations.} We start by enumerating the in-subgroup Steinberg relations, for completeness:

\begin{tcolorbox}[colback=\ColorR, colframe=\ColorBackR, title={Commutation relations spanned by $\Rt{\alpha}$ and $\Rt{\beta}$}]
\begin{relation}[\RelNameA\RelNameSubgp\RelNameCommutator{\Rt{\alpha}}{\Rt{\beta}}]\label{rel:a3:sub:comm:alpha:beta}
\[
    \DQuant{i,j \in [1]}{t,u \in \LiftScalars} \Comm{\Eld{\alpha}{t}i}{\Eld{\beta}{u}j} = \Eld{\alpha+\beta}{tu}{i+j}.
\]
\end{relation}
\begin{relation}[\RelNameA\RelNameSubgp\RelNameCommutator{\Rt{\alpha}}{\Rt{\alpha+\beta}}]\label{rel:a3:sub:comm:alpha:alpha+beta}
\[
    \DQuant{i \in [1], j \in [2]}{t,u \in \LiftScalars} \Comm{\Eld{\alpha}{t}i}{\Eld{\alpha+\beta}{u}j} = \Id.
\]
\end{relation}
\begin{relation}[\RelNameA\RelNameSubgp\RelNameCommutator{\Rt{\beta}}{\Rt{\alpha+\beta}}]\label{rel:a3:sub:comm:beta:alpha+beta}
\[
    \DQuant{i \in [1], j \in [2]}{t,u \in \LiftScalars} \Comm{\Eld{\beta}{t}i}{\Eld{\alpha+\beta}{u}j} = \Id.
\]
\end{relation}
\end{tcolorbox}

\begin{tcolorbox}[colback=\ColorG, colframe=\ColorBackG, title={Commutation relation spanned by $\Rt{\alpha}$ and $\Rt{\gamma}$}]
\begin{relation}[\RelNameA\RelNameSubgp\RelNameCommutator{\Rt{\alpha}}{\Rt{\gamma}}]\label{rel:a3:sub:comm:alpha:gamma}
\[
    \DQuant{i,j \in [1]}{t,u \in \LiftScalars} \Comm{\Eld{\alpha}{t}i}{\Eld{\gamma}{u}j} = \Id.
\]
\end{relation}
\end{tcolorbox}

\begin{tcolorbox}[colback=\ColorB, colframe=\ColorBackB, title={Commutation relations spanned by $\Rt{\beta}$ and $\Rt{\gamma}$}]
\begin{relation}[\RelNameA\RelNameSubgp\RelNameCommutator{\Rt{\beta}}{\Rt{\gamma}}]\label{rel:a3:sub:comm:beta:gamma}
\[
    \DQuant{i,j \in [1]}{t,u \in \LiftScalars} \Comm{\Eld{\beta}{t}i}{\Eld{\gamma}{u}j} = \Eld{\beta+\gamma}{tu}{i+j}.
\]
\end{relation}
\begin{relation}[\RelNameA\RelNameSubgp\RelNameCommutator{\Rt{\beta}}{\Rt{\beta+\gamma}}]\label{rel:a3:sub:comm:beta:beta+gamma}
\[
    \DQuant{i \in [1], j \in [2]}{t,u \in \LiftScalars} \Comm{\Eld{\beta}{t}i}{\Eld{\beta+\gamma}{u}j} = \Id.
\]
\end{relation}
\begin{relation}[\RelNameA\RelNameSubgp\RelNameCommutator{\Rt{\gamma}}{\Rt{\beta+\gamma}}]\label{rel:a3:sub:comm:gamma:beta+gamma}
\[
    \DQuant{i \in [1], j \in [2]}{t,u \in \LiftScalars} \Comm{\Eld{\gamma}{t}i}{\Eld{\beta+\gamma}{u}j} = \Id.
\]
\end{relation}
\end{tcolorbox}

\begin{tcolorbox}[breakable,colback=\ColorNeut,title={Linearity relations}]
\begin{relation}[\RelNameA\RelNameSubgp\RelNameLinearity{\Rt{\alpha}}]\label{rel:a3:sub:lin:alpha}
\[
\DQuant{i \in [1]}{t,u \in \LiftScalars} \Eld{\alpha}{t}{i} \Eld{\alpha}{u}{i} = \Eld{\alpha}{t+u}{i}.
\]
\end{relation}
\begin{relation}[\RelNameA\RelNameSubgp\RelNameLinearity{\Rt{\beta}}]\label{rel:a3:sub:lin:beta}
\[
\DQuant{i \in [1]}{t,u \in \LiftScalars} \Eld{\beta}{t}{i} \Eld{\beta}{u}{i} = \Eld{\beta}{t+u}{i}.
\]
\end{relation}
\begin{relation}[\RelNameA\RelNameSubgp\RelNameLinearity{\Rt{\gamma}}]\label{rel:a3:sub:lin:gamma}
\[
\DQuant{i \in [1]}{t,u \in \LiftScalars} \Eld{\gamma}{t}{i} \Eld{\gamma}{u}{i} = \Eld{\gamma}{t+u}{i}.
\]
\end{relation}
\begin{relation}[\RelNameA\RelNameSubgp\RelNameLinearity{\Rt{\alpha+\beta}}]\label{rel:a3:sub:lin:alpha+beta}
\[
\DQuant{i \in [2]}{t,u \in \LiftScalars} \Eld{\alpha+\beta}{t}{i} \Eld{\alpha+\beta}{u}{i} = \Eld{\alpha+\beta}{t+u}{i}.
\]
\end{relation}
\begin{relation}[\RelNameA\RelNameSubgp\RelNameLinearity{\Rt{\beta+\gamma}}]\label{rel:a3:sub:lin:beta+gamma}
\[
\DQuant{i \in [2]}{t,u \in \LiftScalars} \Eld{\beta+\gamma}{t}{i} \Eld{\beta+\gamma}{u}{i} = \Eld{\beta+\gamma}{t+u}{i}.
\]
\end{relation}
\end{tcolorbox}

\begin{tcolorbox}[breakable,colback=\ColorNeut,title={Self-commutator relations}]
\begin{relation}[\RelNameA\RelNameSubgp\RelNameSelfCommute{\Rt{\alpha}}]\label{rel:a3:sub:comm:self:alpha}
\[
\DQuant{i,j \in [1]}{t,u \in \LiftScalars} \Comm{\Eld{\alpha}{t}{i}}{\Eld{\alpha}{u}{j}} = \Id.
\]
\end{relation}
\begin{relation}[\RelNameA\RelNameSubgp\RelNameSelfCommute{\Rt{\beta}}]\label{rel:a3:sub:comm:self:beta}
\[
\DQuant{i,j \in [1]}{t,u \in \LiftScalars} \Comm{\Eld{\beta}{t}{i}}{\Eld{\beta}{u}{j}} = \Id.
\]
\end{relation}
\begin{relation}[\RelNameA\RelNameSubgp\RelNameSelfCommute{\Rt{\gamma}}]\label{rel:a3:sub:comm:self:gamma}
\[
\DQuant{i,j \in [1]}{t,u \in \LiftScalars} \Comm{\Eld{\gamma}{t}{i}}{\Eld{\gamma}{u}{j}} = \Id.
\]
\end{relation}
\begin{relation}[\RelNameA\RelNameSubgp\RelNameSelfCommute{\Rt{\alpha+\beta}}]\label{rel:a3:sub:comm:self:alpha+beta}
\[
\DQuant{i,j \in [2]}{t,u \in \LiftScalars} \Comm{\Eld{\alpha+\beta}{t}{i}}{\Eld{\alpha+\beta}{u}{j}} = \Id.
\]
\end{relation}
\begin{relation}[\RelNameA\RelNameSubgp\RelNameSelfCommute{\Rt{\beta+\gamma}}]\label{rel:a3:sub:comm:self:beta+gamma}
\[
\DQuant{i,j \in [2]}{t,u \in \LiftScalars} \Comm{\Eld{\beta+\gamma}{t}{i}}{\Eld{\beta+\gamma}{u}{j}} = \Id.
\]
\end{relation}
\end{tcolorbox}

\paragraph*{Lifted relations.} We also use a single lifted relation, the nonhomogeneous lift (cf. \Cref{eq:chev:nonhom}) of the relation $\Comm{\El{\alpha+\beta}{1}}{\El{\beta+\gamma}{1}} = 1$ in $\UnipA{q}$.

\begin{relation}[\RelNameA\RelNameNonHomLiftRaw\RelNameCommute{\Rt{\alpha+\beta}}{\Rt{\beta+\gamma}}]\label{rel:a3:comm:raw:lift:alpha+beta:beta+gamma}
    \begin{multline*}
        \Quant{t_1,t_0,u_1,u_0,v_1,v_0 \in \LiftScalars} \\
        \Comm{\Eld{\alpha+\beta}{t_1u_1}{2} \Eld{\alpha+\beta}{t_1u_0+t_0u_1}{1} \Eld{\alpha+\beta}{t_0u_0}{0}}{\Eld{\beta+\gamma}{u_1v_1}{2} \Eld{\beta+\gamma}{u_1v_0+u_0v_1}{1} \Eld{\beta+\gamma}{u_0v_0}{0}} = \Id.
    \end{multline*}
\end{relation}

\subsection{Basic identities}

We turn to proving useful properties about the group $\GrUnipA{p^k}$ which will be useful in the sequel.

\begin{figure}
    \centering
    \begin{tikzpicture}
\node[algnode0] (v000) {$(0,0)$};
\node[algnode0] (v010) [below=of v000] {$(1,0)$};
\node[algnode0] (v001) [below=of v010] {$(0,1)$};
\node[algnode0] (v020) [below=of v001] {$(2,0)$};
\node[algnode0] (v011) [below=of v020] {$(1,1)$};
\node[algnode0] (v021) [below=of v011] {$(2,1)$};
\node[algnode1] (v100) [right=of v000] {$(0,0)$};
\node[algnode1] (v110) [below=of v100] {$(1,0)$};
\node[algnode1] (v101) [below=of v110] {$(0,1)$};
\node[algnode1] (v111) [below=of v101] {$(1,1)$};
\node[algnode1] (v102) [below=of v111] {$(0,2)$};
\node[algnode1] (v112) [below=of v102] {$(1,2)$};
\draw[algedge] (v011) -- (v111);
\draw[algedge] (v021) -- (v112);
\draw[algedge] (v000) -- (v100);
\draw[algedge] (v010) -- (v110);
\draw[algedge] (v011) -- (v102);
\draw[algedge] (v001) -- (v101);
\draw[algedge] (v010) -- (v101);
\draw[algedge] (v020) -- (v111);
\begin{scope}[on background layer]
    \node[algbackh, fit={(v000) (v100)}] {};
    \node[algbackh, fit={(v010) (v101)}] {};
    \node[algbackh, fit={(v020) (v102)}] {};
    \node[algbackh, fit={(v021) (v112)}] {};
    \node[algback, fit={(v000) (v021)}] (b0) {};
    \node[above=0 of b0] {$\Comm{\Rt{\alpha+\beta}}{\Rt{\gamma}}$};
    \node[algback, fit={(v100) (v112)}] (b1) {};
    \node[above=0 of b1] {$\Comm{\Rt{\alpha}}{\Rt{\beta+\gamma}}$};
\end{scope}
\end{tikzpicture}
\caption{Establishing $\Rt{\alpha+\beta+\gamma}$: A bipartite graph with left vertex-set $[2] \times [1]$ and right vertex-set $[1] \times [2]$, with an edge $(i,j) \sim (k,\ell)$ iff \Cnref{rel:a3:interchange:alpha+beta+gamma} states that for all $t,u,v \in R$, $\Comm{\Eld{\alpha+\beta}{tu}{i}}{\Eld{\gamma}{v}{j}} = \Comm{\Eld{\alpha}{t}{k}}{\Eld{\beta+\gamma}{2uv}{\ell}}$. (Hence, there are edges $(i+j,k) \sim (i,j+k)$ for every $i,j,k \in [1]$.) Additionally, grey blocks partition the vertices based on the sum of coordinates in $[3]$. In this case, the blocks also correspond to connected components in the graph.}\label{fig:a3:alpha+beta+gamma}
\end{figure}

Applying the linearity relations (\Cnref{rel:a3:sub:lin:alpha}, \Cnref{rel:a3:sub:lin:beta}, \Cnref{rel:a3:sub:lin:gamma}, \Cnref{rel:a3:sub:lin:alpha+beta}, and \Cnref{rel:a3:sub:lin:beta+gamma}) with \Cref{prop:prelim:group-homo}, we get the identity relations:

\begin{tcolorbox}[breakable,colback=\ColorNeut,title={Identity relations}]
\begin{relation}[\RelNameA\RelNameSubgp\RelNameId{\Rt{\alpha}}]\label{rel:a3:sub:id:alpha}
\[
\Quant{i \in [1]} \Eld{\alpha}{0}{i} = \Id.
\]
\end{relation}
\begin{relation}[\RelNameA\RelNameSubgp\RelNameId{\Rt{\beta}}]\label{rel:a3:sub:id:beta}
\[
\Quant{i \in [1]} \Eld{\beta}{0}{i} = \Id.
\]
\end{relation}
\begin{relation}[\RelNameA\RelNameSubgp\RelNameId{\Rt{\gamma}}]\label{rel:a3:sub:id:gamma}
\[
\Quant{i \in [1]} \Eld{\gamma}{0}{i} = \Id.
\]
\end{relation}
\begin{relation}[\RelNameA\RelNameSubgp\RelNameId{\Rt{\alpha+\beta}}]\label{rel:a3:sub:id:alpha+beta}
\[
\Quant{i \in [2]} \Eld{\alpha+\beta}{0}{i} = \Id.
\]
\end{relation}
\begin{relation}[\RelNameA\RelNameSubgp\RelNameId{\Rt{\alpha}}]\label{rel:a3:sub:id:beta+gamma}
\[
\Quant{i \in [2]} \Eld{\beta+\gamma}{0}{i} = \Id.
\]
\end{relation}
\end{tcolorbox}
and the inverse relations:
\begin{tcolorbox}[breakable,colback=\ColorNeut,title={Inverse relations}]
\begin{relation}[\RelNameA\RelNameSubgp\RelNameInv{\Rt{\alpha}}]\label{rel:a3:sub:inv:alpha}
\[
\DQuant{i \in [1]}{t \in \LiftScalars} \Eld{\alpha}{t}{i} \Eld{\alpha}{-t}{i} = \Id.
\]
\end{relation}
\begin{relation}[\RelNameA\RelNameSubgp\RelNameInv{\Rt{\beta}}]\label{rel:a3:sub:inv:beta}
\[
\DQuant{i \in [1]}{t \in \LiftScalars} \Eld{\beta}{t}{i} \Eld{\beta}{-t}{i} = \Id.
\]
\end{relation}
\begin{relation}[\RelNameA\RelNameSubgp\RelNameInv{\Rt{\gamma}}]\label{rel:a3:sub:inv:gamma}
\[
\DQuant{i \in [1]}{t \in \LiftScalars} \Eld{\gamma}{t}{i} \Eld{\gamma}{-t}{i} = \Id.
\]
\end{relation}
\begin{relation}[\RelNameA\RelNameSubgp\RelNameInv{\Rt{\alpha+\beta}}]\label{rel:a3:sub:inv:alpha+beta}
\[
\DQuant{i \in [2]}{t \in \LiftScalars} \Eld{\alpha+\beta}{t}{i} \Eld{\alpha+\beta}{-t}{i} = \Id.
\]
\end{relation}
\begin{relation}[\RelNameA\RelNameSubgp\RelNameInv{\Rt{\alpha}}]\label{rel:a3:sub:inv:beta+gamma}
\[
\DQuant{i \in [2]}{t \in \LiftScalars} \Eld{\beta+\gamma}{t}{i} \Eld{\beta+\gamma}{-t}{i} = \Id.
\]
\end{relation}
\end{tcolorbox}

We also state some other useful simple relations. From \Cnref{rel:a3:sub:comm:alpha:beta}, \Cnref{rel:a3:sub:inv:alpha}, and \Cnref{rel:a3:sub:inv:beta}, we can express $\Rt{\alpha+\beta}$ elements as products of $\Rt{\alpha}$ and $\Rt{\beta}$ elements:

\begin{relation}[\RelNameA\RelNameExpr{\Rt{\alpha+\beta}}]\label{rel:a3:expr:alpha+beta}
    \[
    \DQuant{i,j \in [1]}{t,u \in \LiftScalars} \Eld{\alpha+\beta}{tu}{i+j} = \Eld{\alpha}{t}{i} \Eld{\beta}{u}j \Eld{\alpha}{-t}{i} \Eld{\beta}{-u}j.
    \]
\end{relation}

Similarly, from \Cnref{rel:a3:sub:comm:beta:gamma}, \Cnref{rel:a3:sub:inv:beta}, and \Cnref{rel:a3:sub:inv:gamma}:

\begin{relation}[\RelNameA\RelNameExpr{\Rt{\beta+\gamma}}]\label{rel:a3:expr:beta+gamma}
    \[
    \DQuant{i,j \in [1]}{t,u \in \LiftScalars} \Eld{\beta+\gamma}{tu}{i+j} = \Eld{\beta}{t}{i} \Eld{\gamma}{u}j \Eld{\beta}{-t}{i} \Eld{\gamma}{-u}j.
    \]
\end{relation}

Another useful relation derived from \Cnref{eq:comm:left:str} and \Cnref{rel:a3:sub:comm:alpha:beta} lets us reorder $\Rt{\alpha}$ and $\Rt{\beta}$ elements:

\begin{relation}[\RelNameA\RelNameOrder{\Rt{\alpha}}{\Rt{\beta}}]\label{rel:a3:order:alpha:beta}
    \[
    \DQuant{i,j \in [1]}{t,u \in \LiftScalars} \Eld{\alpha}{t}i \Eld{\beta}{u}j = \Eld{\alpha+\beta}{tu}{i+j} \Eld{\beta}{u}j \Eld{\alpha}{t}i. 
    \]
\end{relation}

And similarly from \Cnref{rel:a3:sub:comm:beta:gamma} for $\Rt{\beta}$ and $\Rt{\gamma}$ elements:

\begin{relation}[\RelNameA\RelNameOrder{\Rt{\beta}}{\Rt{\gamma}}]\label{rel:a3:order:beta:gamma}
    \[
    \DQuant{i,j \in [1]}{t,u \in \LiftScalars} \Eld{\beta}{t}i \Eld{\gamma}{u}j = \Eld{\beta+\gamma}{tu}{i+j} \Eld{\gamma}{u}j \Eld{\beta}{t}i. 
    \]
\end{relation}

\subsection{$\alpha+\beta$ and $\beta+\gamma$ commute}

Our first order of business is to show that $\Rt{\alpha+\beta}$ and $\Rt{\beta+\gamma}$ elements commute; this is the only commutation relation not involving the missing root $\Rt{\alpha+\beta+\gamma}$. We first prove:

\begin{relation}[\RelNameA\RelNameHomLift\RelNameCommute{\Rt{\alpha+\beta}}{\Rt{\beta+\gamma}}]\label{rel:a3:comm:lift:alpha+beta:beta+gamma:homog}
    \[
    \DQuant{i,j,k \in [1]}{t,u \in \LiftScalars} \Comm{\Eld{\alpha+\beta}{t}{i+j}}{\Eld{\beta+\gamma}{u}{j+k}} = \Id.
    \]
\end{relation}

\begin{proof}
    Apply \Cnref{rel:a3:comm:raw:lift:alpha+beta:beta+gamma} with $t_i = t$, $t_{1-i} = 0$, $u_1=u_0=1$, $v_i = u$, $v_{1-i} = 0$.
\end{proof}

\begin{relation}[\RelNameA\RelNamePart\RelNameCommute{\Rt{\alpha+\beta}}{\Rt{\beta+\gamma}}]\label{rel:a3:comm:lift:alpha+beta:beta+gamma:missing}
    \begin{align*}
        \Quant{t,u \in \LiftScalars} \Comm{\Eld{\alpha+\beta}{t}{2}}{\Eld{\beta+\gamma}{u}{0}} &= \Id, \\
        \Quant{t,u \in \LiftScalars} \Comm{\Eld{\alpha+\beta}{t}{0}}{\Eld{\beta+\gamma}{u}{2}} &= \Id.
    \end{align*}
\end{relation}

\begin{proof}
    We prove the first equality, the proof for the second is symmetric. We use \Cnref{rel:a3:comm:raw:lift:alpha+beta:beta+gamma} with $t_1 = t, t_0 = 1, u_1 = 1, u_0 = 1, v_1 = 0, v_0 = u$. This gives
    \begin{equation}\label{eq:a3:comm:lift:alpha+beta:beta+gamma}
    \Comm{\Eld{\alpha+\beta}{t}{2} \Eld{\alpha+\beta}{t+1}{1} \Eld{\alpha+\beta}{1}{0}}{\Eld{\beta+\gamma}{u}{1} \Eld{\beta+\gamma}{u}{0}} = \Id.
    \end{equation}
    Now we observe that for every $t,u \in \LiftScalars$, and every $(i,j) \in [2] \times [1] \setminus \{(2,0)\}$, the previous proposition (\Cnref{rel:a3:comm:lift:alpha+beta:beta+gamma:homog}) gives \[
    \Comm{\Eld{\alpha+\beta}{\pm t}{i}}{\Eld{\beta+\gamma}{\pm u}{j}} = \Id.
    \]
    Using \Cnref{rel:a3:sub:comm:self:alpha+beta} and \Cnref{rel:a3:sub:comm:self:beta+gamma}, we conclude that all pairs of (possibly non-distinct) elements \emph{except} $\Eld{\alpha+\beta}{\pm t}2$ and $\Eld{\beta+\gamma}{\pm u}{0}$ present in \Cref{eq:a3:comm:lift:alpha+beta:beta+gamma} commute and therefore $\Eld{\alpha+\beta}{t}{2}$ and $\Eld{\beta+\gamma}{u}{0}$ indeed commute.
\end{proof}

\begin{relation}[\RelNameA\RelNameCommute{\Rt{\alpha+\beta}}{\Rt{\beta+\gamma}}]\label{rel:a3:comm:alpha+beta:beta+gamma}
    \[
    \DQuant{i,j \in [2]}{t,u \in \LiftScalars} \Comm{\Eld{\alpha+\beta}{t}{i}}{\Eld{\beta+\gamma}{u}{j}} = \Id.
    \]
\end{relation}

\begin{proof}
    Together, \Cnref{rel:a3:comm:lift:alpha+beta:beta+gamma:homog} and \Cnref{rel:a3:comm:lift:alpha+beta:beta+gamma:missing} cover all degree-pairs in $[2] \times [2]$.
\end{proof}

\subsection{Establishing $\alpha+\beta+\gamma$}

Our next goal is to establish a syntactic definition of $\Rt{\alpha+\beta+\gamma}$ elements. Towards this, we prove the following relation, allowing us to interchange two commutators --- each of which, we have in the back of our minds, should create a $\Rt{\alpha+\beta+\gamma}$ element.

\begin{relation}[\RelNameA\RelNameInterchange{\Rt{\alpha+\beta+\gamma}}]\label{rel:a3:interchange:alpha+beta+gamma}
\[ \DQuant{i,j,k \in [1]}{t,u,v \in \LiftScalars} \Comm{\Eld{\alpha}{t}i}{\Eld{\beta+\gamma}{uv}{j+k}} = \Comm{\Eld{\alpha+\beta}{tu}{i+j}}{\Eld{\gamma}{v}{k}}. \]
\end{relation}

\begin{proof}
    We write a product of $\Rt{\alpha}$ and $\Rt{\beta+\gamma}$ elements:
    \begin{align*}
        & \Eld{\alpha}{t}{i} \asite{\Eld{\beta+\gamma}{uv}{j+k}} \\
        \intertext{Expand the $\Rt{\beta+\gamma}$ element as a product of $\Rt{\beta}$ and $\Rt{\gamma}$ elements (\Cnref{rel:a3:expr:beta+gamma}):}
        &= \asite{\Eld{\alpha}{t}{i} \Eld{\beta}{u}{j}} \Eld{\gamma}{v}{k} \Eld{\beta}{-u}{j} \Eld{\gamma}{-v}{k} \\
        \intertext{Commute the $\Rt{\alpha}$ element on the left fully to the right, introducing $\Rt{\alpha+\beta}$ commutators with $\Rt{\beta}$ elements (\Cnref{rel:a3:order:alpha:beta}]) and no commutators with $\Rt{\gamma}$ elements (\Cnref{rel:a3:sub:comm:alpha:gamma}):}
        &= \Eld{\alpha+\beta}{tu}{i+j} \asite{\Eld{\beta}{u}{j} \Eld{\gamma}{v}{k}} \Eld{\alpha+\beta}{-tu}{i+j} \Eld{\beta}{-u}{j} \Eld{\gamma}{-v}{k} \Eld{\alpha}{t}{i}. \\
        \intertext{Now, we move one $\Rt{\beta}$ element to the right across a $\Rt{\gamma}$ element (\Cnref{rel:a3:order:beta:gamma}):}
        &= \Eld{\alpha+\beta}{tu}{i+j} \Eld{\beta+\gamma}{uv}{j+k} \Eld{\gamma}{v}{k} \asite{\Eld{\beta}{u}{j} \Eld{\alpha+\beta}{-tu}{i+j} \Eld{\beta}{-u}{j}} \Eld{\gamma}{-v}{k}\Eld{\alpha}{t}{i} \\
        \intertext{at which point we can commute $\Rt{\beta}$ elements across the $\Rt{\alpha+\beta}$ element (\Cnref{rel:a3:sub:comm:beta:alpha+beta}) and then cancel them (\Cnref{rel:a3:sub:inv:beta}):}
        &= \Eld{\alpha+\beta}{tu}{i+j} \asite{\Eld{\beta+\gamma}{uv}{j+k} \Eld{\gamma}{v}{k} \Eld{\alpha+\beta}{-tu}{i+j} \Eld{\gamma}{-v}{k}} \Eld{\alpha}{t}{i}.  \\
        \intertext{Finally, we move the $\Rt{\beta+\gamma}$ element to the right, immediately before the $\Rt{\alpha}$ element, since $\Rt{\beta+\gamma}$ elements commutes with $\Rt{\gamma}$ elements (\Cnref{rel:a3:sub:comm:gamma:beta+gamma}) and $\Rt{\alpha+\beta}$ elements (\Cnref{rel:a3:comm:alpha+beta:beta+gamma}):}
        &= \asite{\Eld{\alpha+\beta}{tu}{i+j} \Eld{\gamma}{v}{k} \Eld{\alpha+\beta}{-tu}{i+j}  \Eld{\gamma}{-v}{k}} \Eld{\beta+\gamma}{uv}{j+k} \Eld{\alpha}{t}{i}  \\
        \intertext{Which reduces via \Cnref{rel:a3:sub:inv:alpha+beta} and \Cnref{rel:a3:sub:inv:gamma} to:}
        &= \Comm{\Eld{\alpha+\beta}{tu}{i+j}}{\Eld{\gamma}{v}{k}} \Eld{\beta+\gamma}{uv}{j+k} \Eld{\alpha}{t}{i}
    \end{align*}
    as desired.
\end{proof}

Given this relation, we can now \emph{define} $\Rt{\alpha+\beta+\gamma}$ elements and give the two commutator relations which create them in one fell swoop!

\begin{proposition}[Existence of $\Rt{\alpha+\beta+\gamma}$]\label{prop:a3:est:alpha+beta+gamma}
    There exist elements $\Eld{\alpha+\beta+\gamma}{t}{i}$ for $t \in \LiftScalars$ and $i \in [3]$ such that the following hold.
    \begin{gather*}
        \DQuant{i \in [1], j \in [2]}{t,u \in \LiftScalars} \Eld{\alpha+\beta+\gamma}{tu}{i+j} = \Comm{\Eld{\alpha}{t}{i}}{\Eld{\beta+\gamma}{u}{j}}, \\
        \DQuant{i \in [2], j \in [1]}{t,u \in \LiftScalars} \Eld{\alpha+\beta+\gamma}{tu}{i+j} = \Comm{\Eld{\alpha+\beta}{t}{i}}{\Eld{\gamma}{u}{j}}.
    \end{gather*}
\end{proposition}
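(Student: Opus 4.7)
The plan is to define $\Eld{\alpha+\beta+\gamma}{w}{d}$ by fiat as a canonical commutator for each $w \in \LiftScalars$ and $d \in [3]$, and then use \Cref{rel:a3:interchange:alpha+beta+gamma} to verify that every commutator appearing in the two displayed families equals this canonical choice. A convenient definition is
\[
\Eld{\alpha+\beta+\gamma}{w}{d} := \Comm{\Eld{\alpha+\beta}{w}{\min(d,2)}}{\Eld{\gamma}{1}{d - \min(d,2)}},
\]
which is well-formed because $d - \min(d,2) \in [1]$ when $d \in [3]$.

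The key tool is \Cref{rel:a3:interchange:alpha+beta+gamma}: for $i,j,k \in [1]$ and $t,u,v \in \LiftScalars$,
\[
\Comm{\Eld{\alpha}{t}{i}}{\Eld{\beta+\gamma}{uv}{j+k}} = \Comm{\Eld{\alpha+\beta}{tu}{i+j}}{\Eld{\gamma}{v}{k}}.
\]
Both sides are determined by the triples $(t, i, uv, j+k)$ and $(tu, i+j, v, k)$, respectively. Specializing one of $t, u, v$ to the value $1$ yields convenient consolidation identities that shift factors between entries and shift one unit of degree between the two ``halves'' of a commutator. For instance, setting $u=1$ in interchange gives $\Comm{\Eld{\alpha}{t}{i}}{\Eld{\beta+\gamma}{v}{j+k}} = \Comm{\Eld{\alpha+\beta}{t}{i+j}}{\Eld{\gamma}{v}{k}}$, while setting $v=1$ gives $\Comm{\Eld{\alpha}{t}{i}}{\Eld{\beta+\gamma}{u}{j+k}} = \Comm{\Eld{\alpha+\beta}{tu}{i+j}}{\Eld{\gamma}{1}{k}}$. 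These let us consolidate two independent entries $t, u$ into a single entry $tu$ while moving a chosen amount of degree between the commutator's two sides.

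With these tools in hand, the proof is a case analysis on $d \in [3]$. For each target commutator $\Comm{\Eld{\alpha}{t}{i}}{\Eld{\beta+\gamma}{u}{j}}$ (first family, $i \in [1], j \in [2]$) or $\Comm{\Eld{\alpha+\beta}{t}{i}}{\Eld{\gamma}{u}{j}}$ (second family, $i \in [2], j \in [1]$) with $tu = w$ and $i + j = d$, we split any ``oversized'' degree as $j = j_1 + j_2$ (respectively $i = i_1 + i_2$) with summands in $[1]$, and apply \Cref{rel:a3:interchange:alpha+beta+gamma} once or twice --- choosing the interchange parameters so that one variable is set to $1$ in order to absorb a factor --- to reach the canonical form. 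For example, from $\Comm{\Eld{\alpha+\beta}{t}{i}}{\Eld{\gamma}{u}{j}}$ one first applies interchange with $(t_{\mathrm{int}}, u_{\mathrm{int}}, v_{\mathrm{int}}) = (1, t, u)$ and degrees $(i_{\mathrm{int}}, j_{\mathrm{int}}, k_{\mathrm{int}})$ chosen to satisfy $i_{\mathrm{int}} + j_{\mathrm{int}} = i$ and $k_{\mathrm{int}} = j$, reducing to a ``Case 1'' form $\Comm{\Eld{\alpha}{1}{i_{\mathrm{int}}}}{\Eld{\beta+\gamma}{w}{d - i_{\mathrm{int}}}}$ whose entries now depend only on $w$ and $d$; a symmetric second interchange transforms this into the canonical $\Comm{\Eld{\alpha+\beta}{w}{\min(d,2)}}{\Eld{\gamma}{1}{d-\min(d,2)}}$.

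The main obstacle is bookkeeping for the case analysis: because the interchange constraints $i,j,k \in [1]$ cap each individual degree slot, certain configurations (in particular $d = 3$, which saturates all slots; and $d = 2$ with $(i,j) = (2,0)$ or $(0,2)$) require a two-step reduction that passes through an intermediate form such as $\Comm{\Eld{\alpha}{1}{1}}{\Eld{\beta+\gamma}{w}{d-1}}$. This structure is visible in \Cref{fig:a3:alpha+beta+gamma}, whose connected components (corresponding to fixed total degree) realize exactly the chains of interchange applications needed. Since $d$ ranges over four values and in each the number of interchange applications required is absolutely bounded (independent of $\LiftScalars$), the derivation stays within the constant length needed by \Cref{thm:story:a3:lift}.
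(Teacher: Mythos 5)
Your proposal is correct and takes essentially the same approach as the paper: it applies \Cref{rel:a3:interchange:alpha+beta+gamma} along chains in the (block-connected) degree graph of \Cref{fig:a3:alpha+beta+gamma}, merely unpacking the abstract consistency lemma \Cref{prop:prelim:equal-conn} into an explicit case analysis with a concrete canonical representative for each total degree. One minor imprecision: the farthest cases (e.g., reducing $\Comm{\Eld{\alpha}{t}{0}}{\Eld{\beta+\gamma}{u}{2}}$ to your canonical $\Comm{\Eld{\alpha+\beta}{tu}{2}}{\Eld{\gamma}{1}{0}}$ at total degree $2$) require three interchange applications rather than two, since the path $(0,2)_R \to (1,1)_L \to (1,1)_R \to (2,0)_L$ has length three in that block; this does not affect the argument because the count remains an absolute constant.
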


\begin{proof}
    Follows from \Cref{prop:prelim:equal-conn}, \Cnref{rel:a3:interchange:alpha+beta+gamma}, and the fact that the corresponding graph (\Cref{fig:a3:alpha+beta+gamma}) is connected.
\end{proof}

We remark that any of the expansions of $\Eld{\alpha+\beta+\gamma}{t}{i}$ given in the above theorem may be taken as an ``alias'' for this element in the sense of \Cref{thm:structure4}; the same will be true for the $B_3$ expansion theorems below.

We now establish some useful relations for decomposing $\Rt{\alpha+\beta+\gamma}$ elements. By \Cnref{prop:a3:est:alpha+beta+gamma}, \Cnref{rel:a3:sub:inv:alpha}, and \Cnref{rel:a3:sub:inv:beta+gamma}:

\begin{relation}[\RelNameA\RelNameExprn{\Rt{\alpha+\beta+\gamma}}{\Rt{\alpha}}{\Rt{\beta+\gamma}}]\label{rel:a3:expr:alpha+beta+gamma:alpha:beta+gamma}
    \[
    \DQuant{i \in [1], j \in [2]}{t,u \in \LiftScalars} \Eld{\alpha+\beta+\gamma}{tu}{i+j} = \Eld{\alpha}{t}{i} \Eld{\beta+\gamma}{u}{j} \Eld{\alpha}{-t}{i} \Eld{\beta+\gamma}{-u}{j}.
    \]
\end{relation}

By \Cnref{prop:a3:est:alpha+beta+gamma}, \Cnref{rel:a3:sub:inv:alpha+beta}, and \Cnref{rel:a3:sub:inv:gamma}:

\begin{relation}[\RelNameA\RelNameExprn{\Rt{\alpha+\beta+\gamma}}{\Rt{\alpha+\beta}}{\Rt{\gamma}}]\label{rel:a3:expr:alpha+beta+gamma:alpha+beta:gamma}
    \[
    \DQuant{i \in [2], j \in [1]}{t,u \in \LiftScalars} \Eld{\alpha+\beta+\gamma}{tu}{i+j} = \Eld{\alpha+\beta}{t}{i} \Eld{\gamma}{u}{j} \Eld{\alpha+\beta}{-t}{i} \Eld{\gamma}{-u}{j}.
    \]
\end{relation}

\subsection{Remaining commutators}

Now, it remains to prove (a) that $\Rt{\alpha+\beta+\gamma}$ elements commutes with all remaining elements and (b) that $\Rt{\alpha+\beta+\gamma}$ elements themselves satisfy a linearity relation.

\begin{relation}[\RelNameA\RelNameCommute{\Rt{\alpha}}{\Rt{\alpha+\beta+\gamma}}]\label{rel:a3:comm:alpha:alpha+beta+gamma}
    \[ \DQuant{i \in [1], j \in [3]}{t,u \in \LiftScalars} \Comm{\Eld{\alpha}{t}i}{\Eld{\alpha+\beta+\gamma}{u}j} = \Id. \]
\end{relation}

\begin{proof}
    By \Cnref{rel:a3:expr:alpha+beta+gamma:alpha+beta:gamma}, $\Rt{\alpha+\beta+\gamma}$ elements can be expressed as products of $\Rt{\alpha+\beta}$ and $\Rt{\gamma}$ elements. $\Rt{\alpha}$ elements commute with $\Rt{\alpha+\beta}$ elements (\Cnref{rel:a3:sub:comm:alpha:alpha+beta}) and $\Rt{\gamma}$ elements (\Cnref{rel:a3:sub:comm:alpha:gamma}).
\end{proof}

Symmetrically, we have:

\begin{relation}[\RelNameA\RelNameCommute{\Rt{\gamma}}{\Rt{\alpha+\beta+\gamma}}]\label{rel:a3:comm:gamma:alpha+beta+gamma}
    \[ \DQuant{i \in [1], j \in [3]}{t,u \in \LiftScalars} \Comm{\Eld{\gamma}{t}i}{\Eld{\alpha+\beta+\gamma}{u}j} = \Id. \]
\end{relation}

\begin{proof}
    By \Cnref{rel:a3:expr:alpha+beta+gamma:alpha:beta+gamma}, $\Rt{\alpha+\beta+\gamma}$ elements can be expressed as products of $\Rt{\alpha}$ and $\Rt{\beta+\gamma}$ elements. $\Rt{\gamma}$ elements commute with $\Rt{\alpha}$ elements (\Cnref{rel:a3:sub:comm:alpha:gamma}) and $\Rt{\beta+\gamma}$ elements (\Cnref{rel:a3:sub:comm:gamma:beta+gamma}).
\end{proof}

With slightly more effort, we also have:

\begin{relation}[\RelNameA\RelNameCommute{\Rt{\beta}}{\Rt{\alpha+\beta+\gamma}}]\label{rel:a3:comm:beta:alpha+beta+gamma}
    \[ \DQuant{i \in [1], j \in [3]}{t,u \in \LiftScalars} \Comm{\Eld{\beta}{t}i}{\Eld{\alpha+\beta+\gamma}{u}j} = \Id. \]
\end{relation}

\begin{proof}
We write a product of $\Rt{\beta}$ and $\Rt{\alpha+\beta+\gamma}$ elements:
    \begin{align*}
        & \Eld{\beta}{t}i \asite{\Eld{\alpha+\beta+\gamma}{u}j} \\
        \intertext{Decomposing $j = j_1+j_2$ for $j_1 \in [2], j_2 \in [1]$ arbitrarily, and expanding the $\Rt{\alpha+\beta+\gamma}$ element into a product of $\Rt{\alpha+\beta}$ and $\Rt{\gamma}$ elements (\Cnref{rel:a3:expr:alpha+beta+gamma:alpha+beta:gamma}):}
        &= \asite{\Eld{\beta}{t}i \Eld{\alpha+\beta}{u}{j_1}} \Eld{\gamma}{1}{j_2} \Eld{\alpha+\beta}{-u}{j_1} \Eld{\gamma}{-1}{j_2} \\
        \intertext{Moving the $\Rt{\beta}$ element from the left fully to the right, creating no commutators with $\Rt{\alpha+\beta}$ elements (\Cnref{rel:a3:sub:comm:beta:alpha+beta}) and $\Rt{\beta+\gamma}$ commutators with $\Rt{\gamma}$ elements (\Cnref{rel:a3:order:beta:gamma}):}
        &= \Eld{\alpha+\beta}{u}{j_1} \Eld{\beta+\gamma}{t}{i+j_2} \Eld{\gamma}{1}{j_2} \Eld{\alpha+\beta}{-t}{j_1} \Eld{\beta+\gamma}{-u}{i+j_2} \Eld{\gamma}{-1}{j_2} \Eld{\beta}{t}{i}. \\
        \intertext{Now we can commute the $\Rt{\beta+\gamma}$ elements across the $\Rt{\alpha+\beta}$ element (\Cnref{rel:a3:comm:alpha+beta:beta+gamma}), thereby cancelling them (\Cnref{rel:a3:sub:inv:beta+gamma}):}
        &= \Eld{\alpha+\beta}{u}{j_1} \Eld{\gamma}{1}{j_2} \Eld{\alpha+\beta}{-u}{j_1} \Eld{\gamma}{-1}{j_2} \Eld{\beta}{t}{i} \\
        \intertext{Reducing back into an $\Rt{\alpha+\beta+\gamma}$ element (\Cnref{rel:a3:expr:alpha+beta+gamma:alpha+beta:gamma}):}
        &= \Eld{\alpha+\beta+\gamma}{u}{j} \Eld{\beta}{t}{i},
    \end{align*}
    as desired.
\end{proof}

Now we get several easy corollaries:

\begin{relation}[\RelNameA\RelNameCommute{\Rt{\alpha+\beta}}{\Rt{\alpha+\beta+\gamma}}]\label{rel:a3:comm:alpha+beta:alpha+beta+gamma}
    \[ \DQuant{i \in [2], j \in [3]}{t,u \in \LiftScalars}  \Comm{\Eld{\alpha+\beta}{t}i}{\Eld{\alpha+\beta+\gamma}{u}j} = \Id. \]
\end{relation}

\begin{proof}
    $\Rt{\alpha+\beta}$ elements can be expressed as products of $\Rt{\alpha}$ and $\Rt{\beta}$ elements by \Cnref{rel:a3:expr:alpha+beta}. These types of elements commute with $\Rt{\alpha+\beta+\gamma}$ elements by \Cnref{rel:a3:comm:alpha:alpha+beta+gamma} and \Cnref{rel:a3:comm:beta:alpha+beta+gamma}, respectively.
\end{proof}

\begin{relation}[\RelNameA\RelNameCommute{\Rt{\beta+\gamma}}{\Rt{\alpha+\beta+\gamma}}]\label{rel:a3:comm:beta+gamma:alpha+beta+gamma}
    \[ \DQuant{i \in [2], j \in [3]}{t,u \in \LiftScalars}  \Comm{\Eld{\beta+\gamma}{t}i}{\Eld{\alpha+\beta+\gamma}{u}j} = \Id. \]
\end{relation}

\begin{proof}
    $\Rt{\beta+\gamma}$ elements can be expressed as products of $\Rt{\beta}$ and $\Rt{\gamma}$ elements by \Cnref{rel:a3:expr:beta+gamma}. These types of elements commute with $\Rt{\alpha+\beta+\gamma}$ elements by \Cnref{rel:a3:comm:beta:alpha+beta+gamma} and \Cnref{rel:a3:comm:gamma:alpha+beta+gamma}, respectively.
\end{proof}

\begin{relation}[\RelNameA\RelNameSelfCommute{\Rt{\alpha+\beta+\gamma}}]\label{rel:a3:comm:self:alpha+beta+gamma}
    \[ \DQuant{i,j \in [3]}{t,u \in \LiftScalars}  \Comm{\Eld{\alpha+\beta+\gamma}{t}i}{\Eld{\alpha+\beta+\gamma}{u}j} = \Id. \]
\end{relation}

\begin{proof}
    $\Rt{\alpha+\beta+\gamma}$ elements can be expressed as products of $\Rt{\alpha+\beta}$ and $\Rt{\gamma}$ elements by \Cnref{rel:a3:expr:alpha+beta+gamma:alpha+beta:gamma}. These types of elements commute with $\Rt{\alpha+\beta+\gamma}$ elements by \Cnref{rel:a3:comm:alpha+beta:alpha+beta+gamma} and \Cnref{rel:a3:comm:gamma:alpha+beta+gamma}, respectively.
\end{proof}

\subsection{Linearity for $\alpha+\beta+\gamma$}

Finally, we prove linearity for $\Rt{\alpha+\beta+\gamma}$ elements.

\begin{relation}[\RelNameA\RelNameLinearity{\Rt{\alpha+\beta+\gamma}}]\label{rel:a3:lin:alpha+beta+gamma}
    \[ \DQuant{i \in [3]}{t,u \in \LiftScalars} \Eld{\alpha+\beta+\gamma}{t}i \Eld{\alpha+\beta+\gamma}{u}i = \Eld{\alpha+\beta+\gamma}{t+u}i. \]
\end{relation}

\begin{proof}
    We write a product of $\Rt{\alpha+\beta+\gamma}$ elements:
    \begin{align*}
        & \asite{\Eld{\alpha+\beta+\gamma}{t}i} \Eld{\alpha+\beta+\gamma}{u}i \\
        \intertext{Decomposing $i = i_1 + i_2$ for $i_1 \in [2],i_2 \in [1]$ arbitrarily and expanding one $\Rt{\alpha+\beta+\gamma}$ into a product of $\Rt{\alpha+\beta}$ and $\Rt{\gamma}$ elements (\Cnref{rel:a3:expr:alpha+beta+gamma:alpha+beta:gamma}):}
        &= \Eld{\alpha+\beta}{t}{i_1} \Eld{\gamma}{1}{i_2} \Eld{\alpha+\beta}{-t}{i_1} \Eld{\gamma}{-1}{i_2} \asite{\Eld{\alpha+\beta+\gamma}{u}i}. \\
        \intertext{Moving the other $\Rt{\alpha+\beta+\gamma}$ element on the right \emph{partially} to the left since it commutes with $\Rt{\gamma}$ elements (\Cnref{rel:a3:comm:gamma:alpha+beta+gamma}) and $\Rt{\alpha+\beta}$ elements (\Cnref{rel:a3:comm:alpha+beta:alpha+beta+gamma}):}
        &= \Eld{\alpha+\beta}{t}{i_1} \asite{\Eld{\alpha+\beta+\gamma}{u}i} \Eld{\gamma}{1}{i_2} \Eld{\alpha+\beta}{-t}{i_1} \Eld{\gamma}{-1}{i_2} \\
        \intertext{Expanding the other $\Rt{\alpha+\beta+\gamma}$ into a similar product of $\Rt{\alpha+\beta}$ and $\Rt{\gamma}$ elements (\Cnref{rel:a3:expr:alpha+beta+gamma:alpha+beta:gamma})}
        &= \asite{\Eld{\alpha+\beta}{t}{i_1} \Eld{\alpha+\beta}{u}{i_1}} \Eld{\gamma}{1}{i_2} \Eld{\alpha+\beta}{-u}{i_1} \asite{\Eld{\gamma}{-1}{i_2} \Eld{\gamma}{1}{i_2}} \Eld{\alpha+\beta}{-t}{i_1} \Eld{\gamma}{-1}{i_2} \\
        \intertext{Applying linearity to the two consecutive $\Rt{\alpha+\beta}$ elements (\Cnref{rel:a3:sub:lin:alpha+beta}) and cancelling the two consecutive $\Rt{\gamma}$ elements (\Cnref{rel:a3:sub:inv:gamma}):}
        &= \Eld{\alpha+\beta}{t+u}{i_1} \Eld{\gamma}{1}{i_2} \asite{\Eld{\alpha+\beta}{-u}{i_1} \Eld{\alpha+\beta}{-t}{i_1}} \Eld{\gamma}{-1}{i_2} \\
        \intertext{Applying linearity to the two newly consecutive $\Rt{\alpha+\beta}$ elements (\Cnref{rel:a3:sub:lin:alpha+beta}):}
        &= \asite{\Eld{\alpha+\beta}{t+u}{i_1} \Eld{\gamma}{1}{i_2} \Eld{\alpha+\beta}{-(t+u)}{i_1} \Eld{\gamma}{-1}{i_2}}  \\
        \intertext{Reducing into a single $\Rt{\alpha+\beta+\gamma}$ element (\Cnref{rel:a3:expr:alpha+beta+gamma:alpha+beta:gamma}):}
        &= \Eld{\alpha+\beta+\gamma}{t+u}{i}.
    \end{align*}
    as desired.
\end{proof}

\section{Lifting for $B_3$}\label{sec:b3}

We next turn to proving \Cref{thm:story:b3:lift}, the lifting theorem for $B_3$ links.

\subsection{Link of $\alpha$}\label{sec:b3:small}

We first prove \Cref{thm:story:b3:lift} for the ``small'' link of $B_3$, i.e., a lifting theorem from $\UnipBSm{p}$ to $\GrUnipBSm{p^k}$. All relations stated in this section hold in the graded link group $\GrUnipBSm{p^k}$ and are proven via constant-length in-subgroup and lifted relations in a number of steps independent of $p$ or $k$.

\subsubsection{Link structure}

\begin{table}[h!]
\centering
\begin{tabular}{c|c|c|c}
    \textbf{Color} & \textbf{Base} & \textbf{Positive span} & \textbf{Total \#} \\ \hline
    & $\{\Rt{\beta},\Rt{\psi},\Rt{\omega}\}$ & $\{\Rt{\beta+\psi},\Rt{\psi+\omega},\Rt{\beta+2\psi},\Rt{\beta+\psi+\omega}\}$ & 7 \\
    \BoxR & $\{\Rt{\beta},\Rt{\psi}\}$ & $\{\Rt{\beta+\psi},\Rt{\beta+2\psi}\}$ & 4 \\
    \BoxG & $\{\Rt{\beta},\Rt{\omega}\}$ & $\emptyset$ & 2 \\
    \BoxB & $\{\Rt{\psi},\Rt{\omega}\}$ & $\{\Rt{\psi+\omega}\}$ & 3 \\
\end{tabular}
\caption{The roots spanned by $\Rt{\beta},\Rt{\psi},\Rt{\omega}$ in $B_3$ as well as the subsets spanned by pairs.}
\end{table}

The link of $\Rt{\alpha}$ is the subset of $B_3$ generated by the basis $\Rt{\beta},\Rt{\psi},\Rt{\omega}$. The link contains $7$ vectors, of which $6$ (all but the missing root $\Rt{\beta+\psi+\omega}$) are contained in the span of some subset of two basis vectors. In the corresponding Steinberg group $\UnipBSm{q}$, there are $\binom{7}{2} = 21$ total commutation relations and $7$ linear relations. The \BoxR, \BoxG, and \BoxB subgroups together cover $\binom{4}2 + \binom{2}2 + \binom{3}2 = 10$ commutation relations and $6$ linear relations, leaving us with a total of $11$ commutation relations and $1$ linear relation to prove. There are two commutation relations not involving a ``missing'' root: $\Rt{\beta+\psi}$ and $\Rt{\psi+\omega}$ commute and $\Rt{\beta+2\psi}$ and $\Rt{\omega}$ commute.

\begin{remark}
    The sequence of manipulations shows we only need weak information from lifting: The relation that $\Rt{\beta+\psi}$ and $\Rt{\psi+\omega}$ elements commute. Indeed, even in the unlifted case, it shows that this non-in-subgroup relation, together with the in-subgroup relations, gives the structure of the entire group (one can get this proof by ignoring all degree markers). Interestingly, we do \emph{not} need to lift the relation that $\Rt{\beta+2\psi}$ and $\Rt{\omega}$ elements commute: Instead, we can circle back at the end and prove this relation directly (\Cnref{rel:b3-small:comm:beta+2psi:omega} below).
\end{remark}

\subsubsection{Assumed relations}

\paragraph{In-subgroup relations}

We enumerate the following in-subgroup relations in $\GrUnipBSm{p^k}$:

\begin{tcolorbox}[colback=\ColorR, colframe=\ColorBackR, title={Commutation relations spanned by $\Rt{\beta}$ and $\Rt{\psi}$}]
\begin{relation}[\RelNameBSm\RelNameSubgp\RelNameCommutator{\Rt{\beta}}{\Rt{\psi}}]\label{rel:b3-small:sub:comm:beta:psi}
    \[ \DQuant{i, j \in [1]}{t,u \in \LiftScalars} \Comm{\Eld{\beta}{t}{i}}{\Eld{\psi}{u}{j}} = \Eld{\beta+\psi}{tu}{i+j} \Eld{\beta+2\psi}{tu^2}{i+2j}. \]
\end{relation}
\begin{relation}[\RelNameBSm\RelNameSubgp\RelNameCommute{\Rt{\beta}}{\Rt{\beta+\psi}}]\label{rel:b3-small:sub:comm:beta:beta+psi}
    \[ \DQuant{i \in [1], j \in [2]}{t,u \in \LiftScalars} \Comm{\Eld{\beta}{t}{i}}{\Eld{\beta+\psi}{u}{j}} = \Id. \]
\end{relation}
\begin{relation}[\RelNameBSm\RelNameSubgp\RelNameCommute{\Rt{\beta}}{\Rt{\beta+2\psi}}]\label{rel:b3-small:sub:comm:beta:beta+2psi}
    \[ \DQuant{i \in [1], j \in [3]}{t,u \in \LiftScalars} \Comm{\Eld{\beta}{t}{i}}{\Eld{\beta+2\psi}{u}{j}} = \Id. \]
\end{relation}
\begin{relation}[\RelNameBSm\RelNameSubgp\RelNameCommutator{\Rt{\psi}}{\Rt{\beta+\psi}}]\label{rel:b3-small:sub:comm:psi:beta+psi}
    \[ \DQuant{i \in [1], j \in [2]}{t,u \in \LiftScalars} \Comm{\Eld{\psi}{t}{i}}{\Eld{\beta+\psi}{u}{j}} = \Eld{\beta+2\psi}{2tu}{i+j}. \]
\end{relation}
\begin{relation}[\RelNameBSm\RelNameSubgp\RelNameCommute{\Rt{\psi}}{\Rt{\beta+2\psi}}]\label{rel:b3-small:sub:comm:psi:beta+2psi}
    \[ \DQuant{i \in [1], j \in [3]}{t,u \in \LiftScalars} \Comm{\Eld{\psi}{t}{i}}{\Eld{\beta+2\psi}{u}{j}} = \Id. \]
\end{relation}
\begin{relation}[\RelNameBSm\RelNameSubgp\RelNameCommute{\Rt{\beta+\psi}}{\Rt{\beta+2\psi}}]\label{rel:b3-small:sub:comm:beta+psi:beta+2psi}
    \[ \DQuant{i \in [2], j \in [3]}{t,u \in \LiftScalars} \Comm{\Eld{\beta+\psi}{t}{i}}{\Eld{\beta+2\psi}{u}{j}} = \Id. \]
\end{relation}
\end{tcolorbox}

\begin{tcolorbox}[colback=\ColorG, colframe=\ColorBackG, title={Commutation relation spanned by $\Rt{\beta}$ and $\Rt{\omega}$}]
\begin{relation}[\RelNameBSm\RelNameSubgp\RelNameCommute{\Rt{\beta}}{\Rt{\omega}}]\label{rel:b3-small:sub:comm:beta:omega}
    \[ \DQuant{i, j \in [1]}{t,u \in \LiftScalars} \Comm{\Eld{\beta}{t}{i}}{\Eld{\omega}{u}{j}} = \Id. \]
\end{relation}
\end{tcolorbox}

\begin{tcolorbox}[colback=\ColorB, colframe=\ColorBackB, title={Commutation relations spanned by $\Rt{\psi}$ and $\Rt{\omega}$}]
\begin{relation}[\RelNameBSm\RelNameSubgp\RelNameCommutator{\Rt{\psi}}{\Rt{\omega}}]\label{rel:b3-small:sub:comm:psi:omega}
    \[ \DQuant{i, j \in [1]}{t,u \in \LiftScalars} \Comm{\Eld{\psi}{t}{i}}{\Eld{\omega}{u}{j}} = \Eld{\psi+\omega}{2tu}{i+j}. \]
\end{relation}
\begin{relation}[\RelNameBSm\RelNameSubgp\RelNameCommute{\Rt{\psi}}{\Rt{\psi+\omega}}]\label{rel:b3-small:sub:comm:psi:psi+omega}
    \[ \DQuant{i \in [1], j \in [2]}{t,u \in \LiftScalars} \Comm{\Eld{\psi}{t}{i}}{\Eld{\psi+\omega}{u}{j}} = \Id. \]
\end{relation}
\begin{relation}[\RelNameBSm\RelNameSubgp\RelNameCommute{\Rt{\omega}}{\Rt{\psi+\omega}}]\label{rel:b3-small:sub:comm:omega:psi+omega}
    \[ \DQuant{i \in [1], j \in [2]}{t,u \in \LiftScalars} \Comm{\Eld{\omega}{t}{i}}{\Eld{\psi+\omega}{u}{j}} = \Id. \]
\end{relation}
\end{tcolorbox}

\begin{tcolorbox}[breakable,colback=\ColorNeut,title={Linearity relations}]
\begin{relation}[\RelNameBSm\RelNameSubgp\RelNameLinearity{\Rt{\beta}}]\label{rel:b3-small:sub:lin:beta}
    \[ \DQuant{i \in [1]}{t,u \in \LiftScalars} \Eld{\beta}{t}{i} \Eld{\beta}{u}{i} = \Eld{\beta}{t+u}{i}. \]
\end{relation}
\begin{relation}[\RelNameBSm\RelNameSubgp\RelNameLinearity{\Rt{\psi}}]\label{rel:b3-small:sub:lin:psi}
    \[ \DQuant{i \in [1]}{t,u \in \LiftScalars} \Eld{\psi}{t}{i} \Eld{\psi}{u}{i} = \Eld{\psi}{t+u}{i}. \]
\end{relation}
\begin{relation}[\RelNameBSm\RelNameSubgp\RelNameLinearity{\Rt{\omega}}]\label{rel:b3-small:sub:lin:omega}
    \[ \DQuant{i \in [1]}{t,u \in \LiftScalars} \Eld{\omega}{t}{i} \Eld{\omega}{u}{i} = \Eld{\omega}{t+u}{i}. \]
\end{relation}
\begin{relation}[\RelNameBSm\RelNameSubgp\RelNameLinearity{\Rt{\beta+\psi}}]\label{rel:b3-small:sub:lin:beta+psi}
    \[ \DQuant{i \in [2]}{t,u \in \LiftScalars} \Eld{\beta+\psi}{t}{i} \Eld{\beta+\psi}{u}{i} = \Eld{\beta+\psi}{t+u}{i}. \]
\end{relation}
\begin{relation}[\RelNameBSm\RelNameSubgp\RelNameLinearity{\Rt{\psi+\omega}}]\label{rel:b3-small:sub:lin:psi+omega}
    \[ \DQuant{i \in [2]}{t,u \in \LiftScalars} \Eld{\psi+\omega}{t}{i} \Eld{\psi+\omega}{u}{i} = \Eld{\psi+\omega}{t+u}{i}. \]
\end{relation}
\begin{relation}[\RelNameBSm\RelNameSubgp\RelNameLinearity{\Rt{\beta+2\psi}}]\label{rel:b3-small:sub:lin:beta+2psi}
    \[ \DQuant{i \in [3]}{t,u \in \LiftScalars} \Eld{\beta+2\psi}{t}{i} \Eld{\beta+2\psi}{u}{i} = \Eld{\beta+2\psi}{t+u}{i}. \]
\end{relation}
\end{tcolorbox}

\begin{tcolorbox}[breakable,colback=\ColorNeut,title={Self-commutator relations}]
\begin{relation}[\RelNameBSm\RelNameSubgp\RelNameSelfCommute{\Rt{\beta}}]\label{rel:b3-small:sub:comm:self:beta}
    \[ \DQuant{i,j \in [1]}{t,u \in \LiftScalars} \Comm{\Eld{\beta}{t}{i}}{\Eld{\beta}{u}{j}} = \Id. \]
\end{relation}
\begin{relation}[\RelNameBSm\RelNameSubgp\RelNameSelfCommute{\Rt{\psi}}]\label{rel:b3-small:sub:comm:self:psi}
    \[ \DQuant{i,j \in [1]}{t,u \in \LiftScalars} \Comm{\Eld{\psi}{t}{i}}{\Eld{\psi}{u}{j}} = \Id. \]
\end{relation}
\begin{relation}[\RelNameBSm\RelNameSubgp\RelNameSelfCommute{\Rt{\omega}}]\label{rel:b3-small:comm:self:omega}
    \[ \DQuant{i,j \in [1]}{t,u \in \LiftScalars} \Comm{\Eld{\omega}{t}{i}}{\Eld{\omega}{u}{j}} = \Id. \]
\end{relation}
\begin{relation}[\RelNameBSm\RelNameSubgp\RelNameSelfCommute{\Rt{\beta+\psi}}]\label{rel:b3-small:sub:comm:self:beta+psi}
    \[ \DQuant{i,j \in [2]}{t,u \in \LiftScalars} \Comm{\Eld{\beta+\psi}{t}{i}}{\Eld{\beta+\psi}{u}{j}} = \Id. \]
\end{relation}
\begin{relation}[\RelNameBSm\RelNameSubgp\RelNameSelfCommute{\Rt{\psi+\omega}}]\label{rel:b3-small:sub:comm:self:psi+omega}
    \[ \DQuant{i,j \in [2]}{t,u \in \LiftScalars} \Comm{\Eld{\psi+\omega}{t}{i}}{\Eld{\psi+\omega}{u}{j}} = \Id. \]
\end{relation}
\begin{relation}[\RelNameBSm\RelNameSubgp\RelNameSelfCommute{\Rt{\beta+2\psi}}]\label{rel:b3-small:sub:comm:self:beta+2psi}
    \[ \DQuant{i,j \in [3]}{t,u \in \LiftScalars} \Comm{\Eld{\beta+2\psi}{t}{i}}{\Eld{\beta+2\psi}{u}{j}} = \Id. \]
\end{relation}
\end{tcolorbox}

\paragraph{Lifted relations} We again only use a single lifted relation, the nonhomogeneous lift (cf. \Cref{eq:chev:nonhom}) of the relation $\Comm{\El{\beta+\psi}{1}}{\El{\psi+\omega}{1}} = 1$ in $\UnipBSm{q}$.

\begin{relation}[\RelNameBSm\RelNameNonHomLiftRaw\RelNameCommute{\Rt{\beta+\psi}}{\Rt{\psi+\omega}}]\label{rel:b3-small:comm:raw:lift:beta+psi:psi+omega}
    \begin{multline*}
        \Quant{t_1,t_0,u_1,u_0,v_1,v_0 \in \LiftScalars} \\
        \Comm{\Eld{\beta+\psi}{t_1u_1}{2} \Eld{\beta+\psi}{t_1u_0+t_0u_1}{1} \Eld{\beta+\psi}{t_0u_0}{0}}{\Eld{\psi+\omega}{u_1v_1}{2} \Eld{\psi+\omega}{u_1v_0+u_0v_1}{1} \Eld{\psi+\omega}{u_0v_0}{0}} = \Id.
    \end{multline*}
\end{relation}

\subsubsection{Additional identities}

Again, from the linearity relations (\Cnref{rel:b3-large:sub:lin:alpha}, \Cnref{rel:b3-large:sub:lin:beta}, \Cnref{rel:b3-large:sub:lin:psi}, \Cnref{rel:b3-large:sub:lin:alpha+beta}, \Cnref{rel:b3-large:sub:lin:beta+psi}, \Cnref{rel:b3-large:sub:lin:beta+2psi}) and \Cref{prop:prelim:group-homo}, we get respective identity relations:

\begin{tcolorbox}[breakable,colback=\ColorNeut,title={Identity relations}]
\begin{relation}[\RelNameBSm\RelNameSubgp\RelNameId{\Rt{\beta}}]\label{rel:b3-small:sub:id:beta}
    \[ \Quant{i \in [1]} \Eld{\beta}{0}{i} = \Id. \]
\end{relation}
\begin{relation}[\RelNameBSm\RelNameSubgp\RelNameId{\Rt{\psi}}]\label{rel:b3-small:sub:id:psi}
    \[ \Quant{i \in [1]} \Eld{\psi}{0}{i} = \Id. \]
\end{relation}
\begin{relation}[\RelNameBSm\RelNameSubgp\RelNameId{\Rt{\omega}}]\label{rel:b3-small:sub:id:omega}
    \[ \Quant{i \in [1]} \Eld{\omega}{0}{i} = \Id. \]
\end{relation}
\begin{relation}[\RelNameBSm\RelNameSubgp\RelNameId{\Rt{\beta+\psi}}]\label{rel:b3-small:sub:id:beta+psi}
    \[ \Quant{i \in [2]} \Eld{\beta+\psi}{0}{i} = \Id. \]
\end{relation}
\begin{relation}[\RelNameBSm\RelNameSubgp\RelNameId{\Rt{\psi+\omega}}]\label{rel:b3-small:sub:id:psi+omega}
    \[ \Quant{i \in [2]} \Eld{\psi+\omega}{0}{i} = \Id. \]
\end{relation}
\begin{relation}[\RelNameBSm\RelNameSubgp\RelNameId{\Rt{\beta+2\psi}}]\label{rel:b3-small:sub:id:beta+2psi}
    \[ \Quant{i \in [3]} \Eld{\beta+2\psi}{0}{i} = \Id. \]
\end{relation}
\end{tcolorbox}

and the inverse relations:

\begin{tcolorbox}[breakable,colback=\ColorNeut,title={Inverse relations}]
\begin{relation}[\RelNameBSm\RelNameSubgp\RelNameInv{\Rt{\beta}}]\label{rel:b3-small:sub:inv:beta}
    \[ \DQuant{i \in [1]}{t \in \LiftScalars} \Eld{\beta}{t}{i} \Eld{\beta}{-t}{i} = \Id. \]
\end{relation}
\begin{relation}[\RelNameBSm\RelNameSubgp\RelNameInv{\Rt{\psi}}]\label{rel:b3-small:sub:inv:psi}
    \[ \DQuant{i \in [1]}{t \in \LiftScalars} \Eld{\psi}{t}{i} \Eld{\psi}{-t}{i} = \Id. \]
\end{relation}
\begin{relation}[\RelNameBSm\RelNameSubgp\RelNameInv{\Rt{\omega}}]\label{rel:b3-small:sub:inv:omega}
    \[ \DQuant{i \in [1]}{t \in \LiftScalars} \Eld{\omega}{t}{i} \Eld{\omega}{-t}{i} = \Id. \]
\end{relation}
\begin{relation}[\RelNameBSm\RelNameSubgp\RelNameInv{\Rt{\beta+\psi}}]\label{rel:b3-small:sub:inv:beta+psi}
    \[ \DQuant{i \in [2]}{t \in \LiftScalars} \Eld{\beta+\psi}{t}{i} \Eld{\beta+\psi}{-t}{i} = \Id. \]
\end{relation}
\begin{relation}[\RelNameBSm\RelNameSubgp\RelNameInv{\Rt{\psi+\omega}}]\label{rel:b3-small:sub:inv:psi+omega}
    \[ \DQuant{i \in [2]}{t \in \LiftScalars} \Eld{\psi+\omega}{t}{i} \Eld{\psi+\omega}{-t}{i} = \Id. \]
\end{relation}
\begin{relation}[\RelNameBSm\RelNameSubgp\RelNameInv{\Rt{\beta+2\psi}}]\label{rel:b3-small:sub:inv:beta+2psi}
    \[ \DQuant{i \in [3]}{t \in \LiftScalars} \Eld{\beta+2\psi}{t}{i} \Eld{\beta+2\psi}{-t}{i} = \Id. \]
\end{relation}
\end{tcolorbox}

We also use some expansions:

\begin{relation}[\RelNameBSm\RelNameSubgp\RelNameExpr{\Rt{\beta+\psi}}]\label{rel:b3-small:sub:expr:beta+psi}
    \[ \DQuant{i, j \in [1]}{t,u \in \LiftScalars} \Eld{\beta+\psi}{2tu}{i+j} = \Eld{\psi}{-u}{j} \Eld{\beta}{t}{i} \Eld{\psi}{2u}{j} \Eld{\beta}{-t}{i} \Eld{\psi}{-u}{j}. \]
\end{relation}

\begin{proof}
    Start with \Cnref{rel:b3-small:sub:comm:beta:psi}:
    \begin{align*}
    \Comm{\Eld{\beta}{t}{i}}{\Eld{\psi}{2u}{j}} &= 
    \Eld{\beta+\psi}{2tu}{i+j} \Eld{\beta+2\psi}{4tu^2}{i+2j} \\
    \intertext{Commute $\Rt{\beta+2\psi}$ and $\Rt{\beta+\psi}$ (\Cnref{rel:b3-small:sub:comm:beta+psi:beta+2psi}):}
    &= \Eld{\beta+2\psi}{4tu^2}{i+2j} \Eld{\beta+\psi}{2tu}{i+j} \\
    \intertext{Replace $\Rt{\beta+2\psi}$ with a commutator of $\Rt{\psi}$ and $\Rt{\beta+\psi}$ elements (\Cnref{rel:b3-small:sub:comm:psi:beta+psi}):}
    &= \Comm{\Eld{\psi}{u}{j}}{\Eld{\beta+\psi}{2tu}{i+j}} \Eld{\beta+\psi}{2tu}{i+j} \\
    \intertext{Expand the commutator and cancel with the $\Rt{\beta+\psi}$ element on the right (\Cnref{rel:b3-small:sub:inv:beta+psi} and \Cnref{rel:b3-small:sub:inv:psi}):}
    &= \Eld{\psi}{u}{j} \Eld{\beta+\psi}{2tu}{i+j} \Eld{\psi}{-u}{j}.
    \end{align*}
    Expanding the commutator on the LHS and conjugating by $\Rt{\psi}$ elements (\Cnref{rel:b3-small:sub:inv:psi} and \Cnref{rel:b3-small:sub:inv:beta}) gives the relation.
\end{proof}

From \Cnref{rel:b3-small:sub:comm:psi:beta+psi}, \Cnref{rel:b3-small:sub:inv:psi}, and \Cnref{rel:b3-small:sub:inv:beta+psi}:
\begin{relation}[\RelNameBSm\RelNameSubgp\RelNameExpr{\Rt{\beta+2\psi}}]\label{rel:b3-small:sub:expr:beta+2psi}
    \[ \DQuant{i \in [1], j \in [2]}{t,u \in \LiftScalars} \Eld{\beta+2\psi}{2tu}{i+j} = \Eld{\psi}{t}{i} \Eld{\beta+\psi}{u}{j} \Eld{\psi}{-t}{i} \Eld{\beta+\psi}{-u}{j}. \]
\end{relation}

From \Cnref{rel:b3-small:sub:comm:psi:omega}, \Cnref{eq:comm:mid:str}, \Cnref{eq:comm:right:str}, \Cnref{rel:b3-small:sub:inv:psi}, and \Cnref{rel:b3-small:sub:inv:omega}:

\begin{relation}[\RelNameBSm\RelNameSubgp\RelNameOrder{\Rt{\psi}}{\Rt{\omega}}]\label{rel:b3-small:order:psi:omega}
    \[ \DQuant{i, j \in [1]}{t,u \in \LiftScalars} \Eld{\psi}{t}{i} \Eld{\omega}{u}{j} = \Eld{\omega}{u}{j} \asite{\Eld{\psi+\omega}{2tu}{i+j}} \Eld{\psi}{t}{i} = \Eld{\omega}{u}{j} \Eld{\psi}{t}{i} \asite{\Eld{\psi+\omega}{2tu}{i+j}}. \]
\end{relation}

From \Cnref{rel:b3-small:sub:comm:beta:psi}, \Cnref{eq:comm:mid:str}, \Cnref{eq:comm:right:str}, \Cnref{rel:b3-small:sub:comm:beta+psi:beta+2psi}, \Cnref{rel:b3-small:sub:inv:beta}, and \Cnref{rel:b3-small:sub:inv:psi}:

\begin{relation}[\RelNameBSm\RelNameSubgp\RelNameOrder{\Rt{\beta}}{\Rt{\psi}}]\label{rel:b3-small:order:beta:psi}
    \[ \DQuant{i, j \in [1]}{t,u \in \LiftScalars} \Eld{\beta}{t}{i} \Eld{\psi}{u}{j} = \Eld{\psi}{u}{j} \asite{\Eld{\beta+\psi}{tu}{i+j} \Eld{\beta+2\psi}{-tu^2}{i+2j}} \Eld{\beta}{t}{i} = \Eld{\psi}{u}{j} \Eld{\beta}{t}{i} \asite{\Eld{\beta+2\psi}{-tu^2}{i+2j} \Eld{\beta+\psi}{tu}{i+j}}. \]
\end{relation}
From \Cnref{rel:b3-small:sub:comm:beta:psi}, \Cnref{eq:comm:left:inv}, \Cnref{rel:b3-small:sub:comm:beta+psi:beta+2psi}, \Cnref{rel:b3-small:sub:inv:beta+psi}, and \Cnref{rel:b3-small:sub:inv:beta+2psi}:
\begin{relation}[\RelNameBSm\RelNameSubgp\RelNameOrder{\Rt{\psi}}{\Rt{\beta}}]\label{rel:b3-small:order:psi:beta}
    \[ \DQuant{i, j \in [1]}{t,u \in \LiftScalars} \Eld{\psi}{u}{j} \Eld{\beta}{t}{i} = \asite{\Eld{\beta+\psi}{-tu}{i+j} \Eld{\beta+2\psi}{-tu^2}{i+2j}} \Eld{\beta}{t}{i} \Eld{\psi}{u}{j}. \]
\end{relation}

\subsubsection{$\beta+\psi$ and $\psi+\omega$ commute}

\begin{relation}[\RelNameBSm\RelNameCommute{\Rt{\beta+\psi}}{\Rt{\psi+\omega}}]\label{rel:b3-small:comm:beta+psi:psi+omega}
    \[ \DQuant{i,j \in [2]}{t,u \in \LiftScalars} \Comm{\Eld{\beta+\psi}{t}{i}}{\Eld{\psi+\omega}{u}{j}} = \Id. \]
\end{relation}

\begin{proof}
    Analogous to the proof of \Cnref{rel:a3:comm:alpha+beta:beta+gamma} using homogeneous and nonhomogeneous lifting and \Cnref{rel:b3-small:comm:raw:lift:beta+psi:psi+omega}.
\end{proof}

\subsubsection{Establishing $\beta+\psi+\omega$}

\begin{relation}[\RelNameBSm\RelNameCommute{\Rt{\beta+2\psi}}{\Rt{\psi+\omega}}]\label{rel:b3-small:comm:beta+2psi:psi+omega}
    \[ \DQuant{i \in [3], j \in [2]}{t,u \in \LiftScalars} \Comm{\Eld{\beta+2\psi}{t}{i}}{\Eld{\psi+\omega}{u}{j}} = \Id. \]
\end{relation}

\begin{proof}
By \Cnref{rel:b3-small:sub:expr:beta+2psi}, $\Rt{\beta+2\psi}$ elements can be expressed as products of $\Rt{\psi}$ and $\Rt{\beta+\psi}$ elements. $\Rt{\psi+\omega}$ elements commute with these types of elements by \Cnref{rel:b3-small:sub:comm:psi:psi+omega} and \Cnref{rel:b3-small:comm:beta+psi:psi+omega}, respectively.
\end{proof}

\begin{relation}[\RelNameBSm\RelNameInterchange{\Rt{\beta+\psi+\omega}}]\label{rel:b3-small:interchange:beta+psi+omega}
    \[ \DQuant{i,j,k \in [1]}{t,u,v \in \LiftScalars} \Comm{\Eld{\beta+\psi}{tu}{i+j}}{\Eld{\omega}{v}{k}} = \Comm{\Eld{\beta}{t}{i}}{\Eld{\psi+\omega}{2uv}{j+k}}. \]
\end{relation}

\begin{proof}
    We write a product of $\Rt{\beta+\psi}$ and $\Rt{\omega}$ elements:
    \begin{align*}
        & \asite{\Eld{\beta+\psi}{tu}{i+j}} \Eld{\omega}{v}{k} \\
        \intertext{Expand the $\Rt{\beta+\psi}$ element into a product of $\Rt{\psi}$ and $\Rt{\beta}$ elements (\Cnref{rel:b3-small:sub:expr:beta+psi}):}
        &= \Eld{\psi}{-u/2}{j} \Eld{\beta}{t}{i} \Eld{\psi}{u}{j} \Eld{\beta}{-t}{i} \Eld{\psi}{-u/2}{j} \asite{\Eld{\omega}{v}{k}} \\
        \intertext{Moving the $\Rt{\omega}$ element from the right fully to the left creates $\Rt{\psi+\omega}$ commutators with $\Rt{\psi}$ elements (\Cnref{rel:b3-small:order:psi:omega}) and no commutators with $\Rt{\beta}$ elements (\Cnref{rel:b3-small:sub:comm:beta:omega}):}
        &= \Eld{\omega}{v}{k} \asite{\Eld{\psi+\omega}{-uv}{j+k}} \Eld{\psi}{-u/2}{j}  \Eld{\beta}{t}{i} \Eld{\psi}{u}{j} \Eld{\psi+\omega}{2uv}{j+k}\Eld{\beta}{-t}{i} \Eld{\psi}{-u/2}{j}\Eld{\psi+\omega}{-uv}{j+k}. \\
        \intertext{Now observe that $\Rt{\psi+\omega}$ elements commute with $\Rt{\omega}$ and $\Rt{\beta+\psi}$ elements (\Cnref{rel:b3-small:sub:comm:psi:psi+omega} and \Cnref{rel:b3-small:comm:beta+psi:psi+omega}, respectively), and therefore we can move the indicated $\Rt{\psi+\omega}$ element cyclically over to the right\footnote{I.e., we move it to the left, then left-multiply by its inverse, creating its inverse on the left of the \emph{other} side of the equation (the product of $\Rt{\beta+\psi}$ and $\Rt{\omega}$ elements), move this inverse over to the right of the other side, then multiply it back onto the current side on the right.}, merging it with the $\Rt{\psi+\omega}$ element there (\Cref{rel:b3-small:sub:lin:psi+omega}):}
        &= \Eld{\omega}{v}{k} \asite{\Eld{\psi}{-u/2}{j} \Eld{\beta}{t}{i}} \Eld{\psi}{u}{j} \Eld{\psi+\omega}{2uv}{j+k}\asite{\Eld{\beta}{-t}{i} \Eld{\psi}{-u/2}{j}} \Eld{\psi+\omega}{-2uv}{j+k} \\
        \intertext{Moving the $\Rt{\psi}$ elements together across $\Rt{\beta}$ elements, creating $\Rt{\beta+\psi}$ and $\Rt{\beta+2\psi}$ commutators (\Cnref{rel:b3-small:order:beta:psi} and \Cnref{rel:b3-small:order:psi:beta}):}
        &= \Eld{\omega}{v}{k} \Eld{\beta+\psi}{tu/2}{i+j} \Eld{\beta+2\psi}{-tu^2/4}{i+2j} \Eld{\beta}{t}{i} \asite{\Eld{\psi}{-u/2}{j} \Eld{\psi}{u}{j} \Eld{\psi+\omega}{2uv}{j+k}\Eld{\psi}{-u/2}{j}} \Eld{\beta}{-t}{i}  \Eld{\beta+2\psi}{tu^2/4}{i+2j}\Eld{\beta+\psi}{tu/2}{i+j} \Eld{\psi+\omega}{-2uv}{j+k} \\
        \intertext{Commuting the $\Rt{\psi}$ elements together across the $\Rt{\psi+\omega}$ elements (\Cnref{rel:b3-small:sub:comm:psi:psi+omega}) and cancelling them (\Cnref{rel:b3-small:sub:inv:psi}):}
        &= \Eld{\omega}{v}{k} \Eld{\beta+\psi}{tu/2}{i+j} \asite{\Eld{\beta+2\psi}{-tu^2/4}{i+2j}} \Eld{\beta}{t}{i} \Eld{\psi+\omega}{2uv}{j+k}\Eld{\beta}{-t}{i} \asite{\Eld{\beta+2\psi}{tu^2/4}{i+2j}} \Eld{\beta+\psi}{tu/2}{i+j} \Eld{\psi+\omega}{-2uv}{j+k}
        \intertext{$\Rt{\beta+2\psi}$ elements commute with $\Rt{\beta}$ elements (\Cnref{rel:b3-small:sub:comm:beta:beta+2psi}) and $\Rt{\psi+\omega}$ elements (\Cnref{rel:b3-small:comm:beta+2psi:psi+omega}), so we can cancel them (\Cnref{rel:b3-small:sub:inv:beta+2psi}):}
        &= \Eld{\omega}{v}{k} \asite{\Eld{\beta+\psi}{tu/2}{i+j}} \Eld{\beta}{t}{i} \Eld{\psi+\omega}{2uv}{j+k} \Eld{\beta}{-t}{i} \asite{\Eld{\beta+\psi}{tu/2}{i+j}} \Eld{\psi+\omega}{-2uv}{j+k}.
        \intertext{Finally, we collect $\Rt{\beta+\psi}$ elements on the right since they commute with $\Rt{\beta}$ and $\Rt{\psi+\omega}$ elements (\Cnref{rel:b3-small:sub:comm:beta:beta+psi} and \Cnref{rel:b3-small:comm:beta+psi:psi+omega}, respectively) and add them (\Cnref{rel:b3-small:sub:lin:beta+psi}):}
        &= \asite{\Eld{\omega}{v}{k}} \Eld{\beta}{t}{i} \Eld{\psi+\omega}{2uv}{j+k} \Eld{\beta}{-t}{i} \Eld{\psi+\omega}{-2uv}{j+k} \Eld{\beta+\psi}{tu}{i+j} \\
        \intertext{And similarly bring $\Rt{\omega}$ element to the penultimate right position since it commutes with $\Rt{\beta}$ and $\Rt{\psi+\omega}$ elements (\Cnref{rel:b3-small:sub:comm:beta:omega} and \Cnref{rel:b3-small:sub:comm:omega:psi+omega}, respectively):}
        &= \asite{\Eld{\beta}{t}{i} \Eld{\psi+\omega}{2uv}{j+k}\Eld{\beta}{-t}{i} \Eld{\psi+\omega}{-2uv}{j+k}} \Eld{\omega}{v}{k} \Eld{\beta+\psi}{tu}{i+j} \\
        \intertext{And reduce into $\Comm{\Rt{\beta}}{\Rt{\psi+\omega}}$ form using \Cnref{rel:b3-small:sub:inv:beta} and \Cnref{rel:b3-small:sub:inv:psi+omega}:}
        &= \Comm{\Eld{\beta}{t}{i}}{\Eld{\psi+\omega}{2uv}{j+k}} \Eld{\omega}{v}{k} \Eld{\beta+\psi}{tu}{i+j}.
    \end{align*}
\end{proof}

\begin{figure}
    \centering
    \begin{tikzpicture}
\node[algnode0] (v000) {$(0,0)$};
\node[algnode0] (v010) [below=of v000] {$(1,0)$};
\node[algnode0] (v001) [below=of v010] {$(0,1)$};
\node[algnode0] (v020) [below=of v001] {$(2,0)$};
\node[algnode0] (v011) [below=of v020] {$(1,1)$};
\node[algnode0] (v021) [below=of v011] {$(2,1)$};
\node[algnode1] (v100) [right=of v000] {$(0,0)$};
\node[algnode1] (v110) [below=of v100] {$(1,0)$};
\node[algnode1] (v101) [below=of v110] {$(0,1)$};
\node[algnode1] (v111) [below=of v101] {$(1,1)$};
\node[algnode1] (v102) [below=of v111] {$(0,2)$};
\node[algnode1] (v112) [below=of v102] {$(1,2)$};
\draw[algedge] (v011) -- (v111);
\draw[algedge] (v021) -- (v112);
\draw[algedge] (v000) -- (v100);
\draw[algedge] (v010) -- (v110);
\draw[algedge] (v011) -- (v102);
\draw[algedge] (v001) -- (v101);
\draw[algedge] (v010) -- (v101);
\draw[algedge] (v020) -- (v111);
\begin{scope}[on background layer]
    \node[algbackh, fit={(v000) (v100)}] {};
    \node[algbackh, fit={(v010) (v101)}] {};
    \node[algbackh, fit={(v020) (v102)}] {};
    \node[algbackh, fit={(v021) (v112)}] {};
    \node[algback, fit={(v000) (v021)}] (b0) {};
    \node[above=0 of b0] {$\Comm{\Rt{\beta+\psi}}{\Rt{\omega}}$};
    \node[algback, fit={(v100) (v112)}] (b1) {};
    \node[above=0 of b1] {$\Comm{\Rt{\beta}}{\Rt{\psi+\omega}}$};
\end{scope}
\end{tikzpicture}
\caption{Establishing $\Rt{\beta+\psi+\omega}$: A bipartite graph with left vertex-set $[2] \times [1]$ and right vertex-set $[1] \times [2]$, with an edge $(i,j) \sim (k,\ell)$ iff \Cnref{rel:b3-small:interchange:beta+psi+omega} states that for all $t,u,v \in R$, $\Comm{\Eld{\beta+\psi}{tu}{i}}{\Eld{\omega}{v}{j}} = \Comm{\Eld{\beta}{t}{k}}{\Eld{\psi+\omega}{2uv}{\ell}}$. (Hence, there are edges $(i+j,k) \sim (i,j+k)$ for every $i,j,k \in [1]$.) Additionally, grey blocks partition the vertices based on the sum of coordinates in $[3]$. In this case, the blocks also correspond to connected components in the graph.}\label{fig:b3-small:def:beta+psi+omega}
\end{figure}

\begin{proposition}[Establishing $\Rt{\beta+\psi+\omega}$]\label{prop:b3-small:est:beta+psi+omega}
    There exist elements $\Eld{\beta+\psi+\omega}{t}{i}$ for $t \in \LiftScalars$ and $i \in [3]$ such that the following hold.
    \begin{gather*}
        \DQuant{i \in [2], j \in [1]}{t,u \in \LiftScalars} \Eld{\beta+\psi+\omega}{2tu}{i+j} = \Comm{\Eld{\beta+\psi}{t}{i}}{\Eld{\omega}{u}{j}}, \\
        \DQuant{i \in [1], j \in [2]}{t,u \in \LiftScalars} \Eld{\beta+\psi+\omega}{tu}{i+j} = \Comm{\Eld{\beta}{t}{i}}{\Eld{\psi+\omega}{u}{j}}.
    \end{gather*}
\end{proposition}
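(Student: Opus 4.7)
The plan is to mirror the strategy used in \Cnref{prop:a3:est:alpha+beta+gamma} for establishing $\Rt{\alpha+\beta+\gamma}$ in the $A_3$ link. The key input is \Cnref{rel:b3-small:interchange:beta+psi+omega}, which states that for all $i,j,k \in [1]$ and $t,u,v \in \LiftScalars$,
\[
\Comm{\Eld{\beta+\psi}{tu}{i+j}}{\Eld{\omega}{v}{k}} \;=\; \Comm{\Eld{\beta}{t}{i}}{\Eld{\psi+\omega}{2uv}{j+k}}.
\]
This gives a family of equalities between expressions of the form $\Comm{\Eld{\beta+\psi}{s}{a}}{\Eld{\omega}{v}{b}}$ (with $a \in [2]$, $b \in [1]$) and expressions of the form $\Comm{\Eld{\beta}{t}{c}}{\Eld{\psi+\omega}{w}{d}}$ (with $c \in [1]$, $d \in [2]$), each of total degree $a+b = c+d \in [3]$.

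Next, I would define a bipartite graph as depicted in \Cref{fig:b3-small:def:beta+psi+omega}: the left vertices are pairs $(a,b) \in [2] \times [1]$ (each representing the commutator $\Comm{\Eld{\beta+\psi}{\star}{a}}{\Eld{\omega}{\star}{b}}$ with suitable scalar assignments), the right vertices are pairs $(c,d) \in [1] \times [2]$, and an edge connects $(i+j, k) \sim (i, j+k)$ for each choice of $i,j,k \in [1]$. Inspecting the figure, the graph decomposes into four connected components, one for each value of the total degree $a+b = c+d \in [3]$, and each component is connected. Within a component of total degree $e \in [3]$, all the expressions represented agree as elements of the group --- this is exactly what \Cnref{rel:b3-small:interchange:beta+psi+omega} provides.

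Given this, I would invoke \Cref{prop:prelim:equal-conn} (the general ``connectivity implies equality'' lemma invoked in the $A_3$ proof) to conclude that for each degree $e \in [3]$ and each $t \in \LiftScalars$, there is a common element that we may name $\Eld{\beta+\psi+\omega}{t}{e}$. To see that the scalar parameter $t$ can be chosen consistently across the two forms, note that the interchange relation matches the scalar $tu$ on the left side with $2tuv/2$-style contributions on the right (up to the factor of $2$ already baked into the statement of the proposition), so defining $\Eld{\beta+\psi+\omega}{tu}{i+j} \coloneqq \Comm{\Eld{\beta+\psi}{t}{i}}{\Eld{\omega}{u}{j}}$ and $\Eld{\beta+\psi+\omega}{tu}{i+j} \coloneqq \Comm{\Eld{\beta}{t}{i}}{\Eld{\psi+\omega}{u}{j}}$ yields the two claimed equalities.

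The main subtlety --- which is already largely resolved by the connectivity observation --- is to check that the scalar $t$ is well-defined: different factorizations $t = t_1 u_1 = t_2 u_2$ must yield the same element. This is immediate from \Cnref{rel:b3-small:interchange:beta+psi+omega} by taking $u = 1$ on one side or the other, plus the linearity-induced identities (\Cnref{rel:b3-small:sub:id:beta}, \Cnref{rel:b3-small:sub:id:psi}, etc.) to handle the degenerate cases where one of the coefficients is zero. I expect this step to be essentially a routine bookkeeping check, so no major obstacle arises beyond verifying the graph connectivity shown in \Cref{fig:b3-small:def:beta+psi+omega}.
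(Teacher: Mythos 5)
Your proposal takes the same approach as the paper: invoke \Cref{prop:prelim:equal-conn} together with the interchange relation \Cnref{rel:b3-small:interchange:beta+psi+omega}, after observing that the bipartite graph in \Cref{fig:b3-small:def:beta+psi+omega} (with edges $(i+j,k) \sim (i,j+k)$) has each fixed total-degree block connected. That is exactly the paper's one-line proof, and your verification of the four connected components is correct.

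One small caveat about your last paragraph: the explicit ``defining'' assignments you write down are internally inconsistent. You propose $\Eld{\beta+\psi+\omega}{tu}{i+j} \coloneqq \Comm{\Eld{\beta+\psi}{t}{i}}{\Eld{\omega}{u}{j}}$ and simultaneously $\Eld{\beta+\psi+\omega}{tu}{i+j} \coloneqq \Comm{\Eld{\beta}{t}{i}}{\Eld{\psi+\omega}{u}{j}}$, but the interchange relation matches $\Comm{\Eld{\beta+\psi}{tu}{i+j}}{\Eld{\omega}{v}{k}}$ against $\Comm{\Eld{\beta}{t}{i}}{\Eld{\psi+\omega}{2uv}{j+k}}$ (with a factor of $2$), so those two definitions would clash; the left one needs the factor of $2$ that appears in the proposition statement. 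This is exactly what applying \Cref{prop:prelim:equal-conn} gives automatically once the functions $f_x$ attached to the two sides are set up with the appropriate $\times 2$ rescaling so that the edge hypothesis $f_x(tu,v) = f_y(t,uv)$ holds verbatim. Your appeal to \Cref{prop:prelim:equal-conn} earlier in the argument already supplies this; the explicit ``defining'' formulas at the end just need the factor of $2$ restored to be consistent with the proposition being proved.
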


\begin{proof}
    Uses \Cnref{prop:prelim:equal-conn} and \Cnref{rel:b3-small:interchange:beta+psi+omega} (i.e., the blocks in \Cref{fig:b3-small:def:beta+psi+omega} are connected).
\end{proof}

By \Cnref{rel:b3-small:sub:inv:beta} and \Cnref{rel:b3-small:sub:inv:psi+omega}, this implies:
\begin{relation}[\RelNameBSm\RelNameExprn{\Rt{\beta+\psi+\omega}}{\Rt{\beta}}{\Rt{\psi+\omega}}]\label{rel:b3-small:expr:beta+psi+omega:beta:psi+omega}
    \[ \DQuant{i \in [2], j \in [1]}{t,u \in \LiftScalars} \Eld{\beta+\psi+\omega}{2tu}{i+j} = \Eld{\beta+\psi}{t}{i} \Eld{\omega}{u}{j} \Eld{\beta+\psi}{-t}{i} \Eld{\omega}{-u}{j}. \]
\end{relation}
and by \Cnref{rel:b3-small:sub:inv:beta+psi} and \Cnref{rel:b3-small:sub:inv:omega}:
\begin{relation}[\RelNameBSm\RelNameExprn{\Rt{\beta+\psi+\omega}}{\Rt{\beta+\psi}}{\Rt{\omega}}]\label{rel:b3-small:expr:beta+psi+omega:beta+psi:omega}
    \[ \DQuant{i \in [1], j \in [2]}{t,u \in \LiftScalars} \Eld{\beta+\psi+\omega}{tu}{i+j} = \Eld{\beta}{t}{i} \Eld{\psi+\omega}{u}{j} \Eld{\beta}{-t}{i} \Eld{\psi+\omega}{-u}{j}. \]
\end{relation}

\subsubsection{Remaining commutation relations}

We now turn to exploring properties of $\Rt{\beta+\psi+\omega}$.

\begin{relation}[\RelNameBSm\RelNameCommute{\Rt{\beta+\psi+\omega}}{\Rt{\omega}}]\label{rel:b3-small:comm:beta+psi+omega:omega}
    \[ \DQuant{i \in [3],j \in [1]}{t,u \in \LiftScalars} \Comm{\Eld{\beta+\psi+\omega}{t}{i}}{\Eld{\omega}{u}{j}} = \Id. \]
\end{relation}

\begin{proof}
    By \Cnref{rel:b3-small:expr:beta+psi+omega:beta:psi+omega}, $\Rt{\beta+\psi+\omega}$ elements can be expressed as products of $\Rt{\beta}$ and $\Rt{\psi+\omega}$ elements. $\Rt{\omega}$ elements commute with these two types of elements by \Cnref{rel:b3-small:sub:comm:beta:omega} and \Cnref{rel:b3-small:sub:comm:omega:psi+omega}, respectively.
\end{proof}

\begin{relation}[\RelNameBSm\RelNameCommute{\Rt{\beta+\psi+\omega}}{\Rt{\beta}}]\label{rel:b3-small:comm:beta+psi+omega:beta}
    \[ \DQuant{i \in [3], j \in [1]}{t,u \in \LiftScalars} \Comm{\Eld{\beta+\psi+\omega}{t}{i}}{\Eld{\beta}{u}{j}} = \Id. \]
\end{relation}

\begin{proof}
    Symmetric to the previous proof: By \Cnref{rel:b3-small:expr:beta+psi+omega:beta+psi:omega}, $\Rt{\beta+\psi+\omega}$ elements can be expressed as products of $\Rt{\beta+\psi}$ and $\Rt{\omega}$ elements. $\Rt{\beta}$ elements commute with these two types of elements by \Cnref{rel:b3-small:sub:comm:beta:beta+psi} and \Cnref{rel:b3-small:sub:comm:beta:omega}, respectively.
\end{proof}

\begin{relation}[\RelNameBSm\RelNameCommute{\Rt{\beta+\psi+\omega}}{\Rt{\psi}}]\label{rel:b3-small:comm:beta+psi+omega:psi}
    \[ \DQuant{i \in [3], j \in [1]}{t,u \in \LiftScalars} \Comm{\Eld{\beta+\psi+\omega}{t}{i}}{\Eld{\psi}{u}{j}} = \Id. \]
\end{relation}

\begin{proof}
    We write a product of $\Rt{\beta+\psi+\omega}$ and $\Rt{\psi}$ elements:
    \begin{align*}
        &\asite{\Eld{\beta+\psi+\omega}{t}{i}} \Eld{\psi}{u}{j} \\
        \intertext{Expanding the $\Rt{\beta+\psi+\omega}$ element into a commutator of $\Rt{\beta}$ and $\Rt{\psi+\omega}$ elements (\Cnref{rel:b3-small:expr:beta+psi+omega:beta:psi+omega}):}
        &= \Eld{\beta}{t}{i_1} \Eld{\psi+\omega}{1/2}{i_2} \Eld{\beta}{-t}{i_1} \Eld{\psi+\omega}{-1/2}{i_2} \asite{\Eld{\psi}{u}{j}} \\
        \intertext{Decompose $i = i_1 + i_2$ for $i_1 \in [1],i_2 \in [2]$ arbitrarily. Moving the $\Rt{\psi}$ element on the right fully to the left creates no commutators with $\Rt{\psi+\omega}$ elements (\Cnref{rel:b3-small:sub:comm:psi:psi+omega}) and $\Rt{\beta+\psi}$ and $\Rt{\beta+2\psi}$ commutators with $\Rt{\beta}$ elements (\Cnref{rel:b3-small:order:beta:psi}):}
        &= \Eld{\psi}{u}{j} \Eld{\beta}{t}{i_1} \Eld{\beta+2\psi}{-tu^2}{i_1+2j} \asite{\Eld{\beta+\psi}{tu}{i_1+j}}  \Eld{\psi+\omega}{1/2}{i_2} \asite{\Eld{\beta+\psi}{-tu}{i_1+j}} \Eld{\beta+2\psi}{tu^2}{i_1+2j} \Eld{\beta}{-t}{i_1} \Eld{\psi+\omega}{-1/2}{i_2}.
        \intertext{Now, $\Rt{\beta+\psi}$ elements commute with $\Rt{\psi+\omega}$ elements by \Cnref{rel:b3-small:comm:beta+psi:psi+omega}, so we can cancel them (\Cnref{rel:b3-small:sub:inv:beta+psi}):}
        &= \Eld{\psi}{u}{j} \Eld{\beta}{t}{i_1} \asite{\Eld{\beta+2\psi}{-tu^2}{i_1+2j}} \Eld{\psi+\omega}{1/2}{i_2} \asite{\Eld{\beta+2\psi}{tu^2}{i_1+2j}} \Eld{\beta}{-t}{i_1} \Eld{\psi+\omega}{-1/2}{i_2};
        \intertext{again, $\Rt{\beta+2\psi}$ elements commute with $\Rt{\psi+\omega}$ elements by \Cnref{rel:b3-small:comm:beta+2psi:psi+omega}, so we can cancel them (\Cnref{rel:b3-small:sub:inv:beta+2psi}):}
        &= \Eld{\psi}{u}{j} \asite{\Eld{\beta}{t}{i_1} \Eld{\psi+\omega}{1/2}{i_2} \Eld{\beta}{-t}{i_1}  \Eld{\psi+\omega}{-1/2}{i_2}} \\
        \intertext{Reducing back into a $\Rt{\beta+\psi+\omega}$ element (\Cnref{rel:b3-small:expr:beta+psi+omega:beta:psi+omega}):}
        &= \Eld{\psi}{u}{j} \Eld{\beta+\psi+\omega}{t}{i},
    \end{align*}
    as desired.
\end{proof}

\begin{relation}[\RelNameBSm\RelNameCommute{\Rt{\beta+\psi+\omega}}{\Rt{\psi+\omega}}]\label{rel:b3-small:comm:beta+psi+omega:psi+omega}
    \[
    \DQuant{i \in [3], j \in [2]}{t,u \in \LiftScalars} \Comm{\Eld{\beta+\psi+\omega}{t}{i}}{\Eld{\psi+\omega}{t}{j}} = \Id.
    \]
\end{relation}

\begin{proof}
    $\Rt{\psi+\omega}$ elements may be expressed as products of $\Rt{\psi}$ and $\Rt{\omega}$ elements (\Cnref{rel:b3-small:sub:comm:psi:omega}, \Cnref{rel:b3-small:sub:inv:psi}, and \Cnref{rel:b3-small:sub:inv:omega}) and $\Rt{\beta+\psi+\omega}$ elements commute with both types of elements by \Cnref{rel:b3-small:comm:beta+psi+omega:psi} and \Cnref{rel:b3-small:comm:beta+psi+omega:omega}, respectively.
\end{proof}

\begin{relation}[\RelNameBSm\RelNameCommute{\Rt{\beta+\psi+\omega}}{\Rt{\beta+\psi}}]\label{rel:b3-small:comm:beta+psi+omega:beta+psi}
    \[
    \DQuant{i \in [3], j \in [2]}{t,u \in \LiftScalars} \Comm{\Eld{\beta+\psi+\omega}{t}{i}}{\Eld{\beta+\psi}{t}{j}} = \Id.
    \]
\end{relation}

\begin{proof}
    $\Rt{\beta+\psi}$ elements may be expressed as products of $\Rt{\beta}$ and $\Rt{\psi}$ elements (\Cnref{rel:b3-small:sub:expr:beta+psi}) and $\Rt{\beta+\psi+\omega}$ elements commute with both types of elements by \Cnref{rel:b3-small:comm:beta+psi+omega:beta} and \Cnref{rel:b3-small:comm:beta+psi+omega:psi}, respectively.
\end{proof}

\begin{relation}[\RelNameBSm\RelNameCommute{\Rt{\beta+\psi+\omega}}{\Rt{\beta+2\psi}}]\label{rel:b3-small:comm:beta+psi+omega:beta+2psi}
    \[
    \DQuant{i \in [3], j \in [3]}{t,u \in \LiftScalars} \Comm{\Eld{\beta+\psi+\omega}{t}{i}}{\Eld{\beta+2\psi}{t}{j}} = \Id.
    \]
\end{relation}

\begin{proof}
    $\Rt{\beta+2\psi}$ elements may be expressed as products of $\Rt{\beta+\psi}$ and $\Rt{\psi}$ elements (\Cnref{rel:b3-small:sub:expr:beta+2psi}) and $\Rt{\beta+\psi+\omega}$ elements commute with both types of elements by \Cnref{rel:b3-small:comm:beta+psi+omega:beta+psi} and \Cnref{rel:b3-small:comm:beta+psi+omega:psi}, respectively.
\end{proof}

\begin{relation}[\RelNameBSm\RelNameSelfCommute{\Rt{\beta+\psi+\omega}}]\label{rel:b3-small:comm:self:beta+psi+omega}
    \[
    \DQuant{i,j \in [3]}{t,u \in \LiftScalars} \Comm{\Eld{\beta+\psi+\omega}{t}{i}}{\Eld{\beta+\psi+\omega}{t}{j}} = \Id.
    \]
\end{relation}

\begin{proof}
    $\Rt{\beta+\psi+\omega}$ elements may be expressed as products of $\Rt{\beta+\psi}$ and $\Rt{\omega}$ elements (\Cnref{rel:b3-small:expr:beta+psi+omega:beta+psi:omega}) and $\Rt{\beta+\psi+\omega}$ elements commute with both types of elements by \Cnref{rel:b3-small:comm:beta+psi+omega:beta+psi} and \Cnref{rel:b3-small:comm:beta+psi+omega:omega}, respectively.
\end{proof}

\begin{relation}[\RelNameBSm\RelNameLinearity{\Rt{\beta+\psi+\omega}}]\label{rel:b3-small:lin:beta+psi+omega}
    \[ \DQuant{i \in [3]}{t,u \in \LiftScalars} \Eld{\beta+\psi+\omega}{t}{i} \Eld{\beta+\psi+\omega}{u}{i} = \Eld{\beta+\psi+\omega}{t+u}{i+j}. \]
\end{relation}

\begin{proof}
We write a product of $\Rt{\beta+\psi+\omega}$ elements of the same degree:
\begin{align*}
    & \asite{\Eld{\beta+\psi+\omega}{t}{i}} \Eld{\beta+\psi+\omega}{u}{i} \\
    \intertext{Decompose arbitrarily $i = i_1+i_2$ for $i_1 \in [1], i_2 \in [2]$ and expand one $\Rt{\beta+\psi+\omega}$ element into a product of $\Rt{\beta}$ and $\Rt{\psi+\omega}$ elements (\Cnref{rel:b3-small:expr:beta+psi+omega:beta:psi+omega}):}
    &= \Eld{\beta}{t}{i_1} \Eld{\psi+\omega}{1}{i_2} \Eld{\beta}{-t}{i_1} \Eld{\psi+\omega}{-1}{i_2} \asite{\Eld{\beta+\psi+\omega}{u}{i}} \\
    \intertext{Commuting the remaining $\Rt{\beta+\psi+\omega}$ element on the right \emph{partially} to the left, creating no commutators with $\Rt{\beta}$ or $\Rt{\psi+\omega}$ elements (\Cnref{rel:b3-small:comm:beta+psi+omega:beta} and \Cnref{rel:b3-small:comm:beta+psi+omega:psi+omega}, respectively):}
    &= \Eld{\beta}{t}{i_1} \asite{\Eld{\beta+\psi+\omega}{u}{i}} \Eld{\psi+\omega}{1}{i_2} \Eld{\beta}{-t}{i_1} \Eld{\psi+\omega}{-1}{i_2} \\
    \intertext{Now, we expand the $\Rt{\beta+\psi+\omega}$ into a similar product of $\Rt{\beta}$ and $\Rt{\psi+\omega}$ elements (\Cnref{rel:b3-small:expr:beta+psi+omega:beta:psi+omega}):}
    &= \Eld{\beta}{t}{i_1} \Eld{\beta}{u}{i_1} \Eld{\psi+\omega}{1}{i_2} \Eld{\beta}{-u}{i_1} \asite{\Eld{\psi+\omega}{-1}{i_2} \Eld{\psi+\omega}{1}{i_2}} \Eld{\beta}{-t}{i_1} \Eld{\psi+\omega}{-1}{i_2} \\
    \intertext{Cancelling the adjacent $\Rt{\psi+\omega}$ elements (\Cnref{rel:b3-small:sub:inv:psi+omega}):}
    &= \asite{\Eld{\beta}{t}{i_1} \Eld{\beta}{u}{i_1}} \Eld{\psi+\omega}{1}{i_2} \asite{\Eld{\beta}{-u}{i_1} \Eld{\beta}{-t}{i_1}} \Eld{\psi+\omega}{-1}{i_2} \\
    \intertext{Using linearity for adjacent $\Rt{\beta}$ elements (\Cnref{rel:b3-small:sub:lin:beta}):}
    &= \asite{\Eld{\beta}{t+u}{i_1} \Eld{\psi+\omega}{1}{i_2} \Eld{\beta}{-(t+u)}{i_1} \Eld{\psi+\omega}{-1}{i_2}} \\
    \intertext{Reducing back into a $\Rt{\beta+\psi+\omega}$ element (\Cnref{rel:b3-small:expr:beta+psi+omega:beta:psi+omega}):}
    &= \Eld{\beta+\psi+\omega}{t+u}{i}.
\end{align*}
\end{proof}

Note that \Cnref{rel:b3-small:lin:beta+psi+omega} implies the same-degree case of the previous proposition (\Cnref{rel:b3-small:comm:self:beta+psi+omega}) --- that is, that $\Rt{\beta+\psi+\omega}$ elements of the same degree commute --- since addition over $R$ is commutative!

By \Cref{prop:prelim:group-homo}, \Cnref{rel:b3-small:lin:beta+psi+omega} implies:

\begin{relation}[\RelNameBSm\RelNameInv{\Rt{\beta}}]\label{rel:b3-small:inv:beta+psi+omega}
    \[ \DQuant{i \in [3]}{t \in \LiftScalars} \Eld{\beta+\psi+\omega}{t}{i} \Eld{\beta+\psi+\omega}{-t}{i} = \Id. \]
\end{relation}

Now, from \Cnref{eq:comm:mid:str}, \Cnref{eq:comm:right:str}, \Cnref{prop:b3-small:est:beta+psi+omega}, \Cnref{rel:b3-small:inv:beta+psi+omega}, \Cnref{rel:b3-small:sub:inv:omega}, and \Cnref{rel:b3-small:sub:inv:beta+psi}, we have:

\begin{relation}[\RelNameBSm\RelNameOrder{\Rt{\beta+\psi}}{\Rt{\omega}}]\label{rel:b3-small:order:beta+psi:omega}
    \[ \DQuant{i \in [2], j \in [1]}{t,u \in \LiftScalars} \Eld{\beta+\psi}{t}{i} \Eld{\omega}{u}{j} =  \Eld{\omega}{u}{j} \asite{\Eld{\beta+\psi+\omega}{2tu}{i+j}} \Eld{\beta+\psi}{t}{i} =  \Eld{\omega}{u}{j}\Eld{\beta+\psi}{t}{i}\asite{\Eld{\beta+\psi+\omega}{2tu}{i+j}}. \]
\end{relation}

\begin{relation}[\RelNameBSm\RelNameCommute{\Rt{\beta+2\psi}}{\Rt{\omega}}]\label{rel:b3-small:comm:beta+2psi:omega}
    \[ \DQuant{i \in [3], j \in [1]}{t,u \in \LiftScalars} \Comm{\Eld{\beta+2\psi}{t}{i}}{\Eld{\omega}{u}{j}} = I. \]
\end{relation}

\begin{proof}
    We write a product of $\Rt{\omega}$ and $\Rt{\beta+2\psi}$ elements:
    \begin{align*}
        & \asite{\Eld{\beta+2\psi}{t}{j}} \Eld{\omega}{u}{i} \\
        \intertext{Expanding the $\Rt{\beta+2\psi}$ element into a commutator of $\Rt{\beta+\psi}$ and $\Rt{\psi}$ elements (\Cnref{rel:b3-small:sub:expr:beta+2psi}):}
        &= \Eld{\psi}{1/2}{j_1} \Eld{\beta+\psi}{t}{j_2} \Eld{\psi}{-1/2}{j_1} \Eld{\beta+\psi}{-t}{j_2} \asite{\Eld{\omega}{u}{i}} \\
        \intertext{Moving the $\Rt{\omega}$ element on the right fully to the left, creating $\Rt{\omega+\psi}$ commutators with $\Rt{\psi}$ elements and $\Rt{\beta+\psi+\omega}$ commutators with $\Rt{\beta+\psi}$ elements (\Cnref{rel:b3-small:order:psi:omega} and \Cnref{rel:b3-small:order:beta+psi:omega}):}
        &= \Eld{\omega}{u}{i} \Eld{\psi}{1/2}{j_1} \Eld{\psi+\omega}{u}{i+j_1} \Eld{\beta+\psi}{t}{j_2} \asite{\Eld{\beta+\psi+\omega}{2tu}{i+j_2}} \Eld{\psi+\omega}{-u}{i+j_1} \Eld{\psi}{-1/2}{j_1} \asite{\Eld{\beta+\psi+\omega}{-2tu}{i+j_2}} \Eld{\beta+\psi}{-t}{j_2}. \\
        \intertext{Now, we observe that $\Rt{\beta+\psi+\omega}$ elements commute with $\Rt{\psi+\omega}$ and $\Rt{\psi}$ elements (\Cnref{rel:b3-small:comm:beta+psi+omega:psi+omega} and \Cnref{rel:b3-small:comm:beta+psi+omega:psi}, respectively), so we can cancel the $\Rt{\beta+\psi+\omega}$ elements (\Cnref{rel:b3-small:inv:beta+psi+omega}):}
        &=  \Eld{\omega}{u}{i} \asite{\Eld{\psi+\omega}{u}{i+j_1}} \Eld{\psi}{1/2}{j_1} \Eld{\beta+\psi}{t}{j_2} \asite{\Eld{\psi+\omega}{-u}{i+j_1}} \Eld{\psi}{-1/2}{j_1} \Eld{\beta+\psi}{-t}{j_2}  \\
        \intertext{and similarly $\Rt{\psi+\omega}$ elements commute with $\Rt{\psi}$ and $\Rt{\beta+\psi}$ elements  (\Cnref{rel:b3-small:sub:comm:psi:psi+omega} and \Cnref{rel:b3-small:comm:beta+psi:psi+omega}, respectively) so we may cancel them (\Cnref{rel:b3-small:sub:inv:psi+omega}):}
        &= \Eld{\omega}{u}{i} \asite{\Eld{\psi}{1/2}{j_1} \Eld{\beta+\psi}{t}{j_2} \Eld{\psi}{-1/2}{j_1} \Eld{\beta+\psi}{-t}{j_2}}  \\
        \intertext{and reducing back into a single $\Rt{\beta+2\psi}$ element (\Cnref{rel:b3-small:sub:expr:beta+2psi}):}
        &= \Eld{\omega}{u}{i} \Eld{\beta+2\psi}{t}{j},
    \end{align*}
    as desired.
\end{proof}

\subsection{Link of $\omega$}\label{sec:b3:large}

Finally, we prove \Cref{thm:story:b3:lift} for the ``big'' link of $B_3$, i.e., a lifting theorem from $\UnipBLg{p}$ to $\GrUnipBLg{p^k}$ --- this is the bulk of the work in our paper.  All relations stated in this section hold in the graded link group $\GrUnipBLg{p^k}$ and are proven via constant-length in-subgroup and lifted relations in a number of steps independent of $p$ or $k$.

\newcommand{\liftC}{(t',u',v')}

\newcommand{\newedgeA}{$\mathtt{A}$}
\newcommand{\newedgeB}{$\mathtt{B}$}
\newcommand{\newedgeC}{$\mathtt{C}$}
\newcommand{\newedgeD}{$\mathtt{D}$}

As we proceed, the (subjectively) most ``exciting'' propositions and relations are tagged with \excl. Also, since all the relations we mention are symmetric to replacing every element $\Eld{\zeta}{t}{i}$ with $\Eld{\zeta}{t}{\mathsf{ht}(\zeta)}$, we often omit one of the two ``paired'' relations for brevity.

\subsubsection{Link structure}

\begin{table}[h!]
\centering
\begin{tabular}{c|c|c|c}
    \textbf{Color} & \textbf{Base} & \textbf{Positive span} & \textbf{Total \#} \\ \hline
    & $\{\Rt{\alpha},\Rt{\beta},\Rt{\psi}\}$ & $\{\Rt{\alpha+\beta},\Rt{\beta+\psi},\Rt{\beta+2\psi},\Rt{\alpha+\beta+\psi},\Rt{\alpha+\beta+2\psi},\Rt{\alpha+2\beta+2\psi}\}$ & 9 \\
    \BoxR & $\{\Rt{\alpha},\Rt{\beta}\}$ & $\{\Rt{\alpha+\beta}\}$ & 3 \\
    \BoxG & $\{\Rt{\alpha},\Rt{\psi}\}$ & $\emptyset$ & 2 \\
    \BoxB & $\{\Rt{\beta},\Rt{\psi}\}$ & $\{\Rt{\beta+\psi},\Rt{\beta+2\psi}\}$ & 4 \\
\end{tabular}
\caption{The roots spanned by $\Rt{\alpha},\Rt{\beta},\Rt{\psi}$ in $B_3$ as well as the subsets spanned by pairs.}
\end{table}

There are $9$ roots meaning $\binom{9}2 = 36$ commutation relations. We cover $\binom{3}2+\binom{2}2+\binom{4}2=10$ of them, so we sadly have to prove $26$. We cover $6$ linear relations and have to prove $3$.

\subsubsection{Assumed relations}

\paragraph{In-subgroup relations.} We enumerate the following in-subgroup relations in $\GrUnipBLg{p^k}:$

\begin{tcolorbox}[colback=\ColorR,colframe=\ColorBackR,title={Commutation relations spanned by $\Rt{\alpha}$ and $\Rt{\beta}$}]
\begin{relation}[\RelNameBLg\RelNameSubgp\RelNameCommutator{\Rt{\alpha}}{\Rt{\beta}}]\label{rel:b3-large:sub:comm:alpha:beta}
\[ \DQuant{i,j \in [1]}{t,u \in \LiftScalars} \Comm{\Eld{\alpha}ti}{\Eld{\beta}uj} = \Eld{\alpha+\beta}{tu}{i+j}. \]
\end{relation}
\begin{relation}[\RelNameBLg\RelNameSubgp\RelNameCommute{\Rt{\alpha}}{\Rt{\alpha+\beta}}]\label{rel:b3-large:sub:comm:alpha:alpha+beta}
\[ \DQuant{i \in [1], j \in [2]}{t,u \in \LiftScalars}\Comm{\Eld{\alpha}ti}{\Eld{\alpha+\beta}uj} = \Id. \]
\end{relation}
\begin{relation}[\RelNameBLg\RelNameSubgp\RelNameCommute{\Rt{\beta}}{\Rt{\alpha+\beta}}]\label{rel:b3-large:sub:comm:beta:alpha+beta}
\[ \DQuant{i \in [1], j \in [2]}{t,u \in \LiftScalars}\Comm{\Eld{\beta}ti}{\Eld{\alpha+\beta}uj} = \Id. \] 
\end{relation}
\end{tcolorbox}

\begin{tcolorbox}[colback=\ColorG,colframe=\ColorBackG,title={Commutation relations spanned by $\Rt{\alpha}$ and $\Rt{\psi}$}]
\begin{relation}[\RelNameBLg\RelNameSubgp\RelNameCommute{\Rt{\alpha}}{\Rt{\psi}}]\label{rel:b3-large:sub:comm:alpha:psi}
\[    \DQuant{i \in [1], j \in [1]}{t,u \in \LiftScalars} \Comm{\Eld{\alpha}ti}{\Eld{\psi}uj} = \Id. \]
\end{relation}
\end{tcolorbox}

\begin{tcolorbox}[colback=\ColorB,colframe=\ColorBackB,title={Commutation relations spanned by $\Rt{\beta}$ and $\Rt{\psi}$}]
\begin{relation}[\RelNameBLg\RelNameSubgp\RelNameCommutator{\Rt{\beta}}{\Rt{\psi}}]\label{rel:b3-large:sub:comm:beta:psi}
    \[ \DQuant{i,j \in [1]}{t,u \in \LiftScalars} \Comm{\Eld{\beta}{t}i}{\Eld{\psi}{u}j} = \Eld{\beta+\psi}{tu}{i+j} \Eld{\beta+2\psi}{tu^2}{i+2j}. \]
\end{relation}
\begin{relation}[\RelNameBLg\RelNameSubgp\RelNameCommute{\Rt{\beta}}{\Rt{\beta+\psi}}]\label{rel:b3-large:sub:comm:beta:beta+psi}
    \[ \DQuant{i \in [1], j \in [2]}{t,u \in \LiftScalars} \Comm{\Eld{\beta}{t}i}{\Eld{\beta+\psi}{u}j} = \Id. \]
\end{relation}
\begin{relation}[\RelNameBLg\RelNameSubgp\RelNameCommute{\Rt{\beta}}{\Rt{\beta+2\psi}}]\label{rel:b3-large:sub:comm:beta:beta+2psi}
    \[ \DQuant{i \in [1], j \in [3]}{t,u \in \LiftScalars} \Comm{\Eld{\beta}{t}i}{\Eld{\beta+2\psi}{u}j} = \Id. \]
\end{relation}
\begin{relation}[\RelNameBLg\RelNameSubgp\RelNameCommutator{\Rt{\psi}}{\Rt{\beta+\psi}}]\label{rel:b3-large:sub:comm:psi:beta+psi}
    \[ \DQuant{i \in [1], j \in [2]}{t,u \in \LiftScalars} \Comm{\Eld{\psi}{t}i}{\Eld{\beta+\psi}{u}j} = \Eld{\beta+2\psi}{2tu}{i+j}. \]
\end{relation}
\begin{relation}[\RelNameBLg\RelNameSubgp\RelNameCommute{\Rt{\psi}}{\Rt{\beta+2\psi}}]\label{rel:b3-large:sub:comm:psi:beta+2psi}
    \[ \DQuant{i \in [1], j \in [3]}{t,u \in \LiftScalars} \Comm{\Eld{\psi}{t}i}{\Eld{\beta+2\psi}{u}j} = \Id. \]
\end{relation}
\begin{relation}[\RelNameBLg\RelNameSubgp\RelNameCommute{\Rt{\beta+\psi}}{\Rt{\beta+2\psi}}]\label{rel:b3-large:sub:comm:beta+psi:beta+2psi}
    \[ \DQuant{i \in [2], j \in [3]}{t,u \in \LiftScalars} \Comm{\Eld{\beta+\psi}{t}i}{\Eld{\beta+2\psi}{u}j} = \Id. \]
\end{relation}
\end{tcolorbox}

\begin{tcolorbox}[breakable,colback=\ColorNeut,title={Linearity relations}]
\begin{relation}[\RelNameBLg\RelNameSubgp\RelNameLinearity{\Rt{\alpha}}]\label{rel:b3-large:sub:lin:alpha}
    \[ \DQuant{i \in [1]}{t,u \in \LiftScalars} \Eld{\alpha}{t}{i} \Eld{\alpha}{u}{i} = \Eld{\alpha}{t+u}{i}. \]
\end{relation}
\begin{relation}[\RelNameBLg\RelNameSubgp\RelNameLinearity{\Rt{\beta}}]\label{rel:b3-large:sub:lin:beta}
    \[ \DQuant{i \in [1]}{t,u \in \LiftScalars} \Eld{\beta}{t}{i} \Eld{\beta}{u}{i} = \Eld{\alpha}{t+u}{i}. \]
\end{relation}
\begin{relation}[\RelNameBLg\RelNameSubgp\RelNameLinearity{\Rt{\psi}}]\label{rel:b3-large:sub:lin:psi}
    \[ \DQuant{i \in [1]}{t,u \in \LiftScalars} \Eld{\psi}{t}{i} \Eld{\psi}{u}{i} = \Eld{\alpha}{t+u}{i}. \]
\end{relation}
\begin{relation}[\RelNameBLg\RelNameSubgp\RelNameLinearity{\Rt{\alpha+\beta}}]\label{rel:b3-large:sub:lin:alpha+beta}
    \[ \DQuant{i \in [2]}{t,u \in \LiftScalars} \Eld{\alpha+\beta}{t}{i} \Eld{\alpha+\beta}{u}{i} = \Eld{\alpha+\beta}{t+u}{i}. \]
\end{relation}
\begin{relation}[\RelNameBLg\RelNameSubgp\RelNameLinearity{\Rt{\beta+\psi}}]\label{rel:b3-large:sub:lin:beta+psi}
    \[ \DQuant{i \in [2]}{t,u \in \LiftScalars} \Eld{\beta+\psi}{t}{i} \Eld{\beta+\psi}{u}{i} = \Eld{\beta+\psi}{t+u}{i}. \]
\end{relation}
\begin{relation}[\RelNameBLg\RelNameSubgp\RelNameLinearity{\Rt{\beta+2\psi}}]\label{rel:b3-large:sub:lin:beta+2psi}
    \[ \DQuant{i \in [3]}{t,u \in \LiftScalars} \Eld{\beta+2\psi}{t}{i} \Eld{\beta+2\psi}{u}{i} = \Eld{\beta+2\psi}{t+u}{i}. \]
\end{relation}
\end{tcolorbox}

\begin{tcolorbox}[breakable,colback=\ColorNeut,title={Self-commutator relations}]
\begin{relation}[\RelNameBLg\RelNameSubgp\RelNameSelfCommute{\Rt{\alpha}}]\label{rel:b3-large:sub:comm:self:alpha}
    \[ \DQuant{i,j \in [1]}{t \in \LiftScalars} \Comm{\Eld{\alpha}{t}{i}}{\Eld{\alpha}{u}{j}} = \Id. \]
\end{relation}
\begin{relation}[\RelNameBLg\RelNameSubgp\RelNameSelfCommute{\Rt{\beta}}]\label{rel:b3-large:sub:comm:self:beta}
    \[ \DQuant{i,j \in [1]}{t \in \LiftScalars} \Comm{\Eld{\beta}{t}{i}}{\Eld{\beta}{u}{j}} = \Id. \]
\end{relation}
\begin{relation}[\RelNameBLg\RelNameSubgp\RelNameSelfCommute{\Rt{\psi}}]\label{rel:b3-large:sub:comm:self:psi}
    \[ \DQuant{i,j \in [1]}{t \in \LiftScalars} \Comm{\Eld{\psi}{t}{i}}{\Eld{\psi}{u}{j}} = \Id. \]
\end{relation}
\begin{relation}[\RelNameBLg\RelNameSubgp\RelNameSelfCommute{\Rt{\alpha+\beta}}]\label{rel:b3-large:sub:comm:self:alpha+beta}
    \[ \DQuant{i,j \in [2]}{t \in \LiftScalars} \Comm{\Eld{\alpha+\beta}{t}{i}}{\Eld{\alpha+\beta}{u}{j}} = \Id. \]
\end{relation}
\begin{relation}[\RelNameBLg\RelNameSubgp\RelNameSelfCommute{\Rt{\beta+\psi}}]\label{rel:b3-large:sub:comm:self:beta+psi}
    \[ \DQuant{i,j \in [2]}{t \in \LiftScalars} \Comm{\Eld{\beta+\psi}{t}{i}}{\Eld{\beta+\psi}{u}{j}} = \Id. \]
\end{relation}
\begin{relation}[\RelNameBLg\RelNameSubgp\RelNameSelfCommute{\Rt{\beta+2\psi}}]\label{rel:b3-large:sub:comm:self:beta+2psi}
    \[ \DQuant{i,j \in [3]}{t \in \LiftScalars} \Comm{\Eld{\beta+2\psi}{t}{i}}{\Eld{\beta+2\psi}{u}{j}} = \Id. \]
\end{relation}
\end{tcolorbox}

\paragraph{Relations from lifting} We also enumerate some lifted relations. Firstly, we have the nonhomogeneous lift (cf. \Cref{eq:chev:nonhom}) of the relation $\Comm{\El{\alpha+\beta}{1}}{\El{\beta+\psi}{1}} = 1$ in $\UnipBLg{q}$.

\begin{relation}[\RelNameBLg\RelNameNonHomLiftRaw\RelNameCommute{\Rt{\alpha+\beta}}{\Rt{\beta+\psi}}]\label{rel:b3-large:comm:raw:lift:alpha+beta:beta+psi}
    \begin{multline*}
        \Quant{t_1,t_0,u_1,u_0,v_1,v_0 \in \LiftScalars} \\
        \Comm{\Eld{\alpha+\beta}{t_1u_1}{2} \Eld{\alpha+\beta}{t_1u_0+t_0u_1}{1} \Eld{\alpha+\beta}{t_0u_0}{0}}{\Eld{\beta+\psi}{u_1v_1}{2} \Eld{\beta+\psi}{u_1v_0+u_0v_1}{1} \Eld{\beta+\psi}{u_0v_0}{0}} = \Id.
    \end{multline*}
\end{relation}

and the nonhomogeneous lift of the relation $\Comm{\El{\alpha}{1}}{\Comm{\El{\alpha}{1}}{\El{\beta+\psi}{1}}} = 1$ in $\UnipBLg{q}$.

\begin{relation}[\RelNameBLg\RelNameNonHomLiftRaw\RelNameCommute{\Rt{\alpha}}{\Rt{\alpha+2\beta+2\psi}}]\label{rel:b3-large:comm:raw:lift:alpha:alpha+2beta+2psi:square}
    \begin{multline*}
    \Quant{t_1,t_0,u_1,u_0,v_1,v_0 \in \LiftScalars} \\
    \Comm{\Eld{\alpha}{t_1}{1}\Eld{\alpha}{t_0}{0}}{\Comm{\Eld{\alpha+\beta}{t_1u_1}{2} \Eld{\alpha+\beta}{t_1u_0+t_0u_1}{2} \Eld{\alpha+\beta}{t_0u_0}{0}}{\Eld{\beta+2\psi}{t_1u_1^2}{3} \Eld{\beta+2\psi}{t_0u_1^2 + 2t_1u_0u_1}{2} \Eld{\beta+2\psi}{t_1u_0^2 + 2t_0u_0u_1}{1} \Eld{\beta+2\psi}{t_0u_0^2}{0}}} = \Id.
    \end{multline*}
\end{relation}

We also need numerous homogeneously lifted relations. These are all the lifts of respective $\UnipBLg{q}$ relations given by deleting the degrees and replacing $t_1,t_0,u_1,u_0,v_1,v_0$ with $1$.

\begin{relation}[\RelNameBLg\RelNameHomLiftRaw\RelNameInterchange{\Rt{\alpha+\beta+\psi}}]\label{rel:b3-large:raw:lift:alpha+beta+psi}
    \begin{multline*}
        \DQuant{i,j,k \in [1]}{t,u,v \in \LiftScalars} \\ \Eld{\psi}{-v/2}{k} \Eld{\alpha+\beta}{tu}{i+j} \Eld{\psi}{v}{k} \Eld{\alpha+\beta}{-tu}{i+j} \Eld{\psi}{-v/2}{k} = \Eld{\beta+\psi}{-uv/2}{j+k} \Eld{\alpha}{t}{i} \Eld{\beta+\psi}{uv}{j+k} \Eld{\alpha}{-t}{i} \Eld{\beta+\psi}{-uv/2}{j+k}.
    \end{multline*}
\end{relation}

\begin{relation}[\RelNameBLg\RelNameHomLiftRaw\RelNameDoub{\Rt{\alpha+\beta+\psi}}]\label{rel:b3-large:raw:lift:doub:alpha+beta+psi}
    \begin{gather*}
    \DQuant{i,j,k \in [1]}{t,u,v \in \LiftScalars} \Eld{\psi}{-v/2}{k} \Eld{\alpha+\beta}{tu}{i+j} \Eld{\psi}{v}{k} \Eld{\alpha+\beta}{-tu}{i+j} \Eld{\psi}{-v/2}{k} \cdot \\
    \Eld{\psi}{-v/2}{k} \Eld{\alpha+\beta}{tu}{i+j} \Eld{\psi}{v}{k} \Eld{\alpha+\beta}{-tu}{i+j} \Eld{\psi}{-v/2}{k} = \Eld{\psi}{-v}{k} \Eld{\alpha+\beta}{tu}{i+j} \Eld{\psi}{2v}{k} \Eld{\alpha+\beta}{-tu}{i+j} \Eld{\psi}{-v}{k}
    \end{gather*}
\end{relation}

\begin{relation}[\RelNameBLg\RelNameHomLiftRaw\RelNameInterchange{\Rt{\alpha+\beta+2\psi}}]\label{rel:b3-large:raw:lift:alpha+beta+2psi}
    \[
    \DQuant{i,j,k \in [1]}{t,u,v \in \LiftScalars} \Comm{\Eld{\psi}{-v/2}{k} \Eld{\alpha+\beta}{tu}{i+j} \Eld{\psi}{v}{k} \Eld{\alpha+\beta}{-tu}{i+j} \Eld{\psi}{-v/2}{k}}{\Eld{\psi}{v}{k}} = \Comm{\Eld{\alpha}{t}{i}}{\Eld{\beta+2\psi}{-2uv^2}{j+2k}}.
    \]
\end{relation}

\begin{relation}[\RelNameBLg\RelNameHomLiftRaw\RelNameCommutator{\Rt{\beta+\psi}}{\Comm{\Rt{\alpha}}{\Rt{\beta+2\psi}}}]\label{rel:b3-large:comm:raw:lift:beta+psi:alpha:beta+2psi}
        \[
    \DQuant{i,j,k\in [1]}{t,u,v \in \LiftScalars} \Comm{\Eld{\beta+\psi}{uv}{j+k}}{\Comm{\Eld{\alpha}{t}{i}}{\Eld{\beta+2\psi}{uv^2}{j+2k}}} = \Id.
    \]
\end{relation}

\begin{relation}[\RelNameBLg\RelNameHomLiftRaw\RelNameInvDoub{\Comm{\Rt{\alpha}}{\Rt{\beta+2\psi}}}]\label{rel:b3-large:inv-doub:raw:lift:alpha:beta+2psi}
    \begin{gather*}
    \DQuant{i,j,k \in [1]}{t,u,v \in \LiftScalars} \Comm{\Eld{\alpha}{t}{i}}{\Eld{\beta+2\psi}{uv^2}{j+2k}} = \Comm{\Eld{\alpha}{-t}{i}}{\Eld{\beta+2\psi}{-uv^2}{j+2k}}, \\
    \DQuant{i \in [1], j \in [3]}{t,u,v \in \LiftScalars} \Comm{\Eld{\alpha}{t}{i}}{\Eld{\beta+2\psi}{uv^2}{j+2k}} \Comm{\Eld{\alpha}{-t}{i}}{\Eld{\beta+2\psi}{uv^2}{j+2k}} = \Id, \\
    \DQuant{i \in [1], j \in [3]}{t,u \in \LiftScalars} \Comm{\Eld{\alpha}{t}{i}}{\Eld{\beta+2\psi}{uv^2}{j+2k}} \Comm{\Eld{\alpha}{t}{i}}{\Eld{\beta+2\psi}{uv^2}{j+2k}} = \Comm{\Eld{\alpha}{2t}{i}}{\Eld{\beta+2\psi}{uv^2}{j+2k}}.
    \end{gather*}
\end{relation}

\begin{relation}[\RelNameBLg\RelNameHomLiftRaw\RelNameCommutator{\Rt{\beta+2\psi}}{\Rt{\alpha+\beta+\psi}}]\label{rel:b3-large:comm:raw:lift:beta+2psi:alpha+beta+psi}
        \[
    \DQuant{i,j,k \in [1]}{t,u,v \in \LiftScalars} \Comm{\Eld{\beta+2\psi}{uv^2}{j+2k}}{\Eld{\psi}{-v/2}{k} \Eld{\alpha+\beta}{tu}{i+j} \Eld{\psi}{v}{k} \Eld{\alpha+\beta}{-tu}{i+j} \Eld{\psi}{-v/2}{k}} = \Id.
    \]
\end{relation}

\begin{relation}[\RelNameBLg\RelNameHomLiftRaw\RelNameInterchange{\Rt{\alpha+2\beta+2\psi}}]\label{rel:b3-large:raw:lift:alpha+2beta+2psi}
    \begin{multline*}
    \DQuant{i,j,k \in [1]}{t,u,v \in \LiftScalars} \\
    \Comm{\Eld{\alpha+\beta}{tu}{i+j}}{\Eld{\beta+2\psi}{2uv^2}{j+2k}} = \Comm{\Eld{\psi}{-v/2}{k} \Eld{\alpha+\beta}{tu}{i+j} \Eld{\psi}{v}{k} \Eld{\alpha+\beta}{-tu}{i+j} \Eld{\psi}{-v/2}{k}}{\Eld{\beta+\psi}{uv}{j+k}}
    \end{multline*}
    and
    \begin{multline*}
    \DQuant{i,j,k \in [1]}{t,u,v \in \LiftScalars} \\
    \Comm{\Eld{\psi}{-v/2}{k} \Eld{\alpha+\beta}{tu}{i+j} \Eld{\psi}{v}{k} \Eld{\alpha+\beta}{-tu}{i+j} \Eld{\psi}{-v/2}{k}}{\Eld{\beta+\psi}{uv}{j+k}} = \Comm{\Comm{\Eld{\alpha}{t}{i}}{\Eld{\beta+2\psi}{uv^2}{j+2k}}}{\Eld{\beta}{u}{j}}.
    \end{multline*}
\end{relation}

\begin{relation}[\RelNameBLg\RelNameHomLiftRaw\RelNameCommutator{\Rt{\psi}}{\Comm{\Rt{\alpha+\beta}}{\Rt{\beta+2\psi}}}]\label{rel:b3-large:comm:raw:lift:psi:alpha+beta:beta+2psi}
\[
    \DQuant{i,j,k \in [1]}{t,u,v \in \LiftScalars} \Comm{\Eld{\psi}{v}{k}}{\Comm{\Eld{\alpha+\beta}{tu}{i+j}}{\Eld{\beta+2\psi}{uv^2}{j+2k}}} = \Id.
\]
\end{relation}

\begin{relation}[\RelNameBLg\RelNameHomLiftRaw\RelNameCommutator{\Rt{\alpha+\beta}}{\Comm{\Rt{\alpha+\beta}}{\Rt{\beta+2\psi}}}]\label{rel:b3-large:comm:raw:lift:alpha+beta:alpha+beta:beta+2psi}
\[
\DQuant{i,j,k \in [1]}{t,u,v \in \LiftScalars, s \in \{\pm 1\}} \Comm{\Eld{\alpha+\beta}{tu}{i+j}}{\Comm{\Eld{\alpha+\beta}{s tu}{i+j}}{\Eld{\beta+2\psi}{uv^2}{j+2k}}} = \Id.
\]
\end{relation}

\begin{relation}[\RelNameBLg\RelNameHomLiftRaw\RelNameInvDoub{\Comm{\Rt{\alpha+\beta}}{\Rt{\beta+2\psi}}}]\label{rel:b3-large:inv-doub:raw:lift:alpha+beta:beta+2psi}
    \begin{gather*}
        \DQuant{i,j,k \in [1]}{t,u,v \in \LiftScalars} \Comm{\Eld{\alpha+\beta}{tu}{i+j}}{\Eld{\beta+2\psi}{uv^2}{j+2k}}  = \Comm{\Eld{\alpha+\beta}{-tu}{i+j}}{\Eld{\beta+2\psi}{-uv^2}{j+2k}} , \\
        \DQuant{i,j,k \in [1]}{t,u,v \in \LiftScalars} \Comm{\Eld{\alpha+\beta}{tu}{i+j}}{\Eld{\beta+2\psi}{uv^2}{j+2k}} \Comm{\Eld{\alpha+\beta}{-tu}{i+j}}{\Eld{\beta+2\psi}{uv^2}{j+2k}} = \Id, \\
        \DQuant{i,j,k \in [1]}{t,u,v \in \LiftScalars} \Comm{\Eld{\alpha+\beta}{tu}{i+j}}{\Eld{\beta+2\psi}{uv^2}{j+2k}} \Comm{\Eld{\alpha+\beta}{tu}{i+j}}{\Eld{\beta+2\psi}{uv^2}{j+2k}} = \Comm{\Eld{\alpha+\beta}{2tu}{i+j}}{\Eld{\beta+2\psi}{uv^2}{j+2k}}.
    \end{gather*}
\end{relation}

\begin{relation}[\RelNameBLg\RelNameHomLiftRaw\RelNameInvDoub{\Comm{\Rt{\beta}}{\Rt{\alpha+\beta+2\psi}}}]\label{rel:b3-large:inv-doub:raw:lift:beta:alpha+beta+2psi}
    \begin{gather*}
        \DQuant{i,j,k \in [1]}{t,u,v \in \LiftScalars} \Comm{\Eld{\beta}{t}{i}}{\Comm{\Eld{\alpha}{t}{i}}{\Eld{\beta+2\psi}{uv^2}{j+2k}}} = \Comm{\Eld{\beta}{-t}{i}}{\Comm{\Eld{\alpha}{-t}{i}}{\Eld{\beta+2\psi}{uv^2}{j+2k}}}, \\
        \DQuant{i,j,k \in [1]}{t,u,v \in \LiftScalars} \Comm{\Eld{\beta}{t}{i}}{\Comm{\Eld{\alpha}{t}{i}}{\Eld{\beta+2\psi}{uv^2}{j+2k}}}\Comm{\Eld{\beta}{-t}{i}}{\Comm{\Eld{\alpha}{t}{i}}{\Eld{\beta+2\psi}{uv^2}{j+2k}}} = \Id, \\
    \end{gather*}
    and
    \begin{multline*}
        \DQuant{i,j,k \in [1]}{t,u,v \in \LiftScalars} \\ \Comm{\Eld{\beta}{t}{i}}{\Comm{\Eld{\alpha}{t}{i}}{\Eld{\beta+2\psi}{uv^2}{j+2k}}} \Comm{\Eld{\beta}{t}{i}}{\Comm{\Eld{\alpha}{t}{i}}{\Eld{\beta+2\psi}{uv^2}{j+2k}}}= \Comm{\Eld{\beta}{2t}{i}}{\Comm{\Eld{\alpha}{t}{i}}{\Eld{\beta+2\psi}{uv^2}{j+2k}}}.
    \end{multline*}
\end{relation}

\begin{relation}[\RelNameBLg\RelNameHomLiftRaw\RelNameCommutator{\Rt{\beta+\psi}}{\Rt{\alpha+\beta+2\psi}}]\label{rel:b3-large:comm:raw:lift:beta+psi:alpha+beta+2psi}
        \[
    \DQuant{i,j,k \in [1]}{t,u,v \in \LiftScalars} \Comm{\Eld{\beta+\psi}{uv}{j+k}}{\Comm{\Eld{\alpha}{t}{i}}{\Eld{\beta+2\psi}{uv^2}{j+2k}}} = \Id.
    \]
\end{relation}

\begin{relation}[\RelNameBLg\RelNameHomLiftRaw\RelNameCommutator{\Rt{\beta+2\psi}}{\Rt{\alpha+\beta+2\psi}}]\label{rel:b3-large:comm:raw:lift:beta+2psi:alpha+beta+2psi}
        \[
    \DQuant{i,j,k \in [1]}{t,u,v \in \LiftScalars} \Comm{\Eld{\beta+2\psi}{uv^2}{j+2k}}{\Comm{\Eld{\alpha}{t}{i}}{\Eld{\beta+2\psi}{uv^2}{j+2k}}} = \Id.
    \]
\end{relation}

\subsubsection{Additional identities}

As usual, from the linearity relations (\Cnref{rel:b3-large:sub:lin:alpha}, \Cnref{rel:b3-large:sub:lin:beta}, \Cnref{rel:b3-large:sub:lin:psi}, \Cnref{rel:b3-large:sub:lin:alpha+beta}, \Cnref{rel:b3-large:sub:lin:beta+psi}, \Cnref{rel:b3-large:sub:lin:beta+2psi}) and \Cref{prop:prelim:group-homo}, we get respective identity relations:

\begin{tcolorbox}[breakable,colback=\ColorNeut,title={Identity relations}]
\begin{relation}[\RelNameBLg\RelNameId{\Rt{\alpha}}]\label{rel:b3-large:sub:id:alpha}
    \[ \Quant{i \in [1]} \Eld{\alpha}{0}{i} = \Id. \]
\end{relation}
\begin{relation}[\RelNameBLg\RelNameId{\Rt{\beta}}]\label{rel:b3-large:sub:id:beta}
    \[ \Quant{i \in [1]} \Eld{\beta}{0}{i} = \Id. \]
\end{relation}
\begin{relation}[\RelNameBLg\RelNameId{\Rt{\psi}}]\label{rel:b3-large:sub:id:psi}
    \[ \Quant{i \in [1]} \Eld{\psi}{0}{i} = \Id. \]
\end{relation}
\begin{relation}[\RelNameBLg\RelNameId{\Rt{\alpha+\beta}}]\label{rel:b3-large:sub:id:alpha+beta}
    \[ \Quant{i \in [2]} \Eld{\alpha+\beta}{0}{i} = \Id. \]
\end{relation}
\begin{relation}[\RelNameBLg\RelNameId{\Rt{\beta+\psi}}]\label{rel:b3-large:sub:id:beta+psi}
    \[ \Quant{i \in [2]} \Eld{\beta+\psi}{0}{i} = \Id. \]
\end{relation}
\begin{relation}[\RelNameBLg\RelNameId{\Rt{\beta+2\psi}}]\label{rel:b3-large:sub:id:beta+2psi}
    \[ \Quant{i \in [3]} \Eld{\beta+2\psi}{0}{i} = \Id. \]
\end{relation}
\end{tcolorbox}

and inverse relations:

\begin{tcolorbox}[breakable,colback=\ColorNeut,title={Inverse relations}]
\begin{relation}[\RelNameBLg\RelNameSubgp\RelNameInv{\Rt{\alpha}}]\label{rel:b3-large:sub:inv:alpha}
    \[ \DQuant{i \in [1]}{t \in \LiftScalars} \Eld{\alpha}{t}{i} \Eld{\alpha}{-t}{i} = \Id. \]
\end{relation}
\begin{relation}[\RelNameBLg\RelNameSubgp\RelNameInv{\Rt{\beta}}]\label{rel:b3-large:sub:inv:beta}
    \[ \DQuant{i \in [1]}{t \in \LiftScalars} \Eld{\beta}{t}{i} \Eld{\beta}{-t}{i} = \Id. \]
\end{relation}
\begin{relation}[\RelNameBLg\RelNameSubgp\RelNameInv{\Rt{\psi}}]\label{rel:b3-large:sub:inv:psi}
    \[ \DQuant{i \in [1]}{t \in \LiftScalars} \Eld{\psi}{t}{i} \Eld{\psi}{-t}{i} = \Id. \]
\end{relation}
\begin{relation}[\RelNameBLg\RelNameSubgp\RelNameInv{\Rt{\alpha+\beta}}]\label{rel:b3-large:sub:inv:alpha+beta}
    \[ \DQuant{i \in [2]}{t \in \LiftScalars} \Eld{\alpha+\beta}{t}{i} \Eld{\alpha+\beta}{-t}{i} = \Id. \]
\end{relation}
\begin{relation}[\RelNameBLg\RelNameSubgp\RelNameInv{\Rt{\beta+\psi}}]\label{rel:b3-large:sub:inv:beta+psi}
    \[ \DQuant{i \in [2]}{t \in \LiftScalars} \Eld{\beta+\psi}{t}{i} \Eld{\beta+\psi}{-t}{i} = \Id. \]
\end{relation}
\begin{relation}[\RelNameBLg\RelNameSubgp\RelNameInv{\Rt{\beta+2\psi}}]\label{rel:b3-large:sub:inv:beta+2psi}
    \[ \DQuant{i \in [3]}{t \in \LiftScalars} \Eld{\beta+2\psi}{t}{i} \Eld{\beta+2\psi}{-t}{i} = \Id. \]
\end{relation}
\end{tcolorbox}

We also have the standard expression with the same proof as \Cref{rel:b3-small:sub:expr:beta+psi}:
\begin{relation}[\RelNameBLg\RelNameExpr{\Rt{\beta+\psi}}]\label{rel:b3-large:sub:expr:beta+psi}
    \[ \DQuant{i,j \in [1]}{t,u \in \LiftScalars} \Eld{\beta+\psi}{tu}{i+j} = \Eld{\psi}{-t/2}{i} \Eld{\beta}{u}{j} \Eld{\psi}{t}{i} \Eld{\beta}{-u}{j} \Eld{\psi}{-t/2}{i}. \]
\end{relation}

Similarly from \Cnref{rel:b3-large:sub:inv:alpha}, \Cnref{rel:b3-large:sub:inv:beta}, and \Cnref{rel:b3-large:sub:comm:alpha:beta}:

\begin{relation}[\RelNameBLg\RelNameExpr{\Rt{\alpha+\beta}}]\label{rel:b3-large:sub:expr:alpha+beta}
    \[ \DQuant{i,j \in [1]}{t,u \in \LiftScalars} \Eld{\alpha+\beta}{tu}{i+j} = \Eld{\alpha}{t}{i} \Eld{\beta}{u}{j} \Eld{\alpha}{-t}{i} \Eld{\beta}{-u}{j}. \]
\end{relation}

From \Cnref{rel:b3-large:sub:inv:psi}, \Cnref{rel:b3-large:sub:inv:beta+psi}, and \Cnref{rel:b3-large:sub:comm:psi:beta+psi}:

\begin{relation}[\RelNameBLg\RelNameExpr{\Rt{\beta+2\psi}}]\label{rel:b3-large:sub:expr:beta+2psi}
    \[ \DQuant{i,j \in [1]}{t,u \in \LiftScalars} \Eld{\beta+2\psi}{2tu}{i+j} = \Eld{\psi}{t}{i} \Eld{\beta+\psi}{u}{j} \Eld{\psi}{-t}{i} \Eld{\beta+\psi}{-u}{j}. \]
\end{relation}

We also enumerate order-change relations.

From \Cnref{rel:b3-large:sub:comm:alpha:beta}, \Cnref{eq:comm:left:inv}, and \Cnref{eq:comm:right:inv}, we deduce:
\begin{relation}[\RelNameBLg\RelNameOrder{\Rt{\beta}}{\Rt{\alpha}}]\label{rel:b3-large:order:beta:alpha}
    \[ \Eld{\beta}{u}{j} \Eld{\alpha}{t}{i} = \asite{\Eld{\alpha+\beta}{-tu}{i+j}} \Eld{\alpha}{t}{i} \Eld{\beta}{u}{j}. \]
\end{relation}

From \Cnref{rel:b3-large:sub:comm:beta:psi}, \Cnref{rel:b3-large:sub:comm:beta+psi:beta+2psi}, \Cnref{eq:comm:left:inv}, \Cnref{eq:comm:mid:inv}, \Cnref{eq:comm:right:inv}, \Cnref{rel:b3-large:sub:inv:beta}, \Cnref{rel:b3-large:sub:inv:psi}, \Cnref{rel:b3-large:sub:inv:beta+psi}, and \Cnref{rel:b3-large:sub:inv:beta+2psi}, we have:

\begin{relation}[\RelNameBLg\RelNameOrder{\Rt{\psi}}{\Rt{\beta}}]\label{rel:b3-large:order:psi:beta}
\begin{multline*}
    \Quant{t,u \in \LiftScalars} \Eld{\psi}{u}{j} \Eld{\beta}{t}{i} = \Eld{\beta}{t}{i} \Eld{\psi}{u}{j} \asite{\Eld{\beta+\psi}{-tu}{i+j} \Eld{\beta+2\psi}{tu^2}{i+2j}} = \asite{\Eld{\beta+2\psi}{-tu^2}{i+2j} \Eld{\beta+\psi}{-tu}{i+j}} \Eld{\beta}{t}{i} \Eld{\psi}{u}{j} \\
     =  \Eld{\beta}{t}{i} \asite{\Eld{\beta+2\psi}{-tu^2}{i+2j} \Eld{\beta+\psi}{-tu}{i+j}} \Eld{\psi}{u}{j} = \Eld{\beta}{t}{i} \asite{\Eld{\beta+\psi}{-tu}{i+j} \Eld{\beta+2\psi}{-tu^2}{i+2j}} \Eld{\psi}{u}{j}.
\end{multline*}
\end{relation}

From \Cnref{rel:b3-large:sub:comm:psi:beta+psi}, \Cnref{eq:comm:mid:str}, \Cnref{eq:comm:right:str}, \Cnref{rel:b3-large:sub:inv:beta+psi}, and \Cnref{rel:b3-large:sub:inv:beta+2psi}:
\begin{relation}[\RelNameBLg\RelNameOrder{\Rt{\psi}}{\Rt{\beta+\psi}}]\label{rel:b3-large:order:psi:beta+psi}
    \[ \DQuant{i \in [1],j\in[2]}{t,u \in \LiftScalars} \Eld{\psi}{t}{i} \Eld{\beta+\psi}{u}{j} = \Eld{\beta+\psi}{u}{j} \asite{\Eld{\beta+2\psi}{2tu}{i+j}} \Eld{\psi}{t}{i} = \Eld{\beta+\psi}{u}{j} \Eld{\psi}{t}{i} \asite{\Eld{\beta+2\psi}{2tu}{i+j}}. \]
\end{relation}
Similarly from \Cnref{rel:b3-large:sub:comm:psi:beta+psi}, \Cnref{eq:comm:mid:inv}, \Cnref{eq:comm:right:inv}, \Cnref{rel:b3-large:sub:inv:psi}, \Cnref{rel:b3-large:sub:inv:beta+psi}, and \Cnref{rel:b3-large:sub:inv:beta+2psi}:
\begin{relation}[\RelNameBLg\RelNameOrder{\Rt{\beta+\psi}}{\Rt{\psi}}]\label{rel:b3-large:order:beta+psi:psi}
    \[ \DQuant{i \in [1],j\in[2]} {t,u \in \LiftScalars} \Eld{\beta+\psi}{u}{j} \Eld{\psi}{t}{i} = \Eld{\psi}{t}{i} \asite{\Eld{\beta+2\psi}{-2tu}{i+j}} \Eld{\beta+\psi}{u}{j} = \Eld{\psi}{t}{i} \Eld{\beta+\psi}{u}{j} \asite{\Eld{\beta+2\psi}{-2tu}{i+j}}. \]
\end{relation}

\begin{table}
\centering
\begin{tabular}{c|c|c|c||c|c|c|c|c|c|}
\hline
Root & $\Rt{\alpha}$ & $\Rt{\beta}$ & $\Rt{\psi}$ & $\Rt{\alpha+\beta}$ & $\Rt{\beta+\psi}$ & $\Rt{\beta+2\psi}$ & $\Rt{\alpha+\beta+\psi}$ & $\Rt{\alpha+\beta+2\psi}$ & $\Rt{\alpha+2\beta+2\psi}$ \\ \hline
Combination & $i$ & $j$ & $k$ & $i+j$ & $j+k$ & $j+2k$ & $i+j+k$ & $i+j+2k$ & $i+2j+2k$ \\ \hline
Height & $1$ & $1$ & $1$ & $2$ & $2$ & $3$ & $3$ & $4$ & $5$ \\ \hline
\hline
& $0$ & $0$ & $0$ & $0$ & $0$ & $0$ & $0$ & $0$ & $0$ \\ \cline{2-10}
& $0$ & $0$ & $1$ & $0$ & $1$ & $2$ & $1$ & $2$ & $2$ \\ \cline{2-10}
& $0$ & $1$ & $0$ & $1$ & $1$ & $1$ & $1$ & $1$ & $2$ \\ \cline{2-10}
& $0$ & $1$ & $1$ & $1$ & $2$ & $3$ & $2$ & $3$ & $4$ \\ \cline{2-10}
& $1$ & $0$ & $0$ & $1$ & $0$ & $0$ & $1$ & $1$ & $1$ \\ \cline{2-10}
& $1$ & $0$ & $1$ & $1$ & $1$ & $2$ & $2$ & $3$ & $3$ \\ \cline{2-10}
& $1$ & $1$ & $0$ & $2$ & $1$ & $1$ & $2$ & $2$ & $3$ \\ \cline{2-10}
& $1$ & $1$ & $1$ & $2$ & $2$ & $3$ & $3$ & $4$ & $5$ \\ \cline{2-10}
\end{tabular}
\caption{Table of certain nonnegative linear combinations of three Boolean variables $i,j,k \in [1]$. There are eight rows, indexed by triples $(i,j,k) \in [1]^3$. Each column contains a linear combination of the variables. This combination has a ``height'', which is its maximum value (i.e., $i=j=k=1$). The column also corresponds to a root in the link. This is because the combinations arise when we study lifting relations from the base group to the lifted group: If $\Rt{\alpha},\Rt{\beta},\Rt{\psi}$ elements are lifted to monomials of degrees $i,j,k$, respectively, then the other six roots will be lifted to monomials of degree listed in the corresponding column. We are typically interested in \emph{pairs} of columns and the extent to which the eight listed rows \emph{cover} the set of all possible pairs of entries. For instance, consider the pair of columns $\Rt{\alpha}$ and $\Rt{\beta+2\psi}$: there are $|[1] \times [3]| = 2 \cdot 4 = 8$ possible pairs of entries which could occur, and all $8$ do occur. On the other hand, consider the pair of columns $\Rt{\alpha+\beta}$ and $\Rt{\beta+\psi}$: there are $|[2] \times [2]| = 3 \cdot 3 = 9$ possible pairs of entries which could occur, but only $7$ do occur! (Note that $(1,1)$ occurs twice, for $(i,j,k) = (1,0,1)$ and $(i,j,k) = (0,1,0)$.)}\label{tab:b3-large:homog}
\end{table}

\subsubsection{$\alpha+\beta$ and $\beta+\psi$ commute}

\begin{relation}[\RelNameBLg\RelNameCommute{\Rt{\alpha+\beta}}{\Rt{\beta+\psi}}]\label{rel:b3-large:comm:alpha+beta:beta+psi}
\[
\DQuant{i,j \in [2]}{t,u \in \LiftScalars} \Comm{\Eld{\alpha+\beta}{t}i}{\Eld{\beta+\psi}{u}j} = \Id.
\]
\end{relation}

\begin{proof}
    Analogous to the proof of \Cnref{rel:a3:comm:alpha+beta:beta+gamma} (and \Cnref{rel:b3-small:comm:beta+psi:psi+omega}) using homogeneous and nonhomogeneous lifting and \Cnref{rel:b3-large:comm:raw:lift:alpha+beta:beta+psi}.
\end{proof}

\subsubsection{Establishing $\alpha+\beta+\psi$}

\begin{figure}
    \centering
    \begin{tikzpicture}
\node[algnode0] (v000) {$(0,0)$};
\node[algnode0] (v010) [below=of v000] {$(1,0)$};
\node[algnode0] (v001) [below=of v010] {$(0,1)$};
\node[algnode0] (v020) [below=of v001] {$(2,0)$};
\node[algnode0] (v011) [below=of v020] {$(1,1)$};
\node[algnode0] (v021) [below=of v011] {$(2,1)$};
\node[algnode1] (v100) [right=of v000] {$(0,0)$};
\node[algnode1] (v110) [below=of v100] {$(1,0)$};
\node[algnode1] (v101) [below=of v110] {$(0,1)$};
\node[algnode1] (v111) [below=of v101] {$(1,1)$};
\node[algnode1] (v102) [below=of v111] {$(0,2)$};
\node[algnode1] (v112) [below=of v102] {$(1,2)$};
\draw[algedge] (v011) -- (v111);
\draw[algedge] (v021) -- (v112);
\draw[algedge] (v000) -- (v100);
\draw[algedge] (v010) -- (v110);
\draw[algedge] (v011) -- (v102);
\draw[algedge] (v001) -- (v101);
\draw[algedge] (v010) -- (v101);
\draw[algedge] (v020) -- (v111);
\begin{scope}[on background layer]
    \node[algbackh, fit={(v000) (v100)}] {};
    \node[algbackh, fit={(v010) (v101)}] {};
    \node[algbackh, fit={(v020) (v102)}] {};
    \node[algbackh, fit={(v021) (v112)}] {};
    \node[algback, fit={(v000) (v021)}] (b0) {};
    \node[above=0 of b0] {$\Rt{\alpha+\beta} \star \Rt{\psi}$};
    \node[algback, fit={(v100) (v112)}] (b1) {};
    \node[above=0 of b1] {$\Rt{\alpha} \star \Rt{\beta+\psi}$};
\end{scope}
\end{tikzpicture}
\caption{Establishing $\Rt{\alpha+\beta+\psi}$: A bipartite graph with left vertex-set $[2] \times [1]$ and right vertex-set $[1] \times [2]$, with an edge $(i,j) \sim (k,\ell)$ only if for all $t,u,v \in R$, $\Eld{\psi}{-v/2}{j} \Eld{\alpha+\beta}{tu}{i} \Eld{\psi}{v}{j} \Eld{\alpha+\beta}{-tu}{i} \Eld{\psi}{-v/2}{j} = \Eld{\beta+\psi}{-uv/2}{\ell} \Eld{\alpha}{t}{k} \Eld{\beta+\psi}{uv}{\ell} \Eld{\alpha}{-t}{k} \Eld{\beta+\psi}{-uv/2}{\ell}$ from \Cnref{rel:b3-large:raw:lift:alpha+beta+psi}. Additionally, grey blocks partition the vertices based on the sum of coordinates in $[3]$. In this case, the blocks also correspond to connected components in the graph.}\label{fig:b3-large:def:alpha+beta+psi}
\end{figure}

\begin{proposition}[Establishing $\Rt{\alpha+\beta+\psi}$]\label{prop:b3-large:est:alpha+beta+psi}
    There exist elements $\Eld{\alpha+\beta+\psi}{t}{i}$ for $t \in \LiftScalars$ and $i \in [3]$ such that the following hold:
    \begin{gather*}
        \DQuant{i \in [2], j \in [1]}{t,u \in \LiftScalars} \Eld{\alpha+\beta+\psi}{tu}{i+j} = 
        \Eld{\psi}{-u/2}{j} \Eld{\alpha+\beta}{t}{i} \Eld{\psi}{u}{j} \Eld{\alpha+\beta}{-t}{i} \Eld{\psi}{-u/2}{j}, \\
        \DQuant{i \in [1],j \in [2]}{t,u \in \LiftScalars} \Eld{\alpha+\beta+\psi}{tu}{i+j} = \Eld{\beta+\psi}{-u/2}{j} \Eld{\alpha}{t}{i} \Eld{\beta+\psi}{u}{j} \Eld{\alpha}{-t}{i} \Eld{\beta+\psi}{-u/2}{j}.
    \end{gather*}
\end{proposition}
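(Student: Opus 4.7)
The plan is to follow the template already established in the $A_3$ case (\Cref{prop:a3:est:alpha+beta+gamma}) and in the $B_3$ ``small link'' case (\Cref{prop:b3-small:est:beta+psi+omega}). The key inputs will be the homogeneously lifted interchange relation \Cnref{rel:b3-large:raw:lift:alpha+beta+psi}, the abstract tool \Cref{prop:prelim:equal-conn}, and the connectivity structure displayed in \Cref{fig:b3-large:def:alpha+beta+psi}.

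More concretely: for each degree-pair $(i,j) \in [2] \times [1]$ and each $t,u \in \LiftScalars$, let \[ A_{i,j}(t,u) \coloneqq \Eld{\psi}{-u/2}{j}\Eld{\alpha+\beta}{t}{i}\Eld{\psi}{u}{j}\Eld{\alpha+\beta}{-t}{i}\Eld{\psi}{-u/2}{j}, \] and for each $(k,\ell) \in [1] \times [2]$ let \[ B_{k,\ell}(t,u) \coloneqq \Eld{\beta+\psi}{-u/2}{\ell}\Eld{\alpha}{t}{k}\Eld{\beta+\psi}{u}{\ell}\Eld{\alpha}{-t}{k}\Eld{\beta+\psi}{-u/2}{\ell}. \] These are the two candidate ``names'' for a $\Rt{\alpha+\beta+\psi}$-type element of a given total degree. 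The relation \Cnref{rel:b3-large:raw:lift:alpha+beta+psi}, applied for every triple $(i,j,k) \in [1]^3$ (with the lifted coefficient $(t,u,v) \mapsto (t,u,v)$), gives the equality $A_{i+j,k}(t,uv) = B_{i,j+k}(t,uv)$, i.e., it furnishes the edges $(i+j,k) \sim (i,j+k)$ in the bipartite graph depicted in \Cref{fig:b3-large:def:alpha+beta+psi}. Substituting $u=1$ into the image of the lift, we also get $B_{k,\ell}(t,uv)$ and $A_{i,j}(t,uv)$ ranging over all $t,u,v$, so the equations extend uniformly in $t$ and the ``$tu$'' coefficient appearing in the proposition.

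Grouping vertices by their coordinate-sum (an element of $[3]$), \Cref{fig:b3-large:def:alpha+beta+psi} exhibits four connected blocks, one for each total degree. Thus within each block all the $A$'s and $B$'s of that total degree are equal, and we may \emph{define} $\Eld{\alpha+\beta+\psi}{s}{d}$ (for $d \in [3]$ and $s \in \LiftScalars$) to be this common value. Formally, this definitional step is exactly \Cref{prop:prelim:equal-conn}, invoked one block at a time. The two displayed identities in the proposition are then simply the instantiations of this definition via the $A$-expressions and $B$-expressions, respectively.

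The only non-routine part of this proof is verifying the connectivity of the four grey blocks in \Cref{fig:b3-large:def:alpha+beta+psi}; but this can be read off directly from the picture. In particular, by contrast with the forthcoming $\Rt{\alpha+\beta+2\psi}$ and $\Rt{\alpha+2\beta+2\psi}$ cases, here the lifted relation alone already suffices to connect each block, so no auxiliary ``partial'' relations need to be inserted before establishing the name.
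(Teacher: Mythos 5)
Your proposal takes exactly the paper's route — it invokes the same \Cref{prop:prelim:equal-conn}, the lifted relation \Cnref{rel:b3-large:raw:lift:alpha+beta+psi}, and the block-connectivity in \Cref{fig:b3-large:def:alpha+beta+psi}, and the paper's proof is just a one-liner citing these. The only slip is a typo: the lifted relation yields $A_{i+j,k}(tu,v) = B_{i,j+k}(t,uv)$ (not $A_{i+j,k}(t,uv) = B_{i,j+k}(t,uv)$), which is precisely the hypothesis form required by \Cref{prop:prelim:equal-conn}, and its conclusion already gives well-definedness as a function of the product, making your extra ``substitute $u=1$'' step superfluous.
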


\begin{proof}
    Uses \Cref{prop:prelim:equal-conn} and \Cnref{rel:b3-large:raw:lift:alpha+beta+psi}) (i.e., the blocks in \Cref{fig:b3-large:def:alpha+beta+psi} are connected).
\end{proof}

\begin{relation}[\RelNameBLg\RelNameCommute{\Rt{\alpha}}{\Rt{\alpha+\beta+\psi}}]\label{rel:b3-large:comm:alpha:alpha+beta+psi}
        \[
    \DQuant{i \in [1], j \in [3]}{t,u \in \LiftScalars} \Comm{\Eld{\alpha}{t}{i}}{\Eld{\alpha+\beta+\psi}{u}{j}} = \Id.
    \]
\end{relation}

\begin{proof}
    $\Rt{\alpha+\beta+\psi}$ elements can be expressed as products of $\Rt{\alpha+\beta}$ elements and $\Rt{\psi}$ elements (\Cnref{prop:b3-large:est:alpha+beta+psi}). $\Rt{\alpha}$ elements commute with $\Rt{\alpha+\beta}$ elements (\Cnref{rel:b3-large:sub:comm:alpha:alpha+beta}) and $\Rt{\psi}$ elements (\Cnref{rel:b3-large:sub:comm:alpha:psi}).
\end{proof}

\begin{relation}[\RelNameBLg\RelNameCommute{\Rt{\alpha+\beta}}{\Rt{\alpha+\beta+\psi}}]\label{rel:b3-large:comm:alpha+beta:alpha+beta+psi}
        \[
    \DQuant{i \in [2], j \in [3]}{t,u \in \LiftScalars} \Comm{\Eld{\alpha+\beta}{t}{i}}{\Eld{\alpha+\beta+\psi}{u}{j}} = \Id.
    \]
\end{relation}

\begin{proof}
    $\Rt{\alpha+\beta+\psi}$ elements can be expressed of products of $\Rt{\alpha}$ elements and $\Rt{\beta+\psi}$ elements (\Cnref{prop:b3-large:est:alpha+beta+psi}). $\Rt{\alpha+\beta}$ elements commute with $\Rt{\alpha}$ elements (\Cnref{rel:b3-large:sub:comm:alpha:alpha+beta}) and $\Rt{\beta+\psi}$ elements (\Cnref{rel:b3-large:comm:alpha+beta:beta+psi}).
\end{proof}

\begin{relation}[\RelNameBLg\RelNameCommute{\Rt{\beta}}{\Rt{\alpha+\beta+\psi}}]\label{rel:b3-large:comm:beta:alpha+beta+psi}
        \[
    \DQuant{i \in [1], j \in [3]}{t,u \in \LiftScalars} \Comm{\Eld{\beta}{t}{i}}{\Eld{\alpha+\beta+\psi}{u}{j}} = \Id.
    \]
\end{relation}

\begin{proof}
    Decompose (arbitrarily) $j = j_1+j_2$ for $j_1 \in [1],j_2 \in [2]$. Now, we write a product of $\Rt{\beta}$ and $\Rt{\alpha+\beta+\psi}$ elements and compute:
    \begin{align*}
        & \Eld{\beta}{t}{i} \asite{\Eld{\alpha+\beta+\psi}{u}{j}} \\
        \intertext{Expanding the $\Rt{\alpha+\beta+\psi}$ element into a product of $\Rt{\alpha}$ and $\Rt{\beta+\psi}$ elements (\Cnref{prop:b3-large:est:alpha+beta+psi}):}
        &= \asite{\Eld{\beta}{t}{i}} \Eld{\alpha}{-u/2}{j_1} \Eld{\beta+\psi}{1}{j_2} \Eld{\alpha}{u}{j_1} \Eld{\beta+\psi}{-1}{j_2} \Eld{\alpha}{-u/2}{j_1} \\
        \intertext{Now, we move the $\Rt{\beta}$ element on the left to the right, which creates $\Rt{\alpha+\beta}$ commutators with $\Rt{\alpha}$ elements (\Cnref{rel:b3-large:order:beta:alpha}) and no commutators with $\Rt{\beta+\psi}$ elements (\Cnref{rel:b3-large:sub:comm:beta:beta+psi}):}
        &= \asite{\Eld{\alpha+\beta}{tu/2}{i+j_1}} \Eld{\alpha}{-u/2}{j_1} \Eld{\beta+\psi}{1}{j_2} \asite{\Eld{\alpha+\beta}{-tu}{i+j_1}} \Eld{\alpha}{u}{j_1} \Eld{\beta+\psi}{-1}{j_2} \asite{\Eld{\alpha+\beta}{tu/2}{i+j_1}} \Eld{\alpha}{-u/2}{j_1} \Eld{\beta}{t}{i}.
        \intertext{Finally, we recall that $\Rt{\alpha+\beta}$ elements commute with $\Rt{\alpha}$ elements (\Cnref{rel:b3-large:sub:comm:alpha:alpha+beta}) and $\Rt{\beta+\psi}$ elements (\Cnref{rel:b3-large:comm:alpha+beta:beta+psi}). Therefore, we can draw them together and cancel them (\Cnref{rel:b3-large:sub:inv:alpha+beta}), leaving:}
        &= \asite{\Eld{\alpha}{-u/2}{j_1} \Eld{\beta+\psi}{1}{j_2} \Eld{\alpha}{u}{j_1} \Eld{\beta+\psi}{-1}{j_2} \Eld{\alpha}{-u/2}{j_1}} \Eld{\beta}{t}{i} \\
        \intertext{Reducing back into an $\Rt{\alpha+\beta+\psi}$ element (\Cnref{prop:b3-large:est:alpha+beta+psi}):}
        &= \Eld{\alpha+\beta+\psi}{t}{j} \Eld{\beta}{t}{i}.
    \end{align*}
\end{proof}

\begin{relation}[\RelNameBLg\RelNameHomLift\RelNameInvDoub{\Rt{\alpha+\beta+\psi}}]\label{rel:b3-large:inv-doub:alpha+beta+psi}
    \begin{gather*}
    \DQuant{i \in [3]}{t \in \LiftScalars} \Eld{\alpha+\beta+\psi}{t}{i} \Eld{\alpha+\beta+\psi}{-t}{i} = \Id, \\
    \DQuant{i \in [3]}{t \in \LiftScalars} \Eld{\alpha+\beta+\psi}{t}{i} \Eld{\alpha+\beta+\psi}{t}{i} = \Eld{\alpha+\beta+\psi}{2t}{i}.
    \end{gather*}
\end{relation}

\begin{proof}
    For the first equation, we use \Cnref{prop:b3-large:est:alpha+beta+psi}, decomposing $i = i_1 + i_2$ for $i_1 \in [2], i_2 \in [1]$, so that \[
    \Eld{\alpha+\beta+\psi}{t}{i} = \Eld{\psi}{-1/2}{i_2} \Eld{\alpha+\beta}{t}{i_1} \Eld{\psi}{1}{i_2} \Eld{\alpha+\beta}{-t}{i_1} \Eld{\psi}{-1/2}{i_2} \] and \[ \Eld{\alpha+\beta+\psi}{-t}{i} = \Eld{\psi}{1/2}{i_2} \Eld{\alpha+\beta}{t}{i_1} \Eld{\psi}{-1}{i_2} \Eld{\alpha+\beta}{-t}{i_1} \Eld{\psi}{1/2}{i_2};
    \]
    by \Cnref{rel:b3-large:sub:inv:alpha+beta} and \Cnref{rel:b3-large:sub:inv:psi}, these two elements multiply to $\Id$. For the second equation, we use \Cnref{prop:b3-large:est:alpha+beta+psi} and \Cnref{rel:b3-large:raw:lift:doub:alpha+beta+psi}.
\end{proof}

\begin{relation}[\RelNameBLg\RelNameAlg\RelNameCommutator{\Rt{\alpha+\beta}}{\Rt{\psi}}]\label{rel:b3-large:rough:comm:alpha+beta:psi}
    \begin{align*}
    \DQuant{i \in [2],j\in[1]}{t,u \in \LiftScalars} \Comm{\Eld{\alpha+\beta}{t}{i}}{\Eld{\psi}{u}{j}} &= \Eld{\alpha+\beta+\psi}{tu}{i+j} \Comm{\Eld{\alpha+\beta+\psi}{-tu}{i+j}}{\Eld{\psi}{u/2}{j}} \\
    &= \Comm{\Eld{\alpha+\beta+\psi}{tu}{i+j}}{\Eld{\psi}{u/2}{j}}^{-1} \Eld{\alpha+\beta+\psi}{tu}{i+j} .
    \end{align*}
\end{relation}

\begin{proof}
    Set \[ x = \Eld{\alpha+\beta}{t}{i}, y = \Eld{\psi}{u/2}{j}, \] so that by \Cnref{rel:b3-large:sub:lin:psi} and \Cnref{rel:b3-large:sub:inv:psi}, \[ x^{-1} = \Eld{\psi}{-u}{j}, y^2 = \Eld{\psi}{u}{j}, y^{-1} = \Eld{\psi}{-u/2}{j}. \] Thus, \[ x \star y = \Eld{\alpha+\beta+\psi}{tu}{i+j}, (x \star y)^{-1} = \Eld{\alpha+\beta+\psi}{-tu}{i+j} \] by \Cnref{prop:b3-large:est:alpha+beta+psi} and \Cnref{rel:b3-large:raw:lift:doub:alpha+beta+psi}. Finally, we use \Cnref{eq:comm:conj} to deduce $\Comm{x}{y^2} = (x \star y) \Comm{(x \star y)^{-1}}{y} = \Comm{x\star y}{y}^{-1} (x \star y)$ which is exactly the required statement.
\end{proof}

Similarly, we have:

\begin{relation}[\RelNameBLg\RelNameAlg\RelNameCommutator{\Rt{\alpha}}{\Rt{\beta+\psi}}]\label{rel:b3-large:rough:comm:alpha:beta+psi}
    \begin{align*}
        \DQuant{i \in [1], j \in [2]}{t,u \in \LiftScalars} \Comm{\Eld{\alpha}{t}{i}}{\Eld{\beta+\psi}{u}{j}} &= \Eld{\alpha+\beta+\psi}{tu}{i+j} \Comm{\Eld{\alpha+\beta+\psi}{-tu}{i+j}}{\Eld{\beta+\psi}{u/2}{j}} \\
        &= \Comm{\Eld{\alpha+\beta+\psi}{tu}{i+j}}{\Eld{\beta+\psi}{u/2}{j}}^{-1} \Eld{\alpha+\beta+\psi}{tu}{i+j} .
    \end{align*}
\end{relation}

\subsubsection{Establishing $\alpha+\beta+2\psi$}

\paragraph{Facts from lifting}

\begin{relation}[\RelNameBLg\RelNameHomLift\RelNameInterchange{\Rt{\alpha+\beta+2\psi}}]\label{rel:b3-large:lift:alpha+beta+2psi}
    \[
    \DQuant{i,j,k \in [1]}{t,u,v \in \LiftScalars} \Comm{\Eld{\alpha+\beta+\psi}{tu}{i+j+k}}{\Eld{\psi}{v}{k}} = \Comm{\Eld{\alpha}{t}{i}}{\Eld{\beta+2\psi}{-2uv}{j+2k}}.
    \]
\end{relation}

\begin{proof}
     Use \Cref{rel:b3-large:raw:lift:alpha+beta+2psi} with $\liftC=(t,\frac{u}v,v)$; the coefficients on the $\Rt{\alpha+\beta+\psi}$, $\Rt{\psi}$, $\Rt{\alpha}$, and $\Rt{\beta+2\psi}$ elements are \[ t \left(\frac{u}{v}\right) v = tu, v, t, -2 \left(\frac{u}v\right) v^2 = -2uv. \] Then, apply \Cnref{prop:b3-large:est:alpha+beta+psi}.
\end{proof}

\begin{relation}[\RelNameBLg\RelNameHomLift\RelNameCommutator{\Rt{\beta+\psi}}{\Comm{\Rt{\alpha}}{\Rt{\beta+2\psi}}}]\label{rel:b3-large:comm:lift:beta+psi:alpha:beta+2psi}
    \[
    \DQuant{i,j,k\in [1]}{t,u,v \in \LiftScalars} \Comm{\Eld{\beta+\psi}{t}{j+k}}{\Comm{\Eld{\alpha}{u}{i}}{\Eld{\beta+2\psi}{v}{j+2k}}} = \Id.
    \]
\end{relation}

\begin{proof}
    If $u = 0$, then $\Eld{\alpha}{u}{i} = \Id$ (\Cnref{rel:b3-large:sub:id:alpha}) and we have nothing to prove. If $v = 0$, then $\Eld{\beta+2\psi}{v}{j+2k} = \Id$ (\Cnref{rel:b3-large:sub:id:beta+2psi}) and we have nothing to prove. Otherwise, we use \Cnref{rel:b3-large:comm:raw:lift:beta+psi:alpha:beta+2psi} with $\liftC=(u,\frac{t^2}v,\frac{v}t)$; the coefficients on the $\Rt{\beta+\psi}$, $\Rt{\alpha}$, $\Rt{\beta+2\psi}$ are \[
    \left(\frac{t^2}v\right) \left(\frac{v}t\right) = t, u, \left(\frac{t^2}v\right) \left(\frac{v}t\right)^2 = v,
    \]
    respectively.
\end{proof}

\begin{relation}[\RelNameBLg\RelNameHomLift\RelNameInvDoub{\Comm{\Rt{\alpha}}{\Rt{\beta+2\psi}}}]\label{rel:b3-large:inv-doub:lift:alpha:beta+2psi}
    \begin{gather*}
    \DQuant{i \in [1], j \in [3]}{t,u \in \LiftScalars} \Comm{\Eld{\alpha}{t}{i}}{\Eld{\beta+2\psi}{u}{j}} = \Comm{\Eld{\alpha}{-t}{i}}{\Eld{\beta+2\psi}{-u}{j}}, \\
    \DQuant{i \in [1], j \in [3]}{t,u \in \LiftScalars} \Comm{\Eld{\alpha}{t}{i}}{\Eld{\beta+2\psi}{u}{j}} \Comm{\Eld{\alpha}{t}{i}}{\Eld{\beta+2\psi}{-u}{j}} = \Id, \\
    \DQuant{i \in [1], j \in [3]}{t,u \in \LiftScalars} \Comm{\Eld{\alpha}{t}{i}}{\Eld{\beta+2\psi}{u}{j}} \Comm{\Eld{\alpha}{t}{i}}{\Eld{\beta+2\psi}{u}{j}} = \Comm{\Eld{\alpha}{t}{i}}{\Eld{\beta+2\psi}{2u}{j}}.
    \end{gather*}
\end{relation}

\begin{proof}
    Decompose $j = j_1 + 2j_2$ for $j_1 \in [1], j_2 \in [1]$ and apply \Cnref{rel:b3-large:inv-doub:raw:lift:alpha:beta+2psi} with $\liftC=(t,u,1)$.
\end{proof}

\begin{relation}[\RelNameBLg\RelNameHomLift\RelNameCommutator{\Rt{\beta+2\psi}}{\Rt{\alpha+\beta+\psi}}]\label{rel:b3-large:comm:lift:beta+2psi:alpha+beta+psi}
        \[
    \DQuant{i,j,k \in [1]}{t,u \in \LiftScalars} \Comm{\Eld{\beta+2\psi}{t}{j+2k}}{\Eld{\alpha+\beta+\psi}{u}{i+j+k}} = \Id.
    \]
\end{relation}

\begin{proof}
    If $t = 0$, then $\Eld{\beta+2\psi}{t}{j+2k} = \Id$ (\Cnref{rel:b3-large:sub:id:beta+2psi}) and we have nothing to prove. Otherwise, apply \Cnref{rel:b3-large:comm:raw:lift:beta+2psi:alpha+beta+psi} with $\liftC=(\frac{u}t,t,1)$ and then \Cnref{prop:b3-large:est:alpha+beta+psi}.
\end{proof}

\begin{figure}
    \centering
    \begin{tikzpicture}
\node[algnode0] (v000) {$(0,0)$};
\node[algnode0] (v010) [below=of v000] {$(1,0)$};
\node[algnode0M] (v001) [below=of v010] {$(0,1)$};
\node[algnode0] (v020) [below=of v001] {$(2,0)$};
\node[algnode0] (v011) [below=of v020] {$(1,1)$};
\node[algnode0M] (v030) [below=of v011] {$(3,0)$};
\node[algnode0] (v021) [below=of v030] {$(2,1)$};
\node[algnode0] (v031) [below=of v021] {$(3,1)$};
\node[algnode1] (v100) [right=of v000] {$(0,0)$};
\node[algnode1] (v110) [below=of v100] {$(1,0)$};
\node[algnode1] (v101) [below=of v110] {$(0,1)$};
\node[algnode1] (v111) [below=of v101] {$(1,1)$};
\node[algnode1] (v102) [below=of v111] {$(0,2)$};
\node[algnode1] (v112) [below=of v102] {$(1,2)$};
\node[algnode1] (v103) [below=of v112] {$(0,3)$};
\node[algnode1] (v113) [below=of v103] {$(1,3)$};

\draw[algedge] (v010) -- (v101);
\draw[algedge] (v021) -- (v103);
\draw[algedge] (v031) -- (v113);
\draw[algedge] (v021) -- (v112);
\draw[algedge] (v020) -- (v111);
\draw[algedge] (v010) -- (v110);
\draw[algedge] (v000) -- (v100);
\draw[algedge] (v011) -- (v102);

\draw[algedge1,->] (v001) -- node[midway, algedgelabel] {\newedgeA} (v101);
\draw[algedge1,->] (v030) -- node[midway, algedgelabel] {\newedgeA} (v112);

\draw[algedge1] (v020) -- node[midway, algedgelabel] {\newedgeB} (v102);

\begin{scope}[on background layer]
    \node[algbackh, fit={(v000) (v100)}] {};
    \node[algbackh, fit={(v010) (v101)}] {};
    \node[algbackh, fit={(v020) (v102)}] {};
    \node[algbackh, fit={(v030) (v103)}] {};
    \node[algbackh, fit={(v031) (v113)}] {};
    \node[algback, fit={(v000) (v031)}] (b0) {};
    \node[above=0 of b0] {$\Comm{\Rt{\alpha+\beta+\psi}}{\Rt{\psi}}$};
    \node[algback, fit={(v100) (v113)}] (b1) {};
    \node[above=0 of b1] {$\Comm{\Rt{\alpha}}{\Rt{\beta+2\psi}}$};
\end{scope}

    \end{tikzpicture}
    \caption{Establishing $\Rt{\alpha+\beta+2\psi}$: A bipartite graph with left vertex-set $[3] \times [1]$ and right vertex-set $[1] \times [3]$, with an edge $(i,j) \sim (k,\ell)$ only if $\forall t,u,v \in R, \Comm{\Eld{\alpha+\beta+\psi}{tu}{i}}{\Eld{\psi}{v}{j}} = \Comm{\Eld{\alpha}{t}{k}}{\Eld{\beta+2\psi}{-2 uv}{\ell}}$. The solid edges are implied via lifting (\Cref{rel:b3-large:raw:lift:alpha+beta+2psi}). Vertices are darker-colored if they are not touched by solid edges (and therefore not ``reached'' by lifting). For the dotted edges, ``\newedgeA'' and ``\newedgeB'' edges are proven in \Cref{cor:b3-large:comm:alpha+beta+psi:psi:A} and \Cref{cor:b3-large:comm:alpha+beta+psi:psi:B} respectively. Additionally, grey blocks partition the vertices based on the sum of coordinates in $[4]$. Darker-colored}\label{fig:b3-large:def:alpha+beta+2psi}
\end{figure}

\paragraph{Analysis}

\begin{relation}[\RelNameBLg\RelNameCommute{\Rt{\psi}}{\Comm{\Rt{\alpha}}{\Rt{\beta+2\psi}}}]\label{rel:b3-large:rough:comm:psi:alpha:beta+2psi}
        \[
    \DQuant{i \in [1], j \in [1], k \in [3]}{t,u,v \in \LiftScalars} \Comm{\Eld{\psi}{t}{i}}{\Comm{\Eld{\alpha}{u}{j}}{\Eld{\beta+2\psi}{v}{k}}} = \Id.
    \]
\end{relation}

\begin{proof}
    The sets of $\Rt{\alpha}$ and $\Rt{\beta+2\psi}$ elements are both closed under inversion (\Cnref{rel:b3-large:sub:inv:alpha} and \Cnref{rel:b3-large:sub:inv:beta+2psi})). $\Rt{\psi}$ elements commute with both $\Rt{\alpha}$ elements (\Cnref{rel:b3-large:sub:comm:alpha:psi}) and $\Rt{\beta+2\psi}$ elements (\Cnref{rel:b3-large:sub:comm:psi:beta+2psi}).
\end{proof}

\begin{relation}[\RelNameBLg\RelNameCommute{\Rt{\alpha}}{\Comm{\Rt{\alpha+\beta+\psi}}{\Rt{\psi}}}]\label{rel:b3-large:rough:comm:alpha:alpha+beta+psi:psi}
    \[
    \DQuant{i \in [1], j \in [3], k \in [1]}{t,u,v \in \LiftScalars} \Comm{\Eld{\alpha}{t}{i}}{\Comm{\Eld{\alpha+\beta+\psi}{u}{j}}{\Eld{\psi}{v}{k}}} = \Id.
    \]
\end{relation}

\begin{proof}
    The sets of $\Rt{\psi}$ and $\Rt{\alpha+\beta+\psi}$ elements are both closed under inversion (\Cnref{rel:b3-large:sub:inv:psi} and \Cnref{rel:b3-large:inv-doub:alpha+beta+psi}). $\Rt{\alpha}$ elements commute with $\Rt{\alpha+\beta+\psi}$ elements (\Cnref{rel:b3-large:comm:alpha:alpha+beta+psi}) and with $\Rt{\psi}$ elements (\Cnref{rel:b3-large:sub:comm:alpha:psi}).
\end{proof}

\begin{relation}[\RelNameBLg\RelNameCommute{\Rt{\alpha}}{\Comm{\Rt{\alpha}}{\Rt{\beta+2\psi}}}]\label{rel:b3-large:rough:comm:alpha:alpha:beta+2psi}
        \[
    \DQuant{i \in [1], j \in [1], k \in [3]}{t,u,v \in \LiftScalars} \Comm{\Eld{\alpha}{t}{i}}{\Comm{\Eld{\alpha}{u}{j}}{\Eld{\beta+2\psi}{v}{k}}} = \Id.
    \]
\end{relation}

\begin{proof}
    Using \Cnref{rel:b3-large:lift:alpha+beta+2psi}, we can rewrite \[ \Comm{\Eld{\alpha}{u}{j}}{\Eld{\beta+2\psi}{v}{k}} = \Comm{\Eld{\alpha+\beta+\psi}{u}{i' + j'}}{\Eld{\psi}{v}{k'}} \] where $i' = j$ and $k = j' + 2k'$ for $j',k' \in [1]$. (Note that we are using the fact that $\{j' + 2k' : j',k' \in [1]\}$ covers $[3]$. Graphically, this is equivalent to the statement that the right-hand side is covered by edges in \Cref{fig:b3-large:def:alpha+beta+2psi}.) Now, we can apply the previous proposition (\Cnref{rel:b3-large:rough:comm:alpha:alpha:beta+2psi}).
\end{proof}

\paragraph{``\newedgeA'' edges}

\begin{proposition}[Sufficient conditions for commutator of $\Rt{\alpha+\beta+\psi}$ and $\Rt{\psi}$]\label{rel:b3-large:comm:edge:alpha+beta+psi:psi}
Let $i \in [1],j \in [2], k \in [1]$. Suppose that:
\[
\Quant{t,u,v \in \LiftScalars} \Comm{\Eld{\beta+\psi}{t}{j}}{\Comm{\Eld{\alpha}{u}{i}}{\Eld{\beta+2\psi}{v}{j+k}}} = \Id.
\]
Then:
\[
\Quant{t,u,v \in \LiftScalars} \Comm{\Eld{\alpha+\beta+\psi}{tu}{i+j}}{\Eld{\psi}{v}{k}} = \Comm{\Eld{\alpha}{t}{i}}{\Eld{\beta+2\psi}{-2uv}{j+k}}.
\]
\end{proposition}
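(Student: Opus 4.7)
The plan is to derive the desired commutator identity by expanding the $\Rt{\alpha+\beta+\psi}$-element on the left into $\Rt{\alpha}$ and $\Rt{\beta+\psi}$ pieces via \Cnref{prop:b3-large:est:alpha+beta+psi} (using the second expansion there, valid since $i \in [1]$ and $j \in [2]$):
\[
\Eld{\alpha+\beta+\psi}{tu}{i+j} = \Eld{\beta+\psi}{-u/2}{j}\,\Eld{\alpha}{t}{i}\,\Eld{\beta+\psi}{u}{j}\,\Eld{\alpha}{-t}{i}\,\Eld{\beta+\psi}{-u/2}{j}.
\]
Then I would compute the commutator with $\Eld{\psi}{v}{k}$ by the standard ``move $\Eld{\psi}{v}{k}$ through the word and collect residues'' strategy. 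The moves are governed by three facts: $\Rt{\alpha}$ and $\Rt{\psi}$ commute (\Cnref{rel:b3-large:sub:comm:alpha:psi}), so $\Eld{\psi}{v}{k}$ slides past each $\Rt{\alpha}$-factor for free; swapping $\Eld{\psi}{v}{k}$ with an $\Eld{\beta+\psi}{s}{j}$-factor produces a $\Rt{\beta+2\psi}$-residue of coefficient $\pm 2sv$ and degree $j+k$ (\Cnref{rel:b3-large:order:psi:beta+psi} and \Cnref{rel:b3-large:order:beta+psi:psi}); and finally these $\Rt{\beta+2\psi}$-residues commute with each other (\Cnref{rel:b3-large:sub:comm:self:beta+2psi}) and with $\Rt{\psi}$ (\Cnref{rel:b3-large:sub:comm:psi:beta+2psi}) and with $\Rt{\beta+\psi}$ (\Cnref{rel:b3-large:sub:comm:beta+psi:beta+2psi}).

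After sliding $\Eld{\psi}{v}{k}$ all the way through, the resulting word will have the form $\Eld{\psi}{v}{k}$ times a ``junk'' string consisting of the original $\Rt{\alpha}$ and $\Rt{\beta+\psi}$ factors, interleaved with several $\Eld{\beta+2\psi}{\pm uv}{j+k}$ residues. Cancelling $\Eld{\psi}{v}{k}$ with the $\Eld{\psi}{-v}{k}$ on the right of the commutator, I am left with that junk string, and I want to show it simplifies to $\Comm{\Eld{\alpha}{t}{i}}{\Eld{\beta+2\psi}{-2uv}{j+k}}$. The strategy here is to collect the $\Rt{\beta+2\psi}$-residues past the central $\Rt{\alpha}$-letters, using $\Comm{\Eld{\alpha}{\pm t}{i}}{\Eld{\beta+2\psi}{\pm uv}{j+k}}$-commutators as correction terms; this converts each $\Rt{\alpha}$-$\Rt{\beta+2\psi}$ swap into a $\Comm{\Rt{\alpha}}{\Rt{\beta+2\psi}}$-element. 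The hypothesis of the proposition is precisely what allows us to slide these $\Comm{\Rt{\alpha}}{\Rt{\beta+2\psi}}$-correction terms past the $\Rt{\beta+\psi}$-factors in the junk for free; combined with the inverse-doubling laws for $\Comm{\Rt{\alpha}}{\Rt{\beta+2\psi}}$ (\Cnref{rel:b3-large:inv-doub:lift:alpha:beta+2psi}), all but one $\Comm{\Rt{\alpha}}{\Rt{\beta+2\psi}}$-term cancels (after also using \Cnref{rel:b3-large:sub:lin:beta+psi} and \Cnref{rel:b3-large:sub:inv:beta+psi} to collapse the now-adjacent $\Rt{\beta+\psi}$-letters, and \Cnref{rel:b3-large:sub:comm:self:alpha} to collapse the $\Rt{\alpha}$-letters), leaving exactly $\Comm{\Eld{\alpha}{t}{i}}{\Eld{\beta+2\psi}{-2uv}{j+k}}$, with the $-2uv$ coefficient arising from combining the $\pm uv$ contributions of all the $\Rt{\psi}$-through-$\Rt{\beta+\psi}$ swaps.

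The main obstacle I anticipate is bookkeeping: five $\Rt{\psi}$-through-$\Rt{\beta+\psi}$ swaps each generate a $\Rt{\beta+2\psi}$-residue with signs and factors of $2$ that must all be tracked exactly, and the order in which the $\Comm{\Rt{\alpha}}{\Rt{\beta+2\psi}}$-correction terms are introduced matters until the hypothesis is invoked. A minor technical point is that the hypothesis is stated for specific degrees $(i, j, j+k)$, and one should double-check that exactly those degree-triples appear in the $\Comm{\Rt{\alpha}}{\Rt{\beta+2\psi}}$-corrections that need to be moved past $\Rt{\beta+\psi}$-elements (they do: the $\Rt{\alpha}$-elements in the expansion all have degree $i$, the $\Rt{\beta+\psi}$-elements all have degree $j$, and each $\Rt{\beta+2\psi}$-residue has degree $j+k$). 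Once these match up, the whole computation is a fairly direct — if lengthy — exercise in commutator calculus, analogous in spirit to the proof of \Cnref{rel:b3-small:interchange:beta+psi+omega} in the small-link case.
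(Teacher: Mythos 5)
Your overall strategy — expand $\Eld{\alpha+\beta+\psi}{tu}{i+j}$ via the $\Rt{\alpha}$-and-$\Rt{\beta+\psi}$ form from \Cnref{prop:b3-large:est:alpha+beta+psi}, slide $\Eld{\psi}{v}{k}$ through to create $\Rt{\beta+2\psi}$-residues, convert $\Rt{\alpha}$-vs-$\Rt{\beta+2\psi}$ swaps into $\Comm{\Rt{\alpha}}{\Rt{\beta+2\psi}}$-correction terms, use the hypothesis to move those correction terms out of the way, and combine with \Cnref{rel:b3-large:inv-doub:lift:alpha:beta+2psi} — is the paper's approach. But your sketch has one omission that would matter if you tried to execute it: when the $\Comm{\Rt{\alpha}}{\Rt{\beta+2\psi}}$-correction terms are collected to one end of the word, the hypothesis only handles moving them past $\Rt{\beta+\psi}$-letters; they also have to pass the intervening $\Rt{\alpha}$- and $\Rt{\psi}$-letters, for which the paper invokes the already-established \Cnref{rel:b3-large:rough:comm:alpha:alpha:beta+2psi} and \Cnref{rel:b3-large:rough:comm:psi:alpha:beta+2psi}. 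The hypothesis alone is not enough.

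Two further bookkeeping notes. First, the paper does not expand the full four-factor commutator and reduce to the identity; it just takes the two-letter product $\Eld{\alpha+\beta+\psi}{tu}{i+j}\Eld{\psi}{v}{k}$ and rewrites it as $\Comm{\Eld{\alpha}{t}{i}}{\Eld{\beta+2\psi}{-2uv}{j+k}}\Eld{\psi}{v}{k}\Eld{\alpha+\beta+\psi}{tu}{i+j}$, reassembling the $\Rt{\alpha}$- and $\Rt{\beta+\psi}$-letters back into the original $\Rt{\alpha+\beta+\psi}$-element at the end rather than collapsing them via \Cnref{rel:b3-large:sub:inv:beta+psi} and self-commutation; this roughly halves the computation relative to what you describe. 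Second, your count of ``five $\Rt{\psi}$-through-$\Rt{\beta+\psi}$ swaps'' is off: each $\Rt{\alpha+\beta+\psi}$-expansion contains exactly three $\Rt{\beta+\psi}$-factors, so there are three residue-producing swaps (with $\Rt{\beta+2\psi}$-coefficients $uv$, $-2uv$, $uv$), not five, and two $\Comm{\Rt{\alpha}}{\Rt{\beta+2\psi}}$-terms that are doubled rather than ``all but one cancelling.''
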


\begin{proof}
    We need that by \Cnref{eq:comm:right:str}, \Cnref{rel:b3-large:sub:inv:alpha}, and \Cnref{rel:b3-large:sub:inv:beta+2psi}:
    \begin{equation}\label{subeq:b3-large:comm:edge:alpha+beta+psi:psi:2}
        \Eld{\alpha}{-t}{i} \Eld{\beta+2\psi}{uv}{j+k} = \Eld{\beta+2\psi}{uv}{j+k} \Eld{\alpha}{-t}{i} \Comm{\Eld{\alpha}{t}{i}}{\Eld{\beta+2\psi}{-uv}{j+k}},
    \end{equation}
    and by \Cnref{eq:comm:left:inv} and \Cnref{rel:b3-large:inv-doub:lift:alpha:beta+2psi}:
    \begin{multline}\label{subeq:b3-large:comm:edge:alpha+beta+psi:psi:3}
        \Eld{\beta+2\psi}{uv}{j+k} \Eld{\alpha}{t}{i} = \Comm{\Eld{\alpha}{t}{i}}{\Eld{\beta+2\psi}{uv}{j+k}}^{-1} \Eld{\alpha}{t}{i} \Eld{\beta+2\psi}{uv}{j+k} = \Comm{\Eld{\alpha}{t}{i}}{\Eld{\beta+2\psi}{-uv}{j+k}} \Eld{\alpha}{t}{i} \Eld{\beta+2\psi}{uv}{j+k}.
    \end{multline}

    Now, we write a product of $\Rt{\alpha+\beta+\psi}$ and $\Rt{\psi}$ elements:
    \begin{align*}
    & \asite{\Eld{\alpha+\beta+\psi}{tu}{i+j}} \Eld{\psi}{v}{k} \\
    \intertext{Expanding the $\Rt{\alpha+\beta+\psi}$ element into a product of $\Rt{\alpha}$ and $\Rt{\beta+\psi}$ elements (\Cnref{prop:b3-large:est:alpha+beta+psi}):}
    &= \Eld{\beta+\psi}{-u/2}{j} \Eld{\alpha}{t}{i} \Eld{\beta+\psi}{u}{j} \Eld{\alpha}{-t}{i} \Eld{\beta+\psi}{-u/2}{j} \asite{\Eld{\psi}{v}{k}}. \\
    \intertext{Commuting the $\Rt{\psi}$ element on the right fully to the left creates no commutators with the $\Rt{\alpha}$ elements (\Cnref{rel:b3-large:sub:comm:alpha:psi}) and $\Rt{\beta+2\psi}$ commutators with the $\Rt{\beta+\psi}$ elements (\Cnref{rel:b3-large:order:beta+psi:psi}):}
    &= \Eld{\psi}{v}{k} \Eld{\beta+\psi}{-u/2}{j} \asite{\Eld{\beta+2\psi}{uv}{j+k} \Eld{\alpha}{t}{i}} \Eld{\beta+2\psi}{-2uv}{j+k} \Eld{\beta+\psi}{u}{j} \asite{\Eld{\alpha}{-t}{i} \Eld{\beta+2\psi}{uv}{j+k}} \Eld{\beta+\psi}{-u/2}{j}. \\
    \intertext{Moving the $\Rt{\beta+2\psi}$ elements closer together by introducing generic commutators (\Cref{subeq:b3-large:comm:edge:alpha+beta+psi:psi:2,subeq:b3-large:comm:edge:alpha+beta+psi:psi:3}):}
    &= \Eld{\psi}{v}{k} \Eld{\beta+\psi}{-u/2}{j} \Comm{\Eld{\beta+2\psi}{uv}{j+k}}{\Eld{\alpha}{t}{i}} \Eld{\alpha}{t}{i} \asite{\Eld{\beta+2\psi}{uv}{j+k} \Eld{\beta+2\psi}{-2uv}{j+k}} \\
    & \hspace{1in} \cdot \asite{\Eld{\beta+\psi}{u}{j} \Eld{\beta+2\psi}{uv}{j+k}} \Eld{\alpha}{-t}{i} \Comm{\Eld{\alpha}{t}{i}}{\Eld{\beta+2\psi}{-uv}{j+k}} \Eld{\beta+\psi}{-u/2}{j}. \\
    \intertext{Now we commute the $\Rt{\beta+2\psi}$ elements across the central $\Rt{\beta+\psi}$ element (\Cnref{rel:b3-large:sub:comm:beta+psi:beta+2psi}) and therefore cancel them (\Cnref{rel:b3-large:sub:inv:beta+2psi}):}
    &= \Eld{\psi}{v}{k} \Eld{\beta+\psi}{-u/2}{j} \asite{\Comm{\Eld{\alpha}{t}{i}}{\Eld{\beta+2\psi}{-uv}{j+k}}} \Eld{\alpha}{t}{i} \Eld{\beta+\psi}{u}{j} \Eld{\alpha}{-t}{i} \asite{\Comm{\Eld{\alpha}{t}{i}}{\Eld{\beta+2\psi}{-uv}{j+k}}} \Eld{\beta+\psi}{-u/2}{j}. \\
    \intertext{Finally, we commute the $\Comm{\Rt{\alpha}}{\Rt{\beta+2\psi}}$ elements to the left using the assumption for $\Rt{\beta+\psi}$ elements, \Cnref{rel:b3-large:rough:comm:alpha:alpha:beta+2psi} for $\Rt{\alpha}$ elements, and \Cnref{rel:b3-large:rough:comm:psi:alpha:beta+2psi} for $\Rt{\psi}$ elements:}
    &= \asite{\Comm{\Eld{\alpha}{t}{i}}{\Eld{\beta+2\psi}{-uv}{j+k}} \Comm{\Eld{\alpha}{t}{i}}{\Eld{\beta+2\psi}{-uv}{j+k}}} \Eld{\psi}{v}{k} \asite{\Eld{\beta+\psi}{-u/2}{j} \Eld{\alpha}{t}{i} \Eld{\beta+\psi}{u}{j} \Eld{\alpha}{-t}{i}  \Eld{\beta+\psi}{-u/2}{j}} \\
    \intertext{Reducing the doubled $\Comm{\Rt{\alpha}}{\Rt{\beta+2\psi}}$ and $\Rt{\alpha+\beta+\psi}$ (\Cnref{rel:b3-large:inv-doub:lift:alpha:beta+2psi} and \Cnref{prop:b3-large:est:alpha+beta+psi}):}
    &= \Comm{\Eld{\alpha}{t}{i}}{\Eld{\beta+2\psi}{-2uv}{j+k}} \Eld{\psi}{v}{k} \Eld{\alpha+\beta+\psi}{tu}{i+j},
    \end{align*}
    as desired.
\end{proof}

\begin{relation}[\RelNameBLg\RelNamePart\newedgeA:\RelNameInterchange{\Rt{\alpha+\beta+2\psi}}]\label{cor:b3-large:comm:alpha+beta+psi:psi:A}
    \begin{align*}
        \Quant{t,u,v \in \LiftScalars} \Comm{\Eld{\alpha+\beta+\psi}{tu}{0}}{\Eld{\psi}{v}{1}} &= \Comm{\Eld{\alpha}{t}{0}}{\Eld{\beta+2\psi}{-2uv}{1}}.
    \end{align*}
\end{relation}

\begin{proof}
        We apply the previous proposition (\Cnref{rel:b3-large:comm:edge:alpha+beta+psi:psi}) with  $i=0, j = 0, k=1$. This gives the desideratum if we can show that:
        \[
        \Quant{t,u,v \in \LiftScalars} \Comm{\Eld{\beta+\psi}{t}{0}}{\Comm{\Eld{\alpha}{u}{0}}{\Eld{\beta+2\psi}{v}{1}}} = \Id.
        \]
        Since
        \[
        \Quant{u,v \in \LiftScalars} \Comm{\Eld{\alpha}{u}{0}}{\Eld{\beta+2\psi}{v}{1}} = \Comm{\Eld{\alpha}{u}{1}}{\Eld{\beta+2\psi}{v}{0}}
        \]
        by \Cnref{rel:b3-large:lift:alpha+beta+2psi} (cf. \Cref{fig:b3-large:def:alpha+beta+2psi}), this follows from homogeneous lifting (\Cnref{rel:b3-large:comm:lift:beta+psi:alpha:beta+2psi} with $i=1,j=0,k=0$).
\end{proof}

\paragraph{The ``\newedgeB'' edge}

To get the ``\newedgeB'' edge, we want to use \Cnref{rel:b3-large:comm:edge:alpha+beta+psi:psi} with $i=0, j = 2, k = 0$. To do this, we need the condition:
\[
\Quant{t,u,v \in \LiftScalars} \Comm{\Eld{\beta+\psi}{t}2}{\Comm{\Eld{\alpha}{u}{0}}{\Eld{\beta+2\psi}{v}{2}}} = \Id.
\]
This doesn't follow directly from lifting (i.e., \Cnref{rel:b3-large:comm:lift:beta+psi:alpha:beta+2psi}). Instead, we use:

\begin{proposition}[Sufficient conditions for commutator of $\Rt{\beta+\psi}$ and $\Comm{\Rt{\alpha}}{\Rt{\beta+2\psi}}$]\label{rel:b3-large:comm:case:beta+psi:alpha:beta+2psi}
    Let $i\in[2],j\in[1],k\in[3]$. Suppose that:
    \[
    \Quant{t,u \in \LiftScalars} \Comm{\Eld{\alpha+\beta+\psi}{t}{i+j}}{\Eld{\beta+2\psi}{u}{k}} = \Id.
    \]
    Then:
    \[
    \Quant{t,u,v \in \LiftScalars} \Comm{\Eld{\beta+\psi}{t}{i}}{\Comm{\Eld{\alpha}{u}{j}}{\Eld{\beta+2\psi}{v}{k}}} = \Id.
    \]
\end{proposition}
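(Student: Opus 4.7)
The plan is to reduce the triple commutator $\Comm{\Eld{\beta+\psi}{t}{i}}{\Comm{\Eld{\alpha}{u}{j}}{\Eld{\beta+2\psi}{v}{k}}}$ to a commutator that either vanishes by the hypothesis or by a previously established lifted relation. My first step would be to exploit the fact that $\Eld{\beta+\psi}{t}{i}$ commutes with $\Eld{\beta+2\psi}{v}{k}$ (\Cnref{rel:b3-large:sub:comm:beta+psi:beta+2psi}), so that conjugating the inner commutator by $X := \Eld{\beta+\psi}{t}{i}$ only affects its $\Rt{\alpha}$ factors. Writing $A := \Eld{\alpha}{u}{j}$, $B := \Eld{\beta+2\psi}{v}{k}$, and $C := \Comm{X}{A}$, the desired equality $X \Comm{A}{B} X^{-1} = \Comm{A}{B}$ becomes $\Comm{C \cdot A}{B} = \Comm{A}{B}$; expanding via the left-string identity \Cref{eq:comm:left:str} then reduces the problem to showing $C$ commutes with both $B$ and $\Comm{A}{B}$.

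The key structural observation is that $C = \Comm{\Eld{\beta+\psi}{t}{i}}{\Eld{\alpha}{u}{j}}$, by \Cnref{rel:b3-large:rough:comm:alpha:beta+psi}, decomposes as an $\Rt{\alpha+\beta+\psi}$ element of degree precisely $i+j$ multiplied by a correction factor of the form $\Comm{\Rt{\alpha+\beta+\psi}}{\Rt{\beta+\psi}}$. The leading $\Rt{\alpha+\beta+\psi}$ piece commutes with $B$ by the degree-matched hypothesis, which accounts for the main use of the assumption. For the correction, I would rewrite its inner $\Rt{\alpha+\beta+\psi}$ via \Cnref{prop:b3-large:est:alpha+beta+psi} into a commutator of $\Rt{\alpha}$ and $\Rt{\beta+\psi}$ elements, turning the correction into a nested commutator that can be analyzed using the lifted relations \Cnref{rel:b3-large:comm:lift:beta+psi:alpha:beta+2psi} and \Cnref{rel:b3-large:comm:raw:lift:beta+psi:alpha:beta+2psi}, together with Hall--Witt-style rearrangements that route interactions with $\Rt{\beta+2\psi}$ through the already-established parts of the calculus. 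In parallel, the required commutation of $C$ with $\Comm{A}{B}$ is handled by a second application of the same machinery, using \Cnref{rel:b3-large:rough:comm:psi:alpha:beta+2psi} to dispose of the $\Rt{\psi}$-type subfactors.

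The main obstacle is that the correction terms carry abstract type $\Rt{\alpha+2\beta+2\psi}$, a root whose commutation behavior has not yet been established at this point in the development. To push through, one must keep the corrections strictly in commutator form and invoke only the lifted identities that are available (e.g.\ \Cnref{rel:b3-large:inv-doub:raw:lift:alpha:beta+2psi}, \Cnref{rel:b3-large:raw:lift:alpha+2beta+2psi}), never trying to simplify them to a canonical form. Ensuring that the residual commutators discharge uniformly across all $(i,j,k) \in [2] \times [1] \times [3]$, and in particular the boundary case $(i,j,k) = (2,0,2)$ needed for the \newedgeB{} edge (where a single application of \Cnref{rel:b3-large:comm:lift:beta+psi:alpha:beta+2psi} is insufficient, forcing genuine use of the hypothesis), will be the delicate technical core of the proof.
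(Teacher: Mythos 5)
Your plan correctly identifies the central tool (\Cnref{rel:b3-large:rough:comm:alpha:beta+psi}), correctly diagnoses that the correction term must be kept in commutator form because $\Rt{\alpha+2\beta+2\psi}$ is not yet established, and correctly sees that the hypothesis is what makes the degree-matched $\Rt{\alpha+\beta+\psi}$-vs-$\Rt{\beta+2\psi}$ commutation go through. But there are two genuine problems, one structural and one a serious gap.

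\textbf{The structural problem: wrong factorization of $XAX^{-1}$.} You write $XAX^{-1} = CA$ with $C := \Comm{X}{A}$, which turns the goal into $\Comm{CA}{B} = \Comm{A}{B}$ and, as you note, requires showing \emph{two} things: $C$ commutes with $B$ \emph{and} $C$ commutes with $\Comm{A}{B}$. The paper instead effectively uses the factorization $XAX^{-1} = AC'$ with $C' := \Comm{A^{-1}}{X}$ (equivalently: it pushes $\Eld{\beta+\psi}{t}{i}$ past each symbol of the expanded $\Comm{A}{B}$ one at a time). Then $\Comm{AC'}{B} = AC'BC'^{-1}A^{-1}B^{-1}$, and if $C'$ commutes with $B$ this collapses directly to $\Comm{A}{B}$ --- a single condition, with no residual $\Comm{C}{\Comm{A}{B}}$ to dispose of. Choosing the ``commutator on the right'' side eliminates the second condition entirely.

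\textbf{The gap: the second condition is circular.} Having saddled yourself with the extra requirement that $C = \Comm{\Eld{\beta+\psi}{t}{i}}{\Eld{\alpha}{u}{j}}$ commutes with $\Comm{A}{B} = \Comm{\Eld{\alpha}{u}{j}}{\Eld{\beta+2\psi}{v}{k}}$, you propose to discharge it by decomposing $C$'s $\Rt{\alpha+\beta+\psi}$ piece and invoking \Cnref{rel:b3-large:rough:comm:psi:alpha:beta+2psi} for $\Rt{\psi}$-type factors. But the decomposition of $\Eld{\alpha+\beta+\psi}{\cdot}{i+j}$ via \Cnref{prop:b3-large:est:alpha+beta+psi} (into $\Rt{\alpha}$ at degree $j$ and $\Rt{\beta+\psi}$ at degree $i$) produces $\Rt{\beta+\psi}$-type factors whose commutation with $\Comm{\Eld{\alpha}{\cdot}{j}}{\Eld{\beta+2\psi}{\cdot}{k}}$ is \emph{precisely the statement you are trying to prove}. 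The alternative decomposition (into $\Rt{\psi}$ and $\Rt{\alpha+\beta}$ factors) fares no better: $\Rt{\alpha+\beta}$'s commutation with $\Comm{\Rt{\alpha}}{\Rt{\beta+2\psi}}$ is not available at this point in the development, and further expanding $\Rt{\alpha+\beta}$ into $\Rt{\alpha}$ and $\Rt{\beta}$ leaves a $\Rt{\beta}$-vs-$\Comm{\Rt{\alpha}}{\Rt{\beta+2\psi}}$ commutation that is likewise not yet established. The lifted relations and ``Hall--Witt-style rearrangements'' you allude to do not obviously close this loop.

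\textbf{A smaller overcomplication.} Even for the first condition (correction term commutes with $B$), you propose to re-expand the correction's inner $\Rt{\alpha+\beta+\psi}$ and invoke lifted relations. None of that is needed: the correction is already a commutator of an $\Rt{\alpha+\beta+\psi}$ element at degree $i+j$ (commutes with $\Eld{\beta+2\psi}{\cdot}{k}$ by the hypothesis) and a $\Rt{\beta+\psi}$ element (commutes with $\Eld{\beta+2\psi}{\cdot}{k}$ by the in-subgroup relation \Cnref{rel:b3-large:sub:comm:beta+psi:beta+2psi}); a commutator of two things that each commute with a given element also commutes with it. That is the entire content of the paper's final step, and it uses no lifting at all.

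\end{document}
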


\begin{proof}
    From \Cnref{eq:comm:mid:inv} and \Cnref{eq:comm:left:inv}, we have:
    \begin{equation}\label{subeq:b3-large:comm:case:beta+psi:alpha:beta+2psi:1}
        \Eld{\beta+\psi}{t}{i} \Eld{\alpha}{\vu}{j} = \Eld{\alpha}{\vu}{j} \asite{\Comm{\Eld{\alpha}{-\vu}{j}}{\Eld{\beta+\psi}{t}{i}}} \Eld{\beta+\psi}{t}{i} = \asite{\Comm{\Eld{\alpha}{\vu}{j}}{\Eld{\beta+\psi}{t}{i}}^{-1}} \Eld{\alpha}{\vu}{j} \Eld{\beta+\psi}{t}{i}.
    \end{equation}

    Now, we write a product of $\Rt{\beta+\psi}$ and $\Comm{\Rt{\alpha}}{\Rt{\beta+2\psi}}$ elements:
    \begin{align*}
        &\Eld{\beta+\psi}{t}{i} \asite{\Comm{\Eld{\alpha}{u}{j}}{\Eld{\beta+2\psi}{v}{k}}} \\
        \intertext{Expanding the commutator with \Cnref{rel:b3-large:sub:inv:alpha} and \Cnref{rel:b3-large:sub:inv:beta+2psi}:}
        &= \asite{\Eld{\beta+\psi}{t}{i}} \Eld{\alpha}{u}{j} \Eld{\beta+2\psi}{v}{k} \Eld{\alpha}{-u}{j} \Eld{\beta+2\psi}{-v}{k}. \\
        \intertext{Now moving the $\Rt{\beta+\psi}$ fully from left to right creates generic commutators with the $\Rt{\alpha}$ elements (\Cref{subeq:b3-large:comm:case:beta+psi:alpha:beta+2psi:1}) and no commutators with the $\Rt{\beta+2\psi}$ elements (\Cnref{rel:b3-large:sub:comm:beta+psi:beta+2psi}):}
        &= \Eld{\alpha}{u}{j} \asite{\Comm{\Eld{\alpha}{-u}{j}}{\Eld{\beta+\psi}{t}{i}} \Eld{\beta+2\psi}{v}{k} \Comm{\Eld{\alpha}{-u}{j}}{\Eld{\beta+\psi}{t}{i}}^{-1}} \Eld{\alpha}{-u}{j} \Eld{\beta+2\psi}{-v}{k} \Eld{\beta+\psi}{t}{i}. \\
    \end{align*}
    Thus, it suffices to show
    \[
    \Comm{\Eld{\alpha}{-u}{j}}{\Eld{\beta+\psi}{t}{i}} \Eld{\beta+2\psi}{v}{k} \Comm{\Eld{\alpha}{-u}{j}}{\Eld{\beta+\psi}{t}{i}}^{-1} = \Eld{\beta+2\psi}{v}{k}.
    \]
    This is rearranges to the statement that $\Comm{\Eld{\alpha}{-u}{j}}{\Eld{\beta+\psi}{t}{i}}$ and $\Eld{\beta+2\psi}{v}{k}$ commute. By \Cnref{rel:b3-large:rough:comm:alpha:beta+psi}, \[
    \Comm{\Eld{\alpha}{-u}{j}}{\Eld{\beta+\psi}{t}{i}} = \Eld{\alpha+\beta+\psi}{-tu}{i+j} \Comm{\Eld{\alpha+\beta+\psi}{tu}{i+j}}{\Eld{\beta+\psi}{t/2}{j}}.
    \]
    Indeed, the $\Rt{\beta+2\psi}$ element commutes with the $\Rt{\alpha+\beta+\psi}$ elements by the assumption and the $\Rt{\beta+\psi}$ elements by \Cnref{rel:b3-large:sub:comm:beta+psi:beta+2psi}.
\end{proof}

\begin{relation}[\RelNameBLg\RelNamePart\RelNameCommute{\Rt{\beta+\psi}}{\Comm{\Rt{\alpha}}{\Rt{\beta+2\psi}}}]\label{rel:b3-large:comm:case:beta+psi:alpha:beta+2psi:202}
    \[
    \Quant{t,u,v \in \LiftScalars} \Comm{\Eld{\beta+\psi}{t}2}{\Comm{\Eld{\alpha}{u}{0}}{\Eld{\beta+2\psi}{v}{2}}} = \Id.
    \]
\end{relation}

\begin{proof}
    We apply the previous proposition (\Cnref{rel:b3-large:comm:edge:alpha+beta+psi:psi}) with $i=2,j=0,k=2$. For this, we need:
    \[
    \Comm{\Eld{\alpha+\beta+\psi}{t}{2}}{\Eld{\beta+2\psi}{u}{2}} = \Id
    \]
    which follows from homogeneous lifting (\Cnref{rel:b3-large:comm:lift:beta+2psi:alpha+beta+psi}), setting $i=1,j=0,k=1$.
\end{proof}

\begin{relation}[\RelNameBLg\RelNamePart\newedgeB:\RelNameInterchange{\Rt{\alpha+\beta+2\psi}}]\label{cor:b3-large:comm:alpha+beta+psi:psi:B}
    \[
    \Quant{t,u,v \in \LiftScalars} \Comm{\Eld{\alpha+\beta+\psi}{tu}{2}}{\Eld{\psi}{v}{0}} = \Comm{\Eld{\alpha}{t}{0}}{\Eld{\beta+2\psi}{-2uv}{2}}.
    \]
\end{relation}

\begin{proof}
    As stated above, we apply \Cnref{rel:b3-large:comm:case:beta+psi:alpha:beta+2psi:202} with $i=0,j=2,k=0$; the necessary condition is provided by \Cnref{rel:b3-large:comm:case:beta+psi:alpha:beta+2psi:202}.
\end{proof}

\paragraph{Conclusions}

\begin{bigproposition}[Establishing $\Rt{\alpha+\beta+2\psi}$]\label{prop:b3-large:est:alpha+beta+2psi}
    There exist elements $\Eld{\alpha+\beta+2\psi}{t}{i}$ for $t \in \LiftScalars$ and $i \in [4]$ such that the following hold:
    \begin{gather*}
        \DQuant{i \in [1], j \in [3]}{t,u \in \LiftScalars} \Comm{\Eld{\alpha}{t}{i}}{\Eld{\beta+2\psi}{u}{j}} = \Eld{\alpha+\beta+2\psi}{tu}{i+j}, \\
        \DQuant{i \in [1], j \in [3]}{t,u \in \LiftScalars} \Comm{\Eld{\alpha+\beta+\psi}{u}{i}}{\Eld{\psi}{t}{j}} = \Eld{\alpha+\beta+2\psi}{-2tu}{i+j}.
    \end{gather*}
\end{bigproposition}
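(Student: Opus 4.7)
The plan is to follow the same template used earlier for \Cref{prop:a3:est:alpha+beta+gamma} and \Cref{prop:b3-large:est:alpha+beta+psi}: define the elements $\Eld{\alpha+\beta+2\psi}{s}{h}$ by appealing to \Cref{prop:prelim:equal-conn} applied to the bipartite ``interchange'' graph drawn in \Cref{fig:b3-large:def:alpha+beta+2psi}, and then verify that each height-block of that graph is connected. Since the two asserted identities merely say that any two ``names'' of the same degree agree, establishing connectivity in every block gives a single well-defined element for each degree $h \in [4]$, and then both identities fall out of tracing a path in the corresponding block.

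First I would enumerate the edges coming from homogeneous lifting (\Cnref{rel:b3-large:lift:alpha+beta+2psi}), which produces one edge $(i+j+k,k)_L \sim (i,j+2k)_R$ for each $(i,j,k) \in [1]^3$. A direct tabulation shows that these solid edges already connect the height-$0$ and height-$4$ blocks (which are singletons on each side), and they supply a spanning path in the height-$2$ sub-block on the right (the edges $(1,1)_L-(0,2)_R$ and $(2,0)_L-(1,1)_R$). However they fail to connect $(0,1)_L$ in the height-$1$ block, fail to connect $(3,0)_L$ in the height-$3$ block, and leave the height-$2$ block as two disjoint edges.

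Next I would patch these three gaps using the two ``extra'' relations we proved above. The edge $(0,1)_L-(0,1)_R$ supplied by \Cnref{cor:b3-large:comm:alpha+beta+psi:psi:A} (together with its pairwise ``mirror'' $(3,0)_L-(1,2)_R$ under $i \mapsto \height - i$, which is implicit in our convention that relations are stated up to this symmetry) simultaneously repairs heights $1$ and $3$; and the edge $(2,0)_L-(0,2)_R$ supplied by \Cnref{cor:b3-large:comm:alpha+beta+psi:psi:B} fuses the two components of the height-$2$ block. At this point every block of \Cref{fig:b3-large:def:alpha+beta+2psi} is connected, so \Cref{prop:prelim:equal-conn} lets us define $\Eld{\alpha+\beta+2\psi}{s}{h}$ unambiguously and both identities of the proposition are simultaneously true by construction.

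There is no real ``hard step'' left, since the genuinely nontrivial work has already been isolated into \Cnref{cor:b3-large:comm:alpha+beta+psi:psi:A} and \Cnref{cor:b3-large:comm:alpha+beta+psi:psi:B}; by contrast with the easier $A_3$ and small-link $B_3$ cases, where pure lifting sufficed, here we absolutely needed those two ``partial interchange'' relations to cover the degrees that homogeneous lifting misses (the telltale sign being the gray/darker vertices in \Cref{fig:b3-large:def:alpha+beta+2psi}). The only care required in writing up the proposal is to confirm that the $8$ homogeneous-lift edges, the two \newedgeA edges, and the \newedgeB edge together span each of the five height-blocks, which is a finite graph check.
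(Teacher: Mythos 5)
Your proposal matches the paper's proof essentially verbatim: the paper also appeals to \Cref{prop:prelim:equal-conn}, the eight homogeneous-lift edges from \Cnref{rel:b3-large:lift:alpha+beta+2psi}, the edge from \Cnref{cor:b3-large:comm:alpha+beta+psi:psi:A} (and its degree-reversal mirror, implicit by the paper's stated convention), and the edge from \Cnref{cor:b3-large:comm:alpha+beta+psi:psi:B}, and simply asserts that the five height-blocks of \Cref{fig:b3-large:def:alpha+beta+2psi} are now connected. The only blemish is your phrase that the lifted edges ``supply a spanning path in the height-$2$ sub-block,'' which contradicts your (correct) next sentence that this block is two disjoint edges until the \newedgeB{} edge merges them; this is a slip in wording, not in substance.
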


\begin{proof}
    Uses \Cref{prop:prelim:equal-conn} and \Cnref{rel:b3-large:lift:alpha+beta+2psi}, \Cnref{cor:b3-large:comm:alpha+beta+psi:psi:A}, and \Cnref{cor:b3-large:comm:alpha+beta+psi:psi:B} (i.e., the blocks in \Cref{fig:b3-large:def:alpha+beta+2psi} are connected).
\end{proof}

\begin{relation}[\RelNameBLg\RelNameCommute{\Rt{\alpha}}{\Rt{\alpha+\beta+2\psi}}]\label{rel:b3-large:comm:alpha:alpha+beta+2psi}
        \[
    \DQuant{i \in [1], j \in [4]}{t,u \in \LiftScalars} \Comm{\Eld{\alpha}{t}{i}}{\Eld{\alpha+\beta+2\psi}{u}{j}} = \Id.
    \]
\end{relation}

\begin{proof}
Follows from \Cnref{rel:b3-large:rough:comm:alpha:alpha+beta+psi:psi} and \Cnref{prop:b3-large:est:alpha+beta+2psi}.
\end{proof}

\begin{relation}[\RelNameBLg\RelNameCommute{\Rt{\psi}}{\Rt{\alpha+\beta+2\psi}}]\label{rel:b3-large:comm:psi:alpha+beta+2psi}
        \[
    \DQuant{i \in [1], j \in [4]}{t,u \in \LiftScalars} \Comm{\Eld{\psi}{t}{i}}{\Eld{\alpha+\beta+2\psi}{u}{j}} = \Id.
    \]
\end{relation}

\begin{proof}
    Follows from \Cnref{rel:b3-large:rough:comm:psi:alpha:beta+2psi} and \Cnref{prop:b3-large:est:alpha+beta+2psi}.
\end{proof}

\begin{relation}[\RelNameBLg\RelNameInvDoub{\Rt{\alpha+\beta+2\psi}}]\label{rel:b3-large:inv-doub:alpha+beta+2psi}
    \begin{gather*}
    \DQuant{i \in [4]}{t \in \LiftScalars} \Eld{\alpha+\beta+2\psi}{t}{i} \Eld{\alpha+\beta+2\psi}{-t}{i} = \Id, \\
    \DQuant{i \in [4]}{t \in \LiftScalars} \Eld{\alpha+\beta+2\psi}{t}{i} \Eld{\alpha+\beta+2\psi}{t}{i} = \Eld{\alpha+\beta+2\psi}{2t}{i}.
    \end{gather*}
\end{relation}

\begin{proof}
    Follows from \Cnref{rel:b3-large:inv-doub:lift:alpha:beta+2psi} and \Cnref{prop:b3-large:est:alpha+beta+2psi}.
\end{proof}

\begin{relation}[\RelNameBLg\RelNameCommutator{\Rt{\alpha+\beta}}{\Rt{\psi}}]\label{rel:b3-large:comm:alpha+beta:psi}
        \[
    \DQuant{i \in [2], j \in [1]}{t,u \in \LiftScalars} \Comm{\Eld{\alpha+\beta}{t}{i}}{\Eld{\psi}{u}{j}} = \Eld{\alpha+\beta+\psi}{tu}{i+j} \Eld{\alpha+\beta+2\psi}{tu^2}{i+2j} = \Eld{\alpha+\beta+2\psi}{tu^2}{i+2j} \Eld{\alpha+\beta+\psi}{tu}{i+j}.
    \]
\end{relation}

\begin{proof}
    Follows from \Cnref{rel:b3-large:rough:comm:alpha+beta:psi} and \Cnref{prop:b3-large:est:alpha+beta+2psi}.
\end{proof}

\paragraph{Additional identities}

As a corollary of \Cnref{rel:b3-large:inv-doub:alpha+beta+2psi}, \Cnref{prop:b3-large:est:alpha+beta+2psi}, \Cnref{rel:b3-large:sub:inv:alpha}, \Cnref{rel:b3-large:sub:inv:psi}, \Cnref{rel:b3-large:sub:inv:beta+2psi}, and \Cnref{rel:b3-large:inv-doub:alpha+beta+psi}, we get the expansion equations:
\begin{relation}[\RelNameBLg\RelNameExprn{\Rt{\alpha+\beta+2\psi}}{\Rt{\alpha}}{\Rt{\beta+2\psi}}]\label{rel:b3-large:expr:alpha+beta+2psi:alpha:beta+2psi}
    \begin{multline*}
        \DQuant{i \in [1], j \in [3]}{t,u \in \LiftScalars} \Eld{\alpha+\beta+2\psi}{tu}{i+j} = \Eld{\alpha}{t}{i} \Eld{\beta+2\psi}{u}{j} \Eld{\alpha}{-t}{i} \Eld{\beta+2\psi}{-u}{j} \\
        = \Eld{\beta+2\psi}{-u}{j} \Eld{\alpha}{t}{i} \Eld{\beta+2\psi}{u}{j} \Eld{\alpha}{-t}{i}.
    \end{multline*}
\end{relation}
and
\begin{relation}[\RelNameBLg\RelNameExprn{\Rt{\alpha+\beta+2\psi}}{\Rt{\psi}}{\Rt{\alpha+\beta+\psi}}]\label{rel:b3-large:expr:alpha+beta+2psi:alpha+beta+psi:psi}
    \begin{multline*}
        \DQuant{i \in [1], j \in [3]}{t,u \in \LiftScalars} \Eld{\alpha+\beta+2\psi}{-2tu}{i+j} = \Eld{\alpha+\beta+\psi}{u}{i} \Eld{\psi}{t}{j}  \Eld{\alpha+\beta+\psi}{-u}{i} \Eld{\psi}{-t}{j} \\
        = \Eld{\psi}{-t}{j}  \Eld{\alpha+\beta+\psi}{u}{i} \Eld{\psi}{t}{j}  \Eld{\alpha+\beta+\psi}{-u}{i}.
    \end{multline*}
\end{relation}

From \Cnref{prop:b3-large:est:alpha+beta+2psi}, \Cnref{eq:comm:left:str}, \Cnref{eq:comm:mid:str}, \Cnref{eq:comm:right:str}, \Cnref{rel:b3-large:sub:inv:alpha}, \Cnref{rel:b3-large:inv-doub:alpha+beta+2psi}, \Cnref{rel:b3-large:sub:inv:beta+2psi} we have:
\begin{relation}[\RelNameBLg\RelNameOrder{\Rt{\alpha}}{\Rt{\beta+2\psi}}]\label{rel:b3-large:order:alpha:beta+2psi}
\begin{multline*}
    \Quant{t,u \in \LiftScalars} \Eld{\alpha}{t}{i} \Eld{\beta+2\psi}{u}{j} = \asite{\Eld{\alpha+\beta+2\psi}{tu}{i+j}} \Eld{\beta+2\psi}{u}{j} \Eld{\alpha}{t}{i} \\ = \Eld{\beta+2\psi}{u}{j} \asite{\Eld{\alpha+\beta+2\psi}{tu}{i+j}} \Eld{\alpha}{t}{i} = \Eld{\beta+2\psi}{u}{j} \Eld{\alpha}{t}{i} \asite{\Eld{\alpha+\beta+2\psi}{tu}{i+j}}
\end{multline*}
\end{relation}

From \Cnref{prop:b3-large:est:alpha+beta+2psi}, \Cnref{rel:b3-large:expr:alpha+beta+2psi:alpha+beta+psi:psi}, \Cnref{eq:comm:mid:inv}, \Cnref{eq:comm:right:inv}, \Cnref{rel:b3-large:sub:inv:psi}, \Cnref{rel:b3-large:inv-doub:alpha+beta+psi}, and \Cnref{rel:b3-large:inv-doub:alpha+beta+2psi}:
\begin{relation}[\RelNameBLg\RelNameOrder{\Rt{\psi}}{\Rt{\alpha+\beta+\psi}}]\label{rel:b3-large:order:psi:alpha+beta+psi}
    \[ \Quant{t,u \in \LiftScalars} \Eld{\psi}{t}{i} \Eld{\alpha+\beta+\psi}{u}{j} = \Eld{\alpha+\beta+\psi}{u}{j} \asite{\Eld{\alpha+\beta+2\psi}{2tu}{i+j}} \Eld{\psi}{t}{i} = \Eld{\alpha+\beta+\psi}{u}{j}  \Eld{\psi}{t}{i} \asite{\Eld{\alpha+\beta+2\psi}{2tu}{i+j}}. \]
\end{relation}
From \Cnref{prop:b3-large:est:alpha+beta+2psi}, \Cnref{eq:comm:left:str}, \Cnref{eq:comm:mid:str}, \Cnref{rel:b3-large:sub:inv:psi}, and \Cnref{rel:b3-large:inv-doub:alpha+beta+psi}:
\begin{relation}[\RelNameBLg\RelNameOrder{\Rt{\alpha+\beta+\psi}}{\Rt{\psi}}]\label{rel:b3-large:order:alpha+beta+psi:psi}
    \[ \Quant{t,u \in \LiftScalars} \Eld{\alpha+\beta+\psi}{u}{j} \Eld{\psi}{t}{i} = \asite{\Eld{\alpha+\beta+2\psi}{-2tu}{i+j}} \Eld{\psi}{t}{i} \Eld{\alpha+\beta+\psi}{u}{j} = \Eld{\psi}{t}{i} \asite{\Eld{\alpha+\beta+2\psi}{-2tu}{i+j}} \Eld{\alpha+\beta+\psi}{u}{j}. \]
\end{relation}

From \Cnref{eq:comm:left:str}, \Cnref{eq:comm:right:str}, \Cnref{eq:comm:mid:str}, \Cnref{rel:b3-large:comm:alpha+beta:psi}, \Cnref{rel:b3-large:sub:inv:alpha+beta}, \Cnref{rel:b3-large:sub:inv:psi}, \Cnref{rel:b3-large:inv-doub:alpha+beta+psi}, and \Cnref{rel:b3-large:inv-doub:alpha+beta+2psi}, we have:
\begin{relation}[\RelNameBLg\RelNameOrder{\Rt{\alpha+\beta}}{\Rt{\psi}}]\label{rel:b3-large:order:alpha+beta:psi}
\begin{multline*}
    \Eld{\alpha+\beta}{t}{i} \Eld{\psi}{u}{j} = \Eld{\psi}{u}{j} \Eld{\alpha+\beta}{t}{i} \asite{\Eld{\alpha+\beta+\psi}{tu}{i+j} \Eld{\alpha+\beta+2\psi}{-tu^2}{i+2j}} = \Eld{\psi}{u}{j} \asite{\Eld{\alpha+\beta+\psi}{tu}{i+j} \Eld{\alpha+\beta+2\psi}{-tu^2}{i+2j}} \Eld{\alpha+\beta}{t}{i} \\
    = \Eld{\psi}{u}{j} \asite{\Eld{\alpha+\beta+2\psi}{-tu^2}{i+2j} \Eld{\alpha+\beta+\psi}{tu}{i+j}} \Eld{\alpha+\beta}{t}{i} = \asite{\Eld{\alpha+\beta+2\psi}{tu^2}{i+2j} \Eld{\alpha+\beta+\psi}{tu}{i+j}} \Eld{\psi}{u}{j}  \Eld{\alpha+\beta}{t}{i}.
\end{multline*}
\end{relation}
and similarly from \Cnref{eq:comm:mid:inv}, \Cnref{rel:b3-large:comm:alpha+beta:psi}, and \Cnref{rel:b3-large:sub:inv:alpha+beta}:
\begin{relation}[\RelNameBLg\RelNameOrder{\Rt{\psi}}{\Rt{\alpha+\beta}}]\label{rel:b3-large:order:psi:alpha+beta}
\[ \Eld{\psi}{u}{j} \Eld{\alpha+\beta}{t}{i} = \Eld{\alpha+\beta}{t}{i} \asite{\Eld{\alpha+\beta+2\psi}{-tu^2}{i+2j} \Eld{\alpha+\beta+\psi}{-tu}{i+j}} \Eld{\psi}{u}{j}. \]
\end{relation}

\subsubsection{Establishing $\alpha+2\beta+2\psi$}

\paragraph{Lifting}

\begin{relation}[\RelNameBLg\RelNameHomLift\RelNameInterchange{\Rt{\alpha+2\beta+2\psi}}]\label{rel:b3-large:lift:alpha+2beta+2psi}
    \[
    \DQuant{i,j,k \in [1]}{t,u,v \in \LiftScalars} \Comm{\Eld{\alpha+\beta}{t}{i+j}}{\Eld{\beta+2\psi}{2uv}{j+2k}} = \Comm{\Eld{\alpha+\beta+\psi}{tu}{i+j+k}}{\Eld{\beta+\psi}{v}{j+k}}
    \]
    and
    \[
    \DQuant{i,j,k \in [1]}{t,u,v \in \LiftScalars} \Comm{\Eld{\alpha+\beta+\psi}{t}{i+j+k}}{\Eld{\beta+\psi}{uv}{j+k}} = \Comm{\Eld{\alpha+\beta+2\psi}{2tu}{i+j+2k}}{\Eld{\beta}{v}{j}}.
    \]
\end{relation}

\begin{proof}
    In both equations, if either $u$ or $v$ is zero, one term in the commutators on both sides is $\Id$ and we have nothing to prove. So, we proceed assuming they are both nonzero.
    
    For the first equation, we use the first equation of \Cnref{rel:b3-large:raw:lift:alpha+2beta+2psi} at $\liftC=(\frac{tu}v,\frac{v}u,u)$ and then apply \Cnref{prop:b3-large:est:alpha+beta+psi}; the coefficients on the $\Rt{\alpha+\beta}$, $\Rt{\beta+2\psi}$, $\Rt{\alpha+\beta+\psi}$, and $\Rt{\beta+\psi}$ elements are \[ (\frac{tu}v)(\frac{v}u) = t, 2(\frac{v}u)(u^2) = 2uv, (\frac{tu}v)(\frac{v}u)(u) = tu, (\frac{v}u)(u) = v, \] respectively.

    For the second equation, we use the second equation of \Cnref{rel:b3-large:raw:lift:alpha+2beta+2psi} at $\liftC=(\frac{t}{uv},v,u)$ and then apply \Cnref{rel:b3-large:expr:alpha+beta+2psi:alpha:beta+2psi}. Then the coefficients on the $\Rt{\alpha+\beta+\psi}$, $\Rt{\beta+\psi}$, $\Rt{\alpha+\beta+2\psi}$, and $\Rt{\beta}$ elements are \[ (\frac{t}{uv})(v)(u) = t, uv, 2(\frac{t}{uv})(v)(u^2) = 2tu, v, \] respectively.
\end{proof}

\begin{figure}
    \centering
    \begin{tikzpicture}
\node[algnode0] (v000) {$(0,0)$};
\node[algnode0] (v010) [below=of v000] {$(1,0)$};
\node[algnode0M] (v001) [below=of v010] {$(0,1)$};
\node[algnode0M] (v020) [below=of v001] {$(2,0)$};
\node[algnode0] (v011) [below=of v020] {$(1,1)$};
\node[algnode0] (v002) [below=of v011] {$(0,2)$};
\node[algnode0] (v021) [below=of v002] {$(2,1)$};
\node[algnode0] (v012) [below=of v021] {$(1,2)$};
\node[algnode0M] (v003) [below=of v012] {$(0,3)$};
\node[algnode0M] (v022) [below=of v003] {$(2,2)$};
\node[algnode0] (v013) [below=of v022] {$(1,3)$};
\node[algnode0] (v023) [below=of v013] {$(2,3)$};

\node[algnode1] (v100) [right=of v000] {$(0,0)$};
\node[algnode1] (v110) [below=of v100] {$(1,0)$};
\node[algnode1M] (v101) [below=of v110] {$(0,1)$};
\node[algnode1M] (v120) [below=of v101] {$(2,0)$};
\node[algnode1] (v111) [below=of v120] {$(1,1)$};
\node[algnode1M] (v102) [below=of v111] {$(0,2)$};
\node[algnode1M] (v130) [below=of v102] {$(3,0)$};
\node[algnode1] (v121) [below=of v130] {$(2,1)$};
\node[algnode1M] (v112) [below=of v121] {$(1,2)$};
\node[algnode1M] (v131) [below=of v112] {$(3,1)$};
\node[algnode1] (v122) [below=of v131] {$(2,2)$};
\node[algnode1] (v132) [below=of v122] {$(3,2)$};

\node[algnode2] (v200) [right=of v100] {$(0,0)$};

\node[algnode2] (v210) [below=of v200] {$(1,0)$};
\node[algnode2M] (v201) [below=of v210] {$(0,1)$};
\node[algdummy] (vdummy1) at ($(v120)!0.5!(v111)$) {};
\node[algnode2] (v220) [below=of v201, right=of vdummy1] {$(2,0)$};
\node[algnode2] (v211) [below=of v220] {$(1,1)$};
\node[algdummy] (vdummy2) at ($(v130)!0.5!(v121)$) {};
\node[algnode2] (v230) [below=of v211, right=of vdummy2] {$(3,0)$};
\node[algnode2] (v221) [below=of v230] {$(2,1)$};
\node[algnode2M] (v240) [below=of v221, right=of v131] {$(4,0)$};
\node[algnode2] (v231) [below=of v240] {$(3,1)$};
\node[algnode2] (v241) [below=of v231, right=of v132] {$(4,1)$};

\draw[algedge] (v121) -- (v230);
\draw[algedge] (v122) -- (v231);
\draw[algedge] (v002) -- (v111);
\draw[algedge] (v110) -- (v210);
\draw[algedge] (v100) -- (v200);
\draw[algedge] (v000) -- (v100);
\draw[algedge] (v010) -- (v110);
\draw[algedge] (v012) -- (v121);
\draw[algedge] (v021) -- (v121);
\draw[algedge] (v023) -- (v132);
\draw[algedge] (v132) -- (v241);
\draw[algedge] (v013) -- (v122);
\draw[algedge] (v111) -- (v220);
\draw[algedge] (v011) -- (v111);
\draw[algedge] (v111) -- (v211);
\draw[algedge] (v121) -- (v221);

\draw[algedge1,<-] (v011) -- node[midway, algedgelabel] {\newedgeA} (v120);
\draw[algedge1,<-] (v002) -- node[midway, algedgelabel] {\newedgeA} (v102);
\draw[algedge1,<-] (v021) -- node[midway, algedgelabel] {\newedgeA} (v130);
\draw[algedge1,<-] (v012) -- node[midway, algedgelabel] {\newedgeA} (v112);

\draw[algedge1,->] (v101) -- node[midway, algedgelabel] {\newedgeB} (v210);
\draw[algedge1,->] (v131) -- node[midway, algedgelabel] {\newedgeB} (v231);

\draw[algedge1,->] (v001) -- node[midway, algedgelabel] {\newedgeC} (v110);
\draw[algedge1,->] (v020) -- node[midway, algedgelabel] {\newedgeC} (v120);
\draw[algedge1,->] (v003) -- node[midway, algedgelabel] {\newedgeC} (v112);
\draw[algedge1,->] (v022) -- node[midway, algedgelabel] {\newedgeC} (v122);

\draw[algedge1,<-] (v101) -- node[midway, algedgelabel] {\newedgeD} (v201);
\draw[algedge1,<-] (v131) -- node[midway, algedgelabel] {\newedgeD} (v240);

\begin{scope}[on background layer]
    \node[algbackh, fit={(v000) (v200)}] {};
    \node[algbackh, fit={(v010) (v201)}] {};
    \node[algbackh, fit={(v020) (v102) (v211)}] {};
    \node[algbackh, fit={(v021) (v112) (v221)}] {};
    \node[algbackh, fit={(v022) (v231)}] {};
    \node[algbackh, fit={(v023) (v241)}] {};
    \node[algback, fit={(v000) (v023)}] (b0) {};
    \node[above=0 of b0] {$\Comm{\Rt{\alpha+\beta}}{\Rt{\beta+2\psi}}$};
    \node[algback, fit={(v100) (v132)}] (b1) {};
    \node[above=0 of b1] {$\Comm{\Rt{\alpha+\beta+\psi}}{\Rt{\beta+\psi}}$};
    \node[algback, fit={(v200) (v241)}] (b2) {};
    \node[above=0 of b2] {$\Comm{\Rt{\alpha+\beta+2\psi}}{\Rt{\beta}}$};
\end{scope}

    \end{tikzpicture}
    \caption{Establishing $\Rt{\alpha+2\beta+2\psi}$: A tripartite graph with left vertex-set $\{0,1,2\}\times\{0,1,2,3\}$, middle vertex-set $\{0,1,2,3\}\times\{0,1,2\}$, and right vertex-set $\{0,1,2,3,4\} \times \{0,1\}$, with a left-middle edge $(i,j) \sim (k,\ell)$ only if $\forall t,u,v \in R, \Comm{\Eld{\alpha+\beta}{t}{i}}{\Eld{\beta+2\psi}{2uv}{j}} =\Comm{\Eld{\alpha+\beta+\psi}{tu}{k}}{\Eld{\beta+\psi}{v}{\ell}}$, and a middle-right edge $(i,j) \sim (k,\ell)$ only if $\forall t,u,v \in R, \Comm{\Eld{\alpha+\beta+\psi}{t}{i}}{\Eld{\beta+\psi}{uv}{j}} = \Comm{\Eld{\alpha+\beta+2\psi}{2tu}{k}}{\Eld{\beta}{v}{\ell}}$. Solid edges are implied by lifting (\Cref{rel:b3-large:raw:lift:alpha+2beta+2psi}). Vertices are darker-colored if they are not touched by solid edges (and therefore not ``reached'' by lifting). For the dotted edges, the ``\newedgeA'', ``\newedgeB'', ``\newedgeC'', and ``\newedgeD'' edges are proven in \Cref{cor:b3-large:expr:alpha+2beta+2psi:A}, \Cref{cor:b3-large:expr:alpha+2beta+2psi:B}, \Cref{cor:b3-large:expr:alpha+2beta+2psi:C}, and \Cref{cor:b3-large:expr:alpha+2beta+2psi:D}, respectively.}\label{fig:b3-large:def:alpha+2beta+2psi}
\end{figure}
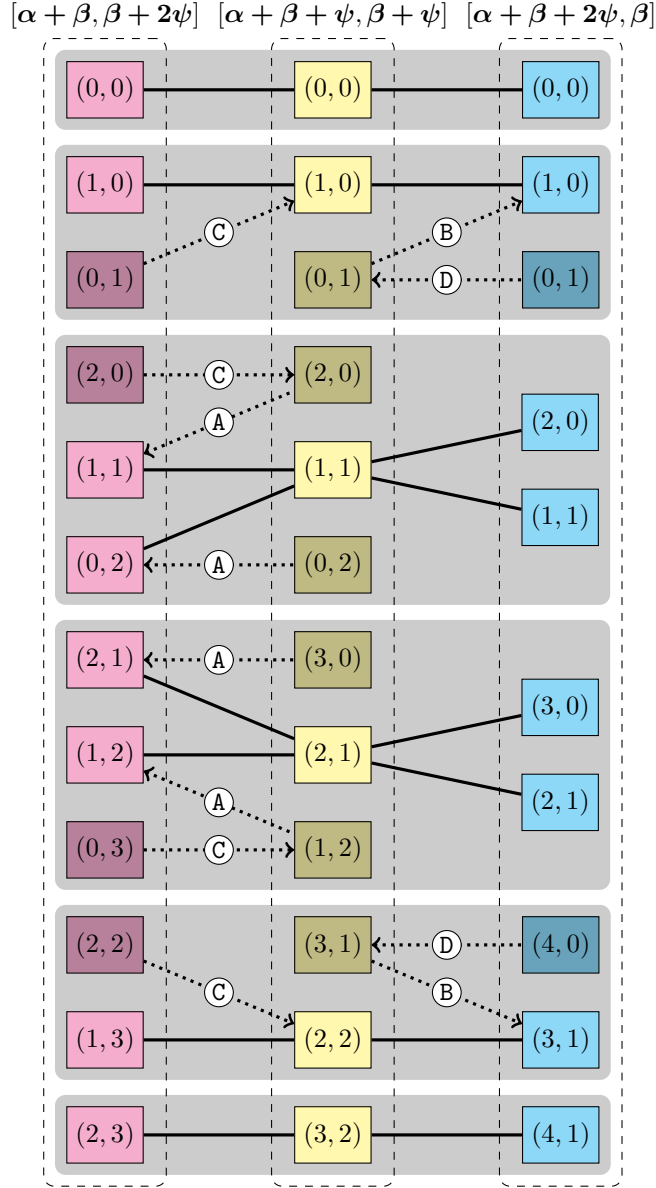

\begin{relation}[\RelNameBLg\RelNameHomLift\RelNameCommutator{\Rt{\psi}}{\Comm{\Rt{\alpha+\beta}}{\Rt{\beta+2\psi}}}]\label{rel:b3-large:comm:lift:psi:alpha+beta:beta+2psi}
\[
    \DQuant{i,j,k \in [1]}{t,u,v \in \LiftScalars} \Comm{\Eld{\psi}{t}{i}}{\Comm{\Eld{\alpha+\beta}{u}{i+j}}{\Eld{\beta+2\psi}{v}{j+2k}}} = \Id.
\]
\end{relation}

\begin{proof}
    If $t = 0$, then $\Eld{\psi}{t}{i} = \Id$ (\Cnref{rel:b3-large:sub:id:psi}), and we have nothing to prove. Otherwise, use \Cnref{rel:b3-large:comm:raw:lift:psi:alpha+beta:beta+2psi} with $\liftC=(\frac{ut^2}{v},\frac{v}{t^2},t)$.
\end{proof}

\begin{relation}[\RelNameBLg\RelNameHomLift\RelNameCommutator{\Rt{\alpha+\beta}}{\Comm{\Rt{\alpha+\beta}}{\Rt{\beta+2\psi}}}]\label{rel:b3-large:comm:lift:alpha+beta:alpha+beta:beta+2psi}
\[
\DQuant{i,j,k \in [1]}{t,u \in \LiftScalars, s \in \{\pm 1\}} \Comm{\Eld{\alpha+\beta}{t}{i+j}}{\Comm{\Eld{\alpha+\beta}{s t}{i+j}}{\Eld{\beta+2\psi}{u}{j+2k}}} = \Id.
\]
\end{relation}

\begin{proof}
    If $u = 0$, then $\Eld{\beta+2\psi}{u}{j+2k} = \Id$ (\Cnref{rel:b3-large:sub:id:beta+2psi}) and we have nothing to prove. Otherwise, use \Cnref{rel:b3-large:comm:raw:lift:alpha+beta:alpha+beta:beta+2psi} with $\liftC=(\frac{t}u,u,1)$.
\end{proof}

\begin{relation}[\RelNameBLg\RelNameHomLift\RelNameInvDoub{\Comm{\Rt{\alpha+\beta}}{\Rt{\beta+2\psi}}}]\label{rel:b3-large:inv-doub:lift:alpha+beta:beta+2psi}
    \begin{gather*}
        \DQuant{i,j,k \in [1]}{t,u \in \LiftScalars} \Comm{\Eld{\alpha+\beta}{t}{i+j}}{\Eld{\beta+2\psi}{u}{j+2k}}  = \Comm{\Eld{\alpha+\beta}{-t}{i+j}}{\Eld{\beta+2\psi}{-u}{j+2k}} , \\
        \DQuant{i,j,k \in [1]}{t,u \in \LiftScalars} \Comm{\Eld{\alpha+\beta}{t}{i+j}}{\Eld{\beta+2\psi}{u}{j+2k}} \Comm{\Eld{\alpha+\beta}{-t}{i+j}}{\Eld{\beta+2\psi}{u}{j+2k}} = \Id, \\
        \DQuant{i,j,k \in [1]}{t,u \in \LiftScalars} \Comm{\Eld{\alpha+\beta}{t}{i+j}}{\Eld{\beta+2\psi}{u}{j+2k}} \Comm{\Eld{\alpha+\beta}{t}{i+j}}{\Eld{\beta+2\psi}{u}{j+2k}} = \Comm{\Eld{\alpha+\beta}{2t}{i+j}}{\Eld{\beta+2\psi}{u}{j+2k}}.
    \end{gather*}
\end{relation}

\begin{proof}
    If $u = 0$, then $\Eld{\beta+2\psi}{u}{j+2k} = \Id$ (\Cnref{rel:b3-large:sub:id:beta+2psi}) and we have nothing to prove. Otherwise, use \Cnref{rel:b3-large:inv-doub:raw:lift:alpha+beta:beta+2psi} with $\liftC=(\frac{t}u,u,1)$.
\end{proof}

\begin{relation}[\RelNameBLg\RelNameHomLift\RelNameInvDoub{\Comm{\Rt{\beta}}{\Rt{\alpha+\beta+2\psi}}}]\label{rel:b3-large:inv-doub:lift:beta:alpha+beta+2psi}
    \begin{gather*}
        \DQuant{i,j,k \in [1]}{t,u \in \LiftScalars} \Comm{\Eld{\beta}{t}{i}}{\Eld{\alpha+\beta+2\psi}{u}{i+j+2k}} = \Comm{\Eld{\beta}{-t}{i}}{\Eld{\alpha+\beta+2\psi}{-u}{i+j+2k}}, \\
        \DQuant{i,j,k \in [1]}{t,u \in \LiftScalars} \Comm{\Eld{\beta}{t}{i}}{\Eld{\alpha+\beta+2\psi}{u}{i+j+2k}}\Comm{\Eld{\beta}{-t}{i}}{\Eld{\alpha+\beta+2\psi}{u}{i+j+2k}} = \Id, \\
        \DQuant{i,j,k \in [1]}{t,u \in \LiftScalars} \Comm{\Eld{\beta}{t}{i}}{\Eld{\alpha+\beta+2\psi}{u}{i+j+2k}} \Comm{\Eld{\beta}{t}{i}}{\Eld{\alpha+\beta+2\psi}{u}{i+j+2k}}= \Comm{\Eld{\beta}{2t}{i}}{\Eld{\alpha+\beta+2\psi}{u}{i+j+2k}}.
    \end{gather*}
\end{relation}

\begin{proof}
    If $t = 0$, then $\Eld{\beta}{t}{i} = \Id$ (\Cnref{rel:b3-large:sub:id:beta}) and we have nothing to prove. Otherwise, use \Cnref{rel:b3-large:inv-doub:raw:lift:beta:alpha+beta+2psi} at $\liftC=(\frac{u}t,t,1)$ and \Cnref{rel:b3-large:expr:alpha+beta+2psi:alpha:beta+2psi}.
\end{proof}

\begin{relation}[\RelNameBLg\RelNameHomLift\RelNameCommutator{\Rt{\beta+\psi}}{\Rt{\alpha+\beta+2\psi}}]\label{rel:b3-large:comm:lift:beta+psi:alpha+beta+2psi}
        \[
    \DQuant{i,j,k \in [1]}{t,u \in \LiftScalars} \Comm{\Eld{\beta+\psi}{t}{j+2k}}{\Eld{\alpha+\beta+2\psi}{u}{i+j+2k}} = \Id.
    \]
\end{relation}

\begin{proof}
    If $t = 0$, then $\Eld{\beta+\psi}{t}{j+2k} = \Id$ (\Cnref{rel:b3-large:sub:id:beta+psi}) and we have nothing to prove. Otherwise, use \Cnref{rel:b3-large:comm:raw:lift:beta+psi:alpha+beta+2psi} at $\liftC=(\frac{u}t,t,1)$ and \Cnref{rel:b3-large:expr:alpha+beta+2psi:alpha:beta+2psi}.
\end{proof}

\begin{relation}[\RelNameBLg\RelNameHomLift\RelNameCommutator{\Rt{\beta+2\psi}}{\Rt{\alpha+\beta+2\psi}}]\label{rel:b3-large:comm:lift:beta+2psi:alpha+beta+2psi}
        \[
    \DQuant{i,j,k \in [1]}{t,u \in \LiftScalars} \Comm{\Eld{\beta+2\psi}{t}{j+2k}}{\Eld{\alpha+\beta+2\psi}{u}{i+j+2k}} = \Id.
    \]
\end{relation}

\begin{proof}
    If $t=0$, then $\Eld{\beta+2\psi}{t}{j+2k} = \Id$ (\Cnref{rel:b3-large:sub:id:beta+2psi}) and we have nothing to prove. Otherwise, use \Cnref{rel:b3-large:comm:raw:lift:beta+2psi:alpha+beta+2psi} at $\liftC=(\frac{u}t,t,1)$ and \Cnref{rel:b3-large:expr:alpha+beta+2psi:alpha:beta+2psi}.
\end{proof}

\paragraph{Simple analysis}

\begin{relation}[\RelNameBLg\RelNameCommute{\Rt{\beta+\psi}}{\Comm{\Rt{\alpha+\beta}}{\Rt{\beta+2\psi}}}]\label{rel:b3-large:rough:comm:beta+psi:alpha+beta:beta+2psi}
        \[
    \DQuant{i \in [2], j \in [2], k \in [3]}{t,u,v \in \LiftScalars} \Comm{\Eld{\beta+\psi}{t}{i}}{\Comm{\Eld{\alpha+\beta}{u}{j}}{\Eld{\beta+2\psi}{v}{k}}} = \Id.
    \]
\end{relation}

\begin{proof}
    $\Rt{\beta+\psi}$ elements commute with $\Rt{\alpha+\beta}$ elements (\Cnref{rel:b3-large:comm:alpha+beta:beta+psi}) and $\Rt{\beta+2\psi}$ elements (\Cnref{rel:b3-large:sub:comm:beta+psi:beta+2psi}).
\end{proof}

\begin{relation}[\RelNameBLg\RelNameCommute{\Rt{\alpha+\beta}}{\Comm{\Rt{\beta+\psi}}{\Rt{\alpha+\beta+\psi}}}]\label{rel:b3-large:rough:comm:alpha+beta:beta+psi:alpha+beta+psi}
        \[
    \DQuant{i \in [2], j \in [2], k \in [3]}{t,u,v \in \LiftScalars} \Comm{\Eld{\alpha+\beta}{t}{i}}{\Comm{\Eld{\beta+\psi}{u}{j}}{\Eld{\alpha+\beta+\psi}{v}{k}}} = \Id.
    \]
\end{relation}

\begin{proof}
    $\Rt{\alpha+\beta}$ elements commute with $\Rt{\alpha+\beta+\psi}$ elements (\Cnref{rel:b3-large:comm:alpha+beta:alpha+beta+psi}) and $\Rt{\beta+\psi}$ elements (\Cnref{rel:b3-large:comm:alpha+beta:beta+psi}).
\end{proof}

\begin{relation}[\RelNameBLg\RelNameCommute{\Rt{\beta}}{\Comm{\Rt{\alpha+\beta}}{\Rt{\beta+2\psi}}}]\label{rel:b3-large:rough:comm:beta:alpha+beta:beta+2psi}
        \[
    \DQuant{i \in [1], j \in [2], k \in [3]}{t,u,v \in \LiftScalars} \Comm{\Eld{\beta}{t}{i}}{\Comm{\Eld{\alpha+\beta}{u}{j}}{\Eld{\beta+2\psi}{v}{k}}} = \Id.
    \]
\end{relation}

\begin{proof}
    $\Rt{\beta}$ elements commute with $\Rt{\alpha+\beta}$ elements (\Cnref{rel:b3-large:sub:comm:beta:alpha+beta}) and $\Rt{\beta+2\psi}$ elements (\Cnref{rel:b3-large:sub:comm:beta:beta+2psi}).
\end{proof}

\begin{relation}[\RelNameBLg\RelNameCommute{\Rt{\beta}}{\Comm{\Rt{\alpha+\beta+\psi}}{\Rt{\beta+\psi}}}]\label{rel:b3-large:rough:comm:beta:alpha+beta+psi:beta+psi}
        \[
    \DQuant{i \in [1], j \in [3], k \in [2]}{t,u,v \in \LiftScalars} \Comm{\Eld{\beta}{t}{i}}{\Comm{\Eld{\alpha+\beta+\psi}{u}{j}}{\Eld{\beta+\psi}{v}{k}}} = \Id.
    \]
\end{relation}

\begin{proof}
    $\Rt{\beta}$ elements commute with $\Rt{\alpha+\beta+\psi}$ elements (\Cnref{rel:b3-large:comm:beta:alpha+beta+psi}) and $\Rt{\beta+\psi}$ elements (\Cnref{rel:b3-large:sub:comm:beta:beta+psi}).
\end{proof}

\paragraph{``\newedgeA'' edges}

\begin{proposition}[Sufficient conditions for commutator of $\Rt{\alpha+\beta+\psi}$ and $\Rt{\beta+\psi}$]\label{rel:b3-large:comm:expr:abs:alpha+2beta+2psi:A}
    Let $i \in [2], j \in [1], k \in [2]$. Suppose that:
    \begin{subequations}
    \begin{gather}
    \Quant{t,u,v \in \LiftScalars} \Comm{\Eld{\psi}{t}{j}}{\Comm{\Eld{\alpha+\beta}{u}{i}}{\Eld{\beta+2\psi}{v}{j+k}}} = \Id, \label{ass:b3-large:comm:expr:abs:alpha+2beta+2psi:A:1} \\
    \Quant{t,u,v \in \LiftScalars} \Comm{\Eld{\alpha+\beta}{t}{i}}{\Comm{\Eld{\alpha+\beta}{u}{i}}{\Eld{\beta+2\psi}{v}{j+k}}} = \Id, \label{ass:b3-large:comm:expr:abs:alpha+2beta+2psi:A:2}\\
    \Quant{t,u \in \LiftScalars} \Comm{\Eld{\alpha+\beta}{t}{i}}{\Eld{\beta+2\psi}{u}{j+k}} = \Comm{\Eld{\alpha+\beta}{-t}{i}}{\Eld{\beta+2\psi}{-u}{j+k}}, \label{ass:b3-large:comm:expr:abs:alpha+2beta+2psi:A:3}\\
    \Quant{t,u \in \LiftScalars} \Comm{\Eld{\alpha+\beta}{t}{i}}{\Eld{\beta+2\psi}{u}{j+k}} \Comm{\Eld{\alpha+\beta}{t}{i}}{\Eld{\beta+2\psi}{u}{j+k}} = \Comm{\Eld{\alpha+\beta}{2t}{i}}{\Eld{\beta+2\psi}{u}{j+k}}\label{ass:b3-large:comm:expr:abs:alpha+2beta+2psi:A:4}.
    \end{gather}
    \end{subequations}
    Then:
    \[
    \Quant{t,u \in \LiftScalars} \Comm{\Eld{\alpha+\beta+\psi}{tu}{i+j}}{\Eld{\beta+\psi}{v}{k}} = \Comm{\Eld{\alpha+\beta}{t}{i}}{\Eld{\beta+2\psi}{2uv}{j+k}}.
    \]
\end{proposition}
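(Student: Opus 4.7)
The plan is to mirror closely the proof of \Cnref{rel:b3-large:comm:edge:alpha+beta+psi:psi}, with the roles of the ``outer'' factors swapped: the root $\Rt{\alpha}$ is replaced by $\Rt{\alpha+\beta}$ and the role of $\Rt{\psi}$ as the element we push through is replaced by $\Rt{\beta+\psi}$. Concretely, I start with the word $\Eld{\alpha+\beta+\psi}{tu}{i+j}\Eld{\beta+\psi}{v}{k}$, expand the $\Rt{\alpha+\beta+\psi}$ factor as an $(\Rt{\alpha+\beta},\Rt{\psi})$-commutator using \Cnref{prop:b3-large:est:alpha+beta+psi}, and then push the $\Rt{\beta+\psi}$ factor fully from right to left across the five resulting symbols. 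Crossings with $\Rt{\alpha+\beta}$ factors are free, since $\Rt{\alpha+\beta}$ and $\Rt{\beta+\psi}$ commute (\Cnref{rel:b3-large:comm:alpha+beta:beta+psi}), while each crossing with a $\Rt{\psi}$ factor introduces a $\Rt{\beta+2\psi}$ ``debris'' term via \Cnref{rel:b3-large:order:psi:beta+psi}.

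After the push-through, the word has the form (reading left-to-right) $\Eld{\beta+\psi}{v}{k}$, followed by the original five symbols of $\Eld{\alpha+\beta+\psi}{tu}{i+j}$, with three interspersed $\Rt{\beta+2\psi}$ debris terms. The next step is to collect these $\Rt{\beta+2\psi}$ debris terms, which requires moving some of them across $\Rt{\alpha+\beta}$ factors. Exactly as in the $\Rt{\alpha}$-case, this is done using the generic identities $\Eld{\alpha+\beta}{-t}{i}\Eld{\beta+2\psi}{s}{j+k}=\Eld{\beta+2\psi}{s}{j+k}\Eld{\alpha+\beta}{-t}{i}\Comm{\Eld{\alpha+\beta}{t}{i}}{\Eld{\beta+2\psi}{-s}{j+k}}$ and the companion identity obtained from \Cnref{eq:comm:left:inv}; the resulting $\Comm{\Rt{\alpha+\beta}}{\Rt{\beta+2\psi}}$ factors come with opposite signs in the argument, but assumption~\Cref{ass:b3-large:comm:expr:abs:alpha+2beta+2psi:A:3} (the ``inverse'' part of assumption~(c)) lets me rewrite them as two copies of $\Comm{\Eld{\alpha+\beta}{t}{i}}{\Eld{\beta+2\psi}{-uv}{j+k}}$ (or the appropriate sign). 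Meanwhile the already-present central $\Rt{\beta+2\psi}$-pair cancels across the inner $\Rt{\psi}$-pair via \Cnref{rel:b3-large:sub:comm:psi:beta+2psi} and \Cnref{rel:b3-large:sub:inv:beta+2psi}.

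At this point I have two copies of $\Comm{\Rt{\alpha+\beta}}{\Rt{\beta+2\psi}}$ separated by $\Rt{\alpha+\beta}$-, $\Rt{\psi}$-, and $\Rt{\beta+\psi}$-factors. Assumption~\Cref{ass:b3-large:comm:expr:abs:alpha+2beta+2psi:A:1} (commutativity with $\Rt{\psi}$), assumption~\Cref{ass:b3-large:comm:expr:abs:alpha+2beta+2psi:A:2} (commutativity with $\Rt{\alpha+\beta}$), and \Cnref{rel:b3-large:rough:comm:beta+psi:alpha+beta:beta+2psi} (commutativity with $\Rt{\beta+\psi}$) together let me slide both $\Comm{\Rt{\alpha+\beta}}{\Rt{\beta+2\psi}}$ factors all the way to the far left. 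Then assumption~\Cref{ass:b3-large:comm:expr:abs:alpha+2beta+2psi:A:4} (the doubling part of assumption~(d)) merges the two copies into $\Comm{\Eld{\alpha+\beta}{t}{i}}{\Eld{\beta+2\psi}{2uv}{j+k}}$ (possibly up to sign conventions that are absorbed by assumption~\Cref{ass:b3-large:comm:expr:abs:alpha+2beta+2psi:A:3}). The remaining five symbols on the right are exactly the $(\Rt{\alpha+\beta},\Rt{\psi})$-expansion of $\Eld{\alpha+\beta+\psi}{tu}{i+j}$, which reassembles via \Cnref{prop:b3-large:est:alpha+beta+psi}. Rearranging the resulting identity $\Eld{\alpha+\beta+\psi}{tu}{i+j}\Eld{\beta+\psi}{v}{k}=\Comm{\Eld{\alpha+\beta}{t}{i}}{\Eld{\beta+2\psi}{2uv}{j+k}}\Eld{\beta+\psi}{v}{k}\Eld{\alpha+\beta+\psi}{tu}{i+j}$ yields the desired commutator equation.

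The main obstacle I anticipate is purely clerical: tracking the signs and coefficients on the two copies of $\Comm{\Rt{\alpha+\beta}}{\Rt{\beta+2\psi}}$ that are created by the two generic-commutator manipulations, and verifying that assumption~\Cref{ass:b3-large:comm:expr:abs:alpha+2beta+2psi:A:3} reconciles them so that assumption~\Cref{ass:b3-large:comm:expr:abs:alpha+2beta+2psi:A:4} produces precisely the $2uv$ coefficient asserted in the conclusion. All remaining manipulations are local moves sanctioned by relations already established in \Cref{sec:b3:large}, and the overall schema is structurally identical to the $(\Rt{\alpha},\Rt{\psi})$-case in \Cnref{rel:b3-large:comm:edge:alpha+beta+psi:psi}, so I expect the derivation to be of comparable length and complexity.
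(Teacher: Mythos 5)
Your proposal is correct and matches the paper's proof essentially step for step: expand $\Eld{\alpha+\beta+\psi}{tu}{i+j}$ via \Cref{prop:b3-large:est:alpha+beta+psi}, push $\Eld{\beta+\psi}{v}{k}$ left (using \Cref{rel:b3-large:comm:alpha+beta:beta+psi} for free crossings and \Cref{rel:b3-large:order:psi:beta+psi} for the $\Rt{\beta+2\psi}$ debris), reconcile the debris across the $\Rt{\alpha+\beta}$ symbols using generic commutator identities together with assumption~\Cref{ass:b3-large:comm:expr:abs:alpha+2beta+2psi:A:3}, cancel the central pair, slide the two $\Comm{\Rt{\alpha+\beta}}{\Rt{\beta+2\psi}}$ factors left via assumptions~\Cref{ass:b3-large:comm:expr:abs:alpha+2beta+2psi:A:1}, \Cref{ass:b3-large:comm:expr:abs:alpha+2beta+2psi:A:2}, and \Cref{rel:b3-large:rough:comm:beta+psi:alpha+beta:beta+2psi}, and merge them via assumption~\Cref{ass:b3-large:comm:expr:abs:alpha+2beta+2psi:A:4}. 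The only difference is purely presentational; the coefficient tracking works out exactly as you anticipate.
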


\begin{proof}
From \Cnref{eq:comm:mid:inv}, \Cnref{rel:b3-large:sub:inv:alpha+beta}, \Cnref{rel:b3-large:sub:inv:beta+2psi}, and by assumption (\Cref{ass:b3-large:comm:expr:abs:alpha+2beta+2psi:A:3}), we have:
\begin{multline}\label{subeq:b3-large:comm:expr:abs:alpha+2beta+2psi:A:1}
    \Eld{\beta+2\psi}{-uv}{j+k} \Eld{\alpha+\beta}{t}{i} = \Eld{\alpha+\beta}{t}{i} \Comm{\Eld{\alpha+\beta}{-t}{i}}{\Eld{\beta+2\psi}{-uv}{j+k}} \Eld{\beta+2\psi}{-uv}{j+k} \\
    = \Eld{\alpha+\beta}{t}{i} \Comm{\Eld{\alpha+\beta}{t}{i}}{\Eld{\beta+2\psi}{uv}{j+k}} \Eld{\beta+2\psi}{-uv}{j+k}
\end{multline}
and from \Cnref{eq:comm:right:str}, \Cnref{rel:b3-large:sub:inv:alpha+beta}, and \Cnref{rel:b3-large:sub:inv:beta+2psi}, we have:
\begin{equation}\label{subeq:b3-large:comm:expr:abs:alpha+2beta+2psi:A:2}
\Eld{\alpha+\beta}{-t}{i} \Eld{\beta+2\psi}{-uv}{j+k} = \Eld{\beta+2\psi}{-uv}{j+k} \Eld{\alpha+\beta}{-t}{i}  \Comm{\Eld{\alpha+\beta}{t}{i}}{\Eld{\beta+2\psi}{uv}{j+k}}.
\end{equation}

Now, we write a product of $\Rt{\alpha+\beta+\psi}$ and $\Rt{\beta+\psi}$ elements:
\begin{align*}
    & \asite{\Eld{\alpha+\beta+\psi}{tu}{i+j}} \Eld{\beta+\psi}{v}{k} \\
    \intertext{Expand the $\Rt{\alpha+\beta+\psi}$ element into a product of $\Rt{\alpha+\beta}$ and $\Rt{\psi}$ elements (\Cnref{prop:b3-large:est:alpha+beta+psi}):}
    &= \Eld{\psi}{-u/2}{j} \Eld{\alpha+\beta}{t}{i} \Eld{\psi}{u}{j} \Eld{\alpha+\beta}{-t}{i} \Eld{\psi}{-u/2}{j} \asite{\Eld{\beta+\psi}{v}{k}} \\
    \intertext{Bring the $\Rt{\beta+\psi}$ element from the right fully to the left, creating no commutators with $\Rt{\alpha+\beta}$ elements (\Cnref{rel:b3-large:comm:alpha+beta:beta+psi}) and $\Rt{\beta+2\psi}$ commutators with $\Rt{\psi}$ elements (\Cnref{rel:b3-large:order:psi:beta+psi}):}
    &= \Eld{\beta+\psi}{v}{k} \Eld{\psi}{-u/2}{j} \asite{\Eld{\beta+2\psi}{-uv}{j+k} \Eld{\alpha+\beta}{t}{i}} \Eld{\beta+2\psi}{2uv}{j+k} \Eld{\psi}{u}{j} \asite{\Eld{\alpha+\beta}{-t}{i} \Eld{\beta+2\psi}{-uv}{j+k}} \Eld{\psi}{-u/2}{j} \\
    \intertext{Move the $\Rt{\beta+2\psi}$ elements towards each other across $\Rt{\alpha+\beta}$ elements, creating generic commutators (\Cref{subeq:b3-large:comm:expr:abs:alpha+2beta+2psi:A:1} and \Cref{subeq:b3-large:comm:expr:abs:alpha+2beta+2psi:A:2}:}
    &= \Eld{\beta+\psi}{v}{k} \Eld{\psi}{-u/2}{j} \Comm{\Eld{\alpha+\beta}{t}{i}}{\Eld{\beta+2\psi}{uv}{j+k}} \Eld{\alpha+\beta}{t}{i} \asite{\Eld{\beta+2\psi}{-uv}{j+k}} \\
    &\hspace{1in} \cdot \asite{\Eld{\beta+2\psi}{2uv}{j+k} \Eld{\psi}{u}{j}  \Eld{\beta+2\psi}{-uv}{j+k}} \Eld{\alpha+\beta}{-t}{i} \Comm{\Eld{\alpha+\beta}{t}{i}}{\Eld{\beta+2\psi}{uv}{j+k}} \Eld{\psi}{-u/2}{j} \\
    \intertext{Cancel the $\Rt{\beta+2\psi}$ elements (\Cnref{rel:b3-large:sub:comm:psi:beta+2psi} and \Cnref{rel:b3-large:sub:inv:beta+2psi}):}
    &= \Eld{\beta+\psi}{v}{k} \Eld{\psi}{-u/2}{j} \asite{\Comm{\Eld{\alpha+\beta}{t}{i}}{\Eld{\beta+2\psi}{uv}{j+k}}} \Eld{\alpha+\beta}{t}{i} \Eld{\psi}{u}{j}  \Eld{\alpha+\beta}{-t}{i} \asite{\Comm{\Eld{\alpha+\beta}{t}{i}}{\Eld{\beta+2\psi}{uv}{j+k}}} \Eld{\psi}{-u/2}{j}. \\
    \intertext{Move the $\Comm{\Rt{\alpha+\beta}}{\Rt{\beta+2\psi}}$ elements to the left. They commute with every element, in particular, with $\Rt{\psi}$ and $\Rt{\alpha+\beta}$ elements by assumption (\Cref{ass:b3-large:comm:expr:abs:alpha+2beta+2psi:A:1} and \Cref{ass:b3-large:comm:expr:abs:alpha+2beta+2psi:A:2}) and with $\Rt{\beta+\psi}$ elements by \Cnref{rel:b3-large:rough:comm:beta+psi:alpha+beta:beta+2psi}:}
    &= \asite{\Comm{\Eld{\alpha+\beta}{t}{i}}{\Eld{\beta+2\psi}{uv}{j+k}} \Comm{\Eld{\alpha+\beta}{t}{i}}{\Eld{\beta+2\psi}{uv}{j+k}}} \Eld{\beta+\psi}{v}{k} \asite{\Eld{\psi}{-u/2}{j} \Eld{\alpha+\beta}{t}{i} \Eld{\psi}{u}{j}  \Eld{\alpha+\beta}{-t}{i}  \Eld{\psi}{-u/2}{j}} \\
    \intertext{Reducing using \Cref{ass:b3-large:comm:expr:abs:alpha+2beta+2psi:A:4}, \Cnref{rel:b3-large:inv-doub:lift:alpha+beta:beta+2psi}, and \Cnref{prop:b3-large:est:alpha+beta+psi}:}
    &= \Comm{\Eld{\alpha+\beta}{t}{i}}{\Eld{\beta+2\psi}{2uv}{j+k}} \Eld{\beta+\psi}{v}{k} \Eld{\alpha+\beta+\psi}{tu}{i+j}.
\end{align*}
\end{proof}

\begin{relation}[\RelNameBLg\RelNamePart\newedgeA:\RelNameInterchange{\Rt{\alpha+2\beta+2\psi}}]\label{cor:b3-large:expr:alpha+2beta+2psi:A}
\begin{align*}
    \Quant{t,u,v \in \LiftScalars} \Comm{\Eld{\alpha+\beta+\psi}{tu}{0}}{\Eld{\beta+\psi}{v}{2}} &= \Comm{\Eld{\alpha+\beta}{t}{0}}{\Eld{\beta+2\psi}{2uv}{2}}, \\
    \Quant{t,u,v \in \LiftScalars} \Comm{\Eld{\alpha+\beta+\psi}{tu}{2}}{\Eld{\beta+\psi}{v}{0}} &= \Comm{\Eld{\alpha+\beta}{t}{1}}{\Eld{\beta+2\psi}{2uv}{1}}.
\end{align*}
\end{relation}

\begin{proof}
We apply the previous proposition (\Cnref{rel:b3-large:comm:expr:abs:alpha+2beta+2psi:A}) in both cases.
\begin{itemize}
\item For the first equation, we use the previous proposition (\Cnref{rel:b3-large:comm:expr:abs:alpha+2beta+2psi:A}) with $i=0, j=0, k=2$. This gives the desideratum if we can show that:
\begin{gather*}
    \Comm{\Eld{\psi}{t}{0}}{\Comm{\Eld{\alpha+\beta}{u}{0}}{\Eld{\beta+2\psi}{v}{2}}} = \Id, \\
    \Comm{\Eld{\alpha+\beta}{t}{0}}{\Comm{\Eld{\alpha+\beta}{u}{0}}{\Eld{\beta+2\psi}{v}{2}}} = \Id, \\
    \Comm{\Eld{\alpha+\beta}{t}{0}}{\Eld{\beta+2\psi}{u}{2}} = \Comm{\Eld{\alpha+\beta}{-t}{0}}{\Eld{\beta+2\psi}{-u}{2}}, \\
    \Comm{\Eld{\alpha+\beta}{t}{0}}{\Eld{\beta+2\psi}{u}{2}} \Comm{\Eld{\alpha+\beta}{t}{0}}{\Eld{\beta+2\psi}{u}{2}} = \Comm{\Eld{\alpha+\beta}{t}{0}}{\Eld{\beta+2\psi}{2u}{2}}.
\end{gather*}
The second condition follows immediately from homogeneous lifting (\Cnref{rel:b3-large:comm:lift:alpha+beta:alpha+beta:beta+2psi} with $i=0,j=0,k=1$). For the remaining conditions, we use that
\[
\Comm{\Eld{\alpha+\beta}{t}{0}}{\Eld{\beta+2\psi}{u}{2}} = \Comm{\Eld{\alpha+\beta}{t}{1}}{\Eld{\beta+2\psi}{u}{1}}
\]
via \Cnref{rel:b3-large:lift:alpha+2beta+2psi} (cf. \Cref{fig:b3-large:def:alpha+2beta+2psi}). Then, we use homogeneous lifting (\Cnref{rel:b3-large:comm:lift:psi:alpha+beta:beta+2psi} and \Cnref{rel:b3-large:inv-doub:lift:beta:alpha+beta+2psi} with $i=0,j=1,k=0$).
\item For the second equation, we use the previous proposition (\Cnref{rel:b3-large:comm:expr:abs:alpha+2beta+2psi:A}) with $i=1, j=1, k=0$. This gives the desideratum if we can show that:
\begin{gather*}
    \Comm{\Eld{\psi}{t}{1}}{\Comm{\Eld{\alpha+\beta}{u}{1}}{\Eld{\beta+2\psi}{v}{1}}} = \Id, \\
    \Comm{\Eld{\alpha+\beta}{t}{1}}{\Comm{\Eld{\alpha+\beta}{u}{1}}{\Eld{\beta+2\psi}{v}{1}}} = \Id, \\
    \Comm{\Eld{\alpha+\beta}{t}{1}}{\Eld{\beta+2\psi}{u}{1}} = \Comm{\Eld{\alpha+\beta}{-t}{1}}{\Eld{\beta+2\psi}{-u}{1}}, \\
    \Comm{\Eld{\alpha+\beta}{t}{1}}{\Eld{\beta+2\psi}{u}{1}} \Comm{\Eld{\alpha+\beta}{t}{1}}{\Eld{\beta+2\psi}{u}{1}} = \Comm{\Eld{\alpha+\beta}{t}{1}}{\Eld{\beta+2\psi}{2u}{1}}.
\end{gather*}
The second condition follows immediately from homogeneous lifting (\Cnref{rel:b3-large:comm:lift:alpha+beta:alpha+beta:beta+2psi} with $i=0,j=1,k=0$). For the remaining conditions, we use that
\[
\Comm{\Eld{\alpha+\beta}{t}{1}}{\Eld{\beta+2\psi}{u}{1}} = \Comm{\Eld{\alpha+\beta}{t}{0}}{\Eld{\beta+2\psi}{u}{2}}
\]
via \Cnref{rel:b3-large:lift:alpha+2beta+2psi} (cf. \Cref{fig:b3-large:def:alpha+2beta+2psi}). Then, we use homogeneous lifting (\Cnref{rel:b3-large:comm:lift:psi:alpha+beta:beta+2psi} and \Cnref{rel:b3-large:inv-doub:lift:alpha+beta:beta+2psi} with $i=0,j=0,k=1$).
\end{itemize}
\end{proof}

\paragraph{``\newedgeB'' edges}

\begin{proposition}[More sufficient conditions for commutator of $\Rt{\alpha+\beta+\psi}$ and $\Rt{\beta+\psi}$]\label{rel:b3-large:comm:expr:abs:alpha+2beta+2psi:B}
    Let $i \in [3], j \in [1], k \in [2]$. Suppose that:
    \begin{subequations}
    \begin{gather}
        \Quant{t,u,v \in \LiftScalars} \Comm{\Eld{\beta}{t}{k}}{\Comm{\Eld{\alpha+\beta+2\psi}{u}{i+j}}{\Eld{\beta}{v}{k}}} = \Id, \label{ass:b3-large:comm:expr:abs:alpha+2beta+2psi:B:1}, \\
        \Quant{t,u,v \in \LiftScalars} \Comm{\Eld{\psi}{t}{j}}{\Comm{\Eld{\alpha+\beta+2\psi}{u}{i+j}}{\Eld{\beta}{v}{k}}} = \Id \label{ass:b3-large:comm:expr:abs:alpha+2beta+2psi:B:2}, \\
        \Quant{t,u \in \LiftScalars} \Comm{\Eld{\beta}{u}{k}}{\Eld{\alpha+\beta+2\psi}{-t}{i+j}} = \Comm{\Eld{\alpha+\beta+2\psi}{t}{i+j}}{\Eld{\beta}{u}{k}} \label{ass:b3-large:comm:expr:abs:alpha+2beta+2psi:B:3}, \\
        \Quant{t,u \in \LiftScalars} \Comm{\Eld{\alpha+\beta+2\psi}{t}{i+j}}{\Eld{\beta}{u}{k}} \Comm{\Eld{\alpha+\beta+2\psi}{t}{i+j}}{\Eld{\beta}{u}{k}} = \Comm{\Eld{\alpha+\beta+2\psi}{2t}{i+j}}{\Eld{\beta}{u}{k}} \label{ass:b3-large:comm:expr:abs:alpha+2beta+2psi:B:4}.
    \end{gather}
    \end{subequations}
    Then:
    \[
    \Quant{t,u \in \LiftScalars} \Comm{\Eld{\alpha+\beta+\psi}{t}{i}}{\Eld{\beta+\psi}{uv}{j+k}} = \Comm{\Eld{\alpha+\beta+2\psi}{2tu}{i+j}}{\Eld{\beta}{v}{k}}.
    \]
\end{proposition}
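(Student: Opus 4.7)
The plan is to mimic the structure of the preceding proposition \Cnref{rel:b3-large:comm:expr:abs:alpha+2beta+2psi:A}, but now starting from a different expansion: we expand the $\Rt{\beta+\psi}$ element (rather than the $\Rt{\alpha+\beta+\psi}$ element) as a product of lower-height symbols. Specifically, we expand $\Eld{\beta+\psi}{uv}{j+k}$ via \Cnref{rel:b3-large:sub:expr:beta+psi} into a word in $\Rt{\psi}$ and $\Rt{\beta}$ elements of degrees $j$ and $k$ respectively, giving five factors of the form $\Eld{\psi}{\cdot}{j} \Eld{\beta}{\cdot}{k} \Eld{\psi}{\cdot}{j} \Eld{\beta}{\cdot}{k} \Eld{\psi}{\cdot}{j}$.

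Next, we start from $\Eld{\alpha+\beta+\psi}{t}{i}$ times this expanded $\Rt{\beta+\psi}$ word, and push the $\Rt{\alpha+\beta+\psi}$ element all the way to the right through the five factors. Since $\Rt{\alpha+\beta+\psi}$ commutes freely with $\Rt{\beta}$ (\Cnref{rel:b3-large:comm:beta:alpha+beta+psi}), only the three $\Rt{\psi}$ factors contribute commutator debris, and each such crossing produces an $\Rt{\alpha+\beta+2\psi}$ element at degree $i+j$ (via \Cnref{rel:b3-large:order:alpha+beta+psi:psi}/\Cnref{rel:b3-large:order:psi:alpha+beta+psi} together with \Cnref{prop:b3-large:est:alpha+beta+2psi}). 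After the pushing, the five-factor word reassembles back into the original $\Eld{\beta+\psi}{uv}{j+k}$ on the right, leaving on the left a product of three $\Rt{\alpha+\beta+2\psi}$ factors, interleaved with the two $\Rt{\beta}$ elements of degree $k$ that came from the expansion; schematically, something like $X \Eld{\beta}{u}{k} X' \Eld{\beta}{-u}{k} X''$ where $X, X', X''$ are $\Rt{\alpha+\beta+2\psi}$ elements.

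Now the key is to massage this product into a single commutator $\Comm{\Eld{\alpha+\beta+2\psi}{2tu}{i+j}}{\Eld{\beta}{v}{k}}$ on the left of $\Eld{\beta+\psi}{uv}{j+k}$. For this we use the four hypotheses: \Cref{ass:b3-large:comm:expr:abs:alpha+2beta+2psi:B:1} and \Cref{ass:b3-large:comm:expr:abs:alpha+2beta+2psi:B:2} say that the ``correction'' commutators $\Comm{\Rt{\alpha+\beta+2\psi}}{\Rt{\beta}}$ commute past $\Rt{\beta}$ and $\Rt{\psi}$ elements, which lets us freely relocate them within the interior of our expression. \Cref{ass:b3-large:comm:expr:abs:alpha+2beta+2psi:B:3} (inversion symmetry) allows us to flip signs when convenient, and \Cref{ass:b3-large:comm:expr:abs:alpha+2beta+2psi:B:4} (doubling) is what upgrades the two separate $\Rt{\alpha+\beta+2\psi}$-commutator contributions (coming from the two ``outer'' and one ``middle'' $\Rt{\psi}$ crossings, which pair up after cancellation) into a single commutator whose coefficient is doubled to $2tu$.

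The main obstacle will be the same bookkeeping difficulty as in the A-edge proof: tracking exactly where each $\Rt{\alpha+\beta+2\psi}$ fragment lands, using the ``$\Rt{\alpha+\beta+2\psi}$ commutes with $\Rt{\psi}$'' relation (\Cnref{rel:b3-large:comm:psi:alpha+beta+2psi}) and the inverse identity (\Cnref{rel:b3-large:inv-doub:alpha+beta+2psi}) to cancel the fragments that shouldn't appear in the final answer, while aggregating the ones that should via hypothesis \Cref{ass:b3-large:comm:expr:abs:alpha+2beta+2psi:B:4}. In particular, the middle $\Rt{\psi}$-crossing contributes an $\Rt{\alpha+\beta+2\psi}$ element whose entry has opposite sign from the outer two; we will need to conjugate this middle contribution past a $\Rt{\beta}$ factor using hypothesis \Cref{ass:b3-large:comm:expr:abs:alpha+2beta+2psi:B:3} so that it combines additively (rather than cancelling) with the outer ones, yielding the desired $2tu$ coefficient rather than $0$.
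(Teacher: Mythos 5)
Your plan has the right overall shape and matches the paper's strategy for this proposition: expand $\Eld{\beta+\psi}{uv}{j+k}$ via \Cnref{rel:b3-large:sub:expr:beta+psi} into a five-factor $\Rt{\psi}$-$\Rt{\beta}$-$\Rt{\psi}$-$\Rt{\beta}$-$\Rt{\psi}$ word, push $\Eld{\alpha+\beta+\psi}{t}{i}$ to the right past all five factors (picking up $\Rt{\alpha+\beta+2\psi}$ debris only at the three $\Rt{\psi}$ crossings, via \Cnref{rel:b3-large:comm:beta:alpha+beta+psi} and \Cnref{rel:b3-large:order:alpha+beta+psi:psi}), and then consolidate.

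However, your last paragraph misdescribes the final bookkeeping step, and the mechanism you propose there would not work as stated. The three $\Rt{\alpha+\beta+2\psi}$ fragments have entries $tu$, $-2tu$, $tu$; hypothesis \Cref{ass:b3-large:comm:expr:abs:alpha+2beta+2psi:B:3} does not let you flip the middle fragment's sign so that they ``combine additively'' to $2tu$. Instead, after the outer fragments are commuted inward past the $\Rt{\beta}$ factors (a step which --- together with \Cref{ass:b3-large:comm:expr:abs:alpha+2beta+2psi:B:3}, which fixes the form of the resulting commutator --- emits two copies of $\Comm{\Eld{\alpha+\beta+2\psi}{tu}{i+j}}{\Eld{\beta}{v}{k}}$ as byproducts), the three $\Rt{\alpha+\beta+2\psi}$ elements are brought adjacent by \Cnref{rel:b3-large:comm:psi:alpha+beta+2psi} and \emph{cancel} completely via \Cnref{rel:b3-large:inv-doub:alpha+beta+2psi}. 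The $2tu$ coefficient comes entirely from the two $\Comm{\Rt{\alpha+\beta+2\psi}}{\Rt{\beta}}$ commutator byproducts: these are relocated to the far left using \Cref{ass:b3-large:comm:expr:abs:alpha+2beta+2psi:B:1} and \Cref{ass:b3-large:comm:expr:abs:alpha+2beta+2psi:B:2} and then merged into a single commutator via the doubling hypothesis \Cref{ass:b3-large:comm:expr:abs:alpha+2beta+2psi:B:4}. Your earlier sentence (``two separate commutator contributions... pair up after cancellation... coefficient is doubled to $2tu$'') is the correct picture; the subsequent sentence about conjugating the middle fragment so it ``combines additively rather than cancelling'' is the confused one, and you should discard it.
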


\begin{proof}
    From \Cnref{eq:comm:left:str}, we have:
    \begin{equation}\label{subeq:b3-large:comm:expr:abs:alpha+2beta+2psi:B:2}
        \Eld{\alpha+\beta+2\psi}{tu}{i+j} \Eld{\beta}{v}{k} = \asite{\Comm{\Eld{\alpha+\beta+2\psi}{tu}{i+j}}{\Eld{\beta}{v}{k}}} \Eld{\beta}{v}{k} \Eld{\alpha+\beta+2\psi}{tu}{i+j},
    \end{equation}
    and from \Cnref{eq:comm:right:inv} and by assumption (\Cref{ass:b3-large:comm:expr:abs:alpha+2beta+2psi:B:3}), we have:
    \begin{equation}\label{subeq:b3-large:comm:expr:abs:alpha+2beta+2psi:B:3}
        \Eld{\beta}{-v}{k} \Eld{\alpha+\beta+2\psi}{tu}{i+j}
        = \Eld{\alpha+\beta+2\psi}{tu}{i+j} \Eld{\beta}{-v}{k} \asite{\Comm{\Eld{\alpha+\beta+2\psi}{tu}{i+j}}{\Eld{\beta}{v}{k}}}
    \end{equation}

    We write a product of $\Rt{\alpha+\beta+\psi}$ and $\Rt{\beta+\psi}$ elements:
    \begin{align*}
        & \Eld{\alpha+\beta+\psi}{t}{i} \asite{\Eld{\beta+\psi}{uv}{j+k}} \\
        \intertext{Expand the $\Rt{\beta+\psi}$ element into a product of $\Rt{\psi}$ and $\Rt{\beta}$ elements (\Cnref{rel:b3-large:sub:expr:beta+psi}):}
        &= \asite{\Eld{\alpha+\beta+\psi}{t}{i}} \Eld{\psi}{-u/2}{j} \Eld{\beta}{v}{k} \Eld{\psi}{u}{j} \Eld{\beta+\psi}{-v}{k} \Eld{\psi}{-u/2}{j} \\
        \intertext{Move the $\Rt{\alpha+\beta+\psi}$ from the left fully to the right, creating no commutators with $\Rt{\beta}$ elements (\Cnref{rel:b3-large:comm:beta:alpha+beta+psi}) and $\Rt{\alpha+\beta+2\psi}$ commutators with $\Rt{\psi}$ elements (\Cnref{rel:b3-large:order:alpha+beta+psi:psi}):}
        &=  \Eld{\psi}{-u/2}{j} \asite{\Eld{\alpha+\beta+2\psi}{tu}{i+j} \Eld{\beta}{v}{k}} \Eld{\alpha+\beta+2\psi}{-2tu}{i+j} \Eld{\psi}{u}{j} \asite{\Eld{\beta}{-v}{k} \Eld{\alpha+\beta+2\psi}{tu}{i+j}} \Eld{\psi}{-u/2}{j} \Eld{\alpha+\beta+\psi}{t}{i} \\
        \intertext{Bringing $\Rt{\alpha+\beta+2\psi}$ elements closer together by commuting across $\Rt{\beta}$ elements, creating generic commutators (\Cref{subeq:b3-large:comm:expr:abs:alpha+2beta+2psi:B:2} and \Cref{subeq:b3-large:comm:expr:abs:alpha+2beta+2psi:B:3}):}
        &= \Eld{\psi}{-u/2}{j} \Comm{\Eld{\alpha+\beta+2\psi}{tu}{i+j}}{\Eld{\beta}{v}{k}} \Eld{\beta}{v}{k} \asite{\Eld{\alpha+\beta+2\psi}{tu}{i+j}   \Eld{\alpha+\beta+2\psi}{-2tu}{i+j} \Eld{\psi}{u}{j}} \\
        & \hspace{0.5in} \cdot \asite{\Eld{\alpha+\beta+2\psi}{tu}{i+j}} \Eld{\beta}{-v}{k} \Comm{\Eld{\alpha+\beta+2\psi}{tu}{i+j}}{\Eld{\beta}{v}{k}} \Eld{\psi}{-u/2}{j} \Eld{\alpha+\beta+\psi}{t}{i} \\
        \intertext{Using that $\Rt{\alpha+\beta+2\psi}$ elements commute with $\Rt{\psi}$ elements (\Cnref{rel:b3-large:comm:psi:alpha+beta+2psi}), we cancel the $\Rt{\alpha+\beta+2\psi}$ elements (\Cnref{rel:b3-large:inv-doub:alpha+beta+2psi}):}
        &= \Eld{\psi}{-u/2}{j} \asite{\Comm{\Eld{\alpha+\beta+2\psi}{tu}{i+j}}{\Eld{\beta}{v}{k}}} \Eld{\beta}{v}{k} \Eld{\psi}{u}{j} \Eld{\beta}{-v}{k} \asite{\Comm{\Eld{\alpha+\beta+2\psi}{tu}{i+j}}{\Eld{\beta}{v}{k}}} \Eld{\psi}{-u/2}{j} \Eld{\alpha+\beta+\psi}{t}{i} \\
        \intertext{Commuting the $\Comm{\Rt{\alpha+\beta+2\psi}}{\Rt{\beta}}$ elements fully to the left, using that they commute with the $\Rt{\beta}$ and $\Rt{\psi}$ elements by assumption (\Cref{ass:b3-large:comm:expr:abs:alpha+2beta+2psi:B:1} and \Cref{ass:b3-large:comm:expr:abs:alpha+2beta+2psi:B:2}):}
        &= \asite{\Comm{\Eld{\alpha+\beta+2\psi}{tu}{i+j}}{\Eld{\beta}{v}{k}} \Comm{\Eld{\alpha+\beta+2\psi}{tu}{i+j}}{\Eld{\beta}{v}{k}}} \asite{\Eld{\psi}{-u/2}{j} \Eld{\beta}{v}{k} \Eld{\psi}{u}{j} \Eld{\beta}{-v}{k} \Eld{\psi}{-u/2}{j}} \Eld{\alpha+\beta+\psi}{t}{i} \\
        \intertext{Reducing using the doubling assumption (\Cref{ass:b3-large:comm:expr:abs:alpha+2beta+2psi:B:4}) for $\Comm{\Rt{\alpha+\beta+2\psi}}{\Rt{\beta}}$ and \Cnref{rel:b3-large:sub:expr:beta+psi}:}
        &= \Comm{\Eld{\alpha+\beta+2\psi}{2tu}{i+j}}{\Eld{\beta}{v}{k}} \Eld{\beta+\psi}{uv}{j+k} \Eld{\alpha+\beta+\psi}{t}{i}.
    \end{align*}
\end{proof}

\begin{proposition}[Sufficient conditions for commutator of $\Rt{\alpha+\beta+2\psi}$ and $\Rt{\beta+2\psi}$]\label{rel:b3-large:comm:impl:beta+2psi:alpha+beta+2psi}
    Let $i \in [3], j \in [1], k \in [3]$. Suppose that:
    \[
    \Quant{t,u \in \LiftScalars} \Comm{\Eld{\beta+2\psi}{t}{k}}{\Eld{\alpha+\beta+2\psi}{u}{i+j}} = \Id.
    \]
    Then:
    \[
    \Quant{t,u \in \LiftScalars} \Comm{\Eld{\beta+2\psi}{t}{i}}{\Eld{\alpha+\beta+2\psi}{u}{j+k}} = \Id.
    \]
\end{proposition}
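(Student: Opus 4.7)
My plan is to reduce the conclusion to the hypothesis via the standard expansion of an $\Rt{\alpha+\beta+2\psi}$ element as a commutator of $\Rt{\alpha}$ and $\Rt{\beta+2\psi}$ elements. Since $j \in [1]$ and $k \in [3]$, \Cnref{rel:b3-large:expr:alpha+beta+2psi:alpha:beta+2psi} lets me rewrite
\[ \Eld{\alpha+\beta+2\psi}{u}{j+k} = \Eld{\alpha}{1}{j} \Eld{\beta+2\psi}{u}{k} \Eld{\alpha}{-1}{j} \Eld{\beta+2\psi}{-u}{k}, \]
so the goal becomes to verify that $\Eld{\beta+2\psi}{t}{i}$ commutes with this four-factor word.

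Next, I would move $\Eld{\beta+2\psi}{t}{i}$ rightward past each factor. Passing through the two $\Rt{\beta+2\psi}$ factors (of degree $k$) is free by \Cnref{rel:b3-large:sub:comm:self:beta+2psi}; passing through each $\Rt{\alpha}$ factor produces a stray $\Rt{\alpha+\beta+2\psi}$ element of degree $i+j$, as read off \Cref{prop:b3-large:est:alpha+beta+2psi}. Because the two $\Rt{\alpha}$ factors have opposite signs, the two stray elements come out as inverses of each other, namely $d = \Eld{\alpha+\beta+2\psi}{-t}{i+j}$ emerging from the leftmost $\Rt{\alpha}$ and $d^{-1} = \Eld{\alpha+\beta+2\psi}{t}{i+j}$ emerging from the middle $\Rt{\alpha}$. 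After these manipulations, the expression takes the form $d \cdot \Eld{\alpha}{1}{j} \Eld{\beta+2\psi}{u}{k} \cdot d^{-1} \cdot \Eld{\alpha}{-1}{j} \Eld{\beta+2\psi}{-u}{k} \cdot \Eld{\beta+2\psi}{t}{i}$.

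Finally, I would slide $d^{-1}$ leftward to meet $d$. This is the step where the hypothesis enters: by assumption, $d$ commutes with $\Eld{\beta+2\psi}{u}{k}$, and by \Cnref{rel:b3-large:comm:alpha:alpha+beta+2psi}, $d$ also commutes with $\Eld{\alpha}{\pm1}{j}$. So $d^{-1}$ passes freely through the two intermediate factors, cancels with $d$ by \Cnref{rel:b3-large:inv-doub:alpha+beta+2psi}, and the original four-factor word reappears with $\Eld{\beta+2\psi}{t}{i}$ now on the right. Reassembling via \Cnref{rel:b3-large:expr:alpha+beta+2psi:alpha:beta+2psi} gives the desired commutation.

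The main obstacle, such as it is, is bookkeeping the signs so that the two stray $\Rt{\alpha+\beta+2\psi}$ elements are honestly inverses; this relies on the algebraic fact that \Cref{prop:b3-large:est:alpha+beta+2psi} gives $\Comm{\Eld{\alpha}{s}{j}}{\Eld{\beta+2\psi}{t}{i}} = \Eld{\alpha+\beta+2\psi}{st}{i+j}$ linearly in $s$, together with \Cnref{rel:b3-large:inv-doub:alpha+beta+2psi}. Aside from this, every other move in the argument is immediate from an in-subgroup commutation relation or an already-established commutator relation between $\Rt{\alpha}$ and $\Rt{\alpha+\beta+2\psi}$.
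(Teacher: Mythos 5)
Your proof is correct and is essentially the mirror image of the paper's argument: both proofs expand $\Eld{\alpha+\beta+2\psi}{u}{j+k}$ via \Cnref{rel:b3-large:expr:alpha+beta+2psi:alpha:beta+2psi}, slide the degree-$i$ $\Rt{\beta+2\psi}$ element through, produce a pair of inverse stray $\Rt{\alpha+\beta+2\psi}$ elements of degree $i+j$, cancel them using the hypothesis, and reassemble. The one small difference is that you place the factor $u$ on the $\Rt{\beta+2\psi}$ terms and $1$ on the $\Rt{\alpha}$ terms (the paper does the opposite), and you let the strays bracket both $\Eld{\alpha}{1}{j}$ and $\Eld{\beta+2\psi}{u}{k}$, so you need \Cnref{rel:b3-large:comm:alpha:alpha+beta+2psi} in addition to the hypothesis; the paper arranges the strays to sandwich only the middle $\Rt{\beta+2\psi}$ factor, so it needs only the hypothesis. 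Since \Cnref{rel:b3-large:comm:alpha:alpha+beta+2psi} is already established at this point, both derivations are fine.
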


\begin{proof}
    We write the product of $\Rt{\alpha+\beta+2\psi}$ and $\Rt{\beta+2\psi}$ elements in the desideratum:
    \begin{align*}
        & \asite{\Eld{\alpha+\beta+2\psi}{u}{j+k}} \Eld{\beta+2\psi}{t}{i} \\
        \intertext{Expanding the $\Rt{\alpha+\beta+2\psi}$ element into a product of $\Rt{\alpha}$ and $\Rt{\beta+2\psi}$ elements (\Cnref{rel:b3-large:expr:alpha+beta+2psi:alpha:beta+2psi}):}
        &=  \Eld{\alpha}u{j} \Eld{\beta+2\psi}1{k}  \Eld{\alpha}{-u}{j} \Eld{\beta+2\psi}{-1}{k} \asite{\Eld{\beta+2\psi}{t}{i}} \\
        \intertext{Commuting the right $\Rt{\beta+2\psi}$ element fully to the left, creating no commutators with $\Rt{\beta+2\psi}$ elements (\Cnref{rel:b3-large:sub:comm:self:beta+2psi}) and $\Rt{\alpha+\beta+2\psi}$ commutators with $\Rt{\alpha}$ elements (\Cnref{rel:b3-large:order:alpha:beta+2psi}):}
        &= \Eld{\beta+2\psi}{t}{i} \Eld{\alpha}u{j} \asite{\Eld{\alpha+\beta+2\psi}{tu}{i+j} \Eld{\beta+2\psi}{1}{k} \Eld{\alpha+\beta+2\psi}{-tu}{i+j}} \Eld{\alpha}{-u}{j} \Eld{\beta+2\psi}{-1}{k} . \\
        \intertext{Now by assumption, the $\Rt{\alpha+\beta+2\psi}$ elements and the $\Rt{\beta+2\psi}$ element commute, so we can cancel the $\Rt{\alpha+\beta+2\psi}$ elements (\Cnref{rel:b3-large:inv-doub:alpha+beta+2psi}):}
        &= \Eld{\beta+2\psi}{t}{i} \asite{\Eld{\alpha}u{j} \Eld{\beta+2\psi}{1}{k} \Eld{\alpha}{-u}{j} \Eld{\beta+2\psi}{-1}{k}}  \\
        \intertext{Reducing back into an $\Rt{\alpha+\beta+2\psi}$ element (\Cnref{rel:b3-large:expr:alpha+beta+2psi:alpha:beta+2psi}):}
        &=  \Eld{\alpha+\beta+2\psi}{u}{j+k} \Eld{\alpha}u{j}.
    \end{align*}
\end{proof}

We prove two commutators of $\Rt{\beta+2\psi}$ and $\Rt{\alpha+\beta+2\psi}$ here; only one is used in this section, while one is used in a later section:

\begin{relation}[\RelNameBLg\RelNamePart\RelNameCommute{\Rt{\beta+2\psi}}{\Rt{\alpha+\beta+2\psi}}]\label{cor:b3-large:comm:cases:beta+2psi:alpha+beta+2psi}
    \begin{align*}
        \Quant{t,u \in \LiftScalars} \Comm{\Eld{\beta+2\psi}{t}{2}}{\Eld{\alpha+\beta+2\psi}{u}{1}} &= \Id, \\
        \Quant{t,u \in \LiftScalars} \Comm{\Eld{\beta+2\psi}{t}{0}}{\Eld{\alpha+\beta+2\psi}{u}{2}} &= \Id.
    \end{align*}
\end{relation}

\begin{proof}
In both cases, we use \Cnref{rel:b3-large:comm:impl:beta+2psi:alpha+beta+2psi}.
\begin{itemize}
    \item For the first equation, we apply the previous proposition (\Cnref{rel:b3-large:comm:impl:beta+2psi:alpha+beta+2psi}) with $i=2, j = 0, k= 1$. For this, we need \[
    \Comm{\Eld{\beta+2\psi}{t}{1}}{\Eld{\alpha+\beta+2\psi}{u}{2}} = \Id,
    \]
    which follows from homogeneous lifting (\Cnref{rel:b3-large:comm:lift:beta+2psi:alpha+beta+2psi} with $i=1,j=1,k=0$).
    \item For the second equation, we apply the previous proposition (\Cnref{rel:b3-large:comm:impl:beta+2psi:alpha+beta+2psi}) with $i=0, j=1, k=1$. For this, we need \[
    \Comm{\Eld{\beta+2\psi}{t}{1}}{\Eld{\alpha+\beta+2\psi}{u}{1}} = \Id,
    \]
    which follows from homogeneous lifting (\Cnref{rel:b3-large:comm:lift:beta+2psi:alpha+beta+2psi} with $i=0,j=1,k=0$).
\end{itemize}
\end{proof}

\begin{proposition}[Sufficient conditions for commutator of $\Rt{\psi}$ and $\Comm{\Rt{\alpha+\beta+2\psi}}{\Rt{\beta}}$]\label{rel:b3-large:comm:impl:psi:alpha+beta+2psi:beta}
    Let $i\in[1],j\in[4],k\in[1]$. Suppose that:
    \begin{subequations}
    \begin{gather}
        \Quant{t,u \in \LiftScalars} \Comm{\Eld{\beta+2\psi}{t}{2i+k}}{\Eld{\alpha+\beta+2\psi}{u}{j}} = \Id \label{ass:b3-large:comm:impl:psi:alpha+beta+2psi:beta:1}, \\
        \Quant{t,u \in \LiftScalars} \Comm{\Eld{\beta+\psi}{t}{i+k}}{\Eld{\alpha+\beta+2\psi}{u}{j}} = \Id \label{ass:b3-large:comm:impl:psi:alpha+beta+2psi:beta:2}.
    \end{gather}
    \end{subequations}
    Then:
    \[
    \Quant{t,u,v \in \LiftScalars} \Comm{\Eld{\psi}{t}{i}}{\Comm{\Eld{\alpha+\beta+2\psi}{u}{j}}{\Eld{\beta}{v}{k}}} = \Id.
    \]
\end{proposition}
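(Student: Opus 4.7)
The plan is to show directly that $\Eld{\psi}{t}{i}$ commutes with the word $\Eld{\alpha+\beta+2\psi}{u}{j} \Eld{\beta}{v}{k} \Eld{\alpha+\beta+2\psi}{-u}{j} \Eld{\beta}{-v}{k}$ (which is just the expansion of the inner commutator via \Cnref{rel:b3-large:inv-doub:alpha+beta+2psi} and \Cnref{rel:b3-large:sub:inv:beta}); this is equivalent to the desideratum. The mechanism is the same as for many of the ``$\Rt{\psi}$ commutes with $\Comm{\cdot}{\cdot}$'' relations earlier in this section.

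First, I would move $\Eld{\psi}{t}{i}$ rightward through the word. Crossing each of the two $\Rt{\alpha+\beta+2\psi}$ factors introduces no residue, by \Cnref{rel:b3-large:comm:psi:alpha+beta+2psi}. Crossing each of the two $\Rt{\beta}$ factors introduces, by \Cnref{rel:b3-large:order:psi:beta}, a pair of commutator residues of types $\Rt{\beta+\psi}$ (at degree $i+k$) and $\Rt{\beta+2\psi}$ (at degree $2i+k$); note that these are precisely the degrees appearing in the two hypotheses \Cref{ass:b3-large:comm:impl:psi:alpha+beta+2psi:beta:1} and \Cref{ass:b3-large:comm:impl:psi:alpha+beta+2psi:beta:2}. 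The coefficients on the two pairs of residues are negatives of one another (one pair coming from each $\Rt{\beta}$).

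Next, I would pull each of those four residues to the extreme right. The residues commute with the remaining $\Rt{\beta}$ factors by in-subgroup relations (\Cnref{rel:b3-large:sub:comm:beta:beta+psi} and \Cnref{rel:b3-large:sub:comm:beta:beta+2psi}) and with the remaining $\Rt{\alpha+\beta+2\psi}$ factors by the two hypotheses. Moreover, $\Rt{\beta+\psi}$ and $\Rt{\beta+2\psi}$ residues commute with each other (\Cnref{rel:b3-large:sub:comm:beta+psi:beta+2psi}). Once collected, the two $\Rt{\beta+\psi}$ residues cancel by \Cnref{rel:b3-large:sub:inv:beta+psi}, and the two $\Rt{\beta+2\psi}$ residues cancel by \Cnref{rel:b3-large:sub:inv:beta+2psi}. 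What remains is exactly $\Eld{\alpha+\beta+2\psi}{u}{j} \Eld{\beta}{v}{k} \Eld{\alpha+\beta+2\psi}{-u}{j} \Eld{\beta}{-v}{k} \Eld{\psi}{t}{i}$, as needed.

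There is no substantive obstacle here: the main thing to verify is that the bookkeeping of degrees works out, i.e.\ that every residue produced matches the exact degree at which the hypotheses guarantee it commutes with $\Rt{\alpha+\beta+2\psi}$. Since the $\Rt{\beta+\psi}$ residue has degree $i+k$ (matching the second hypothesis) and the $\Rt{\beta+2\psi}$ residue has degree $2i+k$ (matching the first hypothesis), this goes through, and the derivation is a routine application of \Cref{cor:stitch}-style rewriting of length $O(1)$.
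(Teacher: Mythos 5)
Your proposal is correct and follows essentially the same route as the paper's proof: expand the inner commutator, slide $\Eld{\psi}{t}{i}$ rightward through the word (no residue across $\Rt{\alpha+\beta+2\psi}$ factors by \Cnref{rel:b3-large:comm:psi:alpha+beta+2psi}, and $\Rt{\beta+\psi}$/$\Rt{\beta+2\psi}$ residues across $\Rt{\beta}$ factors by \Cnref{rel:b3-large:order:psi:beta}), then commute the residues past the middle $\Rt{\alpha+\beta+2\psi}$ using the two hypotheses so that the opposite-signed pairs cancel by \Cnref{rel:b3-large:sub:inv:beta+psi} and \Cnref{rel:b3-large:sub:inv:beta+2psi}. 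The only cosmetic difference is that you collect all four residues at the extreme right, whereas the paper cancels each pair immediately after commuting it past the single intervening $\Rt{\alpha+\beta+2\psi}$ element (the residues already bracket that element symmetrically), so there is no need to invoke the in-subgroup commutators with the remaining $\Rt{\beta}$ factor; both versions are valid, constant-length derivations.
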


\begin{proof}
    We write a product of $\Rt{\psi}$ and $\Comm{\Rt{\alpha+\beta+2\psi}}{\Rt{\beta}}$ elements:
    \begin{align*}
        & \Eld{\psi}{t}{i} \asite{\Comm{\Eld{\alpha+\beta+2\psi}{u}{j}}{\Eld{\beta}{v}k}} \\
        \intertext{Expanding the $\Comm{\Rt{\alpha+\beta+2\psi}}{\Rt{\beta}}$ element using \Cnref{rel:b3-large:inv-doub:lift:beta:alpha+beta+2psi} and \Cnref{rel:b3-large:sub:inv:beta}:}
        &= \asite{\Eld{\psi}{t}{i}} \Eld{\alpha+\beta+2\psi}{u}{j} \Eld{\beta}{v}{k} \Eld{\alpha+\beta+2\psi}{-u}{j}\Eld{\beta}{-v}{k} \\
        \intertext{Moving the $\Rt{\psi}$ element from the left fully to the right, creating no commutators with $\Rt{\alpha+\beta+2\psi}$ elements (\Cnref{rel:b3-large:comm:psi:alpha+beta+2psi}) and $\Rt{\beta+\psi}$ and $\Rt{\beta+2\psi}$ commutators with $\Rt{\beta}$ elements (\Cnref{rel:b3-large:order:psi:beta}):}
        &= \Eld{\alpha+\beta+2\psi}{u}{j} \Eld{\beta}{v}{k} \asite{\Eld{\beta+\psi}{-tv}{i+k} \Eld{\beta+2\psi}{-t^2v}{2i+k} \Eld{\alpha+\beta+2\psi}{-u}{j} \Eld{\beta+2\psi}{t^2v}{2i+k} \Eld{\beta+\psi}{tv}{i+k}} \Eld{\beta}{-v}{k} \Eld{\psi}{t}{i} \\
        \intertext{By our assumptions, the $\Rt{\beta+2\psi}$ elements commute with the $\Rt{\alpha+\beta+2\psi}$ element (\Cref{ass:b3-large:comm:impl:psi:alpha+beta+2psi:beta:1}), so we can cancel them (\Cnref{rel:b3-large:sub:inv:beta+2psi}), and similarly with the $\Rt{\beta+\psi}$ elements (\Cref{ass:b3-large:comm:impl:psi:alpha+beta+2psi:beta:2} and \Cnref{rel:b3-large:sub:inv:beta+psi}):}
        &= \asite{\Eld{\alpha+\beta+2\psi}{u}{j} \Eld{\beta}{v}{k} \Eld{\alpha+\beta+2\psi}{-u}{j} \Eld{\beta}{-v}{k}} \Eld{\psi}{t}{i} \\
        \intertext{Reducing back into a $\Comm{\Rt{\alpha+\beta+2\psi}}{\Rt{\beta}}$ element:}
        &= \Comm{\Eld{\alpha+\beta+2\psi}{u}{j}}{\Eld{\beta}{v}k} \Eld{\psi}{t}{i}.
    \end{align*}
\end{proof}

\begin{relation}[\RelNameBLg\RelNamePart\RelNameCommute{\Rt{\psi}}{\Comm{\Rt{\alpha+\beta+2\psi}}{\Rt{\beta}}}]\label{cor:b3-large:comm:case:psi:alpha+beta+2psi:beta:110}
\[
\Quant{t,u,v \in \LiftScalars} \Comm{\Eld{\psi}{v}{1}}{\Comm{\Eld{\alpha+\beta+2\psi}{t}{1}}{\Eld{\beta}{u}{0}}} = \Id.
\]    
\end{relation}

\begin{proof}
    We apply the previous proposition (\Cnref{rel:b3-large:comm:impl:psi:alpha+beta+2psi:beta}) with $i=1, j=1,k=0$. For the desideratum, we need that: 
    \begin{align*}
    \Quant{t,u \in \LiftScalars} \Comm{\Eld{\beta+2\psi}{t}{2}}{\Eld{\alpha+\beta+2\psi}{u}{1}} &= \Id, \\
    \Quant{t,u\in \LiftScalars} \Comm{\Eld{\beta+\psi}{t}{1}}{\Eld{\alpha+\beta+2\psi}{u}{1}} &= \Id.
    \end{align*}
    The second equation follows from homogeneous lifting (\Cnref{rel:b3-large:comm:lift:beta+psi:alpha+beta+2psi} with $i=0,j=1,k=0$). For the first, we use \Cnref{cor:b3-large:comm:cases:beta+2psi:alpha+beta+2psi} above.
\end{proof}

\begin{relation}[\RelNameBLg\RelNamePart\newedgeB:\RelNameInterchange{\Rt{\alpha+2\beta+2\psi}}]\label{cor:b3-large:expr:alpha+2beta+2psi:B}
\begin{align*}
    \Quant{t,u,v \in \LiftScalars}\Comm{\Eld{\alpha+\beta+\psi}{0}{t}}{\Eld{\beta+\psi}{1}{uv}} &= \Comm{\Eld{\alpha+\beta+2\psi}{1}{2tu}}{\Eld{\beta}{0}{v}}.
\end{align*}
\end{relation}

\begin{proof}
    We invoke \Cnref{rel:b3-large:comm:expr:abs:alpha+2beta+2psi:B}, setting $i = 0, j=1, k=0$. For the desideratum, we need:
    \begin{gather*}
        \Comm{\Eld{\beta}{t}{0}}{\Comm{\Eld{\alpha+\beta+2\psi}{u}{1}}{\Eld{\beta}{v}{0}}} = \Id, \\
        \Comm{\Eld{\psi}{t}{1}}{\Comm{\Eld{\alpha+\beta+2\psi}{u}{1}}{\Eld{\beta}{v}{0}}} = \Id, \\
        \Comm{\Eld{\beta}{u}{0}}{\Eld{\alpha+\beta+2\psi}{-t}{1}} = \Comm{\Eld{\alpha+\beta+2\psi}{t}{1}}{\Eld{\beta}{u}{0}}, \\
        \Comm{\Eld{\alpha+\beta+2\psi}{t}{1}}{\Eld{\beta}{u}{0}} \Comm{\Eld{\alpha+\beta+2\psi}{t}{1}}{\Eld{\beta}{u}{0}} = \Comm{\Eld{\alpha+\beta+2\psi}{2t}{1}}{\Eld{\beta}{u}{0}}.
    \end{gather*}
    The second equation is precisely the previous corollary (\Cnref{cor:b3-large:comm:case:psi:alpha+beta+2psi:beta:110}). For the final two equations, we apply \Cnref{rel:b3-large:inv-doub:lift:beta:alpha+beta+2psi}. For the first equation, we use that
    \[
    \Comm{\Eld{\alpha+\beta+2\psi}{u}{1}}{\Eld{\beta}{v}{0}} = \Comm{\Eld{\alpha+\beta}{u}{1}}{\Eld{\beta+2\psi}{v}{0}}
    \]
    via \Cnref{rel:b3-large:lift:alpha+2beta+2psi} (cf. \Cref{fig:b3-large:def:alpha+2beta+2psi}) and then apply \Cnref{rel:b3-large:rough:comm:beta:alpha+beta:beta+2psi}.\footnote{Alternatively, we could use homogeneous lifting.}
\end{proof}

\paragraph{``\newedgeC'' edges}

\begin{proposition}[Sufficient conditions for commutator of $\Rt{\alpha+\beta}$ and $\Rt{\beta+2\psi}$]\label{rel:b3-large:comm:expr:abs:alpha+2beta+2psi:C}
    Let $i \in [2], j \in [1], k \in [2]$. Suppose that:
    \begin{subequations}
    \begin{gather}
    \Quant{t,u \in \LiftScalars} \Comm{\Eld{\alpha+\beta+2\psi}{t}{i+2j}}{\Eld{\beta+\psi}{u}{k}} = \Id \label{ass:b3-large:comm:expr:abs:alpha+2beta+2psi:1}, \\
    \Quant{t,u,v \in \LiftScalars} \Comm{\Eld{\psi}{t}{j}}{\Comm{\Eld{\alpha+\beta+\psi}{u}{i+j}}{\Eld{\beta+\psi}{v}{k}}} = \Id \label{ass:b3-large:comm:expr:abs:alpha+2beta+2psi:2}.
    \end{gather}
    \end{subequations}
    Then
    \[
    \Quant{t,u,v \in \LiftScalars} \Comm{\Eld{\alpha+\beta}{t}{i}}{\Eld{\beta+2\psi}{2uv}{j+k}} = \Comm{\Eld{\alpha+\beta+\psi}{tu}{i+j}}{\Eld{\beta+\psi}{v}{k}}
    \]
\end{proposition}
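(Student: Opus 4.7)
The plan is to mirror the argument of \Cref{rel:b3-large:comm:expr:abs:alpha+2beta+2psi:A} but run the manipulation in the reverse direction: rather than starting from an $\Eld{\alpha+\beta+\psi}{\cdot}{\cdot}\Eld{\beta+\psi}{\cdot}{\cdot}$ product and pushing towards an $\Rt{\alpha+\beta}\star\Rt{\beta+2\psi}$ commutator via the $\Rt{\psi}$-expansion of $\Rt{\alpha+\beta+\psi}$, I would start from $\Eld{\alpha+\beta}{t}{i}\Eld{\beta+2\psi}{2uv}{j+k}$ and push towards an $\Rt{\alpha+\beta+\psi}\star\Rt{\beta+\psi}$ commutator by using the $\Rt{\psi}$-and-$\Rt{\beta+\psi}$ expansion of $\Rt{\beta+2\psi}$ from \Cref{rel:b3-large:sub:expr:beta+2psi}. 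This naturally introduces the two ingredients on the right-hand side and makes the two assumptions the precise glue needed to close the calculation.

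Concretely, I would proceed as follows. First expand
\[
\Eld{\beta+2\psi}{2uv}{j+k} \;=\; \Eld{\psi}{u}{j}\Eld{\beta+\psi}{v}{k}\Eld{\psi}{-u}{j}\Eld{\beta+\psi}{-v}{k}
\]
using \Cref{rel:b3-large:sub:expr:beta+2psi}, and substitute into $\Comm{\Eld{\alpha+\beta}{t}{i}}{\Eld{\beta+2\psi}{2uv}{j+k}}$. Then move the $\Eld{\alpha+\beta}{t}{i}$ factor rightward through the $\Rt{\psi}$ and $\Rt{\beta+\psi}$ letters: crossing $\Rt{\psi}$ generates $\Rt{\alpha+\beta+\psi}$-and-$\Rt{\alpha+\beta+2\psi}$ factors via \Cref{rel:b3-large:order:alpha+beta:psi,rel:b3-large:order:psi:alpha+beta}, while crossing $\Rt{\beta+\psi}$ costs nothing by \Cref{rel:b3-large:comm:alpha+beta:beta+psi}. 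Symmetrically, rearrange the $\Eld{\alpha+\beta}{-t}{i}$ factor. After this, the word decomposes into (i) a core $\Eld{\beta+\psi}{v}{k}\Eld{\alpha+\beta+\psi}{tu}{i+j}\Eld{\beta+\psi}{-v}{k}\Eld{\alpha+\beta+\psi}{-tu}{i+j}$-style block, which by \Cref{prop:b3-large:est:alpha+beta+psi} (and cyclic/inverse manipulation via \Cref{rel:b3-large:inv-doub:alpha+beta+psi}) is exactly the commutator $\Comm{\Eld{\alpha+\beta+\psi}{tu}{i+j}}{\Eld{\beta+\psi}{v}{k}}$ up to conjugation, and (ii) a collection of ``garbage'' terms involving $\Rt{\alpha+\beta+2\psi}$ elements interleaved with $\Rt{\beta+\psi}$ and $\Rt{\psi}$ elements.

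The heart of the proof is showing that the garbage cancels. This is where the two hypotheses enter: assumption \eqref{ass:b3-large:comm:expr:abs:alpha+2beta+2psi:1} lets me slide every $\Rt{\alpha+\beta+2\psi}$-factor past every $\Rt{\beta+\psi}$-factor for free, and assumption \eqref{ass:b3-large:comm:expr:abs:alpha+2beta+2psi:2} lets me commute $\Rt{\psi}$ through the $\Comm{\Rt{\alpha+\beta+\psi}}{\Rt{\beta+\psi}}$ block, so that I can reorganize the word to bring paired inverse $\Rt{\alpha+\beta+2\psi}$-factors together, where they cancel via \Cref{rel:b3-large:inv-doub:alpha+beta+2psi}. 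The auxiliary in-subgroup facts I will need during these moves are standard: $\Rt{\beta+\psi}$ commutes with $\Rt{\beta+2\psi}$ (\Cref{rel:b3-large:sub:comm:beta+psi:beta+2psi}), $\Rt{\psi}$ commutes with $\Rt{\beta+2\psi}$ (\Cref{rel:b3-large:sub:comm:psi:beta+2psi}), and the interchange formula \Cref{rel:b3-large:expr:alpha+beta+2psi:alpha:beta+2psi} linking $\Comm{\Rt{\alpha+\beta}}{\Rt{\beta+2\psi}}$ to $\Rt{\alpha+\beta+2\psi}$.

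The main obstacle I anticipate is the same bookkeeping headache as in the proofs of \Cref{rel:b3-large:comm:expr:abs:alpha+2beta+2psi:A} and \Cref{rel:b3-large:comm:expr:abs:alpha+2beta+2psi:B}: tracking all the $\Rt{\alpha+\beta+2\psi}$ commutator terms that spawn when $\Rt{\alpha+\beta}$ passes through $\Rt{\beta+2\psi}$ fragments, and arranging the letters so that the doubling identity (second clause of \Cref{rel:b3-large:inv-doub:alpha+beta+2psi}) can be invoked to merge a pair of identical $\Rt{\alpha+\beta+2\psi}$ commutators into one. The two stated hypotheses are designed precisely so that these $\Rt{\alpha+\beta+2\psi}$-factors freely float past the remaining $\Rt{\beta+\psi}$ letters and the $\Comm{\Rt{\alpha+\beta+\psi}}{\Rt{\beta+\psi}}$ block; once that is verified, the remainder of the derivation is a routine sequence of order-change and cancellation steps ending in the advertised equality.
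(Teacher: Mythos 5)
Your proposal matches the paper's proof essentially step for step: start from the product $\Eld{\alpha+\beta}{t}{i}\,\Eld{\beta+2\psi}{2uv}{j+k}$, expand $\Rt{\beta+2\psi}$ as a $\Rt{\psi}$-$\Rt{\beta+\psi}$-$\Rt{\psi}$-$\Rt{\beta+\psi}$ word via \Cref{rel:b3-large:sub:expr:beta+2psi}, push $\Eld{\alpha+\beta}{t}{i}$ rightward (spawning $\Rt{\alpha+\beta+\psi}$ and $\Rt{\alpha+\beta+2\psi}$ debris across the $\Rt{\psi}$ letters while freely crossing $\Rt{\beta+\psi}$), use \eqref{ass:b3-large:comm:expr:abs:alpha+2beta+2psi:1} to slide the $\Rt{\alpha+\beta+2\psi}$ debris across $\Rt{\beta+\psi}$ and cancel, form the $\Comm{\Rt{\alpha+\beta+\psi}}{\Rt{\beta+\psi}}$ block, and invoke \eqref{ass:b3-large:comm:expr:abs:alpha+2beta+2psi:2} to float it past the leading $\Rt{\psi}$ before reassembling $\Rt{\beta+2\psi}$. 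The only cosmetic differences are that the paper shows $ab=cba$ rather than expanding the full four-letter commutator, and a couple of the auxiliary in-subgroup relations you list (e.g.\ \Cref{rel:b3-large:sub:comm:psi:beta+2psi}, \Cref{rel:b3-large:expr:alpha+beta+2psi:alpha:beta+2psi}) aren't actually needed — neither affects correctness.
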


\begin{proof}
By \Cnref{eq:comm:left:str}, we have:
\begin{equation}\label{subeq:b3-large:comm:expr:abs:alpha+2beta+2psi:C:3}
    \Eld{\alpha+\beta+\psi}{tu}{i+j} \Eld{\beta+\psi}{v}{k} = \Comm{ \Eld{\alpha+\beta+\psi}{tu}{i+j}}{\Eld{\beta+\psi}{v}{k}} \Eld{\beta+\psi}{v}{k} \Eld{\alpha+\beta+\psi}{tu}{i+j} 
\end{equation}

    Now, we write a product of $\Rt{\alpha+\beta}$ and $\Rt{\beta+2\psi}$ elements:
    \begin{align*}
        & \Eld{\alpha+\beta}{t}{i} \asite{\Eld{\beta+2\psi}{2uv}{j+k}} \\
        \intertext{Expanding the $\Rt{\beta+2\psi}$ element as a product of $\Rt{\beta+\psi}$ and $\Rt{\psi}$ elements (\Cnref{rel:b3-large:sub:expr:beta+2psi}):}
        &= \asite{\Eld{\alpha+\beta}{t}{i}}  \Eld{\psi}{u}{j} \Eld{\beta+\psi}{v}{k} \Eld{\psi}{-u}{j}\Eld{\beta+\psi}{-v}{k} \\
        \intertext{Moving the $\Rt{\alpha+\beta}$ element on the left fully to the right, creating no commutators with $\Rt{\beta+\psi}$ elements (\Cnref{rel:b3-large:comm:alpha+beta:beta+psi}) and $\Rt{\alpha+\beta+\psi}$ and $\Rt{\alpha+\beta+2\psi}$ commutators with $\Rt{\psi}$ elements (\Cnref{rel:b3-large:order:alpha+beta:psi}):}
        &=  \Eld{\psi}{u}{j} \Eld{\alpha+\beta+\psi}{tu}{i+j} \asite{\Eld{\alpha+\beta+2\psi}{-tu^2}{i+2j} \Eld{\beta+\psi}{v}{k} \Eld{\alpha+\beta+2\psi}{tu^2}{i+2j}} \Eld{\alpha+\beta+\psi}{-tu}{i+j} \Eld{\psi}{-u}{j} \Eld{\beta+\psi}{-v}{k} \Eld{\alpha+\beta}{t}{i}
        \intertext{By assumption (\Cref{ass:b3-large:comm:expr:abs:alpha+2beta+2psi:1}), we can commute the $\Rt{\alpha+\beta+2\psi}$ elements together across the $\Rt{\beta+\psi}$ element, and therefore cancel them with \Cnref{rel:b3-large:inv-doub:alpha+beta+2psi}:}
        &=  \Eld{\psi}{u}{j} \asite{\Eld{\alpha+\beta+\psi}{tu}{i+j} \Eld{\beta+\psi}{v}{k}} \Eld{\alpha+\beta+\psi}{-tu}{i+j} \Eld{\psi}{-u}{j} \Eld{\beta+\psi}{-v}{k} \Eld{\alpha+\beta}{t}{i} \\
        \intertext{and now we commute the $\Rt{\alpha+\beta+\psi}$ element across the $\Rt{\beta+\psi}$ element, introducing a commutator (\Cref{subeq:b3-large:comm:expr:abs:alpha+2beta+2psi:C:3}), and cancel the $\Rt{\alpha+\beta+\psi}$ elements with \Cnref{rel:b3-large:inv-doub:alpha+beta+psi}:}
        &=  \Eld{\psi}{u}{j} \Comm{\Eld{\alpha+\beta+\psi}{tu}{i+j}}{\Eld{\beta+\psi}{v}{k}} \Eld{\beta+\psi}{v}{k} \Eld{\psi}{-u}{j} \Eld{\beta+\psi}{-v}{k}  \Eld{\alpha+\beta}{t}{i} \\
        \intertext{Finally, the $\Comm{\Rt{\alpha+\beta+\psi}}{\Rt{\beta+\psi}}$ element commutes with the $\Rt{\psi}$ element to its left by assumption (\Cref{ass:b3-large:comm:expr:abs:alpha+2beta+2psi:2}), leaving:}
        &= \Comm{\Eld{\alpha+\beta+\psi}{tu}{i+j}}{ \Eld{\beta+\psi}{v}{k}} \asite{\Eld{\psi}{u}{j}   \Eld{\beta+\psi}{v}{k} \Eld{\psi}{-u}{j}\Eld{\beta+\psi}{-v}{k}} \Eld{\alpha+\beta}{t}{i}  \\
        \intertext{Reducing back into a $\Rt{\beta+2\psi}$ element (\Cnref{rel:b3-large:sub:expr:beta+2psi}):}
        &= \Comm{\Eld{\alpha+\beta+\psi}{tu}{i+j}}{ \Eld{\beta+\psi}{v}{k}} \Eld{\beta+2\psi}{2uv}{j+k} \Eld{\alpha+\beta}{t}{i},
    \end{align*}
    as desired.
\end{proof}

\begin{proposition}[Sufficient condition for commutator of $\Rt{\alpha+\beta+2\psi}$ and $\Rt{\beta+\psi}$]\label{rel:b3-large:comm:impl:alpha+beta+2psi:beta+psi}
Let $i\in [4],j\in[1],k\in[1]$. Suppose that:
\begin{subequations}
    \begin{gather}
        \Quant{t,u,v \in \LiftScalars} \Comm{\Comm{\Eld{\alpha+\beta+2\psi}{t}{i}}{\Eld{\beta}{u}{j}}}{\Eld{\psi}{v}{k}} = \Id \label{ass:b3-large:comm:impl:alpha+beta+2psi:beta+psi:1}, \\
        \Quant{t,u \in \LiftScalars} \Comm{\Eld{\beta}{-u}{j}} {\Eld{\alpha+\beta+2\psi}{t}{i}} = \Comm{\Eld{\alpha+\beta+2\psi}{t}{i}}{\Eld{\beta}{u}{j}} \label{ass:b3-large:comm:impl:alpha+beta+2psi:beta+psi:2}, \\
        \Quant{t,u \in \LiftScalars} \Comm{\Eld{\alpha+\beta+2\psi}{t}{i}}{\Eld{\beta}{u}{j}} \Comm{\Eld{\alpha+\beta+2\psi}{-t}{i}}{\Eld{\beta}{u}{j}} = \Id \label{ass:b3-large:comm:impl:alpha+beta+2psi:beta+psi:3}.
    \end{gather}
\end{subequations}
Then
\[
\Quant{t,u \in \LiftScalars} \Comm{\Eld{\alpha+\beta+2\psi}{t}{i}}{\Eld{\beta+\psi}{u}{j+k}} = \Id.
\]
\end{proposition}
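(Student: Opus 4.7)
The plan is to expand $\Eld{\beta+\psi}{u}{j+k}$ via \Cref{rel:b3-large:sub:expr:beta+psi} (taking the ``$t$'' of that relation to be $1$) as $\Eld{\psi}{-1/2}{k} \Eld{\beta}{u}{j} \Eld{\psi}{1}{k} \Eld{\beta}{-u}{j} \Eld{\psi}{-1/2}{k}$, then slide $\Eld{\alpha+\beta+2\psi}{t}{i}$ from the left fully to the right through the resulting product, following the same template as the previous proposition (\Cref{rel:b3-large:comm:impl:psi:alpha+beta+2psi:beta}). Each crossing of a $\Eld{\psi}$ factor produces no commutator, by \Cref{rel:b3-large:comm:psi:alpha+beta+2psi}. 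Crossing $\Eld{\beta}{u}{j}$ deposits $c \coloneqq \Comm{\Eld{\alpha+\beta+2\psi}{t}{i}}{\Eld{\beta}{u}{j}}$ on the left, and crossing $\Eld{\beta}{-u}{j}$ deposits $\Comm{\Eld{\alpha+\beta+2\psi}{t}{i}}{\Eld{\beta}{-u}{j}}$, which, using assumption~\eqref{ass:b3-large:comm:impl:alpha+beta+2psi:beta+psi:2} together with the general identity $[x,y] = [y,x]^{-1}$, equals $c^{-1}$.

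\textbf{Cancellation.} After the slide, the product reads
\[
\Eld{\psi}{-1/2}{k} \, c \, \Eld{\beta}{u}{j} \, \Eld{\psi}{1}{k} \, c^{-1} \, \Eld{\beta}{-u}{j} \, \Eld{\psi}{-1/2}{k} \, \Eld{\alpha+\beta+2\psi}{t}{i},
\]
and I aim to cancel the $c$ and $c^{-1}$. This requires $c$ to commute with the intervening $\Eld{\beta}{u}{j}$ and $\Eld{\psi}{1}{k}$. Commutation with $\Eld{\psi}$ is exactly assumption~\eqref{ass:b3-large:comm:impl:alpha+beta+2psi:beta+psi:1} (unpacked from the nested-commutator form). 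Commutation with $\Eld{\beta}{u}{j}$ I would derive from assumption~\eqref{ass:b3-large:comm:impl:alpha+beta+2psi:beta+psi:2} by expanding $\Comm{\Eld{\alpha+\beta+2\psi}{t}{i}}{\Eld{\beta}{u}{j} \Eld{\beta}{-u}{j}} = \Comm{\Eld{\alpha+\beta+2\psi}{t}{i}}{\Id} = \Id$ via the standard commutator identity $[x, yz] = [x,z] \cdot z^{-1}[x,y]z$ and substituting $\Comm{\Eld{\alpha+\beta+2\psi}{t}{i}}{\Eld{\beta}{-u}{j}} = c^{-1}$, which yields $\Eld{\beta}{-u}{j} \, c \, \Eld{\beta}{u}{j} = c$. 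With both commutativity facts in hand, $c^{-1}$ slides leftward across $\Eld{\beta}{u}{j} \Eld{\psi}{1}{k}$ to meet $c$, they annihilate (via \Cref{rel:b3-large:inv-doub:lift:beta:alpha+beta+2psi} or its analogue applied to $c$ itself), and the remaining $\Eld{\psi}$'s and $\Eld{\beta}$'s reassemble into $\Eld{\beta+\psi}{u}{j+k}$ via \Cref{rel:b3-large:sub:expr:beta+psi}.

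\textbf{Main obstacle.} The crux is the one commutator-identity step deriving $[c, \Eld{\beta}{u}{j}] = \Id$ from assumption~\eqref{ass:b3-large:comm:impl:alpha+beta+2psi:beta+psi:2}; the sliding is otherwise routine. Assumption~\eqref{ass:b3-large:comm:impl:alpha+beta+2psi:beta+psi:3} plays a parallel role that I expect will also be used in the explicit paper-style derivation (either to justify a sliding step in which $\Eld{\alpha+\beta+2\psi}{t}{i}$ passes $c$, or to provide a symmetric bookkeeping identity): expanding $\Comm{\Eld{\alpha+\beta+2\psi}{t}{i}\Eld{\alpha+\beta+2\psi}{-t}{i}}{\Eld{\beta}{u}{j}} = \Id$ via the dual identity $[xy, z] = [x, z]^y[y, z]$ yields that $\Eld{\alpha+\beta+2\psi}{t}{i}$ commutes with $c$. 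Together, the three assumptions say that $c$ is \emph{central} with respect to every factor that appears around it in the slid expression --- precisely the structural fact needed for the cancellation.
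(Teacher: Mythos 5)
Your argument is correct and follows the paper's template (expand $\Eld{\beta+\psi}{u}{j+k}$ via \Cref{rel:b3-large:sub:expr:beta+psi}, slide $\Eld{\alpha+\beta+2\psi}{t}{i}$ across, cancel the deposited commutators, reassemble), but you make a different choice of commutator-rewrite at the $\Rt{\beta}$ crossings. You use the left form (\Cref{eq:comm:left:str}) at both, so the deposited pair is $c = \Comm{\Eld{\alpha+\beta+2\psi}{t}{i}}{\Eld{\beta}{u}{j}}$ and $\Comm{\Eld{\alpha+\beta+2\psi}{t}{i}}{\Eld{\beta}{-u}{j}}$, separated by $\Eld{\beta}{u}{j}\Eld{\psi}{1}{k}$, which forces you to also commute $c$ past $\Eld{\beta}{u}{j}$; the paper's mid/right forms instead deposit $c$ and $\Comm{\Eld{\alpha+\beta+2\psi}{-t}{i}}{\Eld{\beta}{u}{j}}$ with only $\Eld{\psi}{1}{k}$ between them, avoiding that step. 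Your extra centralization fact is indeed a consequence of assumption~\eqref{ass:b3-large:comm:impl:alpha+beta+2psi:beta+psi:2}; in fact, after cancelling the inner $\Eld{\beta}{-u}{j}\Eld{\beta}{u}{j}$ in the expansion of $\Comm{\Eld{\beta}{-u}{j}}{\Eld{\alpha+\beta+2\psi}{t}{i}}$, that assumption says literally $\Eld{\beta}{-u}{j}\,c\,\Eld{\beta}{u}{j} = c$, so the $[x,yz]$-identity detour is not needed. (A minor slip there too: the identity as you quote it belongs to the $[x,y]=x^{-1}y^{-1}xy$ convention; for this paper's $[x,y]=xyx^{-1}y^{-1}$ the matching form is $[x,yz]=[x,y]\cdot y[x,z]y^{-1}$, though either gives the conclusion.) The most interesting consequence of your route is that assumption~\eqref{ass:b3-large:comm:impl:alpha+beta+2psi:beta+psi:3} is never invoked --- your hunch that its role is unclear was right, and it is dispensable given the other two. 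The paper needs it only because its commutator-rewrites produce $\Comm{\Eld{\alpha+\beta+2\psi}{-t}{i}}{\Eld{\beta}{u}{j}}$ rather than a word literally inverse to $c$, and assumption~\eqref{ass:b3-large:comm:impl:alpha+beta+2psi:beta+psi:3} is precisely the inversion relation needed to cancel that pair.
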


\begin{proof}
    From \Cnref{eq:comm:mid:inv} and the assumption (\Cref{ass:b3-large:comm:impl:alpha+beta+2psi:beta+psi:2}):
    \begin{equation}\label{rel:b3-large:comm:impl:alpha+beta+2psi:beta+psi:1}
        \Eld{\alpha+\beta+2\psi}{t}{i} \Eld{\beta}{u}{j} = \Eld{\beta}{u}{j} \asite{\Comm{\Eld{\alpha+\beta+2\psi}{t}{i}}{\Eld{\beta}{u}{j}}} \Eld{\alpha+\beta+2\psi}{t}{i}
    \end{equation}
    and from \Cnref{eq:comm:right:str}, \Cnref{rel:b3-large:inv-doub:alpha+beta+2psi}, and \Cnref{rel:b3-large:sub:inv:beta}:
    \begin{equation}\label{rel:b3-large:comm:impl:alpha+beta+2psi:beta+psi:2}
        \Eld{\alpha+\beta+2\psi}{t}{i} \Eld{\beta}{-u}{j} = \Eld{\beta}{-u}{j} \Eld{\alpha+\beta+2\psi}{t}{i} \asite{\Comm{\Eld{\alpha+\beta+2\psi}{-t}{i}}{\Eld{\beta}{u}{j}}}
    \end{equation}
    
    We write a product of $\Rt{\alpha+\beta+2\psi}$ and $\Rt{\beta+\psi}$ elements:
    \begin{align*}
        & \Eld{\alpha+\beta+2\psi}{t}{i} \asite{\Eld{\beta+\psi}{u}{j+k}} \\
        \intertext{Expand the $\Rt{\beta+\psi}$ element into a product of $\Rt{\beta}$ and $\Rt{\psi}$ elements (\Cnref{rel:b3-large:sub:expr:beta+psi}):}
        &= \asite{\Eld{\alpha+\beta+2\psi}{t}{i}} \Eld{\psi}{-1/2}{k} \Eld{\beta}{u}{j} \Eld{\psi}{1}{k} \Eld{\beta}{-u}{j} \Eld{\psi}{-1/2}{k} \\
        \intertext{Move the $\Rt{\alpha+\beta+2\psi}$ element from the left fully to the right, creating no commutators with $\Rt{\psi}$ elements (\Cnref{rel:b3-large:comm:psi:alpha+beta+2psi}) and commutators with $\Rt{\beta}$ elements (\Cref{rel:b3-large:comm:impl:alpha+beta+2psi:beta+psi:1,rel:b3-large:comm:impl:alpha+beta+2psi:beta+psi:2}):}
        &= \Eld{\psi}{-1/2}{k} \Eld{\beta}{u}{j} \asite{\Comm{\Eld{\alpha+\beta+2\psi}{t}{i}}{\Eld{\beta}{u}{j}} \Eld{\psi}{1}{k} \Comm{\Eld{\alpha+\beta+2\psi}{-t}{i}}{\Eld{\beta}{u}{j}}} \Eld{\beta}{-u}{j} \Eld{\psi}{-1/2}{k} \Eld{\alpha+\beta+2\psi}{t}{i} \\
        \intertext{Now, the $\Comm{\Rt{\alpha+\beta+2\psi}}{\Rt{\beta}}$ elements commute across the $\Rt{\psi}$ element by assumption (\Cref{ass:b3-large:comm:impl:alpha+beta+2psi:beta+psi:1}), so we can cancel them by assumption (\Cref{ass:b3-large:comm:impl:alpha+beta+2psi:beta+psi:3}):}
        &= \asite{\Eld{\psi}{-1/2}{k} \Eld{\beta}{u}{j} \Eld{\psi}{1}{k} \Eld{\beta}{-u}{j} \Eld{\psi}{-1/2}{k}} \Eld{\alpha+\beta+2\psi}{t}{i}. \\
        \intertext{Reduce back into a $\Rt{\beta+\psi}$ element (\Cnref{rel:b3-large:sub:expr:beta+psi}:}
        &= \Eld{\beta+\psi}{u}{j+k} \Eld{\alpha+\beta+2\psi}{t}{i}. \\
    \end{align*}
\end{proof}

\begin{relation}[\RelNameBLg\RelNamePart\RelNameCommute{\Rt{\beta+\psi}}{\Rt{\alpha+\beta+2\psi}}]\label{cor:b3-large:comm:case:psi:alpha+beta+2psi:beta+psi:20}
    \[ \Quant{t,u \in \LiftScalars} \Comm{\Eld{\alpha+\beta+2\psi}{t}{2}}{\Eld{\beta+\psi}{u}{0}} = \Id. \]
\end{relation}

\begin{proof}
    We apply the previous proposition (\Cnref{rel:b3-large:comm:impl:alpha+beta+2psi:beta+psi}) with $i=2,j=0,k=0$. We need
    \begin{gather*}
        \Quant{t,u,v \in \LiftScalars} \Comm{\Comm{\Eld{\alpha+\beta+2\psi}{t}{2}}{\Eld{\beta}{u}{0}}}{\Eld{\psi}{v}{0}} = \Id, \\
        \Quant{t,u \in \LiftScalars} \Comm{\Eld{\beta}{-u}{0}} {\Eld{\alpha+\beta+2\psi}{t}{2}} = \Comm{\Eld{\alpha+\beta+2\psi}{t}{2}}{\Eld{\beta}{u}{0}}, \\
        \Quant{t,u \in \LiftScalars} \Comm{\Eld{\alpha+\beta+2\psi}{t}{2}}{\Eld{\beta}{u}{0}} \Comm{\Eld{\alpha+\beta+2\psi}{-t}{2}}{\Eld{\beta}{u}{0}} = \Id.
    \end{gather*}
    These follow since
    \[
    \Quant{t,u \in \LiftScalars} \Comm{\Eld{\alpha+\beta+2\psi}{t}{2}}{\Eld{\beta}{u}{0}} = \Comm{\Eld{\alpha+\beta}{t}{1}}{\Eld{\beta+2\psi}{u}{1}}
    \]
    by \Cnref{rel:b3-large:lift:alpha+2beta+2psi} (cf. \Cref{fig:b3-large:def:alpha+2beta+2psi}), then we can use homogeneous lifting (\Cnref{rel:b3-large:comm:lift:psi:alpha+beta:beta+2psi} and \Cnref{rel:b3-large:inv-doub:lift:alpha+beta:beta+2psi} with $i=0,j=1,k=0$).
\end{proof}

\begin{relation}[\RelNameBLg\RelNamePart\newedgeC:\RelNameInterchange{\Rt{\alpha+2\beta+2\psi}}]\label{cor:b3-large:expr:alpha+2beta+2psi:C}
    \begin{align*}
        \Quant{t,u,v \in \LiftScalars} \Comm{\Eld{\alpha+\beta}{t}{0}}{\Eld{\beta+2\psi}{uv}{1}} &= \Comm{\Eld{\alpha+\beta+\psi}{tu}{1}}{\Eld{\beta+\psi}{2v}{0}}, \\
        \Quant{t,u,v \in \LiftScalars} \Comm{\Eld{\alpha+\beta}{t}{2}}{\Eld{\beta+2\psi}{uv}{0}} &= \Comm{\Eld{\alpha+\beta+\psi}{tu}{2}}{\Eld{\beta+\psi}{2v}{0}}.
    \end{align*}
\end{relation}

\begin{proof}
We apply \Cnref{rel:b3-large:comm:expr:abs:alpha+2beta+2psi:C}.

\begin{itemize}
    \item Applying \Cnref{rel:b3-large:comm:expr:abs:alpha+2beta+2psi:C} with $i=0, j = 1, k = 0$, we get the desideratum if we can show that:
    \begin{gather*}
    \Quant{t,u \in \LiftScalars} \Comm{\Eld{\alpha+\beta+2\psi}{t}{2}}{\Eld{\beta+\psi}{u}{0}} = \Id, \\
    \Quant{t,u,v \in \LiftScalars} \Comm{\Eld{\psi}{t}{1}}{\Comm{\Eld{\alpha+\beta+\psi}{u}{1}}{\Eld{\beta+\psi}{v}{0}}} = \Id.
    \end{gather*}
    The first equation is the previous corollary (\Cnref{cor:b3-large:comm:case:psi:alpha+beta+2psi:beta+psi:20}). For the latter, we rewrite \[
    \Comm{\Eld{\alpha+\beta+\psi}{u}{1}}{\Eld{\beta+\psi}{v}{0}} = \Comm{\Eld{\alpha+\beta+2\psi}{2u}{1}}{\Eld{\beta}{v}{0}}
    \]
    using \Cnref{rel:b3-large:lift:alpha+2beta+2psi} (cf. \Cref{fig:b3-large:def:alpha+2beta+2psi}). Then, we use \Cnref{cor:b3-large:comm:case:psi:alpha+beta+2psi:beta:110}.
    
    \item Applying \Cnref{rel:b3-large:comm:expr:abs:alpha+2beta+2psi:C} with $i=2, j = 0, k = 0$, we get the desideratum if we can show that:
    \begin{gather*}
    \Quant{t,u \in \LiftScalars} \Comm{\Eld{\alpha+\beta+2\psi}{t}{2}}{\Eld{\beta+\psi}{u}{0}} = \Id, \\
    \Quant{t,u,v \in \LiftScalars} \Comm{\Eld{\psi}{t}{0}}{\Comm{\Eld{\alpha+\beta+\psi}{u}{2}}{\Eld{\beta+\psi}{v}{0}}} = \Id.
    \end{gather*}
    The first is the previous corollary (\Cnref{cor:b3-large:comm:case:psi:alpha+beta+2psi:beta+psi:20}). For the latter, we rewrite
    \[
    \Comm{\Eld{\alpha+\beta+\psi}{u}{2}}{\Eld{\beta+\psi}{v}{0}} = \Comm{\Eld{\alpha+\beta+\psi}{u}{1}}{\Eld{\beta+\psi}{v}{1}}
    \]
    using \Cnref{rel:b3-large:lift:alpha+2beta+2psi} and \Cnref{cor:b3-large:expr:alpha+2beta+2psi:A} (cf. \Cref{fig:b3-large:def:alpha+2beta+2psi}). Then, we use homogeneous lifting (\Cnref{rel:b3-large:comm:lift:psi:alpha+beta:beta+2psi} with $i=0,j=1,k=0$).
\end{itemize}
\end{proof}

\paragraph{``\newedgeD'' edges}

\begin{proposition}[Sufficient conditions for commutator of $\Rt{\alpha+\beta+2\psi}$ and $\Rt{\beta}$]\label{rel:b3-large:comm:expr:abs:alpha+2beta+2psi:D}
Let $i \in [3], j \in [1], k \in [1]$. Suppose:
\begin{subequations}
\begin{gather}
\Quant{t,u \in \LiftScalars} \Comm{\Eld{\alpha+\beta+\psi}{t}{i}}{\Eld{\beta+2\psi}{u}{2j+k}} = \Id \label{ass:b3-large:comm:expr:abs:alpha+2beta+2psi:D:1}, \\
\Quant{t,u,v \in \LiftScalars} \Comm{\Comm{\Eld{\alpha+\beta+\psi}{t}{i}}{\Eld{\beta+\psi}{u}{j+k}}}{\Eld{\psi}{v}{j}} = \Id \label{ass:b3-large:comm:expr:abs:alpha+2beta+2psi:D:2}, \\
\Quant{t,u \in \LiftScalars} \Comm{\Eld{\beta+\psi}{-u}{j+k}}{\Eld{\alpha+\beta+\psi}{t}{i}}= \Comm{\Eld{\alpha+\beta+\psi}{t}{i}}{\Eld{\beta+\psi}{u}{j+k}} \label{ass:b3-large:comm:expr:abs:alpha+2beta+2psi:D:3}.
\end{gather}
\end{subequations}
Then:
\[
\Quant{t,u,v \in \LiftScalars} \Comm{\Eld{\alpha+\beta+2\psi}{2tu}{i+j}}{\Eld{\beta}{v}{k}} = \Comm{\Eld{\alpha+\beta+\psi}{t}{i}}{\Eld{\beta+\psi}{uv}{j+k}}.
\]
\end{proposition}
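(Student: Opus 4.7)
The plan is to adapt the ``expand-and-push'' strategy used in the proofs of Propositions B and C (\Cnref{rel:b3-large:comm:expr:abs:alpha+2beta+2psi:B} and \Cnref{rel:b3-large:comm:expr:abs:alpha+2beta+2psi:C}). Crucially, the three hypotheses here concern the commutator $\Comm{\Rt{\alpha+\beta+\psi}}{\Rt{\beta+\psi}}$ appearing on the right-hand side of the target equation, rather than the $\Comm{\Rt{\alpha+\beta+2\psi}}{\Rt{\beta}}$ of the left-hand side (as in the hypotheses of Proposition B). Consequently, I would run the derivation in the opposite direction, starting from the product $\Eld{\alpha+\beta+\psi}{t}{i}\Eld{\beta+\psi}{uv}{j+k}$ and aiming to transform it into $\Comm{\Eld{\alpha+\beta+2\psi}{2tu}{i+j}}{\Eld{\beta}{v}{k}}\cdot\Eld{\beta+\psi}{uv}{j+k}\Eld{\alpha+\beta+\psi}{t}{i}$.

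The first step is to expand $\Eld{\beta+\psi}{uv}{j+k}$ via \Cnref{rel:b3-large:sub:expr:beta+psi} into its five-element product of $\Rt{\psi}$ and $\Rt{\beta}$ elements. Next, I would commute the leftmost $\Eld{\alpha+\beta+\psi}{t}{i}$ fully rightward through this product: by \Cnref{rel:b3-large:comm:beta:alpha+beta+psi} it passes freely across the two $\Rt{\beta}$ elements, while by \Cnref{rel:b3-large:order:alpha+beta+psi:psi} each of the three crossings of a $\Rt{\psi}$ element produces an $\Rt{\alpha+\beta+2\psi}$ commutator. At this point I should have a string consisting of three $\Rt{\alpha+\beta+2\psi}$ elements interleaved with the original $\Rt{\psi}$ and $\Rt{\beta}$ elements, with $\Eld{\alpha+\beta+\psi}{t}{i}$ parked at the far right.

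The third step is to consolidate the three intermediate $\Rt{\alpha+\beta+2\psi}$ elements: I slide them past the $\Rt{\psi}$ factors (which is free by \Cnref{rel:b3-large:comm:psi:alpha+beta+2psi}) and, where they must cross $\Rt{\beta}$ elements, I introduce auxiliary $\Comm{\Rt{\alpha+\beta+2\psi}}{\Rt{\beta}}$ commutator terms. Using hypothesis~(D:1) to kill any stray $\Rt{\beta+2\psi}$ interactions with $\Rt{\alpha+\beta+\psi}$ that surface when re-expressing these commutators through \Cnref{rel:b3-large:lift:alpha+2beta+2psi}, together with hypotheses~(D:2) and~(D:3) to respectively move $\Comm{\Rt{\alpha+\beta+\psi}}{\Rt{\beta+\psi}}$-type terms across $\Rt{\psi}$ and to pair them off with their inverses, I should be able to collapse the interior string back into $\Eld{\beta+\psi}{uv}{j+k}$ via \Cnref{rel:b3-large:sub:expr:beta+psi}, leaving only a single clean $\Comm{\Eld{\alpha+\beta+2\psi}{2tu}{i+j}}{\Eld{\beta}{v}{k}}$ on the outside.

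The main obstacle will be the combinatorial bookkeeping of the three $\Rt{\alpha+\beta+2\psi}$ elements as they are commuted across the two $\Rt{\beta}$ elements: unlike the situation in \Cnref{rel:b3-large:comm:expr:abs:alpha+2beta+2psi:B}, where $\Comm{\Rt{\alpha+\beta+2\psi}}{\Rt{\beta}}$ was already available as a ``free'' term one could move around, here we need to both \emph{create} that commutator and verify it has the right doubling/inverse behavior ``on the nose.'' The hypotheses are, I believe, precisely calibrated so that the residual terms generated along the way (copies of $\Comm{\Rt{\alpha+\beta+\psi}}{\Rt{\beta+\psi}}$ and related expressions) either commute past the interior or cancel in inverse pairs --- much as assumptions \Cref{ass:b3-large:comm:expr:abs:alpha+2beta+2psi:B:3,ass:b3-large:comm:expr:abs:alpha+2beta+2psi:B:4} play that role in Proposition~B.
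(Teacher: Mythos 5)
Your heuristic for choosing the starting side of the derivation is exactly backwards, and this causes a genuine gap. In both Proposition~B (\Cnref{rel:b3-large:comm:expr:abs:alpha+2beta+2psi:B}) and Proposition~D, the derivation begins by expanding the \emph{left}-hand side of the target equation --- the $\Rt{\alpha+\beta+\psi}\cdot\Rt{\beta+\psi}$ product in~B, and the $\Rt{\alpha+\beta+2\psi}\cdot\Rt{\beta}$ product in~D --- and the hypotheses concern the commutator on the \emph{right}-hand side precisely because that is the commutator type that \emph{emerges} during the push-through. By running the derivation in the other direction you produce the wrong intermediate objects: expanding $\Eld{\beta+\psi}{uv}{j+k}$ via \Cnref{rel:b3-large:sub:expr:beta+psi} and pushing $\Eld{\alpha+\beta+\psi}{t}{i}$ rightward creates three $\Rt{\alpha+\beta+2\psi}$ elements of degree $i+j$ separated by the two $\Rt{\beta}$ elements of degree $k$. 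To gather and cancel them (their entries are $tu,\,-2tu,\,tu$, summing to zero, but they cannot all be brought together without crossing at least one $\Rt{\beta}$), you must introduce $\Comm{\Rt{\alpha+\beta+2\psi}}{\Rt{\beta}}$ terms at degree pair $(i+j,k)$, and you then need inverse/doubling/mobility facts about that commutator at that degree pair.

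But those facts are not available: in the actual application of Proposition~D (\Cnref{cor:b3-large:expr:alpha+2beta+2psi:D}, with $i=j=0$, $k=1$), the degree pair $(i+j,k)=(0,1)$ is one of the two pairs \emph{not} covered by homogeneous lifting (cf.\ \Cref{tab:b3-large:homog} and \Cref{fig:b3-large:def:alpha+2beta+2psi}), so neither \Cnref{rel:b3-large:lift:alpha+2beta+2psi} nor \Cnref{rel:b3-large:inv-doub:lift:beta:alpha+beta+2psi} applies there. Invoking \Cnref{rel:b3-large:lift:alpha+2beta+2psi} to convert these into $\Comm{\Rt{\alpha+\beta+\psi}}{\Rt{\beta+\psi}}$ form (as you suggest) is precisely the content that Proposition~D is proving --- so that route is circular. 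The paper avoids this by expanding $\Eld{\alpha+\beta+2\psi}{2tu}{i+j}$ via \Cnref{rel:b3-large:expr:alpha+beta+2psi:alpha+beta+psi:psi} (four elements: two $\Rt{\psi}$, two $\Rt{\alpha+\beta+\psi}$) and pushing $\Eld{\beta}{v}{k}$ leftward; the stray terms are then $\Rt{\beta+2\psi}$'s (killed by D:1 commuting past $\Rt{\alpha+\beta+\psi}$) and $\Rt{\beta+\psi}$'s (merged into a single $\Comm{\Rt{\alpha+\beta+\psi}}{\Rt{\beta+\psi}}$ via D:3, then pulled out via D:2 and \Cnref{rel:b3-large:rough:comm:beta:alpha+beta+psi:beta+psi}) --- exactly the commutator the hypotheses are calibrated for.
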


\begin{proof}
    From \Cnref{eq:comm:left:inv} and by assumption (\Cref{ass:b3-large:comm:expr:abs:alpha+2beta+2psi:D:3}), we have:
    \begin{equation}\label{subeq:b3-large:comm:expr:abs:alpha+2beta+2psi:D:2}
        \Eld{\beta+\psi}{-uv}{j+k} \Eld{\alpha+\beta+\psi}{t}{i} = \asite{\Comm{\Eld{\alpha+\beta+\psi}{t}{i}}{\Eld{\beta+\psi}{uv}{j+k}}} \Eld{\alpha+\beta+\psi}{t}{i} \Eld{\beta+\psi}{-uv}{j+k} 
    \end{equation}
    
    Now, we write a product of $\Rt{\alpha+\beta+2\psi}$ and $\Rt{\beta}$ elements:
    \begin{align*}
        &\asite{\Eld{\alpha+\beta+2\psi}{2tu}{i+j}} \Eld{\beta}{v}{k} \\
        \intertext{Expanding the $\Rt{\alpha+\beta+2\psi}$ element as a product of $\Rt{\alpha+\beta+\psi}$ and $\Rt{\psi}$ elements (\Cnref{rel:b3-large:expr:alpha+beta+2psi:alpha+beta+psi:psi}):}
        &= \Eld{\psi}{u}{j} \Eld{\alpha+\beta+\psi}{t}{i} \Eld{\psi}{-u}{j} \Eld{\alpha+\beta+\psi}{-t}{i}  \asite{\Eld{\beta}{v}{k}} \\
        \intertext{Commuting the $\Rt{\beta}$ element on the right fully to the left, creating no commutators with the $\Rt{\alpha+\beta+\psi}$ elements (\Cnref{rel:b3-large:comm:beta:alpha+beta+psi}) and $\Rt{\beta+\psi}$ and $\Rt{\beta+2\psi}$ commutators with the $\Rt{\psi}$ elements (\Cnref{rel:b3-large:order:psi:beta}):}
        &= \Eld{\beta}{v}{k} \Eld{\psi}{u}{j} \Eld{\beta+\psi}{-uv}{j+k} \asite{\Eld{\beta+2\psi}{-u^2 v}{2j+k} \Eld{\alpha+\beta+\psi}{t}{i}  \Eld{\beta+2\psi}{u^2 v}{2j+k}} \Eld{\beta+\psi}{uv}{j+k} \Eld{\psi}{-u}{j} \Eld{\alpha+\beta+\psi}{-t}{i} . \\
        \intertext{Now by assumption (\Cref{ass:b3-large:comm:expr:abs:alpha+2beta+2psi:D:1}), we can commute the $\Rt{\beta+2\psi}$ elements together across the $\Rt{\alpha+\beta+\psi}$ element, and then cancel them (\Cnref{rel:b3-large:sub:inv:beta+2psi}):}
        &= \Eld{\beta}{v}{k} \Eld{\psi}{u}{j} \asite{\Eld{\beta+\psi}{-uv}{j+k} \Eld{\alpha+\beta+\psi}{t}{i}} \Eld{\beta+\psi}{uv}{j+k} \Eld{\psi}{-u}{j} \Eld{\alpha+\beta+\psi}{-t}{i} . \\
        \intertext{Next we commute the two $\Rt{\beta+\psi}$ elements together, creating a generic commutator (\Cref{subeq:b3-large:comm:expr:abs:alpha+2beta+2psi:D:2}) and cancelling the $\Rt{\beta+\psi}$ elements (\Cnref{rel:b3-large:sub:inv:beta+psi}):}
        &= \Eld{\beta}{v}{k} \Eld{\psi}{u}{j} \asite{\Comm{\Eld{\alpha+\beta+\psi}{t}{i}}{\Eld{\beta+\psi}{uv}{j+k}}} \Eld{\alpha+\beta+\psi}{t}{i} \Eld{\psi}{-u}{j} \Eld{\alpha+\beta+\psi}{-t}{i} . \\
        \intertext{We can commute the $\Comm{\Rt{\alpha+\beta+\psi}}{\Rt{\beta+\psi}}$ element with the $\Rt{\psi}$ element by assumption (\Cref{ass:b3-large:comm:expr:abs:alpha+2beta+2psi:D:2}) and the $\Rt{\beta}$ element by \Cnref{rel:b3-large:rough:comm:beta:alpha+beta+psi:beta+psi}:}
        &= \Comm{\Eld{\alpha+\beta+\psi}{t}{i}}{\Eld{\beta+\psi}{uv}{j+k}} \Eld{\beta}{v}{k} \asite{\Eld{\psi}{u}{j}  \Eld{\alpha+\beta+\psi}{t}{i} \Eld{\psi}{-u}{j} \Eld{\alpha+\beta+\psi}{-t}{i}}. \\
        \intertext{Finally, we reduce to an $\Rt{\alpha+\beta+2\psi}$ element (\Cnref{rel:b3-large:expr:alpha+beta+2psi:alpha+beta+psi:psi}):}
        &= \Comm{\Eld{\alpha+\beta+\psi}{t}{i}}{\Eld{\beta+\psi}{uv}{j+k}} \Eld{\beta}{v}{k} \Eld{\alpha+\beta+2\psi}{2tu}{i+j}.
    \end{align*}
\end{proof}

\begin{proposition}[Sufficient conditions for commutator of $\Rt{\alpha+\beta+\psi}$ and $\Rt{\beta+2\psi}$]\label{rel:b3-large:comm:impl:alpha+beta+psi:beta+2psi:b}
    Let $i \in [1], j \in [2], k \in [3]$. Suppose:
    \[
    \Quant{t,u \in \LiftScalars} \Comm{\Eld{\alpha+\beta+2\psi}{u}{i+k}}{\Eld{\beta+\psi}{t}{j}} = \Id.
    \]
    Then:
    \[
    \Quant{t,u \in \LiftScalars} \Comm{\Eld{\alpha+\beta+\psi}{t}{i+j}}{\Eld{\beta+2\psi}{u}{k}}=\Id.
    \]
\end{proposition}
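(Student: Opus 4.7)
The plan is to follow the usual template: expand $\Eld{\alpha+\beta+\psi}{t}{i+j}$ into a product of $\Rt{\alpha}$ and $\Rt{\beta+\psi}$ elements using the second form of \Cnref{prop:b3-large:est:alpha+beta+psi} (which is legal because $i \in [1]$ and $j \in [2]$):
\[
\Eld{\alpha+\beta+\psi}{t}{i+j} = \Eld{\beta+\psi}{-1/2}{j} \Eld{\alpha}{t}{i} \Eld{\beta+\psi}{1}{j} \Eld{\alpha}{-t}{i} \Eld{\beta+\psi}{-1/2}{j}.
\]
Then form the product $\Eld{\alpha+\beta+\psi}{t}{i+j}\, \Eld{\beta+2\psi}{u}{k}$ and push the $\Rt{\beta+2\psi}$ element fully to the left.

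When passing through $\Rt{\beta+\psi}$ elements, no commutators are created (\Cnref{rel:b3-large:sub:comm:beta+psi:beta+2psi}); when passing through each of the two $\Rt{\alpha}$ elements, an $\Rt{\alpha+\beta+2\psi}$ commutator of degree $i+k$ is created via \Cnref{rel:b3-large:order:alpha:beta+2psi}. The two created $\Rt{\alpha+\beta+2\psi}$ elements have opposite coefficients $\pm tu$, and they end up on either side of the innermost $\Eld{\beta+\psi}{1}{j}\Eld{\alpha}{-t}{i}$ block (one on the left, one on the right of it), with a $\Eld{\beta+2\psi}{u}{k}$ sitting on the far left.

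Now invoke the hypothesis: the two $\Rt{\alpha+\beta+2\psi}$ elements (of degree $i+k$) commute with the $\Rt{\beta+\psi}$ elements (of degree $j$) by assumption, and they also commute with the $\Rt{\alpha}$ elements by \Cnref{rel:b3-large:comm:alpha:alpha+beta+2psi}. Hence we can slide them together and cancel them via \Cnref{rel:b3-large:inv-doub:alpha+beta+2psi}. What remains, after the $\Rt{\beta+2\psi}$ element on the left, is exactly the original expansion of $\Eld{\alpha+\beta+\psi}{t}{i+j}$ via \Cnref{prop:b3-large:est:alpha+beta+psi}, which reassembles back into $\Eld{\alpha+\beta+\psi}{t}{i+j}$, yielding $\Eld{\alpha+\beta+\psi}{t}{i+j}\,\Eld{\beta+2\psi}{u}{k} = \Eld{\beta+2\psi}{u}{k}\,\Eld{\alpha+\beta+2\psi}{t}{i+j}$ as desired.

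There is no real obstacle here — the proof is essentially bookkeeping, and the only nontrivial step (the cancellation of the two created $\Rt{\alpha+\beta+2\psi}$ elements) is exactly where the hypothesis is used. This is parallel in spirit to arguments such as the proof of \Cnref{rel:b3-large:comm:impl:alpha+beta+2psi:beta+psi} and \Cnref{rel:b3-small:comm:beta+psi+omega:psi}, where an expansion of an established root element is combined with a commutation hypothesis to show that the two $\Rt{\alpha+\beta+2\psi}$ byproducts freely cancel.
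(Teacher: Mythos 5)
Your proposal is correct and follows essentially the same route as the paper: expand $\Eld{\alpha+\beta+\psi}{t}{i+j}$ via \Cref{prop:b3-large:est:alpha+beta+psi}, push the $\Rt{\beta+2\psi}$ element left (generating two $\Rt{\alpha+\beta+2\psi}$ commutators of degree $i+k$ via \Cref{rel:b3-large:order:alpha:beta+2psi}), then cancel them using the hypothesis. The only minor difference is your choice of expansion parameters (putting $\pm t$ on the $\Rt{\alpha}$ factors rather than $\pm 1$), which means the two created $\Rt{\alpha+\beta+2\psi}$ elements are separated by both the inner $\Rt{\beta+\psi}$ element and an $\Rt{\alpha}$ element, so you additionally invoke \Cref{rel:b3-large:comm:alpha:alpha+beta+2psi}; the paper's choice places the two commutators flanking only the inner $\Rt{\beta+\psi}$, so the hypothesis alone suffices.
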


\begin{proof}
    From \Cnref{eq:comm:left:str}, \Cnref{eq:comm:right:str}, and \Cnref{prop:b3-large:est:alpha+beta+2psi}, we have:
    \begin{equation}\label{subeq:b3-large:comm:impl:alpha+beta+psi:beta+2psi:b:1}
        \Eld{\alpha}{t}{i} \Eld{\beta+2\psi}{u}{k} = \Eld{\beta+2\psi}{u}{k} \Eld{\alpha}{t}{i} \asite{\Eld{\alpha+\beta+2\psi}{tu}{i+k}} = \asite{\Eld{\alpha+\beta+2\psi}{tu}{i+k}} \Eld{\beta+2\psi}{u}{k} \Eld{\alpha}{t}{i}.
    \end{equation}

    Now, we write a product of $\Rt{\alpha+\beta+\psi}$ and $\Rt{\beta+2\psi}$ elements:
    \begin{align*}
        & \asite{\Eld{\alpha+\beta+\psi}{t}{i+j}} \Eld{\beta+2\psi}{u}{k} \\
        \intertext{Expanding the $\Rt{\alpha+\beta+\psi}$ element as a product of $\Rt{\alpha}$ and $\Rt{\beta+\psi}$ elements (\Cnref{prop:b3-large:est:alpha+beta+psi}):}
        &= \Eld{\beta+\psi}{-t/2}{j} \Eld{\alpha}{1}{i} \Eld{\beta+\psi}{t}{j} \Eld{\alpha}{-1}{i}\Eld{\beta+\psi}{-t/2}{j} \asite{\Eld{\beta+2\psi}{u}{k}} \\
        \intertext{Moving the $\Rt{\beta+2\psi}$ element from the right fully to the left, which creates no commutators with $\Rt{\beta+\psi}$ elements (\Cnref{rel:b3-large:sub:comm:beta+psi:beta+2psi}) and $\Rt{\alpha+\beta+2\psi}$ commutators with $\Rt{\alpha}$ elements (\Cref{subeq:b3-large:comm:impl:alpha+beta+psi:beta+2psi:b:1}):}
        &= \Eld{\beta+2\psi}{u}{k} \Eld{\beta+\psi}{-t/2}{j} \Eld{\alpha}{1}{i} \asite{\Eld{\alpha+\beta+2\psi}{u}{i+k} \Eld{\beta+\psi}{t}{j} \Eld{\alpha+\beta+2\psi}{-u}{i+k}} \Eld{\alpha}{-1}{i} \Eld{\beta+\psi}{-t/2}{j} \\
        \intertext{By assumption, the $\Rt{\alpha+\beta+2\psi}$ elements commute across the $\Rt{\beta+\psi}$ element. Therefore, we can cancel them (\Cnref{rel:b3-large:inv-doub:alpha+beta+2psi}):}
        &= \Eld{\beta+2\psi}{u}{k} \asite{\Eld{\beta+\psi}{-t/2}{j} \Eld{\alpha}{1}{i} \Eld{\beta+\psi}{t}{j} \Eld{\alpha}{-1}{i} \Eld{\beta+\psi}{-t/2}{j}} \\
        \intertext{And reduce back into an $\Rt{\alpha+\beta+\psi}$ element (\Cnref{prop:b3-large:est:alpha+beta+psi}):}
        &= \Eld{\beta+2\psi}{u}{k}  \Eld{\alpha+\beta+\psi}{t}{i+j} .
    \end{align*}
\end{proof}

\begin{relation}[\RelNameBLg\RelNamePart\RelNameCommute{\Rt{\alpha+\beta+\psi}}{\Rt{\beta+2\psi}}]\label{rel:b3-large:comm:case:alpha+beta+psi:beta+2psi:01}
    \[
    \Quant{t,u \in \LiftScalars} \Comm{\Eld{\alpha+\beta+\psi}{t}{0}}{\Eld{\beta+2\psi}{u}{1}}=\Id.
    \]
\end{relation}

\begin{proof}
We apply the prior proposition (\Cnref{rel:b3-large:comm:impl:alpha+beta+psi:beta+2psi:b}) with $i=0,j=0,k=1$. For the desideratum, we need that \[
\Comm{\Eld{\alpha+\beta+2\psi}{u}{1}}{\Eld{\beta+\psi}{t}{0}} = \Id.
\] This follows from homogeneous lifting (\Cnref{rel:b3-large:comm:lift:beta+psi:alpha:beta+2psi} with $i=1,j=0,k=0$).
\end{proof}

\begin{relation}[\RelNameBLg\RelNamePart\newedgeD:\RelNameInterchange{\Rt{\alpha+2\beta+2\psi}}]\label{cor:b3-large:expr:alpha+2beta+2psi:D}
    \[
        \Quant{t,u,v \in \LiftScalars} \Comm{\Eld{\alpha+\beta+\psi}{t}{0}}{\Eld{\beta+\psi}{uv}{1}} = \Comm{\Eld{\alpha+\beta+2\psi}{tu}{0}}{\Eld{\beta}{2v}{1}}.
    \]
\end{relation}

\begin{proof}
    We apply \Cnref{rel:b3-large:comm:expr:abs:alpha+2beta+2psi:D} with $i=0,j=0, k=1$. This gives the desideratum if we can show that
    \begin{gather*}
    \Quant{t,u \in \LiftScalars} \Comm{\Eld{\alpha+\beta+\psi}{t}{0}}{\Eld{\beta+2\psi}{u}{1}} = \Id, \\
    \Quant{t,u,v \in \LiftScalars} \Comm{\Comm{\Eld{\alpha+\beta+\psi}{t}{0}}{\Eld{\beta+\psi}{u}{1}}}{\Eld{\psi}{v}{0}} = \Id, \\
    \Quant{t,u \in \LiftScalars} \Comm{\Eld{\beta+\psi}{-u}{1}}{\Eld{\alpha+\beta+\psi}{t}{0}}= \Comm{\Eld{\alpha+\beta+\psi}{t}{0}}{\Eld{\beta+\psi}{u}{1}}.
    \end{gather*}
    The first equation is \Cnref{rel:b3-large:comm:case:alpha+beta+psi:beta+2psi:01}. For the remaining two, we use the fact that
    \[
    \Comm{\Eld{\alpha+\beta+\psi}{t}{0}}{\Eld{\beta+\psi}{u}{1}} = \Comm{\Eld{\alpha+\beta}{t}{1}}{\Eld{\beta+2\psi}{2u}{0}}
    \]
    by \Cnref{cor:b3-large:expr:alpha+2beta+2psi:B} and \Cnref{rel:b3-large:lift:alpha+2beta+2psi} (cf. \Cref{fig:b3-large:def:alpha+2beta+2psi}). For the second equation, we then use homogeneous lifting (\Cnref{rel:b3-large:comm:lift:psi:alpha+beta:beta+2psi}), and similarly for the third equation we use \Cnref{rel:b3-large:inv-doub:lift:alpha+beta:beta+2psi}.
\end{proof}

\paragraph{Conclusions}

\begin{bigproposition}[Establishing $\Rt{\alpha+2\beta+2\psi}$]\label{prop:b3-large:est:alpha+2beta+2psi}
    There exist elements $\Eld{\alpha+2\beta+2\psi}{t}{i}$ for $t \in \LiftScalars$ and $i \in [5]$ such that the following hold:
    \begin{gather*}
    \DQuant{i \in [2], j \in [3]}{t,u \in \LiftScalars} \Comm{\Eld{\alpha+\beta}{t}{i}}{\Eld{\beta+2\psi}{u}{j}} = \Eld{\alpha+2\beta+2\psi}{-tu}{i+j}, \\
    \DQuant{i \in [3], j \in [2]}{t,u \in \LiftScalars} \Comm{\Eld{\alpha+\beta+\psi}{t}{i}}{\Eld{\beta+\psi}{u}{j}} = \Eld{\alpha+2\beta+2\psi}{-2tu}{i+j}, \\
    \DQuant{i \in [4],j\in[1]}{t,u \in \LiftScalars} \Comm{\Eld{\alpha+\beta+2\psi}{t}{i}}{\Eld{\beta}{u}{j}} = \Eld{\alpha+2\beta+2\psi}{-tu}{i+j}.
    \end{gather*}
\end{bigproposition}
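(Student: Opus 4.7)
The plan is to follow the same strategy as the preceding ``establishing'' propositions for the missing roots, namely \Cref{prop:b3-large:est:alpha+beta+psi} and \Cref{prop:b3-large:est:alpha+beta+2psi}. I want to define elements $\Eld{\alpha+2\beta+2\psi}{t}{k}$ for each $t \in \LiftScalars$ and $k \in [5]$ by declaring them equal to any of the three families of commutators appearing in the proposition statement, provided that all such commutators with a common ``total degree'' $k$ agree. To make this legitimate, I need the combinatorial fact that the tripartite graph sketched in \Cref{fig:b3-large:def:alpha+2beta+2psi} has connected components that match exactly the level sets given by summing the two coordinates in $[5]$.

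More concretely, I first set up the graph: the left vertex set is $[2]\times[3]$ (one vertex per pair $(i,j)$ from $\Comm{\Eld{\alpha+\beta}{t}{i}}{\Eld{\beta+2\psi}{u}{j}}$), the middle vertex set is $[3]\times[2]$ (from $\Comm{\Eld{\alpha+\beta+\psi}{t}{i}}{\Eld{\beta+\psi}{u}{j}}$), and the right vertex set is $[4]\times[1]$ (from $\Comm{\Eld{\alpha+\beta+2\psi}{t}{i}}{\Eld{\beta}{u}{j}}$). I draw a left-middle or middle-right edge precisely when an interchange relation guarantees that the corresponding commutators are equal, up to the prescribed scaling by $-1$ or $-2$ indicated in the proposition statement. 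The homogeneous lifting relation \Cref{rel:b3-large:lift:alpha+2beta+2psi} supplies many such edges automatically; the remaining ``missing'' degree pairs are exactly the ones delivered by the four interchange corollaries \Cref{cor:b3-large:expr:alpha+2beta+2psi:A,cor:b3-large:expr:alpha+2beta+2psi:B,cor:b3-large:expr:alpha+2beta+2psi:C,cor:b3-large:expr:alpha+2beta+2psi:D}, labelled $\newedgeA$, $\newedgeB$, $\newedgeC$, $\newedgeD$ in the figure.

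With the graph in hand, the rest of the proof reduces to a routine inspection of \Cref{fig:b3-large:def:alpha+2beta+2psi}: for each $k \in [5]$, I check that the subgraph induced on vertices whose coordinate sums equal $k$ is connected. Once that is verified, I invoke \Cref{prop:prelim:equal-conn} to conclude that any two of the three commutator ``names'' for a $\Rt{\alpha+2\beta+2\psi}$ element of a given degree agree in the group, which allows me to choose any one of them as the defining expression for $\Eld{\alpha+2\beta+2\psi}{t}{k}$ and automatically derive the other two equalities in the statement.

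The main difficulty will not be in this write-up --- all the substantive group-theoretic work has already been carried out in the $\newedgeA$--$\newedgeD$ corollaries, which were needed precisely because homogeneous lifting alone cannot reach every $(i,j)$-pair (roots differing by an even-height vector produce squared ratios of lifting coefficients, as emphasized in the introduction). The only remaining subtlety is to make sure that the connectivity check is done \emph{within} each total-degree block, since equating commutators across different total degrees would be meaningless; this is visible from the grey ``block'' boxes drawn in \Cref{fig:b3-large:def:alpha+2beta+2psi}.
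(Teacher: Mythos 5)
Your proposal is correct and follows essentially the same route as the paper: build the tripartite interchange graph on degree pairs, populate edges via the homogeneous lift relation together with the four supplementary $\newedgeA$--$\newedgeD$ corollaries, verify that each fixed-total-degree block is connected, and invoke \Cref{prop:prelim:equal-conn} to merge the three commutator aliases into a single well-defined symbol. This is exactly the argument the paper gives (compressed into a one-line citation of the figure and those same relations).
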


\begin{proof}
    Uses \Cref{prop:prelim:equal-conn} and \Cnref{cor:b3-large:expr:alpha+2beta+2psi:A}, \Cnref{cor:b3-large:expr:alpha+2beta+2psi:B}, \Cnref{cor:b3-large:expr:alpha+2beta+2psi:C}, \Cnref{cor:b3-large:expr:alpha+2beta+2psi:D}, and \Cnref{rel:b3-large:lift:alpha+2beta+2psi} (i.e., the blocks in \Cref{fig:b3-large:def:alpha+2beta+2psi} are connected).
\end{proof}

\begin{relation}[\RelNameBLg\RelNameCommute{\Rt{\beta}}{\Rt{\alpha+2\beta+2\psi}}]\label{rel:b3-large:comm:beta:alpha+2beta+2psi}
        \[
    \DQuant{i \in [1], j \in [5]}{t,u \in \LiftScalars} \Comm{\Eld{\beta}{t}{i}}{\Eld{\alpha+2\beta+2\psi}{u}{j}} = \Id.
    \]
\end{relation}

\begin{proof}
    Follows from \Cnref{prop:b3-large:est:alpha+2beta+2psi} and \Cnref{rel:b3-large:rough:comm:beta:alpha+beta:beta+2psi}.
\end{proof}

\begin{relation}[\RelNameBLg\RelNameCommute{\Rt{\alpha+\beta}}{\Rt{\alpha+2\beta+2\psi}}]\label{rel:b3-large:comm:alpha+beta:alpha+2beta+2psi}
        \[
    \DQuant{i \in [2], j \in [5]}{t,u \in \LiftScalars} \Comm{\Eld{\alpha+\beta}{t}{i}}{\Eld{\alpha+2\beta+2\psi}{u}{j}} = \Id.
    \]
\end{relation}

\begin{proof}
    Follows from \Cnref{prop:b3-large:est:alpha+2beta+2psi} and \Cnref{rel:b3-large:rough:comm:alpha+beta:beta+psi:alpha+beta+psi}.
\end{proof}

\begin{relation}[\RelNameBLg\RelNameCommute{\Rt{\beta+\psi}}{\Rt{\alpha+2\beta+2\psi}}]\label{rel:b3-large:comm:beta+psi:alpha+2beta+2psi}
        \[
    \DQuant{i \in [2], j \in [5]}{t,u \in \LiftScalars} \Comm{\Eld{\beta+\psi}{t}{i}}{\Eld{\alpha+2\beta+2\psi}{u}{j}} = \Id.
    \]
\end{relation}

\begin{proof}
    Follows from \Cnref{prop:b3-large:est:alpha+2beta+2psi} and \Cnref{rel:b3-large:rough:comm:beta+psi:alpha+beta:beta+2psi}.
\end{proof}

\begin{relation}[\RelNameBLg\RelNameInvDoub{\Rt{\alpha+2\beta+2\psi}}]\label{rel:b3-large:inv-doub:alpha+2beta+2psi}
    \begin{align*}
    \DQuant{i \in [5]}{t  \in \LiftScalars} \Eld{\alpha+2\beta+2\psi}{t}{i} \Eld{\alpha+2\beta+2\psi}{-t}{i} &= \Id, \\
    \DQuant{i \in [5]}{t \in \LiftScalars} \Eld{\alpha+2\beta+2\psi}{t}{i} \Eld{\alpha+2\beta+2\psi}{t}{i} &= \Eld{\alpha+2\beta+2\psi}{2t}{i}.
    \end{align*}
\end{relation}

\begin{proof}
    Follows from \Cnref{prop:b3-large:est:alpha+2beta+2psi} and \Cnref{rel:b3-large:inv-doub:lift:alpha+beta:beta+2psi}.
\end{proof}

Now from \Cnref{prop:b3-large:est:alpha+2beta+2psi}, \Cnref{rel:b3-large:inv-doub:alpha+2beta+2psi}, \Cnref{rel:b3-large:sub:inv:alpha+beta}, and \Cnref{rel:b3-large:sub:inv:beta+2psi}:
\begin{relation}[\RelNameBLg\RelNameExprn{\Rt{\alpha+2\beta+2\psi}}{\Rt{\alpha+\beta}}{\Rt{\beta+2\psi}}]\label{rel:b3-large:expr:alpha+2beta+2psi:alpha+beta:beta+2psi}
    \[ \Eld{\alpha+2\beta+2\psi}{-tu}{i+j} = \Eld{\alpha+\beta}{t}{i} \Eld{\beta+2\psi}{u}{j} \Eld{\alpha+\beta}{-t}{i} \Eld{\beta+2\psi}{-u}{j} = \Eld{\beta+2\psi}{-u}{j} \Eld{\alpha+\beta}{t}{i} \Eld{\beta+2\psi}{u}{j} \Eld{\alpha+\beta}{-t}{i}. \]
\end{relation}

From \Cnref{rel:b3-large:expr:alpha+2beta+2psi:alpha+beta:beta+2psi}, \Cnref{eq:comm:right:str}, \Cnref{eq:comm:mid:str}, \Cnref{rel:b3-large:inv-doub:alpha+2beta+2psi}, \Cnref{rel:b3-large:sub:inv:beta+2psi}, and \Cnref{rel:b3-large:sub:inv:alpha+beta}:
\begin{relation}[\RelNameBLg\RelNameOrder{\Rt{\alpha+\beta}}{\Rt{\beta+2\psi}}]\label{rel:b3-large:order:alpha+beta:beta+2psi}
    \[ \Eld{\alpha+\beta}{t}{i} \Eld{\beta+2\psi}{u}{j} = \Eld{\beta+2\psi}{u}{j} \asite{\Eld{\alpha+2\beta+2\psi}{-tu}{i+j}} \Eld{\alpha+\beta}{t}{i} = \Eld{\beta+2\psi}{u}{j} \Eld{\alpha+\beta}{t}{i} \asite{\Eld{\alpha+2\beta+2\psi}{-tu}{i+j}}. \]
\end{relation}
Similarly, from \Cnref{prop:b3-large:est:alpha+2beta+2psi}, \Cnref{rel:b3-large:inv-doub:alpha+2beta+2psi}, \Cnref{eq:comm:mid:inv}, \Cnref{eq:comm:right:inv}, \Cnref{rel:b3-large:inv-doub:alpha+beta+2psi}, and \Cnref{rel:b3-large:sub:inv:beta}:
\begin{relation}[\RelNameBLg\RelNameOrder{\Rt{\beta}}{\Rt{\alpha+\beta+2\psi}}]\label{rel:b3-large:order:beta:alpha+beta+2psi}
    \[ \Eld{\beta}{t}{i} \Eld{\alpha+\beta+2\psi}{u}{j} = \Eld{\alpha+\beta+2\psi}{u}{j} \asite{\Eld{\alpha+2\beta+2\psi}{tu}{i+j}} \Eld{\beta}{t}{i} = \Eld{\alpha+\beta+2\psi}{u}{j} \Eld{\beta}{t}{i} \asite{\Eld{\alpha+2\beta+2\psi}{tu}{i+j}}. \]
\end{relation}
Similarly, from \Cnref{prop:b3-large:est:alpha+2beta+2psi}, \Cnref{rel:b3-large:inv-doub:alpha+2beta+2psi}, \Cnref{eq:comm:mid:inv}, \Cnref{eq:comm:right:inv}, \Cnref{rel:b3-large:sub:inv:beta+psi}, and \Cnref{rel:b3-large:inv-doub:alpha+beta+psi}:
\begin{relation}[\RelNameBLg\RelNameOrder{\Rt{\beta+\psi}}{\Rt{\alpha+\beta+\psi}}]\label{rel:b3-large:order:beta+psi:alpha+beta+psi}
\[ \Eld{\beta+\psi}{t}{i} \Eld{\alpha+\beta+\psi}{u}{j} = \Eld{\alpha+\beta+\psi}{u}{j} \asite{\Eld{\alpha+2\beta+2\psi}{2tu}{i+j}} \Eld{\beta+\psi}{t}{i} =  \Eld{\alpha+\beta+\psi}{u}{j} \Eld{\beta+\psi}{t}{i} \asite{\Eld{\alpha+2\beta+2\psi}{2tu}{i+j}}. \]
\end{relation}

\begin{relation}[\RelNameBLg\RelNameCommutator{\Rt{\alpha}}{\Rt{\beta+\psi}}]\label{rel:b3-large:comm:alpha:beta+psi}
        \[
    \DQuant{i \in [1], j \in [2]}{t,u \in \LiftScalars} \Comm{\Eld{\alpha}{t}{i}}{\Eld{\beta+\psi}{u}{j}} = \Eld{\alpha+\beta+\psi}{tu}{i+j} \Eld{\alpha+2\beta+2\psi}{tu^2}{i+2j} = \Eld{\alpha+2\beta+2\psi}{tu^2}{i+2j} \Eld{\alpha+\beta+\psi}{tu}{i+j}.
    \]
\end{relation}

\begin{proof}
    Follows from \Cnref{rel:b3-large:rough:comm:alpha:beta+psi} and \Cnref{prop:b3-large:est:alpha+2beta+2psi}.
\end{proof}

\subsubsection{Remaining commutation relations}

\begin{proposition}[Sufficient conditions for commutator of $\Rt{\psi}$ and $\Rt{\alpha+2\beta+2\psi}$]\label{rel:b3-large:comm:impl:psi:alpha+2beta+2psi}
    Let $i \in [1], j \in [2], k \in [3]$. Suppose that:
    \begin{subequations}
    \begin{gather}
    \Quant{t,u \in \LiftScalars} \Comm{\Eld{\alpha+\beta+\psi}{t}{i+j}}{\Eld{\beta+2\psi}{u}{k}} = \Id \label{ass:b3-large:comm:impl:psi:alpha+2beta+2psi:1}, \\
    \Quant{t,u \in \LiftScalars} \Comm{\Eld{\alpha+\beta+2\psi}{t}{2i+j}}{\Eld{\beta+2\psi}{u}{k}} = \Id \label{ass:b3-large:comm:impl:psi:alpha+2beta+2psi:2}.
    \end{gather}
    \end{subequations}
    Then:
    \[
    \Quant{t,u \in \LiftScalars} \Comm{\Eld{\psi}{t}{i}}{\Eld{\alpha+2\beta+2\psi}{u}{j+k}} = \Id.
    \]
\end{proposition}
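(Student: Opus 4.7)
The plan is to mimic the structure of analogous commutation proofs earlier in this section: use an ``expansion identity'' to decompose $\Rt{\alpha+2\beta+2\psi}$ into lower-type elements, commute the $\Rt{\psi}$ element through the resulting word, then use the two hypotheses to cancel unwanted byproducts. Specifically, I will expand $\Eld{\alpha+2\beta+2\psi}{u}{j+k}$ as a commutator of $\Rt{\alpha+\beta}$- and $\Rt{\beta+2\psi}$-type elements using \Cnref{rel:b3-large:expr:alpha+2beta+2psi:alpha+beta:beta+2psi}, so that the product becomes
\[
\Eld{\psi}{t}{i}\,\Eld{\alpha+\beta}{1}{j}\,\Eld{\beta+2\psi}{-u}{k}\,\Eld{\alpha+\beta}{-1}{j}\,\Eld{\beta+2\psi}{u}{k}.
\]

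Next, I would commute the $\Rt{\psi}$ element from the far left to the far right. Crossing each $\Rt{\alpha+\beta}$ element produces (via \Cnref{rel:b3-large:order:psi:alpha+beta}) an $\Rt{\alpha+\beta+\psi}$-factor of degree $i+j$ and an $\Rt{\alpha+\beta+2\psi}$-factor of degree $2i+j$, which match precisely the degrees required by the two hypotheses of the proposition. Crossing each $\Rt{\beta+2\psi}$ element introduces nothing, since $\Rt{\psi}$ and $\Rt{\beta+2\psi}$ already commute (\Cnref{rel:b3-large:sub:comm:psi:beta+2psi}). The two hypotheses then permit the newly created $\Rt{\alpha+\beta+\psi}$- and $\Rt{\alpha+\beta+2\psi}$-factors to be commuted freely across the intervening $\Rt{\beta+2\psi}$ elements, and the inverse-doubling relations (\Cnref{rel:b3-large:inv-doub:alpha+beta+psi} and \Cnref{rel:b3-large:inv-doub:alpha+beta+2psi}) allow the respective pairs to cancel. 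What remains, after reassembling with \Cnref{rel:b3-large:expr:alpha+2beta+2psi:alpha+beta:beta+2psi}, is exactly $\Eld{\alpha+2\beta+2\psi}{u}{j+k}\,\Eld{\psi}{t}{i}$, proving the desired commutation.

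The main obstacle I anticipate is bookkeeping: after commuting past the first $\Rt{\alpha+\beta}$, there will be two extraneous factors wedged between a $\Rt{\beta+2\psi}$ element and the second $\Rt{\alpha+\beta}$ element, and after commuting past the second $\Rt{\alpha+\beta}$ there will be two more. One must be careful about the order in which these are gathered: the $\Rt{\alpha+\beta+\psi}$- and $\Rt{\alpha+\beta+2\psi}$-factors produced by the two passes come with opposite signs in their coefficients, which is precisely what makes the cancellation work. A secondary subtlety is that the $\Rt{\alpha+\beta+2\psi}$-factors produced by the two passes do not need to commute with each other to cancel (they have the same degree), but commuting them across the $\Rt{\alpha+\beta+\psi}$-factors to pair them up uses only relations already in hand (\Cnref{rel:b3-large:comm:alpha+beta:alpha+beta+psi} and \Cnref{rel:b3-large:comm:alpha:alpha+beta+2psi}, combined with the expansions of $\Rt{\alpha+\beta+\psi}$). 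Once the correct pairing is identified, the derivation has constant length, independent of $p$ and $k$, as required.
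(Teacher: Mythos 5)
Your overall blueprint matches the paper's — expand $\Rt{\alpha+2\beta+2\psi}$ via \Cnref{rel:b3-large:expr:alpha+2beta+2psi:alpha+beta:beta+2psi}, pass $\Rt{\psi}$ through, cancel the byproducts using the two hypotheses and inverse-doubling — but the ``secondary subtlety'' you flag is in fact a genuine gap, and the relations you cite do not resolve it. After both crossings, the byproduct word contains an $\Rt{\alpha+\beta+2\psi}$-factor that must be moved past an $\Rt{\alpha+\beta+\psi}$-factor (and, depending on how you bracket, also past the intervening $\Rt{\alpha+\beta}$). To establish that $\Rt{\alpha+\beta+\psi}$ and $\Rt{\alpha+\beta+2\psi}$ elements commute by expanding $\Rt{\alpha+\beta+\psi}$, you need either $\Comm{\Rt{\alpha+\beta}}{\Rt{\alpha+\beta+2\psi}} = \Id$ (with the $\Rt{\alpha+\beta}$-$\Rt{\psi}$ expansion) or $\Comm{\Rt{\beta+\psi}}{\Rt{\alpha+\beta+2\psi}} = \Id$ (with the $\Rt{\alpha}$-$\Rt{\beta+\psi}$ expansion); \Cnref{rel:b3-large:comm:alpha+beta:alpha+beta+psi} and \Cnref{rel:b3-large:comm:alpha:alpha+beta+2psi} only give you the ``easy half'' of each route. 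Neither of the missing relations is available yet: $\Comm{\Rt{\beta+\psi}}{\Rt{\alpha+\beta+2\psi}}$ is \Cnref{rel:b3-large:comm:beta+psi:alpha+beta+2psi}, whose proof in the paper actually \emph{uses} \Cnref{rel:b3-large:comm:psi:alpha+2beta+2psi}, and $\Comm{\Rt{\alpha+\beta}}{\Rt{\alpha+\beta+2\psi}}$ is one of the three ``more difficult relations'' only proved at the very end of \Cref{sec:b3:large}. So as written the argument is circular.

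The paper avoids this entirely by a careful choice of \emph{which form} of the reordering identity to apply at each crossing. Moving $\Rt{\psi}$ from right to left and using two different forms of \Cnref{rel:b3-large:order:alpha+beta:psi} for the two $\Rt{\alpha+\beta}$ elements, the byproducts are deposited in a palindrome $\Rt{\alpha+\beta+\psi}\,\Rt{\alpha+\beta+2\psi}\,\Rt{\beta+2\psi}\,\Rt{\alpha+\beta+2\psi}\,\Rt{\alpha+\beta+\psi}$. One then cancels inside-out: first move one $\Rt{\alpha+\beta+2\psi}$-factor across $\Rt{\beta+2\psi}$ (hypothesis \Cref{ass:b3-large:comm:impl:psi:alpha+2beta+2psi:2}) and cancel the pair, then move one $\Rt{\alpha+\beta+\psi}$-factor across $\Rt{\beta+2\psi}$ (hypothesis \Cref{ass:b3-large:comm:impl:psi:alpha+2beta+2psi:1}) and cancel. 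No $\Rt{\alpha+\beta+\psi}$-vs.-$\Rt{\alpha+\beta+2\psi}$ commutation, and no $\Rt{\alpha+\beta}$-vs.-$\Rt{\alpha+\beta+2\psi}$ commutation, is ever required. Your left-to-right pass is constrained to the single stated form of \Cnref{rel:b3-large:order:psi:alpha+beta}, which puts the byproducts in an order that can't be collapsed without the unavailable relations; to fix the proof you should reverse direction (or derive the missing forms of \Cnref{rel:b3-large:order:psi:alpha+beta}, being careful to use only relations available at this stage) so the byproducts land palindromically.
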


\begin{proof}
    We write a product of $\Rt{\alpha+2\beta+2\psi}$ and  $\Rt{\psi}$ elements:
    \begin{align*}
        &\asite{\Eld{\alpha+2\beta+2\psi}{u}{j+k}} \Eld{\psi}{t}{i}  \\
        \intertext{Expand the $\Rt{\alpha+2\beta+2\psi}$ elements into a product of $\Rt{\alpha+\beta}$ and $\Rt{\beta+2\psi}$ elements (\Cnref{rel:b3-large:expr:alpha+2beta+2psi:alpha+beta:beta+2psi}):}
        &=  \Eld{\alpha+\beta}{u}{j} \Eld{\beta+2\psi}{-1}{k} \Eld{\alpha+\beta}{-u}{j} \Eld{\beta+2\psi}{1}{k} \asite{\Eld{\psi}{t}{i}} \\
        \intertext{Move the $\Rt{\psi}$ element from the right fully to the left, creating $\Rt{\alpha+\beta+\psi}$ and $\Rt{\alpha+\beta+2\psi}$ commutators with the $\Rt{\alpha+\beta}$ elements (\Cnref{rel:b3-large:order:alpha+beta:psi}) and no commutators with the $\Rt{\beta+2\psi}$ elements (\Cnref{rel:b3-large:sub:comm:psi:beta+2psi}):}
        &= \Eld{\psi}{t}{i} \Eld{\alpha+\beta}{u}{j} \asite{\Eld{\alpha+\beta+\psi}{tu}{i+j} \Eld{\alpha+\beta+2\psi}{-t^2u}{2i+j}} \\
        &\hspace{1in} \cdot \asite{\Eld{\beta+2\psi}{-1}{k} \Eld{\alpha+\beta+2\psi}{t^2u}{2i+j} \Eld{\alpha+\beta+\psi}{-tu}{i+j}} \Eld{\alpha+\beta}{-u}{j} \Eld{\beta+2\psi}{1}{k}  \\
        \intertext{These $\Rt{\alpha+\beta+\psi}$ and $\Rt{\alpha+\beta+2\psi}$ elements commute across the $\Rt{\beta+2\psi}$ element by assumption (\Cref{ass:b3-large:comm:impl:psi:alpha+2beta+2psi:1} and \Cref{ass:b3-large:comm:impl:psi:alpha+2beta+2psi:2}), so we may cancel them (\Cnref{rel:b3-large:inv-doub:alpha+beta+psi} and \Cnref{rel:b3-large:inv-doub:alpha+beta+2psi}):}
        &= \Eld{\psi}{t}{i} \asite{\Eld{\alpha+\beta}{u}{j} \Eld{\beta+2\psi}{-1}{k} \Eld{\alpha+\beta}{-u}{j} \Eld{\beta+2\psi}{1}{k}}  \\
        \intertext{Reducing into an $\Rt{\alpha+2\beta+2\psi}$ element (\Cnref{rel:b3-large:expr:alpha+2beta+2psi:alpha+beta:beta+2psi}):}
        &= \Eld{\psi}{t}{i} \Eld{\alpha+2\beta+2\psi}{u}{j+k}.
    \end{align*}
\end{proof}

\begin{relation}[\RelNameBLg\RelNamePart\RelNameCommute{\Rt{\psi}}{\Rt{\alpha+2\beta+2\psi}}]\label{cor:b3-large:comm:case:psi:alpha+2beta+2psi:10}
    \begin{align*}
        \Quant{t,u \in \LiftScalars} \Comm{\Eld{\psi}{t}{1}}{\Eld{\alpha+2\beta+2\psi}{u}{0}} &= \Id.
    \end{align*}
\end{relation}

\begin{proof}
    We apply the previous proposition (\Cnref{rel:b3-large:comm:impl:psi:alpha+2beta+2psi}) with $i=1,j=0,k=0$; this gives the desideratum if 
    \[
    \Quant{t,u \in \LiftScalars} \Comm{\Eld{\alpha+\beta+\psi}{t}{1}}{\Eld{\beta+2\psi}{u}{0}} = \Id.
    \]
    and
    \[
    \Quant{t,u \in \LiftScalars} \Comm{\Eld{\alpha+\beta+2\psi}{t}{2}}{\Eld{\beta+2\psi}{u}{0}} = \Id
    \]
    The first equation follows from homogeneous lifting (\Cnref{rel:b3-large:comm:lift:beta+2psi:alpha+beta+psi} with $i=1,j=0,k=0$). The second equation follows from \Cnref{cor:b3-large:comm:cases:beta+2psi:alpha+beta+2psi}.
\end{proof}

\begin{bigrelation}[\RelNameBLg\RelNameCommute{\Rt{\psi}}{\Rt{\alpha+2\beta+2\psi}}]\label{rel:b3-large:comm:psi:alpha+2beta+2psi}
        \[
    \DQuant{i \in [1], j \in [5]}{t,u \in \LiftScalars} \Comm{\Eld{\psi}{t}{i}}{\Eld{\alpha+2\beta+2\psi}{u}{j}} = \Id.
    \]
\end{bigrelation}

\begin{proof}
    The $12$ cases in $[1] \times [5]$ are covered between homogeneous lifting (via \Cnref{rel:b3-large:comm:lift:psi:alpha+beta:beta+2psi} and \Cnref{prop:b3-large:est:alpha+2beta+2psi}), which provides eight pairs (see also \Cref{tab:b3-large:homog}), and \Cnref{cor:b3-large:comm:case:psi:alpha+2beta+2psi:10} and \Cnref{cor:b3-large:comm:case:psi:alpha+beta+2psi:beta:110} with \Cnref{prop:b3-large:est:alpha+2beta+2psi}, which each provide two.
\end{proof}

Now we get lots of commutators for free:

\begin{relation}[\RelNameBLg\RelNameCommute{\Rt{\beta+2\psi}}{\Rt{\alpha+2\beta+2\psi}}]\label{rel:b3-large:comm:beta+2psi:alpha+2beta+2psi}
        \[
    \DQuant{i \in [3], j \in [5]}{t,u \in \LiftScalars} \Comm{\Eld{\beta+2\psi}{t}{i}}{\Eld{\alpha+2\beta+2\psi}{u}{j}} = \Id.
    \]
\end{relation}

\begin{proof}
    By \Cnref{rel:b3-large:sub:comm:psi:beta+psi}, $\Rt{\beta+2\psi}$ elements may be expressed as products of $\Rt{\beta+\psi}$ elements and $\Rt{\psi}$ elements. These types of elements commute with $\Rt{\alpha+2\beta+2\psi}$ elements by \Cnref{rel:b3-large:comm:beta+psi:alpha+2beta+2psi} and \Cnref{rel:b3-large:comm:psi:alpha+2beta+2psi}, respectively.
\end{proof}

\begin{relation}[\RelNameBLg\RelNameCommute{\Rt{\alpha+\beta+\psi}}{\Rt{\alpha+2\beta+2\psi}}]\label{rel:b3-large:comm:alpha+beta+psi:alpha+2beta+2psi}
        \[
    \DQuant{i \in [3], j \in [5]}{t,u \in \LiftScalars} \Comm{\Eld{\alpha+\beta+\psi}{t}{i}}{\Eld{\alpha+2\beta+2\psi}{u}{j}} = \Id.
    \]
\end{relation}

\begin{proof}
    By \Cnref{prop:b3-large:est:alpha+beta+psi}, $\Rt{\alpha+\beta+\psi}$ elements may be expressed as products of $\Rt{\psi}$ elements and $\Rt{\alpha+\beta}$ elements. These types of elements commute with $\Rt{\alpha+2\beta+2\psi}$ elements by \Cnref{rel:b3-large:comm:psi:alpha+2beta+2psi} and \Cnref{rel:b3-large:comm:alpha+beta:alpha+2beta+2psi}, respectively.
\end{proof}

\begin{relation}[\RelNameBLg\RelNameCommute{\Rt{\alpha+\beta+2\psi}}{\Rt{\alpha+2\beta+2\psi}}]\label{rel:b3-large:comm:alpha+beta+2psi:alpha+2beta+2psi}
        \[
    \DQuant{i \in [4], j \in [5]}{t,u \in \LiftScalars} \Comm{\Eld{\alpha+\beta+2\psi}{t}{i}}{\Eld{\alpha+2\beta+2\psi}{u}{j}} = \Id.
    \]
\end{relation}

\begin{proof}
    By \Cnref{rel:b3-large:expr:alpha+beta+2psi:alpha+beta+psi:psi}, $\Rt{\alpha+\beta+2\psi}$ elements may be expressed as products of $\Rt{\psi}$ elements and $\Rt{\alpha+\beta+\psi}$ elements. These types of elements commute with $\Rt{\alpha+2\beta+2\psi}$ elements by \Cnref{rel:b3-large:comm:psi:alpha+2beta+2psi} and \Cnref{rel:b3-large:comm:alpha+beta+psi:alpha+2beta+2psi}, respectively.
\end{proof}

\begin{relation}[\RelNameBLg\RelNameSelfCommute{\Rt{\alpha+2\beta+2\psi}}]\label{rel:b3-large:comm:self:alpha+2beta+2psi}
        \[
    \DQuant{i \in [5], j \in [5]}{t,u \in \LiftScalars} \Comm{\Eld{\alpha+2\beta+2\psi}{t}{i}}{\Eld{\alpha+2\beta+2\psi}{u}{j}} = \Id.
    \]
\end{relation}

\begin{proof}
    By \Cnref{prop:b3-large:est:alpha+beta+psi}, $\Rt{\alpha+2\beta+2\psi}$ elements may be expressed as products of $\Rt{\beta}$ elements and $\Rt{\alpha+\beta+2\psi}$ elements. These types of elements commute with $\Rt{\alpha+2\beta+2\psi}$ elements by \Cnref{rel:b3-large:comm:beta:alpha+2beta+2psi} and \Cnref{rel:b3-large:comm:alpha+beta+2psi:alpha+2beta+2psi}, respectively.
\end{proof}

\begin{relation}[\RelNameBLg\RelNameCommute{\Rt{\alpha+\beta+\psi}}{\Rt{\beta+2\psi}}]\label{rel:b3-large:comm:alpha+beta+psi:beta+2psi}
        \[
    \DQuant{i \in [3], j \in [3]}{t,u \in \LiftScalars} \Comm{\Eld{\alpha+\beta+\psi}{t}{i}}{\Eld{\beta+2\psi}{u}{j}} = \Id.
    \]
\end{relation}

\begin{proof}
    Writing a product of $\Rt{\alpha+\beta+\psi}$ and $\Rt{\beta+2\psi}$ elements:
\begin{align*}
    & \asite{\Eld{\alpha+\beta+\psi}{t}{i}} \Eld{\beta+2\psi}{u}{j} \\
    \intertext{Expanding the $\Rt{\alpha+\beta+\psi}$ element into a product of $\Rt{\alpha+\beta}$ and $\Rt{\psi}$ elements (\Cnref{prop:b3-large:est:alpha+beta+psi}):}
    &= \Eld{\psi}{-1/2}{i_2} \Eld{\alpha+\beta}{t}{i_1} \Eld{\psi}{1}{i_2} \Eld{\alpha+\beta}{-t}{i_1} \Eld{\psi}{-1/2}{i_2} \asite{\Eld{\beta+2\psi}{u}{j}} \\
    \intertext{Moving the $\Rt{\beta+2\psi}$ element from right to left, creating no commutators with $\Rt{\psi}$ elements (\Cnref{rel:b3-large:sub:comm:psi:beta+2psi}) and $\Rt{\alpha+2\beta+2\psi}$ commutators with $\Rt{\alpha+\beta}$ elements (\Cnref{rel:b3-large:order:alpha+beta:beta+2psi}):}
    &= \Eld{\beta+2\psi}{u}{j} \Eld{\psi}{-1/2}{i_2} \Eld{\alpha+\beta}{t}{i_1} \asite{\Eld{\alpha+2\beta+2\psi}{-tu}{i_1+j} \Eld{\psi}{1}{i_2} \Eld{\alpha+2\beta+2\psi}{tu}{i_1+j}} \Eld{\alpha+\beta}{-t}{i_1} \Eld{\psi}{-1/2}{i_2} \\
    \intertext{Commuting the $\Rt{\alpha+2\beta+2\psi}$ elements across the $\Rt{\psi}$ element (\Cnref{rel:b3-large:comm:psi:alpha+2beta+2psi}) and cancelling them (\Cnref{rel:b3-large:inv-doub:alpha+2beta+2psi}):}
    &= \Eld{\beta+2\psi}{u}{j} \asite{\Eld{\psi}{-1/2}{i_2}\Eld{\alpha+\beta}{t}{i_1} \Eld{\psi}{1}{i_2} \Eld{\alpha+\beta}{-t}{i_1} \Eld{\psi}{-1/2}{i_2}} \\
    \intertext{Reducing to an $\Rt{\alpha+\beta+\psi}$ element (\Cnref{prop:b3-large:est:alpha+beta+psi}):}
    &= \Eld{\beta+2\psi}{u}{j} \Eld{\alpha+\beta+\psi}{t}{i}.
\end{align*}
\end{proof}

\begin{relation}[\RelNameBLg\RelNameCommute{\Rt{\beta+\psi}}{\Rt{\alpha+\beta+2\psi}}]\label{rel:b3-large:comm:beta+psi:alpha+beta+2psi}
        \[
    \DQuant{i \in [2], j \in [4]}{t,u \in \LiftScalars} \Comm{\Eld{\beta+\psi}{t}{i}}{\Eld{\alpha+\beta+2\psi}{u}{j}} = \Id.
    \]
\end{relation}

\begin{proof}
    Decompose arbitrarily $i = i_1 + i_2$ for $i_1,i_2 \in [1]$. We write a product of $\Rt{\beta+\psi}$ and $\Rt{\alpha+\beta+2\psi}$ elements:
    \begin{align*}
        & \asite{\Eld{\beta+\psi}{t}{i}} \Eld{\alpha+\beta+2\psi}{u}{j} \\
        \intertext{Expanding the $\Rt{\beta+\psi}$ element into a product of $\Rt{\beta}$ and $\Rt{\psi}$ elements (\Cnref{rel:b3-large:sub:expr:beta+psi}):}
        &= \Eld{\psi}{-1/2}{i_2} \Eld{\beta}{t}{i_1} \Eld{\psi}{1}{i_2} \Eld{\beta}{-t}{i_1} \Eld{\psi}{-1/2}{i_2} \asite{\Eld{\alpha+\beta+2\psi}{u}{j}} \\
        \intertext{Moving the $\Rt{\alpha+\beta+2\psi}$ element fully to the left, creating no commutators with $\Rt{\psi}$ elements (\Cnref{rel:b3-large:comm:psi:alpha+beta+2psi}) and $\Rt{\alpha+2\beta+2\psi}$ commutators with $\Rt{\beta}$ elements (\Cnref{rel:b3-large:order:beta:alpha+beta+2psi}):}
        &= \Eld{\alpha+\beta+2\psi}{u}{j} \Eld{\psi}{-1/2}{i_2} \Eld{\beta}{t}{i_1} \asite{\Eld{\alpha+2\beta+2\psi}{tu}{i_1+j} \Eld{\psi}{1}{i_2} \Eld{\alpha+2\beta+2\psi}{-tu}{i_1+j}} \Eld{\beta}{-t}{i_1} \Eld{\psi}{-1/2}{i_2} \\
        \intertext{Moving the $\Rt{\alpha+2\beta+2\psi}$ elements together across the $\Rt{\psi}$ element (\Cnref{rel:b3-large:comm:psi:alpha+2beta+2psi}) and canceling them (\Cnref{rel:b3-large:inv-doub:alpha+2beta+2psi}):}
        &= \Eld{\alpha+\beta+2\psi}{u}{j} \asite{\Eld{\psi}{-1/2}{i_2} \Eld{\beta}{t}{i_1} \Eld{\psi}{1}{i_2} \Eld{\beta}{-t}{i_1} \Eld{\psi}{-1/2}{i_2}} \\
        \intertext{Reducing the $\Rt{\beta+\psi}$ element (\Cnref{rel:b3-large:sub:expr:beta+psi}):}
        &= \Eld{\alpha+\beta+2\psi}{u}{j} \Eld{\beta+\psi}{t}{i}.
    \end{align*}
\end{proof}

\begin{relation}[\RelNameBLg\RelNameCommute{\Rt{\beta+2\psi}}{\Rt{\alpha+\beta+2\psi}}]\label{rel:b3-large:comm:beta+2psi:alpha+beta+2psi}
        \[
    \DQuant{i \in [3], j \in [4]}{t,u \in \LiftScalars} \Comm{\Eld{\beta+2\psi}{t}{i}}{\Eld{\alpha+\beta+2\psi}{u}{j}} = \Id.
    \]
\end{relation}

\begin{proof}
    By \Cnref{rel:b3-large:sub:expr:beta+2psi}, $\Rt{\beta+2\psi}$ elements may be expressed as products of $\Rt{\psi}$ elements and $\Rt{\beta+\psi}$ elements. These types of elements commute with $\Rt{\alpha+\beta+2\psi}$ elements by \Cnref{rel:b3-large:comm:psi:alpha+beta+2psi} and \Cnref{rel:b3-large:comm:beta+psi:alpha+beta+2psi}, respectively.
\end{proof}

\begin{relation}[\RelNameBLg\RelNameCommute{\Rt{\alpha+\beta+\psi}}{\Rt{\alpha+\beta+2\psi}}]\label{rel:b3-large:comm:alpha+beta+psi:alpha+beta+2psi}
        \[
    \DQuant{i \in [3], j \in [4]}{t,u \in \LiftScalars} \Comm{\Eld{\alpha+\beta+\psi}{t}{i}}{\Eld{\alpha+\beta+2\psi}{u}{j}} = \Id.
    \]
\end{relation}

\begin{proof}
    By \Cnref{prop:b3-large:est:alpha+beta+psi}, $\Rt{\alpha+\beta+\psi}$ elements may be expressed as products of $\Rt{\alpha}$ elements and $\Rt{\beta+\psi}$ elements. These types of elements commute with $\Rt{\alpha+\beta+2\psi}$ elements by \Cnref{rel:b3-large:comm:alpha:alpha+beta+2psi} and \Cnref{rel:b3-large:comm:beta+psi:alpha+beta+2psi}, respectively.
\end{proof}

\begin{relation}[\RelNameBLg\RelNameSelfCommute{\Rt{\alpha+\beta+2\psi}}]\label{rel:b3-large:comm:self:alpha+beta+2psi}
        \[
    \DQuant{i \in [4], j \in [4]}{t,u \in \LiftScalars} \Comm{\Eld{\alpha+\beta+2\psi}{t}{i}}{\Eld{\alpha+\beta+2\psi}{u}{j}} = \Id.
    \]
\end{relation}

\begin{proof}
    By \Cnref{rel:b3-large:expr:alpha+beta+2psi:alpha+beta+psi:psi}, $\Rt{\alpha+\beta+2\psi}$ elements may be expressed as products of $\Rt{\psi}$ elements and $\Rt{\alpha+\beta+\psi}$ elements. These types of elements commute with $\Rt{\alpha+\beta+2\psi}$ elements by \Cnref{rel:b3-large:comm:psi:alpha+beta+2psi} and \Cnref{rel:b3-large:comm:alpha+beta+psi:alpha+beta+2psi}, respectively.
\end{proof}

\subsubsection{Linearity relations for $\alpha+\beta+2\psi$ and $\alpha+2\beta+2\psi$}

The next two propositions use the ``standard'' linearity proof for commutators:

\begin{relation}[\RelNameBLg\RelNameLinearity{\Rt{\alpha+\beta+2\psi}}]\label{rel:b3-large:lin:alpha+beta+2psi}
        \[
    \DQuant{i \in [4]}{t,u \in \LiftScalars} \Eld{\alpha+\beta+2\psi}{t}{i}
    \Eld{\alpha+\beta+2\psi}{u}{i} =
    \Eld{\alpha+\beta+2\psi}{t+u}{i}.
    \]
\end{relation}

\begin{proof}
    Decompose arbitrarily $i = i_1+i_2$ for $i_1 \in [1], i_2 \in [3]$. We write a product of $\Rt{\alpha+\beta+2\psi}$ elements:
    \begin{align*}
    & \asite{\Eld{\alpha+\beta+2\psi}{t}{i}} \Eld{\alpha+\beta+2\psi}{u}{i} \\
    \intertext{Expanding one $\Rt{\alpha+\beta+2\psi}$ into a product of $\Rt{\alpha}$ and $\Rt{\beta+2\psi}$ elements with \Cnref{rel:b3-large:expr:alpha+beta+2psi:alpha:beta+2psi}:}
    &= \Eld{\alpha}{t}{i_1} \Eld{\beta+2\psi}{1}{i_2} \Eld{\alpha}{-t}{i_1} \Eld{\beta+2\psi}{-1}{i_2} \asite{\Eld{\alpha+\beta+2\psi}{u}{i}} \\
    \intertext{Moving the other $\Rt{\alpha+\beta+2\psi}$ element to the left since it commutes with $\Rt{\alpha}$ elements (\Cnref{rel:b3-large:comm:alpha:alpha+beta+2psi}) and $\Rt{\beta+2\psi}$ elements (\Cnref{rel:b3-large:comm:beta+2psi:alpha+beta+2psi}):}
    &= \Eld{\alpha}{t}{i_1} \asite{\Eld{\alpha+\beta+2\psi}{u}{i}} \Eld{\beta+2\psi}{1}{i_2} \Eld{\alpha}{-t}{i_1} \Eld{\beta+2\psi}{-1}{i_2} \\
    \intertext{Expanding the other $\Rt{\alpha+\beta+2\psi}$ element (\Cnref{rel:b3-large:expr:alpha+beta+2psi:alpha:beta+2psi}):}
    &= \asite{\Eld{\alpha}{t}{i_1} \Eld{\alpha}{u}{i_1}}  \Eld{\beta+2\psi}{1}{i_2} \asite{\Eld{\alpha}{-u}{i_1} \Eld{\beta+2\psi}{-1}{i_2} \Eld{\beta+2\psi}{1}{i_2} \Eld{\alpha}{-t}{i_1}} \Eld{\beta+2\psi}{-1}{i_2} \\
    \intertext{Using inverses for $\Rt{\beta+2\psi}$ elements (\Cnref{rel:b3-large:sub:inv:beta+2psi}) and linearity for $\Rt{\alpha}$ elements (\Cnref{rel:b3-large:sub:lin:alpha}):}
    &= \Eld{\alpha}{t+u}{i_1} \Eld{\beta+2\psi}{1}{i_2} \Eld{\alpha}{-(t+u)}{i_1} \Eld{\beta+2\psi}{-1}{i_2} \\
    \intertext{Reducing into an $\Rt{\alpha+\beta+2\psi}$ element (\Cnref{rel:b3-large:expr:alpha+beta+2psi:alpha:beta+2psi}):}
    &= \Eld{\alpha+\beta+2\psi}{t+u}{i}.
    \end{align*}
\end{proof}

\begin{relation}[\RelNameBLg\RelNameLinearity{\Rt{\alpha+2\beta+2\psi}}]\label{rel:b3-large:lin:alpha+2beta+2psi}
        \[
    \DQuant{i \in [5]}{t,u \in \LiftScalars} \Eld{\alpha+2\beta+2\psi}{t}{i}
    \Eld{\alpha+2\beta+2\psi}{u}{i} =
    \Eld{\alpha+2\beta+2\psi}{t+u}{i}.
    \]
\end{relation}

\begin{proof}
    Decompose arbitrarily $i = i_1+i_2$ for $i_1 \in [2], i_2 \in [3]$. We write a product of $\Rt{\alpha+2\beta+2\psi}$ elements:
    \begin{align*}
    & \asite{\Eld{\alpha+2\beta+2\psi}{t}{i}} \Eld{\alpha+2\beta+2\psi}{u}{i} \\
    \intertext{Expanding one $\Rt{\alpha+2\beta+2\psi}$ element into a product of $\Rt{\alpha+\beta}$ and $\Rt{\beta+2\psi}$ elements with \Cnref{rel:b3-large:expr:alpha+2beta+2psi:alpha+beta:beta+2psi}:}
    &= \Eld{\alpha+\beta}{t}{i_1} \Eld{\beta+2\psi}{-1}{i_2} \Eld{\alpha+\beta}{-t}{i_1} \Eld{\beta+2\psi}{1}{i_2} \asite{\Eld{\alpha+2\beta+2\psi}{u}{i}} \\
    \intertext{Moving the other $\Rt{\alpha+2\beta+2\psi}$ element to the left since it commutes with $\Rt{\alpha+\beta}$ elements (\Cnref{rel:b3-large:comm:alpha+beta:alpha+2beta+2psi}) and $\Rt{\beta+2\psi}$ elements (\Cnref{rel:b3-large:comm:beta+2psi:alpha+2beta+2psi}):}
    &= \Eld{\alpha+\beta}{t}{i_1} \asite{\Eld{\alpha+2\beta+2\psi}{u}{i}} \Eld{\beta+2\psi}{-1}{i_2} \Eld{\alpha+\beta}{-t}{i_1} \Eld{\beta+2\psi}{1}{i_2} \\
    \intertext{Expanding the other $\Rt{\alpha+2\beta+2\psi}$ element (\Cnref{rel:b3-large:expr:alpha+2beta+2psi:alpha+beta:beta+2psi}):}
    &= \asite{\Eld{\alpha+\beta}{t}{i_1} \Eld{\alpha+\beta}{u}{i_1}}  \Eld{\beta+2\psi}{-1}{i_2} \asite{\Eld{\alpha+\beta}{-u}{i_1} \Eld{\beta+2\psi}{-1}{i_2} \Eld{\beta+2\psi}{1}{i_2} \Eld{\alpha+\beta}{-t}{i_1}} \Eld{\beta+2\psi}{1}{i_2} \\
    \intertext{Using inverses for $\Rt{\beta+2\psi}$ elements (\Cnref{rel:b3-large:sub:inv:beta+2psi}) and linearity for $\Rt{\alpha+\beta}$ elements (\Cnref{rel:b3-large:sub:lin:alpha+beta}):}
    &= \Eld{\alpha}{t+u}{i_1} \Eld{\beta+2\psi}{-1}{i_2} \Eld{\alpha}{-(t+u)}{i_1} \Eld{\beta+2\psi}{1}{i_2} \\
    \intertext{Reducing into an $\Rt{\alpha+2\beta+2\psi}$ element (\Cnref{rel:b3-large:expr:alpha+2beta+2psi:alpha+beta:beta+2psi}):}
    &= \Eld{\alpha+2\beta+2\psi}{t+u}{i}.
    \end{align*}
\end{proof}

\subsubsection{The more difficult relations}\label{sec:b3-large:remaining-relations}

The remaining Steinberg relations are:

\begin{itemize}
    \item $\Rt{\alpha+\beta+\psi}$ self-commutes.
    \item $\Rt{\alpha+\beta+2\psi}$ and $\Rt{\alpha+\beta}$ commute.
    \item $\Rt{\alpha+2\beta+2\psi}$ and $\Rt{\alpha}$ commute.
\end{itemize}

If we inspect the corresponding columns in \Cref{tab:b3-large:homog}, we see that homogeneous lifting could give us at most $4/16$, $6/18$, and $6/12$ degree-pairs, respectively, for these relations.\footnote{ ``At most'' is important here: One also has to consider which pairs of \emph{scalar} arguments can be generated via lifting. For instance, lifting a self-commutation relation always gives the same scalars on both arguments.} However, there are some issues stemming from the fact that the \emph{differences} between these pairs of roots --- namely, $0$, $2\psi$, and $2\beta+2\psi$, respectively --- are all ``even''. For instance, consider the commutator of $\Rt{\alpha+2\beta+2\psi}$ and $\Rt{\alpha}$. The image of the base relation \[ \Comm{\El{\alpha}{1}}{\El{\alpha+2\beta+2\psi}{1}} = \Id \] under a homogeneous lift is \[ \Comm{\Eld{\alpha}{a}{i}}{\Eld{\alpha+2\beta+2\psi}{ab^2c^2}{i+2j+2k}} = \Id. \] There are two things to notice about this lifted relation:
\begin{enumerate}
    \item The ratio between the scalars is $\frac{ab^2c^2}{a} = (bc)^2$, a square.
    \item The difference between the degrees is $i+2j+2k-i = 2(j+k)$, which is even.
\end{enumerate}

The former causes an issue because it means that direct homogeneous lifting does not immediately give that for \emph{every} $t,u \in \LiftScalars$, \[ \Comm{\Eld{\alpha}{t}{i}}{\Eld{\alpha+2\beta+2\psi}{u}{i+2j+2k}}, \] i.e., it only gives it in cases where the ratio of $t$ and $u$ is a square. We can deal with this, though, using the fact that every element in a finite field is the sum of two squares (\Cref{prop:prelim:two-qrs} below) and the linearity relation we have already proven for $\Rt{\alpha+2\beta+2\psi}$ (\Cnref{rel:b3-large:lin:alpha+2beta+2psi}). The latter also causes an issue because the natural ways to prove an implication between these commutators will preserve the parity of the difference between the degrees, as we will see below. So, we have to resort to \emph{nonhomogeneous} lifting at this point.


\begin{relation}[\RelNameBLg\RelNameHomLift\RelNameCommute{\Rt{\alpha}}{\Rt{\alpha+2\beta+2\psi}}\RelSquare]\label{rel:b3-large:comm:lift:alpha:alpha+2beta+2psi:square}
    \[
    \Quant{i,j,k \in [1]}{t,r \in \LiftScalars} \Comm{\Eld{\alpha}{t}{i}}{\Eld{\alpha+2\beta+2\psi}{tr^2}{i+2j+2k}}.
    \]
\end{relation}

\begin{proof}
    Use \Cnref{rel:b3-large:comm:raw:lift:alpha:alpha+2beta+2psi:square} with $(t_1,t_0,u_1,u_0,v_1,v_0)$ such that $t_i = t,t_{1-i}=0, u_j = 1, u_{1-j} = 0, v_k = r, v_{1-k} = 0$ and \Cnref{rel:b3-large:expr:alpha+2beta+2psi:alpha+beta:beta+2psi}.
\end{proof}

The following elementary fact is discussed in, e.g.,~\cite{BBI21}:
\begin{proposition}\label{prop:prelim:two-qrs}
    In any finite field $\BF_{q^k}$, every element is the sum of two squares.
\end{proposition}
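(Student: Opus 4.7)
The plan is a standard counting/pigeonhole argument on the set of squares. Let $F = \BF_{q^k}$ and fix $a \in F$ arbitrarily; we want to find $x, y \in F$ with $x^2 + y^2 = a$.

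First I would dispose of the characteristic-$2$ case separately: when $q$ is a power of $2$, the Frobenius endomorphism $z \mapsto z^2$ is a field automorphism of $F$, so every element is already a square, and we may write $a = (\sqrt{a})^2 + 0^2$. From now on assume $q$ (and hence $q^k$) is odd.

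Next define the two subsets $S = \{x^2 : x \in F\}$ and $T = \{a - y^2 : y \in F\}$ of $F$. Since the kernel of the squaring homomorphism $F^\times \to F^\times$ has size $2$ in the odd-characteristic case, its image has size $(q^k-1)/2$, and throwing in $0$ gives $|S| = (q^k+1)/2$. The map $y \mapsto a - y^2$ factors as the squaring map followed by an affine bijection, so $|T| = |S| = (q^k+1)/2$ as well. Now
\[
|S| + |T| = q^k + 1 > q^k = |F|,
\]
so $S \cap T \neq \emptyset$. Any element of $S \cap T$ yields $x, y \in F$ with $x^2 = a - y^2$, i.e., $a = x^2 + y^2$, as required.

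There is no real obstacle here; the only mild subtlety is keeping track of whether $0$ is counted as a square (it is), which is why the size of $S$ comes out to $(q^k+1)/2$ rather than $(q^k-1)/2$, and this extra $+1$ is exactly what makes the pigeonhole inequality strict. No appeal to quadratic reciprocity or to the structure of $\BF_{q^k}$ beyond ``odd characteristic means squaring is $2$-to-$1$ on $F^\times$'' is needed.
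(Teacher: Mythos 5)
Your proof is correct. The paper itself does not include a proof of this proposition—it is stated as an elementary fact with a citation to the literature—so there is no in-text argument to compare against. The pigeonhole argument you give (handling characteristic $2$ via Frobenius, then counting $|S| = |T| = (q^k+1)/2$ and observing $|S|+|T| > q^k$ forces an intersection) is the standard proof, and all the details, including the inclusion of $0$ among the squares, are handled correctly.
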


\begin{relation}[\RelNameBLg\RelNameHomLift\RelNameCommute{\Rt{\alpha}}{\Rt{\alpha+2\beta+2\psi}}]\label{rel:b3-large:comm:lift:alpha:alpha+2beta+2psi:homog}
    \[
    \DQuant{i,j,k \in [1]}{t,u \in \LiftScalars} \Comm{\Eld{\alpha}{t}{i}}{\Eld{\alpha+2\beta+2\psi}{u}{i+2j+2k}}.
    \]
\end{relation}

\begin{proof}
    If $t = 0$, then $\Eld{\alpha}{t}{i} = \Id$ (\Cnref{rel:b3-large:sub:id:alpha}) and we have nothing to prove. Otherwise, let $u/t = r^2 + s^2$ (using \Cref{prop:prelim:two-qrs}), so that $tr^2 + ts^2 = u$. By the previous proposition (\Cnref{rel:b3-large:comm:lift:alpha:alpha+2beta+2psi:square}), \[
    \Comm{\Eld{\alpha}{t}{i}}{\Eld{\alpha+2\beta+2\psi}{tr^2}{i+2j+2k}} = \Comm{\Eld{\alpha}{t}{i}}{\Eld{\alpha+2\beta+2\psi}{ts^2}{i+2j+2k}} = \Id.
    \] And by \Cnref{rel:b3-large:lin:alpha+2beta+2psi},
    \[
    \Comm{\Eld{\alpha}{t}{i}}{\Eld{\alpha+2\beta+2\psi}{u}{i+2j+2k}} = \Comm{\Eld{\alpha}{t}{i}}{\Eld{\alpha+2\beta+2\psi}{tr^2}{i+2j+2k} \Eld{\alpha+2\beta+2\psi}{ts^2}{i+2j+2k}}.
    \]
\end{proof}

\begin{relation}[\RelNameBLg\RelNameNonHomLift\RelNameCommute{\Rt{\alpha}}{\Rt{\alpha+2\beta+2\psi}}]\label{rel:b3-large:comm:lift:alpha:alpha+2beta+2psi:nonhomog}
\[
\DQuant{i,j \in [1]}{t,u \in \LiftScalars} \Comm{\Eld{\alpha}{t}{i}}{\Eld{\alpha+2\beta+2\psi}{u}{i+2j+1}} = \Id.
\]
\end{relation}

\begin{proof}
If $t = 0$, then $\Eld{\alpha}{t}{i} = \Id$ (\Cnref{rel:b3-large:sub:id:alpha}) and we have nothing to prove. Otherwise, use \Cnref{rel:b3-large:comm:raw:lift:alpha:alpha+2beta+2psi:square} with $t_i = t, t_{1-i} = 0, u_j = 1, u_{1-j} = 0, v_1 = 1, v_0 = \frac{u}{2t}$. Note that under this lift, the $\Rt{\alpha}$, $\Rt{\alpha+\beta}$, and $\Rt{\beta+2\psi}$ terms are
    \[ \Eld{\alpha}{t}{i}, \Eld{\alpha+\beta}{t}{i+j}, \Eld{\beta+2\psi}{1}{j+2} \Eld{\beta+2\psi}{u/t}{j+1} \Eld{\beta+2\psi}{u^2/(4t^2)}{j},
    \]
    respectively, and we can reorder the $\Rt{\beta+2\psi}$ terms arbitrarily by \Cnref{rel:b3-large:sub:comm:self:beta+2psi}. Thus, the commutator in \Cnref{rel:b3-large:comm:raw:lift:alpha:alpha+2beta+2psi:square} becomes via \Cnref{rel:b3-large:sub:inv:alpha}, \Cnref{rel:b3-large:sub:inv:alpha+beta}, and \Cnref{rel:b3-large:sub:inv:beta+2psi}:
    
    \begin{align*}
    \Id &= \Eld{\alpha}{t}{i} \Eld{\alpha+\beta}{t}{i+j}  \Eld{\beta+2\psi}{u/t}{j+1} \Eld{\beta+2\psi}{1}{j+2} \asite{\Eld{\beta+2\psi}{u^2/(4t^2)}{j} \Eld{\alpha+\beta}{-t}{i+j} \Eld{\beta+2\psi}{-u^2/(4t^2)}{j}} \Eld{\beta+2\psi}{-1}{j+2} \Eld{\beta+2\psi}{-u/t}{j+1} \\
    & \hspace{.2in} \cdot \Eld{\alpha}{-t}{i} \Eld{\alpha+\beta}{-t}{i+j} \Eld{\beta+2\psi}{u/t}{j+1} \Eld{\beta+2\psi}{1}{j+2} \asite{\Eld{\beta+2\psi}{u^2/(4t^2)}{j} \Eld{\alpha+\beta}{t}{i+j} \Eld{\beta+2\psi}{-u^2/(4t^2)}{j}} \Eld{\beta+2\psi}{-1}{j+2} \Eld{\beta+2\psi}{-u/t}{j+1} \\
    \intertext{Moving pairs of $\Eld{\beta+2\psi}{\pm u^2/(4t^2)}{j}$ elements together across $\Eld{\alpha+\beta}{\pm t}{i+j}$, creating $\Eld{\alpha+2\beta+2\psi}{\pm u^2/(4t)}{i+2j}$ commutators (\Cnref{rel:b3-large:order:alpha+beta:beta+2psi}), then cancelling (\Cnref{rel:b3-large:sub:inv:beta+2psi}):}
    &= \Eld{\alpha}{t}{i} \Eld{\alpha+\beta}{t}{i+j} \Eld{\beta+2\psi}{u/t}{j+1} \Eld{\beta+2\psi}{1}{j+2} \asite{\Eld{\alpha+2\beta+2\psi}{-u^2/(4t)}{i+2j}} \Eld{\alpha+\beta}{-t}{i+j} \Eld{\beta+2\psi}{-1}{j+2} \Eld{\beta+2\psi}{-u/t}{j+1} \\
    & \hspace{.2in} \cdot \Eld{\alpha}{-t}{i} \Eld{\alpha+\beta}{-t}{i+j} \Eld{\beta+2\psi}{u/t}{j+1} \Eld{\beta+2\psi}{1}{j+2} \asite{\Eld{\alpha+2\beta+2\psi}{u^2/(4t)}{i+2j}} \Eld{\alpha+\beta}{t}{i+j} \Eld{\beta+2\psi}{-1}{j+2} \Eld{\beta+2\psi}{-u/t}{j+1} \\
    \intertext{The newly created $\Rt{\alpha+2\beta+2\psi}$ elements commute with the $\Rt{\alpha+\beta}$, $\Rt{\beta+2\psi}$, and $\Rt{\alpha}$ elements using \Cnref{rel:b3-large:comm:alpha+beta:alpha+2beta+2psi}, \Cnref{rel:b3-large:comm:beta+2psi:alpha+2beta+2psi}, and \Cnref{rel:b3-large:comm:lift:alpha:alpha+2beta+2psi:homog} respectively (note that in the third case, we need that $i+2j$ and $i$ have the same parity!), so we can cancel them (\Cnref{rel:b3-large:inv-doub:alpha+2beta+2psi}):}
    &= \Eld{\alpha}{t}{i} \Eld{\alpha+\beta}{t}{i+j} \Eld{\beta+2\psi}{u/t}{j+1} \asite{\Eld{\beta+2\psi}{1}{j+2} \Eld{\alpha+\beta}{-t}{i+j} \Eld{\beta+2\psi}{-1}{j+2}} \Eld{\beta+2\psi}{-u/t}{j+1} \\
    & \hspace{.2in} \cdot \Eld{\alpha}{-t}{i} \Eld{\alpha+\beta}{-t}{i+j} \Eld{\beta+2\psi}{u/t}{j+1} \asite{\Eld{\beta+2\psi}{1}{j+2} \Eld{\alpha+\beta}{t}{i+j} \Eld{\beta+2\psi}{-1}{j+2}} \Eld{\beta+2\psi}{-u/t}{j+1} \\
    \intertext{Moving pairs of $\Eld{\beta+2\psi}{\pm u/t}{j+2}$ elements together across $\Eld{\alpha+\beta}{\pm t}{i+j}$, creating $\Eld{\alpha+2\beta+2\psi}{\pm t}{i+2j}$ commutators (\Cnref{rel:b3-large:order:alpha+beta:beta+2psi}), then cancelling (\Cnref{rel:b3-large:sub:inv:beta+2psi}):}
    &= \Eld{\alpha}{t}{i} \Eld{\alpha+\beta}{t}{i+j} \Eld{\beta+2\psi}{u/t}{j+1} \asite{\Eld{\alpha+2\beta+2\psi}{-t}{i+2j+2}} \Eld{\alpha+\beta}{-t}{i+j} \Eld{\beta+2\psi}{-u/t}{j+1} \\
    & \hspace{.2in} \cdot \Eld{\alpha}{t}{i} \Eld{\alpha+\beta}{-t}{i+j} \Eld{\beta+2\psi}{u/t}{j+1} \asite{\Eld{\alpha+2\beta+2\psi}{-t}{i+2j+2}}\Eld{\alpha+\beta}{t}{i+j} \Eld{\beta+2\psi}{-u/t}{j+1} \\
    \intertext{Again, the newly created $\Rt{\alpha+2\beta+2\psi}$ elements commute with the $\Rt{\alpha+\beta}$, $\Rt{\beta+2\psi}$, and $\Rt{\alpha}$ elements using \Cnref{rel:b3-large:comm:alpha+beta:alpha+2beta+2psi}, \Cnref{rel:b3-large:comm:beta+2psi:alpha+2beta+2psi}, and \Cnref{rel:b3-large:comm:lift:alpha:alpha+2beta+2psi:homog} respectively ($i+2j+2$ and $i$ have the same parity), so we can cancel them (\Cnref{rel:b3-large:inv-doub:alpha+2beta+2psi}):}
    &= \Eld{\alpha}{t}{i} \Eld{\alpha+\beta}{t}{i+j} \Eld{\beta+2\psi}{u/t}{j+1} \Eld{\alpha+\beta}{-t}{i+j} \Eld{\beta+2\psi}{-u/t}{j+1} \Eld{\alpha}{-t}{i} \Eld{\alpha+\beta}{-t}{i+j} \Eld{\beta+2\psi}{u/t}{j+1} \Eld{\alpha+\beta}{t}{i+j} \Eld{\beta+2\psi}{-u/t}{j+1} \\
    \intertext{Reducing into $\Rt{\alpha+2\beta+2\psi}$ elements (\Cnref{rel:b3-large:expr:alpha+2beta+2psi:alpha+beta:beta+2psi}) and applying \Cnref{rel:b3-large:sub:inv:alpha} and \Cnref{rel:b3-large:inv-doub:alpha+2beta+2psi}:}
    &= \Comm{\Eld{\alpha}{t}{i}}{\Eld{\beta+2\psi}{u}{i+2j+1}}.
    \end{align*}
\end{proof}

\begin{table}
    \centering
    \begin{tabular}{|c|c||c|}
    \hline
        $i$ & $j$ & $i+2j+1$ \\ \hline \hline
        $0$ & $0$ & $1$ \\ \hline
        $0$ & $1$ & $3$ \\ \hline
        $1$ & $0$ & $2$ \\ \hline
        $1$ & $1$ & $4$ \\ \hline
    \end{tabular}
    \caption{The trace of \Cnref{rel:b3-large:comm:lift:alpha:alpha+2beta+2psi:nonhomog}. Observe that between \Cnref{rel:b3-large:comm:lift:alpha:alpha+2beta+2psi:homog} and \Cnref{rel:b3-large:comm:lift:alpha:alpha+2beta+2psi:nonhomog} we are still missing the pairs $(1,0)$ and $(0,5)$.}\label{tab:b3-large:comm:lift:alpha:alpha+2beta+2psi:nonhomog}
\end{table}

\begin{proposition}[Sufficient conditions for commutator of $\Rt{\alpha+\beta}$ and $\Rt{\alpha+\beta+2\psi}$]\label{rel:b3-large:comm:impl:alpha+beta:alpha+beta+2psi}
    Let $i,j \in [1], k \in [4]$. Suppose that:
    \[
    \Quant{t,u \in \LiftScalars} \Comm{\Eld{\alpha}{t}{i}}{\Eld{\alpha+2\beta+2\psi}{u}{j+k}} = \Id.
    \]
    Then:
    \[
    \Quant{t,u \in \LiftScalars} \Comm{\Eld{\alpha+\beta}{t}{i+j}}{\Eld{\alpha+\beta+2\psi}{u}{k}} = \Id.
    \]
\end{proposition}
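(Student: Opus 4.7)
The plan is to mirror the style of proofs like \Cnref{rel:b3-large:comm:impl:alpha+beta+psi:beta+2psi:b}: namely, decompose one of the two elements we want to commute into a commutator of smaller generators, then carefully pass the other element through, picking up a controlled list of ``extra'' commutators along the way.

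Concretely, I would start with the product $\Eld{\alpha+\beta}{t}{i+j} \Eld{\alpha+\beta+2\psi}{u}{k}$ and use \Cnref{rel:b3-large:sub:expr:alpha+beta} to rewrite the $\Rt{\alpha+\beta}$ factor as
\[
\Eld{\alpha+\beta}{t}{i+j} = \Eld{\alpha}{t}{i} \Eld{\beta}{1}{j} \Eld{\alpha}{-t}{i} \Eld{\beta}{-1}{j}.
\]
Then I would slide $\Eld{\alpha+\beta+2\psi}{u}{k}$ from the right all the way to the left through this product. Crossings with $\Rt{\alpha}$ factors are free by \Cnref{rel:b3-large:comm:alpha:alpha+beta+2psi}, while crossings with the two $\Rt{\beta}$ factors create $\Rt{\alpha+2\beta+2\psi}$ elements of degree $j+k$ via \Cnref{rel:b3-large:order:beta:alpha+beta+2psi}; a short calculation shows these two newly-created $\Rt{\alpha+2\beta+2\psi}$ elements have opposite scalars $\pm u$, and sit on opposite sides of the ``middle'' $\Rt{\alpha}$--$\Rt{\beta}$--$\Rt{\alpha}$ segment of the expanded $\Rt{\alpha+\beta}$.

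Next I would try to bring the two stray $\Rt{\alpha+2\beta+2\psi}$ elements together and cancel them using \Cnref{rel:b3-large:inv-doub:alpha+2beta+2psi}. For this I need to commute an $\Rt{\alpha+2\beta+2\psi}$ element of degree $j+k$ past the middle block, which consists of $\Rt{\alpha}$ and $\Rt{\beta}$ factors: the $\Rt{\beta}$ crossings are free by \Cnref{rel:b3-large:comm:beta:alpha+2beta+2psi}, and the $\Rt{\alpha}$ crossings (with $\Rt{\alpha}$ of degree $i$) are \emph{precisely} what the hypothesis
$\Comm{\Eld{\alpha}{t}{i}}{\Eld{\alpha+2\beta+2\psi}{u}{j+k}} = \Id$ provides. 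Once the stray $\Rt{\alpha+2\beta+2\psi}$ elements are adjacent, inversion cancels them and what remains is $\Eld{\alpha+\beta+2\psi}{u}{k}$ followed by the original expansion of $\Eld{\alpha+\beta}{t}{i+j}$, which re-collapses by \Cnref{rel:b3-large:sub:expr:alpha+beta} into $\Eld{\alpha+\beta+2\psi}{u}{k}\Eld{\alpha+\beta}{t}{i+j}$, giving the desired commutator identity.

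I do not expect a serious obstacle here: the only nontrivial input is the hypothesis itself, used exactly twice (once for each $\pm t$ copy of the inner $\Rt{\alpha}$ factor). The main thing to be careful about is degree bookkeeping. Indeed, the passage of $\Rt{\alpha+\beta+2\psi}$ of degree $k$ across $\Rt{\beta}$ of degree $j$ gives $\Rt{\alpha+2\beta+2\psi}$ of degree $j+k$, which matches the hypothesis; and $j+k \in [5]$ since $j \in [1]$ and $k \in [4]$, so the $\Rt{\alpha+2\beta+2\psi}$ element is indeed in its allowed degree range. No square-ratio issue of the kind encountered in \Cref{sec:b3-large:remaining-relations} arises, since the whole argument is a rearrangement identity rather than a lifting argument.
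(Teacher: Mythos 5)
Your proposal is correct and follows essentially the same route as the paper: expand $\Rt{\alpha+\beta}$ via \Cnref{rel:b3-large:sub:expr:alpha+beta}, slide $\Rt{\alpha+\beta+2\psi}$ leftward past the $\Rt{\alpha}$ and $\Rt{\beta}$ factors (picking up $\Rt{\alpha+2\beta+2\psi}$ elements of degree $j+k$ from the $\Rt{\beta}$ crossings via \Cnref{rel:b3-large:order:beta:alpha+beta+2psi}), and cancel the strays using the hypothesis and \Cnref{rel:b3-large:inv-doub:alpha+2beta+2psi}. One small bookkeeping note: by choosing the appropriate form of the order relation when crossing the two $\Rt{\beta}$ factors, the paper arranges for the two $\Rt{\alpha+2\beta+2\psi}$ elements to flank only the single inner $\Rt{\alpha}$ factor, so the hypothesis is invoked only once (and no $\Rt{\beta}$ crossing is needed); your ``used exactly twice'' accounting also works but corresponds to a slightly less economical placement of the created commutators.
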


\begin{proof}
    We write a product of $\Rt{\alpha+\beta}$ and $\Rt{\alpha+\beta+2\psi}$ elements:
    \begin{align*}
        & \asite{\Eld{\alpha+\beta}{t}{i+j}} \Eld{\alpha+\beta+2\psi}{u}{k} \\
        \intertext{Expanding the $\Rt{\alpha+\beta}$ element into a product of $\Rt{\alpha}$ and $\Rt{\beta}$ elements (\Cnref{rel:b3-large:sub:expr:alpha+beta}):}
        &= \Eld{\alpha}{t}{i} \Eld{\beta}{1}{j} \Eld{\alpha}{-t}{i} \Eld{\beta}{-1}{j} \asite{\Eld{\alpha+\beta+2\psi}{u}{k}} \\
        \intertext{Moving the $\Rt{\alpha+\beta+2\psi}$ element to the left, creating no commutators with $\Rt{\alpha}$ elements (\Cnref{rel:b3-large:comm:alpha:alpha+beta+2psi}) and $\Rt{\alpha+2\beta+2\psi}$ commutators with $\Rt{\beta}$ elements (\Cnref{rel:b3-large:order:beta:alpha+beta+2psi}):}
        &= \Eld{\alpha+\beta+2\psi}{u}{k} \Eld{\alpha}{t}{i} \Eld{\beta}{1}{j} \asite{\Eld{\alpha+2\beta+2\psi}{tu}{i+k} \Eld{\alpha}{-t}{i} \Eld{\alpha+2\beta+2\psi}{-tu}{i+k}} \Eld{\beta}{-1}{j} \\
        \intertext{Canceling the $\Rt{\alpha+2\beta+2\psi}$ elements (\Cnref{rel:b3-large:inv-doub:alpha+2beta+2psi}) since they commute with $\Rt{\alpha}$ elements (\Cnref{rel:b3-large:comm:alpha:alpha+2beta+2psi}):}
        &= \Eld{\alpha+\beta+2\psi}{u}{k} \asite{\Eld{\alpha}{t}{i} \Eld{\beta}{1}{j} \Eld{\alpha}{-t}{i} \Eld{\beta}{-1}{j}} \\
        \intertext{Reducing into an $\Rt{\alpha+\beta}$ element (\Cnref{rel:b3-large:sub:expr:alpha+beta}):}
        &= \Eld{\alpha+\beta+2\psi}{u}{k} \Eld{\alpha+\beta}{t}{i+j}.
    \end{align*}
\end{proof}

\begin{relation}[\RelNameBLg\RelNamePart\RelNameCommute{\Rt{\alpha+\beta}}{\Rt{\alpha+2\beta+2\psi}}]\label{cor:b3-large:comm:spec:alpha+beta:alpha+beta+2psi:01}
    \[ \Quant{t,u \in \LiftScalars} \Comm{\Eld{\alpha+\beta}{t}{0}}{\Eld{\alpha+\beta+2\psi}{u}{1}} = \Id. \]
\end{relation}

\begin{proof}
    We apply the previous proposition (\Cnref{rel:b3-large:comm:impl:alpha+beta:alpha+beta+2psi}) with $i=0,j=0,k=1$. This gives the desideratum if 
    \[
    \Quant{t,u \in \LiftScalars} \Comm{\Eld{\alpha}{t}{0}}{\Eld{\alpha+2\beta+2\psi}{u}{1}} = \Id,
    \]
    which follows from nonhomogeneous lifting (\Cnref{rel:b3-large:comm:lift:alpha:alpha+2beta+2psi:nonhomog} with $i=j=0$, see also \Cref{tab:b3-large:comm:lift:alpha:alpha+2beta+2psi:nonhomog}).
\end{proof}

\begin{proposition}[Sufficient conditions for commutator of $\Rt{\alpha}$ and $\Rt{\alpha+2\beta+2\psi}$]\label{rel:b3-large:comm:impl:alpha:alpha+2beta+2psi}
    Let $i \in [1], j \in [2], k \in [3]$. Suppose that:
    \[
    \Quant{t,u \in \LiftScalars} \Comm{\Eld{\alpha+\beta}{t}{j}}{\Eld{\alpha+\beta+2\psi}{u}{i+k}} = \Id.
    \]
    Then:
    \[
    \Quant{t,u \in \LiftScalars} \Comm{\Eld{\alpha}{t}{i}}{\Eld{\alpha+2\beta+2\psi}{u}{j+k}} = \Id.
    \]
\end{proposition}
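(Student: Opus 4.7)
The plan is to follow the same template as the preceding two ``sufficient conditions'' propositions (in particular, \Cnref{rel:b3-large:comm:impl:alpha+beta:alpha+beta+2psi}): expand the compound root element on one side into a commutator of the two constituent base-type elements, push the other element across, observe that the junk commutators that appear lie in a root whose commutation is already known or hypothesized, and then cancel and reassemble.

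Concretely, I would start from the product $\Eld{\alpha}{t}{i}\Eld{\alpha+2\beta+2\psi}{u}{j+k}$ and expand the $\Rt{\alpha+2\beta+2\psi}$ factor using \Cnref{rel:b3-large:expr:alpha+2beta+2psi:alpha+beta:beta+2psi} as a product of $\Rt{\alpha+\beta}$ and $\Rt{\beta+2\psi}$ elements of degrees $j$ and $k$ respectively. Next I would commute the $\Rt{\alpha}$ factor from the left fully to the right across this product. Crossing $\Rt{\alpha+\beta}$ creates no junk by \Cnref{rel:b3-large:sub:comm:alpha:alpha+beta}, while crossing each $\Rt{\beta+2\psi}$ creates an $\Rt{\alpha+\beta+2\psi}$ commutator of degree $i+k$ via \Cnref{rel:b3-large:order:alpha:beta+2psi}. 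This introduces two $\Rt{\alpha+\beta+2\psi}$ elements of opposite coefficients interspersed among $\Rt{\alpha+\beta}$ and $\Rt{\beta+2\psi}$ elements.

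The key step is then to slide the two $\Rt{\alpha+\beta+2\psi}$ elements next to each other so that they cancel via \Cnref{rel:b3-large:inv-doub:alpha+beta+2psi}. Doing this requires commuting $\Rt{\alpha+\beta+2\psi}$ across an $\Rt{\alpha+\beta}$ element (of degree $j$) and across a $\Rt{\beta+2\psi}$ element (of degree $k$); the first is \emph{exactly} the hypothesis of the proposition, while the second is already known from \Cnref{rel:b3-large:comm:beta+2psi:alpha+beta+2psi}. After cancellation, the remaining word is precisely the expansion of $\Eld{\alpha+2\beta+2\psi}{u}{j+k}$ from \Cnref{rel:b3-large:expr:alpha+2beta+2psi:alpha+beta:beta+2psi}, now sitting to the left of $\Eld{\alpha}{t}{i}$, which is the desired commutation identity.

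I do not expect any real obstacle: the structure is completely parallel to \Cnref{rel:b3-large:comm:impl:alpha+beta:alpha+beta+2psi} and \Cnref{rel:b3-large:comm:impl:alpha:alpha+2beta+2psi} above, with the roles of the roots permuted. The only point to verify carefully is that the degrees match up, namely that the two $\Rt{\alpha+\beta+2\psi}$ elements that arise both have degree $i+k$ (so that they can cancel after being merged), and that the degree constraints $i\in[1]$, $j\in[2]$, $k\in[3]$ are consistent with applying \Cnref{rel:b3-large:expr:alpha+2beta+2psi:alpha+beta:beta+2psi} (which requires $j\in[2]$, $k\in[3]$) and \Cnref{rel:b3-large:order:alpha:beta+2psi} (which requires $i\in[1]$, $k\in[3]$). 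All of these are satisfied by hypothesis.
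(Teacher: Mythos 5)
Your proposal is correct and follows essentially the same template as the paper's proof: expand $\Eld{\alpha+2\beta+2\psi}{u}{j+k}$ via \Cnref{rel:b3-large:expr:alpha+2beta+2psi:alpha+beta:beta+2psi}, push $\Eld{\alpha}{t}{i}$ across, and cancel the resulting $\Rt{\alpha+\beta+2\psi}$ junk using the hypothesis together with \Cnref{rel:b3-large:inv-doub:alpha+beta+2psi}. One small note: with the form of \Cnref{rel:b3-large:order:alpha:beta+2psi} that the paper actually uses, the two $\Rt{\alpha+\beta+2\psi}$ elements end up separated by only the single $\Rt{\alpha+\beta}$ factor, so the hypothesis alone suffices for the merge and the extra crossing over $\Rt{\beta+2\psi}$ via \Cnref{rel:b3-large:comm:beta+2psi:alpha+beta+2psi} that you invoke is not needed — though it is available and harmless if one takes your slightly longer route.
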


\begin{proof}
    We write a product of $\Rt{\alpha}$ and $\Rt{\alpha+2\beta+2\psi}$ elements:
    \begin{align*}
        & \Eld{\alpha}{t}{i} \asite{\Eld{\alpha+2\beta+2\psi}{u}{j+k}} \\
        \intertext{Expanding the $\Rt{\alpha+2\beta+2\psi}$ element into a product of $\Rt{\alpha+\beta}$ and $\Rt{\beta+2\psi}$ elements (\Cnref{rel:b3-large:expr:alpha+2beta+2psi:alpha+beta:beta+2psi}):}
        &= \asite{\Eld{\alpha}{t}{i}} \Eld{\alpha+\beta}{u}{j} \Eld{\beta+2\psi}{-1}{k} \Eld{\alpha+\beta}{-u}{j} \Eld{\beta+2\psi}{1}{k} \\
        \intertext{Moving the $\Rt{\alpha}$ element fully to the right, creating no commutators with $\Rt{\alpha+\beta}$ elements (\Cnref{rel:b3-large:sub:comm:alpha:alpha+beta}) and $\Rt{\alpha+\beta+2\psi}$ commutators with $\Rt{\beta+2\psi}$ elements (\Cnref{rel:b3-large:order:alpha:beta+2psi}):}
        &= \Eld{\alpha+\beta}{u}{j} \Eld{\beta+2\psi}{-1}{k} \asite{\Eld{\alpha+\beta+2\psi}{-t}{i+k} \Eld{\alpha+\beta}{-u}{j} \Eld{\alpha+\beta+2\psi}{t}{i+k}} \Eld{\beta+2\psi}{1}{k} \Eld{\alpha}{t}{i} \\
        \intertext{Moving the $\Rt{\alpha+\beta+2\psi}$ elements together across the $\Rt{\alpha+\beta}$ element by assumption and cancelling them (\Cnref{rel:b3-large:inv-doub:alpha+beta+2psi}):}
        &= \asite{\Eld{\alpha+\beta}{u}{j} \Eld{\beta+2\psi}{-1}{k} \Eld{\alpha+\beta}{-u}{j} \Eld{\beta+2\psi}{1}{k}} \Eld{\alpha}{t}{i} \\
        \intertext{Reducing into an $\Rt{\alpha+2\beta+2\psi}$ element (\Cnref{rel:b3-large:expr:alpha+2beta+2psi:alpha+beta:beta+2psi}):}
        &= \Eld{\alpha+2\beta+2\psi}{u}{j+k} \Eld{\alpha}{t}{i}.
    \end{align*}
\end{proof}

\begin{relation}[\RelNameBLg\RelNamePart\RelNameCommute{\Rt{\alpha}}{\Rt{\alpha+2\beta+2\psi}}]\label{cor:b3-large:comm:spec:alpha:alpha+2beta+2psi:10}
    \[
    \Quant{t,u \in \LiftScalars} \Comm{\Eld{\alpha}{t}{1}}{\Eld{\alpha+2\beta+2\psi}{u}{0}} = \Id.
    \]
\end{relation}

\begin{proof}
    Using the previous proposition (\Cnref{rel:b3-large:comm:impl:alpha:alpha+2beta+2psi}) with $i=1,j=0,k=0$, we get the desideratum if:
    \[
    \Quant{t,u \in \LiftScalars} \Comm{\Eld{\alpha+\beta}{t}{0}}{\Eld{\alpha+\beta+2\psi}{u}{1}} = \Id.
    \]
    But this is precisely \Cnref{cor:b3-large:comm:spec:alpha+beta:alpha+beta+2psi:01}.
\end{proof}

\begin{bigrelation}[\RelNameBLg\RelNameCommute{\Rt{\alpha}}{\Rt{\alpha+2\beta+2\psi}}]\label{rel:b3-large:comm:alpha:alpha+2beta+2psi}
\[
\DQuant{i\in[1],j\in[5]}{t,u \in \LiftScalars} \Comm{\Eld{\alpha}{t}{i}}{\Eld{\alpha+2\beta+2\psi}{u}{j}} = \Id.
\]
\end{bigrelation}

\begin{proof}
    The degree-pairs $[1] \times [5]$ are covered by \Cnref{cor:b3-large:comm:spec:alpha:alpha+2beta+2psi:10}, \Cnref{rel:b3-large:comm:lift:alpha:alpha+2beta+2psi:nonhomog}, and \Cnref{rel:b3-large:comm:lift:alpha:alpha+2beta+2psi:homog} (see also \Cref{tab:b3-large:homog} and \Cref{tab:b3-large:comm:lift:alpha:alpha+2beta+2psi:nonhomog}).
\end{proof}

\begin{relation}[\RelNameBLg\RelNameCommute{\Rt{\alpha+\beta}}{\Rt{\alpha+\beta+2\psi}}]\label{rel:b3-large:comm:alpha+beta:alpha+beta+2psi}
\[
\DQuant{i\in[2],j\in[4]}{t,u \in \LiftScalars} \Comm{\Eld{\alpha+\beta}{t}{i}}{\Eld{\alpha+\beta+2\psi}{u}{j}} = \Id.
\]
\end{relation}

\begin{proof}
    Follows from the implication from ``$\Rt{\alpha}$ and $\Rt{\alpha+2\beta+2\psi}$ commute'' to ``$\Rt{\alpha+\beta}$ and $\Rt{\alpha+\beta+2\psi}$ commute'' (\Cnref{rel:b3-large:comm:impl:alpha+beta:alpha+beta+2psi}) and the fact that $\Rt{\alpha}$ and $\Rt{\alpha+2\beta+2\psi}$ commute (\Cnref{rel:b3-large:comm:alpha:alpha+2beta+2psi}).
\end{proof}

\begin{relation}[\RelNameBLg\RelNameSelfCommute{\Rt{\alpha+\beta+\psi}}]\label{rel:b3-large:comm:self:alpha+beta+psi}
        \[
    \DQuant{i,j \in [3]}{t,u \in \LiftScalars} \Comm{\Eld{\alpha+\beta+\psi}{t}{i}}{\Eld{\alpha+\beta+\psi}{u}{j}} = \Id.
    \]
\end{relation}

\begin{proof}
    We write a product of $\Rt{\alpha+\beta+\psi}$ elements:
    \begin{align*}
        & \asite{\Eld{\alpha+\beta+\psi}{t}{i}} \Eld{\alpha+\beta+\psi}{u}{j} \\
        \intertext{Expand the left $\Rt{\alpha+\beta+\psi}$ elements into a product of $\Rt{\alpha}$ and $\Rt{\beta+\psi}$ elements (\Cnref{prop:b3-large:est:alpha+beta+psi}) using an arbitrary decomposition $i = i_1 + i_2$ for $i_1 \in [1], i_2 \in [2]$:}
        &= \Eld{\alpha}{-1/2}{i_1} \Eld{\beta+\psi}{t}{i_2} \Eld{\alpha}{1}{i_1} \Eld{\beta+\psi}{-t}{i_2} \Eld{\alpha}{-1/2}{i_1}  \asite{\Eld{\alpha+\beta+\psi}{u}{j}} \\
        \intertext{Move the right $\Rt{\alpha+\beta+\psi}$ element to the left, creating no commutators with $\Rt{\alpha}$ elements (\Cnref{rel:b3-large:comm:alpha:alpha+beta+psi}) and $\Rt{\alpha+2\beta+2\psi}$ commutators with $\Rt{\beta+\psi}$ elements (\Cnref{rel:b3-large:order:beta+psi:alpha+beta+psi}):}
        &= \Eld{\alpha+\beta+\psi}{u}{j} \Eld{\alpha}{-1/2}{i_1} \Eld{\beta+\psi}{t}{i_2} \asite{\Eld{\alpha+2\beta+2\psi}{2tu}{i_2+j} \Eld{\alpha}{1}{i_1} \Eld{\alpha+2\beta+2\psi}{-2tu}{i_2+j}} \Eld{\beta+\psi}{-t}{i_2} \Eld{\alpha}{-1/2}{i_1} \\
        \intertext{Now we can commute the $\Rt{\alpha+2\beta+2\psi}$ elements together across the $\Rt{\alpha}$ element (\Cnref{rel:b3-large:comm:alpha:alpha+2beta+2psi}) and then cancel them (\Cnref{rel:b3-large:inv-doub:alpha+2beta+2psi}):}
        &= \Eld{\alpha+\beta+\psi}{u}{j} \asite{\Eld{\alpha}{-1/2}{i_1} \Eld{\beta+\psi}{t}{i_2} \Eld{\alpha}{1}{i_1} \Eld{\beta+\psi}{-t}{i_2} \Eld{\alpha}{-1/2}{i_1}} \\
        \intertext{Reducing into an $\Rt{\alpha+\beta+\psi}$ element (\Cnref{prop:b3-large:est:alpha+beta+psi}):}
        &= \Eld{\alpha+\beta+\psi}{u}{j} \Eld{\alpha+\beta+\psi}{t}{i}.
    \end{align*}
\end{proof}

\subsubsection{Linearity for $\alpha+\beta+\psi$}
Do something similar to what we did before but more complicated because the definition has five elements. Anyhow, commute so that $t,u$ and $-t,-u$ are close to each other. 

\begin{relation}[\RelNameBLg\RelNameLinearity{\alpha+\beta+\psi}]\label{rel:b3-large:lin:alpha+beta+psi}
        \[
    \DQuant{i \in [3]}{t,u \in \LiftScalars} \Eld{\alpha+\beta+\psi}{t}{i}
    \Eld{\alpha+\beta+\psi}{u}{i} =
    \Eld{\alpha+\beta+\psi}{t+u}{i}.
    \]
\end{relation}

\begin{proof}
Decompose $i = i_1+i_2$ for $i_1 \in [2],i_2 \in [1]$. We write the product of two $\Rt{\alpha+\beta+\psi}$ elements and compute:
\begin{align*}
    & \asite{\Eld{\alpha+\beta+\psi}{t}{i_1+i_2}} \Eld{\alpha+\beta+\psi}{u}{i_1+i_2} \\
    \intertext{Expanding one $\Rt{\alpha+\beta+\psi}$ element into a product of $\Rt{\alpha+\beta}$ and $\Rt{\psi}$ elements (\Cnref{prop:b3-large:est:alpha+beta+psi}):}
    &= \Eld{\psi}{-1/2}{i_2} \Eld{\alpha+\beta}{t}{i_1} \Eld{\psi}{1}{i_2} \Eld{\alpha+\beta}{-t}{i_1} \Eld{\psi}{-1/2}{i_2} \asite{\Eld{\alpha+\beta+\psi}{u}{i_1+i_2}} \\
    \intertext{Moving the other $\Rt{\alpha+\beta+\psi}$ element from right to \emph{middle}, creating an $\Rt{\alpha+\beta+2\psi}$ commutator with the $\Rt{\psi}$ element (\Cnref{rel:b3-large:order:psi:alpha+beta+psi}) and no commutator with the $\Rt{\alpha+\beta}$ element (\Cnref{rel:b3-large:comm:alpha+beta:alpha+beta+psi}):}
    &= \Eld{\psi}{-1/2}{i_2} \Eld{\alpha+\beta}{t}{i_1} \Eld{\psi}{1}{i_2} \asite{\Eld{\alpha+\beta+\psi}{u}{i_1+i_2}} \Eld{\alpha+\beta}{-t}{i_1} \Eld{\psi}{-1/2}{i_2} \Eld{\alpha+\beta+2\psi}{-u}{i_1+2i_2} \\
    \intertext{Expanding the other $\Rt{\alpha+\beta+\psi}$ element, again with \Cnref{prop:b3-large:est:alpha+beta+psi}:}
    &= \Eld{\psi}{-1/2}{i_2} \Eld{\alpha+\beta}{t}{i_1} \asite{\Eld{\psi}{1}{i_2} \Eld{\psi}{-1/2}{i_2}} \Eld{\alpha+\beta}{u}{i_1} \Eld{\psi}{1}{i_2} \Eld{\alpha+\beta}{-u}{i_1} \Eld{\psi}{-1/2}{i_2} \Eld{\alpha+\beta}{-t}{i_1} \Eld{\psi}{-1/2}{i_2} \Eld{\alpha+\beta+2\psi}{-u}{i_1+2i_2} \\
    \intertext{Simplifying the product of $\Rt{\psi}$ elements (\Cnref{rel:b3-large:sub:lin:psi}):}
    &= \Eld{\psi}{-1/2}{i_2} \Eld{\alpha+\beta}{t}{i_1} \asite{\Eld{\psi}{1/2}{i_2} \Eld{\alpha+\beta}{u}{i_1}} \Eld{\psi}{1}{i_2} \asite{\Eld{\alpha+\beta}{-u}{i_1} \Eld{\psi}{-1/2}{i_2}} \Eld{\alpha+\beta}{-t}{i_1} \Eld{\psi}{-1/2}{i_2} \Eld{\alpha+\beta+2\psi}{-u}{i_1+2i_2} \\
    \intertext{Commuting $\Rt{\psi}$ and $\Rt{\alpha+\beta}$ elements (\Cnref{rel:b3-large:order:alpha+beta:psi} and \Cnref{rel:b3-large:order:psi:alpha+beta}):}
    &= \Eld{\psi}{-1/2}{i_2} \asite{\Eld{\alpha+\beta}{t}{i_1} \Eld{\alpha+\beta}{u}{i_1}} \Eld{\alpha+\beta+2\psi}{-u/4}{i_1+2i_2} \Eld{\alpha+\beta+\psi}{-u/2}{i_1+i_2} \asite{\Eld{\psi}{1/2}{i_2} \Eld{\psi}{1}{i_2} \Eld{\psi}{-1/2}{i_2}} \\
    &\hspace{1in} \cdot \Eld{\alpha+\beta+\psi}{u/2}{i_1+i_2} \Eld{\alpha+\beta+2\psi}{u/4}{i_1+2i_2} \asite{\Eld{\alpha+\beta}{-u}{i_1} \Eld{\alpha+\beta}{-t}{i_1}} \Eld{\psi}{-1/2}{i_2} \Eld{\alpha+\beta+2\psi}{-u}{i_1+2i_2}. \\
    \intertext{Now we can simplify the products of $\Rt{\psi}$ elements (\Cnref{rel:b3-large:sub:lin:psi}) and of $\Rt{\alpha+\beta}$ elements (\Cnref{rel:b3-large:sub:lin:alpha+beta}):}
    &= \Eld{\psi}{-1/2}{i_2} \Eld{\alpha+\beta}{t+u}{i_1}  \Eld{\alpha+\beta+2\psi}{-u/4}{i_1+2i_2} \asite{\Eld{\alpha+\beta+\psi}{-u/2}{i_1+i_2} \Eld{\psi}{1}{i_2}} \\
    &\hspace{1in} \cdot \asite{\Eld{\alpha+\beta+\psi}{u/2}{i_1+i_2}} \Eld{\alpha+\beta+2\psi}{u/4}{i_1+2i_2} \Eld{\alpha+\beta}{-(t+u)}{i_1} \Eld{\psi}{-1/2}{i_2} \Eld{\alpha+\beta+2\psi}{-u}{i_1+2i_2} \\
    \intertext{Commute the $\Rt{\alpha+\beta+\psi}$ elements across the $\Rt{\psi}$ element, creating a $\Rt{\alpha+\beta+2\psi}$ commutator (\Cnref{rel:b3-large:order:psi:alpha+beta+psi}) and cancelling them (\Cnref{rel:b3-large:inv-doub:alpha+beta+psi}):}
    &= \Eld{\psi}{-1/2}{i_2} \Eld{\alpha+\beta}{t+u}{i_1}  \asite{\Eld{\alpha+\beta+2\psi}{-u/4}{i_1+2i_2} \Eld{\alpha+\beta+2\psi}{u}{i_1+2i_2}} \Eld{\psi}{1}{i_2} \\
    &\hspace{1in} \cdot \asite{\Eld{\alpha+\beta+2\psi}{u/4}{i_1+2i_2}} \Eld{\alpha+\beta}{-(t+u)}{i_1} \Eld{\psi}{-1/2}{i_2} \asite{\Eld{\alpha+\beta+2\psi}{-u}{i_1+2i_2}} \\
    \intertext{Now, we can cancel the $\Rt{\alpha+\beta+2\psi}$ elements (\Cnref{rel:b3-large:inv-doub:alpha+beta+2psi}) since they commute across $\Rt{\psi}$ elements (\Cnref{rel:b3-large:comm:psi:alpha+beta+2psi}) and $\Rt{\alpha+\beta}$ elements (\Cnref{rel:b3-large:comm:alpha+beta:alpha+beta+2psi}):}
    &= \asite{\Eld{\psi}{-1/2}{i_2} \Eld{\alpha+\beta}{t+u}{i_1} \Eld{\psi}{1}{i_2} \Eld{\alpha+\beta}{-(t+u)}{i_1} \Eld{\psi}{-1/2}{i_2}} \\
    \intertext{Reducing into an $\Rt{\alpha+\beta+\psi}$ element (\Cnref{prop:b3-large:est:alpha+beta+psi}):}
    &= \Eld{\alpha+\beta+\psi}{t+u}{i_1+i_2}.
\end{align*}
as desired.
\end{proof}

\bibliographystyle{alpha}
\bibliography{HDX}

\newcommand{\etalchar}[1]{$^{#1}$}
\begin{thebibliography}{ALOV18}

\bibitem[ABN23]{ABN23}
Anurag Anshu, Nikolas Breuckmann, and Chinmay Nirkhe.
\newblock N{LTS} {H}amiltonians from good quantum codes.
\newblock In {\em Proceedings of the 55th Annual ACM Symposium on Theory of Computing}, pages 1090--1096. {ACM}, New York, 2023.

\bibitem[AH93]{AH93}
Herbert Abels and Stephan Holz.
\newblock Higher generation by subgroups.
\newblock {\em Journal of Algebra}, 160(2):310--341, 1993.

\bibitem[ALOV18]{ALOV18}
Nima Anari, Kuikui Liu, Shayan Oveis{ }Gharan, and Cynthia Vinzant.
\newblock Log-concave polynomials {III}: {M}ason's ultra-log-concavity conjecture for independent sets of matroids.
\newblock Technical Report 1811.01600, arXiv, 2018.

\bibitem[AOV21]{AOV21}
Nima Anari, Shayan Oveis{ }Gharan, and Cynthia Vinzant.
\newblock Log-concave polynomials, {I}: entropy and a deterministic approximation algorithm for counting bases of matroids.
\newblock {\em Duke Mathematical Journal}, 170(16):3459--3504, 2021.

\bibitem[Bal00]{Bal00}
Cristina Ballantine.
\newblock Ramanujan type buildings.
\newblock {\em Canadian Journal of Mathematics}, 52(6):1121--1148, 2000.

\bibitem[BBI21]{BBI21}
Vitaly Bergelson, Andrew Best, and Alex Iosevich.
\newblock Sums of powers in large finite fields: a mix of methods.
\newblock {\em American Mathematical Monthly}, 128(8):701--718, 2021.

\bibitem[BD01]{BD01}
Daniel Biss and Samit Dasgupta.
\newblock A presentation for the unipotent group over rings with identity.
\newblock {\em Journal of Algebra}, 237(2):691--707, 2001.

\bibitem[BLM24]{BLM24}
Mitali Bafna, Noam Lifshitz, and Dor Minzer.
\newblock Constant degree direct product testers with small soundness.
\newblock Technical Report 2402.00850, arXiv, 2024.

\bibitem[BMV24]{BMV24}
Mitali Bafna, Dor Minzer, and Nikhil Vyas.
\newblock Quasi-linear size {PCP}s with small soundness from {HDX}.
\newblock Technical Report 2407.12762, arXiv, 2024.

\bibitem[BP24]{BP24}
Ryan Bai and Richard Peng.
\newblock Personal communication, 2024.

\bibitem[BS97]{BS97}
Werner Ballmann and Jacek \'{S}wi\k{a}tkowski.
\newblock On {$L^2$}-cohomology and property ({T}) for automorphism groups of polyhedral cell complexes.
\newblock {\em Geometric and Functional Analysis}, 7(4):615--645, 1997.

\bibitem[Bun52]{Bun52}
Svend Bundgaard.
\newblock On a kind of homotopy in regular numbered complexes.
\newblock {\em Communications du S\'{e}minaire Math\'{e}matique de l'Universit\'{e} de Lund}, 1952(Tome Suppl\'{e}mentaire):35--46, 1952.

\bibitem[Car89]{Car89}
Roger Carter.
\newblock {\em Simple groups of Lie type}.
\newblock John Wiley \& Sons, 1989.

\bibitem[CL23]{CL23}
Michael Chapman and Alexander Lubotzky.
\newblock Stability of homomorphisms, coverings and cocycles {II}: Examples, applications and open problems.
\newblock Technical Report 2311.06706, arXiv, 2023.

\bibitem[CSZ03]{CSZ03}
Donald Cartwright, Patrick Sol\'{e}, and Andrzej \.{Z}uk.
\newblock Ramanujan geometries of type {$\tilde{A}_n$}.
\newblock {\em Discrete Mathematics}, 269(1-3):35--43, 2003.

\bibitem[DD23]{DD23}
Yotam Dikstein and Irit Dinur.
\newblock Coboundary and cosystolic expansion without dependence on dimension or degree.
\newblock Technical Report 2304.01608, arXiv, 2023.

\bibitem[DDL24]{DDL24}
Yotam Dikstein, Irit Dinur, and Alexander Lubotzky.
\newblock Low acceptance agreement tests via bounded-degree symplectic {HDX}s.
\newblock Technical Report 2402.01078, arXiv, 2024.

\bibitem[DEL{\etalchar{+}}22]{DELLM22}
Irit Dinur, Shai Evra, Ron Livne, Alexander Lubotzky, and Shahar Mozes.
\newblock Locally testable codes with constant rate, distance, and locality.
\newblock In {\em Proceedings of the 54th Annual ACM Symposium on Theory of Computing}, pages 357--374, 2022.

\bibitem[DKW18]{DKW18}
Dominic Dotterrer, Tali Kaufman, and Uli Wagner.
\newblock On expansion and topological overlap.
\newblock {\em Geom. Dedicata}, 195:307--317, 2018.

\bibitem[EK16]{EK16}
Shai Evra and Tali Kaufman.
\newblock Bounded degree cosystolic expanders of every dimension.
\newblock In {\em Proceedings of the 48th Annual ACM Symposium on Theory of Computing}, pages 36--48. ACM, 2016.

\bibitem[Eve78]{Eve78}
Martha Evens.
\newblock A program for the {T}odd--{C}oxeter coset enumeration algorithm.
\newblock 1978.

\bibitem[Gar79]{Gar79}
Peter Garst.
\newblock {\em Cohen--{M}acaulay complexes and group actions}.
\newblock PhD thesis, Unviersity of Wisconsin--Madison, 1979.

\bibitem[Gro10]{Gro10}
Mikhail Gromov.
\newblock Singularities, expanders and topology of maps. {P}art 2: {F}rom combinatorics to topology via algebraic isoperimetry.
\newblock {\em Geometric and Functional Analysis}, 20(2):416--526, 2010.

\bibitem[gro19]{linbox}
The~{LinBox} group.
\newblock {\em {LinBox}}, v1.6.3 edition, 2019.

\bibitem[GV24]{GV24}
Laura Grave{ }de{ }Peralta and Inga Valentiner{-}Branth.
\newblock High-dimensional expanders from {K}ac--{M}oody--{S}teinberg groups.
\newblock Technical Report 2401.05197, arXiv, 2024.

\bibitem[HS19]{HS19}
Prahladh Harsha and Ramprasad Saptharishi.
\newblock A note on the elementary {HDX} construction of {K}aufman--{O}ppenheim.
\newblock Technical Report 1912.11225, arXiv, 2019.

\bibitem[Kir78]{Kir78}
John Kirchmeyer.
\newblock {\em Minimal presentations for the unipotent subrgoups of certain {C}hevalley groups}.
\newblock PhD thesis, Northwestern University, 1978.

\bibitem[KKL16]{KKL14b}
Tali Kaufman, David Kazhdan, and Alexander Lubotzky.
\newblock Isoperimetric inequalities for {R}amanujan complexes and topological expanders.
\newblock {\em Geometric and Functional Analysis}, 26(1):250--287, 2016.

\bibitem[KM19]{KM19}
Dmitry Kozlov and Roy Meshulam.
\newblock Quantitative aspects of acyclicity.
\newblock {\em Research in the Mathematical Sciences}, 6(4):Paper No. 33, 32, 2019.

\bibitem[KO18]{KO18}
Tali Kaufman and Izhar Oppenheim.
\newblock Construction of new local spectral high dimensional expanders.
\newblock In {\em Proceedings of the 50th Annual ACM Symposium on Theory of Computing}, pages 773--786, 2018.

\bibitem[KO21]{KO21}
Tali Kaufman and Izhar Oppenheim.
\newblock Coboundary and cosystolic expansion from strong symmetry.
\newblock In {\em Proceedings of the 48th Annual International Colloquium on Automata, Languages and Programming}, volume 198 of {\em LIPIcs}, pages Art.\ No.\ 84, 16. Schloss Dagstuhl. Leibniz-Zent.\ Inform., Wadern, 2021.

\bibitem[Lan50]{Lan50}
Folke Lann{\'e}r.
\newblock On complexes with transitive groups of automorphisms.
\newblock {\em Communications du S{\'e}minaire Math{\'e}matique de l'Universit{\'e} de Lund}, 11:71, 1950.

\bibitem[Li04]{Li04}
Wen-Ching~Winnie Li.
\newblock Ramanujan hypergraphs.
\newblock {\em Geometric and Functional Analysis}, 14(2):380--399, 2004.

\bibitem[LMM16]{LMM16}
Alexander Lubotzky, Roy Meshulam, and Shahar Mozes.
\newblock Expansion of building-like complexes.
\newblock {\em Groups, Geometry, and Dynamics}, 10(1):155--175, 2016.

\bibitem[LSV05a]{LSV05b}
Alexander Lubotzky, Beth Samuels, and Uzi Vishne.
\newblock Explicit constructions of {R}amanujan complexes of type {$\tilde{A}_d$}.
\newblock {\em European Journal of Combinatorics}, 26(6):965--993, 2005.

\bibitem[LSV05b]{LSV05a}
Alexander Lubotzky, Beth Samuels, and Uzi Vishne.
\newblock Ramanujan complexes of type {$\tilde{A}_d$}.
\newblock {\em Israel Journal of Mathematics}, 149:267--299, 2005.

\bibitem[OP22]{OP22}
Ryan O'Donnell and Kevin Pratt.
\newblock High-dimensional expanders from {C}hevalley groups.
\newblock In {\em Proceedings of the 37th Annual Computational Complexity Conference}, volume 234 of {\em LIPIcs}, pages 18:1--18:26. Schloss Dagstuhl - Leibniz-Zentrum f{\"{u}}r Informatik, 2022.

\bibitem[Opp18]{Opp18}
Izhar Oppenheim.
\newblock Local spectral expansion approach to high dimensional expanders part {I}: Descent of spectral gaps.
\newblock {\em Discrete \& Computational Geometry}, 59(2):293--330, 2018.

\bibitem[Ozo07]{Ozo07}
M\={a}ris Ozols.
\newblock The classification of root systems, 2007.
\newblock \url{http://home.lu.lv/~sd20008/papers/essays/Root%20Systems%20[paper].pdf}.

\bibitem[PK22]{PK22b}
Pavel Panteleev and Gleb Kalachev.
\newblock Asymptotically good quantum and locally testable classical {LDPC} codes.
\newblock In {\em Proceedings of the 54th Annual ACM Symposium on Theory of Computing}, pages 375--388, 2022.

\bibitem[Pra23]{Pra23}
Kevin Pratt.
\newblock {\em Hypergraph rank and expansion}.
\newblock PhD thesis, Carnegie Mellon University, 2023.

\bibitem[S{\etalchar{+}}24]{sagemath}
W.\thinspace{}A. Stein et~al.
\newblock {\em {S}age {M}athematics {S}oftware ({V}ersion 10.4)}.
\newblock The Sage Development Team, 2024.
\newblock {\tt http://www.sagemath.org}.

\bibitem[Sar04]{Sar04}
Alireza Sarveniazi.
\newblock {\em Ramanujan hypergraph based on {B}ruhat--{T}its building}.
\newblock PhD thesis, University of G{\"o}ttingen, 2004.

\bibitem[Ste16]{Ste16}
Robert Steinberg.
\newblock {\em Lectures on {C}hevalley groups}, volume~66 of {\em University Lecture Series}.
\newblock American Mathematical Society, 2016.
\newblock Notes prepared by John Faulkner and Robert Wilson, revised and corrected edition of the 1968 original.

\end{thebibliography}

\appendix
\section{Explicit realizations of the root systems}

\newcommand{\AIndMat}[2]{\VecMatStyle{E}_{#1,#2}}
\newcommand{\BIndMat}[2]{\VecMatStyle{E}_{#1,#2}}
\newcommand{\IdMat}[1]{\VecMatStyle{I}^{(#1)}}
\newcommand{\AIdMat}{\VecMatStyle{I}}
\newcommand{\BIdMat}{\VecMatStyle{I}}
\newcommand{\AMat}[4]{\VecMatStyle{M}^{(#1)}_{#2,#3} \pbra{#4}}
\newcommand{\BShortMat}[4]{\VecMatStyle{S}^{(#1)}_{#2#3} \pbra{#4}}
\newcommand{\BLongUMat}[6]{\VecMatStyle{\tilde{L}}^{(#1)}_{#2#4,#3#5} \pbra{#6}}
\newcommand{\BLongMat}[6]{\VecMatStyle{L}^{(#1)}_{#2#4,#3#5} \pbra{#6}}

\newcommand{\XMat}{\VecMatStyle{X}}
\newcommand{\YMat}{\VecMatStyle{Y}}

In this appendix, we present explicit groups of matrices which we prove satisfy the Steinberg relations for the root systems $A_n$ and $B_n$, following \cite[\S11]{Car89}, and therefore realize their respective Chevalley groups.\footnote{Carter~\cite{Car89} describes which matrix to construct corresponding to each root in the root systems $A_n,B_n,C_n,D_n$, but does not give calculations showing why these matrices satisfy the Steinberg relations. The calculations are mechanical but are difficult to find in the literature; we hope that this appendix may independently be a useful reference.} Over a finite field $\BF_q$ these groups are, respectively, $\mathsf{SL}_{n+1}(\BF_q)$, the group of $(n+1) \times (n+1)$ determinant-$1$ matrices over $\BF_q$, and $\Omega_{2n+1}(\BF_q)$, the commutator subgroup of $\mathsf{SO}_{2n+1}(\BF_q)$, the group of $(2n+1) \times (2n+1)$ determinant-$1$ orthogonal matrices over $\BF_q$.

Recall some notations: $[n] = \{0,\ldots,n\}$ denotes the integers between $0$ and $n$ inclusive. $\IndVec{n}{i} \in \BR^n$ denotes the elementary indicator vector with a $1$ in the $i$-th coordinates and $0$'s elsewhere. From \Cref{sec:prelim:root-systems}, we have the root systems \[ A_n = \{\IndVec{n+1}{i} - \IndVec{n+1}{j} : i \neq j \in [n]\} \] and \[ B_n = \{a \IndVec{n}{i} + b \IndVec{n}{j} : i \neq j \in [n-1], a,b\in\{\pm1\}\} \cup \{a \IndVec{n}{i} : i \in [n-1], a \in \{\pm1\}\}. \]

Fix a global field $\BF$. $\IdMat{n} \in \BF^{n \times n}$ denotes the $n \times n$ identity matrix. We use the extremely simple but useful ``foiling'' identity:

\begin{equation}\label{eq:explicit:foil}
    (\IdMat{n} + \VecMatStyle{X})(\IdMat{n} + \VecMatStyle{Y}) = \IdMat{n} + \VecMatStyle{X} + \VecMatStyle{Y} + \VecMatStyle{X}\VecMatStyle{Y}.
\end{equation}

Now for $i,j \in [n-1]$, $\IndMat{n}{i}{j} \in \BF^{n \times n}$ denotes the $n \times n$ ``indicator'' matrix with a $1$ in the $(i,j)$-th entry and $0$'s elsewhere.

\begin{relation}[Products of indicator matrices]\label{rel:explicit:E-prod}
    Fix any field $\BF$. For every $n$ and $i,j,k,\ell \in [n-1]$, \[ \IndMat{n}{i}{j} \IndMat{n}{k}{\ell} = \begin{cases} \IndMat{n}{i}{\ell} & j=k, \\ 0 & j \neq k. \end{cases} \] In particular, if $i \neq j \in [n-1]$, then $(\IndMat{n}{i}{j})^2 = 0$.
\end{relation}

\begin{proof}
    We compute an entry of the product:
    \[
    (\IndMat{n}{i}{j} \IndMat{n}{k}{\ell})_{i',j'} = \sum_{k'=1}^n (\IndMat{n}{i}{j})_{i',k'} (\IndMat{n}{k}{\ell})_{k',j'}.
    \]
    The factor $(\IndMat{n}{i}{j})_{i',k'}$ is $1$ if $i=i'$ and $j=k'$, otherwise zero, and the factor $(\IndMat{n}{k}{\ell})_{k',j'}$ is $1$ if $k=k'$ and $\ell=j'$, otherwise zero.
\end{proof}

This implies the following standard fact about multiplying diagonal matrices:

\begin{relation}[Products of diagonal matrices]\label{rel:explicit:diag}
    Fix any field $\BF$. For every $n$ and $t_1,\ldots,t_n,u_1,\ldots,u_n \in \BF$, \[
    \pbra{\IdMat{n} + \sum_{i=1}^n (t_i-1) \IndMat{n}{i}{i}} \pbra{\IdMat{n} + \sum_{i=1}^n (u_i-1) \IndMat{n}{i}{i}} = \IdMat{n} + \sum_{i=1}^n (t_iu_i-1) \IndMat{n}{i}{i}.
    \]
\end{relation}

As a warmup, we prove it using the previous relation:

\begin{proof}
    By \Cref{eq:explicit:foil}, we have:
    \begin{multline*}
    \pbra{\IdMat{n} + \sum_{i=1}^n (t_i-1) \IndMat{n}{i}{i}} \pbra{\IdMat{n} + \sum_{i=1}^n (u_i-1) \IndMat{n}{i}{i}} \\
    = \IdMat{n} + \sum_{i=1}^n (t_i+u_i-2) \IndMat{n}{i}{i} + \pbra{\sum_{i=1}^n (t_i-1) \IndMat{n}{i}{i}} \pbra{ \sum_{i=1}^n (u_i-1) \IndMat{n}{i}{i}}.
    \end{multline*}
    By \Cref{rel:explicit:E-prod}, we have:
    \[
    \pbra{\sum_{i=1}^n (t_i-1) \IndMat{n}{i}{i}} \pbra{ \sum_{i=1}^n (u_i-1) \IndMat{n}{i}{i}} = \sum_{i,j=1}^n (t_i-1) (u_j-1) \IndMat{n}{i}{i} \IndMat{n}{j}{j} = \sum_{i=1}^n (t_i-1)(u_i-1) \IndMat{n}{i}{i}
    \]
    and $t+u-2+(t-1)(u-1) = tu-1$.
\end{proof}

\subsection{Realizing $A_n$} \label{app:Ad}

Fix $n$. We realize roots in $A_n$ as $(n+1)\times(n+1)$ matrices over $\BF$. Every root in $A_n$ has the form $\IndVec{n+1}{i} - \IndVec{n+1}{j}$ for $i \neq j \in [n-1]$. For each pair $i \neq j \in [n-1]$ and field element $t \in \BF$, we define the matrix
\begin{equation}\label{eq:explicit:A:M}
    \AMat{n}{i}{j}{t} := \IdMat{n+1} + t \IndMat{n+1}{i}{j} \in \BF^{d +1 \times n+1}.
\end{equation}

\subsubsection{Linearity for type-$A$ matrices}

We have a simple identity relation:

\begin{relation}\label{rel:explicit:A:id}
    For every $i \neq j \in [n-1]$, \[ \AMat{n}{i}{j}{0} = \IdMat{n+1}. \]
\end{relation}

\begin{proof}
    Obvious from \Cref{eq:explicit:A:M}.
\end{proof}

\begin{relation}[Linearity for type-$A$ matrices]\label{rel:explicit:A:lin}
    For every $i \neq j \in [n-1]$, \[ \Quant{t,u \in \BF} \AMat{n}{i}{j}{t} \cdot \AMat{n}{i}{j}{u} = \AMat{n}{i}{j}{t+u}. \]
\end{relation}

\begin{proof}
    Compute
    \begin{align*}
        \AMat{n}{i}{j}{t} \cdot \AMat{n}{i}{j}{u} &= (\IdMat{n+1} + t\IndMat{n+1}{i}{j})(\IdMat{n+1} + u \IndMat{n+1}{i}{j}) \tag{\Cref{eq:explicit:A:M}} \\
        &= \IdMat{n+1} + (t+u) \IndMat{n+1}{i}{j} + tu (\IndMat{n+1}{i}{j})^2 \tag{\Cref{eq:explicit:foil}} \\
        &= \IdMat{n+1} + (t+u) \IndMat{n+1}{i}{j} \tag{$i \neq j$ and \Cref{rel:explicit:E-prod}} \\
        &= \AMat{n}{i}{j}{t+u} \tag{\Cref{eq:explicit:A:M}}.
    \end{align*}
\end{proof}

\begin{relation}[Inverses of type-$A$ matrices]\label{rel:explicit:A:inv}
    For every $i \neq j \in [n-1]$, we have \[ \Quant{t \in \BF} (\AMat{n}{i}{j}{t})^{-1} = \AMat{n}{i}{j}{-t}. \]
\end{relation}

\begin{proof}
    Use the previous two propositions.
\end{proof}

\subsubsection{Commutators of type-$A$ matrices}

\begin{relation}[Commutators of type-$A$ matrices]
    \begin{enumerate}[label=\Roman*.]
        \item Let $i\neq j \in [n-1]$ and $k \neq \ell \in [n-1]$. Suppose $j \neq k$ and $i \neq \ell$. Then \[
        \Quant{t,u \in \BF} \Comm{\AMat{n}{i}{j}{t}}{\AMat{n}{k}{\ell}{u}} = \Id.
        \]
        \item Let $i,j,k \in [n-1]$ be all distinct. Then \[
        \Quant{t,u \in \BF} \Comm{\AMat{n}{i}{j}{t}}{\AMat{n}{j}{k}{u}} = \AMat{n}{i}{k}{tu}.
        \]
    \end{enumerate}
\end{relation}

Note that the second item also implies
\[ \Quant{t,u \in \BF} \Comm{\AMat{n}{i}{j}{t}}{\AMat{n}{k}{i}{u}} = \AMat{n}{j}{k}{-tu} \]
via taking inverses of both sides.

\begin{proof}
    For conciseness, write $\IdMat{n+1} = \AIdMat$ and $\IndMat{n+1}{i}{j} = \AIndMat{i}{j}$.

    Now
    \begin{align*}
        \AMat{n}{i}{j}{t} \cdot \AMat{n}{k}{\ell}{u} &= (\AIdMat+t\AIndMat{i}{j})(\AIdMat+u\AIndMat{k}{\ell}) \\
        &= \AIdMat+ \XMat + \YMat,
    \end{align*}
    where we define
    \begin{align*}
        \XMat &:= t\AIndMat{i}{j} + tu\AIndMat{k}{\ell}, \\
        \YMat &:= tu \AIndMat{i}{j} \AIndMat{k}{\ell}.
    \end{align*}
    
    The utility of this notation is that $\XMat$ and $\YMat$ are the ``odd'' and ``even'' parts of the expansion, respectively. In particular, 
    \[
    \AMat{n}{i}{j}{-t} \cdot \AMat{n}{k}{\ell}{-u} =  \AIdMat - \XMat + \YMat.
    \]
    Therefore,
    \[ \Comm{\AMat{n}{i}{j}{t}}{\AMat{n}{k}{\ell}{u}} = \AIdMat + 2\YMat + (\XMat + \YMat)(-\XMat + \YMat). \]

    \paragraph*{Case I.} Since $j \neq k$ by assumption, \Cref{rel:explicit:E-prod} implies $\YMat = 0$. Hence
    \[
    \Comm{\AMat{n}{i}{j}{t}}{\AMat{n}{k}{\ell}{u}} = \AIdMat + \XMat^2.
    \]
    But $\XMat^2 = 0$ since all pairwise products of $\{\AIndMat{i}{j},\AIndMat{k}{\ell}\}$ vanish by \Cref{rel:explicit:E-prod} and by assumption.

    \paragraph*{Case II.} Reusing the previous expressions with the new indices, we have
    \begin{align*}
        \XMat &:= t\AIndMat{i}{j} + u\AIndMat{j}{k}, \\
        \YMat &:= tu \AIndMat{i}{j} \AIndMat{j}{k} = tu \AIndMat{i}{k}
    \end{align*}
    (using \Cref{rel:explicit:E-prod} for the second line). Expanding,
    \begin{align*}
    (\XMat + \YMat)(-\XMat + \YMat) &= (t\AIndMat{i}{j} + u\AIndMat{j}{k} + tu \AIndMat{i}{k})(-t\AIndMat{i}{j} - u\AIndMat{j}{k} + tu \AIndMat{i}{k}).
    \intertext{Now look at the two factors being multiplied. In the first factor, the right-indices of $\AIndMat{\cdot}{\cdot}$ terms are $\{j,k\}$; in the second factor, the left-indices are $\{i,j\}$. By \Cref{rel:explicit:E-prod}, the product of two terms vanishes unless the right-index of the first and the left-index of the second are the same. So since the two sets intersect only at $j$:}
    &= (t \AIndMat{i}{j})(-u \AIndMat{j}{k}) \\
    &= -tu \AIndMat{i}{k}.
    \end{align*}
    Hence we conclude
    \[
    \Comm{\AMat{n}{i}{j}{t}}{\AMat{n}{k}{\ell}{u}} = \AIdMat + 2\YMat + (\XMat + \YMat)(-\XMat + \YMat) = \AIdMat + tu \AIndMat{i}{k} = \AMat{n}{i}{k}{tu}.
    \]
\end{proof}

\subsubsection{The diagonal relation for type-$A$ matrices}

For $i \neq j \in [n-1]$ and $t \neq 0 \in \BF$, define
\[
g_{i,j}(t) := \AMat{n}{i}{j}{t} \AMat{n}{j}{i}{-t^{-1}} \AMat{n}{i}{j}{t}
\]
and
\[
h_{i,j}(t) := g_{i,j}(t) g_{i,j}(-1).
\]

\begin{relation}
    For every $i \neq j \in [n-1]$ and $t \neq 0 \in \BF$,
    \[
    g_{i,j}(t) = \AIdMat + t\AIndMat{i}{j} - t^{-1} \AIndMat{j}{i} - \AIndMat{i}{i} - \AIndMat{j}{j}.
    \]
\end{relation}

\begin{proof}
    Expand the definitions:
    \begin{align*}
        g_{i,j}(t) &= \AMat{n}{i}{j}{t} \AMat{n}{j}{i}{-t^{-1}} \AMat{n}{i}{j}{t} \\
        &= \pbra{\AIdMat + t\AIndMat{i}{j}} \pbra{\AIdMat - t^{-1} \AIndMat{j}{i}} (\AIdMat + t\AIndMat{i}{j}) \\
        \intertext{and then expand fully using \Cref{rel:explicit:E-prod}:}
        &= \AIdMat + 2t\AIndMat{i}{j} - t^{-1} \AIndMat{j}{i} - \AIndMat{i}{i} - \AIndMat{j}{j} - t\AIndMat{i}{j},
    \end{align*}
    which rearranges to the desired right-hand side.
\end{proof}

\begin{relation}
    For every $i \neq j \in [n-1]$ and $t \neq 0 \in \BF$,
    \[
    h_{i,j}(t) = \AIdMat + (t-1) \AIndMat{i}{i} + \pbra{t^{-1}-1} \AIndMat{j}{j}.
    \]
\end{relation}

\begin{proof}
    Expanding using the definitions and the previous proposition:
    \begin{align*}
        & h_{i,j}(t) \\
        &= g_{i,j}(t) g_{i,j}(-1) \\
        &= \pbra{\AIdMat + t\AIndMat{i}{j} - t^{-1} \AIndMat{j}{i} - \AIndMat{i}{i} - \AIndMat{j}{j}} \pbra{\AIdMat - \AIndMat{i}{j} + \AIndMat{j}{i} - \AIndMat{i}{i} - \AIndMat{j}{j}} \\
        \intertext{Foiling (\Cref{eq:explicit:foil}):}
        &= \AIdMat + (t-1) \AIndMat{i}{j} + \pbra{1-t^{-1}} \AIndMat{j}{i} - 2\AIndMat{i}{i} - 2\AIndMat{j}{j} + \pbra{t\AIndMat{i}{j} - t^{-1} \AIndMat{j}{i} - \AIndMat{i}{i} - \AIndMat{j}{j}} \pbra{- \AIndMat{i}{j} + \AIndMat{j}{i} - \AIndMat{i}{i} - \AIndMat{j}{j}} \\
        \intertext{Expanding the product (\Cref{rel:explicit:E-prod}):}
        &= \AIdMat + (t-1) \AIndMat{i}{j} + \pbra{1-t^{-1}} \AIndMat{j}{i} - 2\AIndMat{i}{i} - 2\AIndMat{j}{j} + t\AIndMat{i}{i} - t\AIndMat{i}{j} + t^{-1} \AIndMat{j}{j} + t^{-1} \AIndMat{j}{i} + \AIndMat{i}{j} + \AIndMat{i}{i} - \AIndMat{j}{i} + \AIndMat{j}{j} \\
        &= \AIdMat + (t-1) \AIndMat{i}{i} + \pbra{t^{-1} - 1} \AIndMat{j}{j},
    \end{align*}
    as desired.
\end{proof}

Note that $h_{i,j}$ is a diagonal matrix, and by \Cref{rel:explicit:diag}, we have $h_{i,j}(t) h_{i,j}(u) = h_{i,j}(tu)$.

\subsection{Realizing $B_n$} \label{app:Bd}

Fix $d$. We realize the roots in $B_n$ as $(2n+1)\times(2n+1)$ matrices over $\BF$. We enumerate the row- and column-indices of these matrices as $[\pm n] = \{-n,\ldots,+n\}$ for convenience.

For $t \in \BF$, $1 \leq i \leq \ell$ and $a \in \{\pm1\}$, we define the matrix
\begin{equation}\label{eq:explicit:B:S}
    \BShortMat{n}{a}{i}{t} := \IdMat{2n+1} + 2at \IndMat{2n+1}{ai}{0} - at \IndMat{2n+1}{0}{-ai} - t^2 \IndMat{2n+1}{ai}{-ai}.
\end{equation}
(We identify this matrix with the root $a\IndVec{n}{i}$.)

For $t \in \BF$ and $i \neq j \in [n-1]$ and $a, b \in \{\pm 1\}$, we define the matrix
\begin{equation}\label{eq:explicit:B:L}
\BLongUMat{n}{a}{b}{i}{j}{t} := \IdMat{2n+1} + at \IndMat{2n+1}{ai}{-bj} - at \IndMat{2n+1}{bj}{-ai}.
\end{equation}
(Note that there is a symmetry here: $\BLongUMat{n}{a}{b}{i}{j}{t} = \BLongUMat{n}{b}{a}{j}{i}{-t}$. Both these matrices correspond to the root $a\IndVec{n}{i}+b\IndVec{n}{j}$, but it is notationally convenient for now to not break the symmetry and force a choice of sign.) 

\subsubsection{Simple identities}

\begin{relation}[Identity for short matrices]\label{rel:explicit:B:S:id}
For all $i \in [n-1]$ and $a \in \{\pm 1\}$, \[ \BShortMat{n}{a}{i}{0} = \IdMat{2n+1}. \]
\end{relation}
\begin{proof}
    Immediate from \Cref{eq:explicit:B:S}.
\end{proof}
\begin{relation}[Identity for long matrices]\label{rel:explicit:B:L:id}
For all $i < j \in [n-1]$ and $a,b \in \{\pm 1\}$, \[ \BLongUMat{n}{a}{b}{i}{j}{0} = \IdMat{2n+1}. \]
\end{relation}
\begin{proof}
    Immediate from \Cref{eq:explicit:B:L}.
\end{proof}

\begin{relation}[Linearity for short matrices]\label{rel:explicit:B:S:lin}
For all $i \in [n-1]$ and $a \in \{\pm 1\}$, \[ \Quant{t,u \in \BF} \BShortMat{n}{a}{i}{t} \cdot \BShortMat{n}{a}{i}{u} = \BShortMat{n}{a}{i}{t+u}. \]
\end{relation}
\begin{proof}
    We write a product of elements:
    \begin{align*}
        & \BShortMat{n}{a}{i}{t} \cdot \BShortMat{n}{a}{i}{u} \\
        \intertext{Expand with \Cref{eq:explicit:B:S}:}
        &= (\IdMat{2n+1} + 2at \IndMat{2n+1}{ai}{0} - at \IndMat{2n+1}{0}{-ai} - t^2 \IndMat{2n+1}{ai}{-ai}) (\IdMat{2n+1} + 2au \IndMat{2n+1}{ai}{0} - au \IndMat{2n+1}{0}{-ai} - u^2 \IndMat{2n+1}{ai}{-ai}) \\
        \intertext{Foil (\Cref{eq:explicit:foil}):}
        &= \IdMat{2n+1} + 2at \IndMat{2n+1}{ai}{0} - at \IndMat{2n+1}{0}{-ai} - t^2 \IndMat{2n+1}{ai}{-ai} + 2au \IndMat{2n+1}{ai}{0} - au \IndMat{2n+1}{0}{-ai} - u^2 \IndMat{2n+1}{ai}{-ai} \\ &+ (2at \IndMat{2n+1}{ai}{0} - at \IndMat{2n+1}{0}{-ai} - t^2 \IndMat{2n+1}{ai}{-ai})(2au \IndMat{2n+1}{ai}{0} - au \IndMat{2n+1}{0}{-ai} - u^2 \IndMat{2n+1}{ai}{-ai}) \\
        \intertext{Condense the first line with linearity. For the second line, For the second line, we consider expanding the product with \Cref{rel:explicit:E-prod}. Observe that in the first factor, the only right-indices appearing are $0$ and $-ai$, and in the second factor, the only left-indices appearing are $ai$ and $0$. Thus, all pairs vanish unless the right-index of the left-term and the left-index of the right-term are both $0$:}
        &= \IdMat{2n+1} + 2a(t+u) \IndMat{2n+1}{ai}{0} - a(t+u) \IndMat{2n+1}{0}{-ai} - (t^2+u^2) \IndMat{2n+1}{ai}{-ai} - 2a^2tu^2 \IndMat{2n+1}{ai}{-ai} \\
        \intertext{Since $a^2=1$, we can reduce the square:}
        &= \IdMat{2n+1} + 2a(t+u) \IndMat{2n+1}{ai}{0} - a(t+u) \IndMat{2n+1}{0}{-ai} - (t+u)^2 \IndMat{2n+1}{ai}{-ai} \\
        \intertext{Reduce with \Cref{eq:explicit:B:S}:}
        &= \BShortMat{n}{a}{i}{t+u}.
    \end{align*}
\end{proof}

\begin{relation}[Linearity for long matrices]\label{rel:explicit:B:L:lin}
For all $i \neq j \in [n-1]$ and $a,b \in \{\pm 1\}$, \[ \Quant{t,u \in \BF} \BLongUMat{n}{a}{b}{i}{j}{t} \cdot \BLongUMat{n}{a}{b}{i}{j}{u} = \BLongUMat{n}{a}{b}{i}{j}{t+u}. \]
\end{relation}
\begin{proof}
    We write a product of elements:
    \begin{align*}
        & \BLongUMat{n}{a}{b}{i}{j}{t} \cdot \BLongUMat{n}{a}{b}{i}{j}{u} \\
        \intertext{Expand with \Cref{eq:explicit:B:L}:}
        &= (\IdMat{2n+1} + at \IndMat{2n+1}{ai}{-bj} - at \IndMat{2n+1}{bj}{-ai}) (\IdMat{2n+1} + au \IndMat{2n+1}{ai}{-bj} - au \IndMat{2n+1}{bj}{-ai}) \\
        \intertext{Foil (\Cref{eq:explicit:foil}):}
        &= \IdMat{2n+1} + at \IndMat{2n+1}{ai}{-bj} - at \IndMat{2n+1}{bj}{-ai} +  au \IndMat{2n+1}{ai}{-bj} - au \IndMat{2n+1}{bj}{-ai}) \\
        &+ (at \IndMat{2n+1}{ai}{-bj} - at \IndMat{2n+1}{bj}{-ai}) (au \IndMat{2n+1}{ai}{-bj} - au \IndMat{2n+1}{bj}{-ai}) \\
        \intertext{Condense the first line with linearity. For the second line, we consider expanding the product with \Cref{rel:explicit:E-prod}. Observe that in the first factor, the only right-indices appearing are $-ai$ and $-bj$, and in the second factor, the only left-indices appearing are $ai$ and $bj$. Since $i \neq j$ and $a,b \neq 0$, these two sets are disjoint, so all pairs vanish:}
        &= \IdMat{2n+1} + a(t+u) \IndMat{2n+1}{ai}{-bj} - a(t+u) \IndMat{2n+1}{bj}{-ai} \\
        \intertext{Reduce with \Cref{eq:explicit:B:L}:}
        &= \BLongUMat{n}{a}{b}{i}{j}{t+u}.
    \end{align*}
\end{proof}

\begin{relation}[Inverses for short matrices]\label{rel:explicit:B:S:inv}
For all $i \in [n-1]$ and $a \in \{\pm 1\}$, \[ \Quant{t \in \BF} (\BShortMat{n}{a}{i}{t})^{-1} = \BShortMat{n}{a}{i}{-t}. \]
\end{relation}
\begin{proof}
    Immediate from \Cref{rel:explicit:B:S:id,rel:explicit:B:S:lin}.
\end{proof}

\begin{relation}[Inverses for long matrices]\label{rel:explicit:B:L:inv}
For all $i \neq j \in [n-1]$ and $a,b \in \{\pm 1\}$, \[ \Quant{t \in \BF} (\BLongUMat{n}{a}{b}{i}{j}{t})^{-1} = \BLongUMat{n}{a}{b}{i}{j}{-t}. \]
\end{relation}
\begin{proof}
    Immediate from \Cref{rel:explicit:B:L:id,rel:explicit:B:L:lin}.
\end{proof}

\subsubsection{Commutators}

\begin{relation}[Commutator of two short matrices]
For every $i\neq j \in [n-1]$ and $a,b \in \{\pm1\}$, \[ \Quant{t,u \in \BF} \Comm{\BShortMat{n}{a}{i}{t}}{\BShortMat{n}{b}{j}{t}} = \BLongUMat{n}{a}{b}{i}{j}{2btu}. \]
\end{relation}

Geometric interpretation: The matrices $\BShortMat{n}{a}{i}{t},\BShortMat{n}{b}{j}{u}$ correspond to, respectively, the short roots $a\IndVec{n}{i}, b\IndVec{n}{j} \in B_n$. Their sum is the long root $a\IndVec{n}{i} + b\IndVec{n}{j} \in B_n$ which corresponds to matrices $\BLongUMat{n}{a}{b}{i}{j}{\cdot}$.

\begin{proof}
    We begin by expanding the product of two short matrices:
    \begin{align*}
        & \BShortMat{n}{a}{i}{t} \cdot \BShortMat{n}{b}{j}{u} \\
        \intertext{Expanding with \Cref{eq:explicit:B:S}:}
        &= (\BIdMat + 2at \BIndMat{ai}{0} - at \BIndMat{0}{-ai} - t^2 \BIndMat{ai}{-ai}) (\BIdMat + 2bu \BIndMat{bj}{0} - bu \BIndMat{0}{-bj} - u^2 \BIndMat{bj}{-bj}) \\
        \intertext{Foiling (\Cref{eq:explicit:foil}):}
        &= \BIdMat + 2at \BIndMat{ai}{0} - at \BIndMat{0}{-ai} - t^2 \BIndMat{ai}{-ai} + 2bu \BIndMat{bj}{0} - bu \BIndMat{0}{-bj} - u^2 \BIndMat{bj}{-bj} \\
        &\hspace{0.5in} + (2at \BIndMat{ai}{0} - at \BIndMat{0}{-ai} - t^2 \BIndMat{ai}{-ai}) (2bu \BIndMat{bj}{0} - bu \BIndMat{0}{-bj} - u^2 \BIndMat{bj}{-bj}) \\
        \intertext{Similarly to the proof of \Cref{rel:explicit:B:S:inv}, we expand the second line using \Cref{rel:explicit:E-prod}. Since $i \neq j$ by assumption, the set of right-indices in the first term ($\{0,-ai\}$) and the set of left-indices in the left-term ($\{bj,0\}$) intersect only at zero, so we deduce:}
        &= \BIdMat + 2at \BIndMat{ai}{0} - at \BIndMat{0}{-ai} - t^2 \BIndMat{ai}{-ai} + 2bu \BIndMat{bj}{0} - bu \BIndMat{0}{-bj} - u^2 \BIndMat{bj}{-bj} - 2abtu \BIndMat{ai}{-bj} \\
        &= \BIdMat + \XMat + \YMat
    \end{align*}
    where we separate the odd and even terms, defining:
    \begin{align*}
        \XMat &:= 2at \BIndMat{ai}{0} - at \BIndMat{0}{-ai}  + 2bu \BIndMat{bj}{0} - bu \BIndMat{0}{-bj}, \\
        \YMat &:= - t^2 \BIndMat{ai}{-ai} - u^2 \BIndMat{bj}{-bj} - 2abtu \BIndMat{ai}{-bj}.
    \end{align*}
    In particular,
    \[
    \BShortMat{n}{a}{i}{-t} \cdot \BShortMat{n}{b}{j}{-u} = \BIdMat - \XMat + \YMat
    \]
    and so
    \begin{align*}
        \Comm{\BShortMat{n}{a}{i}{t}}{\BShortMat{n}{b}{j}{u}} &= \BShortMat{n}{a}{i}{t} \cdot \BShortMat{n}{b}{j}{u} \cdot \BShortMat{n}{a}{i}{-t} \cdot \BShortMat{n}{b}{j}{-u} \\
        &= (\BIdMat + \XMat + \YMat) (\BIdMat - \XMat + \YMat)  \\
        &= \BIdMat + 2\YMat + (\XMat + \YMat) (-\XMat + \YMat).
    \end{align*}
    Now the set of right-indices in $\XMat + \YMat$ is $\{0,-ai,-bj\}$ and the set of left-indices in $-\XMat + \YMat$ is $\{ai,0,bj\}$. These two sets only intersect at $0$ (which is not present in $\YMat$!), so
    \begin{align*}
    (\XMat + \YMat) (-\XMat + \YMat) &= (2at \BIndMat{ai}{0} + 2bu \BIndMat{bj}{0}) (at \BIndMat{0}{-ai} + bu \BIndMat{0}{-bj}) \\
    &= 2t^2 \BIndMat{ai}{-ai} + 2atbu \BIndMat{ai}{-bj} + 2atbu \BIndMat{bj}{-ai} + 2u^2 \BIndMat{bj}{-bj}.
    \end{align*}
    Hence we deduce the commutator is \[ \BIdMat - 2abtu \BIndMat{ai}{-bj} + 2abtu \BIndMat{bj}{-ai} = \BLongUMat{n}{a}{b}{i}{j}{-2btu}. \] as desired.
\end{proof}

\begin{relation}[Commutator of two long matrices]
\begin{enumerate}[label=\Roman*.]
    \item Suppose $i\neq j\in [n-1]$, $k \neq \ell \in [n-1]$, $a,b,c,d \in \{\pm1\}$ satisfy $\{ai,bj\} \cap \{-ck,-d\ell\} = \emptyset$. Then \[ \Quant{t,u \in \BF} \Comm{\BLongUMat{n}{a}{b}{i}{j}{t}}{\BLongUMat{n}{c}{d}{k}{\ell}{u}} = \BIdMat. \]
    \item Suppose $i \neq j \in [n-1]$, $k \neq j \in [n-1]$, and $a,b,c \in \{\pm 1\}$ satisfy $ai \neq -ck$. Then \[ \Quant{t,u \in \BF} \Comm{\BLongUMat{n}{a}{b}{i}{j}{t}}{\BLongUMat{n}{c}{-b}{k}{j}{u}} = \BLongUMat{n}{a}{-c}{i}{k}{-ctu}. \]
\end{enumerate}
\end{relation}

Note that the left-hand side of the second is the same as the first with $bj = -d\ell$. We will work out the remaining possible intersections --- keeping track of signs --- below.


\begin{proof}
We begin by expanding a product of two long matrices:
\begin{align*}
    & \BLongUMat{n}{a}{b}{i}{j}{t} \cdot \BLongUMat{n}{c}{d}{k}{\ell}{u} \\
    \intertext{Expanding with \Cref{eq:explicit:B:L}:}
    &= (\BIdMat + at \BIndMat{ai}{-bj} - at \BIndMat{bj}{-ai})(\BIdMat + cu \BIndMat{ck}{-d\ell} - cu \BIndMat{d\ell}{-ck}) \\
    \intertext{Foiling (\Cref{eq:explicit:foil}):}
    &= \BIdMat + at \BIndMat{ai}{-bj} - at \BIndMat{bj}{-ai} + cu \BIndMat{ck}{-d\ell} - cu \BIndMat{d\ell}{-ck} + (at \BIndMat{ai}{-bj} - at \BIndMat{bj}{-ai})(cu \BIndMat{ck}{-d\ell} - cu \BIndMat{d\ell}{-ck}) \\
    \intertext{which we abbreviate as:}
    &= \BIdMat + \XMat + \YMat
\end{align*}
where we separate long and short terms:
\begin{align*}
    \XMat &:= at \BIndMat{ai}{-bj} - at \BIndMat{bj}{-ai} + cu \BIndMat{ck}{-d\ell} - cu \BIndMat{d\ell}{-ck}, \\
    \YMat &:= (at \BIndMat{ai}{-bj} - at \BIndMat{bj}{-ai})(cu \BIndMat{ck}{-d\ell} - cu \BIndMat{d\ell}{-ck}).
\end{align*}

Hence
\[ \BLongUMat{n}{a}{b}{i}{j}{-t} \cdot \BLongUMat{n}{c}{d}{k}{\ell}{-u} = \BIdMat - \XMat + \YMat, \]
and so the commutator of two long matrices is
\begin{align*}
\Comm{\BLongUMat{n}{a}{b}{i}{j}{t}}{\BLongUMat{n}{c}{d}{k}{\ell}{t}} &= \BLongUMat{n}{a}{b}{i}{j}{t} \cdot \BLongUMat{n}{c}{d}{k}{\ell}{u} \cdot \BLongUMat{n}{a}{b}{i}{j}{-t} \cdot \BLongUMat{n}{c}{d}{k}{\ell}{-u} \\
&= (\BIdMat + \XMat + \YMat) (\BIdMat - \XMat + \YMat) \\
&= \BIdMat + 2\YMat + (\XMat + \YMat)(-\XMat + \YMat).
\end{align*}

Now we apply \Cref{rel:explicit:E-prod} to calculate this expansion. Observe that in the definition of $\YMat$, the right-indices in the first factor are $\{-ai,-bj\}$, while the left-indices in the second factor are $\{ck,d\ell\}$. Further, in $\XMat$, the right-indices are $\{-bj,-ai,-d\ell,-ck\}$ and the left-indices are $\{ai,bj,ck,d\ell\}$.

\paragraph*{Case 1.} $\YMat$'s first factor has right-indices $\{-bj,-ai\}$ and $\YMat$'s second factor has left-indices $\{ck,d\ell\}$. By assumption, these sets are disjoint, so $\YMat = 0$. Further, $\XMat$ has left-indices $\{ai,bj,ck,d\ell\}$ and right-indices $\{-bj,-ai,-d\ell,-ck\}$. We claim these two sets are disjoint: Indeed, e.g., $-ai$ cannot be $ai$ (since $a \in \{\pm1\}$ and $i \neq 0$), $bj$ (since $a,b \in \{\pm 1\}$ and $i, j > 0$), nor $ck$ or $d\ell$ by assumption. Thus, $\XMat^2 = 0$ and therefore the commutator vanishes.

\paragraph*{Case 2.} Using the new indices, we have
\begin{align*}
    \XMat &= at \BIndMat{ai}{-bj} - at \BIndMat{bj}{-ai} + cu \BIndMat{ck}{bj} - cu \BIndMat{-bj}{-ck}, \\
    \YMat &= (at \BIndMat{ai}{-bj} - at \BIndMat{bj}{-ai})(cu \BIndMat{ck}{bj} - cu \BIndMat{-bj}{-ck}).
\end{align*}

$\YMat$'s first factor has right-indices $\{-bj,-ai\}$ and its second factor has left-indices $\{ck,-bj\}$. These two sets intersect at $-bj$ (but not elsewhere, since $i \neq j$, $k \neq j$, and $ai \neq -ck$ by assumption). Thus, $\YMat = -actu \BIndMat{ai}{-ck}$. 

Now in the expansion of $(\XMat + \YMat)(-\XMat + \YMat)$, the first term has right-indices $\{-bj,-ai,bj,-ck\}$ and the second term has left-indices $\{ai,bj,ck,-bj\}$. These intersect at $\{\pm bj\}$ and we need only consider these terms (in particular $\YMat$ becomes irrelevant!):
\[
(\XMat + \YMat)(-\XMat + \YMat) = (at \BIndMat{ai}{-bj})(cu \BIndMat{-bj}{-ck}) + (cu \BIndMat{ck}{bj})(at \BIndMat{bj}{-ai}) = actu \BIndMat{ai}{-ck} + actu \BIndMat{ck}{-ai}.
\]

Hence the commutator is altogether 
\begin{align*}
    \BIdMat + 2(-actu \BIndMat{ai}{-ck}) + (actu \BIndMat{ai}{-ck} + actu \BIndMat{ck}{-ai}) \\
    &= \BIdMat - actu \BIndMat{ai}{-ck} + actu \BIndMat{ck}{-ai} \\
    &= \BLongUMat{n}{a}{-c}{i}{k}{-ctu}.
\end{align*}
\end{proof}

\begin{relation}[Commutator of long and short matrix]
\begin{enumerate}[label=\Roman*.]
\item Suppose $i \neq j \in [n-1], k \in [n-1], a,b, c \in \{\pm 1\}$ are such that $-ck \not\in \{ai,bj\}$. Then  \[ \Quant{t,u \in \BF} \Comm{\BLongUMat{n}{a}{b}{i}{j}{t}}{\BShortMat{n}{c}{k}{u}} = \Id. \]
\item Suppose $i \neq j \in [n-1], a,b \in \{\pm 1\}$. Then  \[ \Quant{t,u \in \BF} \Comm{\BLongUMat{n}{a}{b}{i}{j}{t}}{\BShortMat{n}{-a}{i}{u}} = \BLongUMat{n}{-a}{-b}{i}{j}{-tu^2} \cdot \BShortMat{n}{b}{j}{btu}. \]
\end{enumerate}
\end{relation}

\begin{proof}
We can expand a product of long and short matrices:
\begin{align*}
&\BLongUMat{n}{a}{b}{i}{j}{t} \cdot \BShortMat{n}{c}{k}{u} \\
&= (\BIdMat + at \BIndMat{ai}{-bj} - at \BIndMat{bj}{-ai}) (\BIdMat + 2cu \BIndMat{ck}{0} - cu \BIndMat{0}{-ck} - u^2 \BIndMat{ck}{-ck}) \\
&= \BIdMat + at \BIndMat{ai}{-bj} - at \BIndMat{bj}{-ai} + 2cu \BIndMat{ck}{0} - cu \BIndMat{0}{-ck} - u^2 \BIndMat{ck}{-ck} \\
& \hspace{0.5in} + (at \BIndMat{ai}{-bj} - at \BIndMat{bj}{-ai}) (2cu \BIndMat{ck}{0} - cu \BIndMat{0}{-ck} - u^2 \BIndMat{ck}{-ck}).
\intertext{In the second line, we can eliminate the term in the second factor with left-index $0$, since no factor in the first term has this right-index. As usual, we separate into odd and even parts:}
&= \BIdMat + \XMat + \YMat
\end{align*}
where
\begin{align*}
    \XMat &:= at \BIndMat{ai}{-bj} - at \BIndMat{bj}{-ai} + 2cu \BIndMat{ck}{0} - cu \BIndMat{0}{-ck} + (at \BIndMat{ai}{-bj} - at \BIndMat{bj}{-ai}) (- u^2 \BIndMat{ck}{-ck}), \\
    \YMat &:= - u^2 \BIndMat{ck}{-ck} + (at \BIndMat{ai}{-bj} - at \BIndMat{bj}{-ai}) (2cu \BIndMat{ck}{0}).
\end{align*}
As usual,
\[
\BLongUMat{n}{a}{b}{i}{j}{t} \cdot \BShortMat{n}{c}{k}{u} = \BIdMat - \XMat + \YMat
\]
and so
\begin{align*}
    \Comm{\BLongUMat{n}{a}{b}{i}{j}{t}}{\BShortMat{n}{c}{k}{u}} &= \BLongUMat{n}{a}{b}{i}{j}{t} \cdot \BShortMat{n}{c}{k}{u} \cdot \BLongUMat{n}{a}{b}{i}{j}{-t} \cdot \BShortMat{n}{c}{k}{-u} \\
    &= (\BIdMat + \XMat + \YMat) (\BIdMat - \XMat + \YMat) \\
    &= \BIdMat + 2\YMat + (\XMat + \YMat) (-\XMat + \YMat).
\end{align*}

\paragraph*{Case I.} In this case, the unexpanded terms in the definitions of $\XMat$ and $\YMat$ vanish, leaving
\begin{align*}
    \XMat &= at \BIndMat{ai}{-bj} - at \BIndMat{bj}{-ai} + 2cu \BIndMat{ck}{0} - cu \BIndMat{0}{-ck}, \\
    \YMat &= -u^2 \BIndMat{ck}{-ck}.
\end{align*}
As an aside, this lets us deduce that as long as $-ck \not\in \{ai,bj\}$,
\begin{equation}\label{eq:explicit:long-times-short}
    \BLongUMat{n}{a}{b}{i}{j}{t} \cdot \BShortMat{n}{c}{k}{u} = \BIdMat + at \BIndMat{ai}{-bj} - at \BIndMat{bj}{-ai} + 2cu \BIndMat{ck}{0} - cu \BIndMat{0}{-ck} -u^2 \BIndMat{ck}{-ck},
\end{equation}
which is a formula we'll use below.

Now, when expanding $(\XMat + \YMat) (-\XMat + \YMat)$, the right-indices in the first term are $\{-bj,-ai,0,-ck\}$ and left-indices in the second term $\{ai,bj,ck,0\}$. These two sets intersect only at $0$ (we can ignore $\YMat$!). So,
\[
(\XMat + \YMat) (-\XMat + \YMat) = (2cu \BIndMat{ck}{0}) (cu \BIndMat{0}{-ck}) = 2u^2 \BIndMat{ck}{-ck},
\]
which is exactly $-2\YMat$, and therefore the commutator vanishes.

\paragraph*{Case II.} In this case, we rewrite
\begin{align*}
    \XMat &= at \BIndMat{ai}{-bj} - at \BIndMat{bj}{-ai} - 2au \BIndMat{-ai}{0} + au \BIndMat{0}{ai} + (at \BIndMat{ai}{-bj} - at \BIndMat{bj}{-ai}) (- u^2 \BIndMat{-ai}{ai}) \\
    &= at \BIndMat{ai}{-bj} - at \BIndMat{bj}{-ai} - 2au \BIndMat{-ai}{0} + au \BIndMat{0}{ai} + atu^2 \BIndMat{bj}{ai}, \\
    \YMat &= - u^2 \BIndMat{-ai}{ai} + (at \BIndMat{ai}{-bj} - at \BIndMat{bj}{-ai}) (-2au \BIndMat{-ai}{0}) \\
    &= - u^2 \BIndMat{-ai}{ai} + 2tu \BIndMat{bj}{0}.
\end{align*}
Expanding $(\XMat + \YMat)(-\XMat + \YMat)$, the first term has right-indices $\{-bj,-ai,0,ai\}$ and the second term has left-indices $\{ai,bj,-ai,0\}$. These intersect at $\{ai,-ai,0\}$. So:
\begin{align*}
    (\XMat+\YMat)(-\XMat+\YMat) &= (at \BIndMat{ai}{-bj} - at \BIndMat{bj}{-ai} - 2au \BIndMat{-ai}{0} + au \BIndMat{0}{ai} + atu^2 \BIndMat{bj}{ai} - u^2 \BIndMat{-ai}{ai} + 2tu \BIndMat{bj}{0}) \\
    &\hspace{0.5in} \cdot (-at \BIndMat{ai}{-bj} + at \BIndMat{bj}{-ai} + 2au \BIndMat{-ai}{0} - au \BIndMat{0}{ai} - atu^2 \BIndMat{bj}{ai} - u^2 \BIndMat{-ai}{ai} + 2tu \BIndMat{bj}{0}) \\
    &= (- at \BIndMat{bj}{-ai})(2au \BIndMat{-ai}{0} - u^2 \BIndMat{-ai}{ai}) + (- 2au \BIndMat{-ai}{0} + 2tu \BIndMat{bj}{0}) ( - au \BIndMat{0}{ai}) \\
    &\hspace{0.5in} + (au \BIndMat{0}{ai} + atu^2 \BIndMat{bj}{ai} - u^2 \BIndMat{-ai}{ai}) (-at \BIndMat{ai}{-bj}) \\
    &= -2tu \BIndMat{bj}{0} + atu^2 \BIndMat{bj}{ai} + 2u^2 \BIndMat{-ai}{ai} - 2atu^2 \BIndMat{bj}{ai} - tu \BIndMat{0}{-bj} - t^2u^2 \BIndMat{bj}{-bj} + atu^2 \BIndMat{-ai}{-bj} \\
    &= -2tu \BIndMat{bj}{0} - atu^2  \BIndMat{bj}{ai} + 2u^2 \BIndMat{-ai}{ai} - tu \BIndMat{0}{-bj} - t^2u^2 \BIndMat{bj}{-bj} + atu^2 \BIndMat{-ai}{-bj}.
\end{align*}
Hence the commutator is altogether
\begin{align*}
    &\BIdMat + 2(- u^2 \BIndMat{-ai}{ai} + 2tu \BIndMat{bj}{0}) -2tu \BIndMat{bj}{0} - atu^2  \BIndMat{bj}{ai} + 2u^2 \BIndMat{-ai}{ai} - tu \BIndMat{0}{-bj} - t^2u^2 \BIndMat{bj}{-bj} + atu^2 \BIndMat{-ai}{-bj} \\
    &= \BIdMat + 2tu \BIndMat{bj}{0} - atu^2  \BIndMat{bj}{ai} - tu \BIndMat{0}{-bj} - t^2u^2 \BIndMat{bj}{-bj} + atu^2 \BIndMat{-ai}{-bj} \\
    &= \BIdMat + atu^2 \BIndMat{-ai}{-bj} - atu^2  \BIndMat{bj}{ai} + 2tu \BIndMat{bj}{0}- tu \BIndMat{0}{-bj} - t^2u^2 \BIndMat{bj}{-bj} \\
    \intertext{Using \Cref{eq:explicit:long-times-short}:}
    &= \BLongUMat{n}{-a}{-b}{i}{j}{-tu^2} \cdot \BShortMat{n}{b}{j}{btu}.
\end{align*}
\end{proof}

\subsubsection{Diagonal relations for type-$B$ matrices}

First, we handle the diagonal relation for the long roots. For $i < j \in [n-1]$ and $t \neq 0 \in \BF$, define
\[
g_{ai,bj}(t) := \BLongMat{n}{a}{b}{i}{j}{t} \BLongMat{n}{-a}{-b}{i}{j}{-t^{-1}} \BLongMat{n}{a}{b}{i}{j}{t}
\]
and
\[
h_{ai,bj}(t) := g_{ai,bj}(t) g_{ai,bj}(-1).
\]

We have:

\begin{relation}
    For $i < j \in [n-1]$ and $t \neq 0 \in \BF$:
    \[
    g_{ai,bj}(t) = \BIdMat + at\BIndMat{ai}{-bj} - at\BIndMat{bj}{-ai} + at^{-1} \BIndMat{-ai}{bj} - at^{-1} \BIndMat{-bj}{ai} - \BIndMat{ai}{ai} - \BIndMat{bj}{bj} - \BIndMat{-bj}{-bj} - \BIndMat{-ai}{-ai}.
    \]
\end{relation}

\begin{proof}
    We write and expand with the definitions:
    \begin{align*}
        & g_{ai,bj}(t) \\
        &= \BLongMat{n}{a}{b}{i}{j}{t} \BLongMat{n}{-a}{-b}{i}{j}{-t^{-1}} \BLongMat{n}{a}{b}{i}{j}{t} \\
        &= \pbra{\BIdMat + at\BIndMat{ai}{-bj} - at\BIndMat{bj}{-ai}} \pbra{\BIdMat + at^{-1} \BIndMat{-ai}{bj} - at^{-1} \BIndMat{-bj}{ai}} \pbra{\BIdMat + at\BIndMat{ai}{-bj} - at\BIndMat{bj}{-ai}} \\
        \intertext{Expanding fully and using \Cref{rel:explicit:E-prod}:}
        &= \BIdMat + 2at\BIndMat{ai}{-bj} - 2at\BIndMat{bj}{-ai} + at^{-1} \BIndMat{-ai}{bj} - at^{-1} \BIndMat{-bj}{ai} - a^2 \BIndMat{ai}{ai} - a^2 \BIndMat{bj}{bj} - a^2 \BIndMat{-bj}{-bj} - a^2 \BIndMat{-ai}{-ai} \\
        &\hspace{1in} - a^3 t \BIndMat{ai}{-bj} + a^3 t \BIndMat{bj}{-ai} \\
        \intertext{Gathering like terms:}
        &= \BIdMat + at\BIndMat{ai}{-bj} - at\BIndMat{bj}{-ai} + at^{-1} \BIndMat{-ai}{bj} - at^{-1} \BIndMat{-bj}{ai} - \BIndMat{ai}{ai} - \BIndMat{bj}{bj} - \BIndMat{-bj}{-bj} - \BIndMat{-ai}{-ai}.
    \end{align*}
\end{proof}

\begin{relation}
    For $i < j \in [n-1]$ and $t \neq 0 \in \BF$:
    \[
    h_{ai,bj}(t) = \BIdMat + (t-1) \BIndMat{ai}{ai} + (t-1) \BIndMat{bj}{bj} + (t^{-1}-1) \BIndMat{-bj}{-bj} + (t^{-1}-1) \BIndMat{-ai}{-ai}
    \]
\end{relation}

\begin{proof}
    \begin{align*}
        & h_{ai,bj}(t) \\
        &= g_{ai,bj}(t) g_{ai,bj}(-1) \\
        \intertext{Using the previous relation:}
        &= \pbra{\BIdMat + at\BIndMat{ai}{-bj} - at\BIndMat{bj}{-ai} + at^{-1} \BIndMat{-ai}{bj} - at^{-1} \BIndMat{-bj}{ai} - \BIndMat{ai}{ai} - \BIndMat{bj}{bj} - \BIndMat{-bj}{-bj} - \BIndMat{-ai}{-ai}} \\
        &\hspace{1in} \cdot \pbra{\BIdMat - a \BIndMat{ai}{-bj} + a \BIndMat{bj}{-ai} - a \BIndMat{-ai}{bj} + a \BIndMat{-bj}{ai} - \BIndMat{ai}{ai} - \BIndMat{bj}{bj} - \BIndMat{-bj}{-bj} - \BIndMat{-ai}{-ai}} \\
        \intertext{Foiling (\Cref{eq:explicit:foil}):}
        &= \BIdMat + a(t-1)\BIndMat{ai}{-bj} - a(t-1)\BIndMat{bj}{-ai} + a(t^{-1}-1) \BIndMat{-ai}{bj} - a(t^{-1}-1) \BIndMat{-bj}{ai} \\
        &\hspace{.5in} - 2\BIndMat{ai}{ai} - 2\BIndMat{bj}{bj} - 2\BIndMat{-bj}{-bj} - 2\BIndMat{-ai}{-ai} \\
        &\hspace{.5in} + \pbra{at\BIndMat{ai}{-bj} - at\BIndMat{bj}{-ai} + at^{-1} \BIndMat{-ai}{bj} - at^{-1} \BIndMat{-bj}{ai} - \BIndMat{ai}{ai} - \BIndMat{bj}{bj} - \BIndMat{-bj}{-bj} - \BIndMat{-ai}{-ai}} \\
        &\hspace{.5in} \cdot \pbra{- a \BIndMat{ai}{-bj} + a \BIndMat{bj}{-ai} - a \BIndMat{-ai}{bj} + a \BIndMat{-bj}{ai} - \BIndMat{ai}{ai} - \BIndMat{bj}{bj} - \BIndMat{-bj}{-bj} - \BIndMat{-ai}{-ai}}.
    \end{align*}
    Gathering compatible terms as in \Cref{rel:explicit:E-prod}, the product can be expanded as:
    \begin{align*}
        & (at\BIndMat{ai}{-bj} - \BIndMat{-bj}{-bj})(a \BIndMat{-bj}{ai}- \BIndMat{-bj}{-bj}) + (- at\BIndMat{bj}{-ai} - \BIndMat{-ai}{-ai})(- a \BIndMat{-ai}{bj} - \BIndMat{-ai}{-ai}) \\
        &\hspace{.5in} (at^{-1} \BIndMat{-ai}{bj} - \BIndMat{bj}{bj}) (a \BIndMat{bj}{-ai} - \BIndMat{bj}{bj}) + (- at^{-1} \BIndMat{-bj}{ai} - \BIndMat{ai}{ai})(- a \BIndMat{ai}{-bj}- \BIndMat{ai}{ai}) \\
        &= a^2 t \BIndMat{ai}{ai} - at \BIndMat{ai}{-bj} - a \BIndMat{-bj}{ai} + \BIndMat{-bj}{-bj} + a^2 t \BIndMat{bj}{bj} + at \BIndMat{bj}{-ai} + a \BIndMat{-ai}{bj} + \BIndMat{-ai}{-ai} \\
        &\hspace{.5in} + a^2 t^{-1} \BIndMat{-ai}{-ai} - at^{-1} \BIndMat{-ai}{bj} - a \BIndMat{bj}{-ai} + \BIndMat{bj}{bj} + a^2 t^{-1} \BIndMat{-bj}{-bj} + at^{-1} \BIndMat{-bj}{ai} + a \BIndMat{ai}{-bj} + \BIndMat{ai}{ai} \\
        \intertext{Collecting like terms and using $a^2=1$:}
        &= (t+1) \BIndMat{ai}{ai} - a(t-1)\BIndMat{ai}{-bj} + a(t^{-1}-1) \BIndMat{-bj}{ai} + (1 + t^{-1}) \BIndMat{-bj}{-bj} \\
        &\hspace{.5in} + (t + 1) \BIndMat{bj}{bj} + a(t-1) \BIndMat{bj}{-ai} + a (1-t^{-1}) \BIndMat{-ai}{bj} + (1 + t^{-1}) \BIndMat{-ai}{-ai}.
    \end{align*}
    Hence we deduce
    \[
    h_{ai,bj}(t) = \BIdMat + (t-1) \BIndMat{ai}{ai} + (t-1) 2\BIndMat{bj}{bj} + (t^{-1}-1) \BIndMat{-bj}{-bj} + (t^{-1}-1) \BIndMat{-ai}{-ai}.
    \]
\end{proof}

Therefore $h_{ai,bj}(t) h_{ai,bj}(u) = h_{ai,bj}(tu)$ by \Cref{rel:explicit:diag}.

A similar proof is possible for the short roots, but it is more verbose.

\section{Commutators and group theory}\label{app:group}

Recall our definition of the commutator (\Cref{eq:def:comm}): We have
\begin{equation}\label{eq:comm:inv}
    \Comm{x}{y}^{-1} = \Comm{y}{x}.
\end{equation}
and
\begin{equation}
    \Comm{x}{\Id} = \Comm{\Id}{x} = \Id.
\end{equation}
And we have a number of ``rewriting'' forms, which we will name individually.

\begin{relation}[\RelNameAlg\RelNameOrder{\RelNameLeft}{\RelNameStraight}]\label{eq:comm:left:str}
    \[ xy = \Comm{x}{y} y x. \]
\end{relation}
\begin{relation}[\RelNameAlg\RelNameOrder{\RelNameLeft}{\RelNameRev}]\label{eq:comm:left:inv}
    \[ xy = \Comm{y}{x}^{-1} y x. \]
\end{relation}
\begin{relation}[\RelNameAlg\RelNameOrder{\RelNameMid}{\RelNameStraight}]\label{eq:comm:mid:str}
    \[ xy = y \Comm{x}{y^{-1}}^{-1} x \]
\end{relation}
\begin{relation}[\RelNameAlg\RelNameOrder{\RelNameMid}{\RelNameRev}]\label{eq:comm:mid:inv}
    \[ xy = y \Comm{y^{-1}}{x} x. \]
\end{relation}
\begin{relation}[\RelNameAlg\RelNameOrder{\RelNameRight}{\RelNameStraight}]\label{eq:comm:right:str}
    \[ xy = y x \Comm{x^{-1}}{y^{-1}}. \]
\end{relation}
\begin{relation}[\RelNameAlg\RelNameOrder{\RelNameRight}{\RelNameRev}]\label{eq:comm:right:inv}
    \[ xy = y x \Comm{y^{-1}}{x^{-1}}^{-1}. \]
\end{relation}

Also, we have the following equation which will be useful below: For every $x,y \in G$, let $x \star y := y^{-1} x y^2 x^{-1} y^{-1}$. Then 
\begin{equation}\label{eq:comm:conj}
    (x \star y) \Comm{(x \star y)^{-1}}{y} = \Comm{x \star y}{y}^{-1} (x \star y)= y (x \star y) y^{-1} = \Comm{x}{y^2}.
\end{equation}

We also record here some other simple but useful group theory facts. We use the following lemma:

\begin{proposition}\label{prop:prelim:equal-conn}
    Let $R$ be a ring and $S$ a finite set. Let $G=(V,E)$ be a directed (simple, unweighted) graph with the following properties:
    \begin{enumerate}
        \item To every vertex $x \in V$, there is an associated function $f_x : R \times R \to S$.
        \item For every directed edge $(x,y) \in E$ and for every $t,u,v \in R$,
        \[
        f_x(tu,v) = f_y(t,uv).
        \]
    \end{enumerate}
    Suppose that $G$ contains at least two vertices and is weakly connected.\footnote{That is, connected as an undirected graph when replacing directed edges $(u,v)$ with undirected edges $\{u,v\}$.} Then there exists a function $g : R \to S$ such that for every $x \in V$ and $t, u \in R$, $f_x(t,u) = g(tu)$.
\end{proposition}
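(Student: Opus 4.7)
The plan is to exploit the edge relation in two different ways: first to show that all the $f_x$ coincide, and then to show that this common function depends only on the product of its two arguments.

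First I would observe that for any directed edge $(x,y) \in E$ and any $t, v \in R$, substituting $u = 1$ into the hypothesis $f_x(tu, v) = f_y(t, uv)$ yields $f_x(t, v) = f_y(t, v)$. Thus $f_x \equiv f_y$ as functions $R \times R \to S$. Since this equality is symmetric in $x$ and $y$ (even though the edge was directed), and since $G$ is weakly connected with at least two vertices, walking along any undirected path shows $f_x \equiv f_y$ for all vertices $x, y \in V$. Call this common function $f$.

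Next I would fix any directed edge $(x, y) \in E$, which exists because $G$ has at least two vertices and is weakly connected. The edge relation, rewritten using $f_x = f_y = f$, becomes
\[
f(tu, v) = f(t, uv) \quad \text{for all } t, u, v \in R.
\]
Setting $t = 1$ gives $f(u, v) = f(1, uv)$ for all $u, v \in R$. Hence defining $g : R \to S$ by $g(w) := f(1, w)$, we obtain $f_x(u, v) = f(u, v) = g(uv)$ for every vertex $x \in V$ and every $u, v \in R$, which is exactly the claim.

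There is really no substantive obstacle here: the whole argument is two one-line substitutions ($u = 1$ for the first step, $t = 1$ for the second), followed by invoking weak connectivity to propagate the equality $f_x = f_y$ across all of $V$. The only thing worth flagging is that we are implicitly using a multiplicative identity in $R$, which is consistent with the ring conventions in the rest of the paper where $R$ will always be a finite field.
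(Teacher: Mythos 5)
Your proof is correct and takes essentially the same approach as the paper's: set the middle variable to $1$ to show all $f_x$ coincide along edges and hence across the weakly connected graph, then set another variable to $1$ to obtain the factorization $f(t,u)=g(tu)$. The only cosmetic difference is that you first collapse all $f_x$ to a common $f$ and then factor, whereas the paper factors at one chosen edge and then propagates; the substitutions and the use of the multiplicative identity (which you rightly flag) are the same.
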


\begin{proof}
    For any edge $(x',y') \in E$, we observe that using the assumption, $f_{x'}(t,u) = f_{x'}(t \cdot 1, u) = f_{y'}(t, 1 \cdot u) = f_{y'}(t,u)$.

    Pick any edge $(x,y) \in E$. (There is at least one edge since $G$ has two vertices and is weakly connected.) For every $t \in R$, we define $g(t) := f_x(t,1)$. Now for every $t, u \in R$, by the assumption,
    \[
    f_y(t,u) = f_y(t,u \cdot 1) = f_x(tu, 1) = g(tu).
    \]

    Now, it remains to show that $f_z(t,u) = g(t,u)$ for every $z \neq y \in V$. Consider an arbitrary such vertex. By connectivity, $z$ and $y$ are connected by some undirected path, i.e., $z = w_0, \ldots, w_k = z$ and either $(w_i,w_{i+1}) \in E$ or $(w_{i+1},w_i) \in E$. In both cases, we use the observation from the previous paragraph to deduce that $f_{w_i}(t,u) = f_{w_{i+1}}(t,u)$, and so inductively that $f_z(t,u) = f_y(t,u) = g(tu)$, as desired.
\end{proof}

And the following very standard implication:

\begin{proposition}\label{prop:prelim:group-homo}
    Let $G,H$ be groups written using additive and multiplicative notations, respectively, and $\phi : G \to H$ a map such that for every $x,y \in G$, $\phi(x+y) = \phi(x) \phi(y)$ (i.e., a group homomorphism). Then $\phi(0) = \Id$ and for every $x \in G$, $(\phi(x))^{-1} = \phi(-x)$.
\end{proposition}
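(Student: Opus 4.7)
The plan is to exploit the homomorphism property $\phi(x+y) = \phi(x)\phi(y)$ twice: first to show $\phi(0)$ is idempotent in $H$ and hence equal to $\Id$, and then to show $\phi(-x)$ multiplies with $\phi(x)$ to give $\Id$, and hence must be the (unique two-sided) inverse of $\phi(x)$ in $H$.

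For the first claim, I would start from the identity $0 = 0 + 0$ in the additive group $G$ and apply $\phi$ to get $\phi(0) = \phi(0+0) = \phi(0)\phi(0)$. Multiplying both sides on the right (or equivalently on the left) by $\phi(0)^{-1}$ in $H$ yields $\Id = \phi(0)$. Note that $\phi(0)^{-1}$ exists because $\phi(0)$ is an element of the group $H$, so this cancellation is legitimate without any further assumptions.

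For the second claim, fix an arbitrary $x \in G$ and apply $\phi$ to the identity $0 = x + (-x)$ (and also $0 = (-x) + x$) in $G$. This gives $\Id = \phi(0) = \phi(x + (-x)) = \phi(x)\phi(-x)$ using the homomorphism property and the first claim, and symmetrically $\phi(-x)\phi(x) = \Id$. Therefore $\phi(-x)$ is a two-sided inverse of $\phi(x)$ in $H$, and since inverses in a group are unique, $\phi(-x) = (\phi(x))^{-1}$.

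The proof is entirely routine and there is no real obstacle: both parts follow directly from applying $\phi$ to appropriate identities in $G$ together with the uniqueness of identity and inverses in $H$. The only small subtlety, if any, is making sure one does not accidentally assume $\phi(0) = \Id$ when proving it; the argument above uses only cancellation by $\phi(0)^{-1}$, which is valid purely because $H$ is a group.
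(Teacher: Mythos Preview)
Your proof is correct and follows essentially the same approach as the paper's: apply the homomorphism property to $0 = 0+0$ to get $\phi(0)=\Id$, then to $0 = x + (-x)$ to get $\phi(x)\phi(-x)=\Id$. Your version is slightly more explicit (spelling out the cancellation step and checking both sides of the inverse), but the argument is identical in substance.
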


\begin{proof}
    By assumption, $\phi(0) = \phi(0 + 0) = \phi(0) \phi(0)$ and so $\phi(0) = \Id$. Then $\phi(x) \phi(-x) = \phi(x-x) = \phi(0) = \Id$.
\end{proof}

\section{Why $A_3$'s link is simply connected and $B_3$'s might not be}\label{sec:bd}

In this appendix, we discuss why in the $A_3$ setting, \cite{KO21} had a base theorem (\Cref{thm:story:a3:base}), while in the $B_3$ setting, we conjecture that such a theorem is not true.

The \cite{KO21} base theorem --- stating that all Steinberg relations in $\UnipA{\BZ/p\BZ}$ can be derived from in-subgroup relations in $\UnipA{\BZ/p\BZ}$ --- is due essentially to Biss and Dasgupta~\cite{BD01}.\footnote{The original \cite{BD01} proof did not emphasize the quantitative bound on the length of the derivation, and in particular, the fact that the length of the derivation does \emph{not} depend on the size~$p$ of the ring.} The key step of the \cite{BD01} proof (essentially the only difficulty one) is to derive the relation ``$\Rt{\alpha+\beta}$ and $\Rt{\beta+\gamma}$ elements commute''. (This makes sense as a first step because it is the \emph{only} relation in the $A_3$ case which does not name the ``missing'' root $\Rt{\alpha+\beta+\gamma}$.) We give our own shorter (and perhaps conceptually clearer) proof of this relation, and then discuss why we do \emph{not} know how to prove analogous relations in $B_3$.

\subsection{An alternative proof of Biss--Dasgupta}
In this section we give an alternative proof of the key Biss--Dasgupta result~\cite[Sec.~4]{BD01}, which states that over the ring $\BZ/p\BZ$ with $p$ odd, the fact that $(\Rt{\alpha+\beta})$-type and $(\Rt{\beta+\gamma})$-type elements commute can be derived only using ``in-subgroup relations''.
In fact, our proof works within the unipotent group $U_{A_3}(R)$ for any any commutative ring~$R$ in which~$1/2$ exists (i.e., $1+1$ is a unit).

Our proof, as in the original proof in \cite[\S4]{BD01} proof, starts by deriving the following useful ``rewriting'' relation in $U_{A_3}(R)$:

\begin{relation}\label{rel:bd:swap}
For any ring $R$, whenever the elements $uv/(t+v)$ and $tu/(t+v)$ exist, one can derive \[ 
\El{\alpha}{t} \El{\beta}{u} \El{\alpha}{v} = \El{\beta}{uv/(t+v)} \El{\alpha}{t+v} \El{\beta}{tu/(t+v)} \] 
using only in-subgroup relations in $U_{A_3}(R)$, and the number of steps in the proof does not depend on~$R$. The same holds replacing $\Rt{\alpha}$'s with $\Rt{\beta}$'s and $\Rt{\beta}$'s with $\Rt{\gamma}$'s.
\end{relation}

\begin{proof}
    First, note that $u = uv/(t+v) + tu/(t+v)$. Thus, we can write, by linearity of $\Rt{\beta}$ elements:
    \begin{align*}
        & \El{\alpha}{t} \El{\beta}{u} \El{\alpha}{v} \\
        &= \El{\alpha}{t} \El{\beta}{uv/(t+v)} \El{\beta}{tu/(t+v)} \El{\alpha}{v} \\
        \intertext{And reordering elements (\Cnref{eq:comm:right:str} and \Cnref{eq:comm:left:inv}):}
        &=  \El{\beta}{uv/(t+v)} \El{\alpha}{t} \Comm{\El{\alpha}{t}^{-1}}{\El{\beta}{uv/(t+v)}^{-1}} \Comm{\El{\beta}{tu/(t+v)}}{\El{\alpha}{v}}^{-1} \El{\alpha}{v} \El{\beta}{tu/(t+v)} .
    \end{align*}
    Using the fact that inverting $\Rt{\alpha}$, $\Rt{\beta}$, and $\Rt{\alpha+\beta}$ elements is the same as negating the respective entries, and that the commutator of $\Rt{\alpha}$ and $\Rt{\beta}$ elements is an $\Rt{\alpha+\beta}$ element whose product is the element of the entries, these two commutators cancel (they are $\Rt{\alpha+\beta}$ elements with entries $tuv/(t+v)$ and $-tuv/(t+v)$, respectively. Linearity of $\Rt{\alpha}$ elements gives the desired relation.
\end{proof}

Now, we can prove:

\begin{relation}\label{rel:ungr:a3:comm:alpha+beta:beta+gamma}
In any ring $R$ containing $1/2$, one can derive that \[ \Quant{t,u \in R} \Comm{\El{\alpha+\beta}{t}}{\El{\beta+\gamma}{u}} = \Id. \] using only in-subgroup relations in $U_{A_3}(R)$, and the number of steps in the proof does not depend on~$R$. 

\end{relation}

\begin{proof}
    For this proof, we need some subgroup identities. We use four identities which expand elements as commutators (using, again, that inverting a $\Rt{\zeta}$ elements negates its entry for $\Rt{\zeta} \in \{\Rt{\alpha},\Rt{\beta},\Rt{\gamma},\Rt{\alpha+\beta},\Rt{\beta+\gamma}\}$). These identities require that $1/2$ exist in $R$:
    \begin{align}
        \El{\alpha+\beta}{t} \label{eq:a3:comm:alpha+beta:beta+gamma:1} &= \El{\beta}{-1} \El{\alpha}{t} \El{\beta}{1} \El{\alpha}{-t}, \\
        \El{\beta+\gamma}{u} &= \El{\gamma}{-2u} \El{\beta}{1/2} \El{\gamma}{2u} \El{\beta}{-1/2}, \label{eq:a3:comm:alpha+beta:beta+gamma:2} \\
        \El{\alpha+\beta}{-t} &= \El{\beta}{1/2} \El{\alpha}{2t} \El{\beta}{-1/2} \El{\alpha}{-2t}, \label{eq:a3:comm:alpha+beta:beta+gamma:3} \\
        \El{\beta+\gamma}{-u} &= \El{\gamma}{-u} \El{\beta}{-1} \El{\gamma}{u} \El{\beta}{1}. \label{eq:a3:comm:alpha+beta:beta+gamma:4} 
    \end{align}
    We also require four identities for swaps following from \Cref{rel:bd:swap}; again, these require $1/2$ to exist in~$R$:
    \begin{align}
        \El{\alpha}{-t} \El{\beta}{1/2} \El{\alpha}{2t} &= \El{\beta}{1} \El{\alpha}{t} \El{\beta}{-1/2}, \label{eq:a3:comm:alpha+beta:beta+gamma:5} \\
        \El{\gamma}{2u} \El{\beta}{-1/2} \El{\gamma}{-u} &= \El{\beta}{1/2} \El{\gamma}{u} \El{\beta}{-1}, \label{eq:a3:comm:alpha+beta:beta+gamma:6} \\
        \El{\beta}{1} \El{\gamma}{-2u} \El{\beta}{1} &= \El{\gamma}{-u} \El{\beta}{2} \El{\gamma}{-u}, \label{eq:a3:comm:alpha+beta:beta+gamma:7} \\
        \El{\beta}{-1} \El{\alpha}{-2t} \El{\beta}{-1} &= \El{\alpha}{-t} \El{\beta}{-2} \El{\alpha}{-t}. \label{eq:a3:comm:alpha+beta:beta+gamma:8}
    \end{align}

    \noindent The basic plan is now to write down $\Comm{\El{\alpha+\beta}{t}}{\El{\beta+\gamma}{u}}$ and expand using \Cref{eq:a3:comm:alpha+beta:beta+gamma:1,eq:a3:comm:alpha+beta:beta+gamma:2,eq:a3:comm:alpha+beta:beta+gamma:3,eq:a3:comm:alpha+beta:beta+gamma:4}.
    We proceed as follows:
    \begin{align*}
        & \asite{\Comm{\El{\alpha+\beta}{t}}{\El{\beta+\gamma}{u}}} \\
        \intertext{Expand the commutators:}
        &= \El{\alpha+\beta}{t} \asite{\El{\beta+\gamma}{u}} \asite{\El{\alpha+\beta}{-t}} \El{\beta+\gamma}{-u} \\
        \intertext{Expand with \Cref{eq:a3:comm:alpha+beta:beta+gamma:2,eq:a3:comm:alpha+beta:beta+gamma:3}:}
        &= \El{\alpha+\beta}{t} \El{\gamma}{-2u} \El{\beta}{1/2} \El{\gamma}{2u} \asite{\El{\beta}{-1/2}} \asite{\El{\beta}{1/2}} \El{\alpha}{2t} \El{\beta}{-1/2} \El{\alpha}{-2t}  \El{\beta+\gamma}{-u} \\
        \intertext{Cancel the $\Rt{\beta}$ elements:}
        &= \asite{\El{\alpha+\beta}{t}} \El{\gamma}{-2u} \El{\beta}{1/2} \El{\gamma}{2u} \El{\alpha}{2t} \El{\beta}{-1/2} \El{\alpha}{-2t}  \asite{\El{\beta+\gamma}{-u}} \\
        \intertext{Expand with \Cref{eq:a3:comm:alpha+beta:beta+gamma:1,eq:a3:comm:alpha+beta:beta+gamma:4}:}
        &= \El{\beta}{-1} \El{\alpha}{t} \El{\beta}{1} \asite{\El{\alpha}{-t} \El{\gamma}{-2u}} \El{\beta}{1/2} \asite{\El{\gamma}{2u} \El{\alpha}{2t}} \El{\beta}{-1/2} \asite{\El{\alpha}{-2t}  \El{\gamma}{-u}} \El{\beta}{-1} \El{\gamma}{u} \El{\beta}{1} \\
        \intertext{Commute $\Rt{\alpha}$ and $\Rt{\gamma}$ elements where possible:}
        &= \El{\beta}{-1} \El{\alpha}{t} \El{\beta}{1} \El{\gamma}{-2u} \asite{\El{\alpha}{-t} \El{\beta}{1/2} \El{\alpha}{2t}} \asite{\El{\gamma}{2u} \El{\beta}{-1/2} \El{\gamma}{-u}} \El{\alpha}{-2t} \El{\beta}{-1} \El{\gamma}{u}  \El{\beta}{1} \\
        \intertext{Swap $\Rt{\alpha} \cdot \Rt{\beta} \cdot \Rt{\alpha}$ products for $\Rt{\beta} \cdot \Rt{\alpha} \cdot \Rt{\beta}$ products and similarly for $\Rt{\gamma}$ and $\Rt{\beta}$ (\Cref{eq:a3:comm:alpha+beta:beta+gamma:5,eq:a3:comm:alpha+beta:beta+gamma:6}):}
        &= \El{\beta}{-1} \El{\alpha}{t} \El{\beta}{1} \El{\gamma}{-2u} \El{\beta}{1} \El{\alpha}{t} \asite{\El{\beta}{-1/2} \El{\beta}{1/2}} \El{\gamma}{u} \El{\beta}{-1} \El{\alpha}{-2t} \El{\beta}{-1} \El{\gamma}{u} \El{\beta}{1} \\
        \intertext{Cancel adjacent $\Rt{\beta}$ elements:}
        &= \El{\beta}{-1} \El{\alpha}{t} \asite{\El{\beta}{1} \El{\gamma}{-2u} \El{\beta}{1}} \El{\alpha}{t} \El{\gamma}{u} \asite{\El{\beta}{-1} \El{\alpha}{-2t} \El{\beta}{-1}} \El{\gamma}{u}  \El{\beta} \\
        \intertext{Do another swap (\Cref{eq:a3:comm:alpha+beta:beta+gamma:7,eq:a3:comm:alpha+beta:beta+gamma:8}):}
        &= \El{\beta}{-1} \El{\alpha}{t} \El{\gamma}{-u} \El{\beta}{2} \asite{\El{\gamma}{-u} \El{\alpha}{t} \El{\gamma}{u} \El{\alpha}{-t}} \El{\beta}{-2} \El{\alpha}{-t} \El{\gamma}{u}  \El{\beta}{1}  \\
        \intertext{Commute $\Rt{\alpha}$ and $\Rt{\gamma}$ elements and therefore cancel them:}
        &= \El{\beta}{-1} \El{\alpha}{t} \El{\gamma}{-u} \asite{\El{\beta}{2}  \El{\beta}{-2}} \El{\alpha}{-t} \El{\gamma}{u}  \El{\beta}{1}\\
        \intertext{Cancel $\Rt{\beta}$ elements:}
        &= \El{\beta}{-1} \asite{\El{\alpha}{t} \El{\gamma}{-u}  \El{\alpha}{-t} \El{\gamma}{u}}  \El{\beta}{1} \\
        \intertext{Again commute and cancel $\Rt{\alpha}$ and $\Rt{\gamma}$ elements:}
        &= \El{\beta}{-1} \El{\beta}{1}
        \intertext{Finally cancel $\Rt{\beta}$ elements:}
        &= \Id,
    \end{align*}
    as desired.
\end{proof}

Incidentally, the fact that \Cref{rel:ungr:a3:comm:alpha+beta:beta+gamma} holds implies that in any link complex $\LinkCplxA{q}$, there must be an $\BF_2$-filling of the loop corresponding to the critical relation $\Comm{\El{\alpha+\beta}{t}}{\El{\beta+\gamma}{u}} = \Id$.  
We found a simple (human-verifiable) proof of this fact, illustrated by the diagrams in \Cref{fig:squares}. The $\Comm{\El{\alpha+\beta}{t}}{\El{\beta+\gamma}{u}} = \Id$ relation corresponds to a loop of length~$4$, alternating between $\BoxR$ and $\BoxB$ subgroups; this is the orange outer loop in the figures. 
In the coset complex, some (but not all) $\BoxR$-$\BoxB$-$\BoxR$-$\BoxB$ ``squares'' have the property that they are $\BF_2$-filled by $4$ triangles, all incident on the same $\BoxG$ vertex.  Although the orange outer loop does \emph{not} have this property, we find five other squares that \emph{are} fillable in this way, and such that the $\BF_2$-boundary of these squares is the orangle loop to be filled.  The arrangement of these squares is depicted in the left figure, and the full filling by triangles is depicted in the right figure.  Overall, this gives an $\BF_2$-filling of the critical relation $\Comm{\El{\alpha+\beta}{t}}{\El{\beta+\gamma}{u}} = \Id$ using $20$ triangles.

\begin{figure}[H]
\centering
\begin{subfigure}[b]{0.45\textwidth}
\def\dscale{4}
\def\inscale{0.3}
\usetikzlibrary{calc}
\centering
\begin{tikzpicture}

    \coordinate (TL) at (0,0);
    \coordinate (TR) at (0,\dscale);
    \coordinate (BL) at (\dscale,0);
    \coordinate (BR) at (\dscale,\dscale);
    
    \coordinate (iTL) at (\inscale*\dscale,\inscale*\dscale);
    \coordinate (iTR) at (\inscale*\dscale,{(1-\inscale)*\dscale});
    \coordinate (iBL) at ({(1-\inscale)*\dscale},\inscale*\dscale);
    \coordinate (iBR) at ({(1-\inscale)*\dscale},{(1-\inscale)*\dscale});
    
    \draw[line width=1.5pt, yellow] (TL) rectangle (BR);
    \draw[black] (iTL) rectangle (iBR);
    \draw[black] (TL) -- (iTL);
    \draw[black] (TR) -- (iTR);
    \draw[black] (BL) -- (iBL);
    \draw[black] (BR) -- (iBR);

    \node[fill=red, text=white, circle, inner sep=3pt] at (TL) {1};
    \node[fill=blue, text=white, circle, inner sep=3pt] at (TR) {2};
    \node[fill=blue, text=white, circle, inner sep=3pt] at (BL) {3};
    \node[fill=red, text=white, circle, inner sep=3pt] at (BR) {4};

    \node[fill=blue, text=white, circle, inner sep=3pt] at (iTL) {5};
    \node[fill=red, text=white, circle, inner sep=3pt] at (iTR) {6};
    \node[fill=red, text=white, circle, inner sep=3pt] at (iBL) {7};
    \node[fill=blue, text=white, circle, inner sep=3pt] at (iBR) {8};
\end{tikzpicture}
\end{subfigure}%
\hfill
\begin{subfigure}[b]{0.45\textwidth}
\centering
\def\dscale{4}
\def\inscale{0.3}
\usetikzlibrary{calc}
\begin{tikzpicture}

    \coordinate (TL) at (0,0);
    \coordinate (TR) at (0,\dscale);
    \coordinate (BL) at (\dscale,0);
    \coordinate (BR) at (\dscale,\dscale);
    
    \coordinate (iTL) at (\inscale*\dscale,\inscale*\dscale);
    \coordinate (iTR) at (\inscale*\dscale,{(1-\inscale)*\dscale});
    \coordinate (iBL) at ({(1-\inscale)*\dscale},\inscale*\dscale);
    \coordinate (iBR) at ({(1-\inscale)*\dscale},{(1-\inscale)*\dscale});

    \coordinate (cM) at (\dscale/2,\dscale/2);
    \coordinate (cT) at (\inscale*\dscale/2,\dscale/2);
    \coordinate (cL) at (\dscale/2,\inscale*\dscale/2);
    \coordinate (cB) at ({(1-\inscale/2)*\dscale},\dscale/2);
    \coordinate (cR) at (\dscale/2,{(1-\inscale/2)*\dscale});
    
    \draw[line width=1.5pt, yellow] (TL) rectangle (BR);
    \draw[black] (iTL) rectangle (iBR);
    \draw[black] (TL) -- (iTL);
    \draw[black] (TR) -- (iTR);
    \draw[black] (BL) -- (iBL);
    \draw[black] (BR) -- (iBR);

    \draw[black, dashed] (iTL) -- (cM);
    \draw[black, dashed] (iTR) -- (cM);
    \draw[black, dashed] (iBL) -- (cM);
    \draw[black, dashed] (iBR) -- (cM);

    \draw[black, dashed] (TL) -- (cL);
    \draw[black, dashed] (BL) -- (cL);
    \draw[black, dashed] (iTL) -- (cL);
    \draw[black, dashed] (iBL) -- (cL);

    \draw[black, dashed] (TR) -- (cR);
    \draw[black, dashed] (BR) -- (cR);
    \draw[black, dashed] (iTR) -- (cR);
    \draw[black, dashed] (iBR) -- (cR);
    
    \draw[black, dashed] (TL) -- (cT);
    \draw[black, dashed] (TR) -- (cT);
    \draw[black, dashed] (iTL) -- (cT);
    \draw[black, dashed] (iTR) -- (cT);
    
    \draw[black, dashed] (BL) -- (cB);
    \draw[black, dashed] (BR) -- (cB);
    \draw[black, dashed] (iBL) -- (cB);
    \draw[black, dashed] (iBR) -- (cB);

    \node[fill=red, circle, inner sep=3pt] at (TL) {};
    \node[fill=blue, circle, inner sep=3pt] at (TR) {};
    \node[fill=blue, circle, inner sep=3pt] at (BL) {};
    \node[fill=red, circle, inner sep=3pt] at (BR) {};

    \node[fill=blue, circle, inner sep=3pt] at (iTL) {};
    \node[fill=red, circle, inner sep=3pt] at (iTR) {};
    \node[fill=red, circle, inner sep=3pt] at (iBL) {};
    \node[fill=blue, circle, inner sep=3pt] at (iBR) {};

    \node[fill=green, circle, inner sep=3pt] at (cM) {};
    \node[fill=green, circle, inner sep=3pt] at (cL) {};
    \node[fill=green, circle, inner sep=3pt] at (cT) {};
    \node[fill=green, circle, inner sep=3pt] at (cR) {};
    \node[fill=green, circle, inner sep=3pt] at (cB) {};
\end{tikzpicture}
\end{subfigure}
\caption{Depiction of the $\BF_2$-triangulation of relation $\Comm{\El{\alpha+\beta}{1}}{\El{\beta+\gamma}{1}} = \Id$ in $\LinkCplxA{q}$. In the left-hand figure, the vertices are labeled $1,\ldots,8$. These correspond to the following cosets: $H_{\setminus \Rt{\alpha}}$, $H_{\setminus \Rt{\gamma}}$, $\El{\alpha+\beta}1 H_{\setminus \Rt{\alpha}}$, $\El{\beta+\gamma}1 H_{\setminus \Rt{\gamma}}$, $\El{\beta}1 \El{\gamma}2 H_{\setminus \Rt{\gamma}}$, $\El{\beta}1 \El{\alpha}1 H_{\setminus \Rt{\alpha}}$, $\El{\alpha+\beta}1 \El{\beta}2 \El{\gamma}1 H_{\setminus \Rt{\gamma}}$, and $\El{\beta+\gamma}1 \El{\beta}{1/2} \El{\gamma}2 H_{\setminus \Rt{\gamma}}$, respectively.}
\label{fig:squares}
\end{figure}

\subsection{$A_3$ vs.\ $B_3$}
Let us now reflect on some key features of $\UnipA{q}$ that let us write this proof. Firstly, $\Rt{\alpha+\beta}$ and $\Rt{\beta+\gamma}$ were expressible as $\Rt{\alpha}$-$\Rt{\beta}$ and $\Rt{\beta}$-$\Rt{\gamma}$ commutators, respectively. By using factors of $2$ and $1/2$ in two of the expansions (\Cref{eq:a3:comm:alpha+beta:beta+gamma:2,eq:a3:comm:alpha+beta:beta+gamma:3}) but not the others (\Cref{eq:a3:comm:alpha+beta:beta+gamma:1,eq:a3:comm:alpha+beta:beta+gamma:4}), we were able to introduce some asymmetries allowing us to eventually swap $\Rt{\eta} \Rt{\zeta} \Rt{\eta}$ type products for $\Rt{\zeta} \Rt{\eta} \Rt{\zeta}$ type products. We also got mileage out of the complete symmetry between the pairs $\Rt{\alpha}, \Rt{\beta}$ and $\Rt{\beta} \Rt{\gamma}$.

We tried extensively to find a similar derivation for the group $\UnipBSm{q}$: A proof, using only in-subgroup relations, that $\Rt{\beta+\psi}$ and $\Rt{\psi+\omega}$ commute. However, the situation there is not as nice: $\Rt{\beta+\psi}$ is not the only root in the positive  span of $\Rt{\beta}$ and $\Rt{\psi}$, $\Rt{\beta+2\psi}$ is as well! Thus, $\Rt{\beta+\psi}$ elements are not commutators of $\Rt{\beta}$ and $\Rt{\psi}$ elements, though we can express them as $\Rt{\psi} \Rt{\beta} \Rt{\psi} \Rt{\beta} \Rt{\psi}$ products (see \Cref{rel:b3-small:sub:expr:beta+psi}). Importantly, the entries on the first and last $\Rt{\psi}$ elements in this expansion are the same, so we have less asymmetry to play with.

We have some more concrete evidence that it is not the case that in-subgroup relations in $\UnipBSm{q}$ are enough to prove all relations in $\UnipBSm{q}$, and in particular the commutation relation for $\Rt{\beta+\psi}$ and $\Rt{\psi+\omega}$ roots, from our computer calculations of the homology of the corresponding coset complex $\LinkCplxBSm{q}$:
\begin{enumerate}
    \item Firstly, we observed that while $\LinkCplxBSm{5}$ and $\LinkCplxBLg{5}$ both have vanishing $1$-homology over $\BF_2$ (\Cref{thm:story:b3:base}), $\LinkCplxBSm{5}$ \emph{does not} have vanishing $1$-homology over $\BF_5$. This means that in-subgroup relations in $\UnipBSm{5}$ do not generate all relations, since the complex $\LinkCplxBSm{5}$ is not simply connected. The $1$- homology of $\LinkCplxBSm{5}$ does vanish over $\BF_3$, $\BF_7$, and $\BF_{11}$; we conjecture that the $1$-homology of $\LinkCplxBSm{q}$ does not vanish over $\BF_q$ for general $q$.
    \item We attempted to replicate the $A_3$ success depicted in \Cref{fig:squares}; that is, we used a computer to look for reasonably small, ``nicely structured'' fillings of the cycle corresponding to the commutation relation for $\Rt{\beta+\psi}$ and $\Rt{\psi+\omega}$ roots in $\LinkCplxBSm{5}$.
    Unfortunately, we concluded that there is no comparably small and symmetric filling, in comparison to the proof of \Cref{rel:a3:comm:alpha+beta:beta+gamma} in the $A_3$ case.
\end{enumerate}

\end{document}